\title{Dp-finite fields IV: the rank 2 picture}
\author{Will Johnson}
\DeclareMathOperator*{\forkindep}{\raise0.2ex\hbox{\ooalign{\hidewidth$\vert$\hidewidth\cr\raise-0.9ex\hbox{$\smile$}}}}
\newcommand{\dlog}{\partial \log}
\newcommand{\ACVF}{\mathrm{ACVF}}
\newcommand{\LDVF}{\mathrm{LDVF}}
\newcommand{\ADVF}{\mathrm{ADVF}}
\newcommand{\ACF}{\mathrm{ACF}}
\newcommand{\End}{\operatorname{End}}
\newcommand{\Ann}{\operatorname{Ann}}
\newcommand{\Frac}{\operatorname{Frac}}
\newcommand{\Der}{\operatorname{Der}}
\newcommand{\characteristic}{\operatorname{char}}
\newcommand{\Hom}{\operatorname{Hom}}
\newcommand{\Tr}{\operatorname{Tr}}
\newcommand{\res}{\operatorname{res}}
\newcommand{\wres}{\widehat{\res}}
\newcommand{\Mod}{\operatorname{Mod}}
\newcommand{\Vect}{\operatorname{Vect}}
\newcommand{\coker}{\operatorname{coker}}
\newcommand{\acl}{\operatorname{acl}}
\newcommand{\dcl}{\operatorname{dcl}}
\newcommand{\tp}{\operatorname{tp}}
\newcommand{\dom}{\operatorname{dom}}
\newcommand{\val}{\operatorname{val}}
\newcommand{\dpr}{\operatorname{dp-rk}}
\newcommand{\Sub}{\operatorname{Sub}}
\newcommand{\Dir}{\operatorname{Dir}}
\newtheorem{theorem}{Theorem}[section] % numbered like the section
\newtheorem{lemma}[theorem]{Lemma}
\newtheorem{corollary}[theorem]{Corollary}
\newtheorem{fact}[theorem]{Fact}
\newtheorem{conjecture}[theorem]{Conjecture}
\newtheorem{proposition}[theorem]{Proposition}
\newtheorem{proposition-eh}[theorem]{Proposition(?)}
\newtheorem*{theorem-star}{Theorem}
\newtheorem*{conjecture-star}{Conjecture}
\newtheorem*{lemma-star}{Lemma}
\theoremstyle{definition}
\newtheorem{definition}[theorem]{Definition}
\theoremstyle{remark}
\newtheorem{remark}[theorem]{Remark}
\newtheorem{claim}[theorem]{Claim}
\newtheorem*{acknowledgment}{Acknowledgments}
\newcommand{\Qq}{\mathbb{Q}}
\newcommand{\Zz}{\mathbb{Z}}
\newcommand{\Kk}{\mathbb{K}}
\newcommand{\Nn}{\mathbb{N}}
\newcommand{\Oo}{\mathcal{O}}
\newcommand{\mm}{\mathfrak{m}}
\newcommand{\pp}{\mathfrak{p}}
\newenvironment{claimproof}[1][\proofname]
               {
                 \proof[#1]
                 
               }
               {
                 \endproof
               }
\begin{document}
\maketitle

\begin{abstract}
  We investigate fields of characteristic 0 and dp-rank 2.  While we
  do not obtain a classification, we prove that any unstable field of
  characteristic 0 and dp-rank 2 admits a unique definable V-topology.
  If this statement could be generalized to higher ranks, we would
  obtain the expected classification of fields of finite dp-rank.

  We obtain the unique definable V-topology by investigating the
  ``canonical topology'' defined in \cite{prdf}.  Contrary to the
  expectations of \cite{prdf2}, the canonical topology need not be a
  V-topology.  However, we are able to characterize the canonical
  topology (on fields of dp-rank 2 and characteristic 0) in terms of
  differential valued fields.

  This differential valued structure is obtained through a partial
  classification of \emph{2-inflators}, a sort of ``generalized
  valuation'' that arises naturally in fields of finite rank.

  Additionally, we give an example of a dp-rank 2 expansion of ACVF
  with a definable set of full rank and empty interior.  This example
  interferes with certain strategies for proving the henselianity
  conjecture.
\end{abstract}

\section{Introduction}
NIP structures play a central role in modern model theory, and so it
would be desirable to classify the NIP theories of fields.  NIP can be
characterized via dp-rank: a structure $M$ is NIP iff $\dpr(M) <
\infty$.  For an overview of NIP and dp-rank, see (\cite{NIPguide},
Chapters 2 and 4).

From the point of view of dp-rank, the natural first step is fields of
dp-rank 1 (dp-minimal fields).  Dp-minimal fields were successfully
classified in (\cite{myself}, Chapter 9).  The hope is to
generalize this proof to the next simplest case---fields of finite dp-rank (dp-finite fields).

\subsection{The story so far} \label{sec:sofar}

The present paper continues \cite{prdf}, \cite{prdf2}, \cite{prdf3},
which made some partial progress on the classification of dp-finite fields.  The overall
strategy is to prove the dp-finite case of the Shelah conjecture:
\begin{conjecture}[Shelah conjecture, dp-finite case]
  Let $K$ be a dp-finite field.  Then one of the following holds:
  \begin{itemize}
  \item $K$ is finite
  \item $K$ is algebraically closed
  \item $K$ is real closed
  \item $K$ admits a non-trivial henselian valuation.
  \end{itemize}
\end{conjecture}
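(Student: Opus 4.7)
The plan is to prove the conjecture by induction on $\dpr(K)$. The base case $\dpr(K) = 1$ is the dp-minimal classification established in \cite{myself}, which already sorts dp-minimal fields into precisely the four listed classes. For the inductive step the strategy is to produce a definable non-trivial V-topology on $K$ under the assumption that $K$ is infinite and neither algebraically closed nor real closed: on a field, a definable V-topology is induced by either an absolute value or a valuation, and combined with the known henselianity criteria for NIP valued fields, one extracts a non-trivial definable henselian valuation, closing the induction.

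The tool for producing a V-topology should be the ``canonical topology'' of \cite{prdf}, which attaches to every dp-finite field an intrinsic definable topology. The naive hope, voiced in \cite{prdf2}, is that this canonical topology is itself always a V-topology; as the abstract emphasizes, this is false in general. So the actual plan is subtler: analyze the canonical topology via the linear/multiplicative data of $K$, and extract a definable V-topology from whatever structure is present, even when the canonical topology refines it strictly. At dp-rank $2$ and characteristic $0$ this extraction is carried out by showing that the canonical topology is controlled by a differential valued field structure, from which the underlying valuation provides the desired V-topology.

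The technical heart of the inductive step is the classification of \emph{$n$-inflators}, the generalized valuations attached to a finite-rank field. The plan is to show that any $n$-inflator on $K$ either is already V-topological or factors through lower-rank inflators plus derivations, so that an inductive analysis produces either a henselian valuation directly or a differential valued field whose underlying valuation is definable. In rank $2$ this reduces to classifying $2$-inflators, which is precisely what this paper accomplishes.

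The principal obstacle, and the reason only rank $2$ and only characteristic $0$ are settled here, is the classification of higher $n$-inflators: one needs a structure theorem saying every such object is built from valuations and derivations in a sufficiently rigid way to guarantee a unique definable V-topology on the ambient field. A secondary obstacle is characteristic $p$, where imperfect residue behavior and Frobenius phenomena make both the canonical topology and the differential valued reformulation substantially more delicate. Overcoming the $n$-inflator classification would suffice to complete the induction, and hence the dp-finite Shelah conjecture.
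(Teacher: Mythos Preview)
The statement is a \emph{conjecture}, not a theorem: the paper does not prove it, and your proposal is not a proof either but a strategy sketch that explicitly admits its key step (the classification of $n$-inflators for $n > 2$) is missing. So there is no proof to compare against; the relevant question is whether your strategy matches the paper's intended approach.

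It does not, in two significant ways. First, the ``induction on $\dpr(K)$'' framing is spurious: nowhere do you actually use the inductive hypothesis for rank $n-1$ to handle rank $n$, and the paper's own strategy (see \S\ref{sec:app} and the concluding remarks) is not inductive either. The paper aims to show directly, at each rank, that the canonical topology is a ``$W_n$-topology'' admitting a unique V-topology coarsening; there is no reduction from rank $n$ to lower ranks, and your claim that an $n$-inflator ``factors through lower-rank inflators plus derivations'' is not what the paper does or conjectures. Second, your identification of positive characteristic as a ``secondary obstacle'' is backwards: the Shelah conjecture for dp-finite fields of positive characteristic was already proven in \cite{prdf} (Corollary~11.4), as the introduction notes. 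The genuine obstacle is characteristic $0$, where the canonical topology can fail to be a V-topology (as the paper's own counterexample in \S\ref{sec:grail} demonstrates), and one must extract the V-topology from a more complicated diffeovaluation structure. The paper's route to the Shelah conjecture runs through Proposition~6.5: show that parts \ref{fact-1} and \ref{fact-2} of Theorem~\ref{thm:app} generalize to all ranks, then use comparability of definable valuations plus the Jahnke--Koenigsmann criterion.
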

Modulo this conjecture, the classification of dp-finite fields is
known (\cite{halevi-hasson-jahnke}, Theorem~3.11\footnote{The
  classification in \cite{halevi-hasson-jahnke} is for strongly
  dependent fields, but the proof specializes to the case of dp-finite
  fields.  The fields appearing in the conjectured classification of
  strongly dependent fields are all dp-finite
  (\cite{halevi-hasson-jahnke}, Proposition~3.9).}).

Let $(K,+,\cdot,\ldots)$ be a dp-finite field, possibly with extra
structure.  If $K$ is stable, then $K$ must be algebraically closed or
finite (\cite{Palacin}, Proposition~7.2).  Assume $K$ is unstable.  In
\cite{prdf} and \cite{prdf2}, we constructed a field topology on $K$
characterized by the fact that the following family is a neighborhood
basis of 0:
\begin{equation*}
  \{ X - X : X \subseteq K,~ X \textrm{ is definable},~ \dpr(X) =
  \dpr(K)\}.
\end{equation*}
Here $X - Y$ denotes the set of differences
\[ \{x-y : x \in X, ~ y \in Y\},\]
rather than the set difference $X \setminus Y$.

We call this topology the \emph{canonical topology}.  In a monster
model $\Kk \succeq K$, define the \emph{$K$-infinitesimals} to be the
intersection of all $K$-definable basic neighborhoods:
\begin{equation*}
  J_K = \bigcap \{ X - X : X \subseteq \Kk,~ X \textrm{ is
    $K$-definable},~ \dpr(X) = \dpr(K)\}.
\end{equation*}
Using the infinitesimals, we proved the Shelah conjecture for
dp-finite fields of positive characteristic in (\cite{prdf},
Corollary~11.4).

In \cite{prdf2}, we sketched a strategy for attacking fields of
characteristic 0.  Say that $K$ is \emph{valuation type} if the
canonical topology is a V-topology.  (See \cite{prestel-ziegler} for a
reference on topological fields and V-topologies.)  We conjectured
\begin{conjecture}[Valuation conjecture] \label{con:vc}
  If $K$ is an unstable dp-finite field, then $K$ is valuation type.
\end{conjecture}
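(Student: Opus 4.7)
The plan is to produce, inside a sufficiently saturated monster $\Kk \succeq K$, a valuation of $\Kk$ whose maximal ideal is cofinal among the $K$-definable basic neighborhoods of $0$; once we have this, the canonical topology is by definition a V-topology. Equivalently, via the Prestel--Ziegler characterization, one can check directly that for every basic neighborhood $U = X - X$ of $0$ there is a basic neighborhood $V = Y - Y$ such that $ab \in V$ forces $a \in U$ or $b \in U$.

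First I would identify the candidate valuation ring as the stabilizer of the infinitesimals,
\[ R = \{ a \in \Kk : a \cdot J_K \subseteq J_K \}. \]
This is always a subring containing $K$, type-definable over $K$, and invariant under $\Aut(\Kk/K)$. By construction $J_K$ is an $R$-submodule of $\Kk$, multiplication by any $a \in R^\times$ maps the filter of basic neighborhoods of $0$ into itself, and the candidate valuation topology on $\Kk$ has the fractional ideals $\{c \cdot J_K : c \in \Kk^\times\}$ as a neighborhood basis of $0$.

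The crux is to show $R$ is a valuation ring, equivalently that for every $a \in \Kk^\times$, either $a \in R$ or $a^{-1} \in R$. I would proceed by induction on $\dpr(K)$. The base case $\dpr(K) = 1$ uses (\cite{myself}, Chapter~9): every unstable dp-minimal field admits a unique definable V-topology, which agrees with the canonical topology, and the valuation ring of this topology coincides with $R$. For the inductive step, suppose $a \in \Kk^\times$ satisfies neither $a \in R$ nor $a^{-1} \in R$. Then both $\{a^n J_K\}_{n \ge 0}$ and $\{a^{-n} J_K\}_{n \ge 0}$ are strictly descending chains in the neighborhood filter. Using the dp-rank calculus of \cite{prdf, prdf2, prdf3} I would extract from this data a type-definable subquotient of $\Kk$ which is an unstable dp-finite field of rank strictly less than $\dpr(K)$, apply the inductive hypothesis to obtain a definable V-topology on it, and argue that this V-topology forces one of $a, a^{-1}$ to stabilize $J_K$, contradicting the choice of $a$.

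The main obstacle I anticipate is constructing the right subquotient for the inductive step. The naive candidate $\Kk/J_K$ is not a field, so rank reduction has to be done by quotienting by a $J_K$-convex additive subgroup cut out by a partial valuation coming from some $b \in R$, or dually by localizing at such a $b$, and matching the dp-rank of the resulting field against the combinatorics of the chains $\{a^{\pm n} J_K\}$. Showing that the canonical topology of $K$ descends to the canonical topology of the quotient compatibly with multiplication by $a$ is the technical heart of the argument; once this is in place the inductive step should close, and by Galois descent and type-definability the valuation ring $R$ of $\Kk$ produces a $K$-definable V-topology which, by uniqueness, coincides with the canonical topology.
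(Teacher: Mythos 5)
Before assessing the argument: the statement you are trying to prove is false, and the paper proves that it is false. Conjecture~\ref{con:vc} is recalled precisely in order to refute it; Theorem~\ref{oh-no:thm} (proved in \S\ref{sec:grail}) exhibits a valued field $(K,\Oo)\models\ACVF_{0,0}$ with an extra predicate $R$ such that $(K,+,\cdot,\Oo,R)$ has dp-rank $2$, is unstable, and whose canonical topology is \emph{not} a V-topology. Theorem~\ref{ctex-0:thm} makes this explicit: the theory $\ADVF$ (algebraically closed, dense diffeovalued fields) is consistent, complete, unstable, of dp-rank $2$, and not valuation type. So no correct proof of the conjecture as stated can exist.

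The precise point where your outline breaks is the ``crux'' claim that the stabilizer $R=\{a\in\Kk : a\cdot J_K\subseteq J_K\}$ is a valuation ring. This is exactly what fails. The analysis in \S\ref{sec:canform}--\ref{sec:der} shows that under the Strong Assumptions the fundamental ring has the form
\[
R=\{x\in\Oo : \val(\partial x)\ge 0\},
\]
where $\Oo$ is a genuine valuation ring and $\partial:\Oo\to K/\mm$ is a derivation. Lemma~\ref{tilde-o-very-valuation} only gives that for every $\alpha\in K^\times$, one of $\alpha,\alpha^{-1}$ is \emph{integral} over $R$, so the integral closure $\Oo$ is a valuation ring; $R$ itself is not. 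For instance any wild $\alpha\in\Oo^\times$ has both $\alpha\notin R$ and $\alpha^{-1}\notin R$, and Lemma~\ref{weird-elements} produces arbitrarily ``bad'' elements with $\val(a)$ large and $\val_\partial(a)$ very negative. Concretely, in the $\LDVF$ counterexample the definable set $R$ has full dp-rank but empty interior in the valuation topology, so it cannot be a valuation ring or even generate a V-topology. Your inductive scheme would therefore have to close against a statement that is refuted in the base-case-plus-one rank $\dpr(K)=2$; there is no subquotient trick that can rescue it. What the paper salvages instead (Theorem~\ref{thm:intro-1}) is that in rank $2$ and characteristic $0$ the canonical topology is still definable and there is a \emph{unique} definable V-topology, even though the canonical topology itself may be a strictly finer DV-topology (Corollary~\ref{cor:vdv-top}).
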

Modulo this conjecture, we proved the Shelah conjecture.  We also gave
a seemingly weaker criterion which implies the valuation conjecture:

\begin{fact}[Theorem~8.11 in \cite{prdf2}]
  If the $K$-infinitesimals $J_K$ contain a non-zero ideal of a
  multi-valuation ring on $\Kk$, then $K$ is valuation type.
\end{fact}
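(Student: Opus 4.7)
The plan is to extract a single valuation from the data $(J_K, I)$ and use it to identify the canonical topology $\tau_K$ as a V-topology. A natural candidate is the stabilizer ring
\[
\Oo := \{a \in \Kk : a J_K \subseteq J_K\},
\]
together with the associated ideal $\mm := \{a \in \Oo : a \Oo \subseteq J_K\}$; the target is to show $\Oo$ is a valuation subring of $\Kk$ with $\mm$ its maximal ideal, and that the resulting V-topology coincides with $\tau_K$.

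The first step is to observe that the $R$-topology---the supremum $\tau_{V_1} \vee \cdots \vee \tau_{V_n}$ of the individual valuation topologies---refines $\tau_K$. Indeed, every basic $\tau_K$-neighborhood $X - X$ contains $J_K \supseteq I$, and $I$ is an $R$-neighborhood of $0$ (it is a nonzero ideal of $R$), so its $R$-multiples $aI$ (for $a \in R$) land inside $X - X$, giving $\tau_R \supseteq \tau_K$. As a consequence, $R \subseteq \Oo$: for $a \in R$, multiplication by $a$ is $\tau_R$-continuous hence $\tau_K$-continuous, so $a J_K \subseteq J_K$ since $J_K$ is the type-definable infinitesimal subgroup of $\tau_K$.

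The crux is a dichotomy for $\Oo$: for every $a \in \Kk^\times$, either $a J_K \subseteq J_K$ or $a^{-1} J_K \subseteq J_K$. Once this is in place, $\Oo$ is a valuation subring of $\Kk$, and one checks that $\tau_K$ equals the V-topology $\tau_\Oo$ associated to $\Oo$. The containment $\tau_\Oo \subseteq \tau_K$ follows from the construction of $\Oo$ (its basic ideals are $\tau_K$-neighborhoods), while the reverse containment $\tau_K \subseteq \tau_\Oo$ uses that $\tau_K$ is a definable ring topology generated by full-dp-rank subsets, so any basic $\tau_K$-neighborhood is absorbed into an $\Oo$-ideal by the ring-topology axioms.

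The main obstacle is establishing the dichotomy. This is the heart of the argument and is precisely where the transition from ``multi-valuation'' ($n \geq 2$) to ``single valuation'' must be carried out. The failure of the dichotomy---the existence of some $a \in \Kk^\times$ with both $a J_K \not\subseteq J_K$ and $a^{-1} J_K \not\subseteq J_K$---should allow one to construct two incomparable ring refinements of $\tau_K$ (roughly, the ``$a$-side'' and ``$a^{-1}$-side''), whose sup would generate more dp-rank than $\tau_K$ has available, contradicting finite dp-rank of $K$ via a rank-additivity argument built on the \cite{prdf2} description of the canonical topology. Making this last argument precise, and ensuring it uses only the existence of the multi-valuation ideal $I$ (not a single-valuation ideal), is where the real technical work lies.
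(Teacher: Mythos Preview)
The present paper does not prove this statement; it is quoted twice (in \S\ref{sec:sofar} and \S\ref{sec:val-case}) as a Fact imported from \cite{prdf2}, Theorem~8.11, and used as a black box.  So there is no in-paper proof to compare your proposal against.

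As for the proposal itself: it is explicitly a plan, and you correctly flag that the dichotomy ``for every $a \in \Kk^\times$, either $aJ_K \subseteq J_K$ or $a^{-1}J_K \subseteq J_K$'' is the heart of the matter and is not established.  Your suggested route---producing two incomparable ring refinements of $\tau_K$ from a failure of the dichotomy and deriving a dp-rank contradiction---is too vague to evaluate, and it is not clear how the multi-valuation-ideal hypothesis would enter.  The hypothesis hands you concrete valuation rings $\Oo_1,\ldots,\Oo_n$ with a nonzero ideal of $R = \bigcap_i \Oo_i$ inside $J_K$; it is more natural to argue directly with these given $\Oo_i$ (for instance, showing that the canonical topology agrees with one of the $\tau_{\Oo_i}$, and using approximation-theorem-style arguments to rule out genuine dependence on more than one factor) than to try to make the abstract stabilizer $\{a : aJ_K \subseteq J_K\}$ into a valuation ring from scratch.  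The inclusion $R \subseteq \Oo$ you observe is fine, but it does not by itself force $\Oo$ to be a valuation ring, and nothing in your sketch bridges that gap.
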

Here, a \emph{multi-valuation ring on $\Kk$} means a finite
intersection of valuation rings on $\Kk$.

\subsection{Main results for dp-finite fields}
In the present paper, we investigate unstable fields of dp-rank 2 and
characteristic 0.  We find a counterexample to the valuation
conjecture:
\begin{theorem}\label{oh-no:thm}
  There is a valued field $(K,\Oo)$ and a subset $R \subseteq K$ such that 
  \begin{itemize}
  \item The structure $(K,+,\cdot,\Oo,R)$ has dp-rank 2.
  \item The set $R$ has full rank $\dpr(R) = 2$, but has empty
    interior with respect to the valuation topology.
  \item The canonical topology is \emph{not} a V-topology.
  \end{itemize}
\end{theorem}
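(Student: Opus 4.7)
The plan is to build the example as an expansion of $\ACVF$ in characteristic $0$ by a predicate $R$ that contributes a second, orthogonal dp-rank $1$ structure. I would start with a sufficiently saturated $(K,\Oo) \models \ACVF_0$ and choose $R \subseteq K$ so that the induced structure on some stably embedded sort---say, a lifted copy of the residue field, or a cross-section of the value group---is itself dp-rank $1$, while remaining orthogonal to the valuation in the sense that the basic $R$-neighborhoods of zero do not contain any $\Oo$-ball. A concrete candidate is to let $R$ encode an independent V-topology on a lifting of the residue field (or equivalently an incomparable non-henselian valuation composed with a transversal), chosen so that the resulting two field topologies on $K$ are incomparable.

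The first three properties are then verified as follows. Dp-rank $2$ follows by a subadditivity argument: each dp-pattern in the expansion splits into a valuation-type component and an $R$-type component, each of dp-rank at most $1$. Full rank $\dpr(R)=2$ is obtained by exhibiting a definable parametrization of $R$ by a full-rank set, using the independence of the two rank $1$ ingredients. Empty $\Oo$-interior of $R$ is verified by direct inspection: no $\Oo$-open ball lies in $R$ because $R$ is arranged so that every $\Oo$-ball meets both $R$ and its complement.

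For the last, and most delicate, conclusion---that the canonical topology is not a V-topology---I aim to characterize this topology as the join of the $\Oo$-topology and a second definable V-topology encoded by $R$. The canonical topology refines both, since each $\Oo$- or $R$-ball is a definable set of full dp-rank, hence its difference set belongs to the canonical basis described in Section~\ref{sec:sofar}. Building on the machinery of \cite{prdf} and \cite{prdf2}, one expects $X-X$ for any full-rank $X$ to contain a basic neighborhood in the join. Two incomparable V-topologies never combine into a V-topology: one can exhibit an element $x \in \Kk$ small in one constituent but large in the other, violating the V-topology axiom that forces $1/x$ close to zero whenever $x$ is bounded away from zero. The principal obstacle is precisely this last step: pinning down the canonical topology explicitly in terms of the two constituent topologies, and ensuring that the construction of $R$ yields a genuinely new V-topological ingredient rather than silently refining the $\Oo$-topology.
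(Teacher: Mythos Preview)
Your proposal has a fundamental gap and, in fact, the approach cannot work.

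The central problem is your plan to make the canonical topology a \emph{join of two independent V-topologies}. One of the main results of this paper (Theorem~\ref{thm:app}.\ref{fact-1}, with Proposition~\ref{prop-compare}) is that an unstable field of characteristic~0 and dp-rank~2 \emph{cannot} carry two independent definable V-topologies. So if your construction really produces two independent definable V-topologies on $K$, the structure cannot have dp-rank~2; conversely, if it has dp-rank~2, your two topologies must coincide and you lose the argument that the canonical topology is not a V-topology. Your proposal is self-defeating.

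Relatedly, the ``subadditivity'' step is not a theorem. There is no general principle bounding the dp-rank of an expansion by the sum of the dp-ranks of two reducts; one needs a concrete structural result such as quantifier elimination to control types in the expansion. Your sketch gives no mechanism for splitting an arbitrary ict-pattern into a ``valuation part'' and an ``$R$-part''.

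The paper's construction is genuinely different and avoids this obstruction. It does \emph{not} use a second V-topology. Instead, it equips an algebraically closed valued field with a derivation $\partial : \Oo \to K/\mm$ (packaged as a truncated log derivation) satisfying a density axiom, and takes
\[
  R = \{x \in \Oo : \val(\partial x) \ge 0\}.
\]
The resulting ``diffeovaluation topology'' induced by $R$ is a new kind of field topology, provably not a V-topology (Proposition~\ref{not-a-v}); there is only \emph{one} underlying V-topology, the valuation topology, and $R$ refines it in a non-V way. The dp-rank bound comes from a quantifier elimination for this theory $\LDVF$, which lets one encode any $\LDVF$-formula $\varphi(\vec{x})$ as an $\ACVF$-formula $\psi(\vec{x},\delta\vec{x})$ (Lemma~\ref{formula-convert}); an ict-pattern of depth~3 in $K$ would then yield one in $K^2$ in the $\ACVF$ reduct, which is impossible. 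Full rank of $R$ follows from $\Frac(R)=K$ (Lemma~\ref{yatir:lem}), and empty interior is immediate from density of the non-constants of $\partial$.
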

The counterexample does not contradict the Shelah conjecture, or the
expected classification of dp-finite fields and valued fields.  In
fact, $(K,\Oo) \models \ACVF_{0,0}$.

In spite of the counterexample, we are able to prove the following, statement, which would imply the Shelah conjecture if generalized to higher ranks:
\begin{theorem}\label{thm:intro-1}
  Let $K$ be an unstable field of characteristic 0 and dp-rank 2.
  \begin{itemize}
  \item The canonical topology on $K$ is definable, i.e., there is a
    uniformly definable basis of opens.
  \item There is a unique definable non-trivial V-topology on $K$.
  \end{itemize}
\end{theorem}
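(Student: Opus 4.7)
The plan is to reduce both assertions to a structural understanding of the $K$-infinitesimals $J_K$, refining the multi-valuation-ideal criterion of \cite{prdf2}. Theorem~\ref{oh-no:thm} shows that we cannot hope for $J_K$ to contain a nonzero ideal of a multi-valuation ring in general, so we need a more flexible invariant; this is the role of the 2-inflator machinery promised in the abstract. The heuristic is that even in the exotic case of Theorem~\ref{oh-no:thm}, $J_K$ is governed by a definable valuation ring $\Oo$ together with additional differential data (a $\dlog$-like operator), and this pair is canonically attached to $K$.

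I would first introduce 2-inflators on $K$ and prove a partial classification theorem in the unstable, characteristic zero, dp-rank 2 setting. The expected statement is that every 2-inflator on $K$ refines either to a (multi-)valuation ring on $K$, or to a \emph{differential valued field} structure $(\Oo,\partial)$: a definable valuation ring together with a definable, derivation-like operation compatible with it. This is the technical heart of the argument and the step I expect to be the main obstacle. Its proof should proceed by leveraging the dp-minimal classification from \cite{myself} to control the rank 1 residue behaviour, and then analysing how full-dp-rank-2 definable sets can interact with partial valuation-ring candidates, using \cite{prdf3} as the inductive input. The subtlety is ruling out pathological 2-inflators that are neither valuation-like nor differentially valuation-like, which is exactly the phenomenon behind Theorem~\ref{oh-no:thm} and which has no analogue in positive characteristic or dp-rank 1.

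Applying this classification to the canonical 2-inflator attached to $J_K$ yields both bullets. If $J_K$ already comes from a multi-valuation ring, we are in the case of \cite{prdf2}: the canonical topology is a V-topology, and the multi-valuation ring gives a uniformly definable basis. Otherwise the classification supplies a definable valuation ring $\Oo$ and auxiliary definable data $\partial$, and $J_K$ can be sandwiched between two uniformly definable families of sets expressible from $\Oo$ and $\partial$; this furnishes a uniformly definable basis of neighborhoods of $0$, and, by translation, of the entire canonical topology, proving bullet one. For bullet two, any definable non-trivial V-topology $\tau$ on $K$ induces a 2-inflator, to which the classification again applies; comparing its distinguished valuation ring with the $\Oo$ constructed above forces $\tau$ to coincide with the V-topology of $\Oo$. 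A supplementary rank-counting step, showing that two ``independent'' definable V-topologies on $K$ would push $\dpr(K)$ above $2$ in a way incompatible with the previous structural analysis, rules out the remaining candidates and completes the uniqueness.
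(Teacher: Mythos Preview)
Your high-level strategy matches the paper's: split into the valuation-type case (handled by \cite{prdf2}) and a residual case governed by a 2-inflator that, after the classification of \S\ref{sec:isotyp}--\ref{sec:der}, carries a valuation ring $\Oo$ and a derivation $\partial$. The genuine gap is in how you pass from this classification to definability. You assert that the classification outputs ``a definable valuation ring $\Oo$ together with a definable, derivation-like operation,'' and that $J_K$ can then be sandwiched between definable families expressed in terms of $\Oo$ and $\partial$. But the fundamental ring $R$, its integral closure $\Oo$, and $\partial$ are purely algebraic objects attached to the inflator; none of them is definable a priori ($R$ is the stabilizer of a merely type-definable pedestal). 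Bridging this gap is a substantial part of the paper. The key input is Lemma~\ref{tilde-o-very-valuation}: every element of $K^\times$ or its inverse satisfies a monic \emph{quadratic} over $R$. This lets one trap $\Oo$ between a type-definable set built from $J_K$ and its $\vee$-definable complement (Lemmas~\ref{tdef2}, \ref{lem:def2}), then interpolate a definable set co-embeddable with $\Oo$; a parallel but more delicate argument (Lemmas~\ref{exotic2}--\ref{vee-co}) handles $R$ and hence the canonical topology. Without these steps your sandwiching claim has no content, since the objects you want to sandwich between are not themselves definable.

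Your uniqueness argument also misses the actual mechanism. The paper does not attach a 2-inflator to an arbitrary definable V-topology and re-run the classification. Rather, any definable valuation ring $\Oo'$ has full dp-rank, so $\Oo' = \Oo' - \Oo'$ is a basic neighborhood and $J_K \subseteq \Oo'$; Proposition~\ref{prop:contra} then uses the quadratic-integrality structure once more to show that no valuation ring independent from $\Oo$ can contain $J_K$. Hence all definable valuation rings induce the topology of $\Oo$, and a Shelah-expansion argument (together with \cite{hhj-v-top}) upgrades this from valuation rings to V-topologies. The ``rank-counting step'' you allude to is not a separate ingredient; the work is already absorbed in Proposition~\ref{prop:contra}.
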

Additionally, we can give a rather explicit description of the canonical
topology, in the cases where it is not a V-topology.
\begin{theorem}
  Let $\Kk$ be a sufficiently resplendent unstable field of
  characteristic 0 and dp-rank 2.  Suppose the canonical topology on
  $\Kk$ is \emph{not} a V-topology.  Then there exists a valuation
  $\val : K \to \Gamma$ and a derivation $\delta : \Kk \to \Kk$ such
  that the following sets form a basis for the canonical topology:
  \begin{equation*}
    B_{a,b,\gamma} := \{ x \in \Kk : \val(x - a) > \gamma \text{ and }
    \val(\delta x - b) > \gamma\}.
  \end{equation*}
  Moreover, every $B_{a,b,\gamma}$ is non-empty.
\end{theorem}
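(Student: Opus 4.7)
The plan is to combine Theorem~\ref{thm:intro-1} with the partial classification of $2$-inflators announced in the abstract. By Theorem~\ref{thm:intro-1}, $\Kk$ carries a unique definable non-trivial V-topology; let $\val : \Kk \to \Gamma$ be a valuation inducing it. Each basic $\val$-ball $U$ is a definable additive subgroup of full dp-rank with $U - U = U$, so it is itself a basic $0$-neighborhood in the canonical topology $\tau$. Hence $\tau$ refines the V-topology, and by hypothesis this refinement is strict.

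To extract $\delta$, I would inspect the $2$-inflator on $\Kk$ attached to $\tau$ via the framework developed earlier in the paper. The obstruction to $\tau$ being a V-topology must be encoded as an extra ``direction'' in this $2$-inflator; in rank $2$, once the pure valuation part $\val$ has been split off, only one additional direction is available, and the partial classification forces that direction to be a derivation $\delta : \Kk \to \Kk$. Concretely, one obtains that $\tau$ is the pull-back along the graph map $x \mapsto (x, \delta x)$ of the product V-topology on $\Kk \times \Kk$; equivalently, the sets $\{x : \val(x) > \gamma,\ \val(\delta x) > \gamma\}$ form a basis of $\tau$-neighborhoods of $0$. Translating then yields $B_{a,b,\gamma}$ at any centre $(a,b)$ that lies in the graph of $\delta$; arbitrary centres are handled by replacing $\delta$ with $\delta + c$ for a constant $c \in \Kk$, which does not alter the topology generated. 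Closure of $\{B_{a,b,\gamma}\}$ under finite intersections is immediate, and exhaustion of the $\tau$-neighborhood filter at each point follows from the definability of $\tau$ together with the $2$-inflator description.

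The main obstacle is the final non-emptiness clause, which amounts to density of the graph of $\delta$ in $\Kk \times \Kk$ for the product V-topology. I would argue by contradiction: if some $B_{a,b,\gamma}$ were empty, then on the $\val$-ball of radius $\gamma$ around $a$ the values $\delta x$ would all avoid a V-open set of $\Kk$, locally bounding $\delta$ there. Feeding this local bound back through the $2$-inflator dictionary would split off a second valuation on $\Kk$ from the ``derivation direction,'' putting $\tau$ back into V-topology shape and contradicting the hypothesis. Sufficient resplendence is invoked so that the relevant generic $\delta$-values exist to be tested; this density step is where I expect the bulk of the technical work to reside.
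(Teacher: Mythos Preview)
Your high-level picture is right---the $2$-inflator analysis yields a valuation together with a derivation, and the canonical topology is the pullback of the product V-topology along the graph map. But there is a genuine gap in the passage from the inflator data to a derivation $\delta : \Kk \to \Kk$.

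The inflator machinery does \emph{not} hand you a derivation into $\Kk$. What comes out of \S\ref{sec:der} is a derivation $\partial : \Oo \to D$ where $D$ is a ``mock $K/\mm$,'' i.e.\ an $\Oo$-module that looks like $K/\mm$ but is not canonically so (and certainly not $K$). Getting from $\partial$ to a derivation $\delta : \Kk \to \Kk$ requires two non-trivial steps: first normalizing $D \cong K/\mm$ (this uses resplendence, Proposition~\ref{prop:mog}), and then lifting $\partial : \Oo \to K/\mm$ to $\delta : K \to K$. The second step is the serious one: Proposition~\ref{z-probs} in the appendix exhibits a normalized diffeovalued field for which \emph{no} elementary extension admits a lifting. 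The obstruction is controlled by the value group, and the paper needs the value group to be $p$-divisible for some prime $p$ to invoke the lifting result (Proposition~\ref{prop:lift}). This $p$-divisibility is not automatic; it is imported from an external result on dp-finite fields (Corollary~4.6 of \cite{CKS}), applied to the quotient $\Kk^\times / k_0^\times$ for a magic subfield $k_0$. Your sketch contains no trace of this argument, and without it the theorem is simply false at the level of generality you are working in.

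Two smaller points. Your non-emptiness argument by contradiction (``local boundedness would split off a second valuation'') is not how density is obtained: density is proven \emph{inside} the inflator analysis (Proposition~\ref{density} and Lemma~\ref{weird-elements}), as a direct consequence of the hypothesis that no mutation is weakly multi-valuation type, before any lifting is attempted. And your ``replace $\delta$ by $\delta + c$'' maneuver does not work, since adding a constant function to a derivation does not yield a derivation; arbitrary centres $(a,b)$ are handled instead by the strong density statement (Proposition~\ref{prop:strong-density}), which is what guarantees that every $B_{a,b,\gamma}$ is non-empty and hence that these sets (and not just those with $b = \delta a$) form a basis.
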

Non-emptiness of the $B_{a,b,\gamma}$ expresses some degree of independence between the derivation and the valuation.  We call this sort of field topology a \emph{DV-topology}.  We investigate
DV-topologies in \S\ref{sec:dv-fields}.  

Although we are primarily interested in the case $\characteristic(K) =
0$, we only use $\characteristic(K) \ne 2$.  In fact, we prove the
valuation conjecture in odd characteristic:
\begin{theorem}
  Let $K$ be an unstable field of dp-rank 2, with $\characteristic(K)
  > 2$.  Then the canonical topology on $K$ is a V-topology.
\end{theorem}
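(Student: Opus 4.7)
The plan is to combine the characterization of the canonical topology established earlier in the paper with a Freshman's dream argument specific to positive characteristic.  After passing to a sufficiently resplendent elementary extension $\Kk \succeq K$, the characterization theorem---valid for $\characteristic \ne 2$---gives the following dichotomy for the canonical topology on $\Kk$: it is either a V-topology, or a DV-topology arising from a valuation $v$ and a derivation $\delta$ as described above, with every $B_{a,b,\gamma}$ non-empty.  In the V-topology case, descent to $K$ is straightforward.  So the goal reduces to ruling out the DV-topology case when $\characteristic(K) = p > 2$.

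Suppose for contradiction the canonical topology on $\Kk$ is a DV-topology for some $(v,\delta)$.  In characteristic $p$, the Freshman's dream identity $(x-y)^p = x^p - y^p$ gives $\Kk^p - \Kk^p = \Kk^p$, and the map $x \mapsto x^p$ is a definable bijection, so $\dpr(\Kk^p) = \dpr(\Kk) = 2$.  Hence $\Kk^p = \Kk^p - \Kk^p$ is by definition a basic neighborhood of $0$ in the canonical topology.  It follows that some $B_{0,0,\gamma}$ is contained in $\Kk^p$, and in particular every element of $B_{0,0,\gamma}$ lies in $\ker \delta$.

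To derive the contradiction I will exhibit an element $x \in B_{0,0,\gamma}$ with $\delta x \ne 0$.  Choose $c \in \Kk$ with $v(c) = \gamma_0$ for some $\gamma_0 > \gamma$ in the value group, and choose any $\gamma' > \gamma_0$.  Non-emptiness of $B_{0,c,\gamma'}$ produces $x$ with $v(x) > \gamma'$ and $v(\delta x - c) > \gamma'$.  Since $v(\delta x - c) > \gamma_0 = v(c)$, the ultrametric inequality forces $v(\delta x) = v(c) = \gamma_0$, which is both strictly greater than $\gamma$ and strictly less than $\infty$.  Thus $x \in B_{0,0,\gamma}$ and $\delta x \ne 0$, contradicting $B_{0,0,\gamma} \subseteq \ker \delta$.

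The main obstacle is the input from the characterization theorem: establishing that the DV-topology dichotomy---developed primarily for characteristic $0$ in this paper---actually goes through for odd characteristic, including the non-emptiness of all the $B_{a,b,\gamma}$.  Once that structural result is in place, the Freshman's dream argument collapses the DV case essentially for free, because $\Kk^p$ becomes a canonical-open additive subgroup on which $\delta$ vanishes identically, in direct conflict with the required independence of $v$ and $\delta$ encoded in the non-emptiness of the $B_{0,c,\gamma'}$.
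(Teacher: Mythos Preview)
Your core idea---that in characteristic $p$ the derivation kills $p$-th powers, contradicting the ``independence'' of the derivation from the valuation---is exactly right and is what the paper uses. But your implementation takes an unnecessary detour through the full DV-topology characterization with a \emph{lifted} derivation $\delta : \Kk \to \Kk$, and this is where your acknowledged ``main obstacle'' becomes a real problem: the lifting argument (Proposition~\ref{prop:lift} and the appendix) is genuinely specific to characteristic $0$---it uses K\"ahler differentials over $\Qq$ and flatness results that do not carry over.

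The paper sidesteps this obstacle entirely. The 2-inflator analysis of \S\ref{sec:canform}--\ref{sec:der} is valid for $\characteristic \ne 2$, and already produces a nontrivial derivation $\partial : \Oo \to D$ with $D \ne 0$ (Proposition~\ref{res-connection-v2}). One then invokes the standard fact that dp-finite fields are perfect: every $a \in \Oo$ is $b^p$ for some $b \in \Oo$, so $\partial a = p b^{p-1}\partial b = 0$, contradicting nontriviality of $\partial$. This is Proposition~\ref{prop:perfect}. No topology, no lifting, no $B_{a,b,\gamma}$.

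Incidentally, perfection also collapses your own argument: since $\Kk^p = \Kk$, your ``$B_{0,0,\gamma} \subseteq \Kk^p$'' step is vacuous, and one concludes $\delta \equiv 0$ immediately, contradicting non-emptiness of any $B_{0,b,\gamma}$ with $v(b) \le \gamma$. So even granting the DV characterization, the detour through basic neighborhoods is unnecessary. The moral is that the contradiction lives at the level of the derivation $\partial$ itself, before any of the topological or lifting machinery is built.
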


\subsection{Inflators}
Theorem~\ref{thm:intro-1} is obtained through an algebraic analysis of
2-inflators.  An \emph{inflator} is some sort of ``generalized
valuation'' that occurs naturally when attempting to prove the
valuation conjecture.  The basic properties of inflators were
investigated in \cite{prdf3}.

The main point of \cite{prdf3} was that the infinitesimals $J_K$ are
``governed'' by an $r$-inflator, for some $r \le \dpr(\Kk)$.
\begin{fact} \label{hoho-fact}
  Let $\Kk$ be a sufficiently saturated unstable dp-finite field.
  Then there are small models $k_0 \preceq K \preceq \Kk$ and a
  malleable $k_0$-linear $r$-inflator $\varsigma$ on $\Kk$ such that
  \begin{itemize}
  \item $r \le \dpr(\Kk)$.
  \item The group $J_K$ of $K$-infinitesimals is an ideal in the
    fundamental ring $R_\varsigma$ of $\varsigma$.
  \item If $\varsigma'$ is any mutation of $\varsigma$, then there is
    a small model $K' \succeq K$ such that $J_{K'}$ is an ideal in
    $R_{\varsigma'}$.
  \end{itemize}
\end{fact}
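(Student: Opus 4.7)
The plan is to extract the inflator $\varsigma$ directly from the infinitesimal subgroup $J_K$, treating $J_K$ as a ``generalized infinitesimal ideal'' and producing a ring acting on it, in analogy with how a valuation ring arises as the stabilizer of an infinitesimal neighborhood of $0$ in a topological field. The fundamental ring $R_\varsigma$ should then play the role of that valuation ring, with the inflator $\varsigma$ itself serving as a finer-grained replacement for the valuation map. Because inflators are rank-theoretic objects, the rank $r$ of $\varsigma$ will be controlled by the dp-rank of the ambient field.

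First I would fix small models $k_0 \preceq K \preceq \Kk$, with $K$ large enough to define the basic neighborhoods of $J_K$ and $k_0$ containing the $k_0$-linear data. Each basic neighborhood $X-X$ of $J_K$ carries an approximate ``stabilizer'' — a $K$-definable set, nearly a ring, which multiplies $X-X$ into a slightly larger basic neighborhood. A compactness argument combined with the finiteness of $\dpr(\Kk)$ should let me pass from these approximate stabilizers of individual neighborhoods to an honest type-definable ring $R$ that multiplies $J_K$ into itself, giving the ideal condition $J_K \cdot R \subseteq J_K$. Next I would identify $R$ as the fundamental ring of an $r$-inflator $\varsigma$ by running the construction of \cite{prdf3} starting from a $k_0$-generic full-rank type of an element of $\Kk$; the bound $r \le \dpr(\Kk)$ comes from the fact that an inflator's rank is dominated by the dp-rank of the type used to build it, together with malleability arising from the freedom in choosing a generic realization.

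For the mutation clause, the strategy is to show that mutations of $\varsigma$ correspond precisely to the extra freedom one obtains by enlarging $K$ to some $K' \succeq K$: each mutation $\varsigma'$ should be realized as the canonical inflator attached to $J_{K'}$ for a well-chosen $K'$, obtained by forcing the underlying generic type to be compatible with the mutation data. Malleability of $\varsigma$ would then be the technical expression of the fact that the construction has enough slack to accommodate every such $\varsigma'$. I expect this last clause to be the main obstacle, since it is where one must simultaneously keep track of (a) the model-theoretic dependence of $J_{K'}$ on the choice of $K'$, and (b) the purely algebraic mutation operation on inflators from \cite{prdf3}, and align them under a common parametrization — something that the first two bullets, being essentially a single-snapshot statement, do not demand.
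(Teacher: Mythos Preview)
Your proposal misses the central object: the inflator is not built directly from $J_K$ or from a generic type, but from a \emph{pedestal} $H$ in the lattice $\Lambda$ of type-definable $k_0$-linear subspaces of $\Kk$. A pedestal is the base of a strict $r$-cube in $\Lambda$, where $r$ is the reduced rank of $\Lambda$; the bound $r \le \dpr(\Kk)$ comes from the lattice-theoretic fact that reduced rank is bounded by dp-rank (Proposition~10.1.7 of \cite{prdf}), not from any rank computation on a type. The $r$-inflator $\varsigma$ is then the one associated to $H$ by Theorem~9.3 of \cite{prdf3}, and its fundamental ring is simply the stabilizer $R_H = \{x \in \Kk : xH \subseteq H\}$. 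The ideal condition $R_H \cdot J_K \subseteq J_K$ is proved by a compactness argument (Lemma~\ref{ideal-lemma}), but the ring $R_H$ is given in advance by the pedestal; it is not assembled from ``approximate stabilizers'' of basic neighborhoods of $J_K$.

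Your treatment of mutations has the dependence reversed. A mutation of $\varsigma$ along a line $\Kk \cdot (a_1,\ldots,a_n)$ is a purely algebraic operation, and Proposition~10.15 of \cite{prdf3} identifies the mutated inflator as the one attached to the new pedestal $H' = a_1^{-1}H \cap \cdots \cap a_n^{-1}H$. One then \emph{chooses} $K'$ to be any small model over which $H'$ is type-definable, and Lemma~\ref{ideal-lemma} applied to $H'$ gives that $J_{K'}$ is an ideal in $R_{H'} = R_{\varsigma'}$. So the model $K'$ is selected after the mutation, not used to produce it; there is no need to ``align'' a model-theoretic and an algebraic parametrization, because the algebraic description of mutation already tells you exactly which pedestal to use.
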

For definitions of inflators, the fundamental ring, malleability, and
mutation, see Definitions~4.1, 5.8, 5.31, and 10.2 (respectively) in
\cite{prdf3}.  We verify Fact~\ref{hoho-fact} in \S\ref{sec:review} below.

An inflator $\varsigma$ is \emph{weakly multi-valuation type}
(\cite{prdf3}, Definition~5.27) if its fundamental ring $R_\varsigma$
contains a non-zero ideal of a multi-valuation ring.  Because of
Fact~\ref{hoho-fact}, we can focus our attention on 2-inflators
$\varsigma$ with the following properties:
\begin{enumerate}
\item $\varsigma$ is malleable.
\item No mutation of $\varsigma$ is weakly multi-valuation type.
  Otherwise, some $K' \equiv K$ would have valuation type, implying
  the same for $K$.
\item The underlying field $K$ has characteristic 0.  The
  classification of dp-finite fields is already known in positive
  characteristic.
\end{enumerate}
Sections \ref{sec:isotyp}--\ref{sec:der} carry out an algebraic
analysis of inflators satisfying these assumptions.  The original hope
was to rule out these ``wicked'' 2-inflators.  Instead, we get a
rather explicit algebraic description.
\begin{theorem}
  Let $(K,\Oo,\mm)$ be a valued field of characteristic 0, and $k_0$
  be a small subfield on which the valuation is trivial.  Let
  $\partial : \Oo \to K/\mm$ be a $k_0$-linear derivation.  Suppose
  that for every $x \in K/\mm$, the set
  \[ \{y \in \Oo : \partial y = x\}\]
  is dense in $\Oo$.  Let
  \begin{align*}
    R &= \{x \in \Oo : \partial x \in \Oo/\mm \} \\
    I &= \{x \in \mm : \partial x \in \mm/\mm = 0\}.
  \end{align*}
  Then $R$ is a subring of $K$ and $I$ is an ideal in $R$, and the
  quotient $R/I$ is isomorphic to $k[\varepsilon] =
  k[\varepsilon]/(\varepsilon^2)$, where $k$ is the residue field of $\Oo$.  There is a 2-inflator
  \begin{align*}
    \varsigma_n : \Sub_K(K^n) &\to \Sub_k((R/I)^n) \\
    V & \mapsto (V \cap R^n + I^n)/I^n.
  \end{align*}
\end{theorem}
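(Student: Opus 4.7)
The plan is to verify the four assertions in order, using only the Leibniz rule, $k_0$-linearity of $\partial$, the density hypothesis, and the observation that $\Oo/\mm$ is an $\Oo$-submodule of $K/\mm$ annihilated by $\mm$ (so the $\Oo$-action on it factors through $k$). That $R$ is a subring of $K$ follows because $1 \in R$ (since $\partial 1 = 0$), because $\partial$ is additive, and because for $x, y \in R$ we have $\partial(xy) = x \cdot \partial y + y \cdot \partial x \in \Oo \cdot (\Oo/\mm) \subseteq \Oo/\mm$. For $I$ being an ideal: given $r \in R$ and $a \in I$, Leibniz gives $\partial(ra) = r \partial a + a \partial r = a \partial r$, which lies in $\mm \cdot (\Oo/\mm) = 0$.

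For the isomorphism $R/I \cong k[\varepsilon]$, I would define $\phi : R \to k[\varepsilon]$ by $x \mapsto \bar x + (\partial x)\varepsilon$, where $\partial x \in \Oo/\mm$ is identified with its image in $k$. Additivity is immediate; multiplicativity follows from $\varepsilon^2 = 0$ together with the fact that $x \cdot \partial y = \bar x \cdot \partial y$ inside $\Oo/\mm = k$. The kernel is $I$ by inspection. For surjectivity I use density: given $a + b\varepsilon \in k[\varepsilon]$, pick any $x_0 \in \Oo$ with $\bar{x_0} = a$; since the fiber $\{z \in \Oo : \partial z = b - \partial x_0\}$ is dense in $\Oo$ and $\mm$ is open in $\Oo$, I can find $y \in \mm$ with $\partial y = b - \partial x_0$. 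Then $\partial y \in \Oo/\mm$ gives $y \in R$, and $\phi(x_0 + y) = a + b\varepsilon$.

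For the 2-inflator axioms from \cite{prdf3}, Definition~4.1, the crucial step is the dimension bound $\dim_k \varsigma_n(V) \le 2 \dim_K V$. Note that $R \cdot (V \cap R^n) \subseteq V \cap R^n$, so $\varsigma_n(V)$ is an $R/I$-submodule of $(R/I)^n \cong k[\varepsilon]^n$, and any $k[\varepsilon]$-submodule of $k[\varepsilon]^n$ generated by $d$ elements has $k$-dimension at most $2d$. So the bound reduces to showing that $V \cap R^n$ can be generated, modulo $I^n$, by $d = \dim_K V$ elements over $R$. The plan is: pick a $K$-basis $v_1, \ldots, v_d$ of $V$ scaled to lie in $R^n$, and given an arbitrary element $\sum c_i v_i \in V \cap R^n$ with $c_i \in K$, use the density hypothesis coordinate-wise to perturb each $c_i$ into $R$ at the cost of a correction in $I^n$. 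The remaining inflator axioms (functoriality, behaviour on sums and intersections) should follow formally from the obvious functorial properties of $V \mapsto V \cap R^n$ and $M \mapsto (M + I^n)/I^n$.

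The main obstacle I anticipate is precisely this coordinate-wise reduction together with the verification of whichever meet/sum axioms the inflator definition requires: density of $\partial^{-1}$-fibers is exactly what is needed to force the correct $k$-dimension, while the annihilation relation $\mm \cdot (\Oo/\mm) = 0$ (which becomes $\varepsilon^2 = 0$ after quotienting) is what prevents $\varsigma_n(V)$ from exceeding twice $\dim_K V$. Depending on whether Definition~4.1 in \cite{prdf3} demands equality of dimensions or a more delicate exactness property on short exact sequences, additional care may be needed to upgrade inequalities into equalities, likely by a Nakayama-style lifting argument from $(R/I)^n$ back to $R^n$.
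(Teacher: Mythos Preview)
Your verification that $R$ is a subring, $I$ is an ideal, and $R/I \cong k[\varepsilon]$ is correct and matches the paper's treatment; the map $\phi$ you write down is exactly the generalized residue map $\wres$ used throughout.

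For the inflator construction, however, the paper takes a genuinely different route, and your direct approach has a gap at the point you yourself flag. The paper does not verify the axioms of Definition~4.1 in \cite{prdf3} by hand. Instead it introduces the subring $Q = \ker \partial \subseteq R$ and proves the key combinatorial fact (Lemma~\ref{lem:qw3}): for any three elements $a,b,c \in K$, one lies in the $Q$-span of the other two. The proof normalizes to $c = 1$, $a,b \in \Oo$, $\val(\partial a) \le \val(\partial b)$; then writes $\partial b = x_0 \partial a$ for some $x_0 \in \Oo$ using the total comparability of cyclic submodules of the mock $K/\mm$, perturbs $x_0$ to some $x \in Q$ via density, and checks $b - xa \in Q$. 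By Proposition~\ref{prop:wn} this ``property $W_2$'' says $\Sub_Q(K)$ has reduced rank $\le 2$; since $R/I$ is a semisimple $Q$-module of length $2$, the rank is exactly $2$, and Propositions~8.9 and 8.12 of \cite{prdf3} then manufacture the 2-inflator (and its malleability) automatically.

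Your plan---show that $V \cap R^n$ is generated modulo $I^n$ by $d = \dim_K V$ elements over $R$---would give $\dim_k \varsigma_n(V) \le 2d$, but the ``perturb each $c_i$ into $R$ coordinate-wise'' step is not justified as written. The coefficients $c_i$ expressing $w = \sum c_i v_i$ need not lie in $\Oo$ at all, so density of $\partial$-fibers in $\Oo$ does not directly apply; and even when $c_i \in \Oo$, forcing $(c_i - c_i')v_i \in I^n$ requires every coordinate to simultaneously have positive valuation \emph{and} vanishing derivative, which couples the constraints across the entries of $v_i$ in a way a naive one-variable perturbation does not control. The clean way through is exactly the paper's $W_2$ computation over $Q$, and once you have that you may as well invoke the \cite{prdf3} machinery, which also delivers the equality $\dim_k \varsigma_n(V) = 2d$ and the remaining exactness axioms you would otherwise have to check separately.
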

We call these \emph{diffeovaluation inflators} and study them in
\S\ref{sec:dv-fields}.  Actually, the definition is a little more
general, allowing $K/\mm$ to be replaced with a ``mock $K/\mm$.''

Up to mutation, diffeovaluation inflators account for all the
``wicked'' 2-inflators:
\begin{theorem}
  Let $K$ be a field of characteristic 0.  Let $\varsigma$ be a
  malleable 2-inflator on $K$.  Suppose that no mutation of
  $\varsigma$ is weakly multi-valuation type.  Then some mutation of
  $\varsigma$ is a diffeovaluation inflator.
\end{theorem}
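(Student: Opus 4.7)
The plan is to use the algebraic framework of Sections~\ref{sec:isotyp}--\ref{sec:der} to reduce $\varsigma$ via successive mutations to a normal form in which the fundamental ring visibly has the diffeovaluation shape, and then to read off the valuation $\val$, the derivation $\partial$, and the density condition.

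First, using malleability I would mutate $\varsigma$ to bring $R_\varsigma$ to a simplest possible shape, exploiting the hypothesis that no mutation is weakly multi-valuation type to rule out the ``easy'' structures. Because $\varsigma$ is a $2$-inflator, the quotient $R_\varsigma / I$ has $k$-dimension at most $2$ for the appropriate maximal proper submodule $I$ and residue field $k$; the $1$-dimensional case would make $\varsigma$ itself of valuation type and hence weakly multi-valuation type, so I may assume $R/I$ is a $2$-dimensional local $k$-algebra.

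Next, I would run through the classification of $2$-dimensional commutative local $k$-algebras: up to isomorphism these are $k \times k$, a quadratic field extension $k'/k$, or the dual numbers $k[\varepsilon]/(\varepsilon^2)$. In the product case, the two projections $R \to k$ display $R$ as containing a non-zero ideal of a multi-valuation ring, contradicting the hypothesis. In the field extension case, I would apply a base-change/Galois mutation along $k'/k$ that splits $k'$ and turns this into the preceding product case, again producing a weakly multi-valuation type mutation and a contradiction. This forces $R/I \cong k[\varepsilon]$, which is exactly the diffeovaluation shape.

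Once $R/I \cong k[\varepsilon]$, I would take the valuation $\val$ to be the one whose valuation ring $\Oo$ is the preimage of $R$ under the canonical projection used in the inflator setup, and read off $\partial : \Oo \to K/\mm$ from the ``coefficient of $\varepsilon$'' together with the quotient $\Oo \to k$; $k_0$-linearity of $\partial$ is inherited from $k_0$-linearity of $\varsigma$. The last step, which I expect to be the main obstacle, is verifying that every fibre $\{y \in \Oo : \partial y = x\}$ is dense in $\Oo$. I would argue contrapositively: failure of density would let one split off a $\mm$-adic direct summand of the inflator, decomposing $\varsigma$ up to mutation into a pure valuation $1$-inflator plus an independent $1$-inflator, and the resulting ``product'' mutation would be weakly multi-valuation type, contradicting the hypothesis. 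Packaging this independence statement precisely inside the inflator formalism of \S\ref{sec:der}, and matching it against the density of fibres, is the technically delicate point that the rest of the argument hinges on.
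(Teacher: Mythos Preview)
Your outline has the right shape---mutate to a normal form, show $R/I \cong k[\varepsilon]$, extract a valuation and derivation, verify density---but several of the steps you sketch do not work as stated, and the actual content of \S\ref{sec:isotyp}--\ref{sec:der} is rather different from what you describe.

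First, you skip the reduction to the isotypic case. A priori the target of the 2-inflator may be $k_1 \oplus k_2$ for two non-isomorphic simples, and then there is no single field $k$ over which to classify $R/I$. The paper handles this first, via the \emph{degeneracy subspace}: for any wild $\alpha$ one shows $\varsigma_2(K\cdot(1,\alpha)) = A \oplus A$ for a fixed length-1 submodule $A$, and mutating along such a line forces isotypy (Corollary~\ref{cor:rediso}). Your ``quadratic field extension'' case is likewise not disposed of by any ``base-change/Galois mutation''---mutations do not change $k_0$---but rather by the observation that the degeneracy subspace furnishes an eigenvector for every element of $\mathcal{A}$, so no element can have irreducible characteristic polynomial (Lemma~\ref{lem:matrix-nonsense}).

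Second, your construction of $\Oo$ as ``the preimage of $R$ under the canonical projection used in the inflator setup'' does not make sense: there is no ambient valuation to pull back along. In the paper $\Oo$ is built as the \emph{integral closure} of $R$ in $K$, and showing that this is a valuation ring is one of the hardest steps: one must show every $\alpha$ or $\alpha^{-1}$ satisfies a monic quadratic over $R$, and the obstruction case is eliminated by mutating along the wild line and invoking the discriminant constraint of Corollary~\ref{cor:det-tr} (Lemmas~\ref{discriminant}--\ref{tilde-o-very-valuation}).

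Third, the ``coefficient of $\varepsilon$'' only defines $\partial$ on $R$, not on the strictly larger ring $\Oo$. Extending $\partial$ to $\Oo$ is the content of \S\ref{sec:neut}--\ref{ssec:der}: one introduces neutralizers and the secondary valuation $\val_\partial$, builds the $\Oo$-module $D = \Oo/Q$, and checks the Leibniz rule by a density-and-approximation argument. None of this is automatic.

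Finally, your density argument (``failure of density would split off a direct summand and give a product of two 1-inflators'') is not how the paper proceeds and I do not see how to make it precise. The paper's argument is more direct: if some fibre of $\partial$ failed to be dense, then for some $\gamma$ every $a$ with $\val(a)>\gamma$ would satisfy $\val_\partial(a)\ge -\gamma$; tracing through Lemma~\ref{vp-scalars} this forces an open ball of $\Oo$ into $R$, i.e.\ $R$ contains a nonzero ideal of the valuation ring $\Oo$, contradicting the assumption that $\varsigma$ is not weakly multi-valuation type (Lemma~\ref{weird-elements}, Proposition~\ref{density}).
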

Using this explicit characterization, we prove
Theorem~\ref{thm:intro-1}.  The characterization strongly hints at how
to build the example in Theorem~\ref{oh-no:thm}; we check the details
in \S\ref{sec:grail}.

\subsection{Notation and conventions}
A ``field'' may contain additional structure beyond the pure field
structure.  We will use bold $\Kk$ for sufficiently saturated and
resplendent fields.

Unlike \cite{prdf}, \cite{prdf2}, and \cite{prdf3}, we will write the
$K$-infinitesimals as $J_K$, not $I_K$, to avoid conflict with the
fundamental ideal $I$ of a 2-inflator.

We will use the following definitions and facts from
\cite{prdf3}:
\begin{itemize}
\item Directories (Definition~2.1) and the characterization of
  semisimple directories (Theorem~2.7).
\item Inflators and equivalence of inflators (Definitions~4.1, 4.2).
\item The basic inflator calculus of \S5.1.
\item The fundamental ring and ideal, and the generalized residue map
  (Proposition~5.7, Definition~5.8).
\item Tame and wild elements (Lemma~5.22 and Definition~5.23).
\item The notions of (weakly) multi-valuation type (Definitions~5.26,
  5.27), and the characterization in terms of tame and wild elements
  (Proposition~5.25).
\item Mutation (Theorem~10.1, Definition~10.2), transitivity of
  mutation (Proposition~10.5), and commutativity of mutation
  (Remark~10.7).
\item Malleability (Definition~5.31), and its preservation under
  mutation (Proposition~10.13)
\end{itemize}
Unlike \cite{prdf3}, we will use $k_0$ for the small ground field,
rather than $K_0$.  Our inflators will be $k_0$-linear.  All rings
will be $k_0$-algebras, and all fields will extend $k_0$.

Over the course of \S \ref{sec:isotyp}--\ref{sec:der}, we will analyze
a 2-inflator $\varsigma : \Dir_K(K) \to \Dir_S(M)$, satisfying the
following assumptions:
\begin{enumerate}
\item \label{j1} The characteristic of $K$ (or its subfield $k_0$) is
  not 2.
\item $\varsigma$ is malleable
\item \label{j3} No mutation of $\varsigma$ is weakly multi-valuation
  type.
\end{enumerate}
For \S \ref{sec:canform}--\ref{sec:der}, we will add the additional
assumption
\begin{enumerate}
  \setcounter{enumi}{3}
\item \label{j4} $\varsigma$ is isotypic: if we write $M$ as $N \oplus
  N'$ with $N, N'$ simple, then $N \cong N'$.
\end{enumerate}
We call (\ref{j1}-\ref{j3}) the ``Weak Assumptions'' and (\ref{j1}-\ref{j4}) the
``Strong Assumptions.''

\begin{remark}\label{ohbtw}
  Until Lemma~\ref{weird-elements}, we will use only the following
  weaker form of (\ref{j3}): no mutation of $\varsigma$ is
  multi-valuation type.
\end{remark}

During our analysis, we will define a number of sets, rings, and
functions.  We include the following list as a reference:
\begin{itemize}
\item The 2-inflator will be
  \begin{equation*}
    \varsigma: \Dir_K(K) \to \Dir_S(M),
  \end{equation*}
  where $M$ is a semisimple $S$-module of length 2.  Beginning in
  \S\ref{sec:canform}, $S$ will be a skew field $k$, and following
  \S\ref{sec:kcomm}, $k$ will be commutative.
\item $k[\varepsilon]$ will denote the ring of dual numbers
  $k[\varepsilon]/(\varepsilon^2)$.
\item $R$ and $I$ will denote the fundamental ring and ideal of
  $\varsigma$.  The generalized residue map will be $\wres : R
  \to \End_S(M)$, or later $\wres : R \to \End_k(M)$.  In
  \S\ref{sec:canform}, we will arrange for $M = k[\varepsilon]$, and show
  that $\wres$ factors through
  \begin{equation*}
    k[\varepsilon] \cong \End_{ k[\varepsilon] } (k[\varepsilon])
  \subseteq \End_k(k[\varepsilon]) = \End_k(M).
  \end{equation*}
  Beginning in \S\ref{sec:the-val}, we will therefore view $\wres$ as
  a map
  \begin{equation*}
    \wres : R \to k[\varepsilon].
  \end{equation*}
\item In \S\ref{sec:canform}, $\mathcal{A}_0$ will denote the image of
  $\wres$ in $\End_k(M)$, and $\mathcal{A}$ will denote the
  $k$-algebra generated by $\mathcal{A}_0$.  In Lemma~\ref{lem:a0a}
  and Proposition~\ref{prop:no-split}, we will see
  \[ \mathcal{A}_0 = \mathcal{A} \cong k[\varepsilon].\]
\item $\pp$ and $Q$ will denote the sets
  \begin{align*}
    \pp &= \{x \in R : \wres(x) \in k\varepsilon\} \\
    Q &= \{x \in R : \wres(x) \in k\},
  \end{align*}
  where $k[\varepsilon] = k \oplus k\varepsilon$.  Then $\pp$ will be an
  ideal in $R$, and $Q$ will be a subring of $R$.
\item $\Oo$ will denote the integral closure of $R$.  In
  Corollary~\ref{cor:o}, we will see that $\Oo$ is a valuation ring.
  In Proposition~\ref{same-residue-2:prop}, we will see that the residue
  field of $\Oo$ is $k$.  We will let
  \begin{align*}
    \val & : K^\times \to \Gamma \\
    \res & : \Oo \to k
  \end{align*}
  denote the valuation and residue map.  In
  Proposition~\ref{same-residue-2:prop}, we will see that
  \begin{equation*}
    \wres(x) = s + t \varepsilon \implies \res(x) = s,
  \end{equation*}
  for $x \in R$ and $s, t \in k$.
\item In \S\ref{sec:der}, we will construct an $\Oo$-module $D$ and a
  $Q$-linear derivation $\partial : \Oo \to D$, as well as a valuation
  \[ \val : D \to \Gamma_{\le 0} \cup \{+\infty\}.\]
  ($D$ is essentially $K/\mm$).  Before constructing $D$ and
  $\partial$, we will define a map
  \[ \val_\partial : \Oo \to \Gamma_{\le 0} \cup \{+\infty\}\]
  in Definition~\ref{vp-def}.  Later, $\val_\partial(x)$ will turn out
  to be $\val(\partial x)$.
\end{itemize}

\section{Reduction to the isotypic case} \label{sec:isotyp}
For the duration of \S\ref{sec:isotyp}-\ref{sec:der}, \[\varsigma :
\Dir_K(K) \to \Dir_S(M)\] will be a $k_0$-linear 2-inflator, and the
following Weak Assumptions will be in place:
\begin{itemize}
\item The characteristic of $K$ (or its subfield $k_0$) is not 2.
\item $\varsigma$ is malleable (\cite{prdf3}, Definition~5.31).
\item No mutation of $\varsigma$ is weakly of multi-valuation type
  (\cite{prdf3}, Definition~5.27)
\end{itemize}
\begin{remark}\label{rem:inherit}
  If $\varsigma$ satisfies the Weak Assumptions, then so does any
  mutation $\varsigma'$, by (\cite{prdf3}, Propositions~10.5 and
  10.13).
\end{remark}
Since $M$ is semisimple of length 2, we can write it as an internal direct sum
\[ M = A \oplus B,\]
with $A, B$ simple.  We say that $\varsigma$ is \emph{isotypic} if $A
\cong B$.  This depends only on the isomorphism class of the directory
$\Dir_S(M)$.

By Theorem~2.7 in \cite{prdf3}, we can assume that we are in one of
two cases:
\begin{itemize}
\item $S = k$ and $M = k^2$, for some division algebra $k$ over $k_0$.
\item $S = k_1 \times k_2$ and $M = k_1 \oplus k_2$, for two
  division algebras $k_1, k_2$ over $k_0$.
\end{itemize}
The first case is isotypic, and the second case is non-isotypic.

\subsection{The degeneracy subspace}
For any $\alpha \in K$, let $\Theta_\alpha$ denote the line
\[ \Theta_\alpha := K \cdot (1,\alpha) = \{(x,\alpha x) : x \in K\}.\]
For any $\varphi \in \End_S(M)$, let $\Theta_\varphi$ denote the graph of
$\varphi$, i.e.,
\[ \Theta_\varphi = \{(x, \varphi(X)) : x \in M\}.\]
Recall from (\cite{prdf3}, Definition~5.8) that the \emph{fundamental
  ring} of $\varsigma$ is the set
\begin{equation*}
  R = \{\alpha \in K ~|~ \exists \varphi \in \End_S(M) :
  \varsigma_2(\Theta_\alpha) = \Theta_\varphi\}.
\end{equation*}
This is a subring of $K$.

For every $\alpha \in K$, one of two things occurs, by Lemma~5.22 in \cite{prdf3}.
\begin{itemize}
\item The fundamental ring $R$ contains all but at most two of the numbers
  \begin{equation}
    \{ \alpha \} \cup \left \{ \frac{1}{\alpha - c} : c \in k_0
    \right\}. \label{those-numbers}
  \end{equation}
\item The fundamental ring $R$ contains none of the numbers in
  (\ref{those-numbers}).
\end{itemize}
In the first case, $\alpha$ is called \emph{tame}, and in the second
case $\alpha$ is called \emph{wild} (\cite{prdf3}, Definition~5.23).
The fact that $\varsigma$ does not have multi-valuation type implies
that there is at least one wild $\alpha \in K$ (\cite{prdf3},
Proposition~5.25).

\begin{lemma}\label{lem:degen}
  There is an $S$-submodule $A \subseteq M$ of length 1 such that for every
  wild $\alpha$,
  \begin{equation*}
    \varsigma_2(\Theta_\alpha) = A \oplus A \subseteq M \oplus M.
  \end{equation*}
\end{lemma}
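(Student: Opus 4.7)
The plan is to analyse, for each wild $\alpha$, the length-$2$ submodule $V_\alpha := \varsigma_2(\Theta_\alpha) \subseteq M \oplus M$. First I show $V_\alpha$ decomposes as $A_\alpha \oplus B_\alpha$ with $A_\alpha \subseteq M \oplus 0$ and $B_\alpha \subseteq 0 \oplus M$ both simple. Since $\alpha \notin R$, $V_\alpha$ is not the graph of any $\varphi \in \End_S(M)$, which by a length comparison (the first-coordinate projection $V_\alpha \to M$ would be an isomorphism of length-$2$ modules) is equivalent to $V_\alpha \cap (0 \oplus M) \ne 0$. The coordinate-swap automorphism of $K^2$ is $k_0$-linear, hence respected by $\varsigma$, and sends $\Theta_\alpha \mapsto \Theta_{1/\alpha}$; wildness includes $1/\alpha \notin R$ (the case $c = 0$ in the wild definition), so by symmetry $V_\alpha \cap (M \oplus 0) \ne 0$. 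These two nonzero intersections sit in the complementary summands $M \oplus 0$ and $0 \oplus M$, so they meet trivially; a length-count on the length-$2$ module $V_\alpha$ then forces each to be simple and $V_\alpha$ to be their internal direct sum.

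Next I identify $A_\alpha$ with $B_\alpha$ as submodules of $M$, under the identifications $M \oplus 0 \cong M \cong 0 \oplus M$. Apply the $k_0$-linear automorphism $(x, y) \mapsto (x, x + y)$ of $K^2$, which sends $\Theta_\alpha$ to $\Theta_{\alpha + 1}$ and induces $(m, n) \mapsto (m, m + n)$ on $M \oplus M$. A short direct calculation gives $V_{\alpha + 1} = \{(a, a + b) : a \in A_\alpha,\ b \in B_\alpha\}$, so intersecting with $M \oplus 0$ forces $b = -a$ and identifies the intersection with $A_\alpha \cap B_\alpha$ inside $M$. The element $\alpha + 1$ is itself wild (from $\alpha \notin R$ and $1 \in R$ we get $\alpha + 1 \notin R$, and $1/((\alpha + 1) - c) = 1/(\alpha - (c - 1)) \notin R$ for all $c \in k_0$), so the intersection is nonzero. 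But two distinct simple submodules of the length-$2$ module $M$ meet trivially, so $A_\alpha = B_\alpha$. Write $A_\alpha$ for this common simple submodule of $M$.

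Finally, I must show $A_\alpha$ does not depend on the wild $\alpha$. The same $k_0$-linear-automorphism arguments, applied to the generators $\alpha \mapsto \alpha + b$ (for $b \in k_0$), $\alpha \mapsto c\alpha$ (for $c \in k_0^\times$), and $\alpha \mapsto 1/\alpha$, show $A_\alpha$ is constant on $\operatorname{PGL}_2(k_0)$-orbits of wild elements. To merge orbits, I would compare two wild $\alpha \ne \beta$ via the $K$-line $L := K \cdot (1, \alpha, \beta) \subseteq K^3$: the three $k_0$-linear coordinate-pair projections $K^3 \to K^2$ carry $L$ to lines $\Theta_\gamma$ for $\gamma \in \{\alpha, \beta, \beta/\alpha\}$, and the length-$2$ submodule $\varsigma_3(L) \subseteq M^3$ projects onto $V_\alpha$, $V_\beta$, $V_{\beta/\alpha}$; a ``diagonal'' analysis of $\varsigma_3(L)$ in $M^3$ should force the three degenerate summands to coincide, yielding $A_\alpha = A_\beta$. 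The main obstacle is precisely this last step: naive alternatives using $\Theta_\alpha + \Theta_\beta = K^2$ fail because $\varsigma$ need not preserve sums of general subspaces, so genuine use of malleability (and possibly the no-multi-valuation-type hypothesis, through a choice of $\beta$ in an appropriate orbit) appears necessary to pass from local $\operatorname{PGL}_2(k_0)$-invariance to genuine $\alpha$-independence.
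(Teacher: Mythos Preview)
Your first two steps are correct. The decomposition $V_\alpha = A_\alpha \oplus B_\alpha$ follows exactly as you say, and your argument that $A_\alpha = B_\alpha$ via the shear $(x,y)\mapsto(x,x+y)$ and wildness of $\alpha+1$ is a clean alternative to the paper's version (which instead shows $1/(\alpha+1)$ would land in $R$ if $A_\alpha\ne B_\alpha$).

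The genuine gap is the third step, and you have misdiagnosed the obstacle. Malleability is \emph{not} needed here, and the $\operatorname{PGL}_2(k_0)$-orbit reduction is a detour. Your own line $L = K\cdot(1,\alpha,\beta)$ already works, using only the basic inflator calculus (Lemmas~5.2.1--5.2.2 of \cite{prdf3}). Write $P_1 = \{(x,\alpha x,y)\}$ and $P_2 = \{(x,y,\beta x)\}$; then $P_1 + P_2 = K^3$, so by Lemma~5.2.1
\[
\varsigma_3(L) = \varsigma_3(P_1)\cap\varsigma_3(P_2) = (A_\alpha\oplus A_\alpha\oplus M)\cap(A_\beta\oplus M\oplus A_\beta) = 0\oplus A_\alpha\oplus A_\beta
\]
under the assumption $A_\alpha\ne A_\beta$ (so $A_\alpha\cap A_\beta=0$). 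Now $L\cap(0\oplus K\oplus 0)=0$, so by Lemma~5.2.2
\[
\varsigma_3(L) + (0\oplus M\oplus 0) = \varsigma_3(L + 0\oplus K\oplus 0) = \varsigma_3(P_2) = A_\beta\oplus M\oplus A_\beta.
\]
But the left side is $(0\oplus A_\alpha\oplus A_\beta)+(0\oplus M\oplus 0) = 0\oplus M\oplus A_\beta$, forcing $A_\beta=0$, a contradiction. The paper runs an analogous computation with $K\cdot(1,\alpha,\alpha\beta)$ and the commutativity $\alpha\beta=\beta\alpha$, but the mechanism is the same: intersect two known planes in $M^3$, then compare with a known sum. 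Your instinct to go three-dimensional was right; what was missing was the observation that the calculus lemmas for $\cap$ and $+$ already pin down $\varsigma_3(L)$ without any appeal to malleability or to the tameness of $\beta/\alpha$.
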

\begin{proof}
  Suppose $\alpha$ is wild.  Then $\alpha \notin R$, so
  $\varsigma_2(\Theta_\alpha)$ is not the graph of an endomorphism.
  Counting lengths, this implies $0 \oplus A \subseteq
  \varsigma_2(\Theta_\alpha)$ for some length-1 submodule $A \subseteq
  M$.  Similarly, $\alpha^{-1} \notin R$ implies that
  \[ 0 \oplus A' \subseteq \varsigma_2(\Theta_{\alpha^{-1}})\]
  for some length-1 submodule $A' \subseteq M$.  Or equivalently, by
  permutation invariance,
  \[ A' \oplus 0 \subseteq \varsigma_2(\Theta_\alpha).\]
  As $\varsigma_2(\Theta_\alpha)$ has length 2, we must have
  \begin{equation*}
    \varsigma_2(\Theta_\alpha) = A' \oplus A.
  \end{equation*}
  We claim that $A = A'$.  Suppose otherwise.  Then $M$ is an internal
  direct sum of $A$ and $A'$.  By $GL_2(k_0)$-equivariance,
  \begin{align*}
    \varsigma_2(\Theta_{1/(\alpha+1)}) &= \varsigma_2(\{(\alpha x + x,
    x) : x \in K\}) \\ &= \{(y+x, x) : x \in A', ~ y \in A\}.
  \end{align*}
  The latter expression is the graph of an endomorphism, so
  $1/(\alpha+1) \in R$ and $\alpha$ is tame, a contradiction.

  Thus, for any wild $\alpha$ there is some length-1 submodule
  $A_\alpha \subseteq M$ such that $\varsigma_2(\Theta_\alpha) =
  A_\alpha \oplus A_\alpha$.  It remains to show that $A_\alpha$
  doesn't depend on $\alpha$.  Suppose for the sake of contradiction
  that $\alpha, \beta$ are wild and $A_\alpha \ne A_\beta$.  Then
  \begin{align*}
    \varsigma_3(\{(x,\alpha x, y) : x, y \in K\}) &= A_\alpha \oplus
    A_\alpha \oplus M \\ \varsigma_3(\{(w, y, \beta y) : w, y \in K\})
    &= M \oplus A_\beta \oplus A_\beta \\ \varsigma_3(\{(x, \alpha x,
    \alpha \beta x) : x \in K\}) &= A _\alpha \oplus 0 \oplus A_\beta
  \end{align*}
  using Lemma~5.2.1 in \cite{prdf3} and the fact that $A_\alpha \cap A_\beta = 0$.  Then
  \begin{align*}
    \varsigma_3(\{(x, \alpha x, \alpha \beta x) : x \in K\}) &= A _\alpha \oplus 0 \oplus A_\beta \\
    \varsigma_3(\{(0, y, 0) : y \in K\}) &= 0 \oplus M \oplus 0 \\
    \varsigma_3(\{(x, y, \alpha \beta x) : x, y \in K\}) &= A_\alpha \oplus M \oplus A_\beta,
  \end{align*}
  using Lemma~5.2.2 in \cite{prdf3}.
  But a symmetric argument shows
  \begin{equation*}
    \varsigma_3(\{(x, y, \beta \alpha x) : x, y \in K\}) = A_\beta \oplus M \oplus A_\alpha
  \end{equation*}
  As $\alpha \beta = \beta \alpha$, it follows that
  \begin{equation*}
    A_\alpha \oplus M \oplus A_\beta = A_\beta \oplus M \oplus A_\alpha,
  \end{equation*}
  and $A_\alpha = A_\beta$, a contradiction.
\end{proof}
We call $A = A_\alpha$ the \emph{degeneracy subspace} of $M$.

\subsection{Reduction to the isotypic case}

\begin{corollary}\label{cor:rediso}
  Under the Weak Assumptions, $\varsigma$ has a mutation which is
  isotypic.
\end{corollary}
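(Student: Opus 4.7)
The plan is straightforward: if $\varsigma$ is already isotypic there is nothing to do, so I may assume we are in the second case of the dichotomy of Theorem~2.7 of \cite{prdf3}, namely $S = k_1 \times k_2$ and $M = k_1 \oplus k_2$ with the two simple summands non-isomorphic as $S$-modules. The only length-1 $S$-submodules of $M$ are then $k_1 \oplus 0$ and $0 \oplus k_2$, so the degeneracy subspace $A \subseteq M$ produced by Lemma~\ref{lem:degen} is one of these two; after swapping indices I may assume $A = k_1 \oplus 0$. By Proposition~5.25 of \cite{prdf3}, the Weak Assumptions also guarantee at least one wild $\alpha \in K$, and by Lemma~\ref{lem:degen} this $\alpha$ satisfies $\varsigma_2(\Theta_\alpha) = A \oplus A$.

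The main step is to produce the isotypic mutation. Morally, $\varsigma$ only ``sees'' the simple module $A$: wild lines $\Theta_\alpha$ all land in $A \oplus A \subseteq M \oplus M$, and tame lines go to graphs of $S$-endomorphisms that respect the decomposition $A \oplus (0 \oplus k_2)$. I would apply mutation along the wild $\alpha$ (Theorem~10.1 and Definition~10.2 of \cite{prdf3}), which should replace the target $\Dir_S(M)$ by the directory $\Dir_k(k^2)$ with $k = \End_S(A)$; intuitively, the complement $0 \oplus k_2$ is no longer visible in the fundamental data of the mutated inflator, leaving two isomorphic copies of $A$. By Remark~\ref{rem:inherit}, the mutated inflator $\varsigma'$ inherits the Weak Assumptions, so it is a bona fide isotypic inflator in the sense demanded by the corollary.

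The main obstacle I expect is bookkeeping: matching the informal geometric description above to the formal construction of mutation in \cite{prdf3}, and in particular verifying that the mutated target is indeed $\Dir_k(k^2)$ rather than a larger directory still remembering $k_2$, and that $\varsigma'_n$ sends every $n$-dimensional subspace of $K^n$ to a subspace of length exactly $2n$. I expect this to be a routine unpacking once the mutation construction is in hand, leveraging $GL_2(k_0)$-equivariance and the permutation arguments already exploited in the proof of Lemma~\ref{lem:degen}. Isotypy of the mutated target then holds by construction, completing the reduction.
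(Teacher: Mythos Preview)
Your approach is essentially the same as the paper's: mutate along a line $\Theta_\alpha$ with $\alpha$ wild. The difference is that you anticipate an obstacle where there is none. By the very definition of mutation (Theorem~10.1 and Definition~10.2 in \cite{prdf3}), the mutation along a line $L \subseteq K^2$ is an inflator $\varsigma' : \Dir_K(K) \to \Dir_S(M')$ where $M' = \varsigma_2(L)$. Since Lemma~\ref{lem:degen} already computes $\varsigma_2(\Theta_\alpha) = A \oplus A$, the target module is $A \oplus A$ on the nose, and this $S$-module is isotypic by inspection. There is no ``bookkeeping'' to do, no need to verify that $k_2$ has disappeared, and no need to re-check the length condition on $\varsigma'_n$: preservation of the inflator axioms is exactly what Theorem~10.1 of \cite{prdf3} asserts.

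Two smaller points: your case split (isotypic vs.\ non-isotypic) is harmless but unnecessary, since the argument works uniformly---even in the isotypic case a wild $\alpha$ exists and the mutation along $\Theta_\alpha$ lands in $A \oplus A$. And your identification of which summand $A$ is, while correct, plays no role in the argument.
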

\begin{proof}
  Take wild $\alpha$, and let $A \subseteq M$ be the degeneracy
  subspace, so that
  \begin{equation*}
    \varsigma_2(K \cdot (1,\alpha)) = A \oplus A.
  \end{equation*}
  By definition of mutation, the mutation along $K \cdot (1, \alpha)$
  is of the form
  \[ \varsigma' : \Dir_K(K) \to \Dir_S(M'),\]
  where $M' = \varsigma_2(K \cdot (1,\alpha)) = A \oplus A$.  The
  $S$-module $M'$ is isotypic.
\end{proof}

\section{The explicit formula for $\varsigma$} \label{sec:canform}
For the duration of \S\ref{sec:canform}-\ref{sec:der}, \[\varsigma :
\Dir_K(K) \to \Dir_S(M)\] will be a $k_0$-linear 2-inflator, and the
following Strong Assumptions will be in place:
\begin{itemize}
\item $\characteristic(k_0) = \characteristic(K) \ne 2$
\item $\varsigma$ is malleable
\item No mutation of $\varsigma$ is weakly of multi-valuation type.
\item $\varsigma$ is isotypic, i.e., if we write $M$ as a direct sum
  of two simple $S$-modules $A$ and $B$, then $A \cong B$.
\end{itemize}
Isotypy is the new assumption, not present in the Weak Assumptions of
\S\ref{sec:isotyp}.  Isotypy implies that
\begin{equation*}
  \Dir_S(M) \cong \Dir_k(k^2)
\end{equation*}
for some division $k_0$-algebra $k$.  Therefore, we may assume $S = k$
and $M$ is a two-dimensional $k$-vector space.
\begin{remark} \label{rem:inherit2}
  As in Remark~\ref{rem:inherit}, the Strong Assumptions are preserved
  under mutations.  For isotypy, note that any mutation of $\varsigma$
  will have the form
  \begin{equation*}
    \varsigma' : \Dir_K(K) \to \Dir_k(M')
  \end{equation*}
  for some $k$-module $M'$.  The fact that $k$ is a division ring
  ensures that $M'$ is isotypic.
\end{remark}

\subsection{$k$ is commutative} \label{sec:kcomm}

\begin{proposition}
  The division ring $k$ is commutative (a field).
\end{proposition}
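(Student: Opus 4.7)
The plan is to suppose $k$ is non-commutative and derive a contradiction with the Weak Assumption that no mutation of $\varsigma$ is (weakly) multi-valuation type. Identify $M$ with $k^2$ so $\End_k(M) = M_2(k)$; then $\mathcal{A}_0 := \wres(R)$ is a commutative subring of $M_2(k)$ containing $k_0 \cdot I$. First, I would extract a wild element $\alpha \in K$ via Proposition~5.25 of \cite{prdf3} and take the associated degeneracy subspace $A \subseteq M$ from Lemma~\ref{lem:degen}, so that $\varsigma_2(\Theta_\alpha) = A \oplus A$.

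Next I would show that $\wres(R)$ preserves $A$. For any tame $\beta \in R$ with residue $\varphi := \wres(\beta)$, the $K$-linear endomorphism $\mathrm{diag}(1,\beta) : K^2 \to K^2$ lifts via the inflator calculus to $\mathrm{diag}(\id,\varphi) : M^2 \to M^2$, so
\[
  \varsigma_2(\Theta_{\beta\alpha}) \;=\; \mathrm{diag}(\id,\varphi)(A \oplus A) \;=\; A \oplus \varphi(A).
\]
The right-hand side projects onto $A \subsetneq M$ in its first coordinate, so it is not the graph of any endomorphism; hence $\beta\alpha$ must be wild, and Lemma~\ref{lem:degen} then forces $A \oplus \varphi(A) = A \oplus A$, i.e.\ $\varphi(A) = A$. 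Fixing a basis with $A = k \cdot e_1$, we get $\mathcal{A}_0 \subseteq T$, the upper-triangular subring $\bigl(\begin{smallmatrix} k & k \\ 0 & k \end{smallmatrix}\bigr) \subseteq M_2(k)$. The two diagonal projections $\pi_1, \pi_2 : T \to k$ then restrict on $\mathcal{A}_0$ to ring homomorphisms whose images are commutative subrings of $k$ containing $k_0$.

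The final (and hardest) step is to promote this to full commutativity of $k$ by showing $\pi_1(\mathcal{A}_0) = k$, so that commutativity of $\pi_1(\mathcal{A}_0)$ forces $k$ itself to be commutative. I expect this surjectivity to come from malleability of $\varsigma$ (Definition~5.31 of \cite{prdf3}), which should prevent $\varsigma$ from factoring through a proper subdirectory such as $\Dir_{k'}(k^2)$ for $k' \subsetneq k$. The main obstacle is precisely this surjectivity: if malleability does not deliver it directly, the fallback is to exploit the nilradical quotient $T/\mathrm{rad}(T) \cong k \times k$, which resembles the residue structure of a double valuation. A non-surjective $\pi_1$ should then produce, in an appropriate mutation, a non-zero ideal of a multi-valuation ring inside the fundamental ring, contradicting the Weak Assumption that no mutation of $\varsigma$ is weakly multi-valuation type.
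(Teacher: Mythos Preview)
Your argument has a genuine gap at the wildness claim. You correctly compute $\varsigma_2(\Theta_{\beta\alpha}) = A \oplus \varphi(A)$ and observe that this is not the graph of an endomorphism, so $\beta\alpha \notin R$. But $\beta\alpha \notin R$ does \emph{not} imply $\beta\alpha$ is wild: tameness only requires that some $1/(\beta\alpha - c)$ with $c \in k_0$ lie in $R$. In fact, precisely when $\varphi(A) \ne A$ (the case you are trying to rule out), one has $M = A \oplus \varphi(A)$ as a direct sum of one-dimensional $k$-spaces, and a short computation using $GL_2(k_0)$-equivariance shows that for any nonzero $c \in k_0$,
\[
  \varsigma_2(\Theta_{1/(\beta\alpha - c)}) \;=\; \{(v - cu,\, u) : u \in A,\; v \in \varphi(A)\},
\]
which is the graph of $m \mapsto -c^{-1}\,\pi_A(m)$, where $\pi_A$ is the projection onto $A$ along $\varphi(A)$. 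Hence $1/(\beta\alpha - c) \in R$ and $\beta\alpha$ is tame. Your inference fails exactly in the case you need.

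Even granting that $\mathcal{A}_0$ lands in upper-triangular matrices, the surjectivity $\pi_1(\mathcal{A}_0) = k$ is where the real content lies, and you have not supplied it. The appeal to malleability is the right instinct but is not an argument; the fallback via multi-valuation structure is too vague to evaluate.

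The paper's proof bypasses any structural analysis of $\mathcal{A}_0$. Fixing coordinates so the degeneracy subspace is $0 \oplus k$, and given non-commuting $a, b \in k$, malleability directly produces $\alpha \in K^\times$ with
\[
  \varsigma_2(\Theta_\alpha) \supseteq \{(t,0;\,ta,0) : t \in k\}.
\]
This $\alpha$ cannot be wild (its specialization would then be $(0\oplus k)\oplus(0\oplus k)$), so it is tame; after replacing $a$ by one of $a,\,a^{-1},\,(a-1)^{-1}$ one arranges $\alpha \in R$, and then $\wres(\alpha)$ sends $(t,0) \mapsto (ta,0)$. Doing the same for $b$ yields $\beta \in R$ with $\wres(\beta) : (t,0) \mapsto (tb,0)$. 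These two residues do not commute, contradicting commutativity of $R$. The point is that malleability is used to manufacture elements of $R$ whose residues directly witness the non-commutativity of $k$, rather than to prove a surjectivity statement about $\mathcal{A}_0$ after the fact.
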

\begin{proof}
  Changing coordinates on $k^2$, we may assume that the degeneracy
  subspace is $0 \oplus k$.  If $\alpha \in K$ is wild, then
  \begin{equation} \label{wild-corral}
    \varsigma_2(\{(x, \alpha x) : x \in K\}) = (0 \oplus k) \oplus (0 \oplus k).
  \end{equation}
  We write elements of $M^n = (k^2)^n$ as tuples
  $(a_1,b_1;a_2,b_2;\ldots;a_n,b_n)$.

  Let $a, b$ be two non-commuting elements of $k$.  By malleability,
  we can find $\alpha, \beta \in K^\times$ such that
  \begin{align*}
    \varsigma_2(\{(x,\alpha x) : x \in K\}) & \supseteq \{(t,0;t a,0) : t \in k\} \\
    \varsigma_2(\{(x,\beta x) : x \in K\}) & \supseteq \{(t,0; tb, 0) : t \in k\}
  \end{align*}
  Neither $\alpha$ nor $\beta$ can be wild, by equation
  (\ref{wild-corral}).  By $GL_2(k_0)$-equivariance,
  \begin{align*}
    \varsigma_2(\{(x,\alpha x) : x \in K\}) & \supseteq \{(t,0;t a,0) : t \in k\} \\
    \varsigma_2(\{(x,(\alpha - 1) x) : x \in K\}) & \supseteq \{(t,0;t (a - 1),0) : t \in k\} \\
    \varsigma_2(\{(x,\alpha^{-1} x) : x \in K\}) & \supseteq \{(t,0;t a^{-1} ,0) : t \in k\} \\
    \varsigma_2(\{(x,(\alpha-1)^{-1} x) : x \in K\}) & \supseteq \{(t,0;t (a - 1)^{-1},0) : t \in k\}
  \end{align*}
  And one of $\alpha, 1/\alpha, 1/(\alpha - 1)$ is in $R$.  So,
  replacing $a$ with one of $\{a, a^{-1}, (a-1)^{-1}\}$, we may assume
  that $\alpha \in R$.  Similarly, we may assume $\beta \in R$.
  Then
  \begin{align*}
    \varsigma_2(\{(x,\alpha x) : x \in K\})& \\
    \varsigma_2(\{(x,\beta x) : x \in K\}) &
  \end{align*}
  are graphs of endomorphisms $\varphi_A, \varphi_B : k^2 \to k^2$.  Because
  \begin{align*}
    (t,0;ta,0) &\in \Gamma_A \\
    (t,0;tb,0) &\in \Gamma_B     
  \end{align*}
  it follows that $\varphi_A(t,0) = (ta,0)$ and $\varphi_B(t,0) = (tb,0)$.
  Thus $\varphi_A$ and $\varphi_B$ do not commute.  But this is impossible,
  as $\varphi_A, \varphi_B$ both lie in the image of the homomorphism $R
  \to \End_k(k^2)$, and $R$ is commutative.
\end{proof}
Thus $k$ is a field extending $k_0$.

\subsection{The algebra $\mathcal{A}$} \label{ssec:a}
Let $\mathcal{A}_0$ be the image of $R$ in $\End_k(k^2) = M_2(k)$.
This is a commutative $k_0$-algebra.
\begin{lemma}\label{no-central}
  The algebra $\mathcal{A}_0$ is not contained in the center of
  $M_2(k)$, i.e., $\mathcal{A}_0$ contains a matrix \emph{not} of the
  form $\begin{pmatrix} \lambda & 0 \\ 0 & \lambda \end{pmatrix}$.
\end{lemma}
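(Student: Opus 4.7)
The plan is to argue by contradiction. Assume that $\mathcal{A}_0 \subseteq k \cdot I_2$, so that every $\wres(\alpha)$ (for $\alpha \in R$) is a scalar matrix $\lambda_\alpha I_2$. I will use malleability to produce some $\alpha \in R$ whose image $\wres(\alpha)$ is forced to be non-scalar, contradicting the assumption.

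After changing coordinates on $M = k^2$ so that the degeneracy subspace is $A = 0 \oplus k$ (Lemma~\ref{lem:degen}), I apply malleability to find $\alpha \in K^\times$ such that $\varsigma_2(\Theta_\alpha)$ simultaneously contains the two $k$-lines
\[
  L_1 = \{(t,0\,;\,ta,0) : t \in k\}, \qquad L_2 = \{(0,s\,;\,0,sb) : s \in k\},
\]
for a chosen pair $a, b \in k$ with $a \ne b$. The vectors $(1,0;a,0)$ and $(0,1;0,b)$ are $k$-linearly independent in $M^2$, so $L_1 + L_2$ has length $2$; since $\varsigma_2(\Theta_\alpha)$ also has length $2$, the two lines must exhaust it, giving
\[
  \varsigma_2(\Theta_\alpha) \;=\; \{(t,s\,;\,ta,sb) : t,s \in k\} \;=\; \Theta_{\diag(a,b)}.
\]
Hence $\alpha \in R$ with $\wres(\alpha) = \diag(a,b)$, which is not a scalar matrix when $a \ne b$, contradicting the hypothesis. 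Note that $\alpha$ is automatically tame, since the first containment already shows $\varsigma_2(\Theta_\alpha) \not\subseteq A \oplus A$, so no wildness obstruction arises.

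The main obstacle is justifying that malleability (\cite{prdf3}, Definition~5.31) gives both containments for a single $\alpha$, rather than the one-constraint-per-element form used in the preceding proposition. If the definition does not immediately yield simultaneous constraints, one can instead apply malleability twice to produce $\alpha_1, \alpha_2 \in K$ each satisfying one of the two constraints, invoke the tame/wild dichotomy (\cite{prdf3}, Lemma~5.22) to replace each $\alpha_i$ by a suitable $(\alpha_i - c_i)^{-1}$ forcing $\alpha_i \in R$, and then combine them via the inflator calculus (\S5.1 of \cite{prdf3}) applied to $\varsigma_3(K \cdot (1, \alpha_1, \alpha_2))$ to manufacture a single non-scalar element of $\mathcal{A}_0$.
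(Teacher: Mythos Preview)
Your main line of argument has a genuine gap. Malleability, as used everywhere in the paper (and as given by Definition~5.31 of \cite{prdf3}), only guarantees that for a \emph{single} vector $v \in M^n$ there is a line $L \subseteq K^n$ with $v \in \varsigma_n(L)$. Asking that $\varsigma_2(\Theta_\alpha)$ contain the two independent lines $L_1, L_2$ is asking to prescribe the entire length-2 module $\varsigma_2(\Theta_\alpha)$, which is strictly stronger than what malleability provides.

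Your fallback sketch does not repair this. The specific lines you chose are each \emph{individually} compatible with a scalar matrix: $(1,0;a,0)$ lies on the graph of $aI_2$, and $(0,1;0,b)$ lies on the graph of $bI_2$. So after forcing $\alpha_1, \alpha_2 \in R$ you may simply have $\wres(\alpha_1) = aI_2$ and $\wres(\alpha_2) = bI_2$, and no amount of inflator calculus on $\varsigma_3(K\cdot(1,\alpha_1,\alpha_2))$ will manufacture a non-scalar element out of two scalars---the hypothesis $\mathcal{A}_0 \subseteq kI_2$ is closed under all ring operations. (There is also the separate issue that passing from $\alpha_i$ to $(\alpha_i - c_i)^{-1}$ transforms the vector witnessing the constraint, so you cannot simply ``force $\alpha_i \in R$'' while keeping $L_i$ fixed.)

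The paper avoids both problems by applying malleability to a single vector that is already incompatible with any scalar graph, namely $(1,0;0,1)$. The resulting $\alpha$ is tame, and $GL_2(k_0)$-equivariance transports this vector along the substitutions $\alpha \mapsto \alpha^{-1}$ and $\alpha \mapsto (\alpha+1)^{-1}$, giving $(0,1;1,0)$ and $(1,1;1,0)$ respectively. Tameness ensures one of $\alpha,\,\alpha^{-1},\,(\alpha+1)^{-1}$ lies in $R$, and in each case the corresponding $\varphi \in \mathcal{A}_0$ sends a vector to a non-parallel vector, which no scalar matrix can do. If you want to salvage your framing, the single vector $(1,1;a,b)$ with $a \neq b$ would work in place of your two lines, but you would then need exactly the same tameness case analysis via $GL_2(k_0)$-equivariance that the paper carries out.
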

\begin{proof}
  Changing coordinates on $M \cong k^2$, we may assume that $0 \oplus
  k$ is the degeneracy subspace, and so
  \begin{equation} \label{wild-corral-2}
    \varsigma_2(\{(x, \alpha x) : x \in K\}) = (0 \oplus k) \oplus (0 \oplus k).
  \end{equation}
  for any wild $\alpha$.  By malleability, choose $\alpha \in K$ such
  that
  \begin{equation*}
     \varsigma_2(\{(x,\alpha x) : x \in K\}) \supseteq \{(t,0;0,t) : t \in k\} \ni (1,0;0,1).
  \end{equation*}
  Then $\alpha$ cannot be wild.  By $GL_2(k_0)$-equivariance, we have
  \begin{align*}
    (1,0;0,1) & \in \varsigma_2(\{(x,\alpha x) : x \in K\}) \\
    (0,1;1,0) & \in \varsigma_2(\{(x, \alpha^{-1} x) : x \in K\}) \\
    (1,1;1,0) &\in \varsigma_2(\{(x, (\alpha + 1)^{-1}) : x \in K\}).
  \end{align*}
  By tameness of $\alpha$, one of the right-hand-sides is the graph of
  some endomorphism $\varphi \in \End_k(k^2)$.  By definition, $\varphi \in
  \mathcal{A}_0$.  But no central matrix $\begin{pmatrix} \lambda & 0
    \\ 0 & \lambda \end{pmatrix}$ can map $(1,0)$ to $(0,1)$ or map
  $(0,1)$ to $(1,0)$ or map $(1,1)$ to $(1,0)$.
\end{proof}
Let $\mathcal{A}$ be the $k$-subalgebra of $M_2(k)$ generated by
$\mathcal{A}_0$.  It is a commutative $k$-algebra.
Note that $M = k^2$ is naturally an $\mathcal{A}$-module.
\begin{proposition}\label{prop:descend-to-A}
  For any $V \subseteq K^n$, the specialization $\varsigma_n(V)$ is
  an $\mathcal{A}$-submodule of $M^n$.
\end{proposition}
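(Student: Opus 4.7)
The plan is to show that $\varsigma_n(V)$, already a $k$-subspace of $M^n$, is preserved by the componentwise action of each $\varphi \in \mathcal{A}_0$; since $\mathcal{A}$ is the $k$-algebra generated by $\mathcal{A}_0$, this suffices.

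Fix $\alpha \in R$ with $\wres(\alpha) = \varphi$, and consider the graph of multiplication-by-$\alpha$ restricted to $V$:
\[ V' := \{(v, \alpha v) \in K^{n} \oplus K^{n} : v \in V\}. \]
The heart of the argument is to compute $\varsigma_{2n}(V')$ and verify that it equals the componentwise graph $\{(x, \varphi x) : x \in \varsigma_n(V)\}$. Granting this, projecting onto the second factor (and invoking compatibility of $\varsigma$ with linear projections) yields
\[ \varphi(\varsigma_n(V)) = \varsigma_n(\alpha V) = \varsigma_n(V), \]
using $\alpha V = V$ whenever $\alpha \neq 0$ (the case $\alpha = 0$ is trivial since then $\varphi = \wres(0) = 0$). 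This gives the desired stability.

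To compute $\varsigma_{2n}(V')$, present $V' = U_1 \cap U_2$ with $U_1 := V \oplus K^n$ and $U_2 := \{(v, \alpha v) : v \in K^n\}$. After a coordinate permutation sending $U_2$ to $\Theta_\alpha^{\oplus n}$, the direct-sum calculus of \S5.1 of \cite{prdf3}, combined with $\varsigma_2(\Theta_\alpha) = \Theta_\varphi$, gives
\[ \varsigma_{2n}(U_1) = \varsigma_n(V) \oplus M^n, \qquad \varsigma_{2n}(U_2) = \{(x, \varphi x) : x \in M^n\}. \]
Check directly that $U_1 + U_2 = K^{2n}$ and correspondingly $\varsigma_{2n}(U_1) + \varsigma_{2n}(U_2) = M^{2n}$, the latter because $\varsigma_{2n}(U_1) \supseteq 0 \oplus M^n$. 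The length formula for a 2-inflator then forces
\[ \length\bigl(\varsigma_{2n}(U_1) \cap \varsigma_{2n}(U_2)\bigr) = 2 \dim_K V = \length(\varsigma_{2n}(V')), \]
and combined with the always-valid inclusion $\varsigma_{2n}(V') \subseteq \varsigma_{2n}(U_1) \cap \varsigma_{2n}(U_2)$, equality is forced, yielding the desired description of $\varsigma_{2n}(V')$.

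The main obstacle is promoting the always-valid inclusion $\varsigma_{2n}(U_1 \cap U_2) \subseteq \varsigma_{2n}(U_1) \cap \varsigma_{2n}(U_2)$ to equality, since inflators are only monotone (not exact) on intersections in general. The generic-position computation $U_1 + U_2 = K^{2n}$, together with the length identity above, is the mechanism that closes this gap. Modulo this rank bookkeeping, everything else is a routine application of the inflator calculus from \cite{prdf3}.
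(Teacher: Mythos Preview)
Your computation of $\varsigma_{2n}(V')$ via the intersection $U_1 \cap U_2$ and the length argument is correct and is exactly the content of Lemma~5.2.1 in \cite{prdf3} that the paper invokes. The overall strategy matches the paper's proof.

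However, the final step contains an overclaim. Inflators are \emph{not} compatible with coordinate projections as equalities: in general one only has the inclusion $\pi_M(\varsigma_n(W)) \subseteq \varsigma_m(\pi(W))$, obtained by noting $W \subseteq \pi(W) \oplus K^{n-m}$ and applying order-preservation together with $\oplus$-compatibility. Your asserted equality $\varphi(\varsigma_n(V)) = \varsigma_n(\alpha V) = \varsigma_n(V)$ is therefore false whenever $\varphi$ is not surjective on $M$ (for instance when $\varphi$ is the nilpotent element $\varepsilon$ after the identification $\mathcal{A} \cong k[\varepsilon]$: take $V = K$, so $\varsigma_1(V) = M$, yet $\varepsilon M \subsetneq M$). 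Fortunately only the inclusion $\varphi(\varsigma_n(V)) \subseteq \varsigma_n(V)$ is needed, and that inclusion does follow from the one-sided projection compatibility. The paper reaches the same conclusion slightly differently: since $a V \subseteq V$ (as $V$ is a $K$-subspace), one has $V' \subseteq K^n \oplus V$, and order-preservation gives $\varsigma_{2n}(V') \subseteq M^n \oplus \varsigma_n(V)$, from which $\varphi x \in \varsigma_n(V)$ for all $x \in \varsigma_n(V)$. With this correction your argument goes through and is essentially identical to the paper's.
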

\begin{proof}
  We already know that $\varsigma_n(V)$ is a $k$-submodule, so it
  remains to show that $\varsigma_n(V)$ is closed under multiplication
  by $\mathcal{A}_0$.  Let $a$ be an element of $R$, specializing to
  $\varphi \in \mathcal{A}_0$.  Then
  \begin{align*}
    \varsigma_{2n}(\{(\vec{x}, a \vec{x}) : \vec{x} \in K^n\}) &= \{(x_1,
    \ldots, x_n, \varphi x_1, \ldots, \varphi x_n) : \vec{x} \in M^n\} \\
    \varsigma_{2n}(\{(\vec{x}, \vec{y}) : \vec{x} \in V,~ \vec{y} \in K^n\}) &= \{(x_1, \ldots, x_n, y_1, \ldots, y_n) : \vec{x} \in \varsigma_n(V), \vec{y} \in M^n\} \\
    \varsigma_{2n}(\{(\vec{x}, \vec{y}) : \vec{x} \in K^n,~ \vec{y} \in V\}) &= \{(x_1, \ldots, x_n, y_1, \ldots, y_n) : \vec{x} \in M^n, \vec{y} \in \varsigma_n(V)\} \\
    \varsigma_{2n}(\{(\vec{x}, a \vec{x}) : \vec{x} \in V\}) &= \{(x_1,
    x_2, \ldots, x_n, \varphi x_1, \ldots, \varphi x_n) : \vec{x} \in \varsigma_n(V)\}.
  \end{align*}
  The first line holds because $a$ specializes to $\varphi$, using
  compatibility with $\oplus$ and permutations.  The second and third lines hold
  by compatibility with $\oplus$.  The fourth line holds by
  intersecting the first and second lines (using Lemma~5.2.1 in
  \cite{prdf3}).
  Now $V$ is a $K$-submodule of $K^n$, and hence an $R$-submodule.  Therefore
  \begin{equation*}
    \{(\vec{x}, a \vec{x}) : \vec{x} \in V\} \subseteq \{(\vec{x},
    \vec{y}) : \vec{x} \in K^n,~ \vec{y} \in V\}.
  \end{equation*}
  As $\varsigma_{2n}$ is order-preserving,
  \begin{align*}
    & \{(x_1, x_2, \ldots, x_n, \varphi x_1, \ldots, \varphi x_n) :
    \vec{x} \in \varsigma_n(V)\} \\ & \subseteq \{(x_1, \ldots, x_n, y_1, \ldots, y_n)
    : \vec{x} \in M^n, \vec{y} \in \varsigma_n(V)\},
  \end{align*}
  which implies that $\varsigma_n(V)$ is closed under multiplication
  by $\varphi$.
\end{proof}
Proposition~\ref{prop:descend-to-A} says that the inflator $\varsigma
: \Dir_K(K) \to \Dir_k(M)$ factors through $\Dir_{\mathcal{A}}(M)
\subseteq \Dir_k(M)$.
\begin{corollary}
  The degeneracy subspace $X \subseteq M$ is an
  $\mathcal{A}$-submodule of $M$.
\end{corollary}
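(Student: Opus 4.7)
The plan is to invoke Proposition~\ref{prop:descend-to-A} for a wild $\alpha \in K$ and then extract the $\mathcal{A}$-invariance of $A$ from the $\mathcal{A}$-invariance of $A \oplus A$.

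More precisely, by the Weak Assumptions (or rather the non--multi-valuation-type hypothesis and Proposition~5.25 of \cite{prdf3}, as recalled in \S\ref{sec:isotyp}), there exists at least one wild element $\alpha \in K$. Applying Lemma~\ref{lem:degen}, we have
\[ \varsigma_2(\Theta_\alpha) = A \oplus A \subseteq M \oplus M. \]
Now take $V = \Theta_\alpha \subseteq K^2$ in Proposition~\ref{prop:descend-to-A}: the proposition tells us that $\varsigma_2(\Theta_\alpha) = A \oplus A$ is an $\mathcal{A}$-submodule of $M^2 = M \oplus M$, where $\mathcal{A}$ acts diagonally.

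Unpacking the diagonal action, for any $\varphi \in \mathcal{A}$ and any $a \in A$, the element $(a, 0) \in A \oplus A$ satisfies $\varphi \cdot (a, 0) = (\varphi a, 0) \in A \oplus A$, forcing $\varphi a \in A$. Hence $\varphi A \subseteq A$ for every $\varphi \in \mathcal{A}$, which is exactly the statement that $A$ is an $\mathcal{A}$-submodule of $M$. There is no real obstacle here: the work has already been done in Lemma~\ref{lem:degen} (which pinpoints the degeneracy subspace via a wild $\alpha$) and in Proposition~\ref{prop:descend-to-A} (which upgrades $\varsigma_n(V)$ from a $k$-submodule to an $\mathcal{A}$-submodule). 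The corollary is just the rank-2, diagonal instance of that upgrade.
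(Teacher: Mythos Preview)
Your proof is correct and follows essentially the same approach as the paper: take a wild $\alpha$, observe that $\varsigma_2(K\cdot(1,\alpha)) = X \oplus X$, apply Proposition~\ref{prop:descend-to-A} to see this is an $\mathcal{A}$-submodule of $M^2$, and deduce that $X$ is an $\mathcal{A}$-submodule of $M$. Your version is a bit more explicit about the last step (extracting the invariance of $X$ from that of $X \oplus X$), but the argument is the same.
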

\begin{proof}
  If $\alpha \in K$ is wild, then
  \[ \varsigma_2(K \cdot (1,\alpha)) = X \oplus X,\]
  and so $X \oplus X$ is an $\mathcal{A}$-submodule of $M \oplus M$.
  This implies $X$ is an $\mathcal{A}$-submodule of $M$.
\end{proof}

\begin{lemma}\label{lem:matrix-nonsense}
  The $k$-algebra $\mathcal{A}$ is isomorphic to one of the following
  \begin{itemize}
  \item $k \times k$
  \item $k[\varepsilon] = k[\varepsilon]/(\varepsilon^2)$.
  \end{itemize}
  Moreover, $M$ is a free $\mathcal{A}$-module of rank 1.
\end{lemma}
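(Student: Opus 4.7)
My plan is to show first that $\mathcal{A}$ is 2-dimensional over $k$ of the form $k[x]/(p(x))$, then classify the possibilities and rule out the field case using the degeneracy subspace, and finally deduce freeness of $M$ from a faithfulness/dimension count.

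By Lemma~\ref{no-central}, pick $\varphi \in \mathcal{A}_0 \subseteq M_2(k)$ not of the form $\lambda \cdot I$. By the Cayley--Hamilton theorem applied in $M_2(k)$, the element $\varphi$ satisfies a monic quadratic polynomial over $k$, so $k[\varphi]$ has $k$-dimension at most $2$, and is exactly $2$ because $\varphi \notin k \cdot I$. Now $\mathcal{A}$ is commutative and contains $\varphi$, so $\mathcal{A}$ sits inside the centralizer of $\varphi$ in $M_2(k)$. A routine check (splitting into the two Jordan types of a non-scalar $2 \times 2$ matrix, both of which are available because $\characteristic(k) \ne 2$ makes the repeated eigenvalue lie in $k$) shows that this centralizer equals $k[\varphi]$. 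Hence $\mathcal{A} = k[\varphi] \cong k[x]/(x^2 - tx + d)$ for some $t, d \in k$.

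A 2-dimensional commutative $k$-algebra of this form is either a quadratic field extension of $k$, a product $k \times k$, or $k[\varepsilon]$, according to whether the discriminant of $x^2 - tx + d$ is a non-square, a nonzero square, or zero. To rule out the field case, invoke the preceding corollary: the degeneracy subspace $X \subseteq M$ is an $\mathcal{A}$-submodule of $M$, and as a length-$1$ $k$-submodule it has $k$-dimension $1$, so it is a proper nonzero $\mathcal{A}$-submodule. If $\mathcal{A}$ were a field, then $M$ would be an $\mathcal{A}$-vector space of $k$-dimension equal to $\dim_k \mathcal{A} = 2$, hence $\mathcal{A}$-dimension $1$, leaving no room for such an $X$. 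Therefore $\mathcal{A} \cong k \times k$ or $\mathcal{A} \cong k[\varepsilon]$.

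For the freeness statement, note that $M$ is a faithful $\mathcal{A}$-module (since $\mathcal{A} \hookrightarrow \End_k(M)$) with $\dim_k M = \dim_k \mathcal{A} = 2$. In the case $\mathcal{A} \cong k \times k$, the primitive idempotents $e_1, e_2$ yield $M = e_1 M \oplus e_2 M$; faithfulness forces each $e_i M$ to be nonzero, hence each is $1$-dimensional over $k$, which immediately gives $M \cong \mathcal{A}$ as $\mathcal{A}$-modules. In the case $\mathcal{A} \cong k[\varepsilon]$, a $k[\varepsilon]$-module of $k$-dimension $2$ is either free of rank $1$ or isomorphic to $k \oplus k$ with $\varepsilon$ acting as zero; the latter contradicts faithfulness. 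Hence $M$ is free of rank $1$ over $\mathcal{A}$ in all remaining cases.

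The only mildly delicate step is the centralizer computation and the reduction to $k[\varphi]$; the classification of $2$-dimensional commutative $k$-algebras and the freeness verification are routine once the degeneracy subspace is brought in to exclude the field case.
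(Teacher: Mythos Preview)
Your proof is correct and uses the same two ingredients as the paper: the centralizer of a non-scalar $2\times 2$ matrix is $k[\varphi]$, and the degeneracy subspace is a proper nonzero $\mathcal{A}$-submodule of $M$. The only difference is order of operations --- the paper puts the non-central element into canonical form and uses the degeneracy subspace (as an eigenvector) to discard the irreducible-characteristic-polynomial case \emph{before} computing the centralizer, then reads off an explicit free generator of $M$ in each surviving case, whereas you compute the centralizer first and exclude the field case afterwards via an abstract faithfulness/dimension count. One small remark: your parenthetical about ``the two Jordan types'' does not cover the case where the characteristic polynomial of $\varphi$ is irreducible over $k$ (no Jordan form exists then), but the centralizer is still $k[\varphi]$ there since $M$ becomes a $1$-dimensional $k[\varphi]$-vector space, so your conclusion $\mathcal{A}=k[\varphi]$ is unaffected.
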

\begin{proof}
  View $\mathcal{A}$ as a commutative $k$-subalgebra of $M_2(k)$.
  Take $\mu \in \mathcal{A}$ non-central.  Changing the identification
  $M \cong k^2$, we may assume we are in one of three cases:
  \begin{itemize}
  \item $\mu = \begin{pmatrix} a & 0 \\ 0 & b \end{pmatrix}$ for some
    $a \ne b$ in $k$.
  \item $\mu = \begin{pmatrix} a & 0 \\ b & a \end{pmatrix}$ for some
    $a, b \in k$ with $b \ne 0$.
  \item $\mu = \begin{pmatrix} 0 & 1 \\ a & b \end{pmatrix}$ for some
    monic irreducible quadratic polynomial $x^2 - bx - a \in k[x]$.
  \end{itemize}
  The degeneracy subspace is a one-dimensional subspace of $k^2$,
  preserved by $\mu$, and so $\mu$ has an eigenvector.  This rules out
  the third case.

  In the first case, the $k$-subalgebra generated by $\mu$ is
  \begin{equation*}
    \mathcal{A}' = 
    \left\{
    \begin{pmatrix}
      x & 0 \\ 0 & y
    \end{pmatrix} :
    x, y \in k
    \right\} \cong k \times k.
  \end{equation*}
  Then $\mathcal{A} \supseteq \mathcal{A}'$.  In particular,
  $\mathcal{A}$ contains the matrix $\mu' = \begin{pmatrix} 1 & 0 \\ 0
    & 0 \end{pmatrix}$, and $\mathcal{A}$ lies in the centralizer of
  $\mu'$.  By inspection, this centralizer is $\mathcal{A}'$.  Thus
  $\mathcal{A} = \mathcal{A}' \cong k \times k$.  Also, the vector
  $(1,1)$ freely generates $k^2$ as an $\mathcal{A}'$-module.

  In the second case, the $k$-subalgebra generated by $\mu$ is
  \begin{equation*}
    \mathcal{A}'' =
    \left\{ 
    \begin{pmatrix}
      x & 0 \\ y & x
    \end{pmatrix} :
    x,y \in k\right\} \cong k[\varepsilon].
  \end{equation*}
  Then $\mathcal{A} \supseteq \mathcal{A}''$.  In particular,
  $\mathcal{A}$ contains the matrix $\mu'' = \begin{pmatrix} 0 & 0
    \\ 1 & 0 \end{pmatrix}$, and $\mathcal{A}$ lies in the centralizer
  of $\mu''$.  By inspection, this centralizer is $\mathcal{A}''$.
  Thus $\mathcal{A} = \mathcal{A}'' \cong k[\varepsilon]$.  Also, the
  vector $(1,0)$ freely generates $k^2$ as an $\mathcal{A}''$-module.
\end{proof}
\begin{lemma} \label{lem:a0a}
  The two algebras $\mathcal{A}_0$ and $\mathcal{A}$ are equal.
\end{lemma}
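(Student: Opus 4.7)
My plan is to prove the non-trivial inclusion $\mathcal{A} \subseteq \mathcal{A}_0$ by showing that $k \cdot I$ lies in $\mathcal{A}_0$. Combined with a non-central element $\mu \in \mathcal{A}_0$ supplied by Lemma~\ref{no-central}, this suffices: by Lemma~\ref{lem:matrix-nonsense} we have $\mathcal{A} = k \cdot 1 + k \cdot \mu$, and once $k \cdot I \subseteq \mathcal{A}_0$ is established, the ring structure of $\mathcal{A}_0$ gives $\lambda \mu = (\lambda I) \cdot \mu \in \mathcal{A}_0$ for every $\lambda \in k$, whence $\mathcal{A}_0 \supseteq k + k \mu = \mathcal{A}$.

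So fix $\lambda \in k$; I want to produce $\gamma \in R$ with $\wres(\gamma) = \lambda I$, that is, $\varsigma_2(K \cdot (1,\gamma)) = \{(v,\lambda v) : v \in M\} = \Theta_{\lambda I}$. Pick a free $\mathcal{A}$-module generator $e \in M$, available from Lemma~\ref{lem:matrix-nonsense}. By malleability, I would choose $\gamma \in K$ such that $(e,\lambda e) \in \varsigma_2(K \cdot (1,\gamma))$; this is precisely the same kind of single-vector prescription already used in the proof of Lemma~\ref{no-central}. By Proposition~\ref{prop:descend-to-A}, the specialization $\varsigma_2(K \cdot (1,\gamma))$ is an $\mathcal{A}$-submodule of $M^2$, so it contains the $\mathcal{A}$-submodule generated by $(e,\lambda e)$, namely $\{(ae, a \lambda e) : a \in \mathcal{A}\} = \{(v, \lambda v) : v \in M\} = \Theta_{\lambda I}$, using commutativity of $\mathcal{A}$ (Section~\ref{sec:kcomm}) and the fact that $\mathcal{A} \cdot e = M$.

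Both $\varsigma_2(K \cdot (1,\gamma))$ and $\Theta_{\lambda I}$ have $k$-dimension $2$, the former because $\varsigma$ is a $2$-inflator applied to a $1$-dimensional $K$-subspace, so the containment is forced to be equality. Hence $\varsigma_2(K \cdot (1,\gamma))$ is the graph of the endomorphism $\lambda I$, so by definition $\gamma \in R$ and $\wres(\gamma) = \lambda I$, completing the proof.

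The only subtle point is the malleability step: that the definition of malleability (Definition~5.31 of \cite{prdf3}) really permits prescribing containment of the single vector $(e,\lambda e)$ for arbitrary $\lambda \in k$. This should be immediate from the definition, since it is a direct analogue of the prescription $(1,0;0,1) \in \varsigma_2(\{(x,\alpha x) : x \in K\})$ used in Lemma~\ref{no-central}; the only change is that $(0,1)$ is replaced by its $k$-scalar multiple $\lambda \cdot e$. If malleability does not permit this in exactly the form stated, a workaround via $GL_2(k_0)$-equivariance (as in Lemma~\ref{no-central}) and tameness of $\gamma$ should give the same conclusion.
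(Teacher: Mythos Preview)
Your proof is correct, and the core idea---use malleability to produce a line whose specialization contains a chosen vector, then invoke Proposition~\ref{prop:descend-to-A} to upgrade this to the full graph of an endomorphism by counting dimensions---is exactly the paper's approach. The paper simply applies this argument directly to an arbitrary $\mu \in \mathcal{A}$: identifying $M$ with $\mathcal{A}$, it uses malleability to find $L$ with $k \cdot (1,\mu) \subseteq \varsigma_2(L)$, then notes that the $\mathcal{A}$-submodule generated by $(1,\mu)$ is $\{(x,\mu x) : x \in \mathcal{A}\}$, forcing $\varsigma_2(L)$ to be this graph. You instead apply the identical argument only to scalars $\lambda I$, and then use the ring structure of $\mathcal{A}_0$ together with $\mathcal{A} = k + k\mu$ to finish. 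This detour is unnecessary: your argument works verbatim with $(e,\mu e)$ in place of $(e,\lambda e)$ for any $\mu \in \mathcal{A}$, precisely because $\mathcal{A}$ is commutative and $e$ freely generates $M$ (the very facts you already invoke).

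One small point you flag but misdiagnose: malleability yields a line $L \subseteq K^2$, not directly a slope $\gamma \in K$. The issue is not whether malleability ``permits prescribing'' the vector $(e,\lambda e)$---it does, just as in Lemma~\ref{no-central}---but whether the resulting $L$ has the form $K \cdot (1,\gamma)$ rather than $K \cdot (0,1)$. The paper handles this explicitly: if $L = K \cdot (0,1)$ then $\varsigma_2(L) = 0 \oplus M$, which does not contain $(e,\lambda e)$ since $e \ne 0$. Your suggested fallback via $GL_2(k_0)$-equivariance and tameness is not what is needed here.
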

\begin{proof}
  Changing $M$ up to isomorphism, we may assume $M = \mathcal{A}$.
  Let $\mu$ be any element of $\mathcal{A}$; we will show $\mu \in
  \mathcal{A}_0$.  By malleability, there is a line $L \subseteq K^2$
  such that
  \begin{equation*}
    \varsigma_2(L) \supseteq k \cdot (1,\mu).
  \end{equation*}
  But $\varsigma_2(L)$ is an $\mathcal{A}$-submodule of $M^2$
  (Proposition~\ref{prop:descend-to-A}), and the
  $\mathcal{A}$-submodule generated by $(1,\mu)$ is
  \begin{equation*}
    \{ (x, \mu x) : x \in \mathcal{A}\}.
  \end{equation*}
  Thus
  \[ \varsigma_2(L) \supseteq \{(x,\mu x) : x \in \mathcal{A}\}.\]
  Both sides have length two as $k$-modules, and so equality holds.
  Let $a$ be the slope of $L$.  Evidently, $a \ne \infty$, or else
  $\varsigma_2(L)$ would be $0 \oplus \mathcal{A}$.  So $a \in R$ and
  $a$ specializes to $\mu$.  Therefore $\mu \in \mathcal{A}_0$.
\end{proof}
In particular, the natural homomorphism
\[ R \twoheadrightarrow \mathcal{A}_0 \hookrightarrow \mathcal{A}\]
is surjective.  If $I$ is the fundamental ideal (the kernel of $R
\to \End_k(M)$), then $R/I$ is isomorphic to $k \times k$ or to
$k[\varepsilon]$.

\begin{proposition}\label{exact-form}
  Fix any $\mathcal{A}$-module isomorphism $M \cong \mathcal{A}$.
  Then the specialization maps $\varsigma_n : \Sub_K(K^n) \to
  \Sub_k(\mathcal{A}^n)$ are given by the formula
  \begin{equation*}
    \varsigma_n(V) =
    \{(\wres(x_1),\ldots,\wres(x_n)) :
    (x_1,\ldots,x_n) \in V \cap R^n\}
  \end{equation*}
  where $\wres$ is the generalized residue map $R \twoheadrightarrow
  R/I \cong \mathcal{A}$.
\end{proposition}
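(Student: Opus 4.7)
Write $\widetilde{V}$ for the right-hand side. By Proposition~\ref{prop:descend-to-A}, $\varsigma_n(V)$ is an $\mathcal{A}$-submodule of $\mathcal{A}^n$, and so is $\widetilde{V}$: the map $\wres$ is a surjective $k_0$-algebra homomorphism $R \twoheadrightarrow \mathcal{A}$, and $V \cap R^n$ is an $R$-submodule of $R^n$, so the image $\widetilde{V}$ inherits the $\mathcal{A}$-action. I will prove the two inclusions separately.

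\emph{Inclusion $\widetilde{V} \subseteq \varsigma_n(V)$.} Given $(x_1,\ldots,x_n) \in V \cap R^n$, consider the line $L := K \cdot (1,x_1,\ldots,x_n) \subseteq K^{n+1}$. Writing $L = \bigcap_{i=1}^n L_i$, where $L_i := \{(y_0,y_1,\ldots,y_n) : y_i = x_i y_0\}$ is (after reordering coordinates) a direct sum of $K \cdot (1,x_i)$ with $n-1$ free coordinates, compatibility of $\varsigma$ with $\oplus$ and with intersections (Lemma~5.2.1 in \cite{prdf3}), together with $x_i \in R$, yields
\[
  \varsigma_{n+1}(L) = \{(a,\wres(x_1)a,\ldots,\wres(x_n)a) : a \in \mathcal{A}\}.
\]
Since $L \subseteq K \oplus V$, we have $\varsigma_{n+1}(L) \subseteq \mathcal{A} \oplus \varsigma_n(V)$, and taking $a = 1$ exhibits $(\wres(x_1),\ldots,\wres(x_n)) \in \varsigma_n(V)$.

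\emph{Inclusion $\varsigma_n(V) \subseteq \widetilde{V}$.} Here I reduce to the case of $K$-lines. Because $\varsigma_n$ is a lattice homomorphism on $\Sub_K(K^n)$, we have $\varsigma_n(V) = \sum_{L \subseteq V} \varsigma_n(L)$, and since $V \mapsto \widetilde{V}$ is functorial in $V$, it suffices to prove $\varsigma_n(L) \subseteq \widetilde{L}$ for each $K$-line $L \subseteq K^n$. After rescaling, $L = K \cdot (1,\alpha_2,\ldots,\alpha_n)$. If every $\alpha_i$ lies in $R$, then the same intersection computation as in the previous paragraph gives $\varsigma_n(L) = \mathcal{A} \cdot (1,\wres(\alpha_2),\ldots,\wres(\alpha_n))$, and this is visibly contained in $\widetilde{L}$, since $R \cdot (1,\alpha_2,\ldots,\alpha_n) \subseteq L \cap R^n$.

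The main obstacle is the case when some $\alpha_i$ is wild. By Lemma~\ref{lem:degen}, $\varsigma_n(L)$ then picks up the degeneracy subspace $X$ in the wild coordinates, and one must realize each element of $\varsigma_n(L)$ as $(\wres(tv_1),\ldots,\wres(tv_n))$ for some $t \in K$ with $tv \in R^n$. Equivalently, the $R$-ideal $R_w := \{t \in R : t\alpha_w \in R\}$ must map under $\wres$ onto the correct piece of $X \oplus X$ in the wild coordinate $w$. My plan is to invoke malleability (\cite{prdf3}, Definition~5.31) to manufacture enough such lifts $t$, combined with the explicit structure of $\mathcal{A} \cong k \times k$ or $k[\varepsilon]$ and of $X$ from Lemma~\ref{lem:matrix-nonsense}, handling the tame coordinates (where $\wres(t\alpha_i) = \wres(t)\wres(\alpha_i)$ is automatic) and the wild coordinates (where the degeneracy subspace must be matched) by separate case analyses.
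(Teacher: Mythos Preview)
Your argument for the inclusion $\widetilde{V} \subseteq \varsigma_n(V)$ is correct and matches the paper's proof.

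For the reverse inclusion, your reduction to lines is a reasonable idea, but the wild case is a genuine gap, not just an unfinished detail. For a line $L = K \cdot (1,\alpha)$ with $\alpha$ wild, you need $\widetilde{L} = X \oplus X$; equivalently, you need the ideal $\{t \in R : t\alpha \in R\}$ to surject onto $X \oplus X$ via $t \mapsto (\wres(t),\wres(t\alpha))$. Your plan to ``invoke malleability'' here is vague: malleability produces lines in $K^n$ whose specialization contains a prescribed $k$-line, but it does not directly give you elements of $R$ with prescribed $\wres$-behavior along a fixed wild direction. Moreover, with several wild coordinates $\alpha_{i_1},\ldots,\alpha_{i_r}$, the structure of $\varsigma_n(L)$ is not simply ``$X$ in each wild slot''---there can be correlations among the wild coordinates, and your case analysis would have to control all of them simultaneously.

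The paper avoids this entirely by a single trick: instead of decomposing $V$ into lines, it works in $K \oplus V$. Given $(b_1,\ldots,b_n) \in \varsigma_n(V)$, one has $(1,b_1,\ldots,b_n) \in \varsigma_{n+1}(K \oplus V)$, and malleability produces a line $L \subseteq K \oplus V$ with $\varsigma_{n+1}(L) \ni (1,b_1,\ldots,b_n)$. Because $\varsigma_{n+1}(L)$ is an $\mathcal{A}$-module (Proposition~\ref{prop:descend-to-A}) and $(1,b_1,\ldots,b_n)$ generates a free rank-one $\mathcal{A}$-module, length counting forces $\varsigma_{n+1}(L) = \mathcal{A}\cdot(1,b_1,\ldots,b_n)$. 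The presence of the unit $1$ in the first coordinate rules out the degenerate (wild) possibility for $L$, so $L = K\cdot(1,a_1,\ldots,a_n)$ with each $a_i \in R$ and $\wres(a_i) = b_i$. The extra coordinate is exactly what lets you sidestep the wild case.
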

\begin{proof}
  First suppose that $(a_1,\ldots,a_n) \in R^n \cap V$, and $a_i$
  specializes to $b_i \in \mathcal{A}$ for each $i$.  Then the line
  \begin{equation*}
    L = \{(x, a_1x, \ldots, a_nx) : x \in K\}
  \end{equation*}
  is contained in $K \oplus V$, and so
  \begin{equation*}
    \varsigma_{n+1}(L) = \{(x, b_1x, \ldots, b_nx) : x \in
    \mathcal{A}\} \subseteq \varsigma_{n+1}(K \oplus V) = \mathcal{A} \oplus \varsigma_n(V).
  \end{equation*}
  In particular, $(1, b_1, \ldots, b_n) \in \mathcal{A} \oplus
  \varsigma_n(V)$, and so
  \begin{equation*}
    (\wres(a_1),\ldots,\wres(a_n)) =
    (b_1,\ldots,b_n) \in \varsigma_n(V).
  \end{equation*}
  We have seen
  \begin{equation*}
    \varsigma_n(V) \supseteq \{(\wres(x_1),\ldots,\wres(x_n)) :
    (x_1,\ldots,x_n) \in V \cap R^n\}
  \end{equation*}
  Conversely, suppose that $(b_1,\ldots,b_n) \in \varsigma_n(V)$.
  Then
  \begin{equation*}
    (1, b_1, \ldots, b_n) \in \mathcal{A} \oplus \varsigma_n(V) =
    \varsigma_{n+1}(K \oplus V).
  \end{equation*}
  By malleability, there is a line $L \subseteq K \oplus V$ such that
  \begin{equation*}
    \varsigma_{n+1}(L) \supseteq k \cdot (1, b_1, \ldots, b_n) \ni (1,
    b_1, \ldots, b_n).
  \end{equation*}
  The left hand side is an $\mathcal{A}$-module, so
  \begin{equation*}
    \varsigma_{n+1}(L) \supseteq \mathcal{A} \cdot (1, b_1, \ldots, b_n).
  \end{equation*}
  Both sides have length two over $k$, so equality holds.  Now $L$
  must be the graph of a $K$-linear function $K \to K^n$; otherwise $L
  \subseteq 0 \oplus K^n$ and $\varsigma_{n+1}(L)$ would be contained
  in $0 \oplus \mathcal{A}^n$, which is visibly false.  Thus
  \begin{equation*}
    L = K \cdot (1, a_1, \ldots, a_n)
  \end{equation*}
  for some $a_i \in K$.  So
  \begin{equation*}
    \varsigma_{n+1}(\{(x,a_1x, \ldots, a_nx) : x \in K\}) = \{(x,
    b_1 x, \ldots, b_n x) : x \in \mathcal{A}\}.
  \end{equation*}
  Joining this with
  \begin{equation*}
    \varsigma_{n+1}(0 \oplus K^{i-1} \oplus 0 \oplus K^{n - i}) = 0
    \oplus \mathcal{A}^{i-1} \oplus 0 \oplus \mathcal{A}^{n-i},
  \end{equation*}
  we obtain
  \begin{align*}
    \varsigma_{n+1}(&\{(x,y_1,\ldots,y_{i-1},a_ix,y_{i+1},\ldots,y_n)
    : x,y_1,\ldots,y_n \in K\}) \\ = &
    \{(x,y_1,\ldots,y_{i-1},b_ix,y_{i+1},\ldots,y_n) :
    x,y_1,\ldots,y_n \in \mathcal{A}\}.
  \end{align*}
  By permutation invariance and $\oplus$-compatibility, we see
  \begin{equation*}
    \varsigma_2(\{(x, a_ix) : x \in K\}) = \{(x, b_i x) : x
    \in \mathcal{A}\}.
  \end{equation*}
  So each $a_i$ is in $R$, and $\wres(a_i) = b_i$.  The fact
  that $L \subseteq K \oplus V$ implies that $\vec{a} \in V$.  So we
  have shown that
  \begin{equation*}
    \varsigma_n(V) \subseteq \{(\wres(x_1),\ldots,\wres(x_n)) :
    (x_1,\ldots,x_n) \in V \cap R^n\}.  \qedhere
  \end{equation*}
\end{proof}

\begin{corollary}\label{cor:fof}
  $K = \Frac(R)$.
\end{corollary}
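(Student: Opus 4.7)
The plan is to prove $K = \Frac(R)$ by showing every $\alpha \in K$ lies in $\Frac(R)$, splitting on whether $\alpha$ is tame or wild in the sense of \cite{prdf3}, Definition~5.23. The two cases will use orthogonal inputs: the tame case exploits the characterization of tame elements from (\cite{prdf3}, Lemma~5.22), while the wild case uses the explicit formula for $\varsigma_n$ just established in Proposition~\ref{exact-form}.

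For tame $\alpha$, the subcase $\alpha \in R$ is trivial. Otherwise, by (\cite{prdf3}, Lemma~5.22), the ring $R$ contains all but at most two elements of the set $\{\alpha\} \cup \{1/(\alpha - c) : c \in k_0\}$, and $\alpha$ itself is one of the (at most two) exceptions. Hence at most one value of $c \in k_0$ can have $1/(\alpha - c) \notin R$. Since $\characteristic(k_0) \ne 2$ forces $|k_0| \geq 3$, some $c \in k_0$ will satisfy $r := 1/(\alpha - c) \in R \setminus \{0\}$, giving $\alpha = c + r^{-1} \in \Frac(R)$.

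For wild $\alpha$, let $A \subseteq M$ denote the degeneracy subspace of Lemma~\ref{lem:degen}, so that $\varsigma_2(V) = A \oplus A$ for $V := K \cdot (1,\alpha)$. Fix an $\mathcal{A}$-module identification $M \cong \mathcal{A}$; under this identification $A$ is still a nonzero $k$-subspace. Picking any nonzero $y \in A$, the element $(y, 0)$ lies in $A \oplus A = \varsigma_2(V)$, so Proposition~\ref{exact-form} furnishes a pair $(x_1, x_2) \in V \cap R^2$ with $\wres(x_1) = y \ne 0$ and $\wres(x_2) = 0$. In particular $x_1 \ne 0$, and the defining condition of $V$ reads $x_2 = \alpha x_1$, so $\alpha = x_2/x_1 \in \Frac(R)$.

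Neither step presents a real obstacle: the tame case is a routine manipulation of the ``at most two exceptions'' statement combined with the $\characteristic \ne 2$ hypothesis, while the wild case is essentially immediate from the explicit formula for $\varsigma$ applied to the degenerate specialization $A \oplus A$. The key insight one wants to remember from this proof is that the degeneracy subspace, which was originally the obstruction to $\alpha$ being in $R$, is precisely what lets us produce two nonzero elements of $R$ whose ratio is $\alpha$.
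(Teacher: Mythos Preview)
Your proof is correct, but the tame/wild split is unnecessary. The argument you give in the wild case works verbatim for \emph{every} $\alpha \in K^\times$, and this is exactly what the paper does: since $\varsigma$ is a $2$-inflator, the line $V = K \cdot (1,\alpha)$ always specializes to a $2$-dimensional (hence nonzero) $k$-subspace of $k[\varepsilon]^2$, regardless of whether $\alpha$ is tame or wild. Picking any nonzero element of $\varsigma_2(V)$ and applying Proposition~\ref{exact-form} yields $(x_1,x_2) \in V \cap R^2$ with $(\wres(x_1),\wres(x_2)) \ne (0,0)$; since any nonzero point on $V$ has nonzero first coordinate, $x_1 \ne 0$ and $\alpha = x_2/x_1 \in \Frac(R)$. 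Your separate tame-case argument via Lemma~5.22 of \cite{prdf3} is valid (and incidentally $|k_0| \ge 2$ already suffices there, not $|k_0| \ge 3$), but it buys nothing: the explicit formula already handles the tame case uniformly. The moral you extract at the end---that the degeneracy subspace produces the denominator---is a nice observation for wild $\alpha$, but the real point is simpler: $\varsigma_2$ never kills a line.
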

\begin{proof}
  Let $\alpha$ be any element of $K^\times$.  By
  Proposition~\ref{exact-form},
  \begin{equation*}
    \varsigma_2(\{(x, \alpha x) : x \in K\}) = \{(\wres(x),\wres(y)) :
    x,y \in R,~ y/x = \alpha\}.
  \end{equation*}
  The fact that $\varsigma_2(\{(x, \alpha x) : x \in K\}) \ne 0$
  implies that there exist non-trivial $x, y \in R$ such that $y/x =
  \alpha$.
\end{proof}

\subsection{Ruling out the split case} \label{ssec:nosplit}
Let $I$ denote the \emph{fundamental ideal} (\cite{prdf3},
Definition~5.8), i.e., the kernel of the generalized residue map
\[ \wres : R \to \mathcal{A}.\]
\begin{remark}
  Every maximal ideal of $R$ comes from a maximal ideal of the
  artinian ring $R/I \cong \mathcal{A}$.  Indeed, this follows from
  the fact that $I \subseteq Jac(R)$ (\cite{prdf3},
  Proposition~5.7.4).
\end{remark}
For example, if $\mathcal{A} \cong k \times k$, then $R$ has two
maximal ideals.
\begin{proposition}\label{prop:no-split}
  The algebra $\mathcal{A}$ is isomorphic to $k[\varepsilon]$.
\end{proposition}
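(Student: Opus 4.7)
The strategy is proof by contradiction: assuming $\mathcal{A} \cong k \times k$, I will produce a valuation ring $\Oo$ of $K$ such that $R$ contains a nonzero ideal of $\Oo$, making $\varsigma$ weakly multi-valuation type (in fact, weakly valuation type) and contradicting the Weak Assumption. Using the identifications from Lemmas~\ref{lem:matrix-nonsense} and~\ref{lem:a0a}, take $M = \mathcal{A} = k \times k$. The degeneracy subspace $X \subseteq M$ is an $\mathcal{A}$-submodule (by the corollary to Proposition~\ref{prop:descend-to-A}), hence one of the two simple factors of $\mathcal{A}$; after swapping factors, assume $X = 0 \oplus k$. Decompose the generalized residue map as $\wres = (\wres_1, \wres_2) : R \twoheadrightarrow k \times k$, and let $\mathfrak{m}_i = \ker \wres_i$ be the two maximal ideals of $R$, so $\mathfrak{m}_1 \cap \mathfrak{m}_2 = I$.

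The first step is to verify that $\Oo := R_{\mathfrak{m}_2}$ is a valuation ring of $K$. For any wild $\alpha \in K$, Proposition~\ref{exact-form} applied to $\varsigma_2(K \cdot (1, \alpha)) = X \oplus X$ unpacks, with $J_\alpha := R \cap \alpha^{-1} R$, to the inclusions $J_\alpha \subseteq \mathfrak{m}_1$ and $\alpha J_\alpha \subseteq \mathfrak{m}_1$, together with the surjectivity of the map $J_\alpha \to k \times k$ sending $x \mapsto (\wres_2(x), \wres_2(\alpha x))$. Taking $x \in J_\alpha$ with $\wres(x) = \wres(\alpha x) = (0, 1)$ expresses $\alpha = (\alpha x)/x$ as a ratio of two elements of $R \setminus \mathfrak{m}_2$, so $\alpha, 1/\alpha \in \Oo^\times$. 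For tame $\alpha$, a short case analysis on which of $\alpha$, $1/\alpha$, or $1/(\alpha - c)$ lies in $R$ (and on the value of $\wres_2$ on that element) yields $\alpha \in \Oo$ or $1/\alpha \in \Oo$. Combined with $K = \Frac(R)$ from Corollary~\ref{cor:fof}, this shows $\Oo$ is a valuation ring of $K$.

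It remains to exhibit a nonzero $\Oo$-ideal inside $R$, equivalently to show the conductor $(R : \Oo) := \{x \in K : x \Oo \subseteq R\}$ is nonzero. The surjectivity from the wild analysis provides, for each $\lambda \in k$, an element $x_\lambda \in \mathfrak{m}_1 \setminus \mathfrak{m}_2$ with $(\alpha - \lambda) x_\lambda \in I$; combining these rigidity constraints across several choices of $\lambda$ (and possibly passing to a convenient mutation, cf.~Remark~\ref{rem:inherit2}, since the hypotheses are mutation-invariant) should force a nonzero conductor. This is the main obstacle: while the valuation-ring step is a direct computation with Proposition~\ref{exact-form} and the tame/wild dichotomy, conductor nontriviality requires more delicate use of the wild elements' rigid position at $\mathfrak{m}_1$ to produce an element of $R$ that remains in $R$ after dividing by any element of $R \setminus \mathfrak{m}_2$.
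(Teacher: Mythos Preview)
Your proposal is incomplete: the conductor step is admittedly unfinished, and you have no concrete argument for why $(R:\Oo)$ is nonzero. Without that, the strategy of exhibiting $\varsigma$ as weakly multi-valuation type does not close.

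More to the point, you have already walked past the contradiction. You correctly observe that for a wild element $\alpha$, the map $J_\alpha \to k \times k$ given by $x \mapsto (\wres_2(x),\wres_2(\alpha x))$ is surjective. But surjectivity onto $k\times k$ is absurd: pick $x \in J_\alpha$ with image $(0,1)$ and $x' \in J_\alpha$ with image $(1,0)$. Then $x,\alpha x,x',\alpha x' \in R$, and since $(\alpha x)x' = x(\alpha x')$ in the commutative ring $R$, applying $\wres_2$ gives
\[
1 = \wres_2(\alpha x)\cdot\wres_2(x') = \wres_2(x)\cdot\wres_2(\alpha x') = 0.
\]
This is exactly the paper's proof. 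The entire localization $\Oo = R_{\mathfrak m_2}$, the tame/wild case split for the valuation-ring property, and the conductor discussion are all unnecessary: the split assumption $\mathcal A \cong k\times k$ dies the moment you have two preimages in $J_\alpha$ with second-coordinate images $(0,1)$ and $(1,0)$.
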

\begin{proof}
  Otherwise, $\mathcal{A} \cong k \times k$.  Let $p_1, p_2 : R \to k$
  be the two maps such that
  \[ \wres(x) = (p_1(x),p_2(x)) \in k \times k \cong \mathcal{A}.\]
  The two maximal ideals of $R$ are the kernels of $p_1$ and $p_2$.

  The degeneracy subspace of $\varsigma$ is some rank 1 submodule of
  $\mathcal{A}$, necessarily $k \times 0$ or $0 \times k$.  Without
  loss of generality, it is $0 \times k$.

  Take some wild element $a \in K$.  By Proposition~\ref{exact-form}
  and the definition of the degeneracy subspace,
  \begin{align*}
  \varsigma_2(K \cdot (1,a)) &=
      \{(\wres(x),\wres(y)) : x, y \in R, ~ y/x = a\} \\
    &= \{ (p_1(x),p_2(x);p_1(y),p_2(y)) : x, y \in R, ~ y/x = a\} \\
    &= \{(0,t;0,s) : t, s \in k\}.
  \end{align*}
  Therefore, we can find $x, y, x', y' \in R$ such that
  \begin{align*}
    (p_1(x),p_2(x)) &= (0,0) \\
    (p_1(y),p_2(y)) &= (0,1) \\
    y &= ax \\
    (p_1(x'),p_2(x')) &= (0,1) \\
    (p_1(y'),p_2(y')) &= (0,0) \\
    y' &= ax'.
  \end{align*}
  Then $xy' = yx'$, and we obtain a contradiction:
  \begin{equation*}
    1 = p_2(y)p_2(x') = p_2(yx') = p_2(xy') = p_2(x)p_2(y') = 0. \qedhere
  \end{equation*}
\end{proof}

\begin{corollary}
  The fundamental ring $R$ is a local ring.
\end{corollary}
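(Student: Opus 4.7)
The plan is to combine Proposition~\ref{prop:no-split} with the remark preceding it. Proposition~\ref{prop:no-split} gives $\mathcal{A} \cong k[\varepsilon]$, and since $k[\varepsilon]$ is a local ring with unique maximal ideal $k\varepsilon$, the quotient $R/I \cong \mathcal{A}$ is local. So my strategy is simply to lift this uniqueness from $R/I$ back up to $R$.

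Concretely, I would proceed as follows. Let $\mathfrak{m}$ be a maximal ideal of $R$. By the remark just before Proposition~\ref{prop:no-split}, which appeals to Proposition~5.7.4 of \cite{prdf3} to give $I \subseteq \operatorname{Jac}(R)$, every maximal ideal of $R$ contains $I$ and thus corresponds to a maximal ideal of $R/I$. Now $R/I \cong \mathcal{A} \cong k[\varepsilon]$ has exactly one maximal ideal, namely the image of $\{x \in R : \wres(x) \in k\varepsilon\}$. Hence $R$ has exactly one maximal ideal, and is local.

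There is essentially no obstacle: once Proposition~\ref{prop:no-split} rules out the $k \times k$ case, locality of $R$ is a direct consequence of the Jacobson-radical containment $I \subseteq \operatorname{Jac}(R)$, which was already established in \cite{prdf3}. The only thing to verify is that both ingredients are really in place, and both are already cited in the preceding remark.
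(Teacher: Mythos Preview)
Your proof is correct and follows the same approach as the paper's one-line argument, just with more detail: the paper simply says ``Its maximal ideals come from $k[\varepsilon]$, which is a local ring,'' which is exactly the lifting you spell out using $I \subseteq \operatorname{Jac}(R)$ and the identification $R/I \cong k[\varepsilon]$.
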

\begin{proof}
  Its maximal ideals come from $k[\varepsilon]$, which is a local ring.
\end{proof}
The following corollary will be useful later:
\begin{corollary}\label{cor:det-tr}
  If $\varsigma : \Dir_K(K) \to \Dir_k(k^2)$ satisfies the Strong
  Assumptions, and $a \in K$ specializes to a matrix $\mu
  = \begin{pmatrix} b & c \\ d & e \end{pmatrix}$, then $\mu$ must
  have a repeated eigenvalue, and so
  \[ (b + e)^2 = (\Tr(\mu))^2 = 4 \det(\mu) = 4(be - cd).\]
\end{corollary}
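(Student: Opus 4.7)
The plan is to exploit the fact that, by Proposition~\ref{prop:no-split} together with Lemma~\ref{lem:a0a}, the image of $\wres$ sits inside the subalgebra $\mathcal{A} \cong k[\varepsilon]$ of $M_2(k)$. In particular, the matrix $\mu$ is in $\mathcal{A}_0 = \mathcal{A}$, so it represents (in some basis of $k^2$) the action of an element of $k[\varepsilon]$ on a two-dimensional $k$-vector space.

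The next step is to identify what this action looks like intrinsically. By Lemma~\ref{lem:matrix-nonsense}, $M = k^2$ is free of rank $1$ as an $\mathcal{A}$-module, so picking a generator identifies $M$ with $\mathcal{A} = k[\varepsilon]$ as $\mathcal{A}$-modules. Under this identification, $\mu$ acts on $k[\varepsilon]$ as multiplication by some $\lambda = s + t\varepsilon \in k[\varepsilon]$. Since $\Tr$ and $\det$ are basis-independent, it suffices to compute them in the basis $\{1,\varepsilon\}$ of $k[\varepsilon]$: multiplication by $s + t\varepsilon$ is represented by $\begin{pmatrix} s & 0 \\ t & s \end{pmatrix}$, whose trace is $2s$ and whose determinant is $s^2$. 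Therefore
\[
(\Tr \mu)^2 = (2s)^2 = 4s^2 = 4\det\mu,
\]
which is exactly the discriminant equation $(b+e)^2 = 4(be-cd)$. Equivalently, the characteristic polynomial $(t-s)^2$ has the repeated root $s$, so $\mu$ has a repeated eigenvalue.

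There is essentially no obstacle beyond invoking the earlier structural results, since once $\mathcal{A} \cong k[\varepsilon]$ and $M$ is free of rank one over $\mathcal{A}$, the statement is a routine computation on dual numbers. The only minor subtlety is noting that any $k$-basis of $M$ may be used to compute $\Tr$ and $\det$, so we are free to choose the $\mathcal{A}$-adapted basis $\{1,\varepsilon\}$ when doing the calculation.
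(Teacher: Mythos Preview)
Your proof is correct and follows essentially the same approach as the paper: both use Proposition~\ref{prop:no-split} to ensure $\mathcal{A} \cong k[\varepsilon]$ rather than $k \times k$, and then deduce that $\mu$ is conjugate to a matrix of the form $\begin{pmatrix} s & 0 \\ t & s \end{pmatrix}$. Your version is slightly more streamlined in that it invokes the freeness of $M$ over $\mathcal{A}$ directly to change basis, whereas the paper separately handles the central case and then appeals to the case analysis in the proof of Lemma~\ref{lem:matrix-nonsense}; but the substance is the same.
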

\begin{proof}
  The element $a$ lies in $R$ and its image in $\mathcal{A}$ is $\mu$.
  If $\mu$ is central, the identity is clear.  Otherwise, the proof of
  Lemma~\ref{lem:matrix-nonsense} shows that $\mu$ must be conjugate to a
  matrix of the form $\begin{pmatrix} x & y \\ 0 & x \end{pmatrix}$ or
  $\begin{pmatrix} x & 0 \\ 0 & y \end{pmatrix}$.  In the second case,
  $\mathcal{A} \cong k \times k$, contradicting
  Proposition~\ref{prop:no-split}.  So we may assume $\mu
  = \begin{pmatrix} x & y \\ 0 & x \end{pmatrix}$, and then the
  desired identity is clear.
\end{proof}
Now that we have identified $\mathcal{A}$, we can specify the
degeneracy locus:
\begin{lemma}\label{lem:which-degen}
  Under any isomorphism of $\mathcal{A}$-modules $M \cong
  \mathcal{A}$, the degeneracy subspace is the principal ideal
  $k\varepsilon = (\varepsilon) \lhd k[\varepsilon] \cong \mathcal{A}$.
\end{lemma}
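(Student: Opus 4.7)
The plan is short: the statement collapses to the classification of ideals in the ring of dual numbers, combined with a dimension count. The two key inputs from earlier in the section are that the degeneracy subspace $X \subseteq M$ is an $\mathcal{A}$-submodule of $M$ (the corollary following Proposition~\ref{prop:descend-to-A}) and that $M$ is free of rank $1$ over $\mathcal{A}$ (Lemma~\ref{lem:matrix-nonsense}, combined with Proposition~\ref{prop:no-split} which identifies $\mathcal{A}$ with $k[\varepsilon]$). Given any $\mathcal{A}$-module isomorphism $M \cong \mathcal{A} = k[\varepsilon]$, the subspace $X$ is transported to an ideal of $k[\varepsilon]$.

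Next, I would enumerate the ideals of $k[\varepsilon]$. Any element $a + b\varepsilon$ with $a \ne 0$ is a unit, with inverse $a^{-1} - a^{-2}b\varepsilon$. Hence the only ideals are $0$, $(\varepsilon) = k\varepsilon$, and $k[\varepsilon]$ itself, of $k$-dimensions $0$, $1$, and $2$ respectively. On the other hand, by Lemma~\ref{lem:degen}, the degeneracy subspace is a simple $k$-submodule of $M$, so has $k$-dimension $1$. Matching $k$-dimensions forces the image of $X$ to be $k\varepsilon$.

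The uniqueness of the non-trivial ideal $(\varepsilon) \lhd k[\varepsilon]$ is what makes the conclusion independent of the chosen isomorphism $M \cong \mathcal{A}$, so the lemma indeed holds for \emph{any} such identification, as stated. I do not anticipate any genuine obstacle here; everything has already been set up in \S\ref{ssec:a} and \S\ref{ssec:nosplit}, and the only substantive input is the trivial fact that the ring of dual numbers is a local ring with a unique proper non-zero ideal.
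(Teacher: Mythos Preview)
Your proof is correct and follows essentially the same approach as the paper's: both arguments reduce to the observation that $k[\varepsilon]$ has exactly three submodules (ideals), and $k\varepsilon$ is the unique one of $k$-dimension $1$. The paper states this in a single sentence, while you have spelled out the supporting citations and the enumeration of ideals more explicitly.
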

\begin{proof}
  There are only three $k[\varepsilon]$-submodules of $k[\varepsilon]$, and
  $k\varepsilon$ is the only one having dimension 1 over $k$.
\end{proof}
We summarize the picture in the following Theorem.
\begin{theorem}\label{thm:so-far}
  Let $\varsigma$ be an isotypic malleable $k_0$-linear 2-inflator on
  a field $K$ with $\characteristic(K) \ne 2$.  Suppose that no
  mutation of $\varsigma$ is weakly of multi-valuation type.  Then
  there are
  \begin{itemize}
  \item A field $k$ extending $k_0$.
  \item A subring $R \subseteq K$.
  \item An ideal $I \lhd R$
  \item An isomorphism of $k_0$-algebras $R/I \cong k[\varepsilon] :=
    k[\varepsilon]/(\varepsilon^2)$
  \end{itemize}
  such that $\varsigma$ is isomorphic to
  \begin{align*}
    \varsigma : \Dir_K(K) &\to \Dir_k(k[\varepsilon]) \\
    \varsigma_n(V) &=
    \{(\wres(x_1),\ldots,\wres(x_n)) : \vec{x} \in V
    \cap R^n\}
  \end{align*}
  where $\wres$ is the quotient map \[ R \twoheadrightarrow R/I \cong
  k[\varepsilon].\]  Moreover,
  \begin{itemize}
  \item $R$ is the fundamental ring of $\varsigma$, $I$ is the
    fundamental ideal, and $\wres$ is the generalized residue
    map (in the sense of Definition~5.8 in \cite{prdf3}).
  \item $R$ is a local ring, whose unique maximal ideal $\mm$ is the
    pullback of $k \cdot \varepsilon$ along $\wres$.
  \item $\Frac(R) = K$.
  \end{itemize}
\end{theorem}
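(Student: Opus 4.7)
The statement is a summary theorem, so the plan is to assemble the pieces already established in Sections~\ref{sec:isotyp}--\ref{sec:canform}, tracing how the Strong Assumptions force the stated structure.

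The first move is to reduce to the concrete setting $S = k$, $M = k^2$. This is the consequence of isotypy plus Theorem~2.7 from \cite{prdf3}: writing $M = A \oplus B$ with $A \cong B$ simple, the directory $\Dir_S(M)$ is isomorphic to $\Dir_k(k^2)$ for some division $k_0$-algebra $k$. The proposition of \S\ref{sec:kcomm} then promotes $k$ to a commutative field, so we are working with $\varsigma : \Dir_K(K) \to \Dir_k(k^2)$. Next, I would cite Lemma~\ref{no-central}, Lemma~\ref{lem:matrix-nonsense}, and Lemma~\ref{lem:a0a} to conclude that the $k$-algebra $\mathcal{A}$ generated by the image $\mathcal{A}_0$ of $R$ in $\End_k(k^2)$ satisfies $\mathcal{A}_0 = \mathcal{A}$ and is isomorphic to $k \times k$ or $k[\varepsilon]$; Proposition~\ref{prop:no-split} then rules out the split case, giving $\mathcal{A} \cong k[\varepsilon]$.

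With this in hand, the natural surjection $R \twoheadrightarrow \mathcal{A}_0 = \mathcal{A}$ is nothing but the generalized residue map $\wres$, with kernel the fundamental ideal $I$; so $R/I \cong k[\varepsilon]$ as $k_0$-algebras. By Lemma~\ref{lem:matrix-nonsense}, $M$ is free of rank one over $\mathcal{A}$, so fixing an $\mathcal{A}$-module isomorphism $M \cong \mathcal{A} \cong k[\varepsilon]$ puts us in position to invoke Proposition~\ref{exact-form}, which is exactly the asserted explicit formula
\begin{equation*}
  \varsigma_n(V) = \{(\wres(x_1),\ldots,\wres(x_n)) : \vec x \in V \cap R^n\}.
\end{equation*}

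For the trailing bullet points: the identification of $R$, $I$, and $\wres$ with the fundamental ring, fundamental ideal, and generalized residue map is definitional, tracked through the identifications above. Locality of $R$ is the corollary following Proposition~\ref{prop:no-split}: every maximal ideal of $R$ contains $I \subseteq \mathrm{Jac}(R)$ by (\cite{prdf3}, Proposition~5.7.4), and $k[\varepsilon]$ has a unique maximal ideal $(\varepsilon) = k\varepsilon$, so $\mm = \wres^{-1}(k\varepsilon)$. Finally, $\Frac(R) = K$ is Corollary~\ref{cor:fof}. The only part requiring genuine argument beyond bookkeeping is the choice of the $\mathcal{A}$-module isomorphism $M \cong k[\varepsilon]$ used to match the statement's target directory $\Dir_k(k[\varepsilon])$, but this is harmless since all relevant objects are determined up to isomorphism of directories. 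There is no real obstacle here; the substance lies in the preceding sections, and what remains is an assembly job.
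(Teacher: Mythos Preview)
The proposal is correct and takes essentially the same approach as the paper: Theorem~\ref{thm:so-far} is explicitly presented as a summary (``We summarize the picture in the following Theorem''), with no separate proof, and your assembly of the preceding results (commutativity of $k$, Lemmas~\ref{no-central}, \ref{lem:matrix-nonsense}, \ref{lem:a0a}, Propositions~\ref{exact-form} and \ref{prop:no-split}, Corollary~\ref{cor:fof}, and the corollary on locality) is exactly what the paper intends.
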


\section{The associated valuation} \label{sec:the-val}
Continue the Strong Assumptions of \S\ref{sec:canform}.  In light of
Theorem~\ref{thm:so-far}, we assume that $\varsigma$ has the form
\begin{align*}
  \varsigma : \Dir_K(K) & \to \Dir_k(k[\varepsilon]) \\
  \varsigma_n : \Sub_K(K^n) &\to \Sub_k(k[\varepsilon]^n) \\
  V & \mapsto     \{(\wres(x_1),\ldots,\wres(x_n)) : \vec{x} \in V
    \cap R^n\}
\end{align*}
where $\wres : R \twoheadrightarrow k[\varepsilon]$ is the generalized
residue map.  The fundamental ideal $I \lhd R$ is the kernel of
$\wres$.

\subsection{Finitely generated ideals} \label{ssec:fgi}
Let $Q$ be the set of $a \in R$ such that $\wres(a)$ lies in
$k$, i.e.,
\[ \wres(a) = x + 0\varepsilon\]
for some $x \in k$.  The set $Q$ is a $k_0$-subalgebra of $R$.  Note
that $I$ is an ideal in $Q$.  Moreover, if $a \in Q \setminus I$, then
\begin{equation*}
  \wres(a) = x + 0\varepsilon
\end{equation*}
for some non-zero $x$, and so $a \in R^\times$.  Then
\begin{equation*}
  \wres(a^{-1}) = (x + 0 \varepsilon)^{-1} = x^{-1} + 0\varepsilon,
\end{equation*}
so that $a^{-1} \in Q$.  Thus $Q$ is a local ring and $I$ is its
maximal ideal.
\begin{lemma}\label{three-generators}
  Let $a, b, c$ be three elements of $K$.  Then there exist $x, y, z
  \in Q$ such that $ax + by + cz = 0$, and at least one of $x, y, z$
  is 1.  The same holds for $R$ instead of $Q$.
\end{lemma}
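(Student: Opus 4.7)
The plan is to derive this from the explicit formula in Theorem~\ref{thm:so-far} by a single dimension count inside $k[\varepsilon]^3$. Set
\[ V := \{(x,y,z) \in K^3 : ax + by + cz = 0\}.\]
If $a = b = c = 0$ the triple $(1,0,0)$ already works, so I may assume $V$ has $K$-dimension exactly $2$. Because $\varsigma$ is a $2$-inflator, $\varsigma_3(V)$ is a $k$-subspace of $k[\varepsilon]^3$ of $k$-dimension exactly $4$.

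The next step is to view $k[\varepsilon]^3 = k^3 \oplus (k\varepsilon)^3$ as a $6$-dimensional $k$-vector space, with $k^3$ as the ``residue part.'' Since $\dim_k \varsigma_3(V) + \dim_k k^3 = 4 + 3 > 6$, the intersection $\varsigma_3(V) \cap k^3$ has $k$-dimension at least $1$, so it contains a non-zero tuple $(r_1, r_2, r_3)$ all of whose entries lie in $k \subseteq k[\varepsilon]$. By the explicit description of $\varsigma_3$ in Theorem~\ref{thm:so-far}, this tuple lifts to some $(x_1, x_2, x_3) \in V \cap R^3$ with $\wres(x_i) = r_i$ for each $i$. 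Since each $r_i$ lies in $k$, each $x_i$ belongs to $Q$ by the definition of $Q$; since at least one $r_i$ is non-zero, the corresponding $x_i$ has non-zero residue and is therefore a unit of the local ring $Q$ (whose maximal ideal is $I$).

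Finally, multiplying the relation $ax_1 + bx_2 + cx_3 = 0$ through by $x_i^{-1} \in Q$ (for some $i$ with $x_i \in Q^{\times}$) produces a triple in $Q^3$ with one coordinate equal to $1$, satisfying the same linear relation. The $R$-version is immediate since $Q \subseteq R$. I do not expect any real obstacle in executing this plan: the whole argument rests on two already-established inputs, namely that $\varsigma$ is a $2$-inflator (so $\dim_k \varsigma_3(V) = 2 \dim_K V = 4$) and the explicit formula for $\varsigma_3$ in terms of $\wres$. The only mildly delicate point is ensuring that the intersection forces \emph{some} coordinate to be a unit rather than just living in $I$, but this is automatic from the dimension count, which produces a tuple with non-zero image in $k^3$ rather than merely in $(k\varepsilon)^3$.
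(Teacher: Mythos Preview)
Your proposal is correct and essentially identical to the paper's own proof: both compute $\dim_k \varsigma_3(V) = 4$, intersect with the $3$-dimensional subspace $k^3 \subseteq k[\varepsilon]^3$ by a dimension count, lift via the explicit formula for $\varsigma_3$ to obtain a triple in $Q^3$ with at least one unit, and then divide through. Your handling of the trivial case $a=b=c=0$ is slightly more explicit than the paper's, but otherwise the arguments coincide.
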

\begin{proof}
  Consider the vector space $V = \{(x,y,z) \in K^3 : ax + by + cz =
  0\}$.  Then
  \begin{equation*}
    \varsigma_3(V) = \{(\wres(x),\wres(y),\wres(z)) : (x,y,z) \in R^3 \text{ and } ax + by + cz = 0\}.
  \end{equation*}
  Also,
  \begin{equation*}
    \dim_k(\varsigma_3(V)) = 2 \cdot \dim_K(V) = 4.
  \end{equation*}
  Counting dimensions, $\varsigma_3(V)$ must have non-trivial
  intersection with the subspace
  \begin{equation*}
    k^3 = \{(s+0\varepsilon, t + 0\varepsilon, u + 0\varepsilon) : s, t, u \in
    k^3\} \subseteq k[\varepsilon]^3.
  \end{equation*}
  Therefore, there exist $x, y, z \in R$ such that
  \begin{align*}
    \wres(x) &= s + 0\varepsilon \\
    \wres(y) &= t + 0\varepsilon \\
    \wres(z) &= u + 0\varepsilon
  \end{align*}
  for some $s, t, u \in k$, not all zero.  Then $x, y, z \in Q$ and at
  least one of the three is in $Q^\times$.  If $x \in Q^\times$, we may replace $x,
  y, z$ with $x/x, y/x, z/x$, and arrange for $x = 1$.  This handles
  the case of $Q$, and the case of $R$ follows as $R \supseteq Q$.
\end{proof}
We shall return to the ring $Q$ in \S\ref{sec:neut}.
\begin{corollary}
  Any finitely-generated $R$-submodule of $K$ is generated by at most
  two elements.  In particular, any ideal of $R$ is generated by at
  most two elements.
\end{corollary}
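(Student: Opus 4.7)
The plan is a straightforward induction on the number of generators, using Lemma~\ref{three-generators} to knock off one generator at a time until only two remain.

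Suppose $M \subseteq K$ is an $R$-submodule generated by $a_1, a_2, \ldots, a_n$ with $n \geq 3$. I would apply Lemma~\ref{three-generators} to the three elements $a_1, a_2, a_3 \in K$ to obtain $x_1, x_2, x_3 \in R$ (in fact in $Q$, though I do not need this) satisfying $a_1 x_1 + a_2 x_2 + a_3 x_3 = 0$ with at least one of the $x_i$ equal to $1$. After relabeling if necessary, say $x_3 = 1$. Then
\begin{equation*}
  a_3 = -x_1 a_1 - x_2 a_2 \in R a_1 + R a_2,
\end{equation*}
so $a_3$ is redundant and $M = R a_1 + R a_2 + R a_4 + \cdots + R a_n$ is generated by $n-1$ elements. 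Iterating, $M$ is generated by at most two elements.

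The second clause follows immediately, since any finitely generated ideal of $R$ is in particular a finitely generated $R$-submodule of $K$. The main (and only) obstacle would have been the existence of a suitable nontrivial relation among three elements of $K$ with coefficients in $R$ and with one coefficient equal to $1$; this is precisely the content of Lemma~\ref{three-generators}, so the argument reduces to a one-line reduction step.
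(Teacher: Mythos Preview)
Your proof is correct and follows essentially the same approach as the paper: reduce to the case of three generators and use Lemma~\ref{three-generators} to show one of them is redundant. The paper's proof is just a terser version of your argument.
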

\begin{proof}
  It suffices to consider the case of three generators: $R \cdot a +
  R \cdot b + R \cdot c \le K$.  Then the lemma implies that one
  of $a, b, c$ is in the $R$-submodule generated by the other two.
\end{proof}

\subsection{The integral closure of $R$} \label{sec:is-a-val}
We let $\mathfrak{p}$ denote the unique maximal ideal of $R$, i.e.,
the set of $x \in R$ such that $\wres(x)$ has the form $0 + t
\varepsilon$ for some $t \in k$.  Because $R$ is local, $R^\times = R
\setminus \pp$.  Note that $\pp$ is the pullback of the principal
ideal $(\varepsilon) = k\varepsilon \lhd k[\varepsilon]$ along $\wres(-)$.
\begin{lemma}\label{discriminant}
  Suppose $\alpha \in K$ satisfies a monic quadratic equation over
  $R$:
  \begin{equation*}
    \alpha^2 + b \alpha + c = 0
  \end{equation*}
  for some $b, c \in R$.  If $\alpha$ is wild, then $b^2 - 4c \in
  \pp$.
\end{lemma}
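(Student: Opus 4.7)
I plan to prove this in three steps: a reduction via completing the square, an explicit specialization calculation, and a contradiction argument.

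\textbf{Reduction to $b=0$.} Since $\characteristic(K) \neq 2$, $1/2 \in k_0 \subseteq R$, so $\tilde\alpha := \alpha + b/2$ is a well-defined element of $K$ with $\tilde\alpha \notin R$ (else $\alpha \in R$) and $\tilde\alpha^2 = (b^2 - 4c)/4$. The monic quadratic satisfied by $\tilde\alpha$ is $T^2 - (b^2-4c)/4$, whose discriminant is again $b^2 - 4c$, so replacing $\alpha$ by $\tilde\alpha$ does not change the quantity we want to land in $\pp$. I verify that $\tilde\alpha$ is still wild by computing $\varsigma_2(K \cdot (1,\tilde\alpha))$ via Proposition~\ref{exact-form}: the conditions $(x, \tilde\alpha x) \in R^2$ reduce to $(x, \alpha x) \in R^2$ (since $b/2 \in R$), and $\wres(\tilde\alpha x) = \wres(\alpha x) + (b_0/2)\wres(x)$, so by wildness of $\alpha$ the image equals $k\varepsilon \oplus k\varepsilon$. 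The same analysis applied to $\alpha + r$ for every $r \in R$, combined with the swap (symmetry of $\varsigma_2$ under interchange of coordinates), shows $1/(\tilde\alpha - \xi) \notin R$ for every $\xi \in k_0$, so the dichotomy of (\cite{prdf3}, Lemma~5.22) forces $\tilde\alpha$ to be wild. Thus I reduce to the case $b = 0$, $\alpha^2 = c \in R$, with the goal $c \in \pp$.

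\textbf{Explicit computation.} Suppose, towards a contradiction, $c \in R^\times$, and write $\wres(c) = c_0 + c_1 \varepsilon$, $c_0 \neq 0$. For the line $L = K \cdot (1, \alpha, c) = K \cdot (1, \alpha, \alpha^2)$, Proposition~\ref{exact-form} gives
\[
\varsigma_3(L) = \{(\wres(x), \wres(\alpha x), \wres(cx)) : x, \alpha x \in R\},
\]
the condition $cx \in R$ being automatic, as $c \in R$. By wildness of $\alpha$, $\wres(x) = s\varepsilon$ and $\wres(\alpha x) = t\varepsilon$ with $s, t \in k$ arbitrary; moreover $\wres(cx) = (c_0+c_1\varepsilon)\cdot s\varepsilon = c_0 s\varepsilon$. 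Hence
\[
\varsigma_3(L) = \{(s\varepsilon, t\varepsilon, c_0 s\varepsilon) : s, t \in k\}.
\]

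\textbf{Contradiction.} The operator $(s\varepsilon, t\varepsilon) \mapsto (t\varepsilon, c_0 s\varepsilon)$ implicit in the above, describing the shift $(x, \alpha x) \mapsto (\alpha x, \alpha^2 x) = (\alpha x, cx)$ at the level of $\wres$, has characteristic polynomial $X^2 - c_0$ with nonzero discriminant $4c_0$. The intuition is that $\alpha$ ``specializes'' to a matrix with two distinct eigenvalues $\pm\sqrt{c_0}$, contradicting the repeated-eigenvalue constraint of Corollary~\ref{cor:det-tr}. To turn this into a rigorous contradiction, the plan is to mutate $\varsigma$ along $K \cdot (1, \alpha)$: by Remark~\ref{rem:inherit2}, the mutation $\varsigma'$ inherits the Strong Assumptions, and in $\varsigma'$ the line $K \cdot (1, \alpha)$ becomes standard, so $\alpha$ should become tame with specialization satisfying $\wres'(\alpha)^2 = \wres'(c)$. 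The distinct eigenvalues $\pm\sqrt{c_0}$ then force the fundamental algebra $\mathcal{A}'$ of $\varsigma'$ to split as $k' \times k'$, contradicting Proposition~\ref{prop:no-split}. The main obstacle is formalizing this mutation argument -- in particular describing precisely how $\alpha$ specializes in $\varsigma'$ and handling the case where $c_0$ is not a square in $k$, which may require a further mutation, passage to an enlarged residue field, or a direct construction via Lemma~\ref{three-generators} producing $x, y \in Q$ with $\wres(x) = -c_0, \wres(y) = 0$, and $x + y\alpha + c = 0$, from which one exploits the identities $\alpha(x+c) = -yc$ and $\alpha^2 = c$ to expose a split structure in $R$ itself.
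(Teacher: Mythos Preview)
Your approach is essentially the same as the paper's: mutate $\varsigma$ along $L = K\cdot(1,\alpha)$ and analyze how $\alpha$ specializes in the mutation.  The preliminary reduction to $b=0$ is correct (your wildness argument for $\tilde\alpha$ works) but unnecessary, since the general case costs no extra effort.

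The real issue is that you stop exactly where the argument becomes routine, and you manufacture an obstacle that is not there.  You have already computed
\[
\varsigma_4\bigl(K\cdot(1,\alpha,\alpha,\alpha^2)\bigr)=\{(s\varepsilon,t\varepsilon,t\varepsilon,c_0 s\varepsilon):s,t\in k\},
\]
and by the definition of mutation this \emph{is} $\varsigma'_2\bigl(K\cdot(1,\alpha)\bigr)$.  Under the identification $M'=\varsigma_2(L)=k\varepsilon\oplus k\varepsilon\cong k^2$ via $(s\varepsilon,t\varepsilon)\leftrightarrow(s,t)$, this is precisely the graph of the endomorphism $(s,t)\mapsto(t,c_0 s)$, i.e.\ of the matrix $\mu=\begin{pmatrix}0&1\\ c_0&0\end{pmatrix}$.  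So $\alpha$ lies in the fundamental ring $R'$ of $\varsigma'$ and specializes to $\mu$; there is nothing more to ``formalize'' here.  Now apply Corollary~\ref{cor:det-tr} directly to $\mu$: $(\Tr\mu)^2=4\det\mu$ gives $0=-4c_0$, hence $c_0=0$ and $c\in\pp$.  (Without the reduction to $b=0$, the same computation gives $\mu=\begin{pmatrix}0&1\\-c_0&-b_0\end{pmatrix}$ and $b_0^2=4c_0$, which is exactly what the paper does.)

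In particular, your worry about whether $c_0$ is a square in $k$ is a red herring.  You do not need to diagonalize $\mu$ over $k$ or produce a splitting $\mathcal{A}'\cong k\times k$; Corollary~\ref{cor:det-tr} already packages the repeated-eigenvalue constraint as the trace--determinant identity, which holds in $k$ regardless of square roots.  The alternative routes you sketch (further mutation, enlarged residue field, a direct argument via Lemma~\ref{three-generators}) are all unnecessary.
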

\begin{proof}
  Let $b_0, b_1, c_0, c_1 \in k$ be such that
  \begin{align*}
    \wres(b) &= b_0 + b_1 \varepsilon \\
    \wres(c) &= c_0 + c_1 \varepsilon.
  \end{align*}
  We must show $b_0^2 = 4c_0$.  Let $L$ be the line $K \cdot (1, \alpha)$.
  By Lemma~\ref{lem:which-degen},
  \begin{equation*}
    \varsigma_2(L) = \varsigma_2(\{(x,\alpha x) : x \in K\}) = k\varepsilon \oplus k\varepsilon =
    \{(s\varepsilon,t\varepsilon) : s, t \in k\}.
  \end{equation*}
  It follows that
  \begin{align*}
    \varsigma_4(\{(x,\alpha x, -cx, - b \alpha x) : x \in K\}) & = \{(s
    \varepsilon, t \varepsilon, -c_0 s \varepsilon, -b_0 t \varepsilon) : s, t \in
    k\} \\
    \varsigma_3(\{(x, \alpha x, -b \alpha x - c x) : x \in K\}) & = \{(s
    \varepsilon, t \varepsilon, (-b_0t - c_0 s)\varepsilon) : s, t \in k\} \\
    \varsigma_4(\{(x, \alpha x, \alpha x, \alpha^2 x) : x \in K\}) & = \{(s
    \varepsilon, t \varepsilon, t \varepsilon, (-b_0 t - c_0 s) \varepsilon) : s,
    t \in k\}
  \end{align*}
  Here, we are using the identities
  \begin{align*}
    \wres(b) \cdot (s \varepsilon) &= (b_0 s) \varepsilon \\
    \wres(c) \cdot (s \varepsilon) &= (c_0 s) \varepsilon \\
    \alpha^2 &= - b \alpha - c.
  \end{align*}
  Now let $\varsigma' : \Dir_K(K) \to \Dir_k(M')$ be the mutation
  along $L$.  Then
  \begin{equation*}
    M' = \varsigma_2(L) = \{(s\varepsilon, t\varepsilon) : s, t \in k\} \cong k^2,
  \end{equation*}
  and
  \begin{equation*}
    \varsigma'_2(L) = \varsigma_4(\{(x, \alpha x; \alpha x, \alpha^2 x ) :
    x \in K\}) = \{(s,t;t,(-b_0t - c_0s)) : s, t \in k\}.
  \end{equation*}
  Note that $\varsigma'(L)$ is the graph of the $k$-linear map
  \begin{align}
    k^2 &\to k^2 \\
    (s,t) &\mapsto (t, -b_0t - c_0s) \label{split-map}
  \end{align}
  Therefore $\alpha$ lies in the fundamental ring $R'$ of
  $\varsigma'$.  By Remark~\ref{rem:inherit2}, $\varsigma'$ continues
  to satisfy the Strong Assumptions, and then by
  Corollary~\ref{cor:det-tr}, the linear map (\ref{split-map}) must
  have a repeated eigenvalue.  Therefore
  \begin{equation*}
    b_0^2 = \left( \Tr \begin{pmatrix}0 & 1 \\ - c_0 & -
      b_0 \end{pmatrix} \right)^2 = 4 \det \begin{pmatrix}0 & 1 \\ -
      c_0 & - b_0 \end{pmatrix} = 4c_0. \qedhere
  \end{equation*}
\end{proof}
\begin{lemma}\label{tilde-o-very-valuation}
  Let $\alpha$ be an element of $K^\times$.  Then $\alpha$ or
  $\alpha^{-1}$ is integral over $R$, and in fact one of $\alpha$ or
  $\alpha^{-1}$ satisfies a monic polynomial equation of degree $d$,
  where
  \begin{equation*}
    d = 
    \begin{cases}
      1 & \alpha \textrm{ tame} \\
      2 & \alpha \textrm{ wild.}
    \end{cases}
  \end{equation*}
\end{lemma}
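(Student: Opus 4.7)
The plan is to split on whether $\alpha$ is tame or wild and treat each case separately.

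For tame $\alpha$, I would read off the answer from the explicit form of $\varsigma_2$ given by Theorem~\ref{thm:so-far}. The subspace $\varsigma_2(K(1,\alpha))$ is a $k[\varepsilon]$-submodule of $k[\varepsilon]^2$ of $k$-length $2$, and by Lemmas~\ref{lem:degen} and~\ref{lem:which-degen} it is not the degeneracy $k\varepsilon \oplus k\varepsilon$ (that case is precisely wild), so it must be a free rank-$1$ $k[\varepsilon]$-module, generated by some $(a,b)$ with at least one of $a, b$ a unit in $k[\varepsilon]$; otherwise $\varepsilon \cdot (a,b) = 0$ and the module would collapse. If $a$ is a unit, $\varsigma_2(K(1,\alpha))$ is the graph of an endomorphism, so $\alpha \in R$; if $b$ is a unit, the $GL_2(k_0)$-swap maps $K(1,\alpha)$ to $K(1,\alpha^{-1})$ and produces $\alpha^{-1} \in R$. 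Either way we get the required monic linear polynomial.

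For the wild case, I would apply Lemma~\ref{three-generators} to the triple $(1, \alpha, \alpha^{-1})$, obtaining $x, y, z \in Q$ with at least one equal to $1$ and $x + y\alpha + z\alpha^{-1} = 0$. If $y = 1$, multiplying by $\alpha$ yields the monic quadratic $\alpha^2 + x\alpha + z = 0$ over $Q \subseteq R$; if $z = 1$, multiplying by $\alpha$ and then dividing by $\alpha^2$ yields $\alpha^{-2} + x\alpha^{-1} + y = 0$. In the $x = 1$ case, one gets $y\alpha^2 + \alpha + z = 0$; if either $y$ or $z$ is a unit (equivalently lies in $Q^\times \subseteq R^\times$), a single division produces the desired monic quadratic in $\alpha$ or $\alpha^{-1}$.

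The main obstacle is the residual case $x = 1$ with $y, z \in I$; I plan to show this case cannot arise. The degenerate subcases $y = 0$ or $z = 0$ reduce immediately to $\alpha \in R$ or $\alpha^{-1} \in R$, contradicting wildness. Otherwise, setting $\beta := y\alpha$ and multiplying $y\alpha^2 + \alpha + z = 0$ by $y$ gives the monic quadratic $\beta^2 + \beta + yz = 0$ over $R$, whose discriminant $1 - 4yz$ lies in $1 + \pp \subseteq R^\times$. By Lemma~\ref{discriminant}, $\beta$ must be tame, so by the tame case $\beta$ or $\beta^{-1}$ lies in $R$. I then plan to trace through the subcases: $\beta^{-1} \in R$ gives $\alpha^{-1} = y\beta^{-1} \in I \subseteq R$; and if $\beta \in R$, reducing the monic equation modulo $I$ forces $\wres(\beta) \in \{0, -1\}$ (the roots of $t(t+1) = 0$ in $k[\varepsilon]$), and in either value the identity $\beta(\beta + 1) = -yz$ together with the unit $\beta$ or $\beta + 1$ yields $\alpha = -z(\beta+1)^{-1} \in R$ or $\alpha^{-1} = y\beta^{-1} \in R$. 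Each outcome contradicts wildness of $\alpha$, closing the argument.
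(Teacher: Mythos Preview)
Your proof is correct and takes a genuinely different route from the paper's in both cases.

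For the tame case, the paper argues directly from the definition: tameness gives some $\alpha' := 1/(\alpha - b) \in R$ with $b \in k_0$, and since $R$ is local one of $\alpha = (b\alpha'+1)/\alpha'$ or $\alpha^{-1} = \alpha'/(b\alpha'+1)$ lies in $R$. Your structural argument, classifying the length-$2$ $k[\varepsilon]$-submodules of $k[\varepsilon]^2$, is equally valid. One small point: the implication ``$\varsigma_2(K(1,\alpha)) = k\varepsilon \oplus k\varepsilon \Rightarrow \alpha$ wild'' is the \emph{converse} of what Lemmas~\ref{lem:degen} and~\ref{lem:which-degen} actually state. It does hold, because $k\varepsilon \oplus k\varepsilon$ is $GL_2(k_0)$-invariant and hence none of $\alpha$, $1/(\alpha-c)$ can land in $R$; but this deserves an explicit word.

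For the wild case, the paper applies Lemma~\ref{three-generators} to $(1,\alpha,\alpha^2)$ rather than your $(1,\alpha,\alpha^{-1})$. Its difficult subcase ($y$ invertible, $x,z \in \pp$) is handled by the M\"obius substitution $\beta = (1+\alpha)/(1-\alpha)$, which remains wild and satisfies a quadratic whose discriminant reduces to $4 \not\equiv 0 \pmod{\pp}$, contradicting Lemma~\ref{discriminant} directly. Your substitution $\beta = y\alpha$ instead uses Lemma~\ref{discriminant} in contrapositive to force $\beta$ \emph{tame}, and then feeds this back into the already-established tame case to obtain $\alpha$ or $\alpha^{-1}$ in $R$. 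Both work cleanly; the paper's argument stays self-contained within the wild case, while yours makes neat reuse of the tame case and avoids the fractional-linear change of variable.
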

\begin{proof}
  Let $\alpha$ be an element of $K$.  First suppose $\alpha$ is tame.
  Then there is some $b \in k_0$ such that the number $\alpha' := 1/(\alpha - b)$
  lies in $R$.  Because $R$ is a $k_0$-algebra, it contains $b \alpha'
  + 1$.  Because $R$ is a local $k_0$-algebra, at least one of
  $\alpha'$ and $b \alpha' + 1$ is invertible.  Therefore at least one
  of the following lies in $R$:
  \begin{align*}
    \alpha &= \frac{b \alpha' + 1}{\alpha'} \\
    1/\alpha &= \frac{\alpha'}{b \alpha' + 1}.
  \end{align*}
  Next suppose $\alpha$ is wild.  By Lemma~\ref{three-generators},
  there are $x, y, z \in R$ such that
  \[ x + y \alpha + z \alpha^2 = 0,\]
  and at least one of $x, y, z$ is in $R^\times$.  If $z$ is
  invertible, then
  \begin{equation*}
    \alpha^2 + (y/z)\alpha + (x/z) = 0,
  \end{equation*}
  so $\alpha$ is integral over $R$.  Similarly, if $x$ is invertible,
  then $1/\alpha$ is integral over $R$.  So we may assume $y$ is
  invertible and $x, z$ are not.  Then
  \[ 0 \equiv x \equiv z \not \equiv y \pmod{\pp}.\]
  Let $\beta = \frac{1+\alpha}{1 - \alpha}$, so that $\alpha =
  \frac{\beta - 1}{\beta + 1}$.  Then
  \begin{align*}
    x + y \alpha + z\alpha^2 &= 0 \\
    (\beta + 1)^2x + (\beta-1)(\beta+1)y + (\beta-1)^2z &= 0 \\
    (\beta^2 + 2\beta + 1)x + (\beta^2 - 1)y + (\beta^2 - 2\beta + 1)z &= 0 \\
    (x + y + z)\beta^2 + (2x - 2z)\beta + (x + z - y) &= 0 \\
    \beta^2 + \frac{2x-2z}{x + z + y}\beta + \frac{x+z-y}{x+z+y} &= 0
  \end{align*}
  where in the final line we have used the fact that $(R,\pp)$ is a local ring and
  \begin{equation*}
    x + z + y \equiv y \not \equiv 0 \pmod{\pp}.
  \end{equation*}
  Note also that
  \begin{align*}
    \frac{2x-2z}{x+z+y} &\equiv \frac{0}{y} \pmod{\pp} \\
    \frac{x+z-y}{x+z+y} &\equiv \frac{-y}{y} \pmod{\pp}.
  \end{align*}
  Thus $\beta^2 + b'\beta + c' = 0$ for some $b', c' \in R$ with $b'
  \equiv 0$ and $c' \equiv -1$.  Because $\beta$ and $\alpha$ are
  related by a fractional linear transformation over $k_0$, we know
  that $\beta$ is wild.  As we are not in characteristic 2,
  \begin{equation*}
    (b')^2 = 0 \not \equiv -4 \equiv 4c' \pmod{\pp},
  \end{equation*}
  contradicting Lemma~\ref{discriminant}.
\end{proof}
\begin{corollary} \label{cor:o}
  Let $\Oo$ denote the integral closure of $R$ (in $K$).
  Then $\Oo$ is a valuation ring on $K$.
\end{corollary}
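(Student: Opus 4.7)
The plan is to verify the three defining properties of a valuation ring directly: that $\Oo$ is a subring of $K$, that $\Frac(\Oo) = K$, and that for every $\alpha \in K^\times$ either $\alpha \in \Oo$ or $\alpha^{-1} \in \Oo$. All three follow essentially immediately from what has been set up, so this corollary is really a repackaging of Lemma~\ref{tilde-o-very-valuation} together with Corollary~\ref{cor:fof}.

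First, I would invoke the standard commutative algebra fact that the integral closure of any subring in a field extension is itself a subring (sums and products of integral elements are integral, via the usual argument with finitely generated submodules). This gives (i). For (ii), since $R \subseteq \Oo \subseteq K$ and $\Frac(R) = K$ by Corollary~\ref{cor:fof}, we immediately get $\Frac(\Oo) = K$.

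The main content, such as it is, lies in (iii), but this was already isolated in the previous lemma: Lemma~\ref{tilde-o-very-valuation} says that for every $\alpha \in K^\times$, one of $\alpha$ or $\alpha^{-1}$ satisfies a monic polynomial equation over $R$ (of degree 1 if $\alpha$ is tame, degree 2 if $\alpha$ is wild). By definition of integral closure, this element therefore lies in $\Oo$. Hence $\alpha \in \Oo$ or $\alpha^{-1} \in \Oo$, which is the standard characterization of a valuation ring on $K$.

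There is no real obstacle here---the corollary is a bookkeeping step whose only nontrivial input, the ``$\alpha$ or $\alpha^{-1}$'' dichotomy, was the content of Lemma~\ref{tilde-o-very-valuation}. The only thing to flag is that in the wild case Lemma~\ref{tilde-o-very-valuation} may produce $\alpha$ satisfying a degree-2 equation rather than $\alpha$ literally lying in $R$; this is exactly why one needs to pass from $R$ to its integral closure $\Oo$ before one obtains the valuation ring property.
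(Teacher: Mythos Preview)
Your proof is correct and matches the paper's approach: the paper states the corollary with no proof, treating it as an immediate consequence of Lemma~\ref{tilde-o-very-valuation}. The only minor remark is that your step~(ii) is redundant, since the valuation ring condition in~(iii) already forces $\Frac(\Oo)=K$.
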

\subsection{Tameness and $\Oo$} \label{sec:tw}
Let $\mm$ denote the maximal ideal of the valuation ring $\Oo$.
\begin{lemma}\label{same-residue-1:lem}
  The intersection $\mm \cap R$ is exactly the prime ideal $\pp$.
\end{lemma}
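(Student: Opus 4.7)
The plan is to verify both inclusions directly, using the local structure of $R$ and the integrality of $\Oo$ over $R$.

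For the inclusion $\mm \cap R \subseteq \pp$, I would take $x \in R$ with $x \notin \pp$, and show $x \notin \mm$. Since $R$ is local with maximal ideal $\pp$ (the corollary following Proposition~\ref{prop:no-split}), every element of $R \setminus \pp$ is a unit of $R$. So $x^{-1} \in R \subseteq \Oo$, which makes $x$ a unit of $\Oo$, hence $x \notin \mm$.

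For the reverse inclusion $\pp \subseteq \mm$, I would take $x \in \pp$ and suppose for contradiction that $x \notin \mm$, i.e., $x \in \Oo^\times$. Then $x^{-1} \in \Oo$, and since $\Oo$ is the integral closure of $R$, there is a monic relation $x^{-n} + a_{n-1} x^{-(n-1)} + \cdots + a_0 = 0$ with $a_i \in R$. Multiplying through by $x^{n-1}$ yields
\begin{equation*}
x^{-1} = -a_{n-1} - a_{n-2} x - \cdots - a_0 x^{n-1} \in R.
\end{equation*}
But this contradicts the fact that $x \in \pp$ is not a unit of $R$.

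There is no real obstacle here; the argument is a standard fact about integral extensions of local rings, and uses nothing beyond the locality of $R$ and the definition of $\Oo$ as its integral closure (which is a valuation ring by Corollary~\ref{cor:o}). The content of the lemma is simply that the unique maximal ideal of $R$ contracts from the unique maximal ideal of $\Oo$, i.e., $\Oo$ dominates $R$.
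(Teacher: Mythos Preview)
Your proof is correct and essentially identical to the paper's own argument: both inclusions are handled exactly as you describe, with the reverse inclusion using an integral relation for $x^{-1}$ to derive a contradiction. The only cosmetic difference is that the paper multiplies the integral relation by $x^n$ to conclude $-1 \in \pp$, whereas you multiply by $x^{n-1}$ to conclude $x^{-1} \in R$; these are the same contradiction.
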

\begin{proof}
  Let $\alpha$ be an element of $R$.  First suppose $\alpha \notin
  \pp$.  Then $\alpha^{-1} \in R \subseteq \Oo$, so $\alpha
  \notin \mm$.  Conversely, suppose $\alpha \in \pp$ but $\alpha
  \notin \mm$.  Then $\alpha^{-1} \in \Oo$, so there exist
  $c_0, c_1, \ldots, c_{n-1} \in R$ such that
  \begin{equation*}
    \alpha^{-n} + c_{n-1} \alpha^{1-n} + \cdots + c_1 \alpha^{-1} + c_0 = 0,
  \end{equation*}
  or equivalently
  \begin{equation*}
    -1 = c_{n-1} \alpha + c_{n-2} \alpha^2 + \cdots + c_1 \alpha^{n-1} + c_0 \alpha^n.
  \end{equation*}
  But the right-hand side is in $\pp$ and the left hand side is not, a
  contradiction.
\end{proof}
\begin{corollary}
  The valuation ring $\Oo$ is non-trivial.
\end{corollary}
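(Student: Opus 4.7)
The plan is a short direct consequence of the preceding lemma. Recall that the valuation ring $\Oo$ is non-trivial iff $\Oo \ne K$, iff its maximal ideal $\mm$ contains a non-zero element. By Lemma~\ref{same-residue-1:lem}, $\pp = \mm \cap R \subseteq \mm$, so it suffices to exhibit a non-zero element of $\pp$.

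To produce such an element, I would use surjectivity of the generalized residue map $\wres : R \twoheadrightarrow k[\varepsilon]$ (part of the explicit formula of Theorem~\ref{thm:so-far}). Choose any $x \in R$ with $\wres(x) = \varepsilon$. Then $\wres(x) \in k\varepsilon$, so $x \in \pp$ by definition of $\pp$; and $\wres(x) = \varepsilon \ne 0$ forces $x \ne 0$. Therefore $0 \ne x \in \pp \subseteq \mm$, giving $\mm \ne 0$ and hence $\Oo \ne K$.

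There is no serious obstacle: the entire content has been loaded into Lemma~\ref{same-residue-1:lem} and into the explicit description of $\wres$ provided by Theorem~\ref{thm:so-far}, which together reduce non-triviality of $\Oo$ to the trivial observation that $\varepsilon$ is a non-zero element of $k[\varepsilon]$.
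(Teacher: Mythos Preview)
Your proposal is correct and matches the paper's proof essentially verbatim: both use surjectivity of $\wres$ to pick $x \in R$ with $\wres(x) = \varepsilon$, observe that $x$ is a nonzero element of $\pp$, and invoke $\pp \subseteq \mm$ from the preceding lemma to conclude $\mm \ne 0$.
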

\begin{proof}
  By surjectivity of $\wres : R \to k[\varepsilon]$, we can find $x
  \in R$ with $\wres(x) = \varepsilon$.  Then $x \ne 0$, but $x \in
  \pp$.  Therefore $\mm \ne 0$ and $\Oo \ne K$.
\end{proof}
\begin{lemma}\label{tame-is-r}
  If $\alpha \in \Oo$, then $\alpha$ is tame if and only if
  $\alpha \in R$.
\end{lemma}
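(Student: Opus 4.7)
My plan is as follows. The "if" direction is immediate: any $\alpha \in R$ is tame, because the defining condition for wildness (that \emph{none} of $\{\alpha\} \cup \{1/(\alpha - c) : c \in k_0\}$ lie in $R$) already fails at $\alpha$ itself. So the real content is the "only if" direction.

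For the converse, fix $\alpha \in \Oo$ tame, and suppose toward contradiction (or just directly) that $\alpha \notin R$. Tameness then guarantees that $R$ contains all but at most two elements of $\{1/(\alpha - c) : c \in k_0\}$. Since $\characteristic(k_0) \ne 2$ forces $|k_0| \ge 3$, I can pick some $c \in k_0$ with $\beta := 1/(\alpha - c) \in R$. In particular $\alpha \ne c$, so $\beta \in K^\times$.

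The crucial step is then to show $\beta \in R^\times$; once we have that, $\alpha - c = 1/\beta \in R$, and hence $\alpha \in R$ since $c \in k_0 \subseteq R$. To see $\beta$ is a unit in $R$, I would use that $R$ is local with maximal ideal $\pp$, so it suffices to show $\beta \notin \pp$. Here I invoke Lemma~\ref{same-residue-1:lem}, which identifies $\pp = R \cap \mm$. Now $\alpha \in \Oo$ and $c \in k_0 \subseteq \Oo$ give $\alpha - c \in \Oo$, while $\beta \in R \subseteq \Oo$; so both $\beta$ and $1/\beta = \alpha - c$ lie in $\Oo$, i.e., $\beta \in \Oo^\times$, hence $\beta \notin \mm$, hence $\beta \notin \pp$, hence $\beta \in R^\times$.

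There is no real obstacle beyond checking $|k_0| \ge 3$ so that the tameness condition produces at least one usable $c$. The essential ingredient is the identification $\mm \cap R = \pp$ from the previous lemma, which is exactly what lets integrality in $\Oo$ be transferred back to membership in $R$ via the fractional linear orbit permitted by tameness.
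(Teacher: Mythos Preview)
Your proof is correct. The approach is close in spirit to the paper's but slightly more direct: the paper first invokes Lemma~\ref{tilde-o-very-valuation} (the tame case of which already reduces to $\alpha \in R$ or $1/\alpha \in R$), and then argues that $1/\alpha \in R \setminus R^\times$ would force $1/\alpha \in \pp \subseteq \mm$ and hence $\alpha \notin \Oo$. You instead work with an arbitrary $c \in k_0$ provided by tameness and use $\alpha \in \Oo$ to get $\beta = 1/(\alpha-c) \in \Oo^\times$, hence $\beta \notin \mm$, hence $\beta \notin \pp$ via Lemma~\ref{same-residue-1:lem}. Both routes hinge on the same identification $\pp = R \cap \mm$; yours simply avoids the detour through Lemma~\ref{tilde-o-very-valuation} at the cost of rechecking that $|k_0| \ge 3$ (which the paper had already used inside that lemma's proof).
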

\begin{proof}
  If $\alpha \in R$ then $\alpha$ is tame by definition.  Conversely,
  suppose $\alpha$ is tame.  By Lemma~\ref{tilde-o-very-valuation},
  one of $\alpha$ or $1/\alpha$ is in $R$.  If $\alpha \in R$, we are
  done.  Otherwise, $1/\alpha \in R$ and $\alpha \notin R$, so
  $1/\alpha$ is a non-invertible element of $R$.  Then $1/\alpha \in
  \pp \subseteq \mm$, so $\alpha$ has negative valuation,
  contradicting the assumption that $\alpha \in \Oo$.
\end{proof}

\subsection{The residue map} \label{sec:res}
\begin{proposition}\label{same-residue-2:prop}
  The induced map
  \begin{equation*}
    k \cong R/\pp = R/(R \cap \mm) \hookrightarrow \Oo/\mm
  \end{equation*}
  is onto, hence an isomorphism.  In particular, $\Oo$ has
  residue field isomorphic to $k$.
\end{proposition}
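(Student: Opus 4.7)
The plan is to show directly that every $\alpha \in \Oo$ has residue in the image of $R \to \Oo/\mm$. This image equals $R/(R \cap \mm) = R/\pp \cong k$, a field, so it is a subfield of $\Oo/\mm$ and in particular closed under inversion.

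The tame case is immediate from Lemma~\ref{tame-is-r}, and if $\alpha \in \mm$ then $\bar\alpha = 0$ is the residue of $0 \in R$. So I may assume $\alpha$ is wild with $\alpha \in \Oo^\times$. By Lemma~\ref{tilde-o-very-valuation}, one of $\alpha, \alpha^{-1}$ satisfies a monic quadratic over $R$. Since wildness is preserved by the fractional linear transform $\alpha \mapsto \alpha^{-1}$, and since ``$\bar\alpha$ lies in the image of $R$'' is equivalent to ``$\overline{\alpha^{-1}}$ lies in the image of $R$'' (the image being a subfield closed under inversion in $\Oo/\mm$), I may assume after possibly inverting that $\alpha^2 + b\alpha + c = 0$ for some $b, c \in R$.

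The key input is now Lemma~\ref{discriminant}, which tells me $b^2 - 4c \in \pp \subseteq \mm$. Because $\characteristic(K) \ne 2$, the element $2$ is a unit in $k_0 \subseteq R$, so I can complete the square:
\[
(\alpha + b/2)^2 = (b^2 - 4c)/4 \in \mm.
\]
Since $\mm$ is prime in the valuation ring $\Oo$, this forces $\alpha + b/2 \in \mm$, hence $\bar\alpha = \overline{-b/2}$ with $-b/2 \in R$, as required. The substantive work has already been absorbed into Lemma~\ref{discriminant}; once that discriminant obstruction is in hand, completing the square concludes the argument, and the only minor bookkeeping is the inversion maneuver that puts the monic quadratic on $\alpha$ itself rather than on $\alpha^{-1}$.
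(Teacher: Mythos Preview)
Your proof is correct and follows essentially the same approach as the paper: reduce to the wild case with $\alpha \in \Oo^\times$, use Lemma~\ref{tilde-o-very-valuation} to get a monic quadratic for $\alpha$ or $\alpha^{-1}$, apply Lemma~\ref{discriminant} to force the discriminant into $\pp$, and complete the square. The only cosmetic difference is that the paper handles the $\alpha^{-1}$ case by explicitly verifying that the resulting $b \in R$ satisfies $b^{-1} \in R$ (since $b \notin \pp$), whereas you phrase this as the image $R/\pp$ being a subfield closed under inversion; these are the same observation.
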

\begin{proof}
  We must show that for every $x \in \Oo$ there
  exists $y \in R$ such that
  \begin{equation*}
    x \equiv y \pmod{\mm}.
  \end{equation*}
  If $x \in R$ we can take $y = x$, so we may assume $x
  \notin R$.  Then $x$ is wild by Lemma~\ref{tame-is-r}.  If
  $x \in \mm$ we can take $y = 0$.  So we may assume that
  $x^{-1} \in \Oo$.  By Lemma~\ref{tilde-o-very-valuation}, at
  least one of $x$ or $x^{-1}$ satisfies a monic quadratic
  polynomial equation over $R$.

  First suppose it is $x$.  Then
  \begin{equation} \label{the-eq} x^2 + c_1 x + c_0 = 0 \end{equation}
  for some $c_0, c_1 \in R$.  By Lemma~\ref{discriminant},
  \begin{equation*}
    c_1^2 - 4c_0 \in \pp.
  \end{equation*}
  As we are not in characteristic 2, we may rewrite (\ref{the-eq}) as
  \begin{equation*}
    \left(x + \frac{c_1}{2}\right)^2 = \left(\frac{c_1^2}{4} - c_0\right).
  \end{equation*}
  The right hand side is in $\pp \subseteq \mm$, so it has positive
  valuation.  Therefore $x + c_1/2$ also has positive valuation:
  \begin{equation*}
    x + \frac{c_1}{2} \in \mm.
  \end{equation*}
  Thus we can take $y = -c_1/2$.

  Next suppose $x^{-1}$ satisfies a monic quadratic polynomial
  equation over $R$:
  \[ x^{-2} + c_1 x^{-1} + c_0 = 0.\]
  The same argument shows that $x^{-1} \equiv b \pmod{\mm}$ for
  some $b \in R$.  Then $x \equiv b^{-1} \pmod{\mm}$, and
  \begin{equation*}
    x \notin \mm \implies b \notin \mm \implies b \notin \pp
    \implies b \in R^\times \implies b^{-1} \in R,
  \end{equation*}
  so we can take $y = b^{-1}$.
\end{proof}

We let $\res : \Oo \to k$ denote the natural residue map.  Note that
if $a \in R$ and
\[ \wres(a) = x + y \varepsilon,\]
then $\res(a) = x$.

\subsection{The limiting ring} \label{sec:limring}

\begin{lemma}\label{possibly-useless}
  Let $\alpha$ be an element of $K$.  Suppose $\alpha \notin \Oo$, and
  $\alpha^{-1} \notin \pp$.
  Let $\varsigma'$ denote the mutation along $K \cdot (1, \alpha^{-1})$,
  let $R'$ denote the fundamental ring of $\varsigma'$, and let $\pp'$
  denote the maximal ideal of $R'$.  Then $\alpha^{-1} \in \pp'$.
\end{lemma}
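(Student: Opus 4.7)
My plan is to adapt the mutation computation from the proof of Lemma~\ref{discriminant} to the current situation. The key point will be that the monic quadratic satisfied by $\alpha^{-1}$ has \emph{all} coefficients in $\pp$ (not just its discriminant), which will force the resulting endomorphism of $M'$ to be nilpotent.

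First I would extract the following from the hypotheses. Since $\alpha \notin \Oo$, we have $\val(\alpha^{-1}) > 0$, hence $\alpha^{-1} \in \mm$. Combined with the assumption $\alpha^{-1} \notin \pp$ and the identity $\pp = R \cap \mm$ from Lemma~\ref{same-residue-1:lem}, this shows $\alpha^{-1} \notin R$, so $\alpha^{-1}$ is wild by Lemma~\ref{tame-is-r}. Since $\alpha$ is not integral over $R$ (as $\alpha \notin \Oo$), Lemma~\ref{tilde-o-very-valuation} forces $\alpha^{-1}$ to satisfy a monic quadratic
\[ \alpha^{-2} + c_1 \alpha^{-1} + c_0 = 0, \qquad c_0, c_1 \in R. \]
Solving for $c_0$ and using $\alpha^{-1}, \alpha^{-2} \in \mm$ together with $c_1 \in \Oo$, I would conclude $c_0 \in \mm \cap R = \pp$. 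Lemma~\ref{discriminant} then yields $c_1^2 - 4c_0 \in \pp$, hence $c_1^2 \in \pp$; since $R/\pp \cong k$ is a field, $\pp$ is prime and $c_1 \in \pp$ as well.

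Next I would compute $\varsigma'_2(L)$ for $L = K \cdot (1, \alpha^{-1})$. Following the pattern of Lemma~\ref{discriminant}'s proof, the mutation is captured by
\[ \varsigma'_2(L) = \varsigma_4\bigl(\{(x, \alpha^{-1}x, \alpha^{-1}x, \alpha^{-2}x) : x \in K\}\bigr). \]
Substituting $\alpha^{-2}x = -c_1 \alpha^{-1}x - c_0 x$ and applying Proposition~\ref{exact-form}, I would parametrize by letting $(\wres(x), \wres(\alpha^{-1}x)) = (s\varepsilon, t\varepsilon)$ range freely over $\varsigma_2(L) = k\varepsilon \oplus k\varepsilon$. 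The crucial observation is that because $c_0, c_1 \in \pp$, the residues $\wres(c_0), \wres(c_1)$ lie in $k\varepsilon$, so they annihilate $k\varepsilon$ (since $\varepsilon^2 = 0$). The fourth coordinate therefore vanishes, and
\[ \varsigma'_2(L) = \{(s\varepsilon, t\varepsilon; t\varepsilon, 0) : s, t \in k\}. \]
Under the identification $M' = k\varepsilon \oplus k\varepsilon \cong k^2$, this subspace is the graph of the nilpotent endomorphism with matrix $\begin{pmatrix} 0 & 1 \\ 0 & 0 \end{pmatrix}$.

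Since $\varsigma'_2(L)$ is a graph, $\alpha^{-1} \in R'$, and $\wres'(\alpha^{-1})$ equals the nilpotent matrix above. By Remark~\ref{rem:inherit2} the Strong Assumptions are preserved under mutation, so Proposition~\ref{prop:no-split} applies to $\varsigma'$ and gives $\mathcal{A}' \cong k[\varepsilon]$; the nilpotent element $\wres'(\alpha^{-1})$ therefore corresponds to an element of $k\varepsilon \subseteq k[\varepsilon]$, so $\alpha^{-1} \in \pp'$, as required. The main difficulty is simply the bookkeeping in the mutation computation, but the template in Lemma~\ref{discriminant}'s proof reduces this to a mechanical check, with the novelty being that both coefficients (rather than just the discriminant) lie in $\pp$, producing a purely nilpotent matrix instead of one with a repeated nonzero eigenvalue.
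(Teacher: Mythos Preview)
Your proof is correct and follows essentially the same route as the paper's. The only differences are cosmetic: you show $c_0 \in \pp$ by the direct valuation computation $c_0 = -\alpha^{-2} - c_1\alpha^{-1} \in \mm \cap R$, whereas the paper argues by contradiction (if $c_0 \in R^\times$ then $\alpha$ would be integral over $R$); and at the end you invoke Proposition~\ref{prop:no-split} to identify the nilpotent matrix with an element of $k\varepsilon$, whereas the paper simply notes that a non-invertible endomorphism forces $\alpha^{-1} \notin (R')^\times$, hence $\alpha^{-1} \in \pp'$ since $R'$ is local.
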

\begin{proof}
  First note that $\alpha^{-1} \in \mm \subseteq \Oo$.  If
  $\alpha^{-1}$ is tame, then $\alpha^{-1} \in R$ by
  Lemma~\ref{tame-is-r}, and then $\alpha^{-1} \in \pp$ by
  Lemma~\ref{same-residue-1:lem}.  This contradicts the assumptions,
  and so $\alpha^{-1}$ and $\alpha$ are wild.  By
  Lemma~\ref{tilde-o-very-valuation}, one of $\alpha$ and
  $\alpha^{-1}$ satisfies a monic quadratic polynomial equation over
  $R$.  Since $\alpha$ does not lie in the integral closure $\Oo$ of
  $R$, it must be $\alpha^{-1}$ that satisfies the equation:
  \[ \alpha^{-2} = b \alpha^{-1} + c.\]
  Then $b^2 + 4c \in \pp$ by Lemma~\ref{discriminant}.  We claim that
  $c \in \pp$.  Otherwise, $c \in R^\times$, and
  \[ c^{-1} = c^{-1} b \alpha + \alpha^2,\]
  contradicting the fact that $\alpha$ is not integral over $R$.  So
  \begin{align*}
    b^2 + 4c & \equiv 0 \pmod{\pp} \\
    c & \equiv 0 \pmod{\pp}
  \end{align*}
  and therefore $b \in \pp$ as well.  Let $\beta, \gamma$ be
  $\wres(b)$ and $\wres(c)$, respectively.  The fact
  that $b, c \in \pp$ implies that $\beta, \gamma \in k\varepsilon$, and
  therefore $\beta, \gamma$ annihilate $k\varepsilon$.

  As $\alpha^{-1}$ is wild and $k \varepsilon$ is the degeneracy subspace,
  \begin{equation*}
    \varsigma_2(\{(x, \alpha^{-1} x) : x \in K\}) =\{(s\varepsilon,t\varepsilon)
    : s, t \in k\}.
  \end{equation*}
  By the usual inflator calculus, one sees that
  \begin{align*}
    \varsigma_3(\{(x, \alpha^{-1} x, \alpha^{-2} x) : x \in K\}) &=
    \varsigma_3(\{(x, \alpha^{-1} x, b \alpha^{-1} x + c x) : x \in K\}) \\
    &= \{(s \varepsilon, t \varepsilon, \beta t \varepsilon + \gamma s \varepsilon) : s, t \in k\} \\
    &= \{(s \varepsilon, t \varepsilon, 0) : s, t \in k\}.
  \end{align*}
  Now let $\varsigma'$ be the mutation of $\sigma$ along $K \cdot (1,
  \alpha^{-1})$.  Then
  \begin{align*}
    \varsigma'_2(\{(x, \alpha^{-1}x) : x \in K\}) & =
    \varsigma_4(\{(x, \alpha^{-1} x; \alpha^{-1} x, \alpha^{-2} x) : x
    \in K\}) \\ &= \{(s,t;t,0) : s, t \in k\}.
  \end{align*}
  Thus $\alpha^{-1}$ specializes to the endomorphism
  \begin{equation*}
    (s,t) \mapsto (t,0),
  \end{equation*}
  and so $\alpha^{-1} \in R'$.  This endomorphism fails to be
  invertible, so $\alpha^{-1} \in \pp'$.
\end{proof}

\begin{lemma}\label{lem:bite-size}
  Let $\varsigma'$ be a mutation of $\varsigma$, with fundamental ring
  $R'$.  Let $\pp'$ be the maximal ideal of $R'$.  Then $\pp \subseteq
  \pp'$.
\end{lemma}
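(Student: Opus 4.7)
The plan is, given $y \in \pp$, to compute $\varsigma'_2(K \cdot (1, y))$ via the mutation formula and show that it is the graph of a nilpotent endomorphism of $M'$; by Theorem~\ref{thm:so-far} applied to $\varsigma'$ this will force $y \in \pp'$.

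Writing $L = K \cdot (l_1, l_2)$ for the line along which $\varsigma'$ is the mutation, the construction of mutation (as exemplified by the computation in Lemma~\ref{possibly-useless}) should yield
\[ \varsigma'_2(K \cdot (1, y)) = \varsigma_4(\Gamma_y), \qquad \Gamma_y := K \cdot (l_1, l_2, y l_1, y l_2) = \{(w, yw) : w \in L\} \subseteq K^4. \]
I would compute $\varsigma_4(\Gamma_y)$ by writing $\Gamma_y$ as the intersection $G_y \cap (L \oplus L)$, where $G_y := \{(w, yw) : w \in K^2\}$ is the graph of multiplication by $y$ on $K^2$. Since $y \in \pp \subseteq R$, the argument in the proof of Proposition~\ref{prop:descend-to-A} identifies $\varsigma_4(G_y)$ with the graph of $\wres(y) \in \mathcal{A}$ acting diagonally on $M^2$, while $\oplus$-compatibility gives $\varsigma_4(L \oplus L) = M' \oplus M'$. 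Using that $M'$ is an $\mathcal{A}$-submodule of $M^2$ (Proposition~\ref{prop:descend-to-A}), the intersection of these two $\varsigma_4$-images inside $M^4$ is $\{(z, \wres(y) z) : z \in M'\}$, the graph of $\wres(y)|_{M'} \in \End_k(M')$. Monotonicity of $\varsigma_4$ together with a dimension count (both sides have $k$-dimension $2$) then forces equality with $\varsigma_4(\Gamma_y)$, so $y \in R'$ and $\wres'(y) = \wres(y)|_{M'}$.

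To finish, I would note that $y \in \pp$ forces $\wres(y) \in k\varepsilon$, whence $\wres(y)^2 = 0$ in $\mathcal{A} = k[\varepsilon]$ and thus also $\wres'(y)^2 = 0$ in $\End_k(M')$. By Remark~\ref{rem:inherit2}, $\varsigma'$ still satisfies the Strong Assumptions, so Theorem~\ref{thm:so-far} applied to $\varsigma'$ identifies $\mathcal{A}' \cong k'[\varepsilon]$ for some field $k'$; in $k'[\varepsilon]$ the only elements squaring to zero are those in $k'\varepsilon$ (using $\characteristic \ne 2$). Hence $\wres'(y) \in k'\varepsilon$, i.e., $y \in \pp'$.

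The main technical obstacle will be pinning down the mutation formula $\varsigma'_2(V) = \varsigma_4(V \otimes L)$ precisely from \cite{prdf3}, as well as checking edge cases such as $L = 0 \oplus K$ where the formula still goes through but with a degenerate representation of $L$.
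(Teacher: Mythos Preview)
Your argument is correct and follows the same core idea as the paper: compute what $y \in \pp$ specializes to under $\varsigma'$, and show the resulting endomorphism of $M'$ is not invertible. There are two differences worth noting.

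First, the paper's proof is shorter because it cites Lemma~10.3 of \cite{prdf3} directly: if $b \in R$ specializes to $\varphi \in \End_k(M)$ under $\varsigma$, then $b$ specializes to $\varphi|_{M'}$ under the mutation $\varsigma'$. Your intersection-and-dimension-count argument is essentially a reproof of this fact in the special case at hand. Also, you write the mutation as being along a line in $K^2$, but mutations can be along lines in $K^n$ for any $n$; your argument generalizes without change, but you should say so.

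Second, for the final step the paper argues that multiplication by $s\varepsilon$ on the nonzero $k[\varepsilon]$-module $M'$ is not surjective (by Nakayama over $k[\varepsilon]$), hence $y \notin (R')^\times$, hence $y \in \pp'$. Your route---observe $\wres'(y)^2 = 0$ and invoke the full Theorem~\ref{thm:so-far} for $\varsigma'$ to identify nilpotents in $\mathcal{A}' \cong k[\varepsilon]$---also works, but is heavier than needed: once you know $\wres'(y)$ is a nilpotent endomorphism of the two-dimensional space $M'$, it is already non-invertible, so $y \in \pp'$ follows immediately from $R'$ being local (which is all you need from Theorem~\ref{thm:so-far}). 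The characteristic $\ne 2$ remark is unnecessary here: in $k[\varepsilon]$ any element squaring to zero lies in $k\varepsilon$ regardless of characteristic.
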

\begin{proof}
  Suppose $\varsigma'$ is the mutation along $L = K \cdot (a_1,
  \ldots, a_n)$.  Then $\varsigma'$ is a map
  \[ \Dir_K(K) \to \Dir_k(M'),\]
  where $M' = \varsigma_n(L) \subseteq (k[\varepsilon])^n$.  By
  Proposition~\ref{prop:descend-to-A}, $M'$ is a
  $k[\varepsilon]$-submodule of $(k[\varepsilon])^n$.

  Take $b \in \pp$.  Then $\wres_\varsigma(b) = s \varepsilon$
  for some $s \in k$.  This means that $b \in R$ specializes (with
  respect to $\varsigma$) to the endomorphism
  \begin{align*}
    k[\varepsilon] & \to k[\varepsilon] \\
    z & \mapsto s \varepsilon z.
  \end{align*}
  With respect to the mutation $\varsigma'$, the element $b$
  specializes to the endomorphism
  \begin{align*}
    M' & \to M' \\
    \vec{v} & \mapsto s \varepsilon \vec{v},
  \end{align*}
  by Lemma~10.3 in \cite{prdf3}.  This map is not onto, by Nakayama's
  lemma (over the Noetherian ring $k[\varepsilon]$).  Therefore $b \in
  R'$, but $b^{-1} \notin R'$, implying that $b \in \pp'$.
\end{proof}
\begin{proposition}\label{persistence}
  If $\varsigma'$ is a mutation of $\varsigma$, with fundamental
  ring $R'$, then $R' \subseteq \Oo$.  Consequently,
  $R'$ has the same integral closure as $R$.
\end{proposition}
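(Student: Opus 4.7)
The plan is to prove the main inclusion $R' \subseteq \Oo$ directly; the ``consequently'' clause will then follow from the fact that $R \subseteq R'$ (by the specialization behavior of elements of $R$ under mutation; cf.\ Lemma~10.3 in \cite{prdf3}) together with the integral-closedness of the valuation ring $\Oo$.

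Given $\alpha \in R'$, I would suppose for contradiction that $\alpha \notin \Oo$. Since $\Oo$ is a valuation ring on $K$ (Corollary~\ref{cor:o}), this forces $\alpha^{-1} \in \mm$. The argument then splits depending on whether $\alpha^{-1}$ lies in $R$. In the easy case $\alpha^{-1} \in R$, we have $\alpha^{-1} \in R \cap \mm = \pp$ by Lemma~\ref{same-residue-1:lem}; Lemma~\ref{lem:bite-size} gives $\pp \subseteq \pp'$, so $\alpha^{-1} \in \pp'$ is a non-unit of the local ring $R'$, contradicting $\alpha \cdot \alpha^{-1} = 1$ together with $\alpha \in R'$.

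The main obstacle is the other case, where $\alpha^{-1} \notin R$. Then $\alpha^{-1} \in \Oo \setminus R$, so Lemma~\ref{tame-is-r} forces $\alpha^{-1}$, and hence $\alpha$ (by fractional-linear invariance of wildness), to be wild. By Lemma~\ref{tilde-o-very-valuation} together with $\alpha \notin \Oo$, the element $\alpha^{-1}$ must satisfy a monic quadratic $\alpha^{-2} + b\alpha^{-1} + c = 0$ with $b, c \in R$. The crux is to show that both coefficients in fact lie in $\pp$: repeating the argument inside the proof of Lemma~\ref{possibly-useless}, one has $c \in \pp$ (else $\alpha^2 = -c^{-1}b\alpha - c^{-1}$ would exhibit $\alpha$ as integral over $R$, contradicting $\alpha \notin \Oo$), and then Lemma~\ref{discriminant} applied to the wild $\alpha^{-1}$ gives $b^2 - 4c \in \pp$, whence $b^2 \in \pp$ and $b \in \pp$ since $\pp$ is maximal, hence prime. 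Multiplying the quadratic by $\alpha^2$ yields the identity $\alpha^{-1} = -(b + c\alpha)$. Applying Lemma~\ref{lem:bite-size} to upgrade $b, c \in \pp$ to $b, c \in \pp'$, and combining with $\alpha \in R'$, we conclude $\alpha^{-1} = -(b + c\alpha) \in \pp'$; the same unit argument as in the easy case then yields the desired contradiction.

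For the ``consequently'' clause, note that $\Oo$ is a valuation ring and hence integrally closed in $K$, so $R' \subseteq \Oo$ forces the integral closure of $R'$ in $K$ to lie in $\Oo$. Conversely, $R \subseteq R'$ implies that $\Oo$, being the integral closure of $R$, is contained in the integral closure of $R'$. Thus $R$ and $R'$ share the integral closure $\Oo$, as required.
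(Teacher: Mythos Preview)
Your proof is correct, and in the hard case ($\alpha^{-1} \notin \pp$) it takes a genuinely different route from the paper. The paper introduces two auxiliary mutations $\tau$ (of $\varsigma$) and $\tau'$ (of $\varsigma'$), both along the line $K \cdot (1,\alpha^{-1})$, then invokes Lemma~\ref{possibly-useless} to get $\alpha^{-1} \in \pp_\tau$, and finally uses \emph{commutativity of mutation} (Remark~10.7 in \cite{prdf3}) together with Lemma~\ref{lem:bite-size} to transport this to $\alpha^{-1} \in \pp_{\tau'}$, contradicting $\alpha \in R_{\varsigma'} \subseteq R_{\tau'}$. You instead extract directly the algebraic content of Lemma~\ref{possibly-useless}: once you know $b,c \in \pp$, the identity $\alpha^{-1} = -(b + c\alpha)$ combined with $\pp \subseteq \pp'$ (Lemma~\ref{lem:bite-size}) and $\alpha \in R'$ immediately gives $\alpha^{-1} \in \pp'$, with no further mutations and no appeal to commutativity. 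Your argument is more elementary and self-contained; the paper's approach, on the other hand, packages the computation into Lemma~\ref{possibly-useless} and keeps the proof of the proposition itself short, at the cost of citing an external structural fact about mutations.
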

\begin{proof}
  Let $b$ be an element of $R'$ that is not in $\Oo$.  First suppose that $b^{-1} \in \pp$.  Then Lemma~\ref{lem:bite-size} implies $b^{-1} \in \pp'$.  Therefore $b \notin R'$, a contradiction.
  
  Next suppose that $b^{-1} \notin \pp$.  Let $\tau$ and $\tau'$ be the
  mutations of $\varsigma$ and $\varsigma'$ along $K \cdot (1,
  b^{-1})$.    By
  Lemma~\ref{possibly-useless}, $b^{-1} \in \pp_\tau$.  By commutativity of mutation (Remark~10.7 in
  \cite{prdf3}), $\tau'$ is a mutation of $\tau$.  By Lemma~\ref{lem:bite-size},
  \[ b^{-1} \in \pp_\tau \subseteq \pp_{\tau'}.\]
  This implies $b \notin R_{\tau'}$.  But $b \in R' = R_{\varsigma'} \subseteq R_{\tau'}$, a contradiction.
\end{proof}

\begin{corollary} \label{cor:o2}
  The integral closure $\Oo$ is the limiting ring $R_\infty$ of
  \cite{prdf3}, Definition~10.9.
\end{corollary}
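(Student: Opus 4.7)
The plan is to establish the two inclusions $R_\infty \subseteq \Oo$ and $\Oo \subseteq R_\infty$, using the fact that, according to Definition~10.9 in \cite{prdf3}, the limiting ring $R_\infty$ should be the union (or direct limit) of the fundamental rings $R_{\varsigma'}$ as $\varsigma'$ ranges over all iterated mutations of $\varsigma$.

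First, I would dispense with the inclusion $R_\infty \subseteq \Oo$ essentially by invoking Proposition~\ref{persistence}. The Strong Assumptions are preserved under mutation (Remark~\ref{rem:inherit2}), so the proposition applies uniformly to every mutation and even to iterated mutations. In particular $R_{\varsigma'} \subseteq \Oo$ for every $\varsigma'$ in the system, and taking the union gives $R_\infty \subseteq \Oo$.

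For the reverse inclusion $\Oo \subseteq R_\infty$, I would take $\alpha \in \Oo$ and split into two cases according to whether $\alpha$ is tame or wild. If $\alpha$ is tame, then Lemma~\ref{tame-is-r} already places $\alpha \in R \subseteq R_\infty$. If $\alpha$ is wild, then since $\alpha$ lies in the integral closure $\Oo$ of $R$, Lemma~\ref{tilde-o-very-valuation} guarantees that it is $\alpha$ itself (not $\alpha^{-1}$) which satisfies a monic quadratic relation $\alpha^2 + b\alpha + c = 0$ with $b, c \in R$. Now let $\varsigma'$ be the mutation of $\varsigma$ along the line $L = K \cdot (1, \alpha)$. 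The key observation is that the explicit computation carried out inside the proof of Lemma~\ref{discriminant} already determines $\varsigma'_2(L)$: writing $\wres(b) = b_0 + b_1 \varepsilon$ and $\wres(c) = c_0 + c_1 \varepsilon$, one finds
\[
\varsigma'_2(L) = \{(s, t; t, -b_0 t - c_0 s) : s, t \in k\},
\]
which is the graph of the $k$-linear endomorphism $(s,t) \mapsto (t, -b_0 t - c_0 s)$ of $k^2 \cong M'$. By the definition of the fundamental ring, this means $\alpha \in R_{\varsigma'} \subseteq R_\infty$.

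The main obstacle, which I expect to be purely bookkeeping rather than conceptual, is ensuring that the definition of $R_\infty$ in \cite{prdf3} matches the interpretation above (the directed union of fundamental rings along iterated mutations), so that each $R_{\varsigma'}$ from the argument does legitimately sit inside $R_\infty$. Once that is settled, the argument is essentially a dichotomy (tame versus wild) followed by a direct appeal to computations already performed earlier in the paper, and the corollary follows immediately.
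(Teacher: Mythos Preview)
Your inclusion $R_\infty \subseteq \Oo$ via Proposition~\ref{persistence} is exactly the paper's argument. For the reverse inclusion, however, the paper takes a much shorter route: it simply cites the fact (built into Definition~10.9 of \cite{prdf3}) that $R_\infty$ is integrally closed. Since $R \subseteq R_\infty$ and $\Oo$ is the integral closure of $R$, the inclusion $\Oo \subseteq R_\infty$ is immediate. Your approach instead gives a concrete witness: for each $\alpha \in \Oo$ you produce a specific mutation whose fundamental ring contains $\alpha$, recycling the matrix computation from the proof of Lemma~\ref{discriminant}. This is more explicit but also more work, and it re-proves by hand a special case of the integral-closedness of $R_\infty$ that \cite{prdf3} already supplies.

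One small gap in your wild case: the sentence ``since $\alpha$ lies in $\Oo$, Lemma~\ref{tilde-o-very-valuation} guarantees that it is $\alpha$ itself which satisfies a monic quadratic'' is not quite what the lemma says. The lemma only asserts that one of $\alpha,\alpha^{-1}$ satisfies a monic quadratic over $R$. When $\val(\alpha)>0$ your claim follows, since $\alpha^{-1}\notin\Oo$ cannot be integral over $R$. But when $\val(\alpha)=0$ both $\alpha$ and $\alpha^{-1}$ lie in $\Oo$, and the lemma could in principle hand you a quadratic for $\alpha^{-1}$ only. This is easily patched: if $\alpha^{-2}+b\alpha^{-1}+c=0$ with $b,c\in R$, then Lemma~\ref{discriminant} gives $b^2-4c\in\pp$, and if $c\in\pp$ then $b\in\pp$ as well, forcing $\val(\alpha^{-2})>0$, contradicting $\val(\alpha)=0$; hence $c\in R^\times$ and $\alpha$ satisfies the monic quadratic $x^2+(b/c)x+1/c$. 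With this addition your argument goes through.
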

\begin{proof}
  The limiting ring $R_\infty$ is integrally closed, so $R_\infty
  \supseteq \Oo$.  On the other hand $R_\infty$ is a union
  of rings $R'$ obtained by mutation.  Proposition~\ref{persistence} says $R' \subseteq \Oo$.  Thus $R_\infty \subseteq \Oo$.
\end{proof}

\section{Differential structure} \label{sec:der}
Continue the Strong Assumptions of \S\ref{sec:canform}-\ref{sec:the-val}.

\begin{remark}
  Over the next few sections, we will carry out a number of convoluted
  calculations.  The motivated reader may wish to keep two running
  examples in mind:
  \begin{itemize}
  \item The diffeovaluation inflators of \S\ref{dvi:sec} below.
  \item The ``endless mutation'' example of \S 12.3 in \cite{prdf3}.
  \end{itemize}
  The second example doesn't actually satisfy the Strong Assumptions,
  but this won't matter until Lemma~\ref{weird-elements}.  (See
  Remark~\ref{ohbtw}.)
\end{remark}

\subsection{Double mutation lemma}
The idea of the next few lemmas is that we can calculate the residue
$\res(r)$ of an element $r$ by passing to a mutation where $r$ becomes
tame; Proposition~\ref{persistence} ensures that the valuation does
not change in the mutation.
\begin{lemma} \label{advance-notice}
  Let $r$ be an element of $K$, let $q$ be an element of $k$, and let
  $L$ be a line (a one-dimensional subspace) in $K^n$.  Suppose that
  every element of
  \begin{equation*}
    \varsigma_{2n}(\{(\vec{x}, r \vec{x}) : \vec{x} \in L\})
  \end{equation*}
  is of the form $(\vec{x}, q \vec{x})$.  Then $\val(r) \ge 0$ and
  $\res(r) = q$.
\end{lemma}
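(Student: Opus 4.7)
The plan is to pass to the mutation $\varsigma'$ of $\varsigma$ along the line $L$, where the hypothesis will force $r$ to become tame and in fact to specialize to the scalar $q \in k$. Since mutation preserves the ambient valuation ring (Proposition~\ref{persistence}), both conclusions will follow by reading off $\val(r)$ and $\res(r)$ from the structure of $\varsigma'$ supplied by Theorem~\ref{thm:so-far} and Proposition~\ref{same-residue-2:prop}.

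First I would unwind the definition of mutation (\cite{prdf3}, Definition~10.2) applied to the line $K \cdot (1, r) \subseteq K^2$, obtaining
\[ \varsigma'_2(K \cdot (1,r)) = \varsigma_{2n}(\{(\vec{x}, r\vec{x}) : \vec{x} \in L\}). \]
By hypothesis this $k$-subspace of $(M')^2$, where $M' := \varsigma_n(L)$, is contained in $\{(\vec{y}, q\vec{y}) : \vec{y} \in M^n\}$; combining with the fact that the first-coordinate projection lies in $M'$, it lies inside the graph $\{(\vec{y}, q\vec{y}) : \vec{y} \in M'\}$ of ``multiplication by $q$'' on $M'$. Both sides have $k$-dimension $2$, so equality holds. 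Thus $r$ lies in the fundamental ring $R_{\varsigma'}$ and specializes under $\varsigma'$ to multiplication by $q$ on $M'$.

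To finish, I would apply the structure theory of \S\ref{sec:canform}--\ref{sec:the-val} to $\varsigma'$, which continues to satisfy the Strong Assumptions by Remark~\ref{rem:inherit2}. Proposition~\ref{persistence} gives $R_{\varsigma'} \subseteq \Oo_{\varsigma'} = \Oo$, so $r \in \Oo$ and $\val(r) \geq 0$. Under any isomorphism $M' \cong k[\varepsilon]$ of $k[\varepsilon]$-modules (Lemma~\ref{lem:matrix-nonsense}), multiplication by $q$ on $M'$ corresponds to the element $q + 0\varepsilon \in k[\varepsilon] \cong R_{\varsigma'}/I_{\varsigma'}$, so $\wres_{\varsigma'}(r) = q + 0\varepsilon$. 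Proposition~\ref{same-residue-2:prop} applied to $\varsigma'$ then yields $\res(r) = q$. The one subtle point is that the residue map $\Oo \to k$ is intrinsic to the valuation ring and does not depend on which inflator one uses to construct it: this is automatic because Proposition~\ref{same-residue-2:prop} identifies the residue field with the base field of each inflator, and the base field $k$ is unchanged by mutation (since $M'$ is already a $k$-module).
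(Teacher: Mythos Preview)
Your approach is essentially the same as the paper's: mutate along $L$, observe that $r$ specializes under $\varsigma'$ to scalar multiplication by $q$, and use Proposition~\ref{persistence} to transport conclusions back to the original $\Oo$.

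The one soft spot is your final step. You compute $\wres_{\varsigma'}(r) = q + 0\varepsilon$ and then invoke Proposition~\ref{same-residue-2:prop} for $\varsigma'$ to get $\res_{\varsigma'}(r) = q$. But the lemma asks about $\res(r)$, which is defined via the \emph{original} inflator $\varsigma$. Your justification that the two residue maps agree---``the base field $k$ is unchanged by mutation''---is not quite enough: both Proposition~\ref{same-residue-2:prop} for $\varsigma$ and for $\varsigma'$ give isomorphisms $\Oo/\mm \cong k$, but a priori these could differ by a nontrivial automorphism of $k$. One needs to check that the two identifications coincide, which amounts to showing that for $p \in R$ with $\wres_\varsigma(p) = q + 0\varepsilon$, one also has $\wres_{\varsigma'}(p) = q + 0\varepsilon$; this follows from Lemma~10.3 of \cite{prdf3}, but you have not invoked it.

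The paper sidesteps this by never computing $\wres_{\varsigma'}$ abstractly. Instead it picks $p \in R$ with $\wres_\varsigma(p) = q$, uses Lemma~10.3 of \cite{prdf3} to see that $p$ specializes under $\varsigma'$ to the same endomorphism as $r$ (namely multiplication by $q$), concludes $r - p \in I' \subseteq \pp' \subseteq \mm' = \mm$, and then reads off $\res(r) = \res(p) = q$ using only the original $\varsigma$. This avoids any compatibility question between the two residue maps.
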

\begin{proof}
  Let $M' = \varsigma_n(L)$.  Then
  \begin{align*}
    \varsigma_{2n}(\{(\vec{x}, r \vec{x}) : \vec{x} \in L\}) &\subseteq \{(\vec{x}, q \vec{x}) : \vec{x} \in k[\varepsilon]^n\} \\
    \varsigma_{2n}(\{(\vec{x}, r \vec{x}) : \vec{x} \in L\}) & \subseteq \varsigma_{2n}(L \oplus L) = M' \oplus M' \\
    \varsigma_{2n}(\{(\vec{x}, r \vec{x}) : \vec{x} \in L\}) & \subseteq \{(\vec{x}, q \vec{x}) : \vec{x} \in M'\}.
  \end{align*}
  The first line is by assumption, the second line is by
  order-preservation and $\oplus$-compatibility, and the third line
  follows by intersecting the first two lines.  Counting lengths,
  equality must hold in the second line.  Let
  \[ \varsigma' : \Dir_K(K) \to \Dir_k(M')\]
  be the mutation of $\varsigma$ along $L$.  Let $R', \pp', I', \Oo',
  \mm'$ denote the analogues of $R, \pp, I, \Oo, \mm$ for the mutation
  $\varsigma'$.  Then
  \begin{equation*}
    \varsigma'_{2}(K \cdot (1,r)) = \varsigma_{2n}(\{(\vec{x}, r
    \vec{x}) : \vec{x} \in L\}) = \{(\vec{x}, q \vec{x}) : \vec{x} \in
    M'\}.
  \end{equation*}
  It follows that $r$ specializes with respect to
  $\varsigma'$ to the endomorphism
  \begin{align*}
    M' & \to M' \\
    x & \mapsto qx.
  \end{align*}
  Thus $r \in R' \subseteq \Oo'$.  Choose some $p \in R$ such that $\wres(p) = q = q + 0\varepsilon$.
  By Lemma~10.3 in \cite{prdf3}, the element $p$ is also in $R'$, and
  also specializes to this endomorphism.  Therefore
  \[ r - p \in I' \subseteq \pp' \subseteq \mm'.\]
  By Proposition~\ref{persistence}, $\Oo = \Oo'$ and $\mm = \mm'$,
  implying that $r \in \Oo$ and $r - p \in \mm$.  Therefore $\val(r)
  \ge 0$ and $\res(r) = \res(p) = q$.
\end{proof}
\begin{lemma}\label{out-of-context}
  Suppose $a \in K$ has $\val(a) > 0$, and suppose
  \begin{equation*}
    (u,v) \in \varsigma_2(K \cdot (1,a))
  \end{equation*}
  for some $u, v \in k[\varepsilon]$ with $v \ne 0$.  Then there is $a' \in K$ such
  that
  \begin{equation*}
    (k \cdot v) \oplus (k \cdot v) = \varsigma_2(K \cdot
    (1,a')).
  \end{equation*}
\end{lemma}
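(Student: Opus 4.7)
The plan is to show first that $v$ necessarily lies in the degeneracy subspace $k\varepsilon$ (so that $k\cdot v = k\varepsilon$), and then exhibit $a'$ as any wild element of $K$.

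I would begin with the tame/wild dichotomy applied to $a$. If $a$ is tame, then by Lemma~\ref{tame-is-r} we have $a\in R$, and the hypothesis $\val(a) > 0$ combined with Lemma~\ref{same-residue-1:lem} places $a \in R \cap \mm = \pp$, so $\wres(a) \in k\varepsilon$. The explicit formula of Proposition~\ref{exact-form} then identifies $\varsigma_2(K \cdot (1,a))$ with the graph $\{(w, \wres(a)w) : w \in k[\varepsilon]\}$ of multiplication by $\wres(a)$, and therefore $v = \wres(a)\cdot u \in k\varepsilon$. If instead $a$ is wild, Lemmas~\ref{lem:degen} and~\ref{lem:which-degen} give $\varsigma_2(K \cdot (1,a)) = k\varepsilon \oplus k\varepsilon$ outright, and again $v \in k\varepsilon$.

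Since $v$ is a nonzero element of the one-dimensional $k$-space $k\varepsilon$, we have $k \cdot v = k\varepsilon$, and hence
\[
(k \cdot v) \oplus (k \cdot v) = k\varepsilon \oplus k\varepsilon.
\]
By the Weak Assumptions, no mutation of $\varsigma$ (in particular $\varsigma$ itself) is of multi-valuation type, so Proposition~5.25 of \cite{prdf3} furnishes a wild element $a' \in K$. For any such $a'$, Lemmas~\ref{lem:degen} and~\ref{lem:which-degen} give $\varsigma_2(K \cdot (1,a')) = k\varepsilon \oplus k\varepsilon$, matching the required equality.

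The lemma is essentially bookkeeping; its only real content is recognizing that $\val(a) > 0$ is exactly what forces $\wres(a) \in k\varepsilon$ in the tame case (without this, a tame $a$ with $\wres(a) \notin k\varepsilon$ could produce $v$ outside the degeneracy subspace, and the conclusion would fail). There is no substantive obstacle beyond invoking the right earlier result in each branch of the dichotomy and the existence of a wild element.
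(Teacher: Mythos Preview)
Your proof is correct and follows essentially the same approach as the paper: split on tame vs.\ wild, use $\val(a)>0$ to force $\wres(a)\in k\varepsilon$ in the tame case, conclude $v\in k\varepsilon$ in both cases, and then take $a'$ to be any wild element. The only cosmetic difference is that in the wild case the paper simply sets $a'=a$ rather than invoking the existence of a wild element separately.
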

\begin{proof}
  If $a$ is wild, then
  \[ \varsigma_2(K \cdot (1,a)) = k\varepsilon \oplus k\varepsilon\]
  by Lemmas~\ref{lem:degen} and \ref{lem:which-degen}.  Then $v \in
  k\varepsilon$, and
  \[ (k \cdot v) \oplus (k \cdot v) = k\varepsilon \oplus k\varepsilon.\]
  So we may take $a' = a$.

  Otherwise, $a$ is tame, and so $a \in R \cap \mm = \pp$ by
  Lemmas~\ref{same-residue-1:lem} and \ref{tame-is-r}.  Then $\wres(a)
  = b \varepsilon$ for some $b \in k$, and
  \begin{equation*}
    (u,v) \in \varsigma_2(K \cdot (1,a)) =
    \{(x+y\varepsilon,(b\varepsilon)(x+y\varepsilon)) : x, y \in k\} =
    \{(x+y\varepsilon,0+(xb)\varepsilon) : x, y \in k\}.
  \end{equation*}
  Thus $v \in k\varepsilon$.  Then we can take $a'$ to be any wild
  element, and \[ (k \cdot v) \oplus (k \cdot v) = k\varepsilon
  \oplus k\varepsilon = \varsigma_2(K \cdot (1,a')). \qedhere\]
\end{proof}
\begin{lemma}[Double mutation lemma] \label{lem:wtf}
  Let $a, r$ be elements of $K$, with $\val(a) > 0$.  Suppose that
  \[ (s, t, u, qu) \in \varsigma_4(K \cdot (1,r,a,ar))\]
  for some $s, t, u \in k[\varepsilon]$ and $q \in k$ with $u$ nonzero.
  Then $\val(r) \ge 0$ and $\res(r) = q$.
\end{lemma}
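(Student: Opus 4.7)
The plan is to apply Lemma~\ref{advance-notice} within the mutation $\varsigma'$ of $\varsigma$ along $L = K \cdot (1,a) \subseteq K^2$, and then transfer the conclusion via Proposition~\ref{persistence}. Setting $M' = \varsigma_2(L)$, the mutation calculus (after permuting coordinates $2$ and $3$) canonically identifies $\varsigma_4(K \cdot (1,r,a,ar))$ with $\varsigma'_2(K \cdot (1,r)) \subseteq M' \oplus M'$, under which our element $(s,t,u,qu)$ corresponds to the pair $((s,u),(t,qu))$ with both pieces lying in $M'$. The analog of Lemma~\ref{advance-notice} applied to $\varsigma'$ with the trivial line $K \subseteq K$ would give $\val'(r) \ge 0$ and $\res'(r) = q$, provided every element of $\varsigma'_2(K \cdot (1,r))$ has the form $(\vec{v}, q\vec{v}) \in M' \oplus M'$. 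Since both $\varsigma'_2(K \cdot (1,r))$ and $\{(\vec{v}, q\vec{v}) : \vec{v} \in M'\}$ are $2$-dimensional over $k$, the required containment reduces by dimension count to the single relation $(t,qu) = q(s,u)$ in $M'$, equivalently to $t = qs$.

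The main obstacle will be establishing $t = qs$. By Remarks~\ref{rem:inherit} and~\ref{rem:inherit2}, $\varsigma'$ inherits the Strong Assumptions, so Theorem~\ref{thm:so-far} applies: either $r$ lies in the fundamental ring $R'$ of $\varsigma'$, with $\wres'(r) \in \mathcal{A}' \cong k[\varepsilon']$, or else $r$ is wild in $\varsigma'$ and $\varsigma'_2(K \cdot (1,r)) = D' \oplus D'$ for the degeneracy subspace $D' \subseteq M'$. In the tame case, writing $\wres'(r) = q_0 + q_1 \varepsilon'$, the identity $(t,qu) = \wres'(r)(s,u) = q_0(s,u) + q_1 \varepsilon'(s,u)$ combined with $\varepsilon'(s,u) \in D'$ should allow a second-component comparison to force $q_0 = q$ via $u \ne 0$, whence $t = qs$. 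In the wild case, the membership $((s,u),(t,qu)) \in D' \oplus D'$ combined with $u \ne 0$ and the specific position of $D'$ should yield a contradiction, ruling out this case.

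Once $t = qs$ is established and the dimension argument closes the containment, Lemma~\ref{advance-notice} applied in $\varsigma'$ gives $\val'(r) \ge 0$ and $\res'(r) = q$. Finally, Proposition~\ref{persistence} provides $\Oo = \Oo'$ and the compatibility of the residue and valuation maps, yielding the desired $\val(r) \ge 0$ and $\res(r) = q$.
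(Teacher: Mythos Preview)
Your dimension-count reduction is not valid as stated. Establishing $t = qs$ places the single element $((s,u),q(s,u))$ in the graph $\{(\vec v,q\vec v):\vec v\in M'\}$, but a single element spans only a one-dimensional $k$-subspace. To upgrade this to the full two-dimensional containment via the $\mathcal{A}'$-module structure you would also need $(s,u)\notin D'$, so that $((s,u),q(s,u))$ generates the graph as a $k[\varepsilon']$-module; this is never verified. More seriously, the wild case for $r$ in $\varsigma'$ cannot be dismissed as you claim. In that case $\varsigma'_2(K\cdot(1,r))=D'\oplus D'$, so $(s,u)\in D'$, but nothing forces the second coordinate of elements of $D'$ to vanish: when $a$ is itself wild in $\varsigma$, one has $M'=k\varepsilon\oplus k\varepsilon$ on which $k[\varepsilon]$ acts only through scalars, and the degeneracy line $D'$ of $\varsigma'$ is an unspecified one-dimensional subspace with no reason to lie in $k\varepsilon\times 0$. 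So $u\ne 0$ yields no contradiction, and in this case you cannot extract $\res(r)=q$ from the data.

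The paper avoids both problems by reversing the roles of $a$ and $r$: it mutates along $K\cdot(1,r)$ first, so that the given tuple becomes $((s,t),(u,qu))\in\varsigma'_2(K\cdot(1,a))$. Now $\val(a)>0$ persists in $\varsigma'$, so Lemma~\ref{out-of-context} applies to $\varsigma'$ and the nonzero vector $(u,qu)$: it produces some $a'$ with $\varsigma'_2(K\cdot(1,a'))$ equal to the \emph{full} subspace $k(u,qu)\oplus k(u,qu)$, not merely containing one element. Unwinding and permuting, $\varsigma_4(K\cdot(1,a',r,a'r))=\{(xu,yu,q\cdot xu,q\cdot yu):x,y\in k\}$, and Lemma~\ref{advance-notice} with $L=K\cdot(1,a')$ finishes directly, with no case split on $r$. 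The essential point you are missing is that Lemma~\ref{out-of-context} is precisely the device that converts the hypothesis $\val(a)>0$ into a complete description of a two-dimensional image, rather than knowledge of a single point in it.
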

\begin{proof}
  Let $\varsigma' : \Dir_K(K) \to \Dir_k(M')$ be the mutation of
  $\varsigma$ along $K \cdot (1,r)$, where $M' = \varsigma_2(M')
  \subseteq (k[\varepsilon])^2$.  Then
  \begin{equation*}
    \varsigma'_2(K \cdot (1,a)) = \varsigma_4(K \cdot (1,r,a,ar)) \ni
    (s, t, u, qu).
  \end{equation*}
  By Lemma~\ref{out-of-context} applied to $\varsigma'$, there is some
  $a' \in K$ such that
  \begin{equation*}
    \varsigma'_2(K \cdot (1,a')) = \{(x u, x qu; y u, y qu) : x, y \in
    k\}.
  \end{equation*}
  Equivalently, then
  \begin{align*}
    \varsigma_4(K \cdot (1,r, a', a'r)) &=
    \{(xu, x qu, yu, yqu) : x, y \in k\} \\
    \varsigma_4(K \cdot (1, a', r, a'r)) &=
    \{(xu, yu, xqu, yqu) : x, y \in k\}.
  \end{align*}
  By Lemma~\ref{advance-notice} applied to the line $L = K \cdot
  (1,a')$, it follows that $\val(r) \ge 0$ and $\res(r) = q$.
\end{proof}

\subsection{Neutralizers} \label{sec:neut}
Recall from \S\ref{ssec:fgi} that $Q$ is the subring \[ Q = \{x \in R
: \wres(x) \in k\},\] where we view $k$ as a subset of
$k[\varepsilon]$ in the natural way.

\begin{definition} \label{def:neut}
  If $a \in \Oo$, a \emph{neutralizer} is an $a^\dag \in Q$ such that
  $aa^\dag \in R \setminus Q$.
\end{definition}
Neutralizers need not be unique.

\begin{lemma}\label{initial}
  If $a \in \Oo \setminus Q$, then $a$ has a neutralizer.
\end{lemma}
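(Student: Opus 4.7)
The plan is to split into two cases depending on whether $a$ lies in $R$ or merely in $\Oo \setminus R$, and in the second (main) case to exploit the explicit description of $\varsigma$ from Proposition~\ref{exact-form}.

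The easy case is $a \in R$. Then the hypothesis $a \notin Q$ forces $\wres(a) \in k[\varepsilon] \setminus k$, so that $a^\dag := 1 \in Q$ immediately witnesses $aa^\dag = a \in R \setminus Q$.

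For $a \in \Oo \setminus R$, I would first invoke Lemma~\ref{tame-is-r} to conclude that $a$ is wild. Lemmas~\ref{lem:degen} and \ref{lem:which-degen} then identify the specialization of the corresponding line as
\begin{equation*}
  \varsigma_2(K \cdot (1,a)) = k\varepsilon \oplus k\varepsilon.
\end{equation*}
Feeding this into the explicit formula from Proposition~\ref{exact-form} (applied to $V = K \cdot (1,a)$) gives the identity
\begin{equation*}
  \{(\wres(x), \wres(ax)) : x \in R,~ ax \in R\} = k\varepsilon \oplus k\varepsilon.
\end{equation*}
A neutralizer can then be extracted by picking $x \in R$ with $ax \in R$, $\wres(x) = 0$, and $\wres(ax) = \varepsilon$: the condition $\wres(x) = 0$ places $x$ in the fundamental ideal $I \subseteq Q$, while $\wres(ax) = \varepsilon \notin k$ places $ax$ in $R \setminus Q$. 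Setting $a^\dag := x$ finishes the proof.

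I do not anticipate any serious obstacle; the content of the lemma is essentially that wildness of $a$ forces $\varsigma_2(K \cdot (1,a))$ to occupy the entire degeneracy direction in \emph{both} coordinates, and this surjectivity in the second coordinate is exactly what produces an element of $I$ on which multiplication by $a$ lands in $R$ with a nontrivial $\varepsilon$-component.
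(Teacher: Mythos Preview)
Your proposal is correct and follows essentially the same argument as the paper's proof: split into the cases $a \in R \setminus Q$ (where $a^\dag = 1$ works) and $a \in \Oo \setminus R$ (where wildness plus the explicit formula for $\varsigma_2(K \cdot (1,a))$ yields the neutralizer). The only cosmetic difference is that you explicitly cite Lemmas~\ref{lem:degen} and~\ref{lem:which-degen} for the degeneracy identification and note $a^\dag \in I \subseteq Q$ rather than just $a^\dag \in Q$, but the substance is identical.
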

\begin{proof}
  First suppose $a \in R \setminus Q$.  Then $1$ is a neutralizer.

  Next suppose $a \in \Oo \setminus R$.  By Lemma~\ref{tame-is-r}, the
  element $a$ is wild.  Then
  \begin{equation*}
    \varsigma_2(K \cdot (1,a)) = k\varepsilon \oplus k\varepsilon.
  \end{equation*}
  On the other hand, by Proposition~\ref{exact-form},
  \begin{equation*}
    \varsigma_2(K \cdot (1,a)) =
    \{(\wres(x),\wres(ax)) : x \in R, ax \in R\}.
  \end{equation*}
  Therefore, there is some $a^\dag \in R$ such that $aa^\dag \in R$,
  and
  \begin{align*}
    \wres(a^\dag) &= 0 \\
    \wres(aa^\dag) &= \varepsilon.
  \end{align*}
  Then $a^\dag \in Q$ and $aa^\dag \in R \setminus Q$.
\end{proof}
\begin{lemma}\label{major-mess-0}
  Let $a$ be a wild element.
  \begin{itemize}
  \item If $b \in R$ and $ab \in R$, then
    \begin{align*}
      \wres(b) &= p \varepsilon \\
      \wres(ab) &= q \varepsilon
    \end{align*}
    for some $p, q \in k$.
  \item If $a^\dag$ is a neutralizer of $a$, then
    \begin{align*}
      \wres(a^\dag) &= 0 \\
      \wres(aa^\dag) &= q\varepsilon.
    \end{align*}
    for some nonzero $q \in k$.
  \end{itemize}
\end{lemma}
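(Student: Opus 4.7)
The plan is to read off both bullets from the two parallel descriptions of $\varsigma_2(K \cdot (1,a))$ that the wildness of $a$ puts into contact. On one side, Lemmas~\ref{lem:degen} and \ref{lem:which-degen} identify the image as the degeneracy subspace squared, namely $k\varepsilon \oplus k\varepsilon$. On the other side, Proposition~\ref{exact-form} presents the same image as the set of pairs $(\wres(x), \wres(ax))$ with $x, ax \in R$.

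For the first bullet, given $b \in R$ with $ab \in R$, I would simply observe that the pair $(\wres(b), \wres(ab))$ must lie in $\varsigma_2(K \cdot (1,a)) = k\varepsilon \oplus k\varepsilon$, yielding $\wres(b), \wres(ab) \in k\varepsilon$ as required. For the second bullet, I would apply the first to $b = a^\dag$, which is legitimate since Definition~\ref{def:neut} places $a^\dag \in Q \subseteq R$ and $aa^\dag \in R \setminus Q \subseteq R$. This gives $\wres(a^\dag) = p\varepsilon$ and $\wres(aa^\dag) = q\varepsilon$ for some $p, q \in k$. But $a^\dag \in Q$ also forces $\wres(a^\dag) \in k$, and the decomposition $k[\varepsilon] = k \oplus k\varepsilon$ pins this down as $\wres(a^\dag) = 0$. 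Dually, $aa^\dag \notin Q$ means $\wres(aa^\dag) \notin k$, which rules out $q = 0$.

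There is no genuine obstacle here; the lemma is essentially bookkeeping that pulls together the explicit formula of Proposition~\ref{exact-form} with the wild-element image from Lemma~\ref{lem:which-degen}. The only point requiring care is keeping the direct sum $k[\varepsilon] = k \oplus k\varepsilon$ straight, so that membership statements of the form ``$\wres$ lies in $k$'' can be correctly intersected with ``$\wres$ lies in $k\varepsilon$.''
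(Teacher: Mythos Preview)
Your proposal is correct and matches the paper's own proof essentially line for line: the paper combines the explicit formula of Proposition~\ref{exact-form} with the identification of the degeneracy subspace as $k\varepsilon$ to force $(\wres(b),\wres(ab)) \in k\varepsilon \oplus k\varepsilon$, and then reads off the neutralizer statement from the definitions of $Q$ and neutralizer exactly as you do.
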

\begin{proof}
  Because $a$ is wild and $k\varepsilon$ is the degeneracy subspace,
  \begin{equation*}
    \{(\wres(x),\wres(ax)) : x \in R,~ ax \in R\} = \varsigma_2(K \cdot
    (1,a)) = k\varepsilon \oplus k\varepsilon.
  \end{equation*}
  The fact that $b, ab \in R$ thus implies that $\wres(b)$ and
  $\wres(ab)$ lie in $k\varepsilon$, and so
  \begin{align*}
    \wres(b) &= 0 + p\varepsilon \\
    \wres(ab) &= 0 + q\varepsilon,
  \end{align*}
  for some $p, q \in k$.  When $b$ is a neutralizer $a^\dag$, we must
  have $p = 0$ and $q \ne 0$, because $a^\dag \in Q$ and $aa^\dag \notin Q$.
\end{proof}

\begin{lemma}\label{major-mess}
  Let $a$ be a wild element with $\val(a) > 0$, and let $a^\dag$ be a
  neutralizer of $a$.  Suppose $b \in R$ and $ab \in R$.  Then
  $\val(b) \ge \val(a^\dag)$.  Moreover,
  \begin{equation*}
    ab \in Q \iff \val(b) > \val(a^\dag).
  \end{equation*}
\end{lemma}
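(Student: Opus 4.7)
The plan is to feed the Double Mutation Lemma (Lemma~\ref{lem:wtf}) with $r = b/a^\dag$. Once this is set up, the single number $\res(r)$ that the lemma produces will simultaneously yield the inequality $\val(b) \ge \val(a^\dag)$ and the biconditional.

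First I would dispose of degeneracies. The neutralizer condition $aa^\dag \in R \setminus Q$ in particular forces $aa^\dag \ne 0$, hence $a^\dag \ne 0$; and if $b = 0$ then $\val(b) = +\infty > \val(a^\dag)$ and $ab = 0 \in Q$, so both assertions hold trivially. Assume $b \ne 0$ and set $r = b/a^\dag \in K^\times$. The key identity is
\[
K \cdot (1, r, a, ar) \;=\; K \cdot (a^\dag,\, b,\, aa^\dag,\, ab),
\]
whose four coordinates on the right all lie in $R$. Since $a$ is wild, Lemma~\ref{major-mess-0} (applied to each of $a^\dag, b$) gives
\begin{align*}
\wres(a^\dag) &= 0, & \wres(b) &= p\varepsilon, \\
\wres(aa^\dag) &= q_0\varepsilon, & \wres(ab) &= q\varepsilon,
\end{align*}
for some $p, q \in k$ and some $q_0 \in k$; the hypothesis $aa^\dag \notin Q$ is exactly the assertion $q_0 \ne 0$, and $\wres(a^\dag) = 0$ follows from $a^\dag \in Q$ combined with the first bullet of Lemma~\ref{major-mess-0}.

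Next I would apply Proposition~\ref{exact-form} to the line $K \cdot (1,r,a,ar)$, using the scalar $\lambda = 1$ to place the tuple $(0,\, p\varepsilon,\, q_0\varepsilon,\, q\varepsilon)$ inside $\varsigma_4(K \cdot (1,r,a,ar))$. Setting $u = q_0\varepsilon \ne 0$ and $q' := q/q_0 \in k$, this tuple has the shape $(s,t,u,q'u)$. Since $\val(a) > 0$ by hypothesis, Lemma~\ref{lem:wtf} immediately delivers $\val(r) \ge 0$ and $\res(r) = q/q_0$.

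Finally I would translate back. From $\val(r) \ge 0$ we get $\val(b) \ge \val(a^\dag)$. For the biconditional, $ab \in Q$ means $\wres(ab) = q\varepsilon \in k$, i.e.\ $q = 0$; while $\val(b) > \val(a^\dag)$ means $\val(r) > 0$, i.e.\ $\res(r) = q/q_0 = 0$, which (since $q_0 \ne 0$) is again $q = 0$. I do not anticipate any real obstacle: the only substantive move is the recognition that the fourth residue $q\varepsilon$ is automatically a $k$-multiple of the (nonzero) third residue $q_0\varepsilon$, which is precisely the input the Double Mutation Lemma demands, and this is forced by the degeneracy subspace being $k\varepsilon$.
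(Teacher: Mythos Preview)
Your proposal is correct and follows essentially the same approach as the paper: compute the residues of $a^\dag, b, aa^\dag, ab$ via Lemma~\ref{major-mess-0}, place the resulting tuple into $\varsigma_4(K \cdot (1, b/a^\dag, a, ab/a^\dag))$, and invoke the Double Mutation Lemma with $r = b/a^\dag$. The paper routes through $\varsigma_5(K \cdot (1,a^\dag,b,aa^\dag,ab))$ and then projects, whereas you go directly to the 4-coordinate line via Proposition~\ref{exact-form}; this is a cosmetic difference only.
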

\begin{proof}
  By Lemma~\ref{major-mess-0},
  \begin{align*}
    \wres(a^\dag) &= 0 \\
    \wres(b) &= p\varepsilon \\
    \wres(aa^\dag) &= s\varepsilon \\
    \wres(ab) &= q\varepsilon,
  \end{align*}
  for some $p, q, s \in k$ with $s \ne 0$.  Note $ab \in Q \iff q = 0$.
  It suffices to show that $\val(b/a^\dag) \ge 0$ and $\res(b/a^\dag) = q/s$.

  By the inflator calculus,
  \begin{align*}
    \varsigma_5(K \cdot (1,a^\dag,b,aa^\dag,ab)) &=
    \varsigma_5(\{(x,a^\dag x, bx, aa^\dag x, abx) : x \in K\}) \\
    &= \{(x, 0, (p \varepsilon) x, (s \varepsilon) x, (q \varepsilon) y : x \in
    k[\varepsilon] \} \\
    &= (k[\varepsilon]) \cdot (1,0,p \varepsilon, s \varepsilon, q \varepsilon),
  \end{align*}
  and therefore
  \begin{equation*}
    (0,p \varepsilon, s \varepsilon, q \varepsilon) \in \varsigma_4(K \cdot
    (a^\dag, b, aa^\dag, ab)) = \varsigma_4(K \cdot (1, b/a^\dag, a,
    ab/a^\dag)).
  \end{equation*}
  By the Double Mutation Lemma~\ref{lem:wtf} with $r = b/a^\dag$, it
  follows that $\val(b/a^\dag) \ge 0$ and $\res(b/a^\dag) = q/s$.
\end{proof}

\begin{lemma}\label{major-mess-2}
  Let $a$ be a wild element with $\val(a) > 0$, and let $a^\dag$ be a
  neutralizer.  Suppose $b \in R$ and $ab \notin R$.  Then $\val(b) <
  \val(a^\dag)$.
\end{lemma}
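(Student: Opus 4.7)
I argue by contradiction: assume $\val(b) \ge \val(a^\dag)$ and derive $ab \in R$.

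First, reduce to the case $b \in I$. Since $a^\dag \in I \subseteq \pp$, we have $\val(a^\dag) > 0$, so the hypothesis forces $b \in \pp$; write $\wres(b) = \beta\varepsilon$. Using $\varsigma_2(K \cdot (1,a)) = k\varepsilon \oplus k\varepsilon$ (Lemma~\ref{lem:which-degen}) together with the formula of Proposition~\ref{exact-form}, pick $b_0 \in R$ with $ab_0 \in R$, $\wres(b_0) = \beta\varepsilon$, and $\wres(ab_0) = 0$. Then $ab_0 \in I \subseteq Q$, so Lemma~\ref{major-mess} applied to $b_0$ yields $\val(b_0) > \val(a^\dag)$ strictly. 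Replacing $b$ by $b - b_0 \in I$ preserves both $\val(b) \ge \val(a^\dag)$ and $ab \notin R$, so we may assume $b \in I$.

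Next, set $r := b/a^\dag$. The assumption $\val(b) \ge \val(a^\dag)$ gives $r \in \Oo$. If $r \in R$, then $ab = r(aa^\dag) \in R$ as a product of $R$-elements, contradicting $ab \notin R$. So $r \in \Oo \setminus R$, and Lemma~\ref{tame-is-r} shows $r$ is wild. By the same reasoning, $ar = ab/a^\dag \in \Oo \setminus R$ (else $ab = (ar) a^\dag \in R$), so $ar$ is also wild.

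Finally, derive the contradiction by mirroring the inflator calculus of Lemma~\ref{major-mess}. The goal is to compute $\varsigma_5(K \cdot (1, a^\dag, b, aa^\dag, ab))$ by parameterizing $V \cap R^5$ via those $x \in R$ with $xab \in R$; the point $x = a^\dag$ lies in this set because $a^\dag \cdot ab = (aa^\dag) b \in R$, and there are also contributions from $x$ with $xa \in R$ (giving $xab = (xa)b \in R$). The wildness of $r$, expressed by $\varsigma_2(K \cdot (1, r)) = k\varepsilon \oplus k\varepsilon$, produces an element $(s, t, u, qu) \in \varsigma_4(K \cdot (1, r, a, ar))$ with $u \ne 0$, so the Double Mutation Lemma~\ref{lem:wtf} forces $\res(r) = q$. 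Combined with the independent wildness of $ar$, which imposes a second degeneracy $\varsigma_2(K \cdot (1, ar)) = k\varepsilon \oplus k\varepsilon$ on the last two coordinates of this $\varsigma_4$, the length-$2$ constraint on $\varsigma_4(K \cdot (1, r, a, ar))$ should force two incompatible values of $\res(r)$. A cleaner implementation passes to the mutation $\varsigma'$ of $\varsigma$ along $K \cdot (1, r)$: by Remark~\ref{rem:inherit2} the mutation still satisfies the Strong Assumptions, the element $r$ becomes tame in $\varsigma'$ (so $r \in R'$), and Proposition~\ref{persistence} identifies the integral closures, so an application of Lemma~\ref{major-mess} inside $\varsigma'$ — together with $ab \notin R$ translating into a specific failure inside $R'$ — produces the contradiction.

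\textbf{Main obstacle.} Making the final step rigorous: the reduction of the inflator calculus through both $r$ and $ar$ wild, while simultaneously losing the convenient parametrization $x = 1$ that the proof of Lemma~\ref{major-mess} relied on. The mutation route appears cleanest, but requires carefully transferring the witness $b \in I$ with $ab \notin R$ into the vocabulary of $\varsigma'$ and invoking Proposition~\ref{persistence} to trap $r$ simultaneously inside $R'$ and outside $R$.
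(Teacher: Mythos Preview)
Your proposal has a genuine gap: the final contradiction is never actually derived. You set up $r = b/a^\dag$ and correctly show that both $r$ and $ar$ are wild, but the ensuing paragraph is a sketch of ideas (``should force two incompatible values of $\res(r)$'', ``a cleaner implementation passes to the mutation\ldots'') rather than a proof. In particular, you never explain what ``$ab \notin R$ translating into a specific failure inside $R'$'' means, nor why the wildness of $r$ and $ar$ together is contradictory. You yourself flag this as the main obstacle, and indeed it is not clear how to close the argument along these lines without substantial further work.

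The paper's proof avoids this difficulty entirely by working with $ab$ rather than with $b/a^\dag$. Since $ab \in \Oo \setminus R$, the element $ab$ is wild and has a neutralizer $(ab)^\dag$; Lemma~\ref{major-mess-0} gives $\wres((ab)^\dag) = 0$, hence $\val((ab)^\dag) > 0$. Set $c = b \cdot (ab)^\dag$. Then $c \in R$ (product of elements of $R$) and $ac = (ab)(ab)^\dag \in R \setminus Q$ by definition of neutralizer. Now Lemma~\ref{major-mess} applied to $a$, $a^\dag$, and $c$ gives $\val(c) = \val(a^\dag)$, and so
\[
\val(a^\dag) = \val(c) = \val(b) + \val((ab)^\dag) > \val(b).
\]
This is a direct argument, with no contradiction hypothesis, no reduction to $b \in I$, and no mutation. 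The key insight you are missing is that multiplying $b$ by the neutralizer of $ab$ produces an element to which Lemma~\ref{major-mess} applies cleanly.
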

\begin{proof}
  Note that $b \in R \subseteq \Oo$ and $a \in \mm \subseteq \Oo$, so
  $ab \in \Oo$.  The fact that $ab \notin R$ then implies that $ab$ is
  wild, by Lemma~\ref{tame-is-r}.  Also, $ab \notin Q$, as $Q
  \subseteq R$.  By Lemma~\ref{initial}, $ab$ has a neutralizer
  $(ab)^\dag$.  By Lemma~\ref{major-mess-0},
  \begin{align*}
    \wres((ab)^\dag) &= 0 \\
    \wres(ab(ab)^\dag) & = s \varepsilon,
  \end{align*}
  for some non-zero $s \in k$.
  Let $c = b(ab)^\dag$.  Then $c \in R$ (because $b \in R$ and
  $(ab)^\dag \in Q \subseteq R$).  Also
  \begin{equation*}
    ac = (ab)(ab)^\dag \in R \setminus Q.
  \end{equation*}
  By Lemma~\ref{major-mess},
  \begin{equation*}
    ac \notin Q \implies \val(c) = \val(a^\dag).
  \end{equation*}
  Then
  \begin{equation*}
    \val(a^\dag) = \val(c) = \val(b) + \val((ab)^\dag) > \val(b),
  \end{equation*}
  because $\res((ab)^\dag) = 0$.
\end{proof}
Lemmas~\ref{major-mess} and \ref{major-mess-2} combine to yield the
following:
\begin{lemma}\label{major-mess-3}
  Let $a$ be a wild element with $\val(a) > 0$, and let $a^\dag$ be a
  neutralizer of $a$.  Suppose $b \in R$.
  \begin{align*}
    ab \in R &\iff \val(b) \ge \val(a^\dag) \\
    ab \in Q &\iff \val(b) > \val(a^\dag).
  \end{align*}
\end{lemma}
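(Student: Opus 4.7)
The plan is to observe that Lemma~\ref{major-mess-3} is essentially a repackaging of Lemmas~\ref{major-mess} and \ref{major-mess-2}; no new calculation is required, only a careful bookkeeping of the four cases determined by the truth values of ``$ab \in R$'' and ``$\val(b) \ge \val(a^\dag)$.''

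First I would prove the equivalence $ab \in R \iff \val(b) \ge \val(a^\dag)$. The forward direction is exactly the first conclusion of Lemma~\ref{major-mess}. For the converse, I argue by contrapositive: assume $ab \notin R$. Since $b \in R$, Lemma~\ref{major-mess-2} applies and yields $\val(b) < \val(a^\dag)$, so the contrapositive of the desired implication holds.

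Next I would deduce the equivalence $ab \in Q \iff \val(b) > \val(a^\dag)$. For the forward direction, $ab \in Q \subseteq R$ forces $ab \in R$, so by the already-established second statement of Lemma~\ref{major-mess} we get $\val(b) > \val(a^\dag)$. For the converse, $\val(b) > \val(a^\dag)$ implies $\val(b) \ge \val(a^\dag)$, so the first equivalence gives $ab \in R$; then Lemma~\ref{major-mess} (second conclusion) applied in the $ab \in R$ case immediately gives $ab \in Q$.

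There is no real obstacle here: the work has been done in Lemmas~\ref{major-mess} and \ref{major-mess-2}, and Lemma~\ref{major-mess-3} just assembles the two trichotomies (on membership in $R$ vs.\ $Q$ and on $\val(b)$ vs.\ $\val(a^\dag)$) into a pair of clean biconditionals. The only thing worth being careful about is using the contrapositive correctly in the first equivalence, since Lemma~\ref{major-mess-2} is stated only in the case $ab \notin R$ and one must not accidentally invoke it when $ab \in R$.
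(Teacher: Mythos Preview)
Your proposal is correct and matches the paper's approach exactly: the paper merely states that Lemma~\ref{major-mess-3} follows by combining Lemmas~\ref{major-mess} and \ref{major-mess-2}, and your case analysis is precisely how that combination works.
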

Next, we weaken the assumption on $a$, allowing $\val(a) = 0$:
\begin{lemma}\label{major-mess-4}
  Let $a$ be a wild element with $\val(a) \ge 0$, and let $a^\dag$ be
  a neutralizer of $a$.  Suppose $b \in R$.
  Then
  \begin{equation*}
    ab \in R \iff \val(b) \ge \val(a^\dag).
  \end{equation*}
  If $b \in Q \subseteq R$, then
  \begin{equation*}
    ab \in Q \iff \val(b) > \val(a^\dag).
  \end{equation*}
\end{lemma}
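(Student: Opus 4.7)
The plan is to reduce to Lemma~\ref{major-mess-3} by rewriting $a$ as the sum of a ``new'' wild element of strictly positive valuation and a correcting element in $Q$.

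First, I would dispose of the case $\val(a) > 0$, which is immediate from Lemma~\ref{major-mess-3}. So assume $\val(a) = 0$. Let $c = \res(a) \in k$. By Theorem~\ref{thm:so-far}, the generalized residue map $\wres : R \twoheadrightarrow k[\varepsilon]$ is surjective, so I can pick $\tilde{c} \in R$ with $\wres(\tilde{c}) = c + 0\varepsilon$; in particular $\tilde{c} \in Q$. Set $a' := a - \tilde{c}$. By Proposition~\ref{same-residue-2:prop}, $\res(\tilde{c}) = c$, so $\res(a') = 0$ and hence $\val(a') > 0$. I would then check that $a'$ is still wild: otherwise $a' \in \Oo$ is tame, so by Lemma~\ref{tame-is-r} we have $a' \in R$, and then $a = a' + \tilde{c} \in R$, contradicting wildness of $a$.

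Next I would verify that the same $a^\dag$ works as a neutralizer for $a'$. We have $a' a^\dag = a a^\dag - \tilde{c} a^\dag$. Since $\tilde{c}, a^\dag \in Q$ and $Q$ is a ring, $\tilde{c} a^\dag \in Q \subseteq R$; combined with $a a^\dag \in R \setminus Q$ this gives $a' a^\dag \in R \setminus Q$. Thus $a^\dag$ is a neutralizer of $a'$, and Lemma~\ref{major-mess-3} applies to $a'$: for $b \in R$,
\begin{equation*}
a'b \in R \iff \val(b) \geq \val(a^\dag), \qquad a'b \in Q \iff \val(b) > \val(a^\dag).
\end{equation*}

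Finally I would transfer these statements back to $a$ via $ab = a'b + \tilde{c} b$. For any $b \in R$, the correction $\tilde{c} b$ lies in $R$ (since $\tilde{c} \in Q \subseteq R$), so $ab \in R \iff a'b \in R$, giving the first equivalence. For $b \in Q$, the correction $\tilde{c} b$ lies in $Q$, so $ab \in Q \iff a'b \in Q$, giving the second. The argument is largely mechanical; the only conceptual point is the decomposition $a = a' + \tilde{c}$ together with the fact that the $Q$-perturbation $\tilde{c}$ preserves both the neutralizer relation and the $R$- versus $Q$-membership of products, so the mild obstacle is really just pinning down these invariance properties correctly.
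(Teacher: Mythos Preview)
Your proposal is correct and follows essentially the same approach as the paper: both subtract from $a$ an element of $Q$ lifting $\res(a)$ to obtain a wild $a'$ with $\val(a') > 0$ sharing the same neutralizer $a^\dag$, then invoke Lemma~\ref{major-mess-3} and transfer back using that the $Q$-correction preserves $R$-membership (resp.\ $Q$-membership when $b \in Q$). The only cosmetic difference is that you separate out the case $\val(a) > 0$ at the start, whereas the paper's argument handles it implicitly (the correction is zero in that case).
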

\begin{proof}
  Let $\res(a) = \gamma$, and choose $c \in R$ with
  $\wres(c) = \gamma + 0\varepsilon$.  Then $c \in Q$ and
  $\res(c) = \gamma$.  Let $a' = a - c$.  Then $\res(a') =
  \res(a) - \res(c) = \gamma - \gamma = 0$.  So $\val(a') > 0$.
  Because $a \notin R$ and $c \in R$, we have $a' = a -
  c \notin R$, and so $a'$ is wild by Lemma~\ref{tame-is-r}.
  Moreover,
  \begin{equation*}
    a'a^\dag = aa^\dag - c a^\dag \in (R \setminus Q) - Q = R \setminus Q.
  \end{equation*}
  Therefore $a^\dag$ is a neutralizer of $a'$, and we can apply
  Lemma~\ref{major-mess-3} to $a', a^\dag, b$.  Then
  \begin{equation*}
    ab \in R \iff a'b \in R \iff \val(b) \ge \val(a^\dag),
  \end{equation*}
  since $ab - a'b = c b \in R$.  If $b \in Q$, then $c b \in
  Q$ and
  \begin{equation*}
    ab \in Q \iff a'b \in Q \iff \val(b) > \val(a^\dag). \qedhere
  \end{equation*}
\end{proof}

\subsection{The secondary valuation}

\begin{definition}\label{vp-def}
  For $a \in \Oo$, let $\val_\partial(a)$ denote
  \begin{equation*}
    \val_\partial(a) = \begin{cases}
      +\infty & \text{ if } a \in Q \\
      -\val(a^\dag) & \text{ if $a$ has a neutralizer $a^\dag$}
    \end{cases}
  \end{equation*}
\end{definition}

\begin{lemma}\label{vp-well-def}
  $\val_\partial(a)$ is well-defined: for any $a \in \Oo$,
  exactly one of the following holds
  \begin{enumerate}
  \item $a \in Q$
  \item \label{dag2} $a$ has a neutralizer $a^\dag$,
  \end{enumerate}
  and in case (\ref{dag2}) the valuation $\val(a^\dag)$ is independent
  of the choice of a neutralizer $a^\dag$.
\end{lemma}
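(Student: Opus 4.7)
The plan is to verify the three assertions in order: disjointness of the two cases, exhaustiveness, and well-definedness of $\val(a^\dag)$ in case~(\ref{dag2}). The first two parts should be immediate. For disjointness, note that $Q$ is a subring, so if $a \in Q$ and $a^\dag \in Q$ then $aa^\dag \in Q$, which precludes $aa^\dag \in R \setminus Q$; so no $a \in Q$ has a neutralizer. For exhaustiveness, any $a \in \Oo \setminus Q$ has a neutralizer by Lemma~\ref{initial}.

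The substantive part is independence of $\val(a^\dag)$ from the choice of neutralizer. I would split on whether $a$ is tame or wild. In the tame case, Lemma~\ref{tame-is-r} gives $a \in R$, and since $a \notin Q$ we may write $\wres(a) = u + v\varepsilon$ with $v \ne 0$. For any $a^\dag \in Q$ with $\wres(a^\dag) = s \in k$, the product satisfies $\wres(aa^\dag) = us + vs\varepsilon$, which lies outside $k$ iff $s \ne 0$. So the neutralizers of $a$ are exactly the elements of $Q \setminus I = Q^\times$; and every such element is in $R^\times \subseteq \Oo^\times$, hence has valuation $0$. So $\val(a^\dag)$ is forced to be $0$ in the tame case.

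In the wild case, let $a^\dag_1, a^\dag_2$ be two neutralizers of $a$. Since $a \in \Oo$ we have $\val(a) \ge 0$, so Lemma~\ref{major-mess-4} applies with neutralizer $a^\dag_1$: for $b \in Q$, we have $ab \in Q \iff \val(b) > \val(a^\dag_1)$. Applying this with $b = a^\dag_2$, the fact that $aa^\dag_2 \in R \setminus Q$ gives $\val(a^\dag_2) \not> \val(a^\dag_1)$, i.e., $\val(a^\dag_2) \le \val(a^\dag_1)$. Swapping the roles of $a^\dag_1$ and $a^\dag_2$ yields the reverse inequality, so $\val(a^\dag_1) = \val(a^\dag_2)$.

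No step looks like a serious obstacle: disjointness and existence are one-line observations, the tame case is a direct $\wres$-computation plus the fact that $Q$ is local with maximal ideal $I \subseteq \pp$, and the wild case is a symmetric application of Lemma~\ref{major-mess-4}. The only mild subtlety is remembering to split on tame/wild, since Lemma~\ref{major-mess-4} is stated only for wild $a$.
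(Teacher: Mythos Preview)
Your proposal is correct and follows essentially the same approach as the paper: disjointness and exhaustiveness are handled identically, and the independence argument splits on tame/wild with the same $\wres$-computation in the tame case and the same appeal to Lemma~\ref{major-mess-4} in the wild case. The only cosmetic difference is that in the wild case you obtain $\val(a^\dag_2)\le\val(a^\dag_1)$ from the $Q$-part of Lemma~\ref{major-mess-4} and then swap, whereas the paper uses both biconditionals of that lemma at once (with fixed $a^\dag$ and $b$ the other neutralizer) to get $\val(b)=\val(a^\dag)$ directly.
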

\begin{proof}
  If $a \notin Q$, then a neutralizer $a^\dag$ exists by
  Lemma~\ref{initial}.  Conversely, if a neutralizer $a^\dag$ exists,
  then $a^\dag \in Q$ and $aa^\dag \in R \setminus Q$.  As $Q$ is a
  subring, $a \notin Q$.

  Now suppose $a^\dag$ and $b$ are two neutralizers of $a$.  If $a$ is
  wild, then Lemma~\ref{major-mess-4} applies, so
  \begin{equation*}
    ab \in R \setminus Q \implies \val(b) = \val(a^\dag).
  \end{equation*}

  If $a$ is tame, we can write
  \begin{align*}
    \wres(a) &= x + y\varepsilon \\
    \wres(a^\dag) &= z + 0\varepsilon \\
    \wres(b) &= w + 0\varepsilon
  \end{align*}
  for some $x, y, z, w \in k$, using the fact that $a^\dag, b \in Q$.
  Then
  \begin{align*}
    \wres(aa^\dag) &= xz + yz\varepsilon \\
    \wres(ab) &= xw + yw \varepsilon,
  \end{align*}
  The fact that $aa^\dag, ab \in R \setminus Q$ implies that $yz$ and
  $yw$ are non-zero.  Therefore $z, w$ are non-zero and $\res(a^\dag)$
  and $\res(b)$ are non-zero, implying $\val(a^\dag) = 0 = \val(b)$.
\end{proof}

\begin{lemma}\label{vp-class}
  For $a \in \Oo$,
  \begin{itemize}
  \item $\val_\partial(a) \ge 0 \iff a \in R$
  \item $\val_\partial(a) > 0 \iff a \in Q$.
  \end{itemize}
\end{lemma}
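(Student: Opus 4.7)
My plan is to prove the two equivalences by partitioning $\Oo$ into the three strata $Q$, $R \setminus Q$, and $\Oo \setminus R$, and computing $\val_\partial$ directly on each.

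First, for $\val_\partial(a) > 0 \iff a \in Q$: the forward implication of the reverse direction is trivial from the definition, since $a \in Q$ gives $\val_\partial(a) = +\infty$. For the converse, suppose $a \notin Q$. Then by Lemma~\ref{initial} a neutralizer $a^\dag$ exists, and by definition $a^\dag \in Q \subseteq R \subseteq \Oo$, so $\val(a^\dag) \ge 0$, giving $\val_\partial(a) = -\val(a^\dag) \le 0$.

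Second, for $\val_\partial(a) \ge 0 \iff a \in R$: if $a \in Q$, we are done as above. If $a \in R \setminus Q$, the element $1$ is a neutralizer (it lies in $Q$, and $a \cdot 1 = a \in R \setminus Q$), so by the well-definedness in Lemma~\ref{vp-well-def} every neutralizer has valuation $\val(1) = 0$, whence $\val_\partial(a) = 0$. Finally, suppose $a \in \Oo \setminus R$. Then $a$ is wild by Lemma~\ref{tame-is-r}, so any neutralizer $a^\dag$ satisfies $\wres(a^\dag) = 0$ by Lemma~\ref{major-mess-0}. Hence $a^\dag \in I \subseteq \pp$ (using that $I \subseteq \mathrm{Jac}(R) = \pp$ since $R$ is local), and then $a^\dag \in \mm$ by Lemma~\ref{same-residue-1:lem}, so $\val(a^\dag) > 0$ and $\val_\partial(a) = -\val(a^\dag) < 0$.

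There is no real obstacle; the argument is a bookkeeping exercise assembling Lemmas~\ref{initial}, \ref{tame-is-r}, \ref{same-residue-1:lem}, \ref{major-mess-0}, and \ref{vp-well-def}. The only point worth being careful about is the third subcase, where one must know that the neutralizer lies in the maximal ideal $\mm$ of $\Oo$ and not merely in $R$; this is exactly what $\wres(a^\dag) = 0$ provides, by chaining the inclusions $I \subseteq \pp \subseteq \mm$.
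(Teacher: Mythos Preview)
Your proof is correct and follows essentially the same approach as the paper: partition $\Oo$ into $Q$, $R\setminus Q$, and $\Oo\setminus R$, and compute $\val_\partial$ in each case. The only cosmetic difference is that in the $\Oo\setminus R$ case the paper passes directly from $\wres(a^\dag)=0$ to $\res(a^\dag)=0$ (hence $\val(a^\dag)>0$), while you route through the chain $a^\dag\in I\subseteq\pp\subseteq\mm$; these are equivalent.
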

\begin{proof}
  First suppose $a \in Q$.  Then $\val_\partial(a) = +\infty$ by
  definition.  Next suppose that $a \in R \setminus Q$.  Then $1$ is a
  neutralizer of $a$, so $\val_\partial(a) = -\val(1) = 0$.

  Lastly, suppose $a \in \Oo \setminus R$.  Let $a^\dag$ be a
  neutralizer.  By Lemma~\ref{major-mess-0}, $\wres(a^\dag) = 0$,
  implying $\res(a^\dag) = 0$, $\val(a^\dag) > 0$, and
  $\val_\partial(a) < 0$.
\end{proof}

\begin{lemma}\label{vp-scalars}
  If $a \in \Oo$ and $b \in Q$, then
  \begin{equation*}
    \val_\partial(ab) = \val_\partial(a) + \val(b)
  \end{equation*}
  unless the right hand side is positive, in which case
  \begin{equation*}
    \val_\partial(ab) = +\infty.
  \end{equation*}
\end{lemma}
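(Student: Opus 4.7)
The plan is to split into cases according to whether $a$ and $ab$ lie in the subring $Q$. The case $a \in Q$ is immediate: since $Q$ is a subring and $b \in Q$, we have $ab \in Q$, so both $\val_\partial(a)$ and $\val_\partial(ab)$ equal $+\infty$, and the stated convention applies. So I may assume $a \notin Q$.

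In the main case where $ab \notin Q$ as well, I would construct a neutralizer of $a$ out of a neutralizer of $ab$. Let $a^\dag$ be a neutralizer of $a$ and, using Lemma~\ref{initial}, let $(ab)^\dag$ be a neutralizer of $ab$. Then $b(ab)^\dag$ lies in $Q$ (as $b, (ab)^\dag \in Q$ and $Q$ is a subring), and
\[
a \cdot \bigl(b(ab)^\dag\bigr) = (ab)(ab)^\dag \in R \setminus Q,
\]
so $b(ab)^\dag$ is itself a neutralizer of $a$. The uniqueness clause of Lemma~\ref{vp-well-def} then gives $\val\bigl(b(ab)^\dag\bigr) = \val(a^\dag)$, and hence $\val((ab)^\dag) = \val(a^\dag) - \val(b)$, which rearranges to
\[
\val_\partial(ab) = -\val((ab)^\dag) = -\val(a^\dag) + \val(b) = \val_\partial(a) + \val(b),
\]
as desired.

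The remaining case is $a \notin Q$ but $ab \in Q$, in which case $\val_\partial(ab) = +\infty$ and I must verify that $\val_\partial(a) + \val(b)$ is strictly positive, so that the stated convention applies. Observe first that $\val(b) > 0$ in this situation, since otherwise $b$ would be a unit of $Q$ (the units of $Q$ being exactly the elements of valuation $0$, as $I \subseteq \mm$) and the identity $a = (ab)b^{-1}$ would force $a \in Q$. If $a$ is tame, then $a \in R \setminus Q$, so $1$ is a neutralizer and $\val_\partial(a) = 0$ by Lemma~\ref{vp-class}, giving $\val_\partial(a) + \val(b) = \val(b) > 0$. If $a$ is wild, then Lemma~\ref{major-mess-4} applied to $b \in Q$ yields $ab \in Q \iff \val(b) > \val(a^\dag)$, so $\val_\partial(a) + \val(b) = \val(b) - \val(a^\dag) > 0$. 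The main obstacle, insofar as there is one, is spotting the neutralizer trick in the generic case; once the identity $a \cdot b(ab)^\dag = (ab)(ab)^\dag$ is written down, the rest is a mechanical case split invoking the lemmas of \S\ref{sec:neut}.
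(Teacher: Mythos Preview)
Your proof is correct and uses the same core idea as the paper --- the observation that if $a\notin Q$ and $ab\notin Q$ then $b(ab)^\dag$ is itself a neutralizer of $a$, so Lemma~\ref{vp-well-def} forces $\val(b(ab)^\dag)=\val(a^\dag)$. The organization differs slightly: the paper splits first on whether $a$ lies in $Q$, $R\setminus Q$, or $\Oo\setminus R$, handling the middle case by an explicit $\wres$ computation and only invoking the neutralizer trick in the last case; you instead split on whether $ab\in Q$, which lets the neutralizer trick cover the tame and wild cases uniformly in your main branch. Your arrangement is mildly slicker in that respect, but the content is the same.
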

\begin{proof}
  First suppose $a \in Q$.  Then $\val_\partial(a) = +\infty$, and the
  conclusion says that $ab \in Q$, which is true ($Q$ is a ring).

  Next, suppose that $a \in R \setminus Q$.  Then $\val_\partial(a) =
  0$.  As, $a, ab \in R$ and $b \in Q$, we may write
  \begin{align*}
    \wres(a) &= x + y \varepsilon \\
    \wres(b) &= z + 0\varepsilon \\
    \wres(ab) &= xz + yz \varepsilon.
  \end{align*}
  for some $x, y, z \in k$, with $y \ne 0$.  Then
  \begin{align*}
    \val(b) > 0 & \implies z = \res(b) = 0 \implies yz = 0 \implies ab
    \in Q \implies \val_\partial(ab) = +\infty \\
    \val(b) = 0 & \implies z = \res(b) \ne 0 \implies yz \ne 0
    \implies ab \in R \setminus Q \implies \val_\partial(ab) = 0.
  \end{align*}
  Lastly, suppose $a \in \Oo \setminus R$, so $a$ is wild.  Take a
  neutralizer $a^\dag$.  We break into cases according to the sign of
  $\val_\partial(a) + \val(b)$.
  \begin{itemize}
  \item If $\val_\partial(a) + \val(b) > 0$, then $\val(b) >
    \val(a^\dag)$, so $ab \in Q$ by Lemma~\ref{major-mess-4}.  Thus
    $\val_\partial(ab) = +\infty$.
  \item If $\val_\partial(a) + \val(b) = 0$, then $\val(b) =
    \val(a^\dag)$, so $ab \in R \setminus Q$, by
    Lemma~\ref{major-mess-4}.  Thus $\val_\partial(ab) = 0$.
  \item If $\val_\partial(a) + \val(b) < 0$, then $\val(b) <
    \val(a^\dag)$, and $ab \notin R$ by Lemma~\ref{major-mess-4}.  On
    the other hand, $a, b \in \Oo$, so $ab \in \Oo \setminus R$, and
    $ab$ is wild (Lemma~\ref{tame-is-r}).  Take a neutralizer
    $(ab)^\dag$, and let $c = b(ab)^\dag$.  Then $b, (ab)^\dag \in
    Q$, so $c \in Q$.  Also $ac = (ab)(ab)^\dag \in R \setminus Q$, and so $c$
    is a neutralizer of $a$.  Then
    \begin{equation*}
      \val_\partial(ab) = -\val((ab)^\dag) = -\val(c) + \val(b) =
      \val_\partial(a) + \val(b). \qedhere
    \end{equation*}
  \end{itemize}
\end{proof}
The next lemma says that for any $\gamma$, the set
\[ \{x \in \Oo : \val_\partial(x) \ge \gamma\}\]
is a subring of $\Oo$.
\begin{lemma}\label{vp-ops}
  For any $a, b \in \Oo$, let $\gamma =
  \min(\val_\partial(a),\val_\partial(b))$.  Then
  \begin{align*}
    \val_\partial(a+b) &\ge \gamma \\
    \val_\partial(ab) &\ge \gamma.
  \end{align*}
\end{lemma}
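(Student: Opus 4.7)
The plan is to reduce both inequalities to a single key claim: when $a$ is chosen to have the smaller $\val_\partial$ and $a^\dag$ is a neutralizer of $a$, then both $(a+b)a^\dag$ and $(ab)a^\dag$ lie in $R$. The easy cases $\gamma \ge 0$ are dispatched immediately by Lemma~\ref{vp-class}: both $a$ and $b$ lie in $R$ (and in $Q$, if $\gamma > 0$), so the same is true of $a+b$ and $ab$, and a second application of Lemma~\ref{vp-class} gives the bound.

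Suppose then that $\gamma < 0$, and assume without loss of generality that $\val_\partial(a) \le \val_\partial(b)$, so that $\gamma = -\val(a^\dag)$ for a neutralizer $a^\dag$ of the wild element $a \in \Oo \setminus R$. For the sum, $(a+b)a^\dag = aa^\dag + ba^\dag$: the term $aa^\dag$ is in $R \setminus Q$ by the definition of a neutralizer, while $ba^\dag$ lies in $R$ either trivially (when $b \in R$) or, when $b$ is wild with neutralizer $b^\dag$, by Lemma~\ref{major-mess-4} applied to $b$ with test element $a^\dag \in Q \subseteq R$, using that $\val(a^\dag) \ge \val(b^\dag)$. For the product, $(ab)a^\dag = (aa^\dag)b$ lies in $R$ when $b \in R$, and when $b$ is wild it still lies in $R$ by Lemma~\ref{major-mess-4} applied to $b$ with test element $aa^\dag \in R$, whose valuation $\val(aa^\dag) \ge \val(a^\dag) \ge \val(b^\dag)$ again satisfies the required bound.

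Granting the claim, I finish with a case split on where $a+b$ (respectively $ab$) lies. If it is in $R$, Lemma~\ref{vp-class} gives $\val_\partial \ge 0 > \gamma$. Otherwise it lies in $\Oo \setminus R$, hence is wild by Lemma~\ref{tame-is-r}, and so admits a neutralizer by Lemma~\ref{initial}; then the containment $(a+b)a^\dag \in R$ (respectively $(ab)a^\dag \in R$) together with Lemma~\ref{major-mess-4} forces $\val((a+b)^\dag) \le \val(a^\dag)$ (respectively $\val((ab)^\dag) \le \val(a^\dag)$), whence $\val_\partial \ge -\val(a^\dag) = \gamma$. The main subtlety is that the neutralizer must be chosen as $a^\dag$ alone rather than the ``natural'' product $a^\dag b^\dag$: the latter would push $(ab)(a^\dag b^\dag)$ into the fundamental ideal $I$ and yield only the strictly weaker bound $\val_\partial(ab) \ge \val_\partial(a) + \val_\partial(b)$, which falls short of $\gamma$ whenever both $a$ and $b$ are wild.
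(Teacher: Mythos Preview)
Your proof is correct and follows essentially the same strategy as the paper: dispose of $\gamma \ge 0$ via the ring structure of $R$ and $Q$, and for $\gamma < 0$ show that $(a+b)a^\dag$ and $(ab)a^\dag$ both lie in $R$, then invert via the neutralizer to recover the bound. The only organizational difference is that the paper packages the step ``$a^\dag b \in R$'' through Lemma~\ref{vp-scalars} (avoiding a case split on whether $b$ is wild), and for the product applies Lemma~\ref{major-mess-4} to the wild element $a$ with test element $a^\dag b$ rather than to $b$ with test element $aa^\dag$; both routes are valid and of comparable length.
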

\begin{proof}
  If $\gamma > 0$ (i.e., $\gamma = +\infty$), this holds because $Q$
  is a ring.  If $\gamma = 0$, this holds because $R$ is a ring.
  So we may assume
  \begin{equation*}
    0 > \gamma = \val_\partial(a) \le \val_\partial(b),
  \end{equation*}
  swapping $a$ and $b$ if necessary.  The fact that $\val_\partial(a)
  < 0$ implies $a$ is wild.  Take a neutralizer $a^\dag$ of $a$.  Then
  $a^\dag \in Q$, so by Lemma~\ref{vp-scalars},
  \begin{equation*}
    \val_\partial(a^\dag b) \ge \val(a^\dag) + \val_\partial(b) =
    \val_\partial(b) - \val_\partial(a) \ge 0,
  \end{equation*}
  so $a^\dag b \in R$ by Lemma~\ref{vp-class}.  Then
  \begin{equation*}
    a^\dag(a+b) = aa^\dag + a^\dag b \in R + R = R,
  \end{equation*}
  so
  \begin{equation*}
    \val_\partial(a^\dag(a+b)) \ge 0.
  \end{equation*}
  By Lemma~\ref{vp-scalars}, it follows that
  \begin{equation*}
    \val_\partial(a+b) + \val(a^\dag) \ge 0,
  \end{equation*}
  or equivalently, that
  \begin{equation*}
    \val_\partial(a+b) \ge -\val(a^\dag) = \val_\partial(a) = \gamma.
  \end{equation*}
  Also, Lemma~\ref{major-mess-4} shows that
  \begin{equation*}
    (a^\dag b \in R \text{ and } \val(a^\dag b) \ge \val(a^\dag)) \implies aa^\dag b \in R,
  \end{equation*}
  as $a$ is wild.  Thus $a^\dag (a b) \in R$, and $\val_\partial(a^\dag
  (a b)) \ge 0$.  As in the case of $a+b$, this implies that
  \[ \val_\partial(ab) + \val(a^\dag) \ge 0,\]
  or equivalently, that $\val_\partial(ab) \ge \gamma$.
\end{proof}
Later (Corollary~\ref{vp-mult-better}), we will get an improved rule
for $\val_\partial(ab)$, but for now we content ourselves with the
following cases:
\begin{lemma}\label{square-into}
  If $a \in \Oo$ and $\val(a) + \val_\partial(a) > 0$, then $a^2 \in
  R$, i.e., $\val_\partial(a^2) \ge 0$.
\end{lemma}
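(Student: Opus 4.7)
My plan is to split into three cases based on where $a$ sits in the chain $Q \subseteq R \subseteq \Oo$.

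First I would dispose of the easy cases. If $a \in Q$ then $a^2 \in Q \subseteq R$ trivially. If $a \in R \setminus Q$, the hypothesis combined with $\val_\partial(a) = 0$ forces $\val(a) > 0$, placing $a$ in $R \cap \mm = \pp$ (Lemma~\ref{same-residue-1:lem}); since $\wres(a) \in k\varepsilon$ and $\varepsilon^2 = 0$, this gives $\wres(a^2) = 0$, so $a^2 \in I$.

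The remaining case is $a \in \Oo \setminus R$, where $a$ is wild (Lemma~\ref{tame-is-r}). Fixing a neutralizer $a^\dag$, the hypothesis reads $\val(a) > \val(a^\dag) > 0$. I would first establish the auxiliary fact $a^2 a^\dag \in I$: Lemmas~\ref{major-mess-4} and \ref{major-mess} applied with $b = a a^\dag$ give $a^2 a^\dag \in R$ and then $a^2 a^\dag \in Q$, and its positive valuation forces it into $I$. Next, by Lemmas~\ref{tilde-o-very-valuation} and \ref{discriminant}, $a$ satisfies a monic quadratic $a^2 + ba + c = 0$ with $b, c \in R$ and $b^2 - 4c \in \pp$. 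The discriminant condition combined with $\val(a) > 0$ forces the conjugate root $a'$ to have positive valuation, whence $b = -(a + a')$ and $c = aa'$ both lie in $\pp$; write $\wres(b) = \beta\varepsilon$, $\wres(c) = \gamma\varepsilon$, and $\wres(aa^\dag) = s\varepsilon$ with $s \ne 0$.

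The main step---and the part I expect to be the obstacle---is to assemble these pieces via the inflator calculus. My plan is to work with the $2$-dimensional subspace
\[ V = K \cdot (1, 0, a^2) + K \cdot (0, 1, a) = \{(x, y, a^2 x + ay) : x, y \in K\} \subseteq K^3.\]
The rewriting $a^2 + ab = -c$ places $(1, b, -c) \in V \cap R^3$, producing $(1, \beta\varepsilon, -\gamma\varepsilon) \in \varsigma_3(V)$; the identity $a \cdot (aa^\dag) = a^2 a^\dag$ places $(0, aa^\dag, a^2 a^\dag) \in V \cap R^3$, producing $(0, s\varepsilon, 0) \in \varsigma_3(V)$. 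A $k$-linear combination cancels the middle coordinate, yielding $(1, 0, -\gamma\varepsilon) \in \varsigma_3(V)$. Since $V + \{y_2 = 0\} = K^3$, the inflator intersection formula (Lemma~5.2.1 in \cite{prdf3}) identifies this with an element $(1, -\gamma\varepsilon)$ of $\varsigma_2(K \cdot (1, a^2))$. The unit first coordinate rules out the degeneracy subspace in Lemma~\ref{lem:degen}, so $\varsigma_2(K \cdot (1, a^2))$ is the graph of an endomorphism, giving $a^2 \in R$.
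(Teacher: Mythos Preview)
Your proof is correct, but it takes a considerably longer route than the paper's. The paper dispatches the case $a \in R$ in one line (since $R$ is a ring, $a^2 \in R$ trivially---your separate analysis of $R \setminus Q$ is unnecessary), and for $a \in \Oo \setminus R$ it applies Lemma~\ref{tilde-o-very-valuation} to $a/a^\dag$ rather than to $a$. Since $\val(a) > \val(a^\dag)$, the element $a^\dag/a$ lies outside $\Oo$ and hence is not integral over $R$; so $a/a^\dag$ satisfies a monic quadratic over $R$, say $(a/a^\dag)^2 = b(a/a^\dag) + c$. Multiplying through by $(a^\dag)^2$ gives
\[
  a^2 = b \cdot (a a^\dag) + c \cdot (a^\dag)^2,
\]
and since $a a^\dag,\ a^\dag,\ b,\ c$ all lie in $R$, the conclusion $a^2 \in R$ is immediate.

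By contrast, you apply Lemma~\ref{tilde-o-very-valuation} to $a$ itself, obtaining a relation among $a^2$, $a$, and $1$; since $a \notin R$, this does not directly place $a^2$ in $R$, and you compensate with an inflator-calculus argument in $\varsigma_3$ using the auxiliary fact $a^2 a^\dag \in I$. Your route does squeeze out slightly more information (implicitly $\wres(a^2) = -\gamma\varepsilon$), but for the lemma at hand the change of variables $a \rightsquigarrow a/a^\dag$ is the shortcut you missed: it converts the integral equation into one whose coefficients, after clearing denominators, are visibly elements of $R$.
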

\begin{proof}
  We may assume $a \notin R$, so $a$ is wild.  Take a neutralizer
  $a^\dag$.  Then
  \begin{equation*}
    0 < \val(a) + \val_\partial(a) = \val(a) - \val(a^\dag),
  \end{equation*}
  and so $\val(a) > \val(a^\dag)$.  Therefore $a^\dag/a$ is not
  integral over $R$.  By Lemma~\ref{tilde-o-very-valuation}, the
  inverse $a/a^\dag$ satisfies a monic quadratic polynomial equation
  over $R$.  Therefore
  \[ a^2 = b a a^\dag + c (a^\dag)^2\]
  for some $b, c \in R$.  As $aa^\dag, b, c, a^\dag$ are all in $R$,
  this implies $a^2 \in R$.
\end{proof}
\begin{lemma}\label{small-products}
  Let $\gamma$ be a positive element of the valuation group, and $a,
  b$ be elements of $\Oo$.  Suppose
  \begin{align*}
    \val(a) &> \gamma \\
    \val(b) &> \gamma \\
    \val_\partial(a) &> -\gamma \\
    \val_\partial(b) &> -\gamma.
  \end{align*}
  Then $ab \in R$, i.e., $\val_\partial(ab) \ge 0$.
\end{lemma}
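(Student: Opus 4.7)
The plan is to prove this by polarization, reducing to Lemma~\ref{square-into}. The key observation is that the hypotheses $\val(a) > \gamma$ and $\val_\partial(a) > -\gamma$ combine to give
\begin{equation*}
  \val(a) + \val_\partial(a) > \gamma + (-\gamma) = 0,
\end{equation*}
which is exactly the hypothesis of Lemma~\ref{square-into}. Therefore $a^2 \in R$, and by the same reasoning $b^2 \in R$.

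Next, I would verify that $a+b$ also satisfies the hypotheses of Lemma~\ref{square-into}. The ordinary valuation on $\Oo$ is non-archimedean, so $\val(a+b) \ge \min(\val(a),\val(b)) > \gamma$. For the secondary valuation, Lemma~\ref{vp-ops} gives $\val_\partial(a+b) \ge \min(\val_\partial(a),\val_\partial(b)) > -\gamma$. Adding, $\val(a+b) + \val_\partial(a+b) > 0$, and Lemma~\ref{square-into} yields $(a+b)^2 \in R$.

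Now I apply polarization. Expanding,
\begin{equation*}
  2ab = (a+b)^2 - a^2 - b^2 \in R,
\end{equation*}
and since $\characteristic(K) \ne 2$, we can divide by $2$ to conclude $ab \in R$. By Lemma~\ref{vp-class}, this is exactly the statement $\val_\partial(ab) \ge 0$.

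There is no real obstacle here; the only point that requires care is checking that the hypotheses of Lemma~\ref{square-into} propagate to $a+b$, and this is immediate from Lemma~\ref{vp-ops} together with the ultrametric inequality for $\val$. The characteristic hypothesis baked into the Strong Assumptions is what makes the polarization identity work.
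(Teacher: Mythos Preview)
Your proof is correct and follows essentially the same approach as the paper's own proof: apply Lemma~\ref{square-into} to $a$, $b$, and $a+b$, then use the polarization identity $ab = \frac{1}{2}\bigl((a+b)^2 - a^2 - b^2\bigr)$ together with $\characteristic(k_0) \ne 2$. You are slightly more explicit than the paper in justifying $\val(a+b) > \gamma$ via the ultrametric inequality, but the argument is otherwise identical.
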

\begin{proof}
  Note that $\val(a) + \val_\partial(a) > 0$ and $\val(b) +
  \val_\partial(b) > 0$, so $a^2, b^2 \in R$ by
  Lemma~\ref{square-into}.  By Lemma~\ref{vp-ops}, $\val_\partial(a +
  b) > -\gamma$, and so similarly $(a+b)^2 \in R$.  Since $R$ is an
  algebra over a field $k_0$ of characteristic $\ne 2$,
  \begin{equation*}
    ab = \frac{(a+b)^2 - a^2 - b^2}{2} \in R. \qedhere
  \end{equation*}
\end{proof}

\subsection{Density}
\begin{lemma}\label{cofinal-valuations}
  For every non-zero $a \in K$, there is non-zero $b \in Q$ such
  that $\val(b) \ge \val(a)$.
\end{lemma}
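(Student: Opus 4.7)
The plan is to dispatch the statement by case analysis on $\val(a)$, exploiting the secondary valuation $\val_\partial$ from Definition~\ref{vp-def}. If $\val(a) \le 0$, then $b = 1 \in Q$ already works since $\val(1) = 0 \ge \val(a)$. So the substantive case is $\val(a) > 0$, under which $a \in \mm \subseteq \Oo$ and $\val_\partial(a)$ is defined, taking values in $\Gamma_{\le 0} \cup \{+\infty\}$. I will further split this case based on the sign of $\val(a) + \val_\partial(a)$, corresponding roughly to whether a suitable power of $a$ lands in $Q$ on its own (the ``tame-ish'' side) or whether we must pass to a neutralizer (the ``wild'' side).

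In the case $\val(a) + \val_\partial(a) > 0$ (which in particular subsumes $a \in Q$), Lemma~\ref{square-into} gives $a^2 \in R$. Since $\val(a^2) = 2\val(a) > 0$, we have $a^2 \in R \cap \mm = \pp$ by Lemma~\ref{same-residue-1:lem}, so $\wres(a^2) \in k\varepsilon$. Squaring again annihilates $\varepsilon$, yielding $\wres(a^4) = 0$, i.e.\ $a^4 \in I \subseteq Q$. Then $b := a^4$ is non-zero in $Q$ with $\val(b) = 4\val(a) \ge \val(a)$ (using $\val(a) > 0$).

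In the remaining case $\val(a) + \val_\partial(a) \le 0$, we have $\val_\partial(a) \le -\val(a) < 0$, so $\val_\partial(a) \ne +\infty$, meaning $a \notin Q$. By Lemma~\ref{initial}/Lemma~\ref{vp-well-def}, $a$ admits a neutralizer $a^\dag \in Q$ satisfying $\val(a^\dag) = -\val_\partial(a) \ge \val(a)$. Because $a a^\dag \in R \setminus Q$ is non-zero by definition of neutralizer, $a^\dag$ itself is non-zero, and we may take $b := a^\dag$. I do not anticipate any serious obstacle: the statement is essentially a direct application of the machinery of \S\ref{sec:neut}, with Lemma~\ref{square-into} bridging the non-wild side and the neutralizer construction handling the wild side; the only mild subtlety is choosing the right exponent (\(4\) rather than \(2\)) on the tame-ish side so that a single invocation of $a^2 \in \pp \implies a^{2\cdot 2}\in I$ suffices.
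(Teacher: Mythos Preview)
Your proof is correct, but it takes a noticeably heavier route than the paper's. The paper argues directly: assuming $a \in \Oo$ (else $b=1$ works), Corollary~\ref{cor:fof} gives non-zero $x, y \in R$ with $x = ay$; choosing $z \in R$ with $\wres(z) = \varepsilon$, one sets $b = xz^2$, so $\wres(b) = \wres(x)\varepsilon^2 = 0$ forces $b \in I \subseteq Q$, while $\val(b) = \val(a) + \val(y) + 2\val(z) \ge \val(a)$. This avoids the secondary valuation entirely.

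Your approach instead splits on the sign of $\val(a) + \val_\partial(a)$ and calls on Lemma~\ref{square-into} and the neutralizer construction. This works and is logically sound (all the lemmas you invoke appear before Lemma~\ref{cofinal-valuations}), but it is more machinery than the statement requires. The paper's trick of multiplying by an element with $\wres(z) = \varepsilon$ to push any $R$-element into $I$ is the key shortcut you are missing; it makes the case analysis unnecessary. Your argument does have the minor conceptual advantage of illustrating how $\val_\partial$ controls membership in $Q$, but for this particular lemma the direct ring-theoretic approach is cleaner.
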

\begin{proof}
  We may assume $a \in \Oo$ (otherwise take $b = 1$).  By
  Corollary~\ref{cor:fof}, there are non-zero $x, y \in R$ such that
  $x = ay$.  By surjectivity of $\wres : R \to k[\varepsilon]$, there is
  $z \in R$, necessarily non-zero, such that $\wres(z) = \varepsilon$.
  Set $b = xz^2 = ayz^2$.  Then
  \begin{equation*}
    \val(b) \ge \val(a)
  \end{equation*}
  because $y, z \in R \subseteq \Oo$.  And $b \ne 0$, because $y, z
  \ne 0$.  Lastly,
  \begin{equation*}
    \wres(b) = \wres(x)\wres(z^2) = \wres(x) \cdot \varepsilon^2 = 0,
  \end{equation*}
  and so $b \in Q$.
\end{proof}
\begin{lemma}\label{weird-elements}
  For every $\gamma$ in the value group, there exists $a$
  with $\val(a) > \gamma$ and $\val_\partial(a) < -\gamma$.
\end{lemma}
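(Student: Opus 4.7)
I plan to prove the lemma by contradiction, leveraging the full strength of the Strong Assumption that no mutation of $\varsigma$ is \emph{weakly} multi-valuation type (as Remark~\ref{ohbtw} flags this as the first place the stronger form is needed). The negation of the statement is: there exists $\gamma$ in the value group such that every $a \in \Oo$ with $\val(a) > \gamma$ satisfies $\val_\partial(a) \ge -\gamma$. The target contradiction is to exhibit a non-zero ideal of the valuation ring $\Oo$ sitting inside $R$, since $\Oo$ is a multi-valuation ring by Corollary~\ref{cor:o} and this would make $\varsigma$ itself weakly multi-valuation type.

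Suppose for contradiction such a $\gamma$ exists. Since the value group $\Gamma$ is non-trivial (the valuation $\val$ is non-trivial by the corollary to Lemma~\ref{same-residue-1:lem}), $\Gamma$ is unbounded above, so I can replace $\gamma$ by some strictly larger $\gamma' \in \Gamma$ with $\gamma' > 0$, and the hypothesis $\val(a) > \gamma' \Rightarrow \val_\partial(a) \ge -\gamma > -\gamma'$ still holds---now with strict inequality on $\val_\partial$. Rename $\gamma'$ as $\gamma$. So the setup becomes: $\gamma > 0$, and for every $a \in \Oo$,
\[
\val(a) > \gamma \implies \val_\partial(a) > -\gamma.
\]

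With strict inequalities in hand, Lemma~\ref{small-products} immediately yields: for all $a, b \in \Oo$ with $\val(a), \val(b) > \gamma$, we have $ab \in R$. Now fix any non-zero $\pi \in \mm$ (exists since $\Oo$ is a non-trivial valuation ring), and choose $n$ large enough that $a_0 := \pi^n$ satisfies $\val(a_0) > \gamma$. Let $J = \{b \in \Oo : \val(b) > \gamma\}$, which is a non-zero ideal of $\Oo$. The previous sentence gives $a_0 \cdot J \subseteq R$. A routine check shows $a_0 J$ is itself an ideal of $\Oo$: if $x = a_0 b$ with $b \in J$ and $y \in \Oo$, then $\val(by) = \val(b) + \val(y) \ge \val(b) > \gamma$, so $by \in J$ and $xy = a_0(by) \in a_0 J$. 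It is also non-zero (it contains $a_0^2$). So $R$ contains the non-zero $\Oo$-ideal $a_0 J$.

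This makes $\varsigma$ weakly multi-valuation type, contradicting the Strong Assumptions. The main subtlety---and the reason I expect to spend a moment on it---is handling the boundary between $\ge$ and $>$ in the $\val_\partial$ inequality so as to match the strict hypotheses of Lemma~\ref{small-products}, which is why I enlarge $\gamma$ at the start. Everything else (existence of $a_0$, ideal-theoretic verification) is straightforward once that reduction is in place.
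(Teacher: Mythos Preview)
Your proof is correct. The approach differs somewhat from the paper's: rather than building the witness $a$ directly, you argue by contradiction and invoke Lemma~\ref{small-products} to show that if no such $a$ existed then products of two elements from the open ball of radius $\gamma$ all lie in $R$, giving a nonzero $\Oo$-ideal inside $R$. The paper instead uses Lemma~\ref{cofinal-valuations} to find $b \in Q$ with $\val(b) = \gamma > 0$, then uses the failure of weak multi-valuation type to locate $c$ with $\val(c) > 2\gamma$ and $c \notin R$, and sets $a = c/b$; Lemma~\ref{vp-scalars} then forces $\val_\partial(a) < -\gamma$. Both arguments ultimately rest on the same fact---that $R$ cannot contain an open $\Oo$-ball---but your route reaches it via Lemma~\ref{small-products} (already available and unused at this point), while the paper's route is a direct construction using Lemma~\ref{cofinal-valuations} and Lemma~\ref{vp-scalars}. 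Your trick of enlarging $\gamma$ to convert $\ge -\gamma$ into $> -\gamma'$ is exactly what is needed to match the strict hypotheses of Lemma~\ref{small-products}, and the verification that $a_0 J$ is a nonzero $\Oo$-ideal is routine (indeed $a_0 J$ is just the open ball of radius $\val(a_0) + \gamma$).
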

\begin{proof}
  Increasing $\gamma$, we may assume $0 < \gamma = \val(b)$ for some
  $b \in Q$, by Lemma~\ref{cofinal-valuations}.  Since $\varsigma$ is
  not weakly multi-valuation type (Definition~5.27 in \cite{prdf3}),
  the ball of valuative radius $2 \gamma$ cannot be contained in $R$.
  Therefore there is $c \in K$ with $\val(c) > 2 \gamma$ and $c \notin
  R$.  Let $a = c/b$.  Then
  \begin{equation*}
    \val(a) = \val(c) - \val(b) > 2\gamma - \gamma = \gamma > 0.
  \end{equation*}
  Therefore $a \in \Oo$ and $\val_\partial(a)$ is meaningful.  If
  $\val_\partial(a) \ge -\gamma$, then
  \begin{equation*}
    0 \le \val_\partial(a) + \gamma = \val_\partial(a) + \val(b) \le
    \val_\partial(ab) = \val_\partial(c),
  \end{equation*}
  by Lemma~\ref{vp-scalars}.  Then $c \in R$ by Lemma~\ref{vp-class},
  a contradiction.
\end{proof}

\begin{lemma}\label{application}
  If $a, b \in \mm$ and $\val_\partial(a) < \val_\partial(b)$, then
  \begin{equation*}
    b = p + qa
  \end{equation*}
  for some $p, q \in Q$.
\end{lemma}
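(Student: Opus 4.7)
The plan is to apply Lemma \ref{three-generators} to the triple $(a, b, 1) \in K^3$, obtaining $x, y, z \in Q$ with $ax + by + z = 0$ and at least one of $x, y, z$ equal to $1$. A case analysis on which coordinate is $1$, using the hypotheses $a, b \in \mm$ and $\val_\partial(a) < \val_\partial(b)$, then yields the desired expression. If $y = 1$, then $b = -ax - z$, so $p = -z$ and $q = -x$ work directly. The case $z = 1$ is impossible: the relation becomes $ax + by = -1$, but $x, y \in Q \subseteq \Oo$ and $a, b \in \mm$ force $ax, by \in \mm$ (as $\mm$ is an ideal of $\Oo$), and so $-1 \in \mm$, contradicting $\val(-1) = 0$.

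The remaining case $x = 1$ is the main content. Here $a = -by - z$, and I will show $y \in Q^\times$. Suppose for contradiction that $y \notin Q^\times$; since $Q$ is local with maximal ideal $I$, this gives $y \in I$, and in particular $\val(y) > 0$. By Lemma \ref{vp-scalars} applied to $by$ (with $b \in \Oo$ and $y \in Q$), combined with $\val(y) > 0$ and $\val_\partial(b) > \val_\partial(a)$, one has $\val_\partial(by) > \val_\partial(a)$ in both the finite and the $+\infty$ subcases of Lemma \ref{vp-scalars}. Since $z \in Q$ gives $\val_\partial(z) = +\infty$ by Lemma \ref{vp-class}, Lemma \ref{vp-ops} yields
\[
\val_\partial(a) = \val_\partial(-by - z) \ge \min(\val_\partial(by), \val_\partial(z)) > \val_\partial(a),
\]
a contradiction. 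Hence $y \in Q^\times$, and dividing by $y$ gives $b = -(1/y)\, a - z/y$ with both coefficients in $Q$.

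The main obstacle is the third case: one must recognize that the strict inequality $\val_\partial(a) < \val_\partial(b)$ is precisely what is needed to force $y$ to be a unit in $Q$, and the combination of additivity (Lemma \ref{vp-ops}) with the multiplicative rule (Lemma \ref{vp-scalars}) is the only delicate point, owing to the two cases in the $+\infty$ convention of Lemma \ref{vp-scalars}. The other two cases of the three-generators lemma rely only on $a, b \in \mm$ and the fact that $-1$ is a unit of $\Oo$.
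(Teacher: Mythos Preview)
Your proof is correct and follows essentially the same approach as the paper: apply Lemma~\ref{three-generators} to $\{1,a,b\}$, rule out the case $1 \in Qa + Qb$ using $a,b \in \mm$, and use the inequality $\val_\partial(a) < \val_\partial(b)$ to handle the case $a \in Q + Qb$.

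One minor observation: in your third case ($x = 1$, so $a = -by - z$), your contradiction for $y \notin Q^\times$ never actually uses $\val(y) > 0$; the bound $\val(y) \ge 0$ (which holds for any $y \in Q$) already suffices, since Lemma~\ref{vp-scalars} gives $\val_\partial(by) \ge \val_\partial(b) > \val_\partial(a)$ in either subcase. So your argument in fact shows this case is outright impossible, and the ``hence $y \in Q^\times$, divide'' step is superfluous. This is exactly how the paper handles it: the case $a = p + qb$ yields $\val_\partial(a) \ge \min(\val_\partial(p),\val_\partial(q),\val_\partial(b)) = \val_\partial(b)$ directly via Lemma~\ref{vp-ops}, a contradiction. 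Your route is valid but slightly longer than necessary.
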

\begin{proof}
  Applying Lemma~\ref{three-generators} to the set $\{1,a,b\}$ we
  obtain one of three cases:
  \begin{itemize}
  \item $1 = pa + qb$ for some $p, q \in Q$.  This cannot happen, as
    \[ pa + qb \in \Oo \mm + \Oo \mm = \mm \not\ni 1.\]
  \item $a = p + qb$ for some $p, q \in Q$.  By Lemma~\ref{vp-ops},
    \begin{equation*}
      \val_\partial(a) = \val_\partial(p + qb) \ge
      \min(\val_\partial(p),\val_\partial(q),\val_\partial(b)) =
      \val_\partial(b),
    \end{equation*}
    as $\val_\partial(p) = \val_\partial(q) = +\infty$.  This
    contradicts the assumption.
  \item $b = p + qa$ for some $p, q \in Q$. \qedhere
  \end{itemize}
\end{proof}

\begin{proposition}\label{density}
  The set $Q$ is dense in $\Oo$, with respect to the valuation
  topology.
\end{proposition}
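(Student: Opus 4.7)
The plan is to approximate $a \in \Oo$ by an element of $Q$ in two stages: first match residues, then bootstrap using Lemma~\ref{weird-elements} and Lemma~\ref{application}. Fix $a \in \Oo$ and $\gamma$ in the value group; I aim to produce $p \in Q$ with $\val(a - p) > \gamma$.

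For the residue step, I observe that $\res|_Q$ maps $Q$ onto $k$: if $x \in Q$ then $\wres(x) = \res(x) + 0\varepsilon$ by Proposition~\ref{same-residue-2:prop}, and $\wres$ already hits all of $k[\varepsilon]$, so $\res|_Q$ hits all of $k$. Choose $q_0 \in Q$ with $\res(q_0) = \res(a)$, so that $a - q_0 \in \mm$. Since $Q$ is closed under addition, it will suffice to approximate $a - q_0$ within $\gamma$. If $a - q_0 \in Q$ we are done; otherwise $\val_\partial(a - q_0)$ is a genuine (i.e.\ non-$+\infty$) element of $\Gamma_{\le 0}$.

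Now I would call on Lemma~\ref{weird-elements}. The value group is non-trivial (the corollary following Lemma~\ref{same-residue-1:lem}), so I can pick $\gamma'$ in the value group with $\gamma' \ge \gamma$ and $\gamma' > -\val_\partial(a - q_0)$. Lemma~\ref{weird-elements} then produces $a^* \in K$ with $\val(a^*) > \gamma'$ and $\val_\partial(a^*) < -\gamma' < \val_\partial(a - q_0)$. In particular $a^* \in \mm$, so Lemma~\ref{application}, applied to $a^*$ and $a - q_0$, yields $p_1, q_1 \in Q$ with
\begin{equation*}
  a - q_0 = p_1 + q_1 a^*.
\end{equation*}
Since $q_1 \in Q \subseteq \Oo$ we have $\val(q_1) \ge 0$, hence
\begin{equation*}
  \val\bigl(a - (q_0 + p_1)\bigr) = \val(q_1 a^*) \ge \val(a^*) > \gamma' \ge \gamma,
\end{equation*}
and $q_0 + p_1 \in Q$ is the desired approximation.

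The substantive work has already been done in Lemmas~\ref{weird-elements} and \ref{application}: the first uses the assumption that $\varsigma$ is not weakly of multi-valuation type to produce elements with simultaneously large $\val$ and very negative $\val_\partial$, while the second provides a two-generator representation of $\mm$-elements over $Q$. Density then follows essentially formally. The only subtlety is checking that a suitable $\gamma'$ exists in the value group, which is immediate from the non-triviality of the valuation; I do not foresee a real obstacle beyond this bookkeeping.
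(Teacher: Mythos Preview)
Your proof is correct and follows essentially the same approach as the paper: reduce to $\mm$ by matching residues via an element of $Q$, then use Lemma~\ref{weird-elements} and Lemma~\ref{application} to produce a $Q$-approximation. Your explicit introduction of $\gamma' \ge \gamma$ with $\gamma' > -\val_\partial(a-q_0)$ is in fact slightly more careful than the paper's phrasing, which applies Lemma~\ref{weird-elements} directly with the given $\gamma$ and tacitly assumes $\gamma$ has already been increased past $-\val_\partial(b)$.
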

\begin{proof}
  We first show that the closure of $Q$ contains $\mm$.  Let $b$ be some element
  of $\mm$.  Let $\gamma$ be a given positive element of the valuation
  group.  We will find $c \in Q$ such that $\val(c - b) > \gamma$.  If
  $b \in Q$, we can take $c = b$.  Otherwise, $\val_\partial(b) \le 0$.
  By Lemma~\ref{weird-elements}, there is $a$ such that $\val(a) >
  \gamma > 0$ and $\val_\partial(a) < \val_\partial(b) \le 0$.  In
  particular, $a \in \mm$.  By Lemma~\ref{application}, there are $c,
  q \in Q$ such that
  \begin{equation*}
    b = c + qa.
  \end{equation*}
  Then $\val(b - c) = \val(q) + \val(a) \ge \val(a) > \gamma$,
  because $q \in Q \subseteq \Oo$.

  Next let $b$ be any element of $\Oo$.  Take $d \in R$ such
  that
  \begin{equation*}
    \wres(d) = \res(b) + 0\varepsilon.
  \end{equation*}
  Then $d \in Q$ and $b - d \in \mm$.  So we can approximate $b-d$
  arbitrarily closely by elements of $Q$.  Equivalently, $b$ is in the
  closure of $d + Q = Q$.
\end{proof}

\subsection{The derivation $\partial$} \label{ssec:der}
Let $D$ be the $Q$-module
\begin{equation*}
  D := \Oo/Q
\end{equation*}
and let $\partial : \Oo \twoheadrightarrow D$ be the natural
$Q$-linear map.
\begin{proposition}\label{prop:q-to-o}
  The $Q$-module structure on $D$ extends to an $\Oo$-module structure
  as follows: for $a, b \in \Oo$,
  \begin{equation*}
    a \cdot \partial b := \partial (a' \cdot b),
  \end{equation*}
  where $a' \in Q$ and $\val(a' - a) + \val_\partial(b) > 0$.  In
  particular, the choice of $a'$ doesn't matter.
\end{proposition}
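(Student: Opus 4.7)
The plan is to verify three things in turn: that an $a' \in Q$ satisfying the stated inequality exists; that the resulting element $\partial(a'b) \in D$ depends only on $\partial b$ (and in particular is independent of the choice of $a'$); and that the resulting operation satisfies the $\Oo$-module axioms and extends the given $Q$-action.

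Existence of $a'$ is immediate from Proposition~\ref{density}: since $\val_\partial(b) \in \Gamma_{\le 0} \cup \{+\infty\}$, the condition $\val(a'-a) + \val_\partial(b) > 0$ amounts to $\val(a'-a) > -\val_\partial(b)$, and the right-hand side lies in $\Gamma_{\ge 0} \cup \{-\infty\}$, so density of $Q$ in $\Oo$ produces such an $a'$. For independence of $a'$, suppose $a_1', a_2' \in Q$ both satisfy the inequality; then $c := a_1' - a_2' \in Q$ and $\val(c) + \val_\partial(b) > 0$ by the ultrametric estimate, so Lemma~\ref{vp-scalars} applied to $c \in Q$ and $b \in \Oo$ gives $\val_\partial(cb) = +\infty$, i.e.\ $(a_1' - a_2')b \in Q$, so $\partial(a_1'b) = \partial(a_2'b)$. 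For independence from the choice of lift of $\partial b$: if $b - b' \in Q$, then $\val_\partial(b-b') = +\infty$, so Lemma~\ref{vp-ops} forces $\val_\partial(b) = \val_\partial(b')$, hence any $a'$ valid for $b$ is valid for $b'$; and $a'(b-b') \in Q$ since $a'$ and $b-b'$ both lie in $Q$.

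For the module axioms, extension of the $Q$-action is immediate by taking $a' = a$ (so that $\val(a'-a) = +\infty$); additivity in $\partial b$ uses a single $a'$ valid for both $b_1$ and $b_2$ (using $\min(\val_\partial(b_1), \val_\partial(b_2))$ on the right of the ultrametric inequality from Lemma~\ref{vp-ops}); additivity in $a$ follows by taking $(a_1 + a_2)' := a_1' + a_2'$. The delicate point is multiplicativity $(a_1 a_2) \cdot \partial b = a_1 \cdot (a_2 \cdot \partial b)$: after picking $a_2' \in Q$ valid for $a_2$ and $b$, one picks $a_1' \in Q$ satisfying $\val(a_1' - a_1) + \val_\partial(b) > 0$, which is automatically at least as strong as $\val(a_1' - a_1) + \val_\partial(a_2'b) > 0$ since $\val_\partial(a_2'b) \ge \val_\partial(b)$ by Lemma~\ref{vp-scalars} (as $\val(a_2') \ge 0$). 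Then $a_1' a_2' \in Q$ serves as a representative for $a_1 a_2$, because
\[ a_1' a_2' - a_1 a_2 \;=\; a_1'(a_2' - a_2) + (a_1' - a_1) a_2, \]
and both summands have valuation $> -\val_\partial(b)$ (using $a_1', a_2 \in \Oo$, hence of non-negative valuation). Both sides of the multiplicativity identity then collapse to $\partial(a_1' a_2' b)$. The only substantive obstacle is this bookkeeping linking $\val$ and $\val_\partial$ through Lemma~\ref{vp-scalars}; everything else is formal verification.
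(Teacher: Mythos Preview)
Your proof is correct and follows essentially the same approach as the paper. The only notable differences are improvements: you explicitly verify that the action descends to $D = \Oo/Q$ (independence from the lift of $\partial b$), which the paper leaves implicit, and for multiplicativity you derive $\val(a_1'a_2' - a_1a_2) > -\val_\partial(b)$ directly from the decomposition $a_1'a_2' - a_1a_2 = a_1'(a_2'-a_2) + (a_1'-a_1)a_2$, whereas the paper simply invokes continuity of multiplication to choose $a_1', a_2'$ satisfying all three inequalities at once.
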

\begin{proof}
  We first check that $a \cdot \partial b$ is well defined.  We can
  find an $a' \in Q$ such that $\val(a' - a) + \val_\partial(b) > 0$
  by Proposition~\ref{density}.  If $a''$ is another such choice, then
  $a'' - a' \in Q$ and $\val(a'' - a') + \val_\partial(b) > 0$.  By
  Lemma~\ref{vp-scalars}, it follows that
  \begin{equation*}
    \val_\partial((a''-a')b) = +\infty,
  \end{equation*}
  and $(a''-a')b \in Q$ by Lemma~\ref{vp-class}.  Thus $a''b - a'b \in
  Q$ and $\partial(a''b) = \partial(a'b)$.  So the action of $\Oo$ on
  $D$ is well-defined.  Furthermore, the action of $\Oo$ on $D$
  extends the action of $Q$.  (If $a \in Q$, we can take $a' = a$.)

  Next we check the module axioms.
  For the associative law
  \begin{equation*}
    (a_1 \cdot a_2) \partial b \stackrel{?}{=} a_1 (a_2 \partial b),
  \end{equation*}
  take $a_1', a_2' \in Q$ such that
  \begin{align*}
    \val(a_1'a_2' - a_1a_2) & > -\val_\partial (b) \\
    \val(a_2' - a_2) &> -\val_\partial (b) \\
    \val(a_1' - a_1) &> -\val_\partial (b).
  \end{align*}
  This is possible by density of $Q$ and the fact that multiplication
  is continuous.  As $a_2' \in Q$,
  \begin{equation*}
    \val_\partial (a_2' b) \ge \val(a_2') + \val_\partial (b) \ge
    \val_\partial(b),
  \end{equation*}
  by Lemma~\ref{vp-scalars}, and so
  \begin{equation*}
    \val(a_1' - a_1) > -\val_\partial(b) \ge -\val_\partial(a_2'b).
  \end{equation*}
  Thus
  \begin{equation*}
    (a_1a_2)\partial b = \partial(a_1'a_2'b) = a_1\partial(a_2'b) =
    a_1 \cdot (a_2 \partial b).
  \end{equation*}
  The other three module axioms
  \begin{align*}
    (a_1 + a_2) \partial b &= a_1 \partial b + a_2 \partial b \\
    a (\partial b_1 + \partial b_2) &= (a \partial b_1) + (a \partial b_2) \\
    1 \partial b &= b
  \end{align*}
  are proven similarly: one replaces the $a$'s with very close
  elements of $Q$.\footnote{For the second distributive law, one must
    choose $a' \in Q$ such that
    \begin{equation*}
      \val(a' - a) + \min\{\val_\partial(b_1), \val_\partial(b_2),
      \val_\partial(b_1+b_2)\} > 0.
    \end{equation*}
  }
\end{proof}
\begin{proposition}\label{derivation}
  The map $\partial : \Oo \to D$ is a $Q$-linear derivation:
  \begin{itemize}
  \item $\partial q = 0$ for $q \in Q$.
  \item $\partial(ab) = a \partial b + b \partial a$ for $a, b \in \Oo$.
  \end{itemize}
\end{proposition}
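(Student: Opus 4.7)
The first bullet is immediate: $\partial : \Oo \to \Oo/Q = D$ is the canonical quotient map, so $\partial q = 0$ for every $q \in Q$. For the Leibniz rule, fix $a, b \in \Oo$. If $a \in Q$ then $\partial a = 0$ and $a \partial b = \partial(ab)$ (take $a' = a$ in Proposition~\ref{prop:q-to-o}), so the identity reduces to $\partial(ab) = \partial(ab)$; similarly if $b \in Q$. Assume therefore $a, b \notin Q$, giving $\val_\partial(a), \val_\partial(b) \le 0$ finite by Lemma~\ref{vp-class}. Set $M := \max(-\val_\partial(a), -\val_\partial(b))$, and use Proposition~\ref{density} to choose $a', b' \in Q$ with $\val(a-a'), \val(b-b') > M$. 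Then Proposition~\ref{prop:q-to-o} gives $a\partial b = \partial(a'b)$ and $b\partial a = \partial(b'a)$, so the Leibniz identity becomes $ab - a'b - b'a \in Q$. The elementary expansion
\begin{equation*}
  ab - a'b - b'a = (a-a')(b-b') - a'b',
\end{equation*}
together with $a'b' \in Q$ (ring closure), reduces the problem to $(a-a')(b-b') \in Q$.

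Set $\alpha = a - a'$ and $\beta = b - b'$. Applying Lemma~\ref{vp-ops} to $a = a' + \alpha$ and to $\alpha = a + (-a')$, and using $\val_\partial(a') = +\infty$, pins down $\val_\partial(\alpha) = \val_\partial(a)$; likewise $\val_\partial(\beta) = \val_\partial(b)$. By our choice, each of $\alpha, \beta, \alpha+\beta$ has $\val > M$ and $\val_\partial \ge -M$, so each satisfies $\val + \val_\partial > 0$. The polarization identity
\begin{equation*}
  \alpha\beta = \tfrac{1}{2}\bigl((\alpha+\beta)^2 - \alpha^2 - \beta^2\bigr),
\end{equation*}
valid since $\characteristic \ne 2$ and $1/2 \in k_0 \subseteq Q$, then reduces the claim via the ring closure of $Q$ to the following \emph{strengthening} of Lemma~\ref{square-into}: whenever $\gamma \in \Oo$ satisfies $\val(\gamma) + \val_\partial(\gamma) > 0$, one has $\gamma^2 \in Q$ (not merely $\gamma^2 \in R$).

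This strengthening is the main obstacle, and I would prove it by retracing the proof of Lemma~\ref{square-into}. If $\gamma \in R$, then $\gamma \in R \cap \mm = \pp$ by Lemma~\ref{same-residue-1:lem}, so $\wres(\gamma) \in k\varepsilon$ and $\wres(\gamma^2) = 0$, hence $\gamma^2 \in I \subseteq Q$. If $\gamma \notin R$, then $\gamma$ is wild (Lemma~\ref{tame-is-r}) with a neutralizer $\gamma^\dag$; clearing denominators in the monic quadratic for $\gamma/\gamma^\dag$ supplied by Lemma~\ref{tilde-o-very-valuation} yields
\begin{equation*}
  \gamma^2 = b \gamma \gamma^\dag + c (\gamma^\dag)^2, \quad b, c \in R.
\end{equation*}
Now $\gamma/\gamma^\dag$ lies in $\mm$ and fails to lie in $R$ (else $\gamma = (\gamma/\gamma^\dag)\gamma^\dag \in R$), so $\gamma/\gamma^\dag$ is wild; Lemma~\ref{discriminant} gives $b^2 + 4c \in \pp$, while $\gamma/\gamma^\dag \in \mm$ forces $c = (\gamma/\gamma^\dag)^2 - b(\gamma/\gamma^\dag) \in R \cap \mm = \pp$, whence $b \in \pp$ ($\pp$ is prime, as $R/\pp \cong k$). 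Then $\wres(b) \in k\varepsilon$ and $\wres(\gamma\gamma^\dag) \in k\varepsilon$ (Lemma~\ref{major-mess-0}), making the first term of
\begin{equation*}
  \wres(\gamma^2) = \wres(b)\wres(\gamma\gamma^\dag) + \wres(c)\wres(\gamma^\dag)^2
\end{equation*}
vanish via $\varepsilon^2 = 0$, while the second term vanishes from $\wres(\gamma^\dag) = 0$. Thus $\gamma^2 \in I \subseteq Q$, completing the argument.
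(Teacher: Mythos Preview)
Your proof is correct. The overall architecture matches the paper's: decompose $a = a' + \alpha$ and $b = b' + \beta$ with $a',b' \in Q$ and $\alpha,\beta$ valuatively small, then reduce the Leibniz rule to showing $\alpha\beta \in Q$, and attack $\alpha\beta$ via polarization using $\characteristic \ne 2$. The divergence is in how you force the squares $\alpha^2,\beta^2,(\alpha+\beta)^2$ into $Q$ rather than merely into $R$.

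The paper does this by an auxiliary trick: pick nonzero $u \in I$, apply Lemma~\ref{small-products} (which black-boxes Lemma~\ref{square-into}) to $\alpha/u$ and $\beta/u$ to land their product in $R$, and then multiply back by $u^2 \in I \subseteq Q$. You instead prove a sharpened Lemma~\ref{square-into}: if $\val(\gamma)+\val_\partial(\gamma)>0$ then $\gamma^2 \in I \subseteq Q$, not just $\gamma^2 \in R$. Your argument for this reopens the proof of Lemma~\ref{square-into} and adds the discriminant-and-$\wres$ computation that the paper had already carried out in Lemma~\ref{possibly-useless}, so nothing new is needed. Your route is a bit more self-contained (no auxiliary $u$), while the paper's route is more modular (uses Lemma~\ref{square-into} as stated). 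One small expository point: in your ``$\gamma \in R$'' case you silently use $\val(\gamma)>0$; this is immediate from the hypothesis once you note the trivial sub-case $\gamma \in Q$ separately (if $\gamma \in R\setminus Q$ then $\val_\partial(\gamma)=0$, forcing $\val(\gamma)>0$).
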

\begin{proof}
  The map is $Q$-linear with kernel $Q$ by construction.  Note that $I
  \ne 0$, as $R$ is a domain and $R/I \cong k[\varepsilon]$ is not.  Take
  non-zero $u \in I$.  Then $u \in Q$ and $\val(u) > 0$.  Take
  $\gamma$ a positive element of the value group such that
  \begin{align*}
    \min(\val_\partial(a),\val_\partial(b)) & > - \gamma \\
    \val(u) & < \gamma.
  \end{align*}
  By Proposition~\ref{density}, we can find $a_Q, b_Q \in Q$ and $a',
  b' \in \Oo$ such that
  \begin{align*}
    a &= a_Q + a' \\
    b &= b_Q + b' \\
    \val(a') &> 3 \gamma \\
    \val(b') &> 3 \gamma.
  \end{align*}
  By Lemmas~\ref{vp-ops} and \ref{vp-class},
  \begin{equation*}
    \val_\partial(a') \ge \min(\val_\partial(a),\val_\partial(-a_Q)) =
    \min(\val_\partial(a),+\infty) = \val_\partial(a) > - \gamma.
  \end{equation*}
  If $\val_\partial(a'/u) \le - 2 \gamma$, then
  \begin{equation*}
    \val_\partial(a') = \val_\partial(a'/u) + \val(u) < -2 \gamma + \gamma = - \gamma < 0
  \end{equation*}
  by Lemma~\ref{vp-scalars}, a contradiction.  Thus
  \begin{align*}
    \val_\partial(a'/u) & > - 2 \gamma \\
    \val_\partial(b'/u) & > - 2 \gamma,
  \end{align*}
  where the second line follows by a similar argument.  Also,
  \begin{align*}
    \val(a'/u) = \val(a') - \val(u) & > 3 \gamma - \gamma = 2 \gamma \\
    \val(b'/u) = \val(b') - \val(u) & > 3 \gamma - \gamma = 2 \gamma.
  \end{align*}
  Thus, by Lemma~\ref{small-products}, $(a'/u)(b'/u) \in R$.  Then
  \begin{equation*}
    a'b' = (a'/u)(b'/u)(u^2) \in R \cdot u^2 \subseteq I \subseteq Q,
  \end{equation*}
  so we see that
  \begin{equation*}
    \partial(a'b') = 0.
  \end{equation*}
  Also,
  \begin{equation*}
    \val(0 - a') + \val_\partial(b') = \val(a') + \val_\partial(b) > 3 \gamma - \gamma > 0,
  \end{equation*}
  so $a' \partial b' = \partial(0 \cdot b') = 0$.  Similarly, $b'
  \partial a' = 0$.  So
  \begin{equation*}
    \partial(a'b') = 0 = a'\partial b' + b'\partial a'.
  \end{equation*}
  The other three equations
  \begin{align*}
    \partial(a_Qb') &= a_Q \partial b' + b' \partial a_Q \\
    \partial(a'b_Q) &= a' \partial b_Q + b_Q \partial a' \\
    \partial(a_Q b_Q) &= a_Q \partial b_Q + b_Q \partial a_Q
  \end{align*}
  hold by $Q$-linearity and the fact that $\partial$ vanishes on $Q$.
  Adding these four equations, we obtain the desired Leibniz rule
  \begin{equation*}
    \partial(ab) = a \partial b + b \partial a. \qedhere
  \end{equation*}
\end{proof}
Note that
\begin{equation*}
  Q = \{a \in \Oo : \partial a = 0\}.
\end{equation*}
Let $\Gamma$ be the value group of $\Oo$.  We define
\begin{equation*}
  \val : D \to \Gamma_{\le 0} \cup \{+\infty\}
\end{equation*}
by the equation
\begin{equation*}
  \val(\partial a) = \val_\partial(a).
\end{equation*}
By Lemmas~\ref{vp-class} and \ref{vp-ops}, this is well-defined, and
satisfies the identities
\begin{align*}
  \val(a + b) &\ge \min(\val(a),\val(b)) \\
  \val(a) = +\infty &\iff a = 0
\end{align*}
for $a, b \in D$.
\begin{lemma}\label{mult-scaling}
  For $a \in \Oo$ and $b \in D$,
  \begin{equation*}
    \val(ab) = \val(a) + \val(b),
  \end{equation*}
  unless the right hand side is positive, in which case
  \begin{equation*}
    \val(ab) = +\infty.
  \end{equation*}
\end{lemma}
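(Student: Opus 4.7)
The plan is to reduce the identity to a statement about $\val_\partial$ via the defining equation for the $\Oo$-action on $D$, and then apply Lemma~\ref{vp-scalars}. Write $b = \partial x$ for some $x \in \Oo$, so $\val(b) = \val_\partial(x)$. By Proposition~\ref{prop:q-to-o}, $ab = \partial(a'x)$ for any $a' \in Q$ with $\val(a' - a) + \val_\partial(x) > 0$, and the value $\partial(a'x)$ does not depend on the choice of such $a'$. Hence $\val(ab) = \val_\partial(a'x)$, and by Lemma~\ref{vp-scalars},
\begin{equation*}
\val_\partial(a'x) = \val(a') + \val_\partial(x) = \val(a') + \val(b),
\end{equation*}
unless the right-hand side is positive, in which case $\val_\partial(a'x) = +\infty$. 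The task therefore reduces to making a clever choice of $a'$ so that $\val(a')$ either equals $\val(a)$ (when $\val(a) + \val(b) \le 0$) or is large enough to force the positive-case alternative (when $\val(a) + \val(b) > 0$).

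First dispose of the degenerate case $b = 0$: then $x \in Q$, so $ab = \partial(a' x) = 0$ for any allowable $a'$, giving $\val(ab) = +\infty = \val(a) + \val(b)$. Assume from now on that $\val(b) \le 0$ is finite.

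Suppose $\val(a) + \val(b) > 0$, i.e., $\val(a) > -\val(b)$. Then $a' := 0 \in Q$ satisfies $\val(a' - a) + \val_\partial(x) = \val(a) + \val(b) > 0$, so this choice is allowed, and $ab = \partial(0 \cdot x) = 0$, whence $\val(ab) = +\infty$, as required. Suppose instead $\val(a) + \val(b) \le 0$, i.e., $\val(a) \le -\val(b)$. By density of $Q$ in $\Oo$ (Proposition~\ref{density}), pick $a' \in Q$ with $\val(a' - a) > -\val(b)$; such $a'$ is an allowable choice for computing $ab$. Since $\val(a' - a) > -\val(b) \ge \val(a)$, the ultrametric inequality gives $\val(a') = \min(\val(a' - a), \val(a)) = \val(a)$. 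Then $\val(a') + \val(b) = \val(a) + \val(b) \le 0$, so we fall into the first (non-positive) clause of Lemma~\ref{vp-scalars}, and
\begin{equation*}
\val(ab) = \val_\partial(a'x) = \val(a') + \val(b) = \val(a) + \val(b).
\end{equation*}

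There is no real obstacle here: the argument is an exercise in unwinding definitions, and the only subtlety is ensuring the strict inequality $\val(a' - a) > \val(a)$ in the second case so that the ultrametric upgrade to an equality $\val(a') = \val(a)$ is valid, which is exactly what the hypothesis $\val(a) + \val(b) \le 0$ supplies.
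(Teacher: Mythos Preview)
Your proof is correct and takes essentially the same approach as the paper: write $b = \partial x$, use Proposition~\ref{prop:q-to-o} to express $ab$ as $\partial(a'x)$ for suitable $a' \in Q$, and then apply Lemma~\ref{vp-scalars}. The only cosmetic difference is that the paper makes a single uniform choice of $a'$ satisfying both $\val(a'-a) + \val_\partial(x) > 0$ and $\val(a'-a) > \val(a)$ (forcing $\val(a') = \val(a)$ directly), whereas you split into cases and in the positive-sum case cleverly take $a' = 0$; your treatment is slightly more explicit about the degenerate situations.
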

\begin{proof}
  Write $b$ as $\partial c$ for some $c \in \Oo$.  Take $a' \in Q$
  such that $\val(a - a') + \val(\partial c) > 0$ and $\val(a - a') >
  \val(a)$.  Then by definition, $a \partial c = \partial(a'c)$.  Also
  $\val(a) = \val(a')$.  By Lemma~\ref{vp-scalars},
  \begin{equation*}
    \val(a \partial c) = \val(\partial(a'c)) = \val_\partial(a'c) = \val(a') +
    \val_\partial(c) = \val(a) + \val(\partial c),
  \end{equation*}
  unless the right hand side is positive, in which case $\val(ab) =
  +\infty$.
\end{proof}
Using this and the fact that $\partial$ is a derivation, we get an
improved version of the multiplication statement in Lemma~\ref{vp-ops}.
\begin{corollary}\label{vp-mult-better}
  If $a, b \in \Oo$, then
  \begin{equation*}
    \val_\partial(ab) \ge \min(\val(a) + \val_\partial(b), \val_\partial(a) + \val(b)).
  \end{equation*}
\end{corollary}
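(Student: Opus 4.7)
The proof should be a short, essentially formal consequence of the three ingredients the excerpt has already assembled: the Leibniz rule for $\partial$ (Proposition~\ref{derivation}), the ultrametric inequality $\val(x+y) \ge \min(\val x, \val y)$ on $D$, and the multiplicative scaling rule on $D$ (Lemma~\ref{mult-scaling}).

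The plan is to unfold the definition $\val_\partial(ab) = \val(\partial(ab))$, apply Leibniz to rewrite $\partial(ab) = a\partial b + b\partial a$, then bound the valuation of this sum. First I would write
\[ \val_\partial(ab) = \val(\partial(ab)) = \val(a\partial b + b\partial a) \ge \min\bigl(\val(a\partial b),\ \val(b\partial a)\bigr), \]
using the fact (established just before the corollary) that $\val(x+y) \ge \min(\val x, \val y)$ on $D$. Next I would evaluate each term using Lemma~\ref{mult-scaling}: $\val(a\partial b) = \val(a) + \val(\partial b) = \val(a) + \val_\partial(b)$, and symmetrically $\val(b\partial a) = \val(b) + \val_\partial(a)$.

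The only mild subtlety is the clause ``unless the right hand side is positive, in which case $\val(ab) = +\infty$'' in Lemma~\ref{mult-scaling}. I would handle this simply by observing that if $\val(a) + \val_\partial(b) > 0$ then Lemma~\ref{mult-scaling} gives $\val(a\partial b) = +\infty$, which is trivially $\ge \val(a) + \val_\partial(b)$; similarly for the other term. So in all cases
\[ \val(a\partial b) \ge \val(a) + \val_\partial(b), \qquad \val(b\partial a) \ge \val(b) + \val_\partial(a). \]
Combining with the ultrametric bound yields the stated inequality. There is no serious obstacle; the argument is a two-line computation, and the work has all been done in establishing Lemmas~\ref{vp-scalars}, \ref{mult-scaling}, \ref{vp-ops}, and Proposition~\ref{derivation}.
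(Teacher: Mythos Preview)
Your proof is correct and matches the paper's approach exactly: the paper states only ``Using this and the fact that $\partial$ is a derivation'' as its justification, referring to Lemma~\ref{mult-scaling} and Proposition~\ref{derivation}, and your argument spells out precisely that computation. The handling of the ``positive right-hand side'' clause in Lemma~\ref{mult-scaling} is the only detail requiring care, and you address it correctly.
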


\subsection{The module of differentials}
The $\Oo$-module $D$ of differentials shares many properties with
$K/\mm$.
\begin{lemma}\label{all-gammas}
  For any $\gamma \le 0$ in the value group, there is $b \in D$ such
  that $\val(b) = \gamma$.
\end{lemma}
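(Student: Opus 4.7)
The plan is to reduce the claim to producing, for each $\gamma \in \Gamma_{\le 0}$, an element $a \in \Oo$ with $\val_\partial(a) = \gamma$; then $b := \partial a \in D$ will satisfy $\val(b) = \gamma$ by the very definition of $\val$ on $D$. I split into the cases $\gamma = 0$ and $\gamma < 0$.

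The case $\gamma = 0$ is immediate: because $R/I \cong k[\varepsilon]$ while the image of $Q$ in this quotient is the proper subring $k \subsetneq k[\varepsilon]$, there exists $a \in R \setminus Q$, and Lemma~\ref{vp-class} says any such $a$ has $\val_\partial(a) = 0$.

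For $\gamma < 0$, the strategy is to produce first an element of very negative $\val_\partial$ and then scale it by an element of $Q$ to hit the target. Applying Lemma~\ref{weird-elements} with parameter $-\gamma > 0$ yields some $a_0 \in \Oo$ with $\gamma_0 := \val_\partial(a_0) < \gamma$. Set $\delta := \gamma - \gamma_0 > 0$; since $\delta$ lies in $\Gamma$, I can choose $x \in K$ with $\val(x) = \delta$, and then $x \in \mm \subseteq \Oo$. By density of $Q$ in $\Oo$ (Proposition~\ref{density}), I approximate $x$ by some $b \in Q$ with $\val(b - x) > \delta$, which forces $\val(b) = \delta$. Lemma~\ref{vp-scalars} then gives $\val_\partial(a_0 b) = \gamma_0 + \delta = \gamma$; the exceptional branch of that lemma (which would output $+\infty$) is not triggered, since $\gamma \le 0$. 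Hence $a := a_0 b$ works.

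The only real inputs beyond the scalar rule of Lemma~\ref{vp-scalars} are Lemma~\ref{weird-elements}, which supplies elements of $\val_\partial$ below any prescribed threshold, and Proposition~\ref{density}, which provides enough elements of $Q$ to fine-tune the valuation. I do not anticipate any serious obstacle: once the strict inequality $\gamma_0 < \gamma$ and the strict positivity of $\delta$ are secured, the rest is a direct application of the scalar rule.
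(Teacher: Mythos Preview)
Your proof is correct. The paper's argument is more compressed: it observes that by Lemma~\ref{mult-scaling} (scaling in $D$ by arbitrary elements of $\Oo$) it suffices to show that $\{\val(b) : b \in D\}$ is unbounded below, which is immediate from Lemma~\ref{weird-elements}. Your approach instead works at the level of $\Oo$ via Lemma~\ref{vp-scalars}, which only allows scaling by elements of $Q$; this is why you need the extra step of invoking Proposition~\ref{density} to manufacture an element of $Q$ with prescribed valuation $\delta$. In effect you are re-deriving the relevant instance of Lemma~\ref{mult-scaling} by hand. Both routes rest on the same two inputs (unboundedness from Lemma~\ref{weird-elements} and the scalar rule), so the difference is purely one of packaging; the paper's version avoids the case split and the density argument by having already absorbed them into Lemma~\ref{mult-scaling}.
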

\begin{proof}
  By Lemma~\ref{mult-scaling}, it suffices to show that
  \[ \{\val(b) : b \in D\}\]
  has no lower bound, which follows by Lemma~\ref{weird-elements}.
\end{proof}

\begin{lemma}\label{divisibility-sort-of}
  If $a, b \in D$ and $\val(a) < \val(b)$, then $b \in \Oo
  \cdot a$.
\end{lemma}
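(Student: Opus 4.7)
The plan is to reduce directly to Lemma~\ref{application}, which already gives the crucial decomposition $y = p + qx$ for elements of $\mm$ with prescribed inequality in $\val_\partial$. The main point is that everything lifts through $\partial$ cleanly because $\partial$ is $Q$-linear and kills $Q$.

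First, I would write $a = \partial x$ and $b = \partial y$ for some $x, y \in \Oo$. The case $b = 0$ is trivial (take $0 \in \Oo$), so assume $b \ne 0$; similarly $a \ne 0$. Thus $x, y \notin Q$ by construction of $D$, so $\val_\partial(x), \val_\partial(y) < +\infty$.

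Next, I would reduce to $x, y \in \mm$. Choose $x_Q \in Q$ with $\res(x_Q) = \res(x)$ (this is possible since $\res : Q \to k$ is surjective by Proposition~\ref{same-residue-2:prop}, as $Q$ contains $R \setminus \pp = R^\times$ mod $\mm$ already accounts for $k$) and similarly $y_Q$. Then $x - x_Q, y - y_Q \in \mm$, and $\partial(x - x_Q) = \partial x = a$, $\partial(y - y_Q) = \partial y = b$ because $\partial$ vanishes on $Q$. So I may relabel and assume $x, y \in \mm$ from the start.

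Now the hypothesis $\val(a) < \val(b)$ unwinds to $\val_\partial(x) < \val_\partial(y)$, and Lemma~\ref{application} supplies $p, q \in Q$ with $y = p + qx$. Applying $\partial$, which is $Q$-linear with $\partial p = 0$, gives
\[
b = \partial y = \partial p + \partial(qx) = q\, \partial x = q \cdot a,
\]
and since $q \in Q \subseteq \Oo$, this exhibits $b \in \Oo \cdot a$. There is no real obstacle here; the lemma is essentially a translation of Lemma~\ref{application} into the language of $D$ and $\partial$, with the only minor bookkeeping being the reduction of representatives into $\mm$.
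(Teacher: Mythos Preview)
Your proof is correct and follows essentially the same route as the paper: reduce to representatives in $\mm$, invoke Lemma~\ref{application} to write $y = p + qx$ with $p,q \in Q$, then apply $\partial$. The paper phrases the reduction to $\mm$ via Proposition~\ref{density} (which gives $\Oo = \mm + Q$), whereas your justification that $\res : Q \to k$ is surjective is slightly misattributed---it follows directly from surjectivity of $\wres : R \to k[\varepsilon]$ rather than from Proposition~\ref{same-residue-2:prop}, and $R \setminus \pp$ is not contained in $Q$---but the claim itself is true and the argument goes through.
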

\begin{proof}
  We may assume $b \ne 0$, in which case $\val(a) < \val(b) \le 0$.
  By Proposition~\ref{density}, $\Oo = \mm + Q$.  Therefore, we may
  write $a = \partial a'$ and $b = \partial b'$ for some $a', b' \in
  \mm$.  By Lemma~\ref{application}, we can write
  \begin{equation*}
    b' = p + qa'
  \end{equation*}
  for some $p, q \in Q$.  Then $\partial p = \partial q = 0$, so
  \begin{equation*}
    b = \partial b' = q \partial a' = q a. \qedhere
  \end{equation*}
\end{proof}
Of course, we can replace $\val(a) < \val(b)$ with a non-strict
inequality:
\begin{proposition}\label{divisibility-v2}
  If $a, b \in D$ and $\val(a) \le \val(b)$, then $b \in \Oo \cdot a$.
\end{proposition}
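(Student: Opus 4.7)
The plan is to bootstrap from Lemma~\ref{divisibility-sort-of}, which already handles the strict-inequality case. If $\val(a) < \val(b)$ we are done, so I may reduce to $\gamma := \val(a) = \val(b)$. The case $\gamma = +\infty$ is trivial (then $a = b = 0$ and $b = 0 \cdot a$), so I can assume $\gamma \in \Gamma_{\le 0}$; in particular both $a$ and $b$ are nonzero.

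The idea is to triangulate through a third element of strictly smaller valuation. By Lemma~\ref{all-gammas} (together with the unboundedness below of $\{\val(x) : x \in D\}$ established in its proof), pick $c \in D$ with $\val(c) < \gamma$. Two applications of Lemma~\ref{divisibility-sort-of} then furnish $\alpha, \beta \in \Oo$ with $a = \alpha c$ and $b = \beta c$. Lemma~\ref{mult-scaling} yields
\[ \val(\alpha) + \val(c) = \val(a) = \gamma = \val(b) = \val(\beta) + \val(c), \]
where the ``overflow'' branch of that lemma is avoided because $a$ and $b$ are nonzero. Hence $\val(\alpha) = \val(\beta) = \gamma - \val(c) > 0$, and in particular $\alpha \ne 0$.

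Since $\Oo$ is a valuation ring on $K$ and $\val(\beta/\alpha) = 0$ in $\Gamma$, the ratio $u := \beta/\alpha$ lies in $\Oo^\times$. The associative law for the $\Oo$-module structure on $D$ (Proposition~\ref{prop:q-to-o}) then finishes the job:
\[ b = \beta c = (u \alpha) c = u \cdot (\alpha c) = u \cdot a \in \Oo \cdot a. \]
I do not expect a genuine obstacle here; the only step requiring minor care is confirming that none of the valuation sums lands in the $+\infty$ overflow branch of Lemma~\ref{mult-scaling}, but this is forced by $\gamma \in \Gamma_{\le 0}$ together with the nonvanishing of $a$ and $b$.
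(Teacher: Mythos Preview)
Your proof is correct and follows essentially the same approach as the paper: both arguments pick an auxiliary element of $D$ with strictly smaller valuation, express $a$ and $b$ as $\Oo$-multiples of it via Lemma~\ref{divisibility-sort-of}, and then compare the coefficients inside the valuation ring $\Oo$. The only cosmetic difference is that the paper handles the strict and equal cases uniformly (using $\val(\alpha) \le \val(\beta)$ rather than equality), whereas you dispose of the strict case first.
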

\begin{proof}
  We may assume $b, a \ne 0$.
  By Lemma~\ref{all-gammas}, there is $z \in D$ with $\val(z) <
  \val(a)$.  Then $a = \alpha z$ and $b = \beta z$ for some $\alpha,
  \beta \in \Oo$.  By Lemma~\ref{mult-scaling}
  \begin{align*}
    \val(\alpha) &= \val(a) - \val(z) \\
    \val(\beta) &= \val(b) - \val(z).
  \end{align*}
  Then $\val(\alpha) \le \val(\beta)$, and so $\beta \in \Oo \alpha$
  as $\Oo$ is a valuation ring.  Therefore
  \[ b = \beta z = \gamma \alpha z = \gamma a \in \Oo a\]
  for some $\gamma \in \Oo$.
\end{proof}
\begin{proposition}\label{divisibility-4real}
  $D$ is divisible as an $\Oo$-module: for any $b \in D$ and non-zero
  $a \in \Oo$, there is $x \in D$ such that $ax = b$.
\end{proposition}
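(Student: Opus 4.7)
The plan is to reduce divisibility to the key facts already proved about $D$: Lemma~\ref{all-gammas} supplies elements of any prescribed non-positive valuation, Lemma~\ref{mult-scaling} governs the valuation of scalar multiples, and Proposition~\ref{divisibility-v2} lets us divide one element of $D$ by another whenever the valuations compare. The idea is to manufacture an element $y \in D$ whose valuation is exactly $\val(b) - \val(a)$, multiply by $a$ to land on the same valuation as $b$, and then apply Proposition~\ref{divisibility-v2}.

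In detail, I would first dispose of the trivial case $b = 0$ by taking $x = 0$. Otherwise $\val(b) \le 0$ (finite), and since $a \in \Oo$ is non-zero, $\val(a) \ge 0$ is also finite. Hence $\gamma := \val(b) - \val(a)$ lies in $\Gamma_{\le 0}$. By Lemma~\ref{all-gammas} there exists $y \in D$ with $\val(y) = \gamma$. Lemma~\ref{mult-scaling} then gives
\begin{equation*}
  \val(ay) = \val(a) + \val(y) = \val(b),
\end{equation*}
where the exception to the lemma does not apply because the right-hand side equals $\val(b) \le 0$ and is in particular not positive. Now $ay$ and $b$ are two elements of $D$ with the same valuation, so Proposition~\ref{divisibility-v2} yields some $c \in \Oo$ with $b = c \cdot (ay) = a \cdot (cy)$. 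Setting $x = cy$ completes the proof.

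There is no real obstacle: the content of the statement has been pre-digested by Lemmas~\ref{all-gammas}, \ref{mult-scaling} and Proposition~\ref{divisibility-v2}. The only point deserving a moment's attention is checking that the exceptional ``$+\infty$'' clause in Lemma~\ref{mult-scaling} does not intrude, which is immediate because $\val(a) + \val(y) = \val(b)$ is a non-positive element of $\Gamma$ rather than something forced to be positive.
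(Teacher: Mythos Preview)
Your proof is correct and follows essentially the same route as the paper's: both use Lemma~\ref{all-gammas} to produce an element of $D$ with valuation at most $\val(b)-\val(a)$, invoke Lemma~\ref{mult-scaling} to control $\val(a\cdot y)$, and then apply Proposition~\ref{divisibility-v2}. The only cosmetic difference is that the paper takes $\val(y)\le \val(b)-\val(a)$ rather than insisting on equality, but since Lemma~\ref{all-gammas} already gives equality this is immaterial.
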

\begin{proof}
  We may assume $b \ne 0$.  By Lemma~\ref{all-gammas}, there is $c \in
  D$ such that $\val(c) \le \val(b) - \val(a)$.  Then $\val(ac) \le
  \val(b)$, and so
  \begin{equation*}
    b \in \Oo \cdot ac \subseteq a \cdot D. \qedhere
  \end{equation*}
\end{proof}
\begin{proposition}\label{res-connection-v2}
  Let $D_0$ be the $\Oo$-submodule of $x \in D$ such that $\val(x) \ge
  0$.
  \begin{itemize}
  \item $D_0$ is the image of $R$ under $\partial$.
  \item Viewing $k$ as the $\Oo$-module $\Oo/\mm$, there is a unique
    $\Oo$-module isomorphism $\res_2 : D_0 \to k$ such that
    \begin{equation*}
      \wres(x) = \res(x) + \res_2(\partial x) \varepsilon
    \end{equation*}
    for any $x \in R$.
  \end{itemize}
\end{proposition}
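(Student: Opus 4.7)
The plan is to use Lemma~\ref{vp-class} for the first bullet and to read $\res_2$ directly off the $\varepsilon$-coefficient of $\wres$ for the second bullet.

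For the first bullet, I would argue as follows. Let $x \in D_0$ and pick any $a \in \Oo$ with $\partial a = x$. Then by construction of $\val$ on $D$, $\val(x) = \val_\partial(a) \ge 0$, so by Lemma~\ref{vp-class} we have $a \in R$. Thus $D_0 \subseteq \partial(R)$. Conversely, for $a \in R$ Lemma~\ref{vp-class} gives $\val_\partial(a) \ge 0$, so $\partial(R) \subseteq D_0$. This handles the first bullet in two lines.

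For the second bullet, my plan is to define $\res_2 : D_0 \to k$ by the rule: for $x \in R$ with $\wres(x) = s + t\varepsilon$ (where $s, t \in k$), set $\res_2(\partial x) := t$. The well-definedness, given the first bullet, reduces to checking that the map $x \mapsto t$ kills $\ker(\partial|_R) = Q$, which is immediate from the definition of $Q$ as $\{x \in R : \wres(x) \in k\}$. That $\wres(x) = \res(x) + \res_2(\partial x)\varepsilon$ then holds by construction, using Proposition~\ref{same-residue-2:prop} to identify $\res(x)$ with the constant term $s$. Injectivity of $\res_2$ follows because $\res_2(\partial x) = 0$ forces $\wres(x) \in k$, i.e.\ $x \in Q$, i.e.\ $\partial x = 0$; surjectivity follows from surjectivity of $\wres : R \to k[\varepsilon]$ applied to elements of the form $0 + t\varepsilon$. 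Additivity and $k_0$-linearity are obvious, and uniqueness is forced by the defining identity together with the first bullet (so $\res_2$ is determined on all of $D_0$).

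The one step that requires a little care is $\Oo$-linearity, where the action of $\Oo$ on $D$ from Proposition~\ref{prop:q-to-o} is by approximation in $Q$. Given $a \in \Oo$ and $\partial x \in D_0$ with $x \in R$, I would choose $a' \in Q$ with $\val(a - a') > 0$ (which, since $\val_\partial(x) \ge 0$, satisfies the required condition $\val(a - a') + \val_\partial(x) > 0$), so that $a \cdot \partial x = \partial(a' x)$ and moreover $\res(a') = \res(a)$. Then $a' x \in R$ and by multiplicativity of $\wres$,
\begin{equation*}
  \wres(a' x) = \wres(a')\wres(x) = (\res(a') + 0 \cdot \varepsilon)(\res(x) + \res_2(\partial x)\varepsilon) = \res(a)\res(x) + \res(a)\res_2(\partial x)\varepsilon,
\end{equation*}
so $\res_2(a \cdot \partial x) = \res(a) \cdot \res_2(\partial x)$, which is precisely the $\Oo$-module action on $k = \Oo/\mm$.

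The likely obstacle, such as it is, is bookkeeping around $\Oo$-linearity, since the action of $\Oo$ on $D$ was defined only indirectly via approximation; but the crucial point is that on $D_0$ the relevant $\val_\partial$ is nonnegative, so we have enough room to replace $a$ by any sufficiently close $a' \in Q$ without affecting residues. Once that is in hand, everything else reduces to straightforward unwinding of the definitions of $\wres$, $Q$, $\partial$, and $\res_2$.
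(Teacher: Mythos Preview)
Your proof is correct. The first bullet is identical to the paper's argument. For the second bullet you take a different but equally valid route: the paper first picks $w \in R$ with $\wres(w) = \varepsilon$, shows $\partial w$ generates $D_0$ with annihilator $\mm$ (invoking Proposition~\ref{divisibility-v2} and Lemma~\ref{mult-scaling}), thereby obtaining an abstract $\Oo$-module isomorphism $D_0 \cong \Oo/\mm$, and only afterwards verifies the formula. You instead define $\res_2$ directly by the $\varepsilon$-coefficient and verify the isomorphism properties by hand, with the only substantive check being $\Oo$-linearity via the approximation definition of the $\Oo$-action on $D$. Your approach is slightly more elementary in that it avoids the structural lemmas on $D_0$, while the paper's approach makes the cyclic structure of $D_0$ explicit before computing anything. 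Both are short and essentially equivalent in effort.
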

\begin{proof}
  The first point is clear from Lemma~\ref{vp-class}:
  \[ \val(\partial x) \ge 0 \iff \val_\partial(x) \ge 0 \iff x \in R.\]
  Take some $w \in R$ such that $\wres(w) = 0 + \varepsilon$.  Then $w
  \notin Q$, so
  \[ \val(\partial w) = \val_\partial(w) = 0\]
  by Lemma~\ref{vp-class}.  By Proposition~\ref{divisibility-v2}, $\partial
  w$ generates $D_0$ as an $\Oo$-module.  Also, $\Ann_\Oo(\partial w)$
  is $\mm$ by Lemma~\ref{mult-scaling}.  Thus there is an isomorphism
  \begin{align*}
    \res_2 : D_0 & \to \Oo/\mm \\
    y \partial w & \mapsto \res(y).
  \end{align*}
  Now let $x \in R$ be given.  Then
  \begin{equation*}
    \wres(x) = s + t \varepsilon
  \end{equation*}
  for some $s, t \in k$.  We already know that $s = \res(k)$, and we
  must show that
  \[ \res_2(\partial x) = t.\]
  Take $y \in R$ with $\wres(y) = t + 0\varepsilon$.  Then $y \in Q$.  Also,
  \begin{equation*}
    \wres(x - wy) = \wres(x) - \wres(w)\wres(y) = s + t \varepsilon -
    \varepsilon t = s,
  \end{equation*}
  so $x - wy \in Q$.  Then
  \begin{equation*}
    \partial x = \partial (w y) = y \partial w,
  \end{equation*}
  and so
  \begin{equation*}
    \res_2(\partial x) = \res_2(y \partial w) = \res(y) = t.
  \end{equation*}
  This proves the formula
  \begin{equation*}
    \wres(x) = \res(x) + \res_2(\partial x) \varepsilon.
  \end{equation*}
  Finally, this formula uniquely determines $\res_2$, because
  $\partial : R \to D_0$ is onto.
\end{proof}

\subsection{Odd positive characteristic}
\begin{proposition} \label{prop:perfect}
  If $K$ is perfect, then $\characteristic(K) = 0$.
\end{proposition}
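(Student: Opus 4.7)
My plan is to derive a contradiction from assuming that $K$ is perfect of characteristic $p > 2$, using the derivation $\partial : \Oo \to D$ constructed in \S\ref{ssec:der}. By the Strong Assumption, $\characteristic(K) \ne 2$, so the only possibilities to rule out are $\characteristic(K) = p$ for an odd prime $p$.

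The key observation is that the derivation $\partial$, being a derivation in the usual sense (Proposition~\ref{derivation}), kills $p$-th powers in characteristic $p$. More precisely, by induction on the Leibniz rule, for any $y \in \Oo$ one has $\partial(y^p) = p y^{p-1} \partial y$, which vanishes in characteristic $p$. So the first step is: for every $x \in \Oo$, use perfectness to write $x = y^p$ with $y \in K$, observe that $p \val(y) = \val(x) \ge 0$ forces $y \in \Oo$, and conclude $\partial x = \partial(y^p) = 0$.

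This shows $\partial \equiv 0$ on $\Oo$, i.e., $\Oo = \ker(\partial) = Q$. In particular, $R \subseteq \Oo = Q$. But $Q$ was defined as $\{x \in R : \wres(x) \in k\}$, so $R = Q$ would force the image of $\wres : R \to k[\varepsilon]$ to lie in $k$. This contradicts the surjectivity of $\wres$ (Theorem~\ref{thm:so-far}), since $\varepsilon \in k[\varepsilon] \setminus k$. Equivalently, one can note that $D = \Oo/Q = 0$ contradicts the existence of $w \in R \setminus Q$ with $\wres(w) = \varepsilon$, which we used in Proposition~\ref{res-connection-v2}.

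I do not anticipate a serious obstacle here: every ingredient (the derivation property of $\partial$, the inclusion $Q \subseteq R \subseteq \Oo$, surjectivity of $\wres$) is already in hand, and the computation $\partial(y^p)=0$ is a one-line induction from the Leibniz rule. The only mild subtlety is the observation that perfectness together with $\val(x) \ge 0$ yields a $p$-th root already inside $\Oo$ (rather than merely in $K$), but this is immediate from $\val(y) = \val(x)/p \ge 0$.
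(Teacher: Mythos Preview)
Your proposal is correct and follows essentially the same approach as the paper: both arguments use perfectness to write elements of $\Oo$ as $p$-th powers, apply the Leibniz rule to get $\partial(y^p) = p y^{p-1}\partial y = 0$, and reach a contradiction with the nontriviality of $\partial$. The paper phrases the contradiction as ``$D \ne 0$ and $\partial$ is onto, so some $\partial a \ne 0$,'' while you phrase it as ``$\Oo = Q$ forces $R = Q$, contradicting surjectivity of $\wres$''; these are equivalent, and your explicit check that the $p$-th root lies in $\Oo$ is a detail the paper leaves implicit.
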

\begin{proof}
  Suppose for the sake of contradiction that $\characteristic(K) = p >
  2$.  By construction, the derivation $\partial : \Oo \to D$ is onto.
  Also, $D$ cannot vanish, since we have constructed a submodule $D_0$
  isomorphic to $k$.  Therefore $\partial a \ne 0$ for some $a \in
  \Oo$.  By perfection of $K$, we can write $a = b^p$.  Then
  \[ \partial a = \partial (b^p) = p b^{p-1} \partial b = 0,\]
  a contradiction.
\end{proof}

\section{Application to fields of dp-rank 2} \label{sec:app}
Recall that a topology on a structure is \emph{definable} if it admits
a uniformly definable basis of open sets.
\begin{theorem} \label{thm:app}
  Let $(K,+,\cdot,0,1,\ldots)$ be a field of characteristic 0,
  possibly with extra structure.  Suppose $K$ has dp-rank 2 and is
  unstable.
  \begin{enumerate}
  \item \label{fact-1} $K$ does not admit two independent definable
    valuation rings.
  \item \label{fact-2} $K$ admits a definable non-trivial
    V-topology.
  \item \label{fact-3} The canonical topology on $K$ is definable.
    (See \S\ref{sec:sofar}).
  \end{enumerate}
\end{theorem}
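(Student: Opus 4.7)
The plan is to pass to a sufficiently saturated monster $\Kk \succeq K$ and invoke Fact~\ref{hoho-fact} to obtain a malleable $k_0$-linear $r$-inflator $\varsigma$ on $\Kk$ with $J_K$ an ideal of the fundamental ring $R_\varsigma$, where $r \le \dpr(\Kk) = 2$. The case $r = 1$ is essentially a valuation (a rank-1 inflator has multi-valuation type on the nose), so assume $r = 2$. We then case split on whether some mutation of $\varsigma$ is weakly of multi-valuation type.

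In the multi-valuation branch, the mutation variant of Fact~\ref{hoho-fact} yields some $K' \succeq K$ with $J_{K'}$ containing a nonzero ideal of a multi-valuation ring. The criterion quoted in the introduction (Theorem~8.11 of \cite{prdf2}) forces $K'$ to be valuation type, and since the canonical topology is uniformly defined from the theory of $K$, elementary equivalence transfers valuation type back to $K$. Thus the canonical topology is a V-topology, and the valuation ring can be recovered uniformly from any neighborhood basis of $0$ by the standard intrinsic characterization for V-topologies, giving parts (\ref{fact-2}) and (\ref{fact-3}) in this case.

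In the non-multi-valuation branch, use Corollary~\ref{cor:rediso} to pass to an isotypic mutation so that the Strong Assumptions are satisfied; then the machinery of \S\ref{sec:canform}--\S\ref{sec:der} produces a definable valuation ring $\Oo$ on $\Kk$ with residue field $k$, a $Q$-linear derivation $\partial: \Oo \to D$, and the description $\wres(x) = \res(x) + \res_2(\partial x)\varepsilon$ from Proposition~\ref{res-connection-v2}. Combining Proposition~\ref{exact-form} with this description lets us rewrite $J_K$ in terms of $\val$ and $\partial$, so the sets
\[
  B_{a,b,\gamma} = \{x \in \Kk : \val(x-a) > \gamma,\ \val(\partial x - b) > \gamma\}
\]
form a uniformly definable basis for the canonical topology, proving (\ref{fact-3}); the valuation topology of $\Oo$ is a strictly coarser but still definable V-topology on $K$, proving (\ref{fact-2}).

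For (\ref{fact-1}), suppose $\Oo_1, \Oo_2$ are independent definable valuation rings on $K$. Each induces a definable V-topology, and the common refinement is the topology of the multi-valuation ring $\Oo_1 \cap \Oo_2$. Every definable V-topology must be coarser than the canonical topology (because each of its non-trivial definable basic opens is full rank), hence coarser than the unique V-topology extracted above. Two independent coarsenings of a single V-topology would jointly refine into a definable multi-valuation ring structure inside $J_K$, forcing $K$ into the multi-valuation branch, after which the unique V-topology obtained in that branch cannot accommodate two independent coarsenings, a contradiction. The main obstacle is the descent step: making precise that $\Oo$ and $\partial$, which were constructed in $\Kk$ from the chosen inflator $\varsigma$, are genuinely definable over $K$ rather than only over the parameters needed to define $\varsigma$; this presumably requires an intrinsic characterization of $\Oo$ (e.g.\ as the largest valuation ring whose topology coarsens the canonical topology) so that definability follows once the canonical topology itself is shown to have a definable basis.
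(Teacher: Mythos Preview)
Your overall case split (valuation type vs.\ not) matches the paper, and the valuation-type branch is handled correctly by citing \cite{prdf2}. The gap is in the non-valuation-type branch: you repeatedly assume that $\Oo$, $\val$, and $\partial$ are \emph{definable}, and this is false. The ring $\Oo$ is the integral closure of the fundamental ring $R$ of an inflator built from a type-definable pedestal; neither $R$ nor $\Oo$ is definable, and $\partial$ is constructed from them. Consequently your sets $B_{a,b,\gamma}$ are not a priori definable, and your claim ``the valuation topology of $\Oo$ is a \ldots\ definable V-topology'' is exactly the content of part~(\ref{fact-2}), not something you get for free.

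The paper's substitute for definability of $\Oo$ is a sandwiching/co-embeddability argument: one produces a type-definable set and a $\vee$-definable set, each squeezed between two balls of $\Oo$ (Lemma~\ref{tdef2}), and then interpolates a genuinely definable set co-embeddable with $\Oo$ (Lemma~\ref{lem:def2}, Theorem~\ref{th:that-vee}). A parallel sandwiching argument using $Q$ and the derivation handles the canonical topology (Lemmas~\ref{exotic2}--\ref{vee-co}, Theorem~\ref{thm:definable}). Your ``intrinsic characterization of $\Oo$'' idea would not work here: there is no reason to expect $\Oo$ itself to be definable, only its associated topology.

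Your argument for part~(\ref{fact-1}) is also off. In the non-valuation-type branch the canonical topology is \emph{not} a V-topology, so ``coarser than the canonical topology'' does not imply ``coarser than the unique V-topology extracted above,'' and the subsequent sentence about ``forcing $K$ into the multi-valuation branch'' contradicts the branch you are in. The paper instead argues directly (Proposition~\ref{prop:contra}): any definable valuation ring $\Oo'$ contains $J$ (since $\Oo' - \Oo' = \Oo'$ is a basic neighborhood), and then the monic quadratic equations of Lemma~\ref{tilde-o-very-valuation} bound the $\Oo'$-valuation of elements of $\mm$, contradicting independence of $\Oo'$ and $\Oo$ via approximation.
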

We prove these statements in
\S\ref{sec:val-case}--\ref{sec:cantopdef}, but for now, we give some
motivation.
\begin{proposition}
  Under the assumptions of Theorem~\ref{thm:app}, $K$ admits a
  unique definable non-trivial V-topology.
\end{proposition}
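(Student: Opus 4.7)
The existence half of this proposition is immediate from Theorem~\ref{thm:app}(\ref{fact-2}); the plan is to prove uniqueness by assuming two distinct definable non-trivial V-topologies $\tau_0, \tau_1$ on $K$ and extracting from them two independent definable valuation rings, thereby contradicting Theorem~\ref{thm:app}(\ref{fact-1}).

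The first step is the classical independence theorem for V-topologies on a field (see Prestel--Ziegler, Ch.~2): any two distinct non-trivial V-topologies on a single field are automatically independent, in the sense of Artin--Whaples approximation. Supposing both $\tau_0, \tau_1$ are non-archimedean, each has a unique associated valuation ring recoverable from the topology by a uniform formula such as
\[
\Oo_i \;=\; \{\,a \in K : a \cdot U \subseteq U \text{ for some } \tau_i\text{-neighborhood } U \text{ of } 0\,\}.
\]
Definability of $\tau_i$ then gives definability of $\Oo_i$, and topological independence of $\tau_0, \tau_1$ is equivalent to algebraic independence of $\Oo_0, \Oo_1$ (no proper valuation ring of $K$ contains both). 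Hence the assumption $\tau_0 \ne \tau_1$ produces two independent definable valuation rings, contradicting Theorem~\ref{thm:app}(\ref{fact-1}) and forcing $\tau_0 = \tau_1$.

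The main obstacle will be the archimedean case: a priori one of $\tau_0, \tau_1$ could be induced by an archimedean absolute value, in which case the formula above does not return a valuation ring. I would dispose of this by exploiting the fact that the specific V-topology produced by Theorem~\ref{thm:app}(\ref{fact-2}) is visibly non-archimedean, as its construction goes through a definable valuation. It therefore suffices to rule out that some \emph{second} definable V-topology $\tau_1$ is archimedean. If it were, then $\tau_1 \ne \tau_0$, so the two would be independent; combining the approximation theorem with the definability of both topologies, one aims to construct a definable configuration in $K$ witnessing $\dpr(K) > 2$ by independently exploiting the archimedean and non-archimedean neighborhood bases, contradicting the rank hypothesis. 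An alternative route is to show directly that a definable archimedean absolute value on an unstable dp-finite field of characteristic $0$ produces an extra definable ordering incompatible with the non-archimedean $\tau_0$. Either way, once the archimedean case is excluded, the non-archimedean argument above delivers uniqueness, and this archimedean exclusion is the part most in need of care.
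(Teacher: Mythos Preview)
Your overall strategy --- reduce uniqueness to Theorem~\ref{thm:app}(\ref{fact-1}) by turning two distinct definable V-topologies into two independent definable valuation rings --- is the paper's strategy as well, and your use of the approximation theorem (distinct V-topologies are automatically independent) is correct. The gap is in the passage from a definable V-topology to a definable valuation ring.

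A non-archimedean V-topology does \emph{not} determine a unique valuation ring: any two dependent nontrivial valuations on $K$ induce the same topology, so there is an entire chain of valuation rings giving rise to each $\tau_i$, and none of them need be definable even when the topology is. Your proposed formula does not help: as written (with $U$ ranging over all neighborhoods of $0$) it returns all of $K$, since $U = K$ is itself a neighborhood; restricting $U$ to a definable basis yields a union of multiplicative stabilizers that has no reason to be a ring, let alone a valuation ring. Even the natural canonical candidate --- the unique rank-$1$ valuation ring inducing $\tau_i$ --- can fail to be definable: think of a definable valuation of rank $2$ whose rank-$1$ coarsening corresponds to a non-definable convex subgroup of the value group.

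The paper closes this gap by first passing to an $\aleph_0$-saturated elementary extension and invoking Proposition~3.5 of \cite{hhj-v-top}: on a sufficiently saturated NIP field, every definable V-topology is induced by an \emph{externally} definable valuation ring $\Oo_i$. Passing to the Shelah expansion $K^{\textrm{Sh}}$ then makes the $\Oo_i$ honestly definable while preserving dp-rank (a short exercise with quantifier elimination in $K^{\textrm{Sh}}$, cf.\ \cite{NIPguide}, Proposition~3.23), and Theorem~\ref{thm:app}(\ref{fact-1}) applied to $K^{\textrm{Sh}}$ gives the contradiction. This also dissolves your archimedean worry without a separate case analysis: the cited result from \cite{hhj-v-top} already produces a valuation ring.
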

\begin{proof}
  Existence follows from Theorem~\ref{thm:app}.\ref{fact-2}.  For
  uniqueness, suppose $K$ admits two independent definable
  V-topologies.  We may replace $K$ with an $\aleph_0$-saturated
  elementary extension.  Then the two V-topologies are induced by
  externally definable valuation rings $\Oo_1, \Oo_2$, by
  Proposition~3.5 in \cite{hhj-v-top}.  Replacing $K$ with its
  Shelah expansion $K^{\textrm{Sh}}$, we obtain two independent
  valuation rings.  The Shelah expansion continues to have dp-rank
  2---this is a simple exercise using quantifier elimination in the
  Shelah expansion (\cite{NIPguide}, Proposition~3.23).
\end{proof}
\begin{proposition}\label{prop-compare}
  Let $(K,\Oo_1,\Oo_2,\ldots)$ be a field with two definable
  valuation rings $\Oo_1, \Oo_2$, and possibly additional structure.
  If $\dpr(K) \le 2$, then $\Oo_1$ and $\Oo_2$ are comparable.
\end{proposition}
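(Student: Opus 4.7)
The plan is to argue by contradiction. Assume $\Oo_1$ and $\Oo_2$ are incomparable, and derive a combinatorial witness to $\dpr(K) \ge 3$, contradicting the hypothesis $\dpr(K) \le 2$.

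First, I would reduce to the case where $\Oo_1$ and $\Oo_2$ are \emph{independent}, meaning they share no proper common coarsening. Let $\Oo := \Oo_1 \cdot \Oo_2$ denote the subring of $K$ generated by $\Oo_1 \cup \Oo_2$. Any $x \notin \Oo$ lies outside both $\Oo_i$, so $1/x \in \mm_1 \cap \mm_2 \subseteq \Oo$; hence $\Oo$ is a valuation ring with maximal ideal $\mm_\Oo = \mm_1 \cap \mm_2$. Both $\Oo$ and $\mm_\Oo$ are interpretable from $\Oo_1, \Oo_2$, so the residue field $k := \Oo/\mm_\Oo$ is interpretable in $K$, giving $\dpr(k) \le 2$. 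The images $\bar{\Oo}_i$ of $\Oo_i$ in $k$ remain incomparable and are now independent by construction. Replacing $(K, \Oo_1, \Oo_2)$ with $(k, \bar{\Oo}_1, \bar{\Oo}_2)$, we may assume $\Oo_1, \Oo_2$ are independent on $K$.

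Next I would invoke the approximation theorem for independent valuations: for any $a_1, a_2 \in K$ and any $\gamma_i$ in the respective value groups, there exists $x \in K$ with $\val_i(x - a_i) > \gamma_i$ for both $i = 1, 2$. Using this, I would construct an ict-pattern of depth $3$ based on a formula $\varphi(x; u, v, w)$ combining $\val_1$-proximity to $u$, $\val_2$-proximity to $v$, and a third proximity condition on $w$ at a different scale. Three mutually indiscernible parameter sequences would witness three independent directions: approximation guarantees consistency along any diagonal path, while a judicious choice of scales enforces inconsistency when two parameters in the same row vary.

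The main obstacle will be extracting the third independent direction for the pattern. The two valuations alone supply only two naturally independent V-topologies, so the third must come from the internal structure of one of them --- for example, the dp-rank of one of the value groups $\Gamma_i$, the residue field of one valuation, or a stratification of $\val_1$-small elements by $\val_2$ at a compatible scale. Verifying that the resulting pattern truly has depth $3$ (and not merely $2$) requires careful bookkeeping in the approximation step to decouple the three conditions on $x$ while preserving mutual indiscernibility, and this is the technically delicate heart of the argument.
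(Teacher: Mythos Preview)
Your reduction to independent valuations is correct and matches the paper's first step exactly. But after that point your plan and the paper's proof diverge completely, and your plan has a genuine gap.

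The paper does not attempt a direct ict-pattern construction. After reducing to independent $\Oo_1,\Oo_2$, it splits into three cases and cites external results: the dp-minimal case (a known lemma), the positive-characteristic case (a lemma from \cite{prdf}, where Artin--Schreier extensions supply the extra combinatorics), and the remaining case $\dpr(K)=2$, $\characteristic(K)=0$, where it invokes Theorem~\ref{thm:app}.\ref{fact-1}. That theorem is not proved by counting ict-patterns: it uses the canonical topology and the group of infinitesimals $J_K$. Any definable valuation ring $\Oo_i$ has full dp-rank, so $\Oo_i - \Oo_i = \Oo_i$ is a basic neighborhood and hence $J_K \subseteq \Oo_i$. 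The inflator analysis of \S\ref{sec:isotyp}--\ref{sec:der} produces a distinguished valuation ring $\Oo$ (the integral closure of the fundamental ring), and Proposition~\ref{prop:contra} shows that no valuation ring independent from $\Oo$ can contain $J_K$. Thus both $\Oo_1$ and $\Oo_2$ are dependent on $\Oo$, hence on each other.

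Your proposed route---building a depth-3 ict-pattern directly from two independent valuations---is exactly the step you flag as the obstacle, and it is a real one. Two independent V-topologies and the approximation theorem immediately yield a depth-2 pattern, but there is no evident source for a third mutually independent row in characteristic 0. The suggestions you list (value group, residue field, a second scale inside one valuation) do not decouple from the two rows you already have: any such condition is already controlled by one of the two valuations, so the resulting pattern collapses to depth 2. In positive characteristic the Artin--Schreier map provides genuinely new definable subgroups that break this symmetry, which is why that case has a short external proof; in characteristic 0 no comparable mechanism is available, and this is precisely why the paper builds the heavy inflator machinery instead. As written, your argument stops at the point where the actual content lies.
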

\begin{proof}
  Suppose $\Oo_1, \Oo_2$ are incomparable.  The join $\Oo_1 \cdot
  \Oo_2$ is itself a valuation ring.  Let $K'$ be the residue field
  of $\Oo_1 \cdot \Oo_2$.  Then $\Oo_1$ and $\Oo_2$ induce two
  independent valuation rings $\Oo_1'$ and $\Oo_2'$ on $K'$.
  Indeed, there is an isomorphism between
  \begin{itemize}
  \item The poset of valuation rings on $K'$.
  \item The poset of valuation rings on $K$ that are contained in
    $\Oo_1 \cdot \Oo_2$.
  \end{itemize}
  Thus $\Oo_1'$ and $\Oo_2'$ are incomparable, and $\Oo_1' \cdot
  \Oo_2'$ must be the maximal valuation ring on $K'$, which is
  $K'$ itself.  Thus $\Oo_1'$ and $\Oo_2'$ are incomparable and
  independent.

  Replacing $(K,\Oo_1,\Oo_2)$ with $(K',\Oo_1',\Oo_2')$, we may
  assume that $\Oo_1$ and $\Oo_2$ are independent.  Then we get a contradiction:
  \begin{itemize}
  \item If $\dpr(K) \le 1$, use Lemma~9.4.14 in \cite{myself}.
  \item If $\characteristic(K) > 0$, use Lemma~2.6 in \cite{prdf}.  
  \item If $\dpr(K) = 2$ and $\characteristic(K) = 0$, then
    Theorem~\ref{thm:app} applies. \qedhere
  \end{itemize}
\end{proof}
\begin{proposition}
  Suppose parts \ref{fact-1} and \ref{fact-2} of Theorem~\ref{thm:app}
  hold for all ranks.  In other words, suppose the following hold:
  \begin{itemize}
  \item No dp-finite field of characteristic 0 admits two independent
    definable valuation rings.
  \item Every unstable dp-finite field of characteristic 0 admits a
    (non-trivial) definable V-topology.
  \end{itemize}
  Then the Shelah conjecture holds for dp-finite fields: every
  dp-finite field is either finite, algebraically closed, real closed,
  or henselian.  Therefore, the conjectured classification of
  (\cite{halevi-hasson-jahnke}, Theorem~3.11) holds.
\end{proposition}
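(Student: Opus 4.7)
The proof splits into well-known and reduction steps followed by the main case. First, if $K$ is finite there is nothing to prove. If $K$ is infinite and stable, then $K$ is algebraically closed by Palacin's theorem (Proposition~7.2 of \cite{Palacin}, cited in \S\ref{sec:sofar}). If $\characteristic(K) > 0$, the Shelah conjecture is already known by Corollary~11.4 of \cite{prdf}. We may therefore assume $K$ is an infinite, unstable, dp-finite field of characteristic $0$, and must produce a non-trivial henselian valuation or show that $K$ is real closed.

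Apply hypothesis (b) to get a non-trivial definable V-topology $\tau$ on $K$. By hypothesis (a), the preceding Proposition (whose proof works verbatim at all ranks, as it only invokes comparability and the rank-$\le 2$ case of Theorem~\ref{thm:app}, both of which are now available in arbitrary rank) shows that $\tau$ is the \emph{unique} non-trivial definable V-topology on $K$. By the Prestel--Ziegler classification of V-topologies, $\tau$ is either archimedean (induced by an absolute value with completion $\mathbb{R}$ or $\mathbb{C}$) or non-archimedean (induced by a valuation ring).

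In the archimedean case, $K$ embeds as a topological field into $\mathbb{C}$; since $K$ is unstable, the embedding lands in $\mathbb{R}$. The goal here is to conclude $K$ is real closed. The strategy is to observe that $\tau$ is t-henselian (a general fact for definable V-topologies in NIP fields, which at worst requires passing to an $\aleph_0$-saturated elementary extension and its Shelah expansion, which preserves dp-finiteness by Proposition~3.23 of \cite{NIPguide}); a t-henselian archimedean V-topology forces the underlying ordered field to be real closed. In the non-archimedean case, $\tau$ is induced by a valuation ring $\Oo$ (definable after Shelah expansion). We must show $\Oo$ is henselian. Suppose not: then in some finite Galois extension $L/K$, the valuation $\Oo$ has $r \ge 2$ distinct extensions $\Oo_1, \ldots, \Oo_r$, which are permuted by $\Gal(L/K)$ and are pairwise incomparable. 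These give rise to $r \ge 2$ definable valuation rings on $L$, and by taking intersections with traces / pushforwards to $K$, one obtains two incomparable definable valuations on $K$ itself. Proposition~\ref{prop-compare} (extended to arbitrary rank via hypothesis (a)) then forces these to be independent, contradicting (a) directly.

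\emph{Main obstacle.} The hard step is the non-archimedean henselianity argument: one must descend from the multiplicity of extensions of $\Oo$ in $L$ to two \emph{definable} and \emph{incomparable} (hence independent) valuations on $K$ itself. The naive pushforward $\bigcap_\sigma \sigma(\Oo_i) \cap K$ merely recovers $\Oo$. The cleanest workaround is to avoid this descent entirely by appealing to the t-henselianity of definable V-topologies in NIP fields: a t-henselian non-archimedean V-topology in characteristic $0$ is induced by a henselian valuation, and this valuation is the desired witness for Shelah's conjecture. Once Shelah is established, the classification of dp-finite fields is a direct citation of Theorem~3.11 of \cite{halevi-hasson-jahnke}.
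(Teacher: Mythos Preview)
Your reduction to the unstable characteristic-$0$ case is fine, and so is the final citation of \cite{halevi-hasson-jahnke}. The middle of the argument, however, has real problems.

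The archimedean case is broken. From an archimedean V-topology you get a topological embedding of $K$ into $\mathbb{C}$, but ``since $K$ is unstable, the embedding lands in $\mathbb{R}$'' is a non sequitur: the extra structure on $K$ can make it unstable regardless of where the underlying field sits inside $\mathbb{C}$. Even granting $K \subseteq \mathbb{R}$, you would still need to rule out subfields of $\mathbb{R}$ that are neither real closed nor carry a definable henselian valuation; your appeal to t-henselianity here is circular, since t-henselianity of the archimedean topology on a subfield of $\mathbb{R}$ is essentially the statement that the field is real closed. In the non-archimedean case you correctly identify the obstacle and then propose to dissolve it by citing ``t-henselianity of definable V-topologies in NIP fields'' as a general fact---but you give no reference, and this is not something you can treat as a black box.

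The paper's argument avoids the archimedean/non-archimedean case split entirely. The key move you are missing is to first use hypothesis~(a) (via the comparability argument of Proposition~\ref{prop-compare}, extended to all ranks) to conclude that \emph{every} definable valuation ring on a dp-finite field is henselian; this is exactly the content of Theorem~2.8 in \cite{prdf}. Passing to the Shelah expansion (which preserves dp-finiteness), the same holds for externally definable valuation rings. Now, in a saturated model, hypothesis~(b) gives a definable V-topology, and Proposition~3.5 of \cite{hhj-v-top} upgrades this to an externally definable valuation ring $\Oo$, which is therefore henselian. Finally, Theorem~5.2 of \cite{JK} says that a field carrying a non-trivial henselian valuation either admits a \emph{definable} non-trivial valuation or is real closed or algebraically closed. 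This closes the argument without ever splitting on the type of the V-topology or invoking t-henselianity as an unproven lemma.
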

\begin{proof}
  As in Proposition~\ref{prop-compare}, we conclude that any two
  definable valuation rings $\Oo_1, \Oo_2$ on a dp-finite field are
  comparable.  As in the proof of Theorem~2.8 in \cite{prdf}, this
  implies the henselianity conjecture for dp-finite fields: every
  definable valuation ring on a dp-finite field is henselian.

  Because the Shelah expansion of a dp-finite structure is dp-finite,
  it follows that any externally definable valuation ring must also be
  henselian.
  \begin{claim}\label{cl0}
    If $K$ is dp-finite of characteristic 0, then one of the following holds:
    \begin{itemize}
    \item $K$ is algebraically closed.
    \item $K$ is real closed.
    \item $K$ is finite
    \item $K$ admits a definable non-trivial valuation ring.
    \end{itemize}
  \end{claim}
  \begin{claimproof}
    We may replace $K$ with a sufficiently saturated elementary
    extension.  If $K$ is stable, then $K$ is finite or algebraically
    closed, by Proposition~7.2 in \cite{Palacin}.  Otherwise, $K$
    admits a non-trivial definable V-topology, by assumption.  By
    Proposition~3.5 in \cite{hhj-v-top}, $K$ admits a non-trivial
    externally definable valuation ring $\Oo$.  This valuation ring
    must be henselian.  By Theorem~5.2 in \cite{JK}, either $K$ admits
    a definable valuation ring or $K$ is real closed or algebraically
    closed.\footnote{If $K$ is separably closed, then $K$ is
      algebraically closed, because dp-finite fields are perfect.}
  \end{claimproof}
  This in turn implies the Shelah conjecture for dp-finite fields of
  characteristic 0.  The case of positive characteristic is
  Corollary~11.4 \cite{prdf}.

  The classification in (\cite{halevi-hasson-jahnke}, Theorem~3.11) is
  proven conditional on the Shelah conjecture.  (The proof is for
  strongly dependent fields, but can be restricted to the smaller
  class of dp-finite fields.)
\end{proof}

\subsection{The pedestal machine} \label{sec:review}
We review the setup from (\cite{prdf3}, Part II).  Let $\Kk$ be an unstable monster field, possibly with extra structure, with $\dpr(\Kk) \le 2$.

Fix a \emph{magic subfield} $k_0 \preceq \Kk$, i.e., a small model
with the following property (Definition 8.3 in \cite{prdf2}):
\begin{quote}
  For every $k_0$-linear subspace $G \le (\Kk,+)$, if $G$ is
  type-definable (over any small set), then $G = G^{00}$.
\end{quote}
Magic subfields exist by (\cite{prdf}, Corollary~8.7).

Let $\Lambda$ denote the lattice of type-definable $k_0$-linear
subspaces of $\Kk$.  Recall from (\cite{prdf}, Definition~9.13) that a
\emph{strict $n$-cube} in $\Lambda$ is an injection
\begin{align*}
  Pow(n) & \hookrightarrow \Lambda \\
  S & \mapsto G_S
\end{align*}
that preserves the unbounded lattice operations:
\begin{align*}
  G_{S_1 \cup S_2} = G_{S_1} + G_{S_2} \\
  G_{S_1 \cap S_2} = G_{S_1} \cap G_{S_2}.
\end{align*}
We call $G_\emptyset$ the \emph{base} of the cube; the base need not
be 0.

The \emph{reduced rank} of $\Lambda$ is the maximum $r$ such that a
strict $r$-cube exists (Definition~9.17 in \cite{prdf}).  By
Proposition~10.1.7 in \cite{prdf}, the reduced rank is 1 or 2.

If $r$ is the reduced rank, a \emph{pedestal} in $\Lambda$ is a group
$G \in \Lambda$ that is the base of a strict $r$-cube (Definition~8.4
in \cite{prdf2}\footnote{Pedestals were called ``special groups'' in
  \S 10 of \cite{prdf}}).  Since $r$ is small, we can describe what
this means explicitly:
\begin{itemize}
\item If $r = 1$, then an $r$-cube is a chain of length two, and a
  pedestal is any $G \in \Lambda$ other than $\Kk$ itself.
\item If $r = 2$, then an $r$-cube is
  \[ \{G \cap H, G, H, G + H \} \]
  for two incomparable $G, H \in \Lambda$.  Therefore, a pedestal is a
  group of the form $G \cap H$ where $G, H$ are incomparable elements
  of $\Lambda$.
\end{itemize}
\begin{fact}[Proposition~10.4.1 in \cite{prdf}] \label{nze}
  Non-zero pedestals exist.
\end{fact}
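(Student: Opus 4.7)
The plan is to combine the \emph{existence} of a strict $r$-cube, which is automatic from the definition of $r$ as the reduced rank, with a non-triviality input coming from instability. By definition of $r$, some strict $r$-cube exists in $\Lambda$, and the base of any such cube is a pedestal; the only content of Fact~\ref{nze} is to ensure that this base can be chosen to be non-zero.

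The non-triviality input comes from the canonical topology of \cite{prdf2}: since $\Kk$ is unstable and dp-finite, the $k_0$-infinitesimals $J_{k_0}$ form a non-zero, proper, type-definable $k_0$-subspace of $\Kk$. In the case $r = 1$, a strict $r$-cube is just a strict chain $G_\emptyset \subsetneq G_{\{1\}}$, and any proper element of $\Lambda$ is a pedestal; so $J_{k_0}$ itself is already a non-zero pedestal and we are finished.

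In the case $r = 2$, I need to exhibit incomparable $G, H \in \Lambda$ with $G \cap H \ne 0$. The natural candidates are $G := J_{k_0}$ and $H := \alpha \cdot J_{k_0}$ for a suitable $\alpha \in \Kk^\times$. Both are neighborhoods of $0$ in the canonical topology, so their intersection $G \cap H$ is again a neighborhood of $0$ and in particular non-zero; the only thing to verify is that $\alpha$ can be chosen so that $J_{k_0}$ and $\alpha J_{k_0}$ are \emph{in}comparable. If no such $\alpha$ existed, then $\{\alpha \in \Kk : \alpha J_{k_0} \subseteq J_{k_0}\}$ would be a type-definable subring of $\Kk$ acting as a valuation ring that induces the canonical topology, producing a totally ordered neighborhood basis of $0$; a routine check against the definition of reduced rank then forces $r = 1$, contradicting the case assumption.

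The main obstacle is the book-keeping in the $r = 2$ case: one must check that multiplication by $\alpha$ preserves type-definability (so that $\alpha J_{k_0} \in \Lambda$), and that comparability of \emph{all} scalar multiples of $J_{k_0}$ really does force the reduced rank down to $1$. Both points are essentially linear-algebraic once the canonical topology machinery of \cite{prdf,prdf2} is in place, which is why the result is quoted directly from Proposition~10.4.1 in \cite{prdf} rather than reproved here.
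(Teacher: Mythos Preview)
The paper does not prove this Fact at all; it is simply cited from \cite{prdf}. So there is no ``paper's own proof'' to compare your proposal against, and I evaluate your sketch on its own merits.

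Your $r=1$ case is fine. The $r=2$ case has a genuine gap. You argue: if $J_{k_0}$ and $\alpha J_{k_0}$ are comparable for every $\alpha$, then the stabilizer of $J_{k_0}$ is a valuation ring, the canonical topology has a totally ordered neighborhood basis, and ``a routine check against the definition of reduced rank then forces $r=1$.'' This last implication is not routine, and in fact the paper's own framework contradicts it. The reduced rank is defined in terms of the \emph{entire} lattice $\Lambda$ of type-definable $k_0$-subspaces, whereas the groups $\alpha J_{k_0}$ form a very thin slice of $\Lambda$. The paper explicitly treats ``valuation type'' (canonical topology is a V-topology) as a case that can coexist with $r=2$: Corollary~\ref{joke-cor}.\ref{jc3} gives only the one-way implication $r=1 \Rightarrow$ valuation type, and the entire machinery of \S\ref{sec:review}--\S\ref{sec:val-case} is set up to handle valuation type when $r=2$. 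So from ``the $\alpha J_{k_0}$ form a chain'' you cannot conclude $r=1$; there may well be incomparable elements elsewhere in $\Lambda$, and those are exactly what you need to exhibit.

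Concretely, what your argument is missing is a mechanism to pass from the existence of \emph{some} strict $2$-cube (guaranteed by $r=2$) to one whose base is non-zero. Your candidate $\{J_{k_0}, \alpha J_{k_0}\}$ has the non-zero intersection for free but not the incomparability; a generic incomparable pair $\{G,H\}$ from the definition of $r$ has the incomparability for free but possibly $G\cap H = 0$. Bridging this requires a genuine lattice-theoretic argument (this is what Proposition~10.4 of \cite{prdf} actually does), not the topological shortcut you propose. Your final paragraph essentially concedes this by deferring back to the citation, so the proposal is more of a plausibility sketch than a proof.
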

In Theorem~9.3 of \cite{prdf3}, we associated an $r$-inflator to any
non-zero pedestal $H$.
\begin{fact}\label{fact-summary}
  Let $H$ be a non-zero pedestal with associated $r$-inflator
  $\varsigma$.
  \begin{enumerate}
  \item \label{fs-mal} $\varsigma$ is malleable.
  \item The fundamental ring $R_H$ of $\varsigma$ is given as
    \begin{equation*}
      R_H = \{ x \in \Kk : xH \subseteq H\}.
    \end{equation*}
  \item If $H$ is type-definable over a small model $K$ containing
    $k_0$, then the infinitesimals $J_K$ are contained in the
    fundamental ideal $I_H$.
  \item \label{fs-ideal} If $H$ is type-definable over a small model
    $K$ containing $k_0$, then $R_H \cdot J_K \subseteq J_K$, and so
    $J_K$ is a sub-ideal of the fundamental ideal $I_H$.
  \item \label{fs-mut} If $\varsigma'$ is obtained by mutating
    $\varsigma$ along a line $\Kk \cdot (a_1, \ldots, a_n)$, then
    $\varsigma'$ is the $r$-inflator associated to the group
    \begin{equation*}
      H' =  (a_1^{-1} H) \cap \cdots \cap (a_n^{-1} H)
    \end{equation*}
    In particular, $H'$ is itself a non-zero pedestal.
  \end{enumerate}
\end{fact}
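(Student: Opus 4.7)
The plan is to derive all five clauses from the explicit construction of the $r$-inflator $\varsigma$ associated to a non-zero pedestal $H$ in Theorem~9.3 of \cite{prdf3}, combined with the infinitesimal machinery of \cite{prdf} and \cite{prdf2}. That construction produces, from $H$ and the ambient strict $r$-cube, a specialization map $\varsigma_n : \Sub_K(K^n) \to \Sub_k(M^n)$ for a certain $k_0$-vector space $M$ of length $r$; clause (\ref{fs-mal}) is essentially built into this construction and reduces to checking Definition~5.31 of \cite{prdf3} against it.

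For clauses (2) and (\ref{fs-mut}), I would unpack the definition of the fundamental ring. By definition $R_\varsigma = \{x : \varsigma_2(\Theta_x) \text{ is a graph}\}$, and tracing the specialization of $\Theta_x = \{(a,xa) : a \in \Kk\}$ through the quotient constructed from $H^2$ shows that this is a graph exactly when multiplication by $x$ preserves the data defining $M$, which unwinds to $xH \subseteq H$. For (\ref{fs-mut}), the mutation construction of Theorem~10.1 in \cite{prdf3} redefines the target as $M' = \varsigma_n(\Kk \cdot (a_1, \ldots, a_n))$; this should match the object associated to $H' = \bigcap_i a_i^{-1} H$, since $(b_1,\ldots,b_n) \in H'$ iff $a_i b_i \in H$ for all $i$, and non-vanishing of $M'$ together with the cube-structure transferred to $H'$ through the $a_i^{-1}$ gives that $H'$ remains a pedestal.

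Clauses (3) and (\ref{fs-ideal}) need the infinitesimal apparatus. The pedestal $H$ is a type-definable subgroup sandwiched inside $K$-definable sets of full rank coming from the larger cube members, so $J_K \subseteq H$, and any element of $H$ specializes to zero under $\varsigma$, placing $J_K$ inside $I_H$. The upgrade to (\ref{fs-ideal}) is the crux: one must show that $R_H$, characterized abstractly via $H$, preserves the full neighborhood filter defining $J_K$. The main obstacle is precisely here, because $R_H \cdot J_K \subseteq J_K$ crosses the algebraic/model-theoretic boundary: $R_H$ is specified through a single type-definable object, while $J_K$ is specified through \emph{all} $K$-definable full-rank sets. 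My approach is to combine $K$-invariance of $H$ (hence of $R_H$) with $\Aut(\Kk/K)$-invariance of $J_K$, and to leverage the cube-approximations of $H$ by $K$-definable full-rank sets to lift the abstract condition $rH \subseteq H$ into an approximate-preservation statement for each full-rank $K$-definable $X$, concluding that $r \cdot J_K$ sits inside every $X - X$.
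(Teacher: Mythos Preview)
Your treatment of clauses (1), (2), and (\ref{fs-mut}) is fine and aligns with the paper, which simply cites the relevant results from \cite{prdf3}.

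For clause (3), the sentence ``any element of $H$ specializes to zero under $\varsigma$'' is not right: $H$ is not contained in $I_H$ in general (indeed $I_H \subseteq R_H$ and $R_H$ contains $1$). The containment $J_K \subseteq I_H$ needs a different argument; the paper cites \cite{prdf}, Proposition~10.15 and Lemma~10.20 for this.

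The real issue is clause (\ref{fs-ideal}). Your sketch --- use $K$-invariance of $R_H$ together with cube-approximations of $H$ by $K$-definable full-rank sets to bootstrap $rH \subseteq H$ into $r \cdot J_K \subseteq X - X$ for each such $X$ --- does not work as stated. Knowing $rH \subseteq H$ tells you only about the single type-definable group $H$; it gives no control over $r \cdot (X - X)$ for an arbitrary full-rank $K$-definable $X$, and compactness applied to approximations of $H$ only yields statements of the form $r \cdot D_\beta \subseteq D_\alpha$ for sets $D_\alpha, D_\beta$ coming from the type-definition of $H$, not for general basic neighborhoods.

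The paper's argument (Lemma~\ref{ideal-lemma}) takes a genuinely different route and uses an ingredient you do not mention: the multiplicative absorption $J_{K'} \cdot G \subseteq J_{K'}$ (\cite{prdf}, Proposition~10.4.3), valid over a larger model $K'$ containing a fixed nonzero $j_0 \in G$. From this one gets $\varepsilon \cdot a \cdot j_0 \in J_{K'}$ for $\varepsilon \in J_{K'}$ and $a \in R_H$, and then scaling-invariance of $J_{K'}$ gives $\varepsilon a \in J_{K'}$, i.e., $R_H \cdot J_{K'} \subseteq J_{K'}$. Only then does one transfer back to $K$ by compactness, using the uniform ind-definability of the basic-neighborhood family (Remark~\ref{0-ind-def}) to replace $K'$-parameters by $K$-parameters. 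The passage through $K'$ and the absorption property are essential; without them there is no link between the single condition $rH \subseteq H$ and the full neighborhood filter defining $J_K$.
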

This follows from (\cite{prdf3}, Theorem~9.3, Remark~9.5,
Proposition~10.15), (\cite{prdf}, Proposition~10.15.5, Lemma~10.20),
and Lemma~\ref{ideal-lemma} below.

\begin{remark}\label{0-ind-def}
  By construction (\cite{prdf}, Theorem~4.20.4, Definition~6.3), the
  family of basic neighborhoods is uniformly ind-definable across all
  models.  In other words, there is a set of formulas
  $\{\psi_i(x;\vec{z}_i)\}_{i \in I}$ such that for any model $K$, the
  collection of basic neighborhoods on $K$ is exactly
  \[ \{ \psi_i(K;\vec{c}) : i \in I, ~ \vec{c} \in K^{|\vec{z}_i|} \}.\]
\end{remark}

\begin{lemma}\label{ideal-lemma}
  Let $G$ be a non-zero pedestal, type-definable over a small model $K
  \preceq \Kk$, with $K$ extending $k_0$.  Let $R$ be the stabilizer
  ring of $G$:
  \[ R = \{x \in \Kk : x G \subseteq G\}.\]
  Let $J_K$ be the group of $K$-infinitesimals.  Then $J_K$ is an
  ideal in $R$.
\end{lemma}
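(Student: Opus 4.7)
The goal is to show that for every $r \in R$ and $j \in J_K$, the product $rj$ lies in $J_K$; unpacking the definition, this means $rj \in X - X$ for every $K$-definable full-rank set $X \subseteq \Kk$. Fix such an $X$ and write $U = X - X$. I note upfront that by Fact~\ref{fact-summary}.\ref{fs-ideal} (proved in \cite{prdf}), $J_K$ is already known to lie in the fundamental ideal $I_G \subseteq R$, and $I_G$ is an ideal in $R$; so $R \cdot J_K \subseteq I_G$. The task is to upgrade this to $R \cdot J_K \subseteq J_K$, i.e., to force the product back into every basic neighborhood $U$.

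My strategy is a compactness argument driven by the stabilizer hypothesis $rG \subseteq G$. Let $\pi_G(x)$ be a partial type over $K$ defining $G$. The hypothesis $rG \subseteq G$ says that for every formula $\phi \in \pi_G$, the type $\pi_G(x) \cup \{\neg \phi(r \cdot x)\}$ is inconsistent over $K \cup \{r\}$. By compactness, each such $\phi$ is implied by a finite fragment of $\pi_G$ after multiplication by $r$: there are $\phi_1, \ldots, \phi_n \in \pi_G$ with $\bigwedge_i \phi_i(x) \vdash \phi(rx)$. In other words, multiplication by $r$ is ``continuous on $G$'' with uniform $K$-definable witnesses, even though $r$ itself is not in $K$.

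The remaining step is to transfer this continuity statement from the pedestal $G$ to the infinitesimal group $J_K$, which sits inside a different (but related) filter of $K$-definable sets. Here I would invoke the core construction of the canonical topology in \cite{prdf}, which builds basic neighborhoods and pedestals from the same lattice $\Lambda$ and the same strict-cube machinery (Fact~\ref{nze} and Fact~\ref{fact-summary}). The key structural input is that the family of $K$-definable basic neighborhoods of $0$ and the family of $K$-definable supersets of $G$ generate compatible filters: every $K$-definable superset of $G$ contains a $K$-definable basic neighborhood of $0$, and conversely every $K$-definable basic neighborhood of $0$ contains a $K$-definable superset of some pedestal refining $G$. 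Combining this filter-interleaving with the compactness argument applied to the formula ``$x \in U$'', I would extract a $K$-definable basic neighborhood $V = Y - Y$ of $0$ such that $rV \subseteq U$. Since $J_K \subseteq V$ by definition, this yields $rJ_K \subseteq U$, as required.

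\textbf{Main obstacle.} The technical heart of the proof is the filter-interleaving between $K$-definable approximations of the pedestal $G$ and $K$-definable basic neighborhoods of $0$. The lemma asserts that $J_K$ is an ideal in the stabilizer ring of \emph{every} $K$-type-definable non-zero pedestal, so the argument must work uniformly: one cannot tailor the basic neighborhood $V$ to $G$ in an ad hoc way. Extracting the precise interleaving from \cite{prdf}, and checking that the compactness argument passes through intersections of finitely many $K$-definable supersets of $G$ without losing the basic-neighborhood form, is where the real work lies.
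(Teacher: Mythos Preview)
Your plan has a genuine gap at the step you yourself flag as the ``main obstacle.'' The filter-interleaving you want --- that $K$-definable supersets of $G$ and $K$-definable basic neighborhoods of $0$ are cofinal in one another --- is neither proved in \cite{prdf} nor true in general. There is no reason for $J_K$ to sit inside $G$, nor for basic neighborhoods to be recoverable as definable supersets of $G$; the pedestal $G$ and the canonical-topology filter live in the same lattice $\Lambda$ but are not interchangeable in the way your transfer step assumes. So the compactness argument you run on $rG \subseteq G$ gives you continuity of multiplication-by-$r$ \emph{relative to the $G$-filter}, and you have no mechanism to convert that into continuity relative to the basic-neighborhood filter.

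The paper avoids this entirely by going through a larger model. Pick a nonzero $j_0 \in G$ and enlarge $K$ to $K' \supseteq K \cup \{j_0\}$. Over $K'$ one has the absorption property $J_{K'} \cdot G \subseteq J_{K'}$ from \cite{prdf}, Proposition~10.4.3. Now for $a \in R$ and $\varepsilon \in J_{K'}$, the stabilizer condition gives $a j_0 \in G$, hence $\varepsilon a j_0 \in J_{K'}$; dividing by $j_0$ (legitimate since $J_{K'}$ is invariant under scaling by $(K')^\times$) yields $\varepsilon a \in J_{K'}$. So $R \cdot J_{K'} \subseteq J_{K'}$ is established cleanly. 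The descent from $K'$ back to $K$ is then a compactness argument: for any $K$-type-definable $S \subseteq R$ and any $K$-definable basic neighborhood $U$, one has $S \cdot J_{K'} \subseteq U$, hence some $K'$-definable basic neighborhood $V'$ satisfies $S \cdot V' \subseteq U$; the uniform ind-definability of basic neighborhoods (Remark~\ref{0-ind-def}) lets you pull the parameters of $V'$ back into $K$. Since $R$ is $K$-invariant it is a union of such $S$, giving $R \cdot J_K \subseteq J_K$. The point is that the bridge between $G$ and $J_K$ is not a filter comparison but the single element $j_0$ together with scaling invariance.
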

The following proof was sketched in Remark~10.19 of \cite{prdf}.
\begin{proof}
  First of all, $J_K \subseteq R$ by Proposition~10.15.(2,5) in
  \cite{prdf}.  It remains to show that $R \cdot J_K \subseteq J_K$.
  Take a non-zero element $j_0 \in G$.  Take a small model $K' \preceq
  \Kk$ with $K' \supseteq K \cup \{j_0\}$.  As $G$ is type-definable
  over the larger model $K'$, we see that
  \[ J_{K'} \cdot G \subseteq J_{K'}\]
  by (\cite{prdf}, Proposition~10.4.3).  Now for any $\varepsilon \in
  J_{K'}$ and $a \in R$, we have
  \begin{equation*}
    \varepsilon \cdot a \cdot j_0 \in J_{K'} \cdot R \cdot G \subseteq
    J_{K'} \cdot G \subseteq J_{K'},
  \end{equation*}
  implying that $\varepsilon \cdot a \in j_0^{-1} J_{K'}$.  As $J_{K'}$
  is invariant under scaling by elements of $(K')^\times$
  (\cite{prdf}, Remark~6.9.3), we see that $j_0^{-1} J_{K'} = J_{K'}$,
  and
  \[ \varepsilon \cdot a \in J_{K'}.\]
  As $a \in R$ and $\varepsilon \in J_{K'}$ were arbitrary,
  \begin{equation}
    R \cdot J_{K'} \subseteq J_{K'}. \label{eq-foo}
  \end{equation}
  \begin{claim}
    If $S \subseteq R$ is type-definable over $K$, and $U$ is a
    $K$-definable basic neighborhood, then there is a $K$-definable
    basic neighborhood $V$ such that
    \begin{equation*}
      S \cdot V \subseteq U.
    \end{equation*}
  \end{claim}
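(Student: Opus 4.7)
The plan is to trade away two obstacles in turn. From equation~\eqref{eq-foo} we already have $R\cdot J_{K'}\subseteq J_{K'}\subseteq J_K\subseteq U$, and hence $S\cdot J_{K'}\subseteq U$; what is missing is that $S$ is only type-definable (so not directly accessible to a first-order formula) and $J_{K'}$ is only $K'$-type-definable (not yet a single $K$-definable basic neighborhood). I would kill these two defects separately and then apply elementarity.

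First, for each $K$-definable $S^{*}\supseteq S$, introduce the $K$-definable set $T_{S^{*}}:=\{v\in\Kk: S^{*}\cdot v\subseteq U\}$. By compactness, $\{v\in\Kk : S\cdot v\subseteq U\}=\bigcup_{S^{*}} T_{S^{*}}$ is a directed union of $K$-definable sets, and by the observation above it contains the $K'$-type-definable set $J_{K'}$. Saturation of $\Kk$ therefore locates $J_{K'}$ inside a single member of this directed union, producing one $K$-definable $S^{*}\supseteq S$ with $S^{*}\cdot J_{K'}\subseteq U$. Second, the $K'$-definable basic neighborhoods form a filter basis with intersection $J_{K'}$; compactness applied to the inclusion $J_{K'}\subseteq T_{S^{*}}$ (now a type-definable set inside a single definable set) then yields a $K'$-definable basic neighborhood $V'$ with $S^{*}\cdot V'\subseteq U$.

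Finally I would descend the parameters of $V'$ from $K'$ to $K$ by elementarity. Using Remark~\ref{0-ind-def}, write $V'=\psi_{i}(\Kk;\vec c')$ for some formula $\psi_{i}(x;\vec z_{i})$ (from the uniformly ind-definable family of basic neighborhoods) and some $\vec c'\in K'$. Then
\begin{equation*}
  \phi(\vec z_{i}):\ \forall s\,\forall x\bigl((s\in S^{*})\wedge \psi_{i}(x;\vec z_{i})\rightarrow sx\in U\bigr)
\end{equation*}
is a first-order formula over $K$, satisfied in $\Kk$ by $\vec c'$. Since $K\preceq\Kk$ there is a witness $\vec c\in K$, and $V:=\psi_{i}(\Kk;\vec c)$ is then a $K$-definable basic neighborhood with $S\cdot V\subseteq S^{*}\cdot V\subseteq U$, as required.

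The subtle step is this final descent, and it relies on both preparatory moves. Replacing $S$ by the definable $S^{*}$ is exactly what makes $T_{S^{*}}$ definable rather than ind-definable, so that ``$V\subseteq T_{S^{*}}$'' becomes a \emph{single} first-order formula in the parameters of $V$; and the uniform ind-definability in Remark~\ref{0-ind-def} is what makes ``$V$ is a basic neighborhood'' contribute no extra parameters, so that $\phi(\vec z_{i})$ really is a $K$-formula amenable to elementarity. Omit either input and the argument collapses, since a $K$-type-definable condition satisfied over $K'$ need not be satisfied over $K$.
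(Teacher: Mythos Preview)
Your proof is correct and follows essentially the same route as the paper. The paper carries out the two compactness steps simultaneously (extracting a $K$-definable $S'\supseteq S$ and a $K'$-definable basic neighborhood $V'\supseteq J_{K'}$ with $S'\cdot V'\subseteq U$ in one breath), whereas you separate them, but the substance is identical: enlarge $S$ to a definable set, shrink $J_{K'}$ to a single basic neighborhood, then descend the parameters of that neighborhood from $K'$ to $K$ using Remark~\ref{0-ind-def} and elementarity.
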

  \begin{claimproof}
    Since $K' \supseteq K$, the neighborhood $U$ is $K'$-definable and
    contains $J_{K'}$.  Therefore,
    \[ S \cdot J_{K'} \subseteq R \cdot J_{K'} \subseteq J_{K'} \subseteq U,\]
    by (\ref{eq-foo}).  By compactness, there is a $K$-definable set
    $S' \supseteq S$, and a $K'$-definable basic neighborhood $V'
    \supseteq J_{K'}$ such that
    \[ S' \cdot V' \subseteq U.\]
    We can write $V'$ as $\psi_i(\Kk;\vec{b})$ for one of the formulas
    $\psi_i$ in Remark~\ref{0-ind-def}.  Since $S'$ and $U$ are
    $K$-definable, we can find $\vec{c}$ from $K$ such that
    \[ S' \cdot \psi_i(\Kk;\vec{c}) \subseteq U.\]
    Take $V = \psi_i(\Kk;\vec{c})$.  Then
    \[ S \cdot V \subseteq S' \cdot V \subseteq U. \qedhere\]
  \end{claimproof}
  Now by compactness, it follows that for any subset $S \subseteq R$
  that is type-definable over $K$, we have
  \[ S \cdot J_K \subseteq J_K.\]
  As the ring $R$ is $K$-invariant, it is a union of such subsets $S$,
  and therefore
  \[ R \cdot J_K \subseteq J_K,\]
  as desired.
\end{proof}

\subsection{The valuation-type case} \label{sec:val-case}

In \cite{prdf2}, we considered the case where the canonical topology
is a V-topology.  We say that $\Kk$ is \emph{valuation type} if this
holds.  We showed in this case that
\begin{itemize}
\item The canonical topology is a definable V-topology (\cite{prdf2},
  Lemma~7.1).
\item Any two definable valuation rings are dependent (\cite{prdf2},
  Lemma~9.5).
\end{itemize}
Thus, the three parts of Theorem~\ref{thm:app} are automatic in this
case.

\begin{fact}[Theorem~8.11 in \cite{prdf2}]
  If $K$ is a small submodel and if $J_K$ contains a non-zero ideal of
  some multi-valuation ring on $\Kk$, then the canonical topology on
  $\Kk$ is a V-topology.
\end{fact}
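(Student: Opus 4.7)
The plan is to produce a single valuation ring $\Oo \subseteq \Kk$ for which $J_K$ is a non-zero ideal; from this the canonical topology will be the $\Oo$-adic topology, which is a V-topology by definition.

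The first step propagates the ideal property from $I$ to $J_K$. Fix a non-zero $j_0 \in I$, and let $A$ denote the ambient multi-valuation ring with $I \lhd A$ and $I \subseteq J_K$. Because $J_K$ is $\Kk^\times$-invariant (\cite{prdf}, Remark~6.9.3), the compactness argument used in the proof of Lemma~\ref{ideal-lemma} converts $A \cdot I \subseteq I \subseteq J_K$ into $A \cdot J_K \subseteq J_K$: for $a \in A$ and $\varepsilon \in J_K$, the product $a \varepsilon j_0$ lies in $A \cdot J_K \cdot I \subseteq J_K$, and rescaling by $j_0^{-1}$ returns $a \varepsilon \in J_K$. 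Consequently the stabilizer ring $R := \{x \in \Kk : x J_K \subseteq J_K\}$ contains $A$, and since Pr\"ufer-ness is preserved under overrings in the same fraction field, $R$ is itself a Pr\"ufer domain.

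Next, I would show that $R$ is in fact a single valuation ring on $\Kk$. Suppose for contradiction that $R$ has two incomparable localizations at maximal ideals $\mm, \mm'$. By the approximation theorem for independent valuations one can produce $x \in \Kk^\times$ such that neither $x$ nor $x^{-1}$ lies in $R$, i.e., both $x J_K \not\subseteq J_K$ and $x^{-1} J_K \not\subseteq J_K$. Translated into the canonical topology, this says that neither multiplication by $x$ nor by $x^{-1}$ sends arbitrarily small basic neighborhoods into fixed basic neighborhoods at $0$. Combined with $K^\times$-invariance of $J_K$ and continuity of multiplication in the canonical topology, this contradicts the fact that $R$ acts on $J_K$ through a single well-defined filtration; hence $R$ has a unique maximal ideal and is a valuation ring $\Oo$.

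The main obstacle is the second step: collapsing the Pr\"ufer ring $R$ down to a valuation ring. The hypothesis $I \subseteq J_K$ is what makes this possible -- it ensures that the ``size'' of $J_K$ is uniformly small with respect to every component valuation $\Oo_i$ of $A$, and it is precisely this uniformity that prevents the multi-valuation structure of $A$ from persisting into the stabilizer $R$. Once $R = \Oo$ is known to be a valuation ring, $J_K$ is a non-zero $\Oo$-ideal by construction, so the canonical topology is the V-topology associated to $\Oo$, as required.
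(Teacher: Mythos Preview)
This statement is cited as a Fact (Theorem~8.11 of \cite{prdf2}) and is not proved in the present paper, so there is no in-paper argument to compare against.

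Your first step contains an error: $J_K$ is invariant under $K^\times$, not under $\Kk^\times$ (see how Remark~6.9.3 of \cite{prdf} is invoked in the proof of Lemma~\ref{ideal-lemma}). Since $j_0 \in I$ need not lie in $K$, the rescaling step $a\varepsilon \in j_0^{-1}J_K = J_K$ is unjustified. This is probably repairable by enlarging $K$ to contain $j_0$, as in Lemma~\ref{ideal-lemma}, and then running the compactness descent; but note that this descent requires the relevant sets to be type-definable over $K$, and your multi-valuation ring $A$ carries no such definability hypothesis.

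The real gap is the collapse from Pr\"ufer to valuation ring. Having $x, x^{-1} \notin R$ means $xJ_K \not\subseteq J_K$ and $x^{-1}J_K \not\subseteq J_K$; this is entirely compatible with continuity of multiplication in the canonical topology, which only requires that for each basic neighborhood $U$ there exist \emph{some} basic $V$ with $xV \subseteq U$, not that $xJ_K \subseteq J_K$. The appeals to a ``single well-defined filtration'' and to ``uniformity'' are suggestive phrases, not arguments. Reducing the multi-valuation structure to a single valuation is precisely the substance of the theorem, and you have asserted it rather than proved it. You also have not explained why, once $R$ is a valuation ring with $J_K$ a non-zero ideal, the canonical topology coincides with the $R$-adic V-topology; this final step also requires justification.
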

This has several consequences:
\begin{corollary}\label{joke-cor}
  Let $G$ be a non-zero pedestal with stabilizer $R$ and associated
  $r$-inflator $\varsigma$.
  \begin{enumerate}
  \item If $R$ contains a non-zero ideal of a multi-valuation ring,
    then $\Kk$ is valuation type.
  \item If $\varsigma$ is weakly multi-valuation type, then $\Kk$ is
    valuation type.
  \item\label{jc3} If $r = 1$, then $\Kk$ is valuation type.
  \item\label{jc4} If some mutation of $\varsigma$ is weakly
    multi-valuation type, then $\Kk$ is valuation type.
  \end{enumerate}
\end{corollary}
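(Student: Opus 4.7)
The plan is to reduce parts (2), (3), and (4) to part (1), and to deduce (1) from Fact~8.11 of \cite{prdf2} via a short ``principalization'' step.

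For part (1), I will fix a small model $K \supseteq k_0$ over which the pedestal $G$ is type-definable. By Fact~\ref{fact-summary}.\ref{fs-ideal}, $J_K$ is an ideal of $R$, and in the monster $\Kk$ we have $J_K \ne 0$. Pick any non-zero $j \in J_K$ and let $\mathfrak{a} \subseteq R$ be the given non-zero ideal of a multi-valuation ring $\mathcal{V} \subseteq \Kk$. Then
\[ j \mathfrak{a} \subseteq R \cdot J_K \subseteq J_K, \]
while $j \mathfrak{a}$ is still a non-zero ideal of $\mathcal{V}$, since $\mathcal{V} \cdot (j \mathfrak{a}) = j \cdot \mathcal{V} \mathfrak{a} \subseteq j \mathfrak{a}$ and $j \ne 0, \mathfrak{a} \ne 0$. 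Hence $J_K$ itself contains a non-zero ideal of a multi-valuation ring, and Fact~8.11 of \cite{prdf2} gives that $\Kk$ is valuation type.

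Part (2) is immediate from (1): by Definition~5.27 of \cite{prdf3}, ``$\varsigma$ is weakly multi-valuation type'' means exactly that $R = R_\varsigma$ contains a non-zero ideal of a multi-valuation ring. For part (4), if some mutation $\varsigma'$ of $\varsigma$ is weakly multi-valuation type, then by Fact~\ref{fact-summary}.\ref{fs-mut} the inflator $\varsigma'$ is itself the $r$-inflator associated to another non-zero pedestal $H'$, so part (2) applied to $\varsigma'$ concludes. For part (3), I first claim that when $r = 1$ the fundamental ring $R$ is already a non-trivial valuation ring on $\Kk$: for a $1$-inflator $\varsigma \colon \Dir_K(K) \to \Dir_k(k)$ and any $\alpha \in K^\times$, the length-one $k$-submodule $\varsigma_2(K \cdot (1,\alpha)) \subseteq k^2$ is either the graph of some $\varphi \in \End_k(k)$ (so $\alpha \in R$) or it equals $0 \oplus k$; in the latter case, coordinate-swap invariance of $\varsigma_2$ forces $\varsigma_2(K \cdot (1,\alpha^{-1})) = k \oplus 0$, the graph of the zero endomorphism, so $\alpha^{-1} \in R$. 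Non-triviality $R \ne \Kk$ follows from $R \cdot G \subseteq G$ together with $0 \ne G \ne \Kk$. Hence $R$ is its own multi-valuation ring, containing the non-zero ideal $R$, and (1) applies.

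The only real content is in (1), and its engine is the $R$-ideal structure of $J_K$ supplied by Lemma~\ref{ideal-lemma}; after that, a single multiplication by $j$ transports any $\mathcal{V}$-ideal sitting inside $R$ into $J_K$. Non-vanishing of $J_K$ in the monster is standard from the construction of the canonical topology in \cite{prdf}, so there is no serious obstacle; the inflator-specific observation in (3) is genuinely needed but is a one-line argument from permutation invariance.
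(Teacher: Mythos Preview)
Your proof is correct, and for parts (1), (2), and (4) it matches the paper's argument essentially verbatim: multiply the multi-valuation ideal into $J_K$ using that $J_K \lhd R$, unpack Definition~5.27, and for mutations invoke Fact~\ref{fact-summary}.\ref{fs-mut} to land on another pedestal.

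The only genuine difference is in part (3). The paper simply cites Proposition~5.19 of \cite{prdf3} (``1-inflators are multi-valuation type'') and then feeds this into part (2). You instead give a direct argument: the length-1 submodule $\varsigma_2(K\cdot(1,\alpha)) \subseteq k^2$ is either a graph or $0 \oplus k$, and in the latter case coordinate-swap invariance forces $\alpha^{-1} \in R$, so $R$ is a valuation ring; then $R \ne \Kk$ because $R \cdot G \subseteq G$ with $0 \ne G \ne \Kk$ (pedestals are never the whole field), and you apply (1). This is exactly the content of the cited proposition specialized to the present situation, made self-contained. The paper's route is shorter given the external reference; yours avoids the dependency and makes explicit the one extra ingredient needed here, namely the non-triviality of $R$, which you extract from the pedestal structure rather than from the general theory of 1-inflators.
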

\begin{proof}
  \begin{enumerate}
  \item Lemma~\ref{ideal-lemma}---the point is that if $R'$ is a
    multi-valuation ring, and
    \[ a_1 R' \subseteq R,\]
    then $a_2 a_1 R' \subseteq a_2 R \subseteq J_K$ for any non-zero $a_2 \in J_K$.
  \item $R$ is the fundamental ring of $\varsigma$, and ``weakly
    multi-valuation type'' means that the fundamental ring contains a
    non-zero multi-valuation ideal (\cite{prdf3}, Definition~5.27).
  \item 1-inflators are multi-valuation type (Proposition~5.19 in
    \cite{prdf3}).
  \item If $\varsigma'$ is obtained from $\varsigma$ by mutation, then
    $\varsigma'$ is the $r$-inflator associated to some other non-zero
    pedestal $G'$ (Fact~\ref{fact-summary}.\ref{fs-mut}).  In
    particular, if $\varsigma'$ is weakly of multi-valuation type,
    then $G'$ shows that $\Kk$ is valuation type. \qedhere
  \end{enumerate}  
\end{proof}
\begin{theorem} \label{thm:what}
  If $\Kk$ is not valuation type (and characteristic 0 and unstable),
  then there is a small model $K$ and a 2-inflator $\varsigma$
  satisfying the Strong Assumptions of
  \S\ref{sec:canform}--\ref{sec:der}, such that the infinitesimals
  $J_K$ are an ideal in the fundamental ring $R$ of $\varsigma$.
\end{theorem}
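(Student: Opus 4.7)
The plan is to produce the inflator via the pedestal machine of \S\ref{sec:review} and then pass to a mutation to enforce the isotypy condition. First, by Fact~\ref{nze}, I fix a non-zero pedestal $H \in \Lambda$, type-definable over some small model $K_1 \succeq k_0$, and let $\varsigma$ be the associated $r$-inflator on $\Kk$ furnished by Fact~\ref{fact-summary}. The reduced rank $r$ cannot be $1$: in that case Corollary~\ref{joke-cor}.\ref{jc3} would force $\Kk$ to be valuation type, contrary to our standing assumption. So $r = 2$.

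Next, I verify the Weak Assumptions of \S\ref{sec:isotyp} for $\varsigma$: the characteristic of $\Kk$ is $0 \ne 2$; malleability is Fact~\ref{fact-summary}.\ref{fs-mal}; and no mutation of $\varsigma$ is weakly multi-valuation type, since otherwise Corollary~\ref{joke-cor}.\ref{jc4} would again make $\Kk$ valuation type. I then apply Corollary~\ref{cor:rediso} to obtain an isotypic mutation $\varsigma'$; inspecting its proof, $\varsigma'$ is the mutation of $\varsigma$ along a line $\Kk \cdot (1, \alpha)$ for some wild $\alpha \in \Kk$. By Remark~\ref{rem:inherit} the Weak Assumptions persist under mutation, and isotypy of $\varsigma'$ is automatic, so $\varsigma'$ now satisfies the full Strong Assumptions of \S\ref{sec:canform}--\ref{sec:der}.

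Finally, I match $\varsigma'$ with a pedestal type-definable over a suitable small model. By Fact~\ref{fact-summary}.\ref{fs-mut}, $\varsigma'$ is precisely the $2$-inflator associated to the pedestal $H' = H \cap \alpha^{-1} H$. Choose any small $K \succeq K_1$ containing $\alpha$; then $H'$ is type-definable over $K$, and Fact~\ref{fact-summary}.\ref{fs-ideal} shows that $J_K$ is contained in (and is a sub-ideal of) the fundamental ideal, and in particular is an ideal in the fundamental ring $R = R_{H'}$ of $\varsigma'$. The substantive analysis has already been carried out in the preceding sections; the only bookkeeping subtlety is threading together the isotypy reduction with the pedestal formalism, namely ensuring that the mutation datum $\alpha$ produced by Corollary~\ref{cor:rediso} sits inside the small model over which the mutated pedestal is type-definable, which is handled simply by enlarging $K_1$ to contain $\alpha$.
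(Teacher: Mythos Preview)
Your proof is correct and follows essentially the same route as the paper's own argument: take a non-zero pedestal, rule out $r=1$ via Corollary~\ref{joke-cor}.\ref{jc3}, verify the Weak Assumptions, mutate to an isotypic inflator via Corollary~\ref{cor:rediso}, and then invoke Fact~\ref{fact-summary}.\ref{fs-mut} and~\ref{fs-ideal} to realize the mutation as a pedestal over a suitable small model. Your version is slightly more explicit than the paper's in tracking the mutation datum $\alpha$ and the resulting pedestal $H' = H \cap \alpha^{-1}H$, and in noting that one must enlarge the base model to contain $\alpha$; this is harmless extra bookkeeping rather than a different idea.
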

\begin{proof}
  By Fact~\ref{nze}, non-zero pedestals exist.  Let $G$ be some
  non-zero pedestal and $\varsigma$ be the associated inflator.  Then
  $G$ satisfies the Weak Assumptions:
  \begin{itemize}
  \item $K$ has characteristic 0 by assumption.
  \item $\varsigma$ is malleable by
    Fact~\ref{fact-summary}.\ref{fs-mal}.
  \item $r = 2$ by Corollary~\ref{joke-cor}.\ref{jc3}.
  \item No mutation of $\varsigma$ is weakly multi-valuation type, by
    Corollary~\ref{joke-cor}.\ref{jc4}.
  \end{itemize}
  By Corollary~\ref{cor:rediso}, there is a mutation $\varsigma'$ of
  $\varsigma$ such that $\varsigma'$ is isotypic.  Then $\varsigma'$
  inherits the other properties from $\varsigma$ (see
  Remark~\ref{rem:inherit}), and therefore $\varsigma'$ satisfies the
  Strong Assumptions.  By Fact~\ref{fact-summary}.\ref{fs-mut},
  $\varsigma'$ is the 2-inflator coming from some other pedestal $G'$.
  Let $K$ be a small model containing $k_0$, and type-defining $G'$.
  By Fact~\ref{fact-summary}.\ref{fs-ideal}, $J_K$ is an ideal in the
  fundamental ring of $\varsigma'$.
\end{proof}

In the remainder of \S\ref{sec:app}, we therefore assume
\begin{enumerate}
\item $\Kk$ is a monster model of an unstable field of dp-rank 2 and
  characteristic 0.
\item $k_0$ is a magic subfield.
\item $\varsigma$ is a $k_0$-linear 2-inflator on $\Kk$ satisfying the
  Strong Assumptions of \S\ref{sec:canform}-\ref{sec:der}, including
  isotypy.
\item $R$ and $I$ are the fundamental ring and ideal of $\varsigma$,
  and $D$ and $\partial$ are as in \S\ref{ssec:der}.
\item $J$ is the group of $K$-infinitesimals over some small model $K
  \preceq \Kk$ containing $k_0$.  In particular,
  \begin{itemize}
  \item $J$ is type-definable
  \item $J$ is contained in every $K$-definable basic neighborhood.
  \item $J$ is non-zero (\cite{prdf}, Remark~6.9.1).
  \end{itemize}
\item $J$ is an ideal in $R$, contained in the fundamental ideal $I$.
\end{enumerate}

\subsection{Independent valuation rings} \label{sec:newhome}
Recall that the valuation ring $\Oo$ is the integral closure of $R$.
\begin{proposition}\label{prop:contra}
  Let $\Oo'$ be a valuation ring on $\Kk$, independent from $\Oo$.
  Then $\Oo' \not\supseteq J$.
\end{proposition}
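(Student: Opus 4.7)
The plan is to argue by contradiction: assuming $\Oo' \supseteq J$, I will produce an element of $J$ with negative $\val'$. Fix any non-zero $j_0 \in J$ (exists since $J \neq 0$) and set $\gamma_0 = \val'(j_0) \in \Gamma'$; by assumption $\gamma_0 \geq 0$. Since $J$ is an ideal in $R$, for every $r \in R$ we have $rj_0 \in J$, so it suffices to exhibit $r \in R$ with $\val'(r) < -\gamma_0$: then $\val'(rj_0) = \val'(r) + \gamma_0 < 0$, so $rj_0 \notin \Oo'$, contradicting $J \subseteq \Oo'$.

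The first step is to find an element of $\Oo$ with arbitrarily negative $\val'$. Since $\Oo$ and $\Oo'$ are independent (in particular incomparable), the approximation theorem for independent valuations lets me solve simultaneously $\val(a) \geq 0$ and $\val'(a) \leq \gamma'$ for any prescribed $\gamma' \in \Gamma'$. Taking $\gamma'$ strictly below $-\gamma_0$ yields $a \in \Oo$ with $\val'(a) < -\gamma_0 \leq 0$.

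Next, I transfer this to $R$ using integrality (Corollary~\ref{cor:o}): $a$ satisfies a monic relation
\[ a^n + r_{n-1} a^{n-1} + \cdots + r_1 a + r_0 = 0 \]
with $r_i \in R$. Applying the ultrametric inequality to $\val'$ on this vanishing sum, the minimum of $\val'$ over the terms must be attained at least twice; since $\val'(a^n) = n \val'(a) < 0$, some term $r_i a^i$ with $i < n$ must achieve $\val'(r_i a^i) \leq n \val'(a)$. Hence
\[ \val'(r_i) \leq (n-i)\, \val'(a) \leq \val'(a) < -\gamma_0, \]
where the second inequality uses $n - i \geq 1$ and $\val'(a) < 0$. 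Setting $r = r_i$ finishes the argument.

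The only delicate point is the first step, producing $a \in \Oo$ that beats the prescribed $\gamma_0 \in \Gamma'$ even when $\Gamma'$ is non-archimedean; the approximation theorem handles this cleanly because it allows one to prescribe $\val'(a)$ (up to any target) while keeping $\val(a) \geq 0$. Everything else is the standard ultrametric manipulation of an integral equation.
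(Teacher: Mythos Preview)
Your proof is correct and uses the same ingredients as the paper's: the ideal property of $J$ in $R$ gives a lower bound for $\val'$ on $R$, integrality of $\Oo$ over $R$ transfers information between $R$ and $\Oo$, and the approximation theorem supplies the contradiction. The only difference is the order of operations: the paper first uses the degree-$2$ bound of Lemma~\ref{tilde-o-very-valuation} to push the lower bound from $R$ to all of $\mm$ (obtaining $\val'(x)\ge\min(\gamma,\gamma/2)$ for $x\in\mm$), and then applies approximation to find a bad $x\in\mm$; you instead apply approximation first to produce $a\in\Oo$ with $\val'(a)$ very negative, and then read off a coefficient $r_i\in R$ with $\val'(r_i)<-\gamma_0$ from \emph{any} monic equation for $a$ over $R$. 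Your route is marginally more elementary in that it avoids the specific quadratic lemma and works with an integral relation of arbitrary degree.
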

\begin{proof}
  Assume for the sake of contradiction that $\Oo' \supseteq J$.  Take
  nonzero $e \in J$.  Then $R \cdot e \subseteq J$, so
  \begin{equation*}
    R \subseteq e^{-1} \cdot J \subseteq e^{-1} \Oo'.
  \end{equation*}
  Let $\val'$ be the valuation from $\Oo'$ and let $\gamma = \val'(e^{-1})$.  Then
  \begin{equation*}
    x \in R \implies \val'(x) \ge \val'(e^{-1}) = \gamma.
  \end{equation*}
  We claim that for all $a \in K$,
  \begin{equation*}
    x \in \mm \implies \val'(x) \ge \min(\gamma,\gamma/2)
  \end{equation*}
  Indeed, suppose $x \in \mm$.  Then $x^{-1}$ isn't integral over
  $R$, so by Lemma~\ref{tilde-o-very-valuation},
  \begin{equation*}
    x^2 + bx + c = 0
  \end{equation*}
  for some $b, c \in R$.  By Newton polygons,
  \begin{equation*}
    \val'(x) \ge \min\left(\val'(b),\frac{\val'{c}}{2}\right) \ge \min(\gamma,\gamma/2).
  \end{equation*}
  On the other hand, $\Oo$ is independent from $\Oo'$, so by
  the approximation theorem, there is $x \in K$ with $\val(x) > 0$ and
  $\val'(x) < \min(\gamma,\gamma/2)$, a contradiction.
\end{proof}

\begin{corollary}
  If $\Oo_1, \Oo_2$ are two 0-definable valuation rings on $\Kk$, then
  $\Oo_1$ and $\Oo_2$ are not independent.
\end{corollary}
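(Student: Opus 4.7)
The plan is to split on whether $\Kk$ is valuation type. In that sub-case the conclusion is immediate from \cite{prdf2}, Lemma~9.5 (recalled in \S\ref{sec:val-case}): any two definable valuation rings on a valuation-type field are dependent, so there is nothing to prove. I would therefore focus on the case that $\Kk$ is \emph{not} valuation type. There, Theorem~\ref{thm:what} produces a small model $K \supseteq k_0$ and a 2-inflator $\varsigma$ satisfying the Strong Assumptions, with $J_K$ an ideal in the fundamental ring $R$, and Corollary~\ref{cor:o} furnishes the valuation ring $\Oo$ obtained as the integral closure of $R$. The idea is to use Proposition~\ref{prop:contra} as a magnet, forcing every $0$-definable valuation ring to be dependent with $\Oo$, and then to glue the resulting coarsenings together.

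For this plan to work I first need the auxiliary claim that every $0$-definable valuation ring $\Oo_i$ on $\Kk$ contains $J_K$. The inversion map $x \mapsto 1/x$ gives a definable bijection between $\Kk \setminus \Oo_i$ and $\mm_i \setminus \{0\} \subseteq \Oo_i$, so by subadditivity of dp-rank I would deduce $\dpr(\Oo_i) = \dpr(\Kk) = 2$. Being $0$-definable, $\Oo_i$ is in particular $K$-definable, and since $0 \in \Oo_i$ and $\Oo_i$ is a subgroup of $(\Kk,+)$, the identity $\Oo_i - \Oo_i = \Oo_i$ exhibits $\Oo_i$ as a $K$-definable basic neighborhood of $0$ in the canonical topology, which by definition of the infinitesimals contains $J_K$.

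Now suppose for contradiction that $\Oo_1$ and $\Oo_2$ are independent. Two applications of Proposition~\ref{prop:contra} (using $J_K \subseteq \Oo_i$) produce valuation rings $\Oo_1', \Oo_2' \neq \Kk$ that are respectively common coarsenings of $\Oo$ with $\Oo_1$ and of $\Oo$ with $\Oo_2$. Since the set of valuation rings coarsening the fixed valuation ring $\Oo$ is totally ordered by inclusion, I may assume $\Oo_1' \supseteq \Oo_2'$; then $\Oo_1' \supseteq \Oo_1$ and $\Oo_1' \supseteq \Oo_2' \supseteq \Oo_2$, exhibiting $\Oo_1'$ as a proper common coarsening of $\Oo_1$ and $\Oo_2$ and contradicting their independence.

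The main obstacle has already been dispatched in Proposition~\ref{prop:contra} and in the construction of the associated valuation $\Oo$; what remains here is just the dp-rank bookkeeping that forces $J_K \subseteq \Oo_i$, together with the chain trick that glues two witnesses $\Oo_1', \Oo_2'$ of pairwise dependence with $\Oo$ into a single witness of dependence between $\Oo_1$ and $\Oo_2$.
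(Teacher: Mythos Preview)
Your proof is correct and follows essentially the same approach as the paper: show $\dpr(\Oo_i) = \dpr(\Kk)$ so that $\Oo_i = \Oo_i - \Oo_i$ is a basic neighborhood containing $J$, then apply Proposition~\ref{prop:contra} to force each $\Oo_i$ to be dependent with $\Oo$. The only differences are cosmetic: your valuation-type case split is unnecessary here (the corollary is already stated under the standing non-valuation-type assumptions at the end of \S\ref{sec:val-case}), and your final ``chain of coarsenings'' argument is just an explicit unwinding of the paper's one-line observation that $\Oo_1$ and $\Oo_2$ both induce the same topology as $\Oo$.
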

\begin{proof}
  The definable set $\Oo_i$ has full dp-rank for $i = 1, 2$.  It follows that
  $\Oo_i - \Oo_i$ is a 0-definable basic neighborhood, and so
  \begin{equation*}
    J \subseteq \Oo_i - \Oo_i = \Oo_i
  \end{equation*}
  for $i = 1, 2$.  By Proposition~\ref{prop:contra}, both $\Oo_1$ and
  $\Oo_2$ induce the same topology as the valuation ring $\Oo$---the
  integral closure of $R$.
\end{proof}

\subsection{The definable V-topology}
Let $\val : \Kk \to \Gamma$ be the valuation associated to $\Oo$.  We
will show that the associated valuation topology is definable.
\begin{lemma}
  \label{tdef2}
  There is a type-definable set $B \subseteq \Kk$ and some $\gamma \in
  \Gamma$ such that
  \begin{equation*}
    \val(x) > \gamma \implies x \in B \implies \val(x) \ge 0.
  \end{equation*}
  for $x \in \Kk$.
\end{lemma}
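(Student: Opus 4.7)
The target is to produce a type-definable $B \subseteq \Kk$ and $\gamma \in \Gamma$ with $\{x : \val(x) > \gamma\} \subseteq B \subseteq \Oo$. The two immediate type-definable candidates, the fundamental ring $R$ and the infinitesimals $J \subseteq R$, both fail: by Lemma~\ref{weird-elements}, for each $\gamma$ there is $a \in \Kk$ with $\val(a) > \gamma$ and $\val_\partial(a) < -\gamma$, whence $a \notin R$ by Lemma~\ref{vp-class}. Thus $B$ must strictly enlarge $R$ while remaining inside $\Oo$.

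My plan is to exploit the presentation $\Oo = R_\infty = \bigcup_{\varsigma'} R_{\varsigma'}$ from Corollary~\ref{cor:o2}, where the union runs over mutations of $\varsigma$. Each fundamental ring $R_{\varsigma'}$ is type-definable (over the parameters of the line defining the mutation, via Fact~\ref{fact-summary}.(\ref{fs-mut})) and is contained in $\Oo$ by Proposition~\ref{persistence}. Thus any $B = R_{\varsigma'}$ automatically satisfies $B \subseteq \Oo$, and the task reduces to picking a single mutation whose fundamental ring absorbs a whole valuation ball. I expect to establish this by a saturation/compactness argument: Lemma~\ref{possibly-useless} shows that an individual wild $\alpha \notin R$ can be moved into the maximal ideal of a new fundamental ring by a suitable mutation, and Proposition~\ref{prop:contra} blocks any escape to a valuation independent of $\Oo$, so the partial type in mutation-parameters asserting that ``$R_{\varsigma'} \supseteq B_\gamma$ for some $\gamma$'' should be consistent.

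The main obstacle is precisely this uniform absorption by a single type-definable $B$: one has to trade mutations for type-definability over finitely many parameters, and care is needed to verify the consistency of the relevant partial type. Should the mutation route prove unwieldy, an alternative is to show directly that $\Oo$ itself is type-definable over the parameters of $R$: by Lemma~\ref{tilde-o-very-valuation}, every $x \in \mm$ satisfies some monic quadratic $x^2 + bx + c = 0$ with $b, c \in R$, and by Lemma~\ref{discriminant} the coefficient $b$ is determined modulo $\pp$ by $c$; a canonical choice of $b$ would convert the existential ``$\exists b \in R$'' into a type-definable condition on $x$, yielding $B = \Oo$ with $\gamma = 0$ and bypassing mutations altogether.
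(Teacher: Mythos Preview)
Your proposal has a genuine gap: both approaches assume that the fundamental ring $R$ (or its mutated versions $R_{\varsigma'}$) is type-definable, but this is nowhere established in the paper and is almost certainly false in general. The ring $R$ is the stabilizer $\{x : xH \subseteq H\}$ of a type-definable group $H$, which a priori is only $\bigwedge\bigvee$-definable; indeed the later Lemma~\ref{vee-co} and Theorem~\ref{thm:definable} work hard precisely to find a \emph{definable} set co-embeddable with $R$, which would be unnecessary if $R$ were already type-definable. So your first approach (take $B = R_{\varsigma'}$) and your second approach (take $B = \{x : \exists b,c \in R,\ x^2 + bx + c = 0\}$) both founder at the same point. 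Your attempt to salvage the second via a ``canonical choice of $b$'' misdiagnoses the problem: an existential quantifier over a type-definable set \emph{is} harmless (by saturation, $\exists y \in \bigcap_i U_i\,\varphi(x,y)$ coincides with $\bigcap_i \exists y \in U_i\,\varphi(x,y)$ when the $U_i$ are directed), so the obstruction is not the quantifier but the domain $R$.

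The paper's proof stays close to your second approach but replaces $R$ by the type-definable ideal $J \lhd R$ of $K$-infinitesimals. Set
\[
  B = \{x \in \Kk : \exists y,z \in J,\ x^2 = yx + z\},
\]
which is type-definable by the saturation argument above. Any $x \in B$ is integral over $R$, hence in $\Oo$. For the other inclusion, fix nonzero $c \in J$ and put $\gamma = \val(c)$. If $\val(x) > \gamma$ then $x/c \in \mm$ and $c/x \notin \Oo$, so Lemma~\ref{tilde-o-very-valuation} forces $x/c$ (not $c/x$) to satisfy a monic quadratic $(x/c)^2 = (x/c)y_0 + z_0$ with $y_0, z_0 \in R$; multiplying through by $c^2$ gives $x^2 = (cy_0)x + c^2 z_0$ with $cy_0, c^2 z_0 \in cR \subseteq J$. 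The scaling by $c \in J$ is the missing trick that converts ``coefficients in $R$'' into ``coefficients in the type-definable $J$''.
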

\begin{proof}
  Let $B$ be the set
  \[ B = \{x \in \Kk ~|~ \exists y, z \in J : x^2 = yx + z\}.\]
  Then $B$ is type-definable.  If $x \in B$, then
  \[ x^2 = yx + z\]
  for some $y, z \in J \subseteq R$, and so $x$ lies in the integral
  closure $\Oo$ of $R$.

  Now take non-zero $c \in J$, and let $\gamma = \val(c)$.  Note $c
  \in R \subseteq \Oo$, so $\gamma \ge 0$.  Suppose $\val(x) >
  \gamma$.  Then $\val(x/c) > 0$, so $x/c \in \mm$ and $c/x \notin
  \Oo$.  By Lemma~\ref{tilde-o-very-valuation}, either $x/c$ or $c/x$
  satisfies a monic polynomial equation of degree 2 over $R$.  As
  $c/x$ is not in the integral closure $\Oo$ of $R$, we see that $x/c$
  satisfies the equation:
  \[ (x/c)^2 = (x/c)y_0 + z_0\]
  for some $y_0, z_0 \in R$.  Then
  \[ x^2 = (cy_0)x + c^2z_0,\]
  and $cy_0, c^2z_0 \in J$.  Thus $x \in B$.
\end{proof}
Say that two subsets $X, Y \subseteq \Kk$ are ``co-embeddable'' if
there exist $a, b \in \Kk^\times$ such that
\begin{align*}
  a \cdot X &\subseteq Y \\
  b \cdot Y &\subseteq X.
\end{align*}
This is an equivalence relation.
\begin{remark} \label{interpolate}
  Suppose $X$ and $Y$ are co-embeddable, $X$ is type-definable, and
  $Y$ is $\vee$-definable.  Then there is a definable set $Z$
  co-embeddable with $X$ and $Y$.  Indeed, after rescaling, we may
  assume
  \begin{equation*}
    X \subseteq Y.
  \end{equation*}
  Then we may find a definable set $Z$ interpolating $X$ and $Y$, by
  compactness:
  \begin{equation*}
    X \subseteq Z \subseteq Y.
  \end{equation*}
\end{remark}
\begin{lemma}
  \label{lem:def2}
  There is a definable set $B$ that is co-embeddable with $\Oo$.
\end{lemma}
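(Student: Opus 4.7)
The plan is to apply Remark~\ref{interpolate}. I would take $X = B$ to be the type-definable set produced by Lemma~\ref{tdef2}, which satisfies $c \mm \subseteq B \subseteq \Oo$ for some nonzero $c \in J$, and is therefore co-embeddable with $\Oo$.  Since every definable set is trivially $\vee$-definable, the task reduces to producing a single definable set $D$ with $B \subseteq D \subseteq c' \Oo$ for some $c' \in \Kk^\times$; then $Z = D$ itself can serve as the interpolating definable set that the remark demands.

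The natural candidate for $D$ comes from replacing $J$ in the defining formula of $B$ by a definable basic neighborhood.  Explicitly, for each definable basic neighborhood $E \supseteq J$ of $0$ in the canonical topology---the existence and uniformity of which are guaranteed by Remark~\ref{0-ind-def}---consider
\[
B_E = \{x \in \Kk : \exists y, z \in E,~ x^2 = yx + z\}.
\]
Each $B_E$ is definable and contains $B$, and by saturation $B = \bigcap_E B_E$ (the affine line $\{(y,z) : x^2 = yx + z\}$ meets $J \times J$ iff it meets $E \times E$ for every basic neighborhood $E$).  The Newton polygon bound $\val(x) \ge \min(\val(y), \val(z)/2)$ applied to the monic quadratic $x^2 = yx + z$ then yields $B_E \subseteq c' \Oo$ for an appropriate $c'$, \emph{provided} $E$ itself is valuation-bounded below, i.e.\ $E \subseteq d \Oo$ for some $d \in \Kk^\times$.

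The main obstacle is therefore to exhibit a definable basic neighborhood $E \supseteq J$ that is valuation-bounded below in this sense.  Since $J \subseteq \Oo$ is an inclusion of two type-definable sets with neither side being $\vee$-definable \emph{a priori}, there is no automatic definable interpolation here.  I expect the required $E$ to be extracted by leveraging the Strong Assumptions together with Proposition~\ref{prop:contra}: the fact that no valuation ring independent of $\Oo$ contains $J$ should force $J$ to sit tightly enough inside $\Oo$ to admit a valuation-bounded definable approximation.  Concretely, one might first use multiplicativity of the canonical topology together with the $R$-ideal property of $J$ (and the containment $\Oo \cdot J \subseteq B$ noted in the proof of Lemma~\ref{tdef2}) to pass to a basic neighborhood $E_0$ with $E_0 \cdot E_0$ contained in a principal ideal $j_0 \Oo \subseteq B$, and then extract from $E_0$ a definable basic neighborhood of the required form.
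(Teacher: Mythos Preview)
Your plan has a genuine gap, and you have identified it yourself: you need a definable basic neighborhood $E \supseteq J$ with $E \subseteq d\Oo$ for some $d \in \Kk^\times$.  You do not actually produce such an $E$; you only speculate that Proposition~\ref{prop:contra} or some multiplicativity argument might deliver one.  In fact, at this point in the paper nothing guarantees that \emph{any} definable basic neighborhood is valuation-bounded; that is essentially what we are trying to prove.  Attempting to extract this from the $R$-ideal property of $J$ or from Proposition~\ref{prop:contra} leads in circles, since those statements give information about $J$ itself (a type-definable object), not about definable approximations to $J$.

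The paper sidesteps this obstacle with a simple trick you are missing.  From Lemma~\ref{tdef2} you already have a \emph{type-definable} $B_0$ with
\[
\val(x) > \gamma \implies x \in B_0 \implies \val(x) \ge 0.
\]
Now set $B_1 := \{0\} \cup \{y \in \Kk^\times : y^{-1} \notin B_0\}$.  This is $\vee$-definable (the complement of a type-definable set), and taking contrapositives of the two implications above (with $y = x^{-1}$) gives
\[
\val(y) > 0 \implies y \in B_1 \implies \val(y) \ge -\gamma.
\]
So $B_1$ is also co-embeddable with $\Oo$, and now Remark~\ref{interpolate} applies directly to the pair $(B_0, B_1)$.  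The point is that ``inversion followed by complement'' converts a type-definable set trapped between two balls into a $\vee$-definable set trapped between two balls, with no need to find a valuation-bounded definable neighborhood of $J$.
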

\begin{proof}
  By Lemma~\ref{tdef2}, there is a type-definable set $B_0$ and
  $\gamma \in \Gamma$ such that
  \begin{equation*}
    \val(x) > \gamma \implies x \in B_0 \implies \val(x) \ge 0.
  \end{equation*}
  Therefore $B_0$ is co-embeddable with $\Oo$.  Let $B_1$ be the
  $\vee$-definable set $B_1 = \{0\} \cup \{y \in \Kk^\times : y^{-1}
  \notin B_0\}$.  Note that
  \begin{equation*}
    \val(y) > 0 \implies y \in B_1 \implies \val(y) \ge -\gamma.
  \end{equation*}
  Thus $B_1$ is co-embeddable with $\Oo$.  By
  Remark~\ref{interpolate}, there is a definable set in the
  co-embeddability class of $B_0, B_1,$ and $\Oo$.
\end{proof}
Recall from Lemma~2.1(d) in \cite{prestel-ziegler}, that a set $S$ in
a topological field $K$ is \emph{bounded} if and only if for every
open neighborhood $U \ni 0$, there is non-zero $a \in K^\times$ such
that
\begin{equation*}
  a \cdot S \subseteq U.
\end{equation*}
\begin{theorem}\label{th:that-vee}
  The V-topology induced by $\Oo$ is definable.
\end{theorem}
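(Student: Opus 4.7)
The plan is to exploit Lemma~\ref{lem:def2}: fix a definable set $B$ and elements $a_0, b \in \Kk^\times$ with
\[ b \Oo \subseteq B \subseteq a_0^{-1} \Oo. \]
I will produce a uniformly definable basis of opens for the V-topology from the uniformly definable family $\mathcal{B} := \{aB : a \in \Kk^\times\}$, which is parameterized by $a$.

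First I will verify that $\mathcal{B}$ is a neighborhood basis of $0$ in the V-topology. Each $aB$ contains the clopen ball $ab\Oo$, so each $aB$ is a V-neighborhood of $0$. Conversely, every V-neighborhood of $0$ contains a ball $c\Oo$ for some $c \in \Kk^\times$, and for $a \in \Kk^\times$ with $\val(a) \ge \val(c) + \val(a_0)$ we have
\[ aB \subseteq a a_0^{-1} \Oo \subseteq c\Oo, \]
so the family $\mathcal B$ is cofinal in the filter of V-neighborhoods of $0$.

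To turn a neighborhood basis into a basis of opens, I take a first-order interior with respect to $\mathcal{B}$. Define
\[ U_{y,a} := \{x \in \Kk : \exists a' \in \Kk^\times,~ x + a'B \subseteq y + aB\}. \]
Because $\mathcal{B}$ is a neighborhood basis of $0$ and the V-topology is translation-invariant, $U_{y,a}$ coincides with the V-topological interior of $y + aB$; in particular each $U_{y,a}$ is V-open. The family $\{U_{y,a} : y \in \Kk,~ a \in \Kk^\times\}$ is uniformly definable, and it is a basis of opens: given a V-open $V$ and $x \in V$, choose $a$ so that $x + aB \subseteq V$ (possible by cofinality), and observe that $x \in U_{x,a} \subseteq x + aB \subseteq V$, since $0 \in aB$ implies $x \in x + aB$ and the condition $x + aB \subseteq x + aB$ witnesses $x \in U_{x,a}$.

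I do not anticipate a genuine obstacle; the entire substance of the argument is carried by the earlier Lemmas~\ref{tdef2}--\ref{lem:def2}, which extract a definable set co-embeddable with $\Oo$ from the type-definable ideal $J$. The present theorem is then a routine application of the topological-group principle that a uniformly definable neighborhood basis of $0$ in a group topology yields a uniformly definable basis of opens, via the first-order interior.
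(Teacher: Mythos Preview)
Your proof is correct and follows essentially the same approach as the paper: obtain a definable set $B$ co-embeddable with $\Oo$ via Lemma~\ref{lem:def2}, observe that $\{aB : a \in \Kk^\times\}$ is a definable neighborhood basis of $0$, and pass to a definable basis of opens by taking first-order interiors. The only cosmetic difference is that the paper packages the last two steps as citations (Lemma~2.1(e) of \cite{prestel-ziegler} for the neighborhood basis, and Lemma~\ref{duh} for the passage from a neighborhood basis of $0$ to a basis of opens), whereas you unpack both directly.
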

\begin{proof}
  Take a definable set $B$ that is co-embeddable with $\Oo$.  Then $B$
  is a bounded neighborhood of 0, with respect to the V-topology
  induced by $\Oo$.  Therefore, the following definable family is a
  neighborhood basis of 0, by Lemma 2.1(e) in \cite{prestel-ziegler}:
  \begin{equation*}
    \{a B : a \in \Kk^\times\}.
  \end{equation*}
  This proves definability, by Lemma~\ref{duh} below.
\end{proof}
\begin{lemma}\label{duh}
  Let $(K,+,\cdot,\ldots)$ be a field, possibly with extra structure.
  Let $\tau$ be a field topology on $K$.  Then $\tau$ is definable if
  and only if there is a definable neighborhood basis of 0.
\end{lemma}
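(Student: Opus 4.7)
The forward implication should be essentially tautological: if $\{U_{\vec{c}}\}_{\vec{c}}$ is a uniformly definable basis of opens for $\tau$, then the subfamily $\{U_{\vec{c}} : 0 \in U_{\vec{c}}\}$, carved out by a first-order condition on the parameters $\vec{c}$, remains uniformly definable; and by the definition of a basis of opens, it forms a neighborhood basis of $0$.

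For the converse, my plan is to start from a uniformly definable neighborhood basis $\{V_{\vec{c}}\}_{\vec{c}}$ of $0$ and produce a uniformly definable basis of opens in two stages: first refine to a family of \emph{open} neighborhoods of $0$, then translate to all of $K$. For the refinement I set $U_{\vec{c}} := \operatorname{int}(V_{\vec{c}})$. The crucial observation is that the interior admits a uniform first-order description,
\[
U_{\vec{c}} = \{ x \in K : \exists \vec{d}\ (x + V_{\vec{d}} \subseteq V_{\vec{c}})\},
\]
so the family $\{U_{\vec{c}}\}$ is itself uniformly definable. Each $U_{\vec{c}}$ is open by construction; and by continuity of addition I can find $\vec{d}$ with $V_{\vec{d}} + V_{\vec{d}} \subseteq V_{\vec{c}}$, which forces $V_{\vec{d}} \subseteq U_{\vec{c}}$ and in particular exhibits $U_{\vec{c}}$ as a neighborhood of $0$. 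Since $U_{\vec{c}} \subseteq V_{\vec{c}}$, the refined family is still cofinal among neighborhoods of $0$, hence is itself a neighborhood basis of $0$ consisting of opens.

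The final step is to take the family $\{a + U_{\vec{c}} : a \in K, \vec{c}\}$, which is uniformly definable as a family parametrized by $(a, \vec{c})$ and consists of open sets. Every $\tau$-open $W$ is the union of such translates: for each $a \in W$, the open neighborhood $W - a$ of $0$ contains some $U_{\vec{c}}$, giving $a + U_{\vec{c}} \subseteq W$. Hence $W = \bigcup_{a \in W}(a + U_{\vec{c}_a})$ for suitable choices of $\vec{c}_a$, and we have a uniformly definable basis of opens. The only mild subtlety is the definability of the interior at the refinement stage, which is settled by the displayed existential formula; I do not anticipate any real obstacle beyond this.
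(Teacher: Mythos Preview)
Your proposal is correct and follows essentially the same approach as the paper: the paper also restricts the definable basis of opens to those containing $0$ for the forward direction, and for the converse defines $N_a^{\mathrm{int}} = \{x : \exists b\ (x + N_b \subseteq N_a)\}$ and then translates. Your write-up is slightly more detailed (you spell out why $U_{\vec c}$ is open and why it is a neighborhood of $0$ via continuity of addition), but the argument is the same.
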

\begin{proof}
  If $\{U_a\}_{a \in Y}$ is a definable basis of opens, then
  \begin{equation*}
    \{ U_a : a \in Y \text{ and } 0 \in U_a\}
  \end{equation*}
  is a definable neighborhood basis of 0.  Conversely, suppose
  $\{N_a\}_{a \in Y}$ is a definable neighborhood basis of 0.  Let
  \begin{equation*}
    N_a^{\mathrm{int}} = \{x \in K ~|~ \exists b \in Y : x + N_b \subseteq N_a\}.
  \end{equation*}
  Then $\{N_a^{\mathrm{int}}\}_{a \in Y}$ is a definable basis of open
  neighborhoods around 0, and
  \begin{equation*}
    \{b + N_a^{\mathrm{int}} : b \in K, ~ a \in Y\}.
  \end{equation*}
  is a definable basis of open sets.
\end{proof}

\subsection{Definability of the canonical topology} \label{sec:cantopdef}
\begin{lemma}
  \label{exotic2}
  There is nonnegative $\gamma \in \Gamma$, and a type-definable set
  $S$, such that for $x \in \Kk$ with $\val(x) > \gamma$, we have
  \begin{equation*}
    \val(\partial x) < - \gamma \implies x \in S \implies
    \val(\partial x) \le 0.
  \end{equation*}
\end{lemma}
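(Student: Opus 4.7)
The strategy is to mimic Lemma~\ref{tdef2}, which constructed a type-definable approximation of $\Oo$ via the existential quadratic witness $\exists y, z \in J : x^2 = yx + z$. Here we must in addition detect that $\partial x \ne 0$ (equivalently, for $x \in \Oo$, that $x \notin Q$), while still relying only on existential witnesses in the type-definable ideal $J$.

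For the forward inclusion, I would start with $x$ satisfying $\val(x) > \gamma$ and $\val(\partial x) < -\gamma$. Such an $x$ is wild with a neutralizer $x^\dag \in Q$ for which $\val(x^\dag) > \gamma$ and $x x^\dag \in R \setminus Q$. Fixing any non-zero $c \in J$ of suitably small valuation, the scaled neutralizer $w := c x^\dag$ lies in $J$ because $x^\dag \in R$ and $J$ is an $R$-ideal, and $xw = c(x x^\dag) \in J$ as well. Combining this with the quadratic witnesses $y, z \in J$ produced exactly as in the proof of Lemma~\ref{tdef2} furnishes the existential data needed to place $x$ into $S$.

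For the reverse inclusion, given $x \in S$ with $\val(x) > \gamma$ I would first deduce $x \in \Oo$ from the quadratic clause, as in Lemma~\ref{tdef2}, and then use the multiplicative clause to extract an element playing the role of a scaled neutralizer. The calculus of Lemmas~\ref{major-mess-3} and \ref{major-mess-4}, together with Lemma~\ref{vp-class}, then converts this witness into the bound $\val_\partial(x) \le 0$, which is exactly $\val(\partial x) \le 0$. The threshold $\gamma$ is governed by the valuation of the fixed element $c \in J$ chosen in the forward direction.

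The hard part will be pinning down the existential clause inside $S$ that cleanly separates $x \notin Q$ from $x \in Q$ using only positive conditions on $J$. A naive definition such as $\exists w \in J,~ u \in J \setminus \{0\} : xw = u$ is far too coarse, since any $x \in J \subseteq Q$ satisfies it vacuously (take any $w \in J$ and set $u := xw$). The refinement I would attempt bundles $w$ with auxiliary relations tying $u$ to the discriminant $b^2 - 4c \in \pp$ of the minimal polynomial of $x$ provided by Lemma~\ref{discriminant}, forcing $w$ to behave like a genuine scaled neutralizer rather than merely a multiplicative cofactor coming from $x$'s own membership in $J$. Once the right $S$ is in place, both inclusions should follow by compactness applied to the type-definable $J$, together with the algebraic identities and neutralizer lemmas established in \S\ref{sec:the-val}--\ref{sec:der}.
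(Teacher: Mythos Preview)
Your proposal never actually specifies $S$; it only describes the shape of an argument you hope will work once the right $S$ is found. You correctly diagnose the obstacle --- a naive clause like $\exists w,u \in J : xw = u,\ u \ne 0$ cannot separate $x \in Q$ from $x \notin Q$ --- but the suggested fix (``bundling $w$ with auxiliary relations tying $u$ to the discriminant'') is not a definition, and nothing in \S\ref{sec:the-val}--\ref{sec:der} turns discriminant information into a clean membership test for $Q$. As written this is a plan, not a proof.

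The paper's idea is much simpler and avoids both the quadratic clause and the neutralizer machinery. Fix once and for all a nonzero $c \in J$, set $\gamma = \val(c)$, and use Proposition~\ref{density} to produce an element $e \in R \setminus Q$ with $\val(e) > \gamma$. Then define
\[
  S \;=\; \{\, x : \exists\, y,z \in J,\ e = xy + z \,\}.
\]
The reverse inclusion is immediate: if $y,z \in J \subseteq Q$ and $x \in Q$, then $xy + z \in Q$, contradicting $e \notin Q$; so $x \in S$ forces $x \notin Q$, i.e.\ $\val(\partial x) \le 0$. For the forward inclusion, apply Lemma~\ref{three-generators} to the triple $\{xc, e, c\}$; valuation and $\val_\partial$ bounds rule out two of the three cases, and the remaining case $e = (xc)y_0 + c z_0$ with $y_0, z_0 \in R$ yields $y = cy_0,\ z = cz_0 \in J$. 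Note that no quadratic witness is needed at all: the hypothesis $\val(x) > \gamma \ge 0$ already places $x \in \Oo$.
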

\begin{proof}
  Take some non-zero $c \in J$.  Take $\gamma = \val(c)$.  Then
  $\gamma \ge 0$, as
  \[ J \subseteq R \subseteq \Oo.\]
  Take $e_0 \in R$ such that $\wres(e_0) = s + t \varepsilon$, with $t
  \ne 0$.  Then $e_0 \in R \setminus Q$.  Let $B$ be the open ball of
  valuative radius $\gamma$.  By Proposition~\ref{density}, $\Oo = B +
  Q$.  Therefore there is $e \in B$ with $e - e_0 \in Q$.  Then $e \in
  R \setminus Q$, and $\val(e) > \gamma = \val(c)$.

  Let $S$ be the type-definable set of $x \in \Kk$ such that
  \[ \exists y, z \in J : e = xy + z.\]
  Suppose $\val(x)
  > \gamma$ and $\val(\partial x) < - \gamma$.  Apply
  Lemma~\ref{three-generators} to the set $\{xc, e, c\}$.  There are
  three cases:
  \begin{itemize}
  \item $c$ is generated by $xc$ and $e$.  This cannot happen, since
    $R \subseteq \Oo$, since $\val(c) < \val(e)$ (by choice of $e$),
    and since $\val(c) < \val(xc)$ as $\val(x) > \gamma \ge 0$.
  \item $xc$ is generated by $e$ and $c$.  As $e, c \in R$, this would
    imply $xc \in R$.  But $c \in J \subseteq I \subseteq Q$, so by
    Lemma~\ref{vp-scalars},
    \[ \val(\partial(xc)) = \val(c) + \val(\partial x) < \val(c) - \gamma = 0.\]
    By Lemma~\ref{vp-class}, $xc \notin R$, a contradiction.
  \item $e$ is generated by $c$ and $xc$.  Then
    \begin{equation*}
      e = xcy_0 + cz_0,
    \end{equation*}
    for some $y_0, z_0 \in R$.
    If $y = cy_0$ and $z = cz_0$, then $y, z \in J$ (as $J \lhd R$),
    and $e = xy + z$.  So $x \in S$.
  \end{itemize}
  Conversely, suppose $x \in S$ and $\val(x) > \gamma$.  Then there
  are $y, z \in J \subseteq I \subseteq Q$ such that
  \[ e = xy + z.\]
  Now $Q$ is a subring, and $y, z \in Q$, $e \notin Q$.  Therefore $x
  \notin Q$.  On the other hand, $\val(x) > \gamma \ge 0$, so $x \in
  \Oo$.  Therefore $x \in \Oo \setminus Q$, which implies
  $\val(\partial x) \le 0$ by Lemma~\ref{vp-class}.
\end{proof}
\begin{lemma}\label{vee-co}
  Some $\vee$-definable set is co-embeddable with $R$.
\end{lemma}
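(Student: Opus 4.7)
The plan is to exhibit an explicit $\vee$-definable set of the form $X = cB \setminus S$, where $B$ is the definable set co-embeddable with $\Oo$ produced by Lemma~\ref{lem:def2}, $S$ is the type-definable set produced by Lemma~\ref{exotic2}, and $c$ is an element of $J$ with sufficiently large valuation. Since the complement of the type-definable $S$ is $\vee$-definable, and $cB$ is definable, their intersection $X$ is $\vee$-definable. I will then check co-embeddability with $R$ directly in both directions.

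After rescaling $B$, we may assume $\Oo \subseteq B \subseteq \beta \Oo$ for some $\beta \in \Kk^\times$. I will choose $c \in J$ with $\val(c) > \gamma - \val(\beta)$, where $\gamma$ is the parameter from Lemma~\ref{exotic2}; such $c$ exists because $J$ is a nonzero ideal of $R$ and $Q \subseteq R$ contains elements of arbitrarily large valuation (Lemma~\ref{cofinal-valuations}), so multiplying a fixed nonzero element of $J$ by such elements of $Q$ produces elements of $J$ of arbitrary valuation.

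For the inclusion $cR \subseteq X$: given $x \in R$, we have $cx \in c\Oo \subseteq cB$, and also $cx \in J \subseteq I \subseteq Q$, so $\partial(cx) = 0$ and $\val(\partial(cx)) = +\infty$. Since $\val(cx) > \gamma$ but $\val(\partial(cx)) \not\le 0$, the second implication of Lemma~\ref{exotic2} forces $cx \notin S$. Hence $cx \in X$, giving $cR \subseteq X$.

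For the reverse inclusion, choose $a \in Q$ with $\val(a) \ge \gamma$ (again by Lemma~\ref{cofinal-valuations}). For any $x \in X$, we have $x \in cB \subseteq c\beta\Oo$, so $\val(x) \ge \val(c\beta) > \gamma$, in particular $x \in \Oo$. Since $x \notin S$ and $\val(x) > \gamma$, the contrapositive of the first implication in Lemma~\ref{exotic2} gives $\val_\partial(x) \ge -\gamma$. By Lemma~\ref{vp-scalars}, $\val_\partial(ax) \ge \val(a) + \val_\partial(x) \ge 0$, so $ax \in R$ by Lemma~\ref{vp-class}. No essential obstacle arises; the only technical point is matching up the valuation inequalities so that both $c$ and $a$ are chosen large enough for everything to fit.
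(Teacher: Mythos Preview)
Your proof is correct and follows essentially the same strategy as the paper: form a $\vee$-definable set by removing the type-definable $S$ of Lemma~\ref{exotic2} from a definable set approximating a valuation ball, then verify co-embeddability with $R$ using elements of $Q$ (or $J$) of large valuation together with Lemmas~\ref{vp-scalars} and~\ref{vp-class}. The paper works with $B \setminus S$ where $B$ is already chosen (via Theorem~\ref{th:that-vee}) so that $x \in B \implies \val(x) > \gamma$, and uses a single scalar $b \in Q$ for both directions; you instead scale the set $B$ from Lemma~\ref{lem:def2} by $c \in J$ to achieve the same effect and use separate scalars $c$ and $a$---a cosmetic difference only.
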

\begin{proof}
  Take $\gamma$ and $S$ as in Lemma~\ref{exotic2}.  By
  Theorem~\ref{th:that-vee}, we can find $\gamma'$ and a definable set
  $B$ such that
  \[ \val(x) > \gamma' \implies x \in B \implies \val(x) > \gamma.\]
  \begin{claim}\label{cl1}
    If $\val(x) > \gamma'$ and $\val(\partial x) > 0$, then $x \in B
    \setminus S$.
  \end{claim}
  \begin{claimproof}
    Because $\val(x) > \gamma'$, we have $x \in B$ and $\val(x) >
    \gamma$, and so Lemma~\ref{exotic2} applies.  Then $\val(\partial
    x) > 0$ implies $x \notin S$, by the contrapositive to
    Lemma~\ref{exotic2}.
  \end{claimproof}
  \begin{claim}\label{cl2}
    If $x \in B \setminus S$, then $\val(x) > \gamma$ and
    $\val(\partial x) \ge - \gamma$.
  \end{claim}
  \begin{claimproof}
    The fact that $x$ is in $B$ implies that $\val(x) > \gamma$, and
    thus that Lemma~\ref{exotic2} applies.  By the contrapositive to
    Lemma~\ref{exotic2}, $x \notin S$ implies $\val(\partial x) \ge
    \gamma$.
  \end{claimproof}
  By Lemma~\ref{cofinal-valuations}, there is $b \in Q$ such that
  $\val(b) > \gamma'$.  Then
  \begin{equation}
    b R \subseteq B \setminus S. \label{hah1}
  \end{equation}
  Indeed, if $x \in R$, then
  \begin{align*}
    \val(bx) &= \val(b) + \val(x) > \gamma' + 0 \\
    \val(\partial(bx)) &\ge \val(b) + \val(\partial x) > \gamma' + 0 \ge 0,
  \end{align*}
  and Claim~\ref{cl1} applies.  Also,
  \begin{equation}
    b \cdot (B \setminus S) \subseteq R. \label{hah2}
  \end{equation}
  Indeed, if $x \in B \setminus S$, then
  \begin{align*}
    \val(x) &> \gamma \\
    \val(\partial x) &\ge -\gamma
  \end{align*}
  by Claim~\ref{cl2}.  But then $bx \in R$:
  \begin{align*}
    \val(bx) &= \val(b) + \val(x) > \gamma' + \gamma \ge 0 \\
    \val(\partial(bx)) &\ge \val(b) + \val(\partial x) > \gamma' - \gamma \ge 0.
  \end{align*}
  By (\ref{hah1})-(\ref{hah2}), the $\vee$-definable set $B \setminus
  S$ is co-embeddable with $R$.
\end{proof}
\begin{lemma}\label{lem:jbound}
  The set $J$ is bounded with respect to the canonical topology on
  $\Kk$: for any basic neighborhood $U$, there is $a \in \Kk^\times$
  such that $a J \subseteq U$.
\end{lemma}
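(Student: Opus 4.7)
The plan is to exploit that $J$ is an ideal in the fundamental ring $R$, together with the fact that the infinitesimal group $J_{K'}$ over any larger small model $K' \succeq K$ is also a non-zero ideal of the same $R$. Three ingredients then suffice: (a) $U \supseteq J_{K'}$ by the very definition of $J_{K'}$; (b) $J_{K'}$ is non-zero; and (c) $J_{K'}$ is itself an $R$-ideal. These combine in a one-line computation once $a$ is chosen from inside $J_{K'}$.

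First, given an arbitrary basic neighborhood $U$, I would choose a small model $K' \succeq K$ over which $U$ is definable; this is possible because $U$ is parameter-definable and $K$ is small. Then $U$ appears in the defining family of $J_{K'}$, so automatically $J_{K'} \subseteq U$. Non-triviality $J_{K'} \ne 0$ is Remark~6.9.1 in \cite{prdf}. To see that $J_{K'}$ is an $R$-ideal, I would apply Lemma~\ref{ideal-lemma} with $K'$ in place of $K$: the pedestal $G'$ giving rise to $\varsigma$ is type-definable over $K$ and hence over $K'$, and the ring $R$ is its stabilizer, which does not depend on the choice of model.

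Finally, pick any non-zero $a \in J_{K'}$. Since $J = J_K \subseteq R$, for every $j \in J$ we have $aj = ja \in R \cdot a \subseteq R \cdot J_{K'} \subseteq J_{K'}$, using the ideal property of $J_{K'}$. Hence $aJ \subseteq J_{K'} \subseteq U$, and the chosen $a$ witnesses the boundedness of $J$ at $U$. I do not expect any serious obstacle here; the lemma is a direct bookkeeping consequence of the inflator infrastructure recalled in \S\ref{sec:review}, the one subtlety being the remark that $R$ acts on $J_{K'}$ and not only on $J_K$.
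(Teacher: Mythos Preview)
Your proof is correct and follows the same outline as the paper: enlarge to a small model $K' \succeq K$ over which $U$ is definable, use $J_{K'} \subseteq U$, and then exhibit $a$ with $aJ_K \subseteq J_{K'}$. The only difference is in how the last inclusion is justified: the paper simply cites Corollary~8.9 of \cite{prdf2} (a general co-embeddability fact about infinitesimal groups over nested models), whereas you derive it from Lemma~\ref{ideal-lemma} applied to $K'$ and the pedestal underlying $\varsigma$, together with $J_K \subseteq R$. Your route is more self-contained within the present paper; the paper's route avoids tracking the pedestal and works for any $K \preceq K'$ independently of the inflator setup.
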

\begin{proof}
  Recall that $J$ is the set $J_K$ of $K$-infinitesimals.  Let $K'$ be
  a small model containing $K$ and defining $U$.  Then $U$ contains
  the group $J_{K'}$ of $K'$-infinitesimals.  By Corollary~8.9 in
  \cite{prdf2}, there is non-zero $a$ such that
  \begin{equation*}
    a \cdot J_K \subseteq J_{K'} \subseteq U. \qedhere
  \end{equation*}
\end{proof}
Recall from \cite{prestel-ziegler}, \S 2, that a ring topology is
\emph{locally bounded} if there is a bounded neighborhood of 0.  If
$R$ is a proper subring of a field $K = \Frac(R)$, then $R$ induces a
locally bounded ring topology on $K$, as in \cite{prestel-ziegler},
Example 1.2 and Theorem 2.2(a).
\begin{theorem} \label{thm:definable}
  The canonical topology on $\Kk$ is locally bounded, definable, and
  induced by $R$.
\end{theorem}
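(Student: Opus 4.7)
The plan is to leverage Lemma~\ref{vee-co} together with the boundedness of $J$ (Lemma~\ref{lem:jbound}) and the fact that $J$ is an ideal in $R$ to produce a concrete \emph{definable} bounded neighborhood of $0$, then apply the standard theory of locally bounded ring topologies from \cite{prestel-ziegler}, exactly as was done for the V-topology in Theorem~\ref{th:that-vee}.

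First I would promote Lemma~\ref{vee-co} to a definable witness. The fundamental ring $R$ is type-definable over $K$ as the stabilizer of the pedestal $G'$ supplied by Theorem~\ref{thm:what}, and is co-embeddable with a $\vee$-definable set by Lemma~\ref{vee-co}. Applying Remark~\ref{interpolate} (which interpolates a type-definable and a $\vee$-definable co-embeddable set by a definable one), I obtain a definable $B$ and scalars $a, b \in \Kk^\times$ with $aB \subseteq R$ and $bR \subseteq B$.

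Next I would verify that $B$ is a bounded neighborhood of $0$ in the canonical topology. For the neighborhood property, $B \supseteq bR \supseteq bJ$; writing $J = \bigcap_i U_i$ as an intersection of $K$-definable basic neighborhoods, one has $bJ = \bigcap_i bU_i$, and each $bU_i = (bX_i) - (bX_i)$ is itself a basic neighborhood because scaling by a nonzero element preserves definable full-rank sets. Saturation then forces $B$ to contain a finite intersection of $bU_i$'s, and since basic neighborhoods form a filter base this contains a single basic neighborhood, making $B$ a neighborhood of $0$. For boundedness, given any basic neighborhood $U$ of $0$, Lemma~\ref{lem:jbound} provides $c \in \Kk^\times$ with $cJ \subseteq U$; picking any nonzero $e \in J$ and using that $J$ is an ideal in $R$ gives $eR \subseteq J$, whence $(ce)R \subseteq U$. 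Composing with $aB \subseteq R$ yields $(cea) \cdot B \subseteq U$, so $B$ is bounded.

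With $B$ a bounded definable neighborhood of $0$, Lemma~2.1(e) of \cite{prestel-ziegler} shows that $\{\lambda B : \lambda \in \Kk^\times\}$ is a uniformly definable neighborhood basis of $0$ for the canonical topology; definability then follows from Lemma~\ref{duh}, and local boundedness is immediate. To identify the canonical topology with the one induced by $R$, observe that $\Frac(R) = \Kk$ by Corollary~\ref{cor:fof}, so $R$ itself induces a locally bounded ring topology (Example~1.2 of \cite{prestel-ziegler}), and $B$ is a bounded neighborhood of $0$ in \emph{that} topology too via the same co-embedding; two locally bounded ring topologies sharing a bounded neighborhood of $0$ coincide. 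The only mild obstacle is the saturation step upgrading $B \supseteq bJ$ to $B$ containing a basic neighborhood — this is routine given the uniform ind-definability of basic neighborhoods recorded in Remark~\ref{0-ind-def}, and essentially everything else is bookkeeping of the scalars $a, b, c, e$.
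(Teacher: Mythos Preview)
Your overall strategy matches the paper's: obtain a definable set co-embeddable with $R$, show it is a bounded neighborhood of $0$, and conclude via Lemma~2.1(e) of \cite{prestel-ziegler} and Lemma~\ref{duh}. Steps 2 and 3 of your argument (neighborhood and boundedness) are correct and essentially the same as the paper's, though the paper compresses your step 2 by rescaling the co-embeddable set so that $J \subseteq U$ and then interpolating a basic neighborhood $V$ with $J \subseteq V \subseteq U$ directly.

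There is, however, a genuine gap in your step 1. You assert that ``the fundamental ring $R$ is type-definable over $K$ as the stabilizer of the pedestal $G'$,'' and use this to invoke Remark~\ref{interpolate}. But the stabilizer $\{x : xG' \subseteq G'\}$ of a type-definable group $G'$ is in general only $K$-invariant, not type-definable: the condition $xG' \subseteq G'$ unwinds to a $\forall\exists$ statement over the definable approximants of $G'$. Nothing in the paper establishes type-definability of $R$, and Remark~\ref{interpolate} requires one of the two co-embeddable sets to be type-definable. The paper fixes this by first observing that $J$ and $R$ are co-embeddable (via $cR \subseteq J \subseteq R$ for any nonzero $c \in J$, since $J \lhd R$), and then applying Remark~\ref{interpolate} with the \emph{type-definable} set $J$ against the $\vee$-definable set from Lemma~\ref{vee-co}. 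You already have all the ingredients for this ($J$ type-definable, $J$ co-embeddable with $R$ via your nonzero $e \in J$), so the repair is a one-line substitution of $J$ for $R$ in your invocation of Remark~\ref{interpolate}.
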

\begin{proof}
  Note that $J$ and $R$ are co-embeddable, as
  \[ c R \subseteq J \subseteq R\]
  for any non-zero $c \in J$.  By Lemma~\ref{vee-co}, some
  $\vee$-definable set $U$ is co-embeddable with $R$ and $J$.  As $J$
  itself is type-definable, we can take $U$ to be \emph{definable} by
  Remark~\ref{interpolate}.  Rescaling $U$, we may assume $J \subseteq
  U$.  By compactness, there is a $K$-definable basic neighborhood $V$
  such that
  \[ J \subseteq V \subseteq U,\]
  as $J$ is the directed intersection of such neighborhoods.
  Therefore $U$ is a neighborhood of 0.  Also, $U$ is bounded, because
  it is co-embeddable with the bounded set $J$.  Therefore the
  canonical topology is locally bounded.  By Lemma 2.1(e) in
  \cite{prestel-ziegler}, the family 
  \begin{equation*}
    \{a U : a \in \Kk^\times\}
  \end{equation*}
  is a neighborhood basis of 0.  Then the canonical topology is
  definable by Lemma~\ref{duh}.  The family
  \[ \{a R : a \in \Kk^\times\}\]
  is also a neighborhood basis of 0, because $U$ and $R$ are co-embeddable.
\end{proof}
Once the canonical topology is definable on the monster, it is
uniformly definable on all models:
\begin{theorem}\label{thm:uni-def}
  ~
  \begin{enumerate}
  \item \label{ud1} There is a formula $\varphi(x;\vec{y})$ such that
    for every small model $K \preceq \Kk$, the family of sets
    \[ \{ \varphi(K;\vec{b}) : \vec{b} \in K^{|\vec{y}|}\}\]
    is a neighborhood basis of 0 for the canonical topology on $K$.
  \item \label{ud2} If $K, K'$ are two small submodels, then $K$ and $K'$ with
    their canonical topologies are ``locally equivalent'' in the sense
    of \cite{prestel-ziegler}.
  \end{enumerate}
\end{theorem}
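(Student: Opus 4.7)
The plan is to bootstrap Theorem~\ref{thm:definable}, which gives definability of the canonical topology on the monster $\Kk$, into uniform definability across all small submodels, using the uniform ind-definability of basic neighborhoods from Remark~\ref{0-ind-def} together with ordinary elementarity.

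For part~\ref{ud1}, I would first apply Theorem~\ref{thm:definable} together with Lemma~\ref{duh} to extract a single formula $\varphi(x;\vec{y})$ whose instances $\{\varphi(\Kk;\vec{b}) : \vec{b} \in \Kk^{|\vec{y}|}\}$ form a definable neighborhood basis of $0$ on $\Kk$; if necessary I would absorb a guard into the formula so that ``bad'' parameters give a trivial neighborhood (e.g.\ all of $\Kk$), ensuring that every instance is a neighborhood of $0$. Let $\{\psi_i(x;\vec{z}_i)\}_{i\in I}$ denote the formulas from Remark~\ref{0-ind-def}, so that on every small $K \preceq \Kk$ the sets $\psi_i(K;\vec{c})$ are exactly the basic neighborhoods. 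For small $K \preceq \Kk$, I would then argue by elementarity in two directions:
\begin{itemize}
\item Each $\varphi(K;\vec{b})$ is a neighborhood of $0$ in $K$: the set $\varphi(\Kk;\vec{b})$ contains some basic neighborhood $\psi_i(\Kk;\vec{c})$ for appropriate $i, \vec{c}$, and the sentence ``$\exists \vec{z}\,(\psi_i(x;\vec{z}) \subseteq \varphi(x;\vec{b}))$'', viewed as a formula with parameter $\vec{b} \in K$, transfers from $\Kk$ to $K$ by elementarity, producing $\vec{c}' \in K$ with $\psi_i(K;\vec{c}') \subseteq \varphi(K;\vec{b})$.
\item Every basic neighborhood $\psi_i(K;\vec{c})$ on $K$ contains some $\varphi(K;\vec{b})$: on $\Kk$, $\psi_i(\Kk;\vec{c})$ must contain some member of the basis, so ``$\exists \vec{y}\,(\varphi(x;\vec{y}) \subseteq \psi_i(x;\vec{c}))$'' holds in $\Kk$ with parameter $\vec{c} \in K$, hence in $K$.
\end{itemize}
Together these show $\{\varphi(K;\vec{b}) : \vec{b} \in K^{|\vec{y}|}\}$ is a neighborhood basis of $0$ on $K$.

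Part~\ref{ud2} should follow formally from part~\ref{ud1}: both $K$ and $K'$ carry neighborhood bases of $0$ given by the same parameter-free formula $\varphi$, so the incidence structure on the neighborhood filter is first-order specified and agrees between $K$ and $K'$ as elementary substructures of $\Kk$. What remains is to verify that this agreement matches Prestel--Ziegler's precise definition of local equivalence, which I expect to reduce to the statement that the two bases share a common first-order ``type'' in their sense. The main obstacle throughout is bookkeeping rather than mathematical: crafting $\varphi$ (and its guard) carefully enough that every instance really is a neighborhood in every small model, and matching the uniform first-order description of the basis to the local-equivalence formalism of \cite{prestel-ziegler}.
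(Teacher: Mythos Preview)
Your outline is close, but there is a real gap in part~\ref{ud1}: the formula $\varphi(x;\vec{y})$ you extract from Theorem~\ref{thm:definable} and Lemma~\ref{duh} will in general carry parameters from $\Kk$. Concretely, the proof of Theorem~\ref{thm:definable} produces a definable set $U$ (built from $B$, $S$, and other auxiliary elements of $\Kk$) and takes $\{aU : a \in \Kk^\times\}$ as the basis. If the parameters defining $U$ do not lie in a given small $K \preceq \Kk$, then neither of your elementarity transfers goes through: the sentences you want to push down have parameters outside $K$. Absorbing the parameters of $U$ into $\vec{y}$ does not help either, since for arbitrary substitute parameters $\vec{e} \in K$ the instance need not be a neighborhood of $0$ at all, and your proposed guard cannot be made first-order---``is a neighborhood of $0$'' means ``contains some $\psi_i(\cdot;\vec{c})$'', which is an infinite disjunction over $i \in I$.

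The paper supplies exactly the missing ingredient: a saturation argument against the ind-definable family $\{\psi_i\}_{i \in I}$. Every member of the definable basis $\mathcal{N}$ on $\Kk$ contains some $\psi_i(\Kk;\vec{c})$, and compactness over the definable parameter set of $\mathcal{N}$ yields a \emph{finite} $I_0 \subseteq I$ such that already $\{\psi_i(\Kk;\vec{e}) : i \in I_0,~\vec{e}\}$ is a neighborhood basis of $0$. One then takes $\varphi$ to be (essentially) the finite disjunction of these $\psi_i$, which is parameter-free. At that point your two bullets work: the second becomes the transfer of the parameter-free scheme $\forall \vec{c}~\exists i \in I_0~\exists \vec{e}\,(\psi_i(\cdot;\vec{e}) \subseteq \psi_j(\cdot;\vec{c}))$, one sentence per $j \in I$, and the first is trivial since every $\psi_i$-instance is already a basic neighborhood by Remark~\ref{0-ind-def}. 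Your treatment of part~\ref{ud2} is fine and matches the paper.
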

\begin{proof}
  Let $\{\psi_i(x;\vec{z}_i)\}_{i \in I}$ be as in
  Remark~\ref{0-ind-def}, so that
  \[ \{ \psi_i(K;\vec{c}) : i \in I, ~ \vec{c} \in K^{|\vec{z}_i|} \}\]
  is the set of basic neighborhoods on any $K \equiv \Kk$.
  
  On $\Kk$, Theorem~\ref{thm:definable} gives a ($\Kk$-)definable
  neighborhood basis $\mathcal{N}$.  By saturation, there must be a
  finite subset $I_0 \subseteq I$ such that every set in $\mathcal{N}$
  has the form $\psi_i(K;\vec{c})$ for some $i \in I_0$.  The fact
  that $\mathcal{N}$ is a neighborhood basis implies that
  \begin{equation*}
    \forall j \in I ~ \forall \vec{c} ~ \exists i \in I_0 ~ \exists
    \vec{e} : \psi_i(\Kk;\vec{e}) \subseteq \psi_j(\Kk;\vec{c}).
  \end{equation*}
  This is a small conjunction of first-order sentences, so it holds in
  submodels $K \preceq \Kk$.  Then for any small model $K$, the family
  \begin{equation*}
    \{ \psi_i(K;\vec{c}) : i \in I_0, ~ \vec{c} \in K^{|\vec{z}_i|}\}
  \end{equation*}
  is a neighborhood basis of 0.  Because $I_0$ is finite, this can be
  written as
  \begin{equation*}
    \{ \varphi(K;\vec{b}) : \vec{b} \in K^{|\vec{y}|}\}
  \end{equation*}
  for some formula $\varphi(x;\vec{y})$.

  This proves the first point.  The second point is immediate, because
  local sentences can be evaluated on a neighborhood basis
  (\cite{prestel-ziegler}, Theorem 1.1(a)).
\end{proof}

\subsection{Odd positive characteristic}
In Theorem~\ref{thm:what}, we can weaken the assumption
$\characteristic(K) = 0$ to $\characteristic(K) \ne 2$; the same proof
works.  But the positive characteristic case then leads to a
contradiction:
\begin{proposition}
  Let $\Kk$ be a monster model of an unstable field with $\dpr(\Kk)
  \le 2$.  If $\Kk$ is not of valuation type, then
  $\characteristic(\Kk)$ is 0 or 2.
\end{proposition}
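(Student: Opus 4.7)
The plan is to argue by contradiction: assume $\characteristic(\Kk) = p$ with $p > 2$ and that $\Kk$ is not of valuation type, and derive $\characteristic(\Kk) = 0$ via the derivation $\partial$ produced in \S\ref{sec:der}.

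First, I would observe that the proof of Theorem~\ref{thm:what} goes through verbatim once $\characteristic(K) = 0$ is weakened to $\characteristic(K) \ne 2$. The characteristic hypothesis was only invoked there to guarantee that the resulting 2-inflator satisfies the Strong Assumptions, whose single characteristic clause is exactly $\characteristic(K) \ne 2$; the other ingredients (non-zero pedestals via Fact~\ref{nze}, malleability via Fact~\ref{fact-summary}.\ref{fs-mal}, the reduced-rank analysis of Corollary~\ref{joke-cor}, and the isotypic reduction of Corollary~\ref{cor:rediso}) are characteristic-independent. Applying this generalized Theorem~\ref{thm:what} to $\Kk$ yields a small submodel $K \preceq \Kk$ containing $k_0$ and a $k_0$-linear 2-inflator $\varsigma$ on $\Kk$ satisfying the Strong Assumptions, whose infinitesimal group $J$ is an ideal in the fundamental ring $R$. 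We are then fully inside the setting of \S\ref{sec:canform}--\ref{sec:der}, and in particular we obtain the valuation ring $\Oo$ (Corollary~\ref{cor:o}), the subring $Q \subseteq R$, the nonzero $\Oo$-module $D = \Oo/Q$, and the $Q$-linear derivation $\partial : \Oo \to D$ of Proposition~\ref{derivation}.

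Now I would invoke Proposition~\ref{prop:perfect}, which states that, under the Strong Assumptions, if the underlying field is perfect then its characteristic must be $0$. The underlying field here is $\Kk$, and $\Kk$ is perfect: this is the fact cited in the footnote to Claim~\ref{cl0}, and it follows from the standard observation that the Frobenius $x \mapsto x^p$ is a definable endomorphism of $\Kk$ that preserves dp-rank, so its image is a subfield of full rank and must therefore equal $\Kk$. Hence Proposition~\ref{prop:perfect} forces $\characteristic(\Kk) = 0$, contradicting $p > 2$, and so $\characteristic(\Kk) \in \{0, 2\}$.

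The only real obstacle is the bookkeeping check that the proof of Theorem~\ref{thm:what} really does survive the weakening of the characteristic hypothesis, but this reduces to the one-line audit above. Once that is in place, the contradiction is immediate: the entire machinery of \S\ref{sec:canform}--\ref{sec:der} is available, and Proposition~\ref{prop:perfect} closes the argument.
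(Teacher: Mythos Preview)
Your proposal is correct and follows essentially the same route as the paper: assume $\characteristic(\Kk) > 2$, observe that the proof of Theorem~\ref{thm:what} goes through under the weaker hypothesis $\characteristic(\Kk) \ne 2$ to produce a 2-inflator satisfying the Strong Assumptions, and then invoke Proposition~\ref{prop:perfect} together with perfection of dp-finite (strongly dependent) fields to force $\characteristic(\Kk) = 0$. The paper's proof is terser but identical in structure.
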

\begin{proof}
  Suppose $\characteristic(\Kk) > 2$.  As in the proof of
  Theorem~\ref{thm:what}, there would be a 2-inflator satisfying the
  Strong Assumptions of \S\ref{sec:canform}--\ref{sec:der}.  But strongly
  dependent fields are perfect, and so $\characteristic(\Kk) = 0$ by
  Proposition~\ref{prop:perfect}.
\end{proof}
Therefore
\begin{theorem}
  If $K$ is a field with $\dpr(K) \le 2$ and $\characteristic(K) > 2$,
  then either $K$ is stable, or $K$ is valuation type.
\end{theorem}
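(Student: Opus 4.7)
The plan is to deduce the theorem at once from the preceding proposition by passing to a monster model and then transferring back. Let $K$ be a field with $\dpr(K) \le 2$ and $\characteristic(K) > 2$; assume $K$ is unstable, and aim to conclude that $K$ is of valuation type. Take a sufficiently saturated elementary extension $\Kk \succeq K$, which remains unstable, has $\dpr(\Kk) \le 2$, and has the same characteristic as $K$. The proposition immediately above, applied to $\Kk$, then forces $\Kk$ to be of valuation type, since the alternative would give $\characteristic(\Kk) \in \{0,2\}$.

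It then remains to transfer "valuation type" from $\Kk$ to its elementary submodel $K$. The family of basic neighborhoods is uniformly $\vee$-definable across models by Remark~\ref{0-ind-def}. In the valuation-type case on $\Kk$, Lemma~7.1 in \cite{prdf2} provides a definable neighborhood basis of $0$ for the canonical topology. The saturation-plus-compactness argument of Theorem~\ref{thm:uni-def}, which is purely formal and never actually invokes the "not valuation type" hypothesis, then yields a single formula $\varphi(x;\vec{y})$ whose instances form a neighborhood basis of $0$ for the canonical topology on every small $K' \preceq \Kk$, making $K$ and $\Kk$ locally equivalent topological fields in the sense of \cite{prestel-ziegler}.

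The conclusion now follows: being a V-topology is a local property of a neighborhood basis of $0$ (cf.\ Theorem~1.1(a) in \cite{prestel-ziegler}), so it passes from $\Kk$ to $K$. The only step which is not pure bookkeeping is verifying that the uniform-definability proof of Theorem~\ref{thm:uni-def} applies equally in the valuation-type case, but this is immediate once one has the single definable basis of Lemma~7.1 in \cite{prdf2} together with Remark~\ref{0-ind-def}; no new obstacle arises.
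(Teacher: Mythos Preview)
Your proof is correct and follows the same route as the paper, which simply writes ``Therefore'' after the proposition about the monster model; you have just made explicit the elementary-equivalence transfer step that the paper leaves implicit. Your observation that the uniform-definability argument of Theorem~\ref{thm:uni-def} goes through verbatim once one substitutes Lemma~7.1 of \cite{prdf2} for Theorem~\ref{thm:definable} is exactly right, and this is presumably what the paper intends.
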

Perhaps this can be proven in characteristic 2 as well.

\section{Reduced rank and generators} \label{sec:wn}
Let $R$ be a noncommutative ring, and $M$ be an $R$-module.  Say that
$M$ has \emph{property $W_n$} if the following holds: for any $a_0,
a_1, \ldots, a_n \in M$, there is some $0 \le i \le n$ such that
\[ a_i \in R \cdot a_1 + \cdots + R \cdot a_{i-1} + R \cdot a_{i+1} + \cdots + R \cdot a_n.\]
In other words, any submodule of $M$ generated by a set $S$ of size
$n+1$ is generated by an $n$-element subset of $S$.
\begin{remark}
  This property appeared in Lemma~\ref{three-generators}, which said
  that $K$ has property $W_2$ as a $Q$-module or $R$-module.
\end{remark}
\begin{lemma} \label{lem:wn}
  ~
  \begin{enumerate}
  \item If $M_0, \ldots, M_n$ are non-zero, then $M_0 \oplus \cdots \oplus M_n$ does not have property $W_n$.
  \item If $M$ has property $W_n$ and $N \le M$, then $N$ has property $W_n$.
  \item If $M$ has property $W_n$ and $N \le M$, then $M/N$ has property $W_n$.
  \end{enumerate}
\end{lemma}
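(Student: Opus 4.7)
The plan is to prove the three parts independently, as each is a short direct argument that does not require the machinery developed earlier in the paper.

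For part~(1), I would witness the failure of $W_n$ explicitly. Choose any non-zero $a_i \in M_i$ for each $0 \le i \le n$, and let $e_i \in M_0 \oplus \cdots \oplus M_n$ be the element whose $i$-th coordinate is $a_i$ and whose other coordinates are $0$. For any $j \ne i$, every element of $R \cdot e_j$ has $i$-th coordinate equal to $0$, so the sum $\sum_{j \ne i} R \cdot e_j$ is contained in the submodule of tuples whose $i$-th coordinate vanishes. Since the $i$-th coordinate of $e_i$ is $a_i \ne 0$, we conclude $e_i \notin \sum_{j \ne i} R \cdot e_j$. As this holds for every $i$, the tuple $(e_0, \ldots, e_n)$ witnesses the failure of $W_n$.

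For part~(2), I would simply transfer the witness from $M$ to $N$. Given $a_0, \ldots, a_n \in N$, view them as elements of $M$ and apply property $W_n$ on $M$ to obtain $i$ with $a_i \in \sum_{j \ne i} R \cdot a_j$, where the sum is computed in $M$. Because each $a_j$ lies in the submodule $N$, the product $R \cdot a_j$ lies in $N$, hence so does the whole sum; thus the relation holds in $N$ as well, verifying $W_n$ for $N$.

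For part~(3), I would lift, apply $W_n$ in $M$, and reduce. Given $\bar a_0, \ldots, \bar a_n \in M/N$, pick lifts $a_0, \ldots, a_n \in M$. Property $W_n$ for $M$ produces an index $i$ and elements $r_j \in R$ ($j \ne i$) with $a_i = \sum_{j \ne i} r_j a_j$. Passing to the quotient $M/N$ yields $\bar a_i = \sum_{j \ne i} r_j \bar a_j \in \sum_{j \ne i} R \cdot \bar a_j$, which is what is required.

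I do not anticipate a genuine obstacle in any of the three parts: (1) is a projection argument onto a single coordinate, and (2) and (3) are formal consequences of the definition, exploiting only the fact that $N$ is closed under the $R$-action and that reduction modulo $N$ is $R$-linear. The whole proof is essentially an exercise in unwinding the definition of $W_n$.
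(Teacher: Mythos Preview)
Your proposal is correct and follows essentially the same approach as the paper's proof: the same coordinate witnesses for part~(1), the same trivial observation for part~(2), and the same lift--apply--project argument for part~(3). The only difference is that you spell out a bit more detail than the paper, which dispatches part~(2) with the word ``Clear.''
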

\begin{proof}
  ~
  \begin{enumerate}
  \item Take $a_i$ a non-zero element of $M_i$, viewed as an element
    of the direct sum.  Then $\{a_0,a_1,\ldots,a_n\}$ violates
    property $W_n$.
  \item Clear.
  \item Given $a_i \in M/N$, lift them to $\tilde{a_i} \in M$, apply
    property $W_n$ in $M$ to obtain $i$ and $r_0, \ldots, r_n \in R$
    such that
    \[ \tilde{a}_i = r_0 \tilde{a}_0 + \cdots + r_{i-1} \tilde{a}_{i_1} + r_{i+1} \tilde{a}_{i+1} + \cdots + r_n \tilde{a}_n,\]
    and then project back to $M/N$. \qedhere
  \end{enumerate}
\end{proof}
\begin{proposition} \label{prop:wn}
  $M$ has property $W_n$ if and only if the reduced rank of
  $\Sub_R(M)$ is at most $n$.
\end{proposition}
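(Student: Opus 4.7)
The plan is to prove the two directions of the biconditional separately. The forward direction, from a strict $(n+1)$-cube to a failure of $W_n$, follows cleanly from Lemma~\ref{lem:wn}. The reverse direction, extracting a strict cube from a failure of $W_n$, is where the main technical difficulty lies.

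For the direction ``reduced rank $\ge n+1$ implies $W_n$ fails,'' suppose $\{G_S\}_{S \subseteq \{0,\ldots,n\}}$ is a strict $(n+1)$-cube. Set $J := G_\emptyset$, $T := G_{\{0,\ldots,n\}}$, and $H_i := G_{\{i\}}$. The join identity of the cube yields $\sum_{i \in S} H_i = G_S$ for every $S$; specializing the meet identity $G_S \cap G_{S'} = G_{S \cap S'}$ to $S = \{i\}$ and $S' = \{0,\ldots,n\} \setminus \{i\}$ gives $H_i \cap \sum_{j \ne i} H_j = J$. Consequently, the images $\overline{H_i}$ in the subquotient $T/J$ of $M$ form an internal direct sum $\overline{H_0} \oplus \cdots \oplus \overline{H_n}$ of submodules, each non-zero by injectivity of the cube. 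Lemma~\ref{lem:wn}(1) then shows that this direct sum fails $W_n$, while parts (2) and (3) propagate failure of $W_n$ from the direct sum up through $T/J$ and $T$ to $M$.

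For the direction ``$W_n$ fails implies reduced rank $\ge n+1$,'' I start with $a_0, \ldots, a_n \in M$ such that no $a_i$ lies in the submodule generated by the others, and must construct a strict $(n+1)$-cube inside $\Sub_R(M)$. The naive attempt $G_S := \sum_{i \in S} R a_i$ automatically satisfies the join identity, and the no-redundancy hypothesis makes the map $S \mapsto G_S$ injective (for any $i \in S \setminus S'$, $a_i \in G_S \setminus G_{S'}$). The obstruction is the meet identity $G_{S_1 \cap S_2} = G_{S_1} \cap G_{S_2}$, which can fail: for $R = \Zz$ and $a_0, a_1, a_2$ the three pairwise sums of the standard basis of $\Zz^3$, one checks $(\Zz a_0 + \Zz a_1) \cap (\Zz a_0 + \Zz a_2)$ is strictly larger than $\Zz a_0$. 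So the plan is to replace $G_S$ by an adjusted cube $G_S' := \sum_{i \in S} R a_i + J$ for some submodule $J \le T := \sum_i R a_i$ chosen to ``absorb'' all such defects simultaneously while preserving injectivity of the cube.

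Finding the right $J$ is the crux. The natural first guess $J = \bigcap_i \sum_{j \ne i} R a_j$ is insufficient (it vanishes in the $\Zz^3$ example while the meet identity still fails). I expect the correct approach is to pass to an appropriately chosen quotient $T/N$ in which the images of the $a_i$ become strongly independent, i.e., freely generate an internal direct sum; the trivial cube inside such a quotient then pulls back to a strict cube in $\Sub_R(M)$. To produce such an $N$, one can either use a Zorn's lemma argument picking $N$ maximal subject to keeping all images of $a_i$ outside the submodule generated by the others, or else argue inductively on $n$, using the $(n{-}1)$ case on suitable subquotients to assemble the required decomposition. The main technical difficulty is to show that the no-redundancy hypothesis on $n+1$ generators is exactly strong enough to realize such an $(n+1)$-fold independent decomposition somewhere in the sub-lattice structure of $M$.
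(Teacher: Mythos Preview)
Your forward direction is correct and matches the paper's argument exactly.

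Your reverse direction is a sketch, not a proof: you correctly identify that the naive cube $G_S = \sum_{i \in S} Ra_i$ can fail the meet identity, and you propose a Zorn's lemma fix, but you stop at ``the main technical difficulty is to show\ldots'' without actually showing it. The paper sidesteps this entirely by citing Proposition~6.3.2 of \cite{prdf3}, which says precisely that if $N_0, \ldots, N_n$ are submodules with $\sum_j N_j \gneq \sum_{j \ne i} N_j$ for every $i$, then the reduced rank exceeds $n$; applying this to $N_i = Ra_i$ finishes immediately.

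That said, your Zorn's lemma idea does work, and the verification you left undone is short. Take $N \le T := \sum_i Ra_i$ maximal subject to $a_i \notin N + \sum_{j \ne i} Ra_j$ for all $i$ (the empty $N = 0$ witnesses nonemptiness, and the condition is clearly closed under unions of chains). If the images $\bar a_i$ in $T/N$ failed to generate a direct sum, there would be some $i$ and some $x \in (N + Ra_i) \cap (N + \sum_{j \ne i} Ra_j)$ with $x \notin N$. Set $N' = N + Rx$. Since $N' \subseteq N + Ra_i$ and $N' \subseteq N + \sum_{j \ne i} Ra_j$, one checks for each $k$ that $N' + \sum_{j \ne k} Ra_j = N + \sum_{j \ne k} Ra_j$ (using whichever of the two containments absorbs $Rx$ into the sum), so $a_k \notin N' + \sum_{j \ne k} Ra_j$ still holds. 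This contradicts maximality of $N$. Hence in $T/N$ the $\bar a_i$ span an internal direct sum of nonzero summands, and the preimages of the coordinate-subset sums give a strict $(n{+}1)$-cube in $\Sub_R(M)$. So your approach is valid and arguably more self-contained than the paper's citation; you just needed to write down this maximality check.
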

\begin{proof}
  If the reduced rank of $\Sub_R(M)$ is greater than $n$, then there
  is a strict $(n+1)$-cube in $M$.  This corresponds to submodules
  $M^- \le M^+ \le M$ and an isomorphism
  \[ M^+/M^- \cong N_0 \oplus \cdots \oplus N_n\]
  where the $N_i$ are non-zero $R$-modules.  By Lemma~\ref{lem:wn},
  the right hand side does not satisfy $W_n$, and therefore neither do
  $M^+$ or $M$.

  Conversely, suppose $W_n$ fails, witnessed by $a_0, \ldots, a_n \in
  M$.  Let $N_i = M \cdot a_i$.  Then
  \begin{equation*}
    N_0 + \cdots + N_n > N_0 + \cdots + N_{i-1} + N_{i+1} + \cdots + N_n,
  \end{equation*}
  for any $0 \le i \le n$.  By Proposition~6.3.2 in \cite{prdf3},
  $\Sub_R(M)$ has reduced rank greater than $n$.
\end{proof}

\section{Diffeovaluation data} \label{sec:dv-fields}
In \S\ref{sec:dv-fields}, all fields will have characteristic 0, and
all rings will be $\Qq$-algebras.
\subsection{Mock $K/\mm$'s} \label{sec:mock}
Let $K$ be a valued field with valuation ring $\Oo$, maximal ideal
$\mm$, and residue field $k = \Oo/\mm$.
\begin{definition}
  A \emph{mock $K/\mm$} is a divisible $\Oo$-module $D$ extending $k$
  satisfying the following property: for any $x, y \in D$,
  \[ x \in \Oo \cdot y \text{ or } y \in \Oo \cdot x.\]
\end{definition}
Note that $K/\mm$ is naturally a mock $K/\mm$.

There is a theory $T$ whose models are pairs $(K,D)$, where $K$ is a
valued field and $k \hookrightarrow D$ is a mock $K/\mm$.
\begin{proposition} \label{prop:mog}
  Let $(K,D)$ be a model of $T$.  If $(K,D)$ is countable or
  $\aleph_1$-resplendent, then $D$ is isomorphic (as an extension of
  $k$) to $K/\mm$.
\end{proposition}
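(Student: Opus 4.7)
The plan is to handle the countable case by back-and-forth and reduce the $\aleph_1$-resplendent case to it.

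First equip any mock $K/\mm$ $D$ with a natural valuation $\val : D \to \Gamma_{\le 0} \cup \{+\infty\}$ extending the trivial valuation on $k$. For $x \in D \setminus k$, the mock axiom forces $\Oo x \supseteq k = \Oo \cdot 1$, so some $a \in \mm$ satisfies $ax = 1$, and one sets $\val(x) = -\val(a)$; well-definedness follows because ideals of the valuation ring $\Oo$ are totally ordered. Divisibility of $D$ makes $\val$ surjective onto $\Gamma_{\le 0}$, and a short case analysis gives $\val(x+y) \ge \min(\val x, \val y)$ and $\val(ax) = \val(a) + \val(x)$ (with the convention that values $> 0$ collapse to $+\infty$, meaning $ax = 0$). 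Consequently principal submodules $\Oo z \le D$ are totally ordered by reverse valuation, and $\Ann_\Oo(z) = \{b \in \Oo : \val(b) > -\val(z)\}$ depends only on $\val(z)$.

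For the countable case, enumerate $K/\mm = \{e_0, e_1, \ldots\}$ and $D = \{d_0, d_1, \ldots\}$ and inductively build a chain of $\Oo$-linear partial isomorphisms $\phi_n : M_n \to N_n$ starting with $\phi_0 = \id_k$, where $M_n \le K/\mm$ and $N_n \le D$ are finitely generated. Since a finite totally ordered sum of principal submodules is itself principal (attained by the largest summand), each $M_n = \Oo x_n$ and $N_n = \Oo y_n$ with $\phi_n(x_n) = y_n$ and $\val(x_n) = \val(y_n)$. The key extension step, at stage $2n$, adds $e_n$ (the nontrivial case being $\val(e_n) < \val(x_n)$): write $x_n = c e_n$ with $c \in \mm$ and $\val(c) = \val(x_n) - \val(e_n)$, use divisibility of $D$ to choose $w \in D$ with $cw = y_n$, and set $\phi_{n+1}(e_n) = w$. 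Then $\val(w) = \val(y_n) - \val(c) = \val(e_n)$, so $\Ann_\Oo(w) = \Ann_\Oo(e_n)$, which makes $b e_n \mapsto b w$ a well-defined $\Oo$-module isomorphism $\Oo e_n \to \Oo w$ extending $\phi_n$ (the identity on $k \subseteq \Oo x_n$ is preserved, and $cw = y_n$ ensures agreement at $x_n$). Stage $2n+1$ is symmetric, and the union $\phi = \bigcup_n \phi_n$ is the required isomorphism $K/\mm \to D$ over $k$.

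For the $\aleph_1$-resplendent case, add a function symbol $\phi$ from the $K/\mm$-sort to the $D$-sort and let $T^*$ be $T$ together with the countable list of axioms stating that $\phi$ is an $\Oo$-module isomorphism restricting to the identity on $k$. Take a countable elementary submodel $(K_1, D_1) \preceq (K, D)$ by L\"owenheim--Skolem, apply the countable case to produce an iso $\phi_1 : K_1/\mm_1 \to D_1$, and observe that $(K_1, D_1, \phi_1)$ realizes $T^*$ together with the $L$-theory of $(K, D)$; hence this combined theory is consistent. By $\aleph_1$-resplendence, $(K, D)$ admits an expansion modeling $T^*$, yielding the required isomorphism. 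The chief obstacle is the first step: setting up the valuation on $D$ and checking that its value set and annihilator structure match those of $K/\mm$. Once this groundwork is in place, both the back-and-forth and the resplendence reduction are routine.
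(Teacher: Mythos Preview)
Your proof is correct, but it takes a genuinely different route from the paper's.

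The paper does not set up the valuation on $D$ in advance. Instead, it uses only countable cofinality of $\Gamma$: pick $a_0 = 1, a_1, a_2, \ldots \in K$ with $\val(a_i)$ strictly decreasing and unbounded below, so that $K = \bigcup_i \Oo a_i$. Using divisibility of $D$, recursively choose $b_i \in D$ with $b_0 = 1 \in k$ and $b_{i-1} = (a_{i-1}/a_i)b_i$. The maps $f_i : \Oo a_i \to D$, $x \mapsto (x/a_i)b_i$ are compatible and glue to $f : K \to D$; one then checks directly that $\ker f = \mm$, that $f$ is onto (here the mock axiom is used), and that $f$ restricts to the given embedding of $k$. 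The valuation on $D$ is derived only \emph{afterwards}, as a corollary, by reducing to the case $D = K/\mm$.

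Your approach inverts this: you first extract the valuation on $D$ directly from the axioms, then run a back-and-forth using $\val$ to match generators. This is longer but more systematic --- the structure theory of $D$ (principal submodules, annihilators, totally ordered by $\val$) is made explicit up front, and the back-and-forth makes the symmetry between $K/\mm$ and $D$ manifest rather than building the map in one direction and checking surjectivity separately. The paper's approach is shorter and gives an explicit formula, but relies on the mild cleverness of the telescoping $b_i$.

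Two small remarks on your write-up. First, your justification for well-definedness of $\val(x)$ (``ideals are totally ordered'') is a bit compressed; the actual point is that $ax = a'x = 1$ with $\val(a) < \val(a')$ forces $a - a'$ to be an associate of $a$, whence $ax = 0$, a contradiction. Second, in the resplendence step you show consistency of $T^*$ with $\Th(K,D)$, whereas resplendence asks for consistency with the elementary diagram; the gap is bridged by compactness (finitely many diagram sentences can be realized in your countable $(K_1,D_1)$ after moving the parameters). The paper glosses over this entirely, so you are not being less careful than the original.
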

\begin{proof}
  The resplendent case follows from the countable case.  Assume $K, D$
  are countable.  Then the value group $\Gamma$ has countable
  cofinality.  Take a sequence
  \[ a_0, a_1, \ldots \]
  in $K$ such that $a_0 = 1$ and the sequence
  \[ \val(a_0), \val(a_1), \ldots\]
  is descending with no lower bound.  Then
  \begin{equation*}
    \Oo = \Oo \cdot a_0 \subseteq \Oo \cdot a_1 \subseteq \cdots 
  \end{equation*}
  and the union of this chain is $K$.

  By divisibility, we can find a sequence
  \begin{equation*}
    b_0, b_1, \ldots
  \end{equation*}
  in $D$ such that
  \begin{itemize}
  \item $b_0$ is the image of $1$ under the embedding $k
    \hookrightarrow D$.
  \item $b_{i-1} = (a_{i-1}/a_i) b_i$, for all $i \ge 1$.
  \end{itemize}
  By induction on $i$, the $b_i$ are all non-zero.  Define $f_i : \Oo
  \cdot a_i \to D$ by $f_i(x) = (x/a_i)b_i$.  If $x \in \Oo \cdot
  a_i$, then
  \[ f_i(x) = (x/a_i)b_i = (x/a_i)(a_i/a_{i+1})b_{i+1} = (x/a_{i+1})b_{i+1} = f_{i+1}(x).\]
  Therefore the $f_i$ glue together to yield a morphism
  \[ f : K \to D.\]
  Moreover, $f(a_i) = f_i(a_i) = (a_i/a_i)b_i = b_i$ for all $i$.
  \begin{claim}
    For any $x \in K$,
    \[ f(x) = 0 \iff \val(x) > 0.\]
  \end{claim}
  \begin{claimproof}
    First suppose $\val(x) \ge 0$.  Then $x \in \Oo = \Oo \cdot a_0 =
    \dom(f_0)$, and so
    \[ f(x) = f_0(x) = (x/a_0)b_0 = xb_0.\]
    By choice of $b_0$, the annihilator $\Ann_\Oo(b_0)$ is exactly
    $\mm$, and so
    \[ f(x) = 0 \iff xb_0 = 0 \iff x \in \mm \iff \val(x) > 0.\]
    Next suppose $\val(x) \le 0$.  Then $1/x \in \Oo$, and so
    \[ f(1) = f((1/x)x) = (1/x) f(x),\]
    because $f$ is $\Oo$-linear.  By the first case,
    \[ \val(1) = 0 \implies f(1) \ne 0 \implies f(x) \ne 0. \qedhere\]
  \end{claimproof}
  Therefore $\ker(f) = \mm$, and $f$ induces an embedding
  \[ K/\mm \hookrightarrow D.\]
  Restricted to $k = \Oo/\mm$, this embeding is
  \[ (x + \mm) \mapsto f(x) = f_0(x) = (x/a_0)b_0 = xb_0.\]
  By choice of $b_0$, this is the given embedding of $k$ into $D$.

  It remains to show that $f$ is onto.  Suppose not.  Take an element
  $y$ of $D$ that is not in the image of $f$.  Then
  \[ y \notin \Oo \cdot b_i,\]
  for any $i$, since $\Oo \cdot b_i$ is the image of $f_i$.  By
  definition of mock $K/\mm$, it follows that
  \begin{equation*}
    b_i \in \Oo \cdot y
  \end{equation*}
  for all $i$.  In particular, there are $c_i \in \Oo$ such that $b_i = c_i y$.  Take $i$ large enough that $\val(a_ic_0) < 0$.  Then
  \begin{equation*}
    (1/a_i)y = \frac{1}{a_ic_0} c_0 y = \frac{1}{a_ic_0} b_0 = 0,
  \end{equation*}
  because $\Ann(b_0) = \mm$.  But then
  \begin{equation*}
    b_0 = f_i(a_0) = f_i(1) = (1/a_i)b_i = (1/a_i)c_iy = c_i(1/a_i)y = c_i0 = 0,
  \end{equation*}
  contradicting the choice of $b_0$.
\end{proof}
\begin{corollary}
  Let $D$ be a mock $K/\mm$.  Then there is a map $\val : D \to \Gamma
  \cup \{+\infty\}$ with the following properties:
  \begin{enumerate}
  \item $\val(x) \le 0$ or $\val(x) = +\infty$ for all $x
    \in D$.
  \item $\val(x) = +\infty$ if and only if $x = 0$.
  \item $\val(x) \ge 0$ if and only if $x$ is in the image of $k
    \hookrightarrow D$.
  \item For any $a \in \Oo$ and $x \in D$,
    \begin{equation*}
      \val(ax) = 
      \begin{cases}
        \val(a) + \val(x) & \text{ if } \val(a) + \val(x) \le 0 \\
        +\infty & \text{ if } \val(a) + \val(x) > 0.
      \end{cases}
    \end{equation*}
  \item For any $x, y \in D$,
    \begin{equation*}
      \val(x+y) \ge \min(\val(x),\val(y)).
    \end{equation*}
  \item If $\val(x) \le \val(y)$, then $y \in \Oo \cdot x$.
  \item For any $\gamma \in \Gamma$, there is $x \in D$ such that
    $\val(x) \le \gamma$.
  \end{enumerate}
\end{corollary}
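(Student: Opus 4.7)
The plan is to construct $\val$ directly from the $\Oo$-module structure of $D$, using the chain property that any two cyclic $\Oo$-submodules of $D$ are comparable. Set $\val(0) = +\infty$, set $\val(x) = 0$ for any nonzero $x$ in the image of $k \hookrightarrow D$, and for $x \in D \setminus k$ apply the chain axiom to the pair $(x, 1)$: either $x \in \Oo \cdot 1 = k$ (excluded) or $1 \in \Oo \cdot x$, so there is $a \in \Oo$ with $ax = 1$; define $\val(x) = -\val(a)$. Such an $a$ cannot be a unit of $\Oo$, else $x = a^{-1} \cdot 1 \in k$, so $\val(a) > 0$ and $\val(x) < 0$.

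The main technical step is well-definedness. If $ax = a'x = 1$, then $a - a' \in \Ann_\Oo(x)$. I claim every $b \in \Ann_\Oo(x)$ satisfies $\val(b) > \val(a)$: no unit of $\Oo$ annihilates $x$ (else $x = u^{-1}(ux) = 0$), and if some $b \in \Ann_\Oo(x)$ had $\val(b) \le \val(a)$, then $a/b \in \Oo$ and $a = (a/b)b \in \Oo \cdot \Ann_\Oo(x) = \Ann_\Oo(x)$, contradicting $ax = 1$. Thus $\val(a - a') > \val(a)$, forcing $\val(a) = \val(a')$ by the ultrametric inequality in $\Gamma$.

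The remaining properties are then established by direct calculation. Properties (1)--(3) are immediate from the definition. For the multiplicativity law (4), I fix $x \notin k$ with $bx = 1$ and split into cases comparing $\val(a)$ with $\val(b)$: when $\val(a) < \val(b)$, one has $b/a \in \Oo$ and $(b/a)(ax) = bx = 1$, yielding $\val(ax) = \val(a) - \val(b) = \val(a) + \val(x)$; when $\val(a) = \val(b)$, $a/b \in \Oo^\times$ and $ax = (a/b) \cdot 1 \in k^\times$, giving $\val(ax) = 0$; when $\val(a) > \val(b)$, $a/b \in \mm$ annihilates $1 \in k$ (since $\mm$ kills $k$ as an $\Oo$-module), so $ax = 0$ and $\val(ax) = +\infty$. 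The cases $x \in k$ and $x = 0$ are handled directly. For divisibility (6) with $\val(x) \le \val(y)$ and both nonzero, the chain axiom gives $y \in \Oo x$ or $x \in \Oo y$; in the second case $x = cy$ with $c \in \Oo$, and (4) forces $\val(c) = \val(x) - \val(y) \le 0$, hence $\val(c) = 0$ and $c \in \Oo^\times$, so $y = c^{-1} x \in \Oo x$. The ultrametric (5) then follows by writing the element of larger valuation as $cx$ and expanding $x + cx = (1+c)x$, noting $\val(1+c) \ge 0$. Finally, (7) is immediate from divisibility of $D$: for $\gamma < 0$, pick $a \in \Oo$ with $\val(a) = -\gamma$ and $x \in D$ with $ax = 1$.

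The main obstacle I anticipate is the well-definedness step, which rests on the observation that the annihilator of a nonzero element of a module over a valuation ring cannot contain any element whose valuation is at most that of a non-annihilator. Once this is in place, everything else is a routine interplay between the chain axiom on $D$ and the valuation arithmetic of $\Oo$.
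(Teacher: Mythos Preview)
Your argument is correct, but it takes a genuinely different route from the paper's. You work directly from the axioms of a mock $K/\mm$: the chain condition gives you, for each $x \notin k$, an $a \in \Oo$ with $ax = 1$, and you set $\val(x) = -\val(a)$; well-definedness follows from your annihilator lemma, and properties (1)--(7) then drop out by case analysis on valuations. The paper instead observes that the three claims
\begin{itemize}
\item every nonzero $x \in D$ has some $a \in \Oo$ with $ax \in D^* := k \setminus \{0\}$,
\item any two such $a$ have the same valuation,
\item the resulting map $\val(x) := -\val(a)$ satisfies (1)--(7),
\end{itemize}
are all expressible by first-order sentences in the two-sorted structure $(K,D)$, passes to a resplendent elementary extension, and then invokes Proposition~\ref{prop:mog} to identify $D$ with the literal $K/\mm$, where everything is transparent.

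Your approach is more elementary and self-contained: it never leaves the given model, and in particular does not depend on the back-and-forth construction of Proposition~\ref{prop:mog}. The paper's approach buys conceptual economy---once one knows that mock $K/\mm$'s are all ``the same'' up to elementary equivalence, any first-order property need only be checked for $K/\mm$ itself---at the cost of that earlier structural result. One small remark: the paper's definition uses $ax \in D^*$ rather than $ax = 1$; your choice $ax = 1$ is a harmless normalization, since any nonzero element of $k$ can be scaled to $1$ by a unit of $\Oo$, which does not change $\val(a)$.
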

\begin{proof}
  Let $D^*$ be the image of $k \setminus \{0\}$ in $D$.  We claim that
  \begin{itemize}
  \item For every non-zero $x$ in $D$, there is an $a \in \Oo$ such
    that $ax \in D^*$.
  \item If $ax \in D^*$ and $bx \in D^*$, then $\val(a) = \val(b)$.
  \item If we define $\val : D \to \Gamma \cup \{+\infty\}$ as
    \begin{equation*}
      \val(x) = 
      \begin{cases}
        +\infty & \text{ if } x = 0 \\
        -\val(a) & \text{ if } ax \in D^*,
      \end{cases}
    \end{equation*}
    then $\val$ satisfies the listed conditions.
  \end{itemize}
  These three claims can be expressed by a first-order sentence, so we
  may pass to a resplendent elementary extension.  Then we may assume
  $D$ is $K/\mm$, in which case the three claims are straightforward.
\end{proof}

\subsection{Diffeovalued fields}
\begin{definition}
  A \emph{diffeovalued field} is a structure $(K,\Oo,D,\partial)$
  where
  \begin{itemize}
  \item $(K,\Oo)$ is a valued field (of equicharacteristic 0).
  \item $k \hookrightarrow D$ is a mock $K/\mm$.
  \item $\partial : \Oo \to D$ is a derivation.
  \end{itemize}
\end{definition}
The theory of diffeovalued fields is first-order.
\begin{definition}
  A \emph{normalization} of a diffeovalued field is a choice of an
  isomorphism $D \cong K/\mm$ (respecting the embedding $k
  \hookrightarrow D$).
\end{definition}
Every sufficiently resplendent diffeovalued field admits a
normalization, by Proposition~\ref{prop:mog}.
\begin{definition}
A \emph{normalized diffeovalued field} is a diffeovalued field with a choice of a
normalization.
\end{definition}
Equivalently, a normalized diffeovalued field is a valued field $(K,\Oo)$ with a
derivation $\partial : \Oo \to K/\mm$.
\begin{definition}
  Let $K$ be a normalized diffeovalued field.  A \emph{lifting} is a derivation
  $\delta : K \to K$ such that
  \[ \partial x = (\delta x) + \mm\]
  for $x \in \Oo$.
\end{definition}
\begin{proposition}\label{prop:lift}
  Let $K$ be a sufficiently resplendent diffeovalued field.  Suppose
  the value group $\Gamma$ is $p$-divisible for at least one prime.
  Then $K$ admits a normalization and a lifting.
\end{proposition}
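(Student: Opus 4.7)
The plan is to produce the normalization and the lifting by two separate applications of resplendence. Normalization is immediate from Proposition~\ref{prop:mog}: since $K$ is sufficiently resplendent, the pair $(K,D)$ is in particular $\aleph_1$-resplendent, so that proposition produces an isomorphism $D \cong K/\mm$ of extensions of $k$. Fix such an isomorphism and work from now on in the normalized setting, with $\partial : \Oo \to K/\mm$.

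For the lifting, I would express ``there exists a derivation $\delta : K \to K$ lifting $\partial$'' as the consistency of an expansion of the language by a new unary function symbol $\delta$, subject to the first-order axioms of additivity, the Leibniz rule, $\Qq$-linearity, and the lifting condition $\delta(x) - s(x) \in \mm$ for $x \in \Oo$ (where $s$ is a set-theoretic section of $\Oo \to K/\mm$, invoked via auxiliary parameters as needed). By resplendence of $K$, it then suffices to show this expanded theory is consistent with the theory of $(K,\Oo,\partial)$, equivalently, to exhibit a \emph{single} normalized diffeovalued field $K' \equiv K$, with $p$-divisible value group, that admits a lifting.

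To produce a lifting in such a $K'$, observe that the quotient rule canonically extends any $\Qq$-linear derivation $\Oo' \to K'$ to a derivation $K' \to K'$ (since $K' = \Frac(\Oo')$), so the essential step is to lift $\partial': \Oo' \to K'/\mm'$ to a $\Qq$-linear $\tilde\partial' : \Oo' \to K'$ along the short exact sequence of $\Oo'$-modules
\[ 0 \longrightarrow \mm' \longrightarrow K' \longrightarrow K'/\mm' \longrightarrow 0. \]
I plan to exploit the $p$-divisibility of $\Gamma$ by factoring each $x \in \Oo'$ as $u \cdot y^p$, with $u \in (\Oo')^\times$ and $y \in (K')^\times$, and then to reformulate the lifting problem through the logarithmic-derivative homomorphism $\dlog : (\Oo')^\times \to K'/\mm'$, $x \mapsto \partial' x / x$: finding a lift $\tilde{\partial}'$ is equivalent to lifting $\dlog$ to a group homomorphism into $K'$ compatibly with the additive Leibniz identities that relate units to non-units via the factorization $x = u y^p$.

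The main obstacle will be this last step: showing that the obstruction class to such a simultaneous lift, which lies in $\Ext^1_{\Oo'}(\Omega^1_{\Oo'/\Qq}, \mm')$, vanishes. My expectation is that the $p$-divisibility of $\Gamma$ is exactly what is needed to make the $u \cdot y^p$ reduction close up, while the divisibility of $\mm'$ as an abelian group (automatic from equicharacteristic zero, since $\mm'$ is a $\Qq$-vector space) provides the injectivity needed on the coefficient side to make the relevant $\Ext^1$ vanish. Once $\tilde\partial'$ is obtained, extending by the quotient rule to $\delta' : K' \to K'$ is formal, and transporting back to $K$ by resplendence completes the proof.
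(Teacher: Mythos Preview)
Your normalization step is fine and matches the paper: Proposition~\ref{prop:mog} gives $D \cong K/\mm$ under resplendence.

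The lifting argument, however, has a genuine gap. You correctly locate the obstruction in $\Ext^1_{\Oo'}(\Omega_{\Oo'/\Qq}, \mm')$, but your proposed reason for its vanishing is wrong: divisibility of $\mm'$ as an abelian group (i.e., as a $\Qq$-vector space) says nothing about injectivity as an $\Oo'$-module. Over a valuation ring, injectivity of a module is equivalent to $\Oo'$-divisibility, and $\mm'$ is certainly not $\Oo'$-divisible: for any $r$ with $\val(r) > 0$ one has $r\mm' = \{x : \val(x) > \val(r)\} \subsetneq \mm'$. So the coefficient side gives you nothing, and the $u \cdot y^p$ / $\dlog$ reformulation does not circumvent this. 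Indeed Proposition~\ref{z-probs} shows that without a hypothesis on $\Gamma$ the lifting can fail in \emph{every} elementary extension, so some nontrivial input on the \emph{source} side $\Omega_{\Oo'/\Qq}$ is unavoidable.

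What the paper actually uses is on the source side: the Gabber--Ramero theorem (Fact~\ref{gr-flat}) that $\Omega_{\Oo/\Qq}$ is flat over $\Oo$. Flatness alone is still not projectivity, so one cannot simply kill the $\Ext^1$; instead Theorem~\ref{resplendent} runs an elementary chain $(\Oo_i,M_i,N_i)$ and, at each stage, uses saturation to reduce to lifting along \emph{finitely generated} submodules of $\Omega_{\Oo_i/\Qq}$ (or of the relative $\Omega_{\Oo_i/\Oo_{i-1}}$), which are free by flatness over a valuation ring (Remark~\ref{valuation-flatness}); Lemma~\ref{ortho} then produces the diagonal. The $p$-divisibility hypothesis enters only to make $\Gamma$ ``$\Zz$-less,'' which is exactly what Proposition~\ref{flat-extension} needs to guarantee flatness of the relative differentials $\Omega_{\Oo_i/\Oo_{i-1}}$ at the inductive step. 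Your high-level reduction via resplendence is compatible with this, but the substance of the argument is the flatness of K\"ahler differentials together with the saturation trick, not any injectivity of $\mm'$.
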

\begin{proof}
  The normalization comes from Proposition~\ref{prop:mog}, and the
  lifting comes from Corollary~\ref{metaval-lifting} in the appendix.
\end{proof}
Some assumption on the value group is necessary:
Proposition~\ref{z-probs} in the appendix gives an example of a
normalized diffeovalued field which cannot be lifted, even after
passing to an elementary extension.
\begin{definition}
  A \emph{lifted diffeovalued field} is a normalized diffeovalued
  field with a choice of a lifting.
\end{definition}
Equivalently, a lifted diffeovalued field is a field with a derivation and a
valuation.
\begin{lemma}\label{inverses}
  Let $K$ be a diffeovalued field.  If $x \in \Oo^\times$, then
  $\partial(x^{-1}) = -x^{-2} \partial x$.
\end{lemma}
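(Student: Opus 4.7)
The plan is to apply the Leibniz rule to the product $x \cdot x^{-1} = 1$ and solve for $\partial(x^{-1})$, using the $\Oo$-module structure on $D$ to divide by $x^2$.

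First I would check the preliminary fact that $\partial(1) = 0$. Applying the Leibniz rule from the definition of diffeovalued field to the product $1 = 1 \cdot 1 \in \Oo$ yields
\[ \partial(1) = 1 \cdot \partial(1) + 1 \cdot \partial(1) = 2 \partial(1), \]
and hence $\partial(1) = 0$, since we are in characteristic $0$ (or more generally, since $D$ is a $\Qq$-module). Next, since $x \in \Oo^\times$, the inverse $x^{-1}$ also lies in $\Oo$, and we may legitimately compute $\partial(x^{-1}) \in D$. Applying Leibniz to the product $x \cdot x^{-1} = 1$ gives
\[ 0 = \partial(1) = \partial(x \cdot x^{-1}) = x \cdot \partial(x^{-1}) + x^{-1} \cdot \partial x, \]
so $x \cdot \partial(x^{-1}) = - x^{-1} \cdot \partial x$ in the $\Oo$-module $D$.

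Finally, multiplying both sides by $x^{-2} \in \Oo$ (using the $\Oo$-module structure on $D$) and associativity of scalar multiplication, I obtain
\[ \partial(x^{-1}) = x^{-2} \cdot x \cdot \partial(x^{-1}) = - x^{-2} \cdot x^{-1} \cdot \partial x = - x^{-2} \partial x, \]
as desired. There is no real obstacle here: the argument is the standard one-line derivation identity, and the only thing to verify is that the Leibniz rule and $\Oo$-linearity are available in the setting of a diffeovalued field, both of which are immediate from the definition.
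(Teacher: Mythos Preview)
Your approach is exactly the paper's: apply the Leibniz rule to $x \cdot x^{-1} = 1$ and solve. However, your final display contains an arithmetic slip. You write
\[
\partial(x^{-1}) = x^{-2} \cdot x \cdot \partial(x^{-1}) = - x^{-2} \cdot x^{-1} \cdot \partial x = - x^{-2} \partial x,
\]
but $x^{-2}\cdot x = x^{-1}$ and $x^{-2}\cdot x^{-1} = x^{-3}$, so neither the first nor the last equality holds as written. The fix is to multiply $x\cdot\partial(x^{-1}) = -x^{-1}\partial x$ by $x^{-1}$ rather than $x^{-2}$:
\[
\partial(x^{-1}) = x^{-1}\cdot\bigl(x\cdot\partial(x^{-1})\bigr) = -x^{-1}\cdot x^{-1}\partial x = -x^{-2}\partial x.
\]
With that correction the argument is complete and matches the paper's proof.
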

\begin{proof}
  This follows as usual from
  \[ 0 = \partial (x x^{-1}) = x^{-1} \partial x + x \partial (x^{-1}). \qedhere\]
\end{proof}

\subsection{Dense diffeovalued fields}
\begin{definition}
  A diffeovalued field is \emph{dense} if for every $x \in D$, the
  fiber
  \begin{equation*}
    \{y \in \Oo : \partial y = x\}
  \end{equation*}
  is dense in $\Oo$, with respect to the valuation topology.
\end{definition}
The theory of dense diffeovalued fields is first-order.
\begin{remark} \label{rem:dnon}
  Denseness implies that the value group $\Gamma$ is non-trivial.
\end{remark}
\begin{proof}
  If the valuation is trivial, then $\Oo = K$ and the valuation
  topology is discrete.  Then every fiber
  \begin{equation*}
    \{y \in K : \partial y = x\}
  \end{equation*}
  is dense in $\Oo$, hence equal to $\Oo$.  This is absurd unless $D$
  is a singleton.  But $D$ contains a submodule isomorphic to $k \cong
  K$.
\end{proof}
\subsection{The diffeovaluation topology} \label{sec:dtop}
Fix a \emph{dense} diffeovalued field $K$.  Define
\begin{align*}
  R = \{x \in K &: \val(x) \ge 0 \text{ and } \val(\partial x) \ge 0\} \\
  Q = \{x \in K &: \val(x) \ge 0 \text{ and } \val(\partial x) > 0\} \\
  I = \{x \in K &: \val(x) > 0 \text{ and } \val(\partial x) > 0\}.
\end{align*}
Note that in the definition of $Q, I$,
\begin{equation*}
  \val(\partial x) > 0 \iff \partial x = 0,
\end{equation*}
because $D$ is a mock $K/\mm$.  In fact, $Q$ is merely the kernel of
$\partial : \Oo \to D$.
\begin{lemma}\label{lem-friends-v2}
  ~
  \begin{enumerate}
  \item \label{lef1} $R, Q$ are proper subrings of $K$.
  \item $I$ is a proper ideal in $R$ and in $Q$.
  \item \label{lef3} $\Frac(Q) = \Frac(R) = K$.
  \item \label{lef4} $Q$ is a local ring with maximal ideal $I$.
  \item $I \ne 0$.
  \end{enumerate}
\end{lemma}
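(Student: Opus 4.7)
The plan is to treat each clause in order, leaning throughout on the observation that an element $y \in D$ with $\val(y) > 0$ must satisfy $y = 0$, since the image of $\val$ lies in $\Gamma_{\le 0} \cup \{+\infty\}$. This simplifies $Q$ to $\{x \in \Oo : \partial x = 0\}$ and $I$ to $Q \cap \mm$, and reduces most of the verifications to standard ideal-theoretic bookkeeping; the only step requiring thought is (\ref{lef3}).

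For (\ref{lef1}), I check that $R$ and $Q$ are subrings of $\Oo$. Closure under addition is immediate from the ultrametric inequality on $K$ and on $D$. For multiplication, the Leibniz rule $\partial(xy) = x \partial y + y \partial x$ combines with the inequality $\val(a \cdot z) \ge \val(a) + \val(z)$ for $a \in \Oo$ and $z \in D$ (which holds in both cases of the piecewise definition of the $D$-valuation) to keep $\partial(xy)$ in the relevant half-line. Both rings contain $1$ since $\partial 1 = 0$, and both sit inside $\Oo$, which is proper because $\Gamma$ is non-trivial by Remark~\ref{rem:dnon}. For (2), the key calculation is that for $x \in I$ and $a \in R$ one has $\partial(ax) = x \partial a$ (as $\partial x = 0$), and $\val(x \partial a) \ge \val(x) + \val(\partial a) > 0$ forces $x \partial a = 0$; with $\val(ax) > 0$ this gives $I \lhd R$, hence also $I \lhd Q$, and $1 \notin I$ keeps the ideal proper.

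The main step is (\ref{lef3}), and I exploit density twice. First I produce elements of $Q$ of arbitrarily large valuation: choose non-zero $\pi \in \mm$ (which exists by Remark~\ref{rem:dnon}) and for any $n$ approximate $\pi^n$ by $q \in Q$ with $\val(q - \pi^n) > n \val(\pi)$, so $\val(q) = n\val(\pi)$. Second, given $x \in \Oo$ with $\partial x \ne 0$, I pick such a $q \in Q$ with $\val(q) > -\val(\partial x)$; then the action formula on $D$ gives $q \partial x = 0$, so $\partial(qx) = q \partial x + x \partial q = 0$, i.e.\ $qx \in Q$ and $x = (qx)/q \in \Frac(Q)$. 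If $\partial x = 0$ then $x \in Q$ already; if $x \notin \Oo$ I apply the same to $1/x \in \mm$. This gives $\Frac(Q) = K$, and $Q \subseteq R$ yields $\Frac(R) = K$ as well. This is the step I expect to be the main obstacle, since one has to organize density so that denominators are cleared and $\partial$ is simultaneously annihilated.

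Clauses (\ref{lef4}) and (5) are then quick. For (\ref{lef4}), any $x \in Q \setminus I$ has $\val(x) = 0$, so $x^{-1} \in \Oo$, and Lemma~\ref{inverses} gives $\partial(x^{-1}) = -x^{-2}\partial x = 0$, placing $x^{-1} \in Q$; thus $Q^\times = Q \setminus I$, making $Q$ local with maximal ideal $I$. For (5), one density step suffices: approximate a non-zero $\pi \in \mm$ by $q \in Q$ with $\val(q - \pi) > \val(\pi)$, so $\val(q) = \val(\pi) > 0$ and hence $0 \ne q \in Q \cap \mm = I$.
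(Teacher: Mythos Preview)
Your proof is correct and follows essentially the same route as the paper: parts (\ref{lef1}), (2), (\ref{lef3}), and (\ref{lef4}) match the paper's arguments almost exactly, with your treatment of (\ref{lef3}) being a slightly more explicit version of the paper's one-step density argument. The only genuine difference is in (5): you construct a nonzero element of $I$ directly by approximating some $\pi \in \mm$ with an element of $Q$, whereas the paper argues by contradiction that $I = 0$ would force $Q$ to be a field, hence $Q = K$ by (\ref{lef3}), contradicting (\ref{lef1}).
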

\begin{proof}
  ~
  \begin{enumerate}
  \item Easy.  Properness holds because $R, Q \subseteq \Oo$, and
    $\Oo$ is a proper subring by Remark~\ref{rem:dnon}.
  \item Easy.  Properness holds because $1 \notin I$.
  \item As $Q \subseteq R$, it suffices to show $\Frac(Q) = K$.  Given
    $a \in K$, we must show $a \in \Frac(Q)$.  Replacing $a$ with
    $a^{-1}$, we may assume $a \in \Oo$.  If $a \in Q$, we are done.
    Otherwise, $\val(\partial a) \le 0$.  By density, there is $b$
    such that
    \begin{align*}
      \val(b) &> - \val(\partial a) \\
      b &\ne 0 \\
      \partial b &= 0.
    \end{align*}
    Then $b \in Q$, and so $ab \in \Oo$.  Also,
    \[ \partial (ab) = a \partial b + b \partial a = b \partial a = 0.\,\]
    because $\partial b = 0$, and $\val(b)$ is high enough for $b$ to
    annihilate $\partial a$.  Thus $ab \in Q$ and $a = (ab)/b \in
    \Frac(Q)$.
  \item Since $I$ is a proper ideal in $Q$, it suffices to show
    \begin{equation*}
      x \in Q \setminus I \implies x^{-1} \in Q.
    \end{equation*}
    Suppose $x \in Q \setminus I$, so that $\val(x) = 0$ and $\partial
    x = 0$.  Then $x^{-1} \in \Oo$, and
    \begin{equation*}
      \partial(x^{-1}) = -x^{-2} \partial x = 0,
    \end{equation*}
    by Lemma~\ref{inverses}.  So $x^{-1} \in Q$.
  \item By (\ref{lef4}), $Q/I$ is a field.  If $I = 0$, then $Q$ is a
    field, and $Q = K$ by (\ref{lef3}).  This contradicts
    (\ref{lef1}). \qedhere
  \end{enumerate}
\end{proof}
\begin{proposition}\label{prop:dv-top}
  Let $K$ be a dense diffeovalued field.  There is a locally bounded
  field topology on $K$ characterized by the fact that either of the
  following are a neighborhood basis of 0:
  \begin{align*}
    \{a R : a \in K^\times\} \\
    \{a Q : a \in K^\times\}.
  \end{align*}
  The topology is locally bounded, non-discrete, and Hausdorff.
\end{proposition}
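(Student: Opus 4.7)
The plan is to verify the field-topology axioms directly, building on the algebraic structure of $R$, $Q$, and $I$ established in Lemma~\ref{lem-friends-v2}. Since $R \subsetneq K = \Frac(R)$, the results of \cite{prestel-ziegler} (Example 1.2 and Theorem 2.2(a)) immediately yield a locally bounded ring topology with neighborhood basis $\{aR : a \in K^\times\}$; the filter-basis property follows from the observation that for $a_i = p_i/q_i$ with $p_i, q_i \in R$, one has $p_1 p_2 R \subseteq a_1 R \cap a_2 R$. The $R$- and $Q$-topologies coincide: $aQ \subseteq aR$ is trivial, and conversely $(ai) R \subseteq aQ$ for any $i \in I \setminus \{0\}$, using the pivotal identity $I \cdot R \subseteq Q$---indeed, for $i \in I$ and $r \in R$ one has $ir \in \Oo$ and $\partial(ir) = i \partial r + r \partial i = i \partial r$ (since $\partial i = 0$), and $\val(i) + \val(\partial r) > 0$ forces $i \partial r = 0$ in the mock $K/\mm$.

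The main obstacle is continuity of inversion. It suffices to prove continuity at $1$: for $a \in K^\times$, the identity $(a+x)^{-1} - a^{-1} = a^{-1}[(1+x/a)^{-1} - 1]$ and continuity of multiplication by $a^{\pm 1}$ reduce inversion at $a$ to inversion at $1$. Moreover, the family $\{bR : b \in I \setminus \{0\}\}$ is cofinal in $\{aR : a \in K^\times\}$: given $a \in K^\times$, write $a = p/q$ with $p, q \in Q \setminus \{0\}$ (via Lemma~\ref{lem-friends-v2}.\ref{lef3}) and take $b = pqj$ for $j \in I$ with $\val(j)$ sufficiently large; then $b \in I$, and $b/a = q^2 j \in Q \subseteq R$, so $bR \subseteq aR$. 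Thus continuity at $1$ amounts to showing that for every $b \in I \setminus \{0\}$, $(1 + bR)^{-1} - 1 \subseteq bR$. Fix $x = bu$ with $u \in R$ and set $w := (1+x)^{-1} - 1 = -x/(1+x)$, so $w/b = -u \cdot m$ where $m := (1+bu)^{-1} \in \Oo^\times$. By Lemma~\ref{inverses} and $\partial b = 0$, $\partial m = -m^2 \partial(bu) = -m^2 b \partial u$, with $\val(\partial m) \ge \val(b) + \val(\partial u) > 0$ forcing $\partial m = 0$. Hence $\partial(w/b) = -m \partial u - u \partial m = -m \partial u$, which has valuation $\val(\partial u) \ge 0$; thus $w/b \in R$ and $w \in bR$, as required.

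The remaining topological properties follow routinely. Local boundedness: $R$ is itself a bounded neighborhood of $0$, since $R \cdot (bR) = bR$ for every $b \in K^\times$. Non-discreteness: each basic neighborhood $aR$ contains the non-zero element $a$. Hausdorffness: if $x \in \bigcap_{a \in K^\times} aR$, then $x/a \in R$ for every $a$, equivalently $xc \in R$ for every $c \in K^\times$, so $xK \subseteq R$; since $R$ is a proper subring of $K$, this forces $x = 0$.
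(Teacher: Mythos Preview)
Your proof is correct, but it takes a longer route than the paper's. The paper observes that $Q$ is a proper \emph{local} subring of $K = \Frac(Q)$ (this is Lemma~\ref{lem-friends-v2}.\ref{lef3}--\ref{lef4}), and then cites Theorem~2.2(b) of \cite{prestel-ziegler} directly: a proper local subring with fraction field $K$ already induces a locally bounded, non-discrete, Hausdorff \emph{field} topology---inversion continuity comes for free. The equivalence with the $R$-topology is then one line: for any nonzero $a \in I$, $aR \subseteq RI = I \subseteq Q$, using that $I$ is an ideal of $R$ (Lemma~\ref{lem-friends-v2}).

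By contrast, you start from $R$ and cite only Theorem~2.2(a), which gives a ring topology; you then verify continuity of inversion by hand. Your calculation is fine (the key step $\partial m = 0$ via $b \in I$ annihilating $\partial u$ is exactly right), and your cofinality argument for $\{bR : b \in I \setminus \{0\}\}$ is valid. But all of this work is subsumed by the single observation that $Q$ is local. Note also that your justification of $IR \subseteq Q$ via the derivation is redundant: you already have $IR = I$ since $I \lhd R$, and $I \subseteq Q$ by definition. The trade-off: your argument is more self-contained and shows concretely why inversion behaves well, while the paper's is a two-line appeal to the right black box.
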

\begin{proof}
  The ring $Q$ induces a locally bounded, non-discrete, Hausdorff
  field topology because $Q$ is a proper local subring of $K =
  \Frac(Q)$.  For example, see Theorem 2.2(b) in
  \cite{prestel-ziegler}.  If $a$ is a non-zero element of $I$, then
  \[ R \cdot a \subseteq R \cdot I = I \subseteq Q,\]
  so $R$ and $Q$ induce the same topology.
\end{proof}
\begin{definition}
  The \emph{diffeovaluation topology} is the topology induced by $Q$ or $R$ as in
  Proposition~\ref{prop:dv-top}.
\end{definition}
\begin{proposition}\label{not-a-v}
  The diffeovaluation topology is not a V-topology.
\end{proposition}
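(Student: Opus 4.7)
The plan is to contradict the V-property, which for a V-topology asserts: for every neighborhood $U$ of $0$ there exists a neighborhood $V$ of $0$ with $xy \in V \Rightarrow x \in U$ or $y \in U$. I would take $U = R$ and, for an arbitrary neighborhood $V$ of $0$ in the diffeovaluation topology, produce $x, y \in K$ with $xy \in V$ but $x, y \notin R$.

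By Proposition~\ref{prop:dv-top}, $V \supseteq aR$ for some $a \in K^{\times}$. The first step is to replace $a$ by a convenient $c \in Q \cap V$ with $\val(c) > 0$ (and hence $\partial c = 0$), so that the product rule will simplify later. For this I would pick a nonzero $q \in I$ with $\val(q)$ large enough that both $\val(q) + \val(a) > 0$ and $\val(q) + \val(\partial a) > 0$; such $q$ exists because $\val(Q)$ is cofinal in $\Gamma$ (by density of $Q$ in $\Oo$ together with surjectivity of $\val : K^{\times} \to \Gamma$). Setting $c := aq$, the Leibniz rule gives $\partial c = q \partial a + a \partial q = q \partial a = 0$ in $D$, so $c \in Q$; moreover $c \in aR \subseteq V$ and $\val(c) > 0$.

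For the second step I would exploit density together with the $\val$ on $D$ from \S\ref{sec:mock}, whose image contains arbitrarily negative elements. Pick $z \in D$ with $\val(z) < -\val(c)$, and use density of $\partial^{-1}(z)$ in $\Oo$ to choose $y \in 1 + \mm$ with $\partial y = z$. Then $\val(y) = 0$ and $\val(\partial y) = \val(z) < 0$, so $y \notin R$. Set $x := c/y$. Since $\partial c = 0$, Lemma~\ref{inverses} and the product rule give $\partial x = -c y^{-2} \partial y$, and the $\Oo$-module valuation formula on $D$ yields
\[
\val(\partial x) = \val(c) - 2\val(y) + \val(\partial y) = \val(c) + \val(\partial y) < 0,
\]
so $x \notin R$ as well. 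But $xy = c \in V$, contradicting the V-property.

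The main obstacle is the bookkeeping required to arrange simultaneously $c \in V \cap Q$ with $\val(c) > 0$, so that the Leibniz computation of $\partial(c/y)$ collapses to its second term. Conceptually, the failure of the V-property traces to the non-continuity of $\partial$ with respect to the valuation topology: density lets us produce $y \equiv 1 \pmod{\mm}$ with $\partial y$ arbitrarily large, so that both $y$ and $x = c/y$ escape $R$ even as their product $c$ is arbitrarily close to $0$.
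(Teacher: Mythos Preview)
Your argument is essentially correct, but there is a small gap in the first step. You invoke the Leibniz rule to compute $\partial(aq) = q\,\partial a + a\,\partial q$, and you speak of $\val(\partial a)$; however, $\partial$ is only defined on $\Oo$, and the element $a \in K^\times$ produced by Proposition~\ref{prop:dv-top} need not lie in $\Oo$. This is easily repaired: since $aR \supseteq (ar)R$ for any $r \in R$, you may first replace $a$ by $ar$ for some $r \in I$ of sufficiently large valuation, so that without loss of generality $a \in R \subseteq \Oo$. (This is exactly the ``shrinking'' move the paper makes.) Once $a \in \Oo$, your Leibniz computation and the rest of the argument go through as written.

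Compared with the paper's proof, your route is correct but more elaborate. The paper also takes $U = R$ and shrinks to $V = aR$ with $a \in R$, but then skips your entire first step: it simply uses density to pick $x$ with $\val(x) > \val(a)$ and $\val(\partial x) < 0$, and sets $y = a/x$. Then $x \notin R$ because $\val(\partial x) < 0$, while $y \notin R$ for the cruder reason that $\val(y) = \val(a) - \val(x) < 0$, so $y \notin \Oo$. This avoids both the passage to $c \in Q$ and the Leibniz/inverse computation of $\partial(c/y)$. Your version has the feature that both witnesses $x,y$ lie in $\Oo$ and fail to be in $R$ for the ``derivative'' reason $\val(\partial\cdot)<0$, which is arguably a sharper illustration of why the topology is not a V-topology; the cost is the extra bookkeeping you flag in your final paragraph.
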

\begin{proof}
  In a V-topology, the following local sentence holds, where $U, V$
  range over neighborhoods of 0:
  \begin{equation*}
    \forall U~ \exists V~ \forall x, y : ((xy \in V) \rightarrow (x \in
    U \text{ or } y \in U)).
  \end{equation*}
  However, this fails for $U = R$.  Indeed,
  suppose $V = a R$ is such that
  \begin{equation*}
    xy \in aR \implies (x \in R \vee y \in R).
  \end{equation*}
  Shrinking the set $aR$, we may assume $a \in R$.  By density, there
  is $x$ such that
  \begin{align*}
    \val(x) &> \val(a) \\
    \val(\partial x) &< 0.
  \end{align*}
  Let $y = a/x$.  Then
  \begin{equation*}
    xy = a \in a R.
  \end{equation*}
  On the other hand, $x \notin R$ by choice of $\val(\partial x)$,
  and $y \notin \Oo \supseteq R$, by choice of $\val(x)$.
\end{proof}
\begin{definition} \label{def:dv-top}
  A \emph{DV-topology} is a field topology that is locally equivalent
  to a dense diffeovaluation topology.
\end{definition}

\subsection{Lifted diffeovalued fields}
For lifted diffeovalued fields, we can characterize density and the
diffeovaluation topology more naturally.
\begin{proposition}\label{prop:strong-density}
  Let $(K,\delta,\val)$ be a lifted diffeovalued field.  Then $K$ is dense if and
  only if for every $a, b \in K$ and every $\gamma \in \Gamma$, there
  is $x \in K$ such that
  \begin{align*}
    \val(x - a) & > \gamma \\
    \val(\delta x - b) & > \gamma
  \end{align*}
\end{proposition}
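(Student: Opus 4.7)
The backward direction is almost automatic. Suppose the displayed condition holds. To verify density, fix $z \in D$, $a \in \Oo$, and $\gamma > 0$; choose any lift $b \in K$ of $z$ and apply the condition to $(a,b,\gamma)$ to obtain $x \in K$ with $\val(x-a) > \gamma$ and $\val(\delta x - b) > \gamma$. Since $a \in \Oo$ and $\gamma > 0$, the first inequality forces $x \in \Oo$; the second forces $\delta x \equiv b \pmod{\mm}$, i.e. $\partial x = z$. Thus $x$ lies in the fiber over $z$ and is $\gamma$-close to $a$.

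The forward direction is the substantive one. Fix $a,b \in K$ and $\gamma \in \Gamma$; I may assume $\gamma \geq 0$, since increasing $\gamma$ only strengthens the conclusion. The first step is to translate the problem to $a = 0$: writing $x = a+y$ and $b' := b - \delta a$, the Leibniz rule shows it suffices to produce $y \in K$ with $\val(y) > \gamma$ and $\val(\delta y - b') > \gamma$.

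The chief obstacle is that density, as stated, only provides approximation of $b'$ by $\delta y$ at scale $0$ (i.e.\ modulo $\mm$), whereas I need approximation at scale $\gamma$. I plan to bridge this gap by a scaling trick using elements of $Q = \ker(\partial|_{\Oo})$, which are annihilated by $\delta$. Density applied to the fiber over $0 \in D$ shows that $Q$ is dense in $\Oo$; combined with the non-triviality of $\Gamma$ (Remark~\ref{rem:dnon}) and a routine approximation argument, this yields a non-zero $c \in Q$ with $\val(c) > \gamma$: pick any $a_1 \in K$ with $\val(a_1) > \max(\gamma,0)$ and approximate $a_1$ within $Q$ closely enough that $\val(c) = \val(a_1)$.

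With such a $c$ in hand, apply density once more to the fiber over $(b'/c) + \mm \in D$, obtaining some $z \in \Oo$ with $\val(\delta z - b'/c) > 0$, equivalently $\val(c\,\delta z - b') > \val(c) > \gamma$. Set $y := cz$. Then $y \in c\,\Oo$, so $\val(y) \geq \val(c) > \gamma$. Since $\delta c = 0$, the Leibniz rule gives $\delta y = c\,\delta z + z\,\delta c = c\,\delta z$, so $\val(\delta y - b') = \val(c\,\delta z - b') > \gamma$. Then $x := a + y$ satisfies both desired inequalities, completing the proof. The scale mismatch is precisely resolved by the fact that multiplying by $c \in Q$ amplifies the $\delta$-error by $\val(c)$ while contributing nothing from the derivation.
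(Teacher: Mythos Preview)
There is a genuine gap in the forward direction. You assert that $c \in Q$ implies $\delta c = 0$, but this is false: membership in $Q = \ker(\partial|_{\Oo})$ only says $\delta c \in \mm$, since by definition of a lifting $\partial x = (\delta x) + \mm$. Hence the Leibniz rule gives
\[
  \delta y = c\,\delta z + z\,\delta c,
\]
and the cross term $z\,\delta c$ need not vanish. From $z \in \Oo$ and $\delta c \in \mm$ you only get $\val(z\,\delta c) > 0$, which is not enough to conclude $\val(\delta y - b') > \gamma$.

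The repair is easy and brings your argument in line with the paper's. When you invoke density to land in the fiber over $(b'/c)+\mm$, also demand that $z$ be close to $0$, say $\val(z) > \gamma$; density allows this since you may approximate any point of $\Oo$, in particular $0$. Then $\val(z\,\delta c) \ge \val(z) + \val(\delta c) > \gamma$, and the rest of your computation goes through. The paper proceeds essentially this way but skips the detour through $Q$: it takes any $a$ with $\val(a) > \gamma$ (no constraint on $\delta a$), then uses density to choose $y$ with $\val(y) > \max(\gamma - \val(\delta a),\,0)$ and $\val(\delta y - b/a) > 0$, so that both $\val(a\,\delta y - b) > \gamma$ and $\val(y\,\delta a) > \gamma$ hold. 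Your scaling idea is the same; the only error was conflating $\partial c = 0$ with $\delta c = 0$.
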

\begin{proof}
  Unwinding the definition, density says that we can solve equations
  of the form
  \begin{align*}
    \val(x - a) & > \gamma \\
    \val(\delta x - b) & > 0
  \end{align*}
  when $a \in \Oo$.  So the listed conditions certainly imply density.
  Conversely, suppose density holds.
  \begin{claim}
    For any $\gamma \in \Gamma$ and $b \in K$, there is $x$ such that
    \begin{align*}
      \val(x) & > \gamma \\
      \val(\delta x - b) & > \gamma.
    \end{align*}
  \end{claim}
  \begin{claimproof}
    Take some non-zero $a$ such that $\val(a) > \gamma$.

    By density, there is $y$ such that
    \begin{align*}
      \val(y) & > \max(\gamma - \val(\delta a), 0) \\
      \val(\delta y - b/a) & > 0.
    \end{align*}
    Let $x = ya$.  Then
    \begin{align*}
      \val(x) &= \val(ya) = \val(y) + \val(a) > 0 + \gamma \\
      \val(a \delta y - b) & > \val(a) > \gamma.
    \end{align*}
    Also,
    \begin{equation*}
      \val(y \delta a) = \val(y) + \val(\delta a) > \gamma.
    \end{equation*}
    So we see that
    \begin{equation*}
      \val(\delta x - b) = \val(y \delta a + (a \delta y - b)) > \gamma. \qedhere
    \end{equation*}
  \end{claimproof}
  Now given any $a, b, \gamma$, we can find $\varepsilon$ such that
  \begin{align*}
    \val(\varepsilon) &> \gamma \\
    \val(\delta \varepsilon + \delta a - b) &> \gamma.
  \end{align*}
  Set $x = a + \varepsilon$.  Then
  \begin{align*}
    \val(x - a) &= \val(\varepsilon) > \gamma \\
    \val(\delta x - b) &= \val(\delta \varepsilon + \delta a - b) > \gamma. \qedhere
  \end{align*}
\end{proof}

\begin{proposition}\label{prop:dense-dvtop}
  If $(K,\delta,\val)$ is a dense, lifted diffeovalued field, then sets of the
  form
  \begin{equation*}
    \{x \in K : \val(x - a_1) > \gamma_1 \text{ and } \val(\delta x
    - a_2) > \gamma_2\} \text{ for } \gamma_1, \gamma_2 \in \Gamma,~
    a_1, a_2 \in K
  \end{equation*}
  form a basis of opens in the diffeovaluation topology.
\end{proposition}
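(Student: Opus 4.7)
The plan is to compare the given family of sets
\[ B(a_1, a_2, \gamma_1, \gamma_2) := \{x \in K : \val(x - a_1) > \gamma_1 \text{ and } \val(\delta x - a_2) > \gamma_2\} \]
with the family $\{aR : a \in K^\times\}$, which by Proposition~\ref{prop:dv-top} is known to form a neighborhood basis of $0$ for the diffeovaluation topology. Since the diffeovaluation topology is a field topology, translations are homeomorphisms, so it suffices to identify neighborhood bases at $0$ and lift to arbitrary points by translation.

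First, using $\val(x+y) \ge \min(\val x, \val y)$ and the $K$-linearity of $\delta$, whenever $y \in B(a_1, a_2, \gamma_1, \gamma_2)$ one checks (by substituting $y$ for $a_1$ and $\delta y$ for $a_2$ using the strong ultrametric) the identity
\[ B(a_1, a_2, \gamma_1, \gamma_2) = B(y, \delta y, \gamma_1, \gamma_2) = y + B(0, 0, \gamma_1, \gamma_2). \]
Hence it is enough to prove that the family $\{B(0, 0, \gamma_1, \gamma_2) : \gamma_1, \gamma_2 \in \Gamma\}$ is a neighborhood basis of $0$ in the diffeovaluation topology; openness of each $B$ then follows by translation, and membership of each point $y \in B$ in $B$ follows from writing $B = y + B(0, 0, \gamma_1, \gamma_2)$.

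For the first inclusion (each $B(0, 0, \gamma_1, \gamma_2)$ is a neighborhood of $0$), Lemma~\ref{lem-friends-v2} guarantees $I \neq 0$, so choose $0 \ne c \in I \subseteq Q$; then $\val(c) > 0$ and the powers $c^n$ lie in $Q$ with $\val(c^n) \to \infty$. Pick $n$ so that $a := c^n \in Q$ has $\val(a) > \max(\gamma_1, \gamma_2)$. Since $\delta a = 0$, for any $y = ax \in aR$ we have $\val(y) \ge \val(a) > \gamma_1$ and $\delta y = a\, \delta x$, so $\val(\delta y) \ge \val(a) > \gamma_2$, yielding $aR \subseteq B(0, 0, \gamma_1, \gamma_2)$.

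For the reverse inclusion, given any $b \in K^\times$ I would use the Leibniz rule (and Lemma~\ref{inverses}) to compute
\[ \delta(y/b) = \frac{\delta y}{b} - \frac{y\, \delta b}{b^2}, \]
and then balance valuations: choosing $\gamma_2 \ge \val(b)$ and $\gamma_1 \ge \max\bigl(\val(b),\ 2\val(b) - \val(\delta b)\bigr)$ (the second quantity reading $\val(b)$ when $\delta b = 0$) forces $\val(y/b) \ge 0$ and $\val(\delta(y/b)) \ge 0$ for every $y \in B(0, 0, \gamma_1, \gamma_2)$, so $B(0, 0, \gamma_1, \gamma_2) \subseteq bR$. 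Combining the two inclusions gives that $\{B(0, 0, \gamma_1, \gamma_2)\}$ is cofinal with $\{aR\}$ as filters of neighborhoods of $0$, completing the proof. The only delicate point is the bookkeeping of valuations in the final step, where the cross term $y\, \delta b / b^2$ forces the asymmetric lower bound on $\gamma_1$; everything else is routine given the structural lemmas already established for $R$, $Q$, and $I$.
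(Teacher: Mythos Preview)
Your overall strategy matches the paper's: reduce to showing that the sets $B(0,0,\gamma_1,\gamma_2)$ form a neighborhood basis of $0$, cofinal with $\{aR : a \in K^\times\}$. The second inclusion and the translation argument are fine.

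There is a genuine error in the first inclusion. You write ``Since $\delta a = 0$'' for $a = c^n \in Q$, but membership in $Q$ only says $\partial a = 0$ in $K/\mm$, i.e.\ $\delta a \in \mm$; it does \emph{not} force $\delta a = 0$ in $K$. With only $\val(\delta a) > 0$ you get, for $y = ax$ with $x \in R$,
\[
\val(\delta y) = \val(a\,\delta x + x\,\delta a) \ge \min\bigl(\val(a)+\val(\delta x),\ \val(x)+\val(\delta a)\bigr) \ge \min(\val(a),\, \val(\delta a)),
\]
and the second term need not exceed $\gamma_2$.

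The fix is easy with your same choice of $a$: by the Leibniz rule $\delta(c^n) = n c^{n-1}\delta c$, so $\val(\delta(c^n)) = (n-1)\val(c) + \val(\delta c) \to \infty$. Hence for $n$ large you can arrange both $\val(a) > \max(\gamma_1,\gamma_2)$ and $\val(\delta a) > \gamma_2$, and then the displayed estimate gives $\val(\delta y) > \gamma_2$. (Alternatively, this is exactly the paper's route: observe that $B_\gamma := B(0,0,\gamma,\gamma)$ is an $R$-submodule, and use strong density to exhibit a nonzero element $a \in B_\gamma$, whence $aR \subseteq B_\gamma$.) With this correction your argument goes through and is essentially the same as the paper's.
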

\begin{proof}
  Let $B_\gamma = \{x \in K : \val(x) > \gamma \text{ and } \val(\delta
  x) > \gamma\}$.  It suffices to show that the $B_\gamma$ form a
  neighborhood basis of 0.  First of all, one sees by a straightforward calculation that
  $B_\gamma$ is an $R$-submodule of $K$.  The strong form of density in Proposition~\ref{prop:strong-density} can be used to show that $B_\gamma$ is strictly bigger than $\{0\}$; for example take $a, b$ very small relative to $\gamma$, and then take $x$ with $x - a$ and $\delta x - b$ very small relative to $a, b, \gamma$.  Thus $B_\gamma$ is a
  neighborhood of 0 in the diffeovaluation topology.  Conversely, given any $a \in
  K^\times$, we claim that
  \begin{equation*}
    B_\gamma \subseteq a^{-1} \cdot R
  \end{equation*}
  for sufficiently large $\gamma$.  Indeed, if $\gamma$ is sufficiently large relative to $a$, and $x \in B_\gamma$, then
  \begin{align*}
    \val(a x) &= \val(a) + \val(x) \ge \val(a) + \gamma \ge 0 \\
    \val(\delta(ax)) &= \val(a \delta x + x \delta a) \ge \gamma
    + \min(\val(a),\val(\delta a)) \ge 0. \qedhere
  \end{align*}
\end{proof}

\subsection{Diffeovaluation inflators} \label{dvi:sec}
Fix a \textbf{dense} diffeovalued field $K$.  Let $D_0$ be the image
of the embedding $k \hookrightarrow D$, and let $\res' : D_0 \to k$ be
the inverse of this embedding.  Let $Q, R, I$ be as in
\S\ref{sec:dtop}.  Note that if $a \in R$, then $\partial a \in D_0$,
and so $\res'(\partial a)$ makes sense.
\begin{lemma}
  ~
  \begin{enumerate}
  \item \label{za1} If $a \in \Oo$ and $b \in R$, then
    \begin{equation*}
      \res'(a \partial b) = \res(a) \res'(\partial b).
    \end{equation*}
  \item The quotient $Q/I$ is isomorphic (as a ring) to $k$ via the
    map $\res(-)$.  Therefore, we can regard $k$-modules as
    $Q$-modules.
  \item $R$ and $I$ are $Q$-submodules of $K$.
  \item For $x \in R$, let $\wres(x) = (\res(x),\res'(\partial x)) \in
    k^2$.  Then $\wres$ induces an isomorphism of $Q$-modules from
    $R/I$ to $k^2$.  In particular, $R/I$ is a semisimple $Q$-module
    of length 2.
  \end{enumerate}
\end{lemma}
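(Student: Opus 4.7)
The plan is to establish the four items in order, with (1) serving as the key $k$-linearity fact that powers (2) and (4). Throughout, the decisive ingredient beyond bookkeeping will be the density hypothesis, invoked twice: once to show that $Q$ surjects onto $k$, and once to produce the ``differential'' half of the generators of $R/I$.

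For part (\ref{za1}), the idea is that the $\Oo$-action on $D_0$ factors through the residue map $\Oo \twoheadrightarrow k$. Write $a = a_0 + m$ with $a_0$ a lift of $\res(a)$ and $m \in \mm$. Since $b \in R$ we have $\val(\partial b) \ge 0$, and $\val(m) > 0$, so $\val(m) + \val(\partial b) > 0$. By the multiplication rule on mock $K/\mm$'s this forces $m \cdot \partial b = 0$, whence $a \cdot \partial b = a_0 \cdot \partial b$. Applying the $k$-linear isomorphism $\res' : D_0 \to k$ gives the claim. For part (2), observe that by construction $Q = \{y \in \Oo : \partial y = 0\}$, which is exactly the zero-fiber of $\partial$. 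Density then says $Q$ is dense in $\Oo$, so the restriction of the residue map $\res : Q \to k$ is surjective; its kernel is $\{q \in Q : \val(q) > 0\} = I$, giving $Q/I \cong k$. Part (3) is immediate from the inclusion $Q \subseteq R$ (read off the definitions) together with Lemma~\ref{lem-friends-v2}: $R$ is a ring containing $Q$, and $I$ is an $R$-ideal, hence both are $Q$-submodules of $K$.

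For part (4), the map $\wres(x) = (\res(x), \res'(\partial x))$ is additive by construction and by $k$-linearity of $\res$ and $\res'$. For $Q$-linearity, given $q \in Q$ and $x \in R$, the Leibniz rule gives $\partial(qx) = q \partial x$ since $\partial q = 0$, and part (1) then yields $\res'(\partial(qx)) = \res(q) \res'(\partial x)$; combined with $\res(qx) = \res(q)\res(x)$, this shows $\wres(qx) = \res(q) \cdot \wres(x)$, which is the required $Q$-action via $Q \twoheadrightarrow Q/I \cong k$. The kernel is $\{x \in R : \val(x) > 0 \text{ and } \partial x = 0\} = I$. For surjectivity, the vector $(a,0)$ is hit by any lift of $a$ from $k$ to $Q$ provided by (2); for $(0,b)$, lift $b$ to $\tilde b \in D_0$ and invoke density on the fiber $\partial^{-1}(\tilde b)$, which meets $\mm$, producing $w \in R$ with $\res(w) = 0$ and $\res'(\partial w) = b$. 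Finally, $k^2$ is a semisimple $k$-module of length 2, hence a semisimple $Q$-module of length 2 through $Q \twoheadrightarrow k$, so the same holds for $R/I$.

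No step poses a real obstacle; the only thing to be careful about is that ``dense'' supplies both the surjectivity of $\partial : \Oo \twoheadrightarrow D$ (needed for the $(0,b)$ case of surjectivity) and the density of $Q$ in $\Oo$ (needed for surjectivity of $\res : Q \to k$). Once these are in hand the four statements reduce to routine verifications from the defining formulas.
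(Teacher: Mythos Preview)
Your proof is correct and follows essentially the same approach as the paper's. The paper is slightly terser in places (e.g., for part~(\ref{za1}) it simply observes that $\res'$ is $\Oo$-linear and the $\Oo$-action on $k$ is via $\res$, without the decomposition $a = a_0 + m$; and for surjectivity in part~(4) it just writes ``surjective by density'' without spelling out the $(a,0)$ and $(0,b)$ cases), but the ideas and the use of density are identical.
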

\begin{proof}
  \begin{enumerate}
  \item This holds because $\res'$ is an $\Oo$-linear map from $D_0$
    to $k$, and the $\Oo$-module structure on $k$ comes from $\res :
    \Oo \to k$.
  \item The ring homomorphism $\res : Q \to k$ is onto, by density.
    Indeed, given any $x \in k$, we can find $y \in \Oo$ such that
    \begin{align*}
      \res(x) &= y \\
      \partial x &= 0.
    \end{align*}
    Then $x \in Q$ and $\res(x) = y$.  The kernel of $\res : Q \to k$
    is $I$, by definition of $Q$ and $I$.
  \item $R$ is a $Q$-module because $R$ is a superring of $Q$.  $I$ is
    a $Q$-module because $I$ is an ideal in $R$.
  \item The map $\wres : R \to k^2$ is obviously $\Zz$-linear.  It is
    surjective by density.  The kernel is $I$, by definition of $R$
    and $I$.  For $Q$-linearity, suppose $x \in Q$ and $y \in R$.
    Then
    \begin{equation*}
      \res(xy) = \res(x)\res(y).
    \end{equation*}
    Also, $x \in Q$ implies $\partial x = 0$, and so
    \begin{equation*}
      \res'(\partial(xy)) = \res'(x \partial y + y \partial x) =
      \res'(x \partial y) = \res(x)\res'(\partial y)
    \end{equation*}
    by part (\ref{za1}).  Thus
    \begin{equation*}
      \wres(xy) = \res(x) \cdot (\res(y), \res'(\partial y)) = \res(x)\wres(y). \qedhere
    \end{equation*}
  \end{enumerate}
\end{proof}
\begin{lemma} \label{lem:qw3}
  For any $a, b, c \in K$, the $Q$-submodule generated by $\{a,b,c\}$
  is generated by a two-element subset of $\{a,b,c\}$.
\end{lemma}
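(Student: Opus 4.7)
The plan is to reduce to the case where $\partial$ lifts to a derivation $\delta$ of the whole field, and then combine the $R/I\cong k^2$ structure with Nakayama's lemma and the valuation ring structure of $Q$ on the constant subfield. The statement of the lemma is a $\forall\exists$-sentence in the language of dense diffeovalued fields, so by elementarity it suffices to prove it in a sufficiently saturated elementary extension $\Kk\succeq K$. Any such $\Kk$ is sufficiently resplendent, and its value group is divisible (as any sufficiently saturated ordered abelian group is), so Proposition~\ref{prop:lift} supplies a normalization and a lifting $\delta:\Kk\to\Kk$. Replacing $K$ by $\Kk$, we may assume $K$ is lifted; writing $C:=\ker\delta$ for the constant subfield, the ring $Q=\Oo\cap C$ is the restriction of $\Oo$ to $C$ and hence a valuation ring on $C$, so any two nonzero elements of $Q$ are comparable by divisibility.

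Fix $a,b,c\in K$. I first reduce to $a,b,c\in R$. Pick a nonzero $i_0\in I$ (nonzero by Lemma~\ref{lem-friends-v2}); the powers $q:=i_0^n\in Q$ have $\val(q)=n\val(i_0)$ arbitrarily large. Since $\delta q=0$, we have $\delta(qx)=q\delta x$, and for $n$ large enough $\val(qx)\ge 0$ and $\val(q\delta x)\ge 0$ for each $x\in\{a,b,c\}$, so $qa,qb,qc\in R$. The conclusion is invariant under nonzero scaling, so we may assume $a,b,c\in R$. Now examine the images $\bar a,\bar b,\bar c\in R/I\cong k^2$. If some pair, say $\bar a,\bar b$, is $k$-linearly independent, they span $R/I=R/IR$; since $R=Q+Qy$ (produced via density by picking $y\in R$ with $\wres(y)=(0,1)$) is finitely generated over $Q$ and $I$ is the maximal ideal of the local ring $Q$, Nakayama's lemma gives $R=Qa+Qb$, whence $c\in R\subseteq Qa+Qb$.

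Otherwise all three $\bar a,\bar b,\bar c$ lie in a common 1-dimensional $k$-subspace of $R/I$. If they are all zero, then $a,b,c\in I\subseteq Q$, and the valuation ring $Q$ orders them by divisibility: the one of minimal valuation generates a principal $Q$-ideal containing the other two, so any pair containing it spans $Qa+Qb+Qc$. Otherwise WLOG $\bar a\ne 0$; write $\bar b=\lambda\bar a$ and $\bar c=\mu\bar a$ with $\lambda,\mu\in k$, lift to $\tilde\lambda,\tilde\mu\in Q$, and set $i_b:=b-\tilde\lambda a$ and $i_c:=c-\tilde\mu a$, both in $I\subseteq Q$. Then $Qa+Qb+Qc=Qa+Qi_b+Qi_c$, and since $Q$ is a valuation ring, one of $Qi_b,Qi_c$ contains the other; WLOG $Qi_c\subseteq Qi_b$, giving $Qa+Qb+Qc=Qa+Qi_b=Qa+Qb$, so $c\in Qa+Qb$. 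The main obstacle is the initial lifting reduction: without a lifting, $Q$ need not be a valuation ring on its fraction field and the key divisibility comparison in $I$ collapses; with the lifting in hand, the rest is a short combination of Nakayama and one valuation comparison.
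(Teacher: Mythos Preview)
Your argument has two genuine gaps.

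First, the lifting reduction does not go through. It is simply false that a sufficiently saturated ordered abelian group is divisible (or even $p$-divisible for some prime $p$): a saturated model of $\mathrm{Th}(\Zz,+,<)$ still has a least positive element and is not $p$-divisible for any $p$. So Proposition~\ref{prop:lift} need not apply, and in fact Proposition~\ref{z-probs} exhibits a normalized diffeovalued field in which no elementary extension admits a lifting. Since a dense diffeovalued field is allowed to have value group $\Zz$, you cannot assume a lifting exists.

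Second, even granting a lifting $\delta$, your identification $Q=\Oo\cap\ker\delta$ is wrong. By definition $Q=\{x\in\Oo:\partial x=0\}$, and with a lifting this reads $Q=\{x\in\Oo:\delta x\in\mm\}$, which strictly contains $\Oo\cap\ker\delta$. Consequently $Q$ is \emph{not} a valuation ring on any field, and elements of $I$ are not linearly ordered by $Q$-divisibility. Concretely: take any nonzero $a\in I$ and any $c\in R\setminus Q$ with $\res(c)\ne 0$ (such $c$ exist by density). Then $ca\in I$ since $\partial(ca)=a\,\partial c$ has $\val(a)+\val(\partial c)>0$; but $ca/a=c\notin Q$ and $a/(ca)=c^{-1}\notin Q$ (as $\partial(c^{-1})=-c^{-2}\partial c\ne 0$). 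So $a$ and $ca$ are incomparable in $Q$. This breaks both your ``all images zero'' case and your ``collinear nonzero'' case, which both hinge on comparing two elements of $I$ by $Q$-divisibility. Your Nakayama case (two independent images) is fine, but it does not cover everything.

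The paper's proof avoids all of this by never leaving the diffeovalued structure: it rescales so that one of the three elements is $1$ and the other two lie in $\Oo$, then uses the mock $K/\mm$ structure on $D$ to write $\partial b=x_0\,\partial a$ for some $x_0\in\Oo$, approximates $x_0$ by $x\in Q$ via density so that $x\,\partial a=\partial b$, and concludes $b-xa\in Q$. No lifting, no valuation-ring property of $Q$.
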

\begin{proof}
  Without loss of generality, $\val(c) \le \val(a)$ and $\val(c) \le
  \val(b)$.  Rescaling, we may assume $c = 1$.  Then $a, b \in \Oo$.

  Without loss of generality, $\val(\partial a) \le \val(\partial b)$.
  Now, if $\val(\partial b) > 0$, then $b \in Q \cdot 1$, and we are
  done.  So we may assume
  \[ \val(\partial a) \le \val(\partial b) \le 0.\]
  Take $x_0 \in \Oo$ such that $\partial b = x_0 \partial a$.
  By density, there is some $x$ such that $\partial x = 0$ and
  \begin{equation*}
    \val(x - x_0) > -\val(\partial a) \ge 0.
  \end{equation*}
  Note that $\val(x - x_0) \ge 0$, so $x - x_0 \in \Oo$.  As $x_0 \in
  \Oo$, we see $x \in \Oo$.  Then $x \in Q$.

  Now $\val(x - x_0)$ is large enough that $x - x_0$ annihilates
  $\partial a$, so
  \[ x \partial a = x_0 \partial a = \partial b.\]
  Then
  \[ \partial (xa) = x \partial a + a \partial x = \partial b + 0.\]
  Let $y = b - xa$.  Then
  \begin{equation*}
    \partial y = \partial b - \partial(xa) = 0.
  \end{equation*}
  Also, $b, x, a \in \Oo$, and so $y \in
  \Oo$.  Thus $y \in Q$.  Then
  \[ b = xa + y \in Q \cdot a + Q \cdot 1. \qedhere\]
\end{proof}
\begin{theorem}\label{dv-inflator-construction}
  Let $K$ be a dense diffeovalued field.  There is a malleable
  $\Qq$-linear 2-inflator
  \begin{align*}
    \Dir_K(K) & \to \Dir_k(k^2) \\
    \Sub_K(K^n) & \to \Sub_k(k^{2n}) \\
    V & \mapsto \{(\wres(x_1),\ldots,\wres(x_n)) : \vec{x} \in V \cap R^n\},
  \end{align*}
  where $\wres(x) = (\res(x),\res'(\partial x))$.
\end{theorem}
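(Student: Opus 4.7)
The plan is to verify directly from the algebraic data of a dense diffeovalued field each clause of the definition of a malleable $\Qq$-linear $2$-inflator (Definitions~4.1 and~5.31 in \cite{prdf3}). Morally, this is the converse of the analysis in \S\ref{sec:canform}--\ref{sec:der}: we showed there that any 2-inflator satisfying the Strong Assumptions factors through $R/I \cong k[\varepsilon]$ by the formula of Proposition~\ref{exact-form}, and here we reverse direction, starting from $(K,\Oo,D,\partial)$ and producing the inflator. A useful preliminary observation, via the Leibniz rule computation $\wres(xy) = (\res(x)\res(y),\,\res(x)\res'(\partial y) + \res(y)\res'(\partial x))$ for $x,y \in R$, is that $\wres$ is actually a ring isomorphism $R/I \cong k[\varepsilon]$; as $k$-modules this identifies $R/I$ with $k^2$.

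First I would check that the formula is well-targeted. For $V \le K^n$, the intersection $V \cap R^n$ is a $Q$-submodule of $R^n$, the map $\wres^n$ factors as $V \cap R^n \twoheadrightarrow (V \cap R^n)/(V \cap I^n) \hookrightarrow (R/I)^n \cong k^{2n}$, and the image is a $Q$-submodule; since $Q$ acts through $Q \twoheadrightarrow Q/I \cong k$, a $Q$-submodule of $k^{2n}$ is automatically a $k$-subspace. Compatibility with $\oplus$ and with permutations is immediate from the formula; $GL_n(\Qq)$-equivariance is a consequence of $\Qq$-linearity of $\wres$ together with the invariance of $R^n$ and $I^n$ under $GL_n(\Qq)$; and the intersection identities (\cite{prdf3}, Lemma~5.2) reduce, via the formula, to the fact that the functor $V \mapsto (V \cap R^n) + I^n$ commutes with the relevant lattice operations, with the $W_2$ property of Lemma~\ref{lem:qw3} used to prevent a parasitic drop of dimension when passing from $V_1 \cap V_2$ to $(V_1 \cap R^n) \cap (V_2 \cap R^n)$ modulo $I^n$.

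The heart of the argument is the dimension formula $\dim_k \varsigma_n(V) = 2 \dim_K V$. By $\oplus$-compatibility it suffices to handle lines $L = K \cdot \vec{a}$. Rescaling so that some coordinate of $\vec{a}$ has valuation $0$, one is in one of two regimes. If $\vec{a}$ can be further rescaled into $R^n$, then $L \cap R^n \supseteq R \cdot \vec{a}$ and the computation reduces, via the ring identification $R/I \cong k[\varepsilon]$, to showing that the $k[\varepsilon]$-submodule generated by $\wres(\vec{a}) \in k[\varepsilon]^n$ has $k$-dimension exactly $2$, which holds because $k[\varepsilon]$ is a local ring of length $2$ and at least one coordinate of $\wres(\vec{a})$ is a unit. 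If $\vec{a}$ cannot be brought into $R^n$ (because $\partial a_i$ has negative valuation for some $i$), one computes $L \cap R^n$ directly as $T \cdot \vec{a}$ where $T = \{x \in R : x a_i \in R \text{ for all } i\}$, and uses density (together with the relation $\partial(xa_i) = a_i \partial x + x \partial a_i$ and the divisibility properties of $D$) to show that $T$ still realizes a $k[\varepsilon]$-worth of residues in $k^{2n}$.

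Finally, malleability reduces to: for every $\vec{v} = (v_1,\ldots,v_n) \in k^{2n}$, produce a line $L \subseteq K^n$ with $\vec{v} \in \varsigma_n(L)$. Density supplies $x_i \in R$ with $\wres(x_i) = v_i$; after a harmless adjustment to avoid the zero line, $L = K \cdot (x_1,\ldots,x_n)$ does the job, since $(x_1,\ldots,x_n) \in L \cap R^n$. The $\Qq$-linearity of $\varsigma$ is clear from the construction, as $\wres$ is $\Qq$-linear and $\Qq \subseteq Q$. The main obstacle will be the dimension count on the exotic lines that cannot be rescaled into $R^n$: the interplay between the valuation topology and the derivation, mediated by the density hypothesis of \S\ref{sec:dtop}, must be handled with care to verify that $L \cap R^n$, while smaller than one naively expects, still has image of dimension exactly $2$ under $\wres^n$.
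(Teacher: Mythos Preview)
Your approach is a direct verification of the inflator axioms, whereas the paper's proof is a one-line appeal to the abstract pedestal machinery of \cite{prdf3}: Lemma~\ref{lem:qw3} (the $W_2$ property) combined with Proposition~\ref{prop:wn} gives $\redrk \Sub_Q(K) \le 2$, and since $R/I$ is a semisimple $Q$-module of length $2$, the chain $I \subseteq R$ is a pedestal; Propositions~8.9 and~8.12 of \cite{prdf3} then manufacture the malleable $2$-inflator for free. The paper never touches the dimension formula by hand.

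Your direct route has a real gap at exactly that point. The sentence ``By $\oplus$-compatibility it suffices to handle lines'' does not hold: $\oplus$-compatibility concerns the \emph{external} direct sum $K^m \oplus K^n$, not an internal decomposition of $V \le K^n$ into a sum of lines. Knowing $\dim_k \varsigma_n(L) = 2$ for each line $L \subseteq V$ gives no a priori upper bound on $\dim_k \varsigma_n(V)$, and even the lower bound requires an argument that the images $\varsigma_n(L_i)$ are in general position. What you actually need is $\dim_k\bigl((V \cap R^n)/(V \cap I^n)\bigr) = 2\dim_K V$ for arbitrary $V$, and this is precisely the content hidden inside the reduced-rank/pedestal framework of \cite{prdf3}. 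The $W_2$ property is indeed the key ingredient, but not in the way you use it (handling intersections); rather it bounds the reduced rank of $\Sub_Q(K)$, which is what makes the pedestal construction go through. If you want to avoid the black box, you would essentially have to reprove Proposition~8.9 of \cite{prdf3} in this special case, which is considerably more work than your sketch suggests. (Minor point: in your exotic-line case, $T$ should be $\{x \in K : xa_i \in R \text{ for all } i\}$, not $\{x \in R : \ldots\}$.)
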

\begin{proof}
  By Proposition~\ref{prop:wn} and Lemma~\ref{lem:qw3}, the reduced
  rank of $\Sub_Q(K)$ is at most 2.  As $R/I$ is a semisimple
  $Q$-module of length 2, we see that $\Sub_Q(K)$ has reduced rank
  exactly two.  Let
  \begin{itemize}
  \item $\mathcal{C}$ the category of $Q$-modules.
  \item $F : K\Vect \to Q\Mod$ the forgetful functor.
  \item $G : Q\Mod \to \Qq\Vect$ the forgetful functor.
  \end{itemize}
  Then Assumptions~8.1 and 8.11 of \cite{prdf3} hold.  Applying
  Propositions~8.9 and 8.12 in \cite{prdf3},
  we obtain a malleable 2-inflator
  \begin{align*}
    \Dir_K(K) & \to \Dir_Q(R/I) \\
    \Sub_K(K^n) & \to \Sub_Q((R/I)^n) \\
    V & \mapsto (V \cap R^n + I^n)/I^n.
  \end{align*}
  Now $(V \cap R^n + I^n)/I^n$ can be described as the image of $V
  \cap R^n$ under the projection $R^n \twoheadrightarrow (R/I)^n$.
  Under the isomorphism $\Dir_Q(R/I) \cong \Dir_Q(k^2) \cong
  \Dir_k(k^2)$, this is exactly
  \begin{equation*}
    \{(\wres(x_1),\ldots,\wres(x_n)) : \vec{x} \in V \cap R^n\}. \qedhere
  \end{equation*}
\end{proof}
\begin{definition}
  A \emph{diffeovaluation inflator} on $K$ is a 2-inflator on $K$
  arising from a dense diffeovaluation on $K$ via
  Theorem~\ref{dv-inflator-construction}.
\end{definition}

\subsection{Characterization of diffeovaluation inflators}
We can summarize \S\ref{sec:canform}--\ref{sec:der} as follows:
\begin{theorem}\label{thm:one-dir}
  Let $\varsigma$ be an isotypic, malleable 2-inflator on a field $K$
  of characteristic 0.  If no mutation of $\varsigma$ is weakly
  multi-valuation type, then $\varsigma$ is a diffeovaluation
  inflator.
\end{theorem}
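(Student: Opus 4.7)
The plan is to assemble the structure constructed over \S\ref{sec:canform}--\ref{sec:der} into a dense diffeovalued field and then observe that the inflator $\varsigma$ is precisely the diffeovaluation inflator associated to this structure by Theorem~\ref{dv-inflator-construction}. Since all the hard analytic work has been done, the proof is essentially a bookkeeping exercise: we need to check (a) the data $(K,\Oo,D,\partial)$ satisfies the axioms of a dense diffeovalued field, and (b) the explicit formula for $\varsigma$ in Theorem~\ref{thm:so-far} matches the formula in Theorem~\ref{dv-inflator-construction}.

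First I would verify the diffeovalued field axioms. By Corollary~\ref{cor:o}, $(K,\Oo)$ is a valued field, and equicharacteristic 0 follows from $\characteristic(K)=0$. Proposition~\ref{res-connection-v2} supplies an embedding $k \hookrightarrow D$, namely $\res_2^{-1} : k \to D_0 \subseteq D$. Proposition~\ref{derivation} shows $\partial : \Oo \to D$ is a derivation, and Proposition~\ref{divisibility-4real} gives divisibility of $D$ as an $\Oo$-module. For the mock $K/\mm$ condition, I would apply Proposition~\ref{divisibility-v2}: given $x, y \in D$, the valuation comparison $\val(x) \le \val(y)$ or $\val(y) \le \val(x)$ immediately produces an $\Oo$-divisibility relation between them.

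Next I would check density. Given $x \in D$ and a target $y \in \Oo$ and $\gamma \in \Gamma$, pick any preimage $y_0 \in \Oo$ with $\partial y_0 = x$ (surjectivity of $\partial : \Oo \to D$ is built in). By Proposition~\ref{density}, $Q$ is dense in $\Oo$, so one can find $q \in Q$ with $\val((y - y_0) - q) > \gamma$. Since $\partial q = 0$, the element $y_0 + q$ lies in the fiber $\{z \in \Oo : \partial z = x\}$ and is within $\gamma$ of $y$; this is exactly the required density.

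Finally I would identify the two inflators. The subring $R_{\mathrm{dv}} := \{x \in K : \val(x) \ge 0,\ \val(\partial x) \ge 0\}$ coincides with the original fundamental ring $R$ by Lemma~\ref{vp-class} (since $\val(\partial x) = \val_\partial(x)$); similarly $Q_{\mathrm{dv}} = Q$ and $I_{\mathrm{dv}} = I$. Under the $k$-linear isomorphism $k[\varepsilon] \cong k^2$ given by $s + t\varepsilon \mapsto (s,t)$, the map $\res_2$ of Proposition~\ref{res-connection-v2} becomes the map $\res'$ of \S\ref{dvi:sec}, and the formula in Proposition~\ref{res-connection-v2}
$$\wres(x) = \res(x) + \res_2(\partial x)\varepsilon$$
becomes $\wres(x) = (\res(x),\, \res'(\partial x))$. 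Comparing the formulas for $\varsigma_n$ in Theorem~\ref{thm:so-far} and Theorem~\ref{dv-inflator-construction}, they agree term by term. Hence $\varsigma$ is the diffeovaluation inflator of $(K,\Oo,D,\partial)$.

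There is essentially no remaining obstacle — the entire weight of the argument rests on the preceding sections, and in particular on the total-divisibility statement (Proposition~\ref{divisibility-v2}) and the residue formula (Proposition~\ref{res-connection-v2}), which were engineered precisely to make this matching work. The only subtle point is making sure the $k$-module identifications $k[\varepsilon] \cong k^2$ and $\res_2 = \res'$ are used consistently, so that the two descriptions of $\wres$ genuinely coincide rather than merely being equivalent as inflators.
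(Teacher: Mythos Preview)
Your proposal is correct and follows essentially the same route as the paper's proof: assemble $(K,\Oo,D,\partial)$ from \S\ref{sec:canform}--\ref{sec:der}, verify the mock $K/\mm$ and density axioms via Propositions~\ref{divisibility-v2}, \ref{divisibility-4real}, \ref{res-connection-v2}, and \ref{density}, then match the explicit formula from Theorem~\ref{thm:so-far} against Theorem~\ref{dv-inflator-construction} using Proposition~\ref{res-connection-v2}. The paper additionally cites Proposition~\ref{prop:q-to-o} (to record that $D$ carries an $\Oo$-module structure, not merely a $Q$-module one) and Proposition~\ref{same-residue-2:prop} (to identify the residue field of $\Oo$ with the target $k$, which is what makes equicharacteristic~$0$ and the embedding $k\hookrightarrow D$ legitimate), but these are already implicit in the references you gave.
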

\begin{proof}
  By Corollary~\ref{cor:o}, we have a valuation ring $\Oo$.  By
  Propositions~\ref{prop:q-to-o} and \ref{derivation}, we have an
  $\Oo$-module $D$ and a derivation $\partial : \Oo \to D$.  By
  Propositions~\ref{divisibility-v2}, \ref{divisibility-4real}, and
  \ref{res-connection-v2} (with Proposition
  \ref{same-residue-2:prop}), $D$ is a mock $K/\mm$, and the sets $R,
  I, Q$ of \S\ref{sec:canform}--\ref{sec:der} are exactly
  \begin{align*}
    R &= \{x \in \Oo : \val(x) \ge 0\} \\
    I &= \{x \in \mm : \partial x = 0\} \\
    Q &= \{x \in \Oo : \partial x = 0\}.
  \end{align*}
  Thus we have a diffeovaluation, and the sets $R, I, Q$ agree with
  the ones defined in \S\ref{sec:dtop}.
  
  The map $\partial : \Oo \to D$ is surjective, by its construction in
  \S\ref{ssec:der}.  Proposition~\ref{density} says that $Q$ is dense
  in $\Oo$.  Every other fiber of $\partial$ is a translate of $Q$, by
  surjectivity.  Therefore every fiber is dense.  So the
  diffeovaluation data is dense.

  Finally, by Propositions~\ref{exact-form} and \ref{res-connection-v2},
  $\varsigma$ has the same form as the diffeovaluation inflator
  constructed in Theorem~\ref{dv-inflator-construction}.
\end{proof}
Under the Weak Assumptions of \S\ref{sec:isotyp}--\ref{sec:der}, we
can say the following:
\begin{corollary}
  Let $\varsigma$ be a malleable 2-inflator on a field $K$ of
  characteristic 0.  Then some mutation $\varsigma'$ of $\varsigma$ is
  either weakly multi-valuation type, or a diffeovaluation inflator.
\end{corollary}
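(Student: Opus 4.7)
\begin{proof}
The plan is a straightforward dichotomy combined with the isotypy reduction already established in \S\ref{sec:isotyp}. Either some mutation of $\varsigma$ is weakly multi-valuation type, in which case there is nothing to prove, or no mutation of $\varsigma$ is weakly multi-valuation type. In the latter case, I will exhibit a mutation $\varsigma'$ that is a diffeovaluation inflator by first passing to an isotypic mutation and then invoking Theorem~\ref{thm:one-dir}.

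So suppose no mutation of $\varsigma$ is weakly multi-valuation type. Then $\varsigma$ satisfies the Weak Assumptions of \S\ref{sec:isotyp}: the characteristic is $0 \ne 2$, $\varsigma$ is malleable by hypothesis, and the third Weak Assumption is precisely our standing hypothesis. By Corollary~\ref{cor:rediso} there is a mutation $\varsigma'$ of $\varsigma$ which is isotypic. By Remark~\ref{rem:inherit}, $\varsigma'$ continues to satisfy the Weak Assumptions; in particular, by transitivity of mutation (\cite{prdf3}, Proposition~10.5), any mutation of $\varsigma'$ is a mutation of $\varsigma$, and hence no mutation of $\varsigma'$ is weakly multi-valuation type. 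Combined with isotypy, $\varsigma'$ therefore satisfies the Strong Assumptions of \S\ref{sec:canform}.

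Now Theorem~\ref{thm:one-dir} applies directly to $\varsigma'$: since $\varsigma'$ is isotypic, malleable, and has no mutation that is weakly multi-valuation type, $\varsigma'$ is a diffeovaluation inflator. This $\varsigma'$ is the desired mutation.
\end{proof}

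There is no real obstacle here; the statement is essentially a packaging of Theorem~\ref{thm:one-dir} together with the isotypy reduction of Corollary~\ref{cor:rediso}, using that mutation is transitive so that the ``no mutation is weakly multi-valuation type'' hypothesis propagates from $\varsigma$ to $\varsigma'$. All the substantive work was done earlier in \S\ref{sec:isotyp}--\ref{sec:der}.
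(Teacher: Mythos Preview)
Your proof is correct and follows essentially the same route as the paper: set up the dichotomy, invoke Corollary~\ref{cor:rediso} to pass to an isotypic mutation, use Remark~\ref{rem:inherit} (via transitivity of mutation) to propagate the Weak Assumptions, and finish with Theorem~\ref{thm:one-dir}. The paper's version is just a terser rendering of the same argument.
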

\begin{proof}
  If no mutation of $\varsigma$ is weakly multi-valuation type, then
  $\varsigma$ satisfies the Weak Assumptions of \S\ref{sec:isotyp}.
  By Corollary~\ref{cor:rediso}, there is some mutation $\varsigma'$
  which is isotypic.  By Remark~\ref{rem:inherit}, $\varsigma'$
  satisfies the Strong Assumptions of
  \S\ref{sec:canform}--\ref{sec:der}, and so $\varsigma'$ is a
  diffeovaluation inflator by Theorem~\ref{thm:one-dir}.
\end{proof}
We can use Theorem~\ref{thm:one-dir} to characterize diffeovaluation
inflators.  We first need some lemmas.
\begin{lemma}\label{lolsome}
  Let $K$ be a field of characteristic 0 and $\Oo_1, \ldots, \Oo_n$ be
  some valuation rings on $K$.  Let $b$ be an element of $K$.  Then
  there is non-zero $q \in \Qq$ such that $1/(b - q) \in \Oo_1 \cap
  \cdots \cap \Oo_n$.
\end{lemma}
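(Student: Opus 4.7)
The plan is to produce $q$ in the explicit form $q = 1/P^N$, where $P$ is a particular positive integer and $N \in \Nn$ is chosen sufficiently large. Writing $\val_i$ for the valuation associated to $\Oo_i$, the condition $1/(b - q) \in \bigcap_i \Oo_i$ is equivalent to $\val_i(b - q) \le 0$ for every $i$, and I verify this by case analysis on the restriction $\val_i|_{\Qq}$.

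First I classify the indices: since $\characteristic K = 0$, each restriction $\val_i|_{\Qq}$ is either trivial or equivalent to a $p_i$-adic valuation for some prime $p_i$. Let $P$ be the product of the (distinct) primes $p_i$ arising in the second case, with $P = 1$ if none do, and set $q = 1/P^N$. Direct computation yields $\val_i(q) = 0$ at the trivial-restriction indices and $\val_i(q) = -N\val_i(p_i)$ at the $p_i$-adic ones, and the latter quantity is strictly negative and tends to $-\infty$ as $N \to \infty$.

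Now I check $\val_i(b - q) \le 0$ in each case. At a trivial-restriction index, $\val_i(b - q) \le 0$ fails only when $\val_i(b) \ge 0$ and $q$ reduces to $\overline{b}_i$ in the residue field $k_i = \Oo_i/\mm_i$; since $\val_i|_{\Qq}$ is trivial, $\Qq$ embeds into $k_i$, so $\overline{b}_i$ is either outside $\Qq$ (in which case the condition is vacuous) or a single rational that $q = 1/P^N$ must avoid, ruling out at most one value of $N$. At a $p_i$-adic-restriction index, the ultrametric identity gives $\val_i(b - q) = \min(\val_i(b), \val_i(q)) \le \val_i(q) < 0$ whenever $\val_i(b) \ne \val_i(q)$, and the residual coincidence $\val_i(b) = -N\val_i(p_i)$ holds for at most one $N$ per index.

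Thus only finitely many values of $N$ are bad, so any sufficiently large $N$ yields a valid $q$. The one delicate step is the trivial-restriction case: one must observe that each bad set collapses to a single rational rather than a whole $\mm_i$-adic neighborhood, which is exactly what the characteristic-$0$ hypothesis supplies, through the embedding $\Qq \hookrightarrow k_i$.
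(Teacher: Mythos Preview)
Your approach is essentially the same as the paper's: both reduce the problem to showing that the bad set $\{q \in \Qq : b - q \in \mm_i \text{ for some } i\} \cup \{0\}$ is avoidable, observe that for each $i$ it is empty, a singleton (residue characteristic $0$), or a $p$-adic ball (residue characteristic $p$), and then exhibit an explicit sequence of rationals that eventually misses all of them. The paper uses $q = 1/m!$; you use $q = 1/P^N$.

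There is one small gap. In the case where every $\val_i|_\Qq$ is trivial, your definition gives $P = 1$, hence $q = 1/P^N = 1$ for every $N$. Your claim that each trivial-restriction index ``rules out at most one value of $N$'' relies on $N \mapsto 1/P^N$ being injective, which fails here: if some $\overline{b}_i = 1 \in k_i$, then \emph{every} $N$ is bad. The fix is immediate --- just ensure $P \ge 2$ (e.g.\ throw in an extra prime factor, or follow the paper and take $q = 1/m!$), after which your argument goes through verbatim.
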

\begin{proof}
  Let $\mm_i$ be the maximal ideal of $\Oo_i$.  We need non-zero $q$
  such that $b - q \notin \mm_i$ for $1 \le i \le n$.  For each $i$,
  let
  \begin{equation*}
    B_i = \{q \in \Qq : b - q \in \mm_i\}.
  \end{equation*}
  We must show that $B_1 \cup \cdots \cup B_n \cup \{0\}$ fails to
  cover all of $\Qq$.  There are three possibilities for each $B_i$:
  \begin{itemize}
  \item If $b \notin \Qq + \mm_i$, then $B_i$ is empty.
  \item Otherwise, if $\Oo_i$ has residue characteristic $0$, then $B_i$ is a singleton.
  \item Otherwise, if $\Oo_i$ has residue characteristic $p > 0$, then
    $B_i$ is a $p$-adic ball in $\Qq$ (of radius $1/p$).
  \end{itemize}
  We can take $q = 1/n!$, for $n \gg 0$.
\end{proof}
\begin{lemma}\label{lem:non-multival}
  Let $K$ be a dense diffeovalued field.  As usual, let
  \begin{equation*}
    R = \{x \in \Oo : \val(\partial x) \ge 0\}.
  \end{equation*}
  Let $S$ be a multi-valuation ring on $K$.  If $a S \subseteq R$,
  then $a = 0$.
\end{lemma}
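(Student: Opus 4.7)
The plan is to derive a contradiction by exhibiting an element $s \in S$ with $\val(s) = 0$ and $\val(\partial s) < -\val(a)$; for then $as \in R$ by hypothesis, yet
\[
 \partial(as) \;=\; a\,\partial s + s\,\partial a,
\]
with $\val(a\,\partial s) = \val(a)+\val(\partial s) < 0$ and $\val(s\,\partial a) = 0 + \val(\partial a) \ge 0$, forcing $\val(\partial(as)) < 0$ and contradicting $as \in R$. Before producing $s$, note that $1 \in S$ yields $a \in R$, so $\val(a), \val(\partial a) \ge 0$, and that we may replace $S$ by $S \cap \Oo$ (still a multi-valuation ring, with $a(S \cap \Oo) \subseteq aS \cap \Oo \subseteq R$), so we may assume $S \subseteq \Oo$.

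To build $s$, I would combine density with Lemma~\ref{lolsome}. As $D$ is a mock $K/\mm$, it contains elements of arbitrarily negative valuation, and density of the fibre $\{y \in \Oo : \partial y = x\}$ in $\Oo$ then produces $b \in \mm$ with $\val(\partial b)$ as negative as desired. Applying Lemma~\ref{lolsome} to the list $\Oo_1, \ldots, \Oo_n, \Oo$ yields $q \in \Qq^\times$ with $1/(b-q) \in S \cap \Oo$, i.e.\ $\val(b-q) \le 0$. Setting $s = 1/(b-q)$ and using $\partial q = 0$, Lemma~\ref{inverses} gives $\partial s = -\partial b/(b-q)^{2}$, so
\[
 \val(\partial s) \;=\; \val(\partial b) - 2\,\val(b-q).
\]

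The main obstacle is to pin down $\val(b-q)$. Since $\val(b) > 0$, we have $\val(b-q) = \val(q) \le 0$, and we need $\val(q) = 0$ so that $\val(s) = 0$ and $\val(\partial s) = \val(\partial b)$. In residue characteristic $0$ this is automatic, since every nonzero rational has valuation zero. In residue characteristic $p > 0$ I would strengthen the conclusion of Lemma~\ref{lolsome}, adding the constraint $q \in \Zz_{(p)}^{\times}$: the proof there reduces finding $q$ to avoiding a finite union of $p_i$-adic balls (one per residue characteristic $p_i$), and a Chinese-remainder argument shows that $q$ can simultaneously be chosen in $\Zz_{(p)}^{\times}$. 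The only delicate case is $p = 2$ together with some $\Oo_i$ of residue characteristic $2$, which is handled by first arranging $b \in \mm_i$ for those $i$ (possible by the approximation theorem when $\val$ and $\val_i$ are independent, and automatic when $\Oo \subseteq \Oo_i$). With $\val(q) = 0$ secured, the contradiction described in the first paragraph goes through by making $\val(\partial b) < -\val(a)$.
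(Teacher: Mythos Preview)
Your proposal is correct and follows essentially the same route as the paper: pick $b \in \mm$ with $\val(\partial b)$ very negative using density, invoke Lemma~\ref{lolsome} to get $q \in \Qq^\times$ with $1/(b-q) \in S$, and then compute $\partial\bigl(a/(b-q)\bigr)$ via the Leibniz rule to reach a contradiction. The only superfluous part is your second paragraph on residue characteristic $p>0$: by definition a diffeovalued field is of equicharacteristic $0$, so every nonzero $q \in \Qq$ automatically has $\val(q)=0$, and there is no need to add $\Oo$ to the list in Lemma~\ref{lolsome} or to strengthen that lemma.
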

\begin{proof}
  Note that $a = a \cdot 1 \in a \cdot S \subseteq R$, so $a \in R
  \subseteq \Oo$.

  Suppose $a \ne 0$.  By density, we can find $b \in K$ such that
  \begin{align*}
    \val(b) &> 0 \\
    \val(\partial b) & < \min(\val(\partial a),0) - \val(a)
  \end{align*}
  Then $b \in \mm$.  By Lemma~\ref{lolsome}, there is non-zero $q \in
  \Qq$ such that $1/(b-q) \in S$.  Then
  \begin{equation*}
    \frac{a}{b-q} \in aS \subseteq R.
  \end{equation*}
  Now $b - q \in \Oo^\times$ because $\Oo$ is equicharacteristic 0 and
  $q \ne 0$.  By Lemma~\ref{inverses},
  \begin{equation*}
    \partial \left( \frac{1}{b-q}\right) = \frac{-\partial b}{(b-q)^2},
  \end{equation*}
  and
  \begin{equation*}
    \val\left( \frac{-a \partial b}{(b-q)^2} \right) = \val(a) +
    \val(\partial b),
  \end{equation*}
  because the right hand side is less than 0.  Then
  \begin{equation*}
    \partial \left(\frac{a}{b-q}\right) = \frac{\partial a}{b-q} +
    \frac{-a \partial b}{(b-q)^2}.
  \end{equation*}
  But
  \begin{align*}
    \val\left(\frac{\partial a}{b-q}\right) &= \val(\partial a) \\
    \val\left( \frac{-a \partial b}{(b-q)^2} \right) &= \val(a) +
    \val(\partial b) < \val(\partial a).
  \end{align*}
  So
  \begin{equation*}
    \val\left(\partial \left(\frac{a}{b-q}\right)\right) = \val(a) +
    \val(\partial b) < 0.
  \end{equation*}
  So $a/(b-q) \notin R$, a contradiction.
\end{proof}

\begin{proposition} \label{prop:arr}
  Let $K$ be a dense diffeovalued field, and let $\varsigma :
  \Dir_K(K) \to \Dir_k(k^2)$ be the induced 2-inflator.  Then no
  mutation of $\varsigma$ is weakly multi-valuation type.
\end{proposition}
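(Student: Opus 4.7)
The plan is to argue by contradiction. Suppose $\varsigma'$ is a mutation of $\varsigma$, with fundamental ring $R'$, such that $aS \subseteq R'$ for some non-zero $a \in K$ and some multi-valuation ring $S$ on $K$. The goal is to derive a contradiction by recognizing $\varsigma'$ as itself being a diffeovaluation inflator and then invoking Lemma~\ref{lem:non-multival}.

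First, I would set up the formalities. By Theorem~\ref{dv-inflator-construction}, $\varsigma$ is isotypic (its target is $\Dir_k(k^2)$) and malleable, and by Remark~\ref{rem:inherit2} both properties are inherited by $\varsigma'$. Thus $\varsigma'$ is an isotypic, malleable $\Qq$-linear 2-inflator on a characteristic-0 field $K$. The trivial-mutation case $\varsigma' = \varsigma$ is already Lemma~\ref{lem:non-multival} directly, so the new content concerns genuinely non-trivial mutations.

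The heart of the argument is to exhibit a dense diffeovaluation $(\Oo', D', \partial')$ on $K$ whose associated diffeovaluation inflator (via Theorem~\ref{dv-inflator-construction}) is isomorphic to $\varsigma'$. The natural guess is that the underlying valuation ring $\Oo' = \Oo$ and the module of differentials $D' = D$ persist under mutation, while the derivation $\partial$ is modified to some $\partial'$ depending on the mutation parameters. Concretely, if the mutation is taken along a line $L = K \cdot (1, \alpha_1, \ldots, \alpha_n)$ with specialization $\varsigma_{n+1}(L) = M' \subseteq k[\varepsilon]^{n+1}$, one expects $\partial'$ to differ from $\partial$ by an $\Oo$-linear term computable from the $\alpha_i$. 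Density of $(\Oo, D, \partial')$ then follows from density of $(\Oo, D, \partial)$. Crucially, one must perform this construction by hand, without appealing to Theorem~\ref{thm:one-dir} or Proposition~\ref{persistence}, since both were proved under the Strong Assumptions that include the very statement we are trying to establish.

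Once $\varsigma'$ is identified as a diffeovaluation inflator, Lemma~\ref{lem:non-multival} applied to its underlying diffeovaluation data gives $a = 0$, contradicting the assumption. The main obstacle is precisely the step of explicitly producing the modified derivation $\partial'$ and verifying that the resulting diffeovaluation is dense: this amounts to tracking how the inflator calculus interacts with the concrete formula $\wres(x) = (\res(x), \res'(\partial x))$ under mutation along $L$. A useful technical tool here is Lemma~10.3 of \cite{prdf3}, which relates the specialization of an element under $\varsigma$ to its specialization under $\varsigma'$, letting us transfer the residue-and-derivative description of $R$ across the mutation. I would expect the calculation to split into cases according to whether $L$ contains tame or wild elements, mirroring the case analysis already used in the proofs of Lemmas~\ref{tame-is-r}, \ref{possibly-useless}, and \ref{lem:bite-size}.
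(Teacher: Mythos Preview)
Your overall strategy---reduce to Lemma~\ref{lem:non-multival}---is right, but the route you propose is far harder than necessary, and the hard step you flag (explicitly constructing a modified derivation $\partial'$ making $\varsigma'$ a diffeovaluation inflator) is never carried out. That step is not obviously feasible: it is not clear a priori that every mutation of a diffeovaluation inflator is again a diffeovaluation inflator, and you correctly note you cannot invoke Theorem~\ref{thm:one-dir} without circularity. So as written, the proposal has a genuine gap at its core.

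The paper bypasses this entirely. The key observation you are missing is that $\varsigma$ arises as the inflator attached to the pedestal $I$ in the lattice $\Sub_Q(K)$ (this is how it was built in Theorem~\ref{dv-inflator-construction}, via Propositions~8.9--8.12 of \cite{prdf3}). By Proposition~10.15 of \cite{prdf3}, the mutation $\varsigma'$ along $K\cdot(a_1,\ldots,a_m)$ is then the inflator attached to the pedestal
\[
I' = a_1^{-1}I \cap \cdots \cap a_m^{-1}I,
\]
and by Proposition~8.10 of \cite{prdf3} its fundamental ring is the stabilizer $R' = \{x \in K : xI' \subseteq I'\}$. Now the contradiction is immediate: if $aS \subseteq R'$ with $a \ne 0$, pick any nonzero $b \in I'$ (which exists since $I'$ is open in the diffeovaluation topology) and any $a_i \ne 0$; then
\[
a_i \cdot b \cdot a \cdot S \;\subseteq\; a_i \cdot b \cdot R' \;\subseteq\; a_i \cdot I' \;\subseteq\; I \;\subseteq\; R,
\]
and Lemma~\ref{lem:non-multival} applied to the \emph{original} diffeovaluation data forces $a_i b a = 0$, absurd. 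No new derivation is needed: one simply transports the multi-valuation ideal from $R'$ back into $R$ by multiplying by an element of $I'$ and rescaling.
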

\begin{proof}
  Recall that $\varsigma$ is the 2-inflator induced by the pedestal
  $I$ in $\Sub_Q(K)$.  Suppose $\varsigma'$ is the mutation of
  $\varsigma$ along the line $K \cdot (a_1, a_2, \ldots, a_m)$.  By
  Proposition~10.15 in \cite{prdf3}, $\varsigma'$ is the 2-inflator
  induced by the pedestal
  \begin{equation*}
    I' = a_1^{-1} I \cap \cdots \cap a_m^{-1} I.
  \end{equation*}
  Note that $I'$ is non-zero, because it is open in the
  diffeovaluation topology on $K$.

  By Proposition~8.10 in \cite{prdf3}, the fundamental ring $R'$ of
  $\varsigma'$ is the ``stabilizer''
  \begin{equation*}
    R' = \{x \in K : xI' \subseteq I'\}.
  \end{equation*}
  Suppose for the sake of contradiction that $\varsigma'$ is weakly
  multi-valuation type.  Then there is a multivaluation ring $S$ on
  $K$ such that $R'$ contains a non-zero $S$-module.  So there is some
  non-zero $a \in K$ such that
  \begin{equation*}
    a \cdot S \subseteq R'.
  \end{equation*}
  Let $b$ be a non-zero element in $I'$.  Choose $i$ so that $a_i \ne
  0$.  Then
  \begin{equation*}
    a_i \cdot b \cdot a \cdot S \subseteq a_i \cdot b \cdot R'
    \subseteq a_i \cdot I' \subseteq I \subseteq R.
  \end{equation*}
  By Lemma~\ref{lem:non-multival}, $a_i b a = 0$, which is absurd.
\end{proof}
\begin{theorem}\label{thm:char}
  Let $\varsigma$ be a 2-inflator on a field $K$ of characteristic 0.
  Then $\varsigma$ is a diffeovaluation inflator if and only if the
  following conditions hold:
  \begin{itemize}
  \item $\varsigma$ is malleable.
  \item $\varsigma$ is isotypic.
  \item No mutation of $\varsigma$ is weakly multi-valuation type.
  \end{itemize}
\end{theorem}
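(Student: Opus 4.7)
The plan is to observe that Theorem~\ref{thm:char} is really just a repackaging of results already established, with the two directions handled separately.

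For the ``if'' direction, I would invoke Theorem~\ref{thm:one-dir} directly. That theorem assumes exactly the three listed conditions (malleable, isotypic, no mutation weakly multi-valuation type) together with $\characteristic(K)=0$, and concludes that $\varsigma$ is a diffeovaluation inflator. So there is nothing new to do here beyond citing that result.

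For the ``only if'' direction, suppose $\varsigma$ is a diffeovaluation inflator, arising from some dense diffeovalued field structure on $K$ via Theorem~\ref{dv-inflator-construction}. Malleability is part of the conclusion of Theorem~\ref{dv-inflator-construction}. Isotypy is immediate from the formula: the target directory is $\Dir_k(k^2)$, in which the two simple summands are both isomorphic to $k$. Finally, the statement that no mutation of $\varsigma$ is weakly multi-valuation type is exactly Proposition~\ref{prop:arr}.

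The main obstacle would ordinarily have been checking that diffeovaluation inflators resist becoming weakly multi-valuation type under arbitrary mutations, but Proposition~\ref{prop:arr} already handled this via the density-based Lemma~\ref{lem:non-multival}. So no substantial new argument is required; the proof is a short assembly of Theorem~\ref{thm:one-dir}, Theorem~\ref{dv-inflator-construction}, and Proposition~\ref{prop:arr}, with the isotypy observation thrown in.
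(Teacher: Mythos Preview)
Your proposal is correct and matches the paper's proof essentially verbatim: the ``if'' direction is Theorem~\ref{thm:one-dir}, and the ``only if'' direction cites Theorem~\ref{dv-inflator-construction} for malleability, observes isotypy from the target $\Dir_k(k^2)$, and invokes Proposition~\ref{prop:arr} for the third condition.
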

\begin{proof}
  If the listed properties hold, then $\varsigma$ is a diffeovaluation
  inflator by Theorem~\ref{thm:one-dir}.  Conversely, suppose
  $\varsigma : \Dir_K(K) \to \Dir_k(k^2)$ is a diffeovaluation
  inflator.  Then $\varsigma$ is plainly isotypic, and malleable by
  Theorem~\ref{dv-inflator-construction}.  The final property holds by
  Proposition~\ref{prop:arr}.
\end{proof}

\section{The canonical topology in characteristic 0}
Let $(\Kk,+,\cdot,\ldots)$ be a field, possibly with extra structure.
Assume
\begin{itemize}
\item $\Kk$ is sufficiently resplendent
\item $\dpr(\Kk) \le 2$ and $\characteristic(\Kk) = 0$.
\item $\Kk$ is unstable, and the canonical topology is not a
  V-topology.
\end{itemize}
\begin{theorem}\label{thm:can-dv}
  There is a valuation $\val : \Kk \to \Gamma$ and a derivation $\delta :
  \Kk \to \Kk$ such that for every $a, b \in \Kk$ and $\gamma \in
  \Gamma$, the set
  \begin{equation*}
    \{x \in \Kk : \val(x - a) > \gamma \text{ and } \val(\delta x -
    b) > \gamma\}
  \end{equation*}
  is non-empty, and these sets form a basis for the canonical topology
  on $\Kk$.
\end{theorem}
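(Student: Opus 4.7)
The plan is to combine the results of \S\ref{sec:app} with the diffeovaluation framework of \S\ref{sec:dv-fields}. By Theorem~\ref{thm:what} applied to $\Kk$ --- which is unstable, of characteristic $0$, has $\dpr(\Kk) \le 2$, and is by hypothesis not of valuation type --- there exist a small submodel $K_0 \preceq \Kk$ containing the magic subfield $k_0$, and a $k_0$-linear 2-inflator $\varsigma$ on $\Kk$ satisfying the Strong Assumptions of \S\ref{sec:canform}--\ref{sec:der}, such that the group $J_{K_0}$ of $K_0$-infinitesimals is an ideal in the fundamental ring $R$ of $\varsigma$. Theorem~\ref{thm:one-dir} then identifies $\varsigma$ as a diffeovaluation inflator on $\Kk$: we obtain a non-trivial valuation $\val : \Kk \to \Gamma$ with valuation ring $\Oo$ and maximal ideal $\mm$, a mock $\Kk/\mm$ denoted $D$, and a dense $\Oo$-linear derivation $\partial : \Oo \to D$, with $R = \{x \in \Oo : \val(\partial x) \ge 0\}$ as in \S\ref{sec:dtop}.

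Next I would use sufficient resplendence of $\Kk$ to enrich $(\Kk, \Oo, D, \partial)$ to a normalized, lifted diffeovalued field. The structure $(\Oo, D, \partial)$ is interpretable from $\varsigma$, so $(\Kk, \Oo, D, \partial)$ is itself resplendent, and we may additionally arrange that $\Gamma$ is $p$-divisible for at least one prime (by realizing, via resplendence, an appropriate algebraic extension of the valuation). Proposition~\ref{prop:lift} then yields both a normalization $D \cong \Kk/\mm$ (respecting the embedding of $k$) and a lifting $\delta : \Kk \to \Kk$, i.e.\ a derivation on all of $\Kk$ such that $\partial x = (\delta x) + \mm$ for every $x \in \Oo$.

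With $(\Kk, \delta, \val)$ now a dense, lifted diffeovalued field, Proposition~\ref{prop:strong-density} delivers exactly the non-emptiness of every $B_{a,b,\gamma}$: density of $\partial$ is equivalent to the existence, for every $a, b \in \Kk$ and $\gamma \in \Gamma$, of $x \in \Kk$ with $\val(x - a) > \gamma$ and $\val(\delta x - b) > \gamma$. Proposition~\ref{prop:dense-dvtop} then tells us that the family $\{B_{a,b,\gamma}\}$ forms a basis for the diffeovaluation topology on $\Kk$, which is by definition the topology induced by $R$. But by Theorem~\ref{thm:definable}, the canonical topology on $\Kk$ is also induced by $R$ (it has neighborhood basis $\{aR : a \in \Kk^\times\}$), so the two topologies coincide and the $B_{a,b,\gamma}$ provide the desired basis for the canonical topology.

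The main obstacle I anticipate is the resplendence step: verifying that sufficient resplendence of $\Kk$ really is enough to supply both the normalization $D \cong \Kk/\mm$ and a genuinely global lifting $\delta : \Kk \to \Kk$ --- in particular arranging $p$-divisibility of $\Gamma$ so that Proposition~\ref{prop:lift} applies without altering the hypotheses on $\Kk$. Once the lifting is in hand, the remainder of the argument is a direct assembly of results already established in \S\ref{sec:app} and \S\ref{sec:dv-fields}.
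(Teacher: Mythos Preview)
Your overall architecture is correct---invoke Theorem~\ref{thm:what} and Theorem~\ref{thm:one-dir} to obtain dense diffeovaluation data $(\Oo,D,\partial)$ on $\Kk$, identify the diffeovaluation topology with the canonical topology via Theorem~\ref{thm:definable}, and then lift via Proposition~\ref{prop:lift} to reach Propositions~\ref{prop:strong-density} and \ref{prop:dense-dvtop}. The gap is exactly where you anticipated it: the resplendence and $p$-divisibility step.

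Two specific problems. First, the diffeovaluation data $(\Oo,D,\partial)$ is \emph{not} known to be interpretable in $\Kk$. The inflator $\varsigma$ is built from pedestals (type-definable subgroups chosen using a magic subfield), and the rings $R$, $\Oo$, and the module $D$ are constructed from $\varsigma$; none of this is part of the definable structure of $\Kk$. Theorems~\ref{th:that-vee} and \ref{thm:definable} only say that the induced \emph{topologies} are definable, not that $\Oo$ or $R$ themselves are. So you cannot conclude that the expansion $(\Kk,\Oo,D,\partial)$ inherits resplendence from $\Kk$. Second, $p$-divisibility of $\Gamma$ cannot be ``arranged via resplendence'': resplendence lets you add predicates realizing a consistent expansion, but it does not alter the isomorphism type of already-present structure such as the value group.

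The paper handles both issues differently. For $p$-divisibility it uses an external input (Corollary~4.6 of \cite{CKS}): there is a prime $p$ with $\Kk^\times = k_0^\times \cdot (\Kk^\times)^p$, and since $k_0 \subseteq \Oo$ the value group $\Kk^\times/\Oo^\times$ is a quotient of $\Kk^\times/k_0^\times$, hence $p$-divisible automatically. For resplendence, the paper does not try to make $(\Kk,\Oo,D,\partial)$ resplendent. Instead it expands $\Kk$ to $\Kk^+ = (\Kk,\Oo,D,\partial)$, passes to a resplendent elementary \emph{extension} $K$ of $\Kk^+$, and applies Proposition~\ref{prop:lift} there. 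The return to $\Kk$ is justified at the outset: by Theorem~\ref{thm:uni-def} the canonical topology is uniformly definable, so the entire conclusion of the theorem (existence of $\val,\delta$ with the stated basis property) is expressible as consistency of a theory in an expanded language over $\operatorname{Th}(\Kk)$; once it holds in some $K \equiv \Kk$, resplendence of $\Kk$ transfers it back.
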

\begin{proof}
  By resplendence and the uniform definability of the canonical
  topology (Theorem~\ref{thm:uni-def}), we may replace $\Kk$ with an
  elementarily equivalent field $K$.  By
  Propositions~\ref{prop:strong-density} and \ref{prop:dense-dvtop},
  it suffices to produce a dense, lifted diffeovaluation structure on
  $K$ such that the diffeovaluation topology agrees with the canonical
  topology.
  
  Take a magic subfield $k_0 \preceq \Kk$.  By Corollary~4.6 in
  \cite{CKS}, there is a prime $p$ such that the embedding
  \begin{equation*}
    k_0^\times/(k_0^\times)^p \hookrightarrow \Kk^\times/(\Kk^\times)^p
  \end{equation*}
  is an isomorphism, and so $\Kk^\times = k_0^\times \cdot
  (\Kk^\times)^p$.  Thus $\Kk^\times/k_0^\times$ is $p$-divisible.
  
  By Theorem~\ref{thm:what}, there is a $k_0$-linear 2-inflator
  $\varsigma$ on $\Kk$ satisfying the Strong Assumptions of \S
  \ref{sec:canform}--\ref{sec:der}.  By Theorem~\ref{thm:definable},
  its fundamental ring $R$ induces the canonical topology on $\Kk$.
  By Theorem~\ref{thm:one-dir} (and its proof), $\varsigma$ is the
  2-inflator induced by some diffeovaluation data $(\Oo,D,\partial)$,
  and
  \begin{equation*}
    R = \{x \in \Oo : \val(\partial x) \ge 0\}.
  \end{equation*}
  Thus, the canonical topology agrees with the diffeovaluation
  topology.

  Note that $R$ is a $k_0$-algebra, and therefore $k_0 \subseteq R
  \subseteq \Oo$.  So the value group $\Kk^\times/\Oo^\times$ is a
  quotient of $\Kk^\times/k_0^\times$, and is $p$-divisible.

  Let $\Kk^+$ be the expansion of $\Kk$ by the
  diffeovaluation data.  By Theorem~\ref{thm:uni-def}, there is a
  sentence $\sigma$ holding in $\Kk^+$, expressing that
  \begin{itemize}
  \item the diffeovaluation is dense
  \item the value group is $p$-divisible
  \item the diffeovaluation topology agrees with the canonical
    topology (of the reduct).
  \end{itemize}
  Let $K$ be a sufficiently resplendent elementary extension of
  $\Kk^+$.  Then $\sigma$ holds in $K$, and $K$ admits a lifting, by
  Proposition~\ref{prop:lift}.
\end{proof}
Recall from Definition~\ref{def:dv-top} that a \emph{DV-topology} is a
field topology that is ``locally equivalent'' in the sense of
\cite{prestel-ziegler} to a diffeovaluation topology on a dense
diffeovalued field.
\begin{corollary}\label{cor:vdv-top}
  If $K$ is a field of dp-rank 2 and characteristic 0, then one of the
  following holds:
  \begin{itemize}
  \item $K$ is stable.
  \item The canonical topology on $K$ is a V-topology.
  \item The canonical topology on $K$ is a DV-topology.
  \end{itemize}
\end{corollary}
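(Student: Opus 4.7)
The plan is to derive the corollary as a direct consequence of Theorem~\ref{thm:can-dv} together with the uniform definability result Theorem~\ref{thm:uni-def}. Assume $K$ is unstable (otherwise case one holds) and that the canonical topology on $K$ is not a V-topology (otherwise case two holds). The goal is to show the canonical topology on $K$ is a DV-topology, i.e., locally equivalent in the sense of \cite{prestel-ziegler} to a dense diffeovaluation topology.

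The first step is to pass to a sufficiently resplendent elementary extension $\Kk \succeq K$. By Theorem~\ref{thm:uni-def}(\ref{ud2}), the canonical topologies on $K$ and $\Kk$ are locally equivalent, so it suffices to show the canonical topology on $\Kk$ is a DV-topology, since local equivalence is transitive. We also need that the canonical topology on $\Kk$ is not a V-topology; this follows because ``being a V-topology'' can be characterized by a local sentence (the same local sentence used in Proposition~\ref{not-a-v}), and local sentences are preserved under local equivalence.

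Now apply Theorem~\ref{thm:can-dv} to $\Kk$. This furnishes a valuation $\val$ and a derivation $\delta$ on $\Kk$ such that the sets
\begin{equation*}
B_{a,b,\gamma} = \{x \in \Kk : \val(x-a) > \gamma \text{ and } \val(\delta x - b) > \gamma\}
\end{equation*}
are all non-empty and form a basis for the canonical topology on $\Kk$. Non-emptiness of all these sets is precisely the strong density condition of Proposition~\ref{prop:strong-density}, so $(\Kk,\val,\delta)$ (viewed via the natural lifted diffeovaluation structure, with $\Oo$ the valuation ring of $\val$ and $\partial$ the composition $\Oo \xrightarrow{\delta} \Kk \twoheadrightarrow \Kk/\mm$) is a dense lifted diffeovalued field. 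By Proposition~\ref{prop:dense-dvtop}, the sets $B_{a,b,\gamma}$ form a basis for the diffeovaluation topology of this structure. Hence the canonical topology on $\Kk$ literally coincides with a dense diffeovaluation topology, and in particular is a DV-topology.

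Transferring back to $K$: by Theorem~\ref{thm:uni-def}(\ref{ud2}), the canonical topology on $K$ is locally equivalent to the canonical topology on $\Kk$, which in turn equals (hence is locally equivalent to) the dense diffeovaluation topology of $(\Kk,\val,\delta)$. Composing the two local equivalences yields the desired conclusion that the canonical topology on $K$ is a DV-topology. The only real obstacle in this proposal is verifying that ``not a V-topology'' transfers from $K$ to the resplendent extension $\Kk$; this is handled by noting that the negation of the local sentence defining V-topologies is itself a local sentence and is therefore preserved under local equivalence, so the V-topology case for $\Kk$ would contradict the assumed non-V-topology status of $K$.
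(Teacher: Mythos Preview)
Your argument is correct and follows the same route as the paper's one-line proof, which cites precisely Theorem~\ref{thm:can-dv} and Theorem~\ref{thm:uni-def}.\ref{ud2}. One caveat on the point you flag at the end: your resolution uses local equivalence (from Theorem~\ref{thm:uni-def}) to transfer ``not a V-topology'' to $\Kk$, but Theorem~\ref{thm:uni-def} is proved in \S\ref{sec:cantopdef} under the standing hypothesis that $\Kk$ is already not valuation type, so as written this is still circular; the clean fix is to argue contrapositively that if $\Kk$ \emph{were} valuation type then, by the results of \cite{prdf2} cited in \S\ref{sec:val-case}, the canonical topology would be a uniformly definable V-topology on every model, forcing $K$'s canonical topology to be a V-topology as well.
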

\begin{proof}
  Theorem~\ref{thm:can-dv} and Theorem~\ref{thm:uni-def}.\ref{ud2}.
\end{proof}

\section{Counterexample to the valuation conjecture} \label{sec:grail}
As outlined in \S 10 of \cite{prdf2}, it would be very helpful if the
Valuation Conjecture~\ref{con:vc} were true.  Unfortunately,
algebraically closed dense diffeovalued fields turn out to be a
counterexample, as hinted by Theorem~\ref{thm:can-dv} and
Corollary~\ref{cor:vdv-top}.
\begin{theorem}\label{ctex-0:thm}
  Let $\ADVF$ be the theory of algebraically closed, dense diffeovalued
  fields of residue characteristic 0.  Then $\ADVF$ is consistent,
  complete, unstable, has dp-rank 2, and is not valuation type.
\end{theorem}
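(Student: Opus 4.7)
The proof splits into four claims: consistency (a model exists), completeness, that the theory is unstable of dp-rank $2$, and that the canonical topology on models is not a V-topology.

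For consistency, I would construct a model by starting with an algebraically closed valued field of equicharacteristic $0$ such as $K_0 = \Cc((t^\Qq))$, whose value group $\Qq$ is divisible and hence $p$-divisible for every prime $p$. The naive derivation $d/dt$ is not dense (e.g.\ its image omits elements with nonzero $t^{-1}$-coefficient), so I would instead pass to a larger algebraically closed valued field, pick a large transcendence basis, and define $\delta$ on this basis by a transfinite back-and-forth that directly realizes every requirement $(a, b, \gamma) \mapsto \exists x(\val(x - a) > \gamma \wedge \val(\delta x - b) > \gamma)$. A cleaner route is to first construct a surjective $k_0$-linear derivation $\partial : \Oo_0 \to K_0/\mm_0$ (which is essentially an exercise in choosing values on a transcendence basis), and then apply Proposition~\ref{prop:lift} to produce a lifting $\delta : K_0 \to K_0$; the resulting structure is a dense diffeovalued algebraically closed valued field of equicharacteristic $0$.

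For completeness, I would establish quantifier elimination in the expansion of the $\ACVF$-language by $\delta$, via a back-and-forth between saturated models. At each stage one either extends the partial isomorphism by a new valued-field element (handled by $\ACVF_{0,0}$-QE) or by a new element along with its iterated derivatives $\delta^n c$ (handled by density in the target, which provides arbitrary approximants to $(c, \delta c, \delta^2 c, \ldots)$ of any prescribed valuative type). Since the residue field and value group are determined up to elementary equivalence, completeness follows. Instability is immediate from the $\ACVF$ reduct, which already interprets a linear order on $\Gamma$.

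The dp-rank is the main obstacle. The lower bound $\dpr \ge 2$ follows from an explicit ict-pattern of depth $2$: take the two families $\varphi_i(x) \equiv \val(x - a_i) > 0$ and $\psi_j(x) \equiv \val(\delta x - b_j) > 0$ for appropriately chosen $a_i, b_j$; density guarantees that every Boolean combination is realized. For the upper bound $\dpr \le 2$, I would argue after QE that every formula reduces to an $\ACVF$-formula in the variables $x, \delta x, \delta^2 x, \ldots, \delta^n x$, and that the effective rank is $2$ because the higher iterated derivatives are controlled (up to density) by $(x, \delta x)$; combined with dp-minimality of $\ACVF_{0,0}$, a mutually-indiscernible-sequences argument then yields $\dpr \le 2$. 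This combinatorial bookkeeping is the most delicate part of the proof. Finally, once $\dpr = 2$, $\characteristic = 0$, and unstability are in hand, Theorems~\ref{dv-inflator-construction} and \ref{thm:char} provide a $2$-inflator satisfying the Strong Assumptions, so by Theorem~\ref{thm:definable} the canonical topology coincides with the topology induced by the fundamental ring $R$, which is the diffeovaluation topology (Proposition~\ref{prop:dv-top}); this is not a V-topology by Proposition~\ref{not-a-v}, completing the proof.
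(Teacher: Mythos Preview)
Your overall plan is reasonable, but the approach to completeness and the dp-rank upper bound has a genuine gap, and your ``not valuation type'' argument is more circuitous than necessary.

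\textbf{Completeness and QE.} You propose QE in the language with the lifted derivation $\delta : K \to K$. This is unlikely to work: the density axiom constrains only the pair $(x,\delta x)$, so higher derivatives $\delta^2 x, \delta^3 x,\ldots$ are completely unconstrained, and different liftings of the same $\partial : \Oo \to K/\mm$ can give elementarily inequivalent $(K,\val,\delta)$-structures. The paper itself remarks that the theory $T$ of dense lifted diffeovalued fields ``probably has no nice properties, other than being consistent.'' The paper's key trick is to replace $\delta$ by the \emph{truncated log derivation} $\dlog : K^\times \to K/\mm$, $\dlog(x) = (\delta x)/x + \mm$, and work in the theory $\LDVF$. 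Because $\dlog$ lands in $K/\mm$ rather than $K$, it does not iterate, so there are no higher-derivative obstructions; and because $\dlog$ is a group homomorphism on $K^\times$, one can prove the crucial algebraic lemma (Proposition~\ref{alg-case}) that a log derivation vanishing on $F$ vanishes on $F^{alg}$. The back-and-forth for $\LDVF$ then goes through cleanly, yielding QE and completeness; completeness of $\ADVF$ follows since every model is (after passing to an elementary extension) a reduct of an $\LDVF$-model.

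\textbf{dp-rank upper bound.} Your reduction to ACVF-formulas in $(x,\delta x,\delta^2 x,\ldots,\delta^n x)$ followed by the hand-wave ``higher derivatives are controlled by $(x,\delta x)$'' is exactly where the $\delta$-language fails you. With the $\LDVF$ setup the problem evaporates: by QE, every $\LDVF$-formula is quantifier-free, and since $\dlog(P(x)) = \delta(P(x))/P(x) + \mm$ is a rational function of $(x,\delta x)$, every $\LDVF$-formula $\varphi(x)$ is provably equivalent (in $T$) to an ACVF-formula $\psi(x,\delta x)$. An ict-pattern of depth $3$ in $\LDVF$ then transports to one in $\ACVF$ on $K^2$, which is impossible.

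\textbf{Not valuation type.} Your argument invokes Theorem~\ref{thm:definable}, but that theorem applies to the fundamental ring of the $2$-inflator produced by the pedestal machinery (Theorem~\ref{thm:what}), not to an arbitrary inflator satisfying the Strong Assumptions; you have not shown these coincide. The paper's argument is more direct: the diffeovaluation ring $R$ satisfies $\Frac(R)=K$, so $\dpr(R)=\dpr(K)$ by Lemma~\ref{yatir:lem}, hence $R = R - R$ is itself a basic neighborhood of the canonical topology. Thus the canonical topology refines the diffeovaluation topology; if the former were a V-topology, so would the latter be (coarsenings of V-topologies are V-topologies, \cite{prestel-ziegler} Theorem~3.2), contradicting Proposition~\ref{not-a-v}.
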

This will take some work to prove.  In order to get a cleaner
quantifier elimination result, it helps to work in a slightly
different theory expanding $\ADVF$.
\begin{definition}
  Let $(K,\Oo,\mm)$ be a valued field of residue characteristic 0.
  \begin{itemize}
  \item Let $M$ be an $\Oo$-module.  An \emph{$M$-valued log
    derivation} on $K$ is a group homomorphism
    \begin{equation*}
      \dlog : K^\times \to (M,+)
    \end{equation*}
    such that
    \begin{equation*}
      (x + y) \dlog(x + y) = x \dlog x + y \dlog y
    \end{equation*}
    for $x, y \in \Oo$.
  \item A \emph{truncated log derivation} on $K$ is a log derivation
    taking values in $K/\mm$.
  \end{itemize}
\end{definition}
If $\dlog : K^\times \to (M,+)$ is a log derivation, and we define
$\partial x = x \cdot \dlog x$ for $x \in \Oo$, then $\partial : \Oo
\to M$ is a derivation.

\begin{definition}
  \emph{LDVF} is the theory of $(K,\val,\dlog)$, where
  \begin{itemize}
  \item $(K,\val) \models \ACVF_{0,0}$
  \item $\dlog$ is a truncated log derivation on $K$.
  \item Every fiber of $\dlog$ is dense in $K$, with respect to the
    valuation topology.
  \end{itemize}
\end{definition}

\begin{remark}
  The notion of ``log derivation'' used here is probably related to a
  standard construction of \emph{logarithmic differentials} in log
  geometry.  Specifically, an $M$-valued log derivatiion on $\Oo$ is
  probably the same thing as an $\Oo$-linear morphism
  $\Omega_{\Oo/\Qq}(\log \Gamma_{> 0}) \to M$, where the module of log
  differentials $\Omega_{\Oo/\Qq}(\log \Gamma_{> 0})$ is as defined in
  \S 6.4.14 of \cite{GabberRamero}.  I have not traced through the
  definitions to verify this.
\end{remark}

\subsection{Consistency}
Recall that if $L/K$ is an extension of fields of characteristic 0, if
$V$ is an $L$-vector space, and if $\partial : K \to V$ is a
derivation, then we can extend $\partial$ to a derivation $\partial' :
L \to V$.  In the case where $L = K(t)$ (a pure transcendental
extension), we can arrange for $\partial' t$ to equal any value we
want in $V$.
\begin{lemma}\label{exister:lem}
  There is an algebraically closed dense, lifted diffeovalued field.
  In other words, there is an algebraically closed field $K$ of
  characteristic 0, a non-trivial valuation $\val : K \to \Gamma$ with
  residue characteristic 0, and a derivation $\partial : K \to K$ such
  that for any $a, b \in K$ and $\gamma \in \Gamma$, there is $x \in
  K$ such that
  \begin{align*}
    \val(x - a) & \ge \gamma \\
    \val(\partial x - b) & \ge \gamma.
  \end{align*}
  (Compare with Proposition~\ref{prop:strong-density}.)
\end{lemma}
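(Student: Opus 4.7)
The plan is to construct $K$ by transfinite induction. I would start with $K_0 = \overline{\Qq(t)}$ carrying the $t$-adic valuation (which has residue characteristic $0$) and the derivation $d/dt$, both extended to the algebraic closure; in characteristic $0$ the derivation extends uniquely. Each $K_\alpha$ in the chain will be algebraically closed of characteristic $0$, equipped with a non-trivial valuation of residue characteristic $0$ and a derivation.

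Fix an uncountable regular cardinal $\kappa$ and enumerate, with suitable bookkeeping, a list of density requirements $(a_\alpha, b_\alpha, \gamma_\alpha)_{\alpha < \kappa}$ arranged so that every triple $(a,b,\gamma) \in K_\kappa \times K_\kappa \times \Gamma_{K_\kappa}$ arising over the eventual union appears at some stage. At successor stage $\alpha+1$, given $K_\alpha$ and a triple with $a_\alpha, b_\alpha \in K_\alpha$ and $\gamma_\alpha \in \Gamma_{K_\alpha}$, I would write $\gamma_\alpha = \val(c)$ for some $c \in K_\alpha^\times$ and adjoin a transcendental $s$, extending the valuation to $K_\alpha(s) = K_\alpha(u)$ where $u := (s - a_\alpha)/c$ via the Gauss construction: declare $\val(u) = 0$ with $\res(u)$ transcendental over the residue field. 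This forces $\val(s - a_\alpha) = \gamma_\alpha$. Then I would extend the derivation to $K_\alpha(s)$ by setting $\partial s = b_\alpha$, which is legitimate precisely because $s$ is transcendental over $K_\alpha$; this makes $\val(\partial s - b_\alpha) = +\infty \geq \gamma_\alpha$. Finally I would take $K_{\alpha+1}$ to be the algebraic closure of $K_\alpha(s)$, extending the valuation by some valid extension and extending the derivation uniquely via $f'(\beta)\,\partial\beta + f^\partial(\beta) = 0$ for $\beta$ algebraic with minimal polynomial $f$, noting that $f'(\beta) \neq 0$ since in characteristic $0$ the polynomial $f$ is separable. At limit stages I would take unions, and let $K = K_\kappa$.

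By construction every density requirement over $K$ is realized: a triple $(a, b, \gamma)$ over $K$ appears at some stage $\alpha$, and the witness $s \in K_{\alpha+1} \subseteq K$ produced at that stage satisfies $\val(s - a) = \gamma$ and $\partial s = b$, inequalities preserved under all further extensions of the valuation and derivation. The main obstacle is the bookkeeping argument that ensures every triple in the final $K$ is caught at some stage, but this is entirely analogous to the standard construction of saturated algebraically closed valued fields and presents no real difficulty once one notes that passing from $K_\alpha$ to $K_{\alpha+1}$ grows the cardinality in a controlled way. The algebraic ingredients --- the Gauss extension of a valuation by a residually transcendental generator, the extension of a valuation to the algebraic closure, the free extension of a derivation along a pure transcendental extension, and the unique extension of a derivation to algebraic extensions in characteristic $0$ --- are all classical.
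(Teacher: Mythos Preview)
Your proof is correct, but it takes a genuinely different route from the paper's argument. The paper fixes the field in advance: it takes $K$ to be the completion of $\Qq(t)^{alg}$ under the $t$-adic valuation, observes that $K$ is an uncountable perfect Polish space with a countable dense elementary substructure $K_0$, and then chooses a countable sequence $t_0, t_1, \ldots$ of elements of $K$, each transcendental over $K_0(t_0,\ldots,t_{i-1})$, hitting a countable basis of open conditions. The derivation is then defined on $K_0(t_0,t_1,\ldots)$ by freely specifying $\partial t_i$, and extended arbitrarily to $K$. The density requirement is met because the $(t_i,\partial t_i)$ land in every basic open of $K \times K$.

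Your approach instead builds the field along with the derivation, adjoining one Gauss-transcendental per density requirement and closing up algebraically. This is the standard ``chase your tail'' saturation construction, and all the algebraic ingredients you cite are correct; the one point you gloss over (the bookkeeping) is routine and could be avoided entirely by iterating the ``handle all current requirements'' step $\omega$ times rather than doing a single transfinite pass. The paper's argument buys brevity and concreteness: no bookkeeping, and the field is an explicit complete valued field. Your argument is more portable---it doesn't rely on completeness or Polish-space facts and would adapt more readily to variants with other density conditions or over other base fields.
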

\begin{proof}
  Choose some extension of the $t$-adic valuation on $\Qq(t)$ to
  $\Qq(t)^{alg}$.  Let $K$ be the completion of $\Qq(t)^{alg}$.  Then
  $K$ is an algebraically closed field with a complete rank 1
  valuation of residue characteristic 0.  Moreover, the valuation
  topology on $K$ is metrizable, separable, and complete.  As $(K,+)$
  is a non-discrete topological group, it has no isolated points.  As $K$ is a
  perfect Polish space, it is uncountable.
  
  Let $\{U_i \times V_i\}_{i \in \Nn}$ be a countable basis of opens
  in $K \times K$.  Let $K_0 \preceq K$ be a countable elementary
  substructure which defines the $U_i$ and $V_i$, and is dense in $K$.
  Recursively choose $t_i, s_i \in K$ such that
  \begin{itemize}
  \item $t_i \in U_i$ and $s_i \in V_i$
  \item $t_i$ is transcendental over $K_0(t_0, t_1, \ldots, t_{i-1})$.
  \end{itemize}
  This is possible because $K_0(t_0, t_1, \ldots, t_{i-1})$ is
  countable, so at least one transcendental $t' \in K$ exists.
  Replacing $t'$ with its inverse, we may arrange for $t' \in \Oo$.
  Then, $K_0$-definability of $U_i$ ensures that there are $a, b \in
  K_0^\times$ such that $a \cdot \Oo + b \subseteq U_i$.  Take $t_i =
  a t' + b$.

  Take the trivial derivation $K_0 \to K$ and extend it successively
  to $K_0(t_0)$, $K_0(t_0,t_1)$, \ldots, arranging for $\partial t_i =
  s_i$.  This determines a derivation $K(t_0,t_1,\ldots) \to K$, which
  we can then extend to a derivation $K \to K$.  The collection of
  $t_i$ witnesses the required density statement.
\end{proof}

\begin{lemma}\label{convert:lem}
  Let $(K,\val,\delta)$ be an algebraically closed, dense, lifted
  diffeovalued field.  Let $\dlog : K^\times \to K/\mm$ be the composition
  \begin{equation*}
    K^\times \to K \twoheadrightarrow K/\mm,
  \end{equation*}
  where the first map is the usual log derivation $x \mapsto (\delta
  x)/x$, and the second map is the quotient map $x \mapsto x + \mm$.
  Then $\dlog$ is a truncated log derivation, and $(K,\val,\dlog)$ is
  a model of LDVF.
\end{lemma}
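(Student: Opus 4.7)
The plan is to verify three things in turn: that $\dlog$ is a group homomorphism $K^\times \to (K/\mm, +)$; that it satisfies the cocycle identity $(x+y)\dlog(x+y) = x\dlog x + y \dlog y$ on $\Oo$; and that every fiber of $\dlog$ is dense in $K$ for the valuation topology. The remaining $\LDVF$ axiom $(K,\val) \models \ACVF_{0,0}$ is immediate from the assumptions, since $K$ is algebraically closed, the valuation has residue characteristic $0$, and $\val$ is nontrivial by density (Remark \ref{rem:dnon}).

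The first two steps are routine. The Leibniz rule for $\delta$ gives $\delta(xy)/(xy) = \delta x / x + \delta y / y$ on $K^\times$, and additivity is preserved by the quotient $K \to K/\mm$, yielding the homomorphism property. For the cocycle, the $\Oo$-action on $K/\mm$ is multiplication in $K$ followed by reduction mod $\mm$, so $x \cdot \dlog x = \delta x + \mm$ whenever $x \in \Oo \setminus \{0\}$. With the convention $0 \cdot \dlog 0 := 0$ (forced by $\delta 0 = 0$), the identity $(x+y)\dlog(x+y) = x\dlog x + y\dlog y$ reduces to the additivity of $\delta$ on $\Oo$.

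The substantive step is density. Fix $c \in K/\mm$, choose a lift $c' \in K$, and observe that
\[
\dlog^{-1}(c) = \{\, x \in K^\times : \val(\delta x - c' x) > \val(x) \,\}.
\]
To prove density near a given target $a \in K$ at prescribed radius $\gamma \in \Gamma$, I may assume $a \neq 0$ since $K^\times$ is dense in $K$. Pick $\gamma' \ge \gamma$ large enough that $\gamma' > \val(a)$ and $\gamma' + \val(c') > \val(a)$ (the latter being vacuous when $c' = 0$). By the strong-density reformulation of the lifted diffeovaluation in Proposition~\ref{prop:strong-density}, applied with parameters $a$, $c'a$, and $\gamma'$, there is $x \in K$ with $\val(x - a) > \gamma'$ and $\val(\delta x - c'a) > \gamma'$. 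Then $\val(x) = \val(a)$ by the ultrametric triangle, and
\[
\val(\delta x - c' x) \ge \min\bigl(\val(\delta x - c'a),\; \val(c') + \val(a - x)\bigr) > \val(a) = \val(x),
\]
so $x$ lies in $\dlog^{-1}(c)$ while $\val(x - a) > \gamma$ witnesses density.

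The argument is essentially a direct translation between the $\delta$-formulation and the $\dlog$-formulation of the differential data; the only point requiring attention is balancing $\gamma'$ against the shift by $\val(c')$ in the density step so that the ultrametric estimate for $\val(\delta x - c'x)$ strictly exceeds $\val(x)$.
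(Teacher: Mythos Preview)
Your proof is correct and follows essentially the same approach as the paper: both verify the log-derivation axioms by unwinding the Leibniz rule and additivity of $\delta$, then establish fiber density by invoking the strong density of Proposition~\ref{prop:strong-density} and making an ultrametric estimate. The only cosmetic difference is that the paper packages the final estimate as ``continuity of division'' to produce the auxiliary radius $\gamma'$, whereas you write down the explicit inequalities $\gamma' > \val(a)$ and $\gamma' + \val(c') > \val(a)$ directly; these are the same idea.
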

\begin{proof}
  If we set $\partial x = x \cdot \dlog x$ for $x \in \Oo$, then
  $\partial : \Oo \to K/\mm$ is exactly the composition
  \begin{equation*}
    K \stackrel{\delta}{\to} K \twoheadrightarrow K/\mm.
  \end{equation*}
  Thus $\partial$ is a derivation, and $\dlog$ is a truncated log
  derivation.

  By choice of $(K,\val)$, it is a model of $\ACVF_{0,0}$.  Finally,
  we verify the density axiom.  Given $b \in K$, we must show that the fiber
  \begin{equation*}
    \left\{ x \in K : \frac{\delta x}{x} \in b + \mm\right\}
  \end{equation*}
  is dense in $K$, or equivalently, dense in $K^\times$.  Fix $a \in
  K^\times$ and $\gamma$ in the value group.  By continuity of
  division, there is $\gamma'$ such that for any $x, y \in K$,
  \begin{equation*}
    (\val(x - a) > \gamma' \text{ and } \val(y - ab) > \gamma')
    \implies \val\left( \frac{y}{x} - b \right) > 0.
  \end{equation*}
  By choice of $(K,\val,\delta)$, there is $x$ such that
  \begin{align*}
    \val(x - a) &> \max(\gamma',\gamma) \\
    \val(\delta x - ab) &> \gamma'.
  \end{align*}
  Then
  \begin{align*}
    \val(x - a) &> \gamma \\
    \val\left(\frac{\delta x}{x} - b\right) &> 0.
  \end{align*}
  Thus $x$ is within $\gamma$ of $a$, and $\dlog x = b + \mm$.  So
  $(K,\val,\dlog)$ is a model of LDVF.
\end{proof}
As an immediate corollary,
\begin{proposition} \label{consist:prop}
  The theory LDVF is consistent.
\end{proposition}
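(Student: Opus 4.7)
The plan is to deduce Proposition~\ref{consist:prop} immediately from the two preceding lemmas, which together constitute essentially a complete proof. First I would invoke Lemma~\ref{exister:lem} to obtain a concrete algebraically closed, dense, lifted diffeovalued field $(K,\val,\delta)$: an algebraically closed field $K$ of characteristic $0$, a nontrivial valuation $\val$ of residue characteristic $0$, and a derivation $\delta : K \to K$ such that every pair $(a,b) \in K^2$ can be simultaneously approximated by $(x,\delta x)$ for some $x \in K$ to arbitrary valuative precision.

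Then I would define the associated truncated log derivation $\dlog : K^\times \to K/\mm$ by the composition
\begin{equation*}
  x \mapsto \frac{\delta x}{x} \mapsto \frac{\delta x}{x} + \mm,
\end{equation*}
and appeal to Lemma~\ref{convert:lem}, which precisely guarantees that the resulting tuple $(K,\val,\dlog)$ is a model of LDVF. Since LDVF has a model, it is consistent.

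There is no real obstacle remaining at this stage; the substantive content has already been discharged in the two lemmas above. In particular, the existence of the lifted diffeovalued field in Lemma~\ref{exister:lem} is obtained by a Baire-style recursive construction inside the completion of $\overline{\Qq(t)}$, enumerating a countable basis of opens and choosing sufficiently transcendental elements $t_i$ with prescribed derivatives $s_i$; the density axiom for LDVF is then recovered in Lemma~\ref{convert:lem} by combining the density statement for $(K,\val,\delta)$ with continuity of division on $K^\times$, so that the fibers of $\dlog$ inherit density from the fibers of $\delta$. Consequently, the proof of Proposition~\ref{consist:prop} should consist of a single sentence invoking these two lemmas in sequence.
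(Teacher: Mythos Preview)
Your proposal is correct and matches the paper's approach exactly: the paper presents Proposition~\ref{consist:prop} ``as an immediate corollary'' of Lemmas~\ref{exister:lem} and~\ref{convert:lem}, with no further argument written out. Your summary of how the two lemmas combine is accurate and there is nothing to add.
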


\subsection{Calculations in ACVF}
\begin{lemma} \label{interior:lem}
  Let $K$ be a model of $\ACVF_{0,0}$.  Let $S$ be a subset of $K$ and
  $S'$ be a subset of $K/\mm$.  Let $D \subseteq K$ be definable over
  $S \cup S'$.  Then one of the following holds:
  \begin{itemize}
  \item $D$ has interior.
  \item $D$ is finite, and every element is field-theoretically
    algebraic over $S$.
  \end{itemize}
\end{lemma}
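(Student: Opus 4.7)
My plan is to split the proof into two clearly separated pieces: the dichotomy itself, and the algebraicity statement.

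First, I would establish the ``finite or has interior'' dichotomy using classical ACVF theory. The sort $K/\mm$ is interpretable in $\ACVF_{0,0}$ (its elements code open balls of a fixed radius), so parameters from $S'$ are really parameters from the geometric sorts. By quantifier elimination in $\ACVF_{0,0}$ and Holly's theorem on definable subsets of $K^1$, the set $D$ is a finite Boolean combination of balls, i.e.\ a finite union of ``swiss cheeses.'' Because the valuation is non-archimedean, every ball (open or closed) is open in the valuation topology, so any such finite union is either finite (when each piece is a singleton) or contains an open ball, which gives interior.

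Assume then that $D$ is finite and fix $c \in D$. Only finitely many parameters from $S'$ actually appear, so write $S'_0 = \{\bar b_1, \ldots, \bar b_m\} \subseteq S'$ with lifts $b_i \in K$. The key observation is that the formula $\varphi(x;\vec s,\bar{\vec b})$ defining $D$ must be invariant under replacing each $b_i$ by any $b_i + \epsilon_i$ with $\epsilon_i \in \mm$, since $\bar b_i$ is unchanged. Passing to a $\kappa$-saturated elementary extension $K^* \succeq K$ with $\kappa > |S|+\aleph_0$, we obtain: for every $\vec\epsilon \in (\mm^*)^m$, $c$ lies in a finite set definable over $S \cup \{b_1+\epsilon_1,\ldots,b_m+\epsilon_m\}$, so $c$ is ACVF-algebraic, hence field-theoretically algebraic, over $S(b_1+\epsilon_1,\ldots,b_m+\epsilon_m)$.

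Now I induct on $m$ to conclude $c \in \acl^{\mathrm{fld}}(S)$. The case $m=0$ is immediate. For the inductive step, fix arbitrary $\epsilon_2,\ldots,\epsilon_m \in \mm^*$ and let $\tilde S = S \cup \{b_2+\epsilon_2,\ldots,b_m+\epsilon_m\}$. It suffices (by the induction hypothesis applied to $b_2,\ldots,b_m$) to show $c$ is field-algebraic over $\tilde S$. Suppose not; then $c$ is field-transcendental over $\tilde S$, but $c \in \tilde S(b_1)^{\mathrm{alg}}$ (take $\epsilon_1 = 0$), so $\trdeg(\tilde S(c,b_1)/\tilde S)=1$ and therefore $b_1 \in \tilde S(c)^{\mathrm{alg}}$. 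Applying the same reasoning to each $\epsilon_1 \in \mm^*$ gives $b_1+\epsilon_1 \in \tilde S(c)^{\mathrm{alg}}$, hence $\epsilon_1 \in \tilde S(c)^{\mathrm{alg}}$ for every $\epsilon_1 \in \mm^*$. But $|\tilde S(c)^{\mathrm{alg}}| \le |S|+\aleph_0 < \kappa \le |\mm^*|$, a contradiction.

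The main conceptual hurdle is the handling of parameters from the imaginary sort $K/\mm$; the ``invariance under $\mm$-translation of lifts'' trick is what lets us trade an acl-statement over imaginaries for an acl-statement over many lifts and then squeeze out field-algebraicity by a cardinality argument in a saturated extension. The rest is just quantifier elimination in ACVF and a routine transcendence-degree computation.
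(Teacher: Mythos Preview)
Your proof is correct and takes a genuinely different route from the paper's. Both begin with the swiss cheese decomposition for the finite/interior dichotomy. For the algebraicity over $S$, the paper does something more global: it picks \emph{two} full families of lifts $y_1,\ldots,y_n$ and $y'_1,\ldots,y'_n$ of the $K/\mm$-parameters, chosen so that $y_1,\ldots,y_n,y'_1,\ldots,y'_n$ are algebraically independent over $S$ (and so that one lift lands in $\mm$, forcing the valuation to be non-trivial on the generated field). By model completeness of ACVF, $M=(Sy_1\cdots y_n)^{\mathrm{alg}}$ and $M'=(Sy'_1\cdots y'_n)^{\mathrm{alg}}$ are elementary submodels; $D$ is definable over each, hence $D\subseteq M\cap M'=S^{\mathrm{alg}}$. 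Your argument instead peels off one $K/\mm$-parameter at a time via the exchange trick ($c\in\tilde S(b_1+\epsilon_1)^{\mathrm{alg}}\setminus\tilde S^{\mathrm{alg}}\Rightarrow b_1+\epsilon_1\in\tilde S(c)^{\mathrm{alg}}$) and a cardinality count in a saturated extension. Your route is a bit more elementary---it avoids the explicit model-completeness step and the bookkeeping of ensuring a non-trivial induced valuation---at the cost of an induction and an appeal to the standard fact $\acl(A)\cap K=\Qq(A)^{\mathrm{alg}}$ in ACVF. The paper's version is non-inductive and dispatches all the imaginary parameters in one intersection. Both exploit the same underlying idea: a $K/\mm$-parameter can be replaced by any lift, and varying the lift shows it contributes no genuine algebraic information about $c$.
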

\begin{proof}
  Replacing $K$ with an elementary extension, and we may assume that
  $K$ is a monster model.  We may assume $S, S'$ are finite.  For $A
  \subseteq K$, let $A^{alg}$ denote the field-theoretic algebraic
  closure, i.e., the algebraic closure of the subfield generated by
  $A$.

  The swiss cheese decomposition ensures that $D$ has interior unless
  $D$ is finite.  So we may assume $D$ is finite.  Then $D \subseteq
  \acl(S \cup S')$.  Let $\{x_1, \ldots, x_n\}$ enumerate the elements
  of $S'$.  Enlarging $S'$, we may assume $x_1 = 0$.  Let $\pi : K \to
  K/\mm$ be the quotient map.  The fibers of $\pi$ are infinite, and
  thus uncountable, by saturation of the monster.  Therefore we can
  find $y_i, y'_i \in \pi^{-1}(x_i)$ such that
  \begin{align*}
    y_i & \notin (S y_1 y_2 \cdots y_{i-1})^{alg} \\
    y'_i & \notin (S y_1 y_2 \cdots y_n y'_1 y'_2 \cdots y'_{i-1})^{alg}.
  \end{align*}
  Then the sequence $y_1, y_2, \ldots, y'_1, y'_2, \ldots, y'_n$ is a
  sequence of independent transcendentals over $S^{alg}$.

  We arranged for $y_1, y'_1 \in \pi^{-1}(x_1) = \pi^{-1}(0) = \mm$.
  Therefore, $y_1$ and $y'_1$ have non-trivial valuation.  Then
  \begin{align*}
    M := (S y_1 y_2 \cdots y_n)^{alg} & \preceq K \\
    M' := (S y'_1 y'_2 \cdot y'_n)^{alg} & \preceq K,
  \end{align*}
  by model completeness of ACVF.  Note that $x_i \in
  \dcl^{\textrm{eq}}(y_i)$, and so $S' \subseteq
  \dcl^{\textrm{eq}}(M)$.  Then $D$ is $M$-definable, and so $D
  \subseteq M$ because $D$ is finite.  Similarly, $D \subseteq M'$.

  On the other hand, we arranged for the following to hold in the ACF
  reduct:
  \begin{equation*}
    y_1 y_2 \cdots y_n \forkindep_S y_1' y_2' \cdots y_n'.
  \end{equation*}
  Therefore $M \cap M' = S^{alg}$, and so $D \subseteq S^{alg}$.
\end{proof}
\begin{lemma}\label{newton}
  Let $K$ be a model of ACVF.  Let $P(x) = a_n x^n + \cdots + a_1 x +
  a_0$ be a polynomial such that $\min_{i \le n}(\val(a_i)) = 0$.
  Then the number of roots of $P$ in $\Oo$, counted with
  multiplicities, is equal to the largest $i$ such that $\val(a_i) =
  0$.
\end{lemma}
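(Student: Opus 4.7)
Since $K\models \ACVF$, the polynomial factors as $P(x) = a_n\prod_{j=1}^n(x-\alpha_j)$ with the $\alpha_j\in K$. Let $s$ be the number of $\alpha_j$ (with multiplicity) satisfying $\val(\alpha_j)\ge 0$, and $t=n-s$ the number satisfying $\val(\alpha_j)<0$. Since a root $\alpha\in K$ lies in $\Oo$ iff $\val(\alpha)\ge 0$ (where $\val(0)=+\infty$), the quantity to be computed is exactly $s$, so the goal is to show $s=m$.

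Introduce the monic factors
\[
U(x)=\prod_{\val(\alpha_j)\ge 0}(x-\alpha_j)=\sum_{j=0}^{s}u_jx^j,\qquad V(x)=\prod_{\val(\alpha_j)<0}(x-\alpha_j)=\sum_{k=0}^{t}v_kx^k,
\]
so that $P=a_n\,U\cdot V$. By Vieta, each $u_j$ is a signed elementary symmetric polynomial in roots lying in $\Oo$, hence $\val(u_j)\ge 0$ for all $j$ and $u_s=1$. The key estimate on $V$ is that $v_0=(-1)^t\prod_{\val<0}\alpha_j$ has $\val(v_0)=\sum_{\val(\alpha_j)<0}\val(\alpha_j)$, while for $k>0$ the coefficient $v_k$ is (up to sign) a sum $\sum_{|S|=t-k}\prod_{j\in S}\alpha_j$ over proper subsets $S\subsetneq\{1,\dots,t\}$; each such product omits at least one factor of strictly negative valuation, so its valuation strictly exceeds $\val(v_0)$. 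The ultrametric inequality then yields $\val(v_k)>\val(v_0)$ for all $k>0$.

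Now compute $(UV)_i=\sum_{j+k=i}u_j v_k$. When $i>s$ every summand has $k=i-j\ge i-s>0$, so every term has valuation $>\val(v_0)$ and hence $\val((UV)_i)>\val(v_0)$. When $i\le s$ the unique summand with $k=0$ is $u_iv_0$ of valuation $\val(u_i)+\val(v_0)$, whereas every other summand has $k>0$ and valuation $>\val(v_0)$. Consequently $\val((UV)_i)\ge\val(v_0)$, with equality iff $i\le s$ and $\val(u_i)=0$. The largest such $i$ is $s$, witnessed by $u_s=1$. Translating via $a_i=a_n\cdot(UV)_i$ gives $\min_i\val(a_i)=\val(a_n)+\val(v_0)$, attained with largest witness $i=s$. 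Combined with the hypothesis $\min_i\val(a_i)=0$ and the definition of $m$ as the largest witness, this yields $s=m$.

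\textbf{Main obstacle.} The one nontrivial point is the strict inequality $\val(v_k)>\val(v_0)$ for $k>0$. It rests entirely on the fact that all roots of $V$ have \emph{strictly} negative valuation, so that dropping any subset of them strictly increases the valuation of the product; the ultrametric inequality then upgrades a pointwise strict inequality to a strict inequality for the sum. The boundary cases $s=0$ and $t=0$ are immediate (either $U=1$ or $V=1$, and one reads off $m=s$ directly).
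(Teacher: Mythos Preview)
Your proof is correct and follows the same Newton-polygon idea as the paper, but the paper uses a slightly slicker normalization that avoids your ``main obstacle'' entirely. Instead of keeping $V(x)=\prod_{\val(\alpha_j)<0}(x-\alpha_j)$ monic and then arguing that $\val(v_k)>\val(v_0)$ for $k>0$, the paper writes
\[
Q(x)=\prod_{\val(\alpha_j)\ge 0}(x-\alpha_j)\cdot\prod_{\val(\alpha_j)<0}\bigl(1-x/\alpha_j\bigr),
\]
so that each factor of the second product lies in $\Oo[x]$ and reduces to $1$ modulo $\mm$. Then $Q\in\Oo[x]$ automatically, and its reduction is visibly $\prod_{\val(\alpha_j)\ge 0}(x-\res(\alpha_j))$, a nonzero polynomial of degree exactly $s$. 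Since $P=cQ$ for some constant $c$, and $\min_i\val(b_i)=0$ where $Q=\sum b_ix^i$, one reads off $\val(c)=0$ and hence $\max\{i:\val(a_i)=0\}=\max\{i:\val(b_i)=0\}=s$. This sidesteps any explicit ultrametric estimate on the $v_k$; your argument recovers the same conclusion by hand, which is fine but slightly longer.
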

\begin{proof}
  This is a basic statement about Newton polygons.  Let $r_1, \ldots,
  r_n$ be the roots of $P(x)$, counted with multiplicity.  Reordering,
  we may assume $r_1, \ldots, r_m \in \Oo$, and $r_{m+1}, \ldots, r_n
  \notin \Oo$.  Then
  \begin{align*}
    P(x) &= c Q(x) \\
    Q(x) &= \prod_{i = 1}^m (x - r_i) \cdot \prod_{i = m+1}^n (1 - x/r_i).
  \end{align*}
  for some $c \in K^\times$.  Then $Q(x) \in \Oo[x]$, and its
  reduction modulo $\mm$ is
  \begin{equation*}
    \prod_{i = 1}^m (x - \res(r_i)),
  \end{equation*}
  a nonzero polynomial in $k[x]$, of degree $m$.  If we write $Q(x) =
  b_n x^n + \cdots + b_1 x + b_0$, then $\min_{i \le n}(\val(b_i)) =
  0$, and so $\val(c) = 0$.  Then
  \begin{equation*}
    \max \{i \le n : \val(a_i) = 0\} = \max \{i \le n : \val(b_i) = 0\} = m. \qedhere
  \end{equation*}
\end{proof}

Some form of Rolle's theorem holds in models of $\ACVF_{0,0}$:
\begin{lemma}\label{rolle}
  Let $K$ be a model of $\ACVF_{0,0}$.  Let $B$ be a ball.  Let $P(x)$
  be a polynomial in $K[x]$.  If $P$ has two distinct zeros in $B$,
  then $P'(x)$ has a zero in $B$.
\end{lemma}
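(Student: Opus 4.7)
The plan is to reduce to a standard ball by an affine change of variables and then read off the required root of $P'$ from a Newton polygon argument. First, note that for $c \in K^\times$ and $a \in K$, the derivative of $P(cx + a)$ is $c \cdot P'(cx+a)$, so a root of the transformed derivative in the transformed ball corresponds to a root of $P'$ in $B$. Choosing $a$ to be a center of $B$ and $c$ with $\val(c)$ equal to the valuative radius, we may assume $B = \Oo$ or $B = \mm$. After multiplying $P$ by a scalar, we may further assume $\min_i \val(a_i) = 0$, so $P \in \Oo[x]$ and $\bar{P} \in k[x]$ is nonzero.

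Now suppose $B = \Oo$. By Lemma~\ref{newton}, the number of roots of $P$ in $\Oo$ counted with multiplicity equals the largest index $m$ with $\val(a_m) = 0$, and the two-distinct-root hypothesis forces $m \ge 2$. For $P'(x) = \sum_{i \ge 0} (i+1) a_{i+1} x^i$, the residue characteristic zero hypothesis guarantees $\val((i+1) a_{i+1}) = \val(a_{i+1})$ for all $i \ge 0$. Thus $P'$ is also normalized with minimum valuation $0$, and the largest index achieving valuation $0$ is $m - 1$. Applying Lemma~\ref{newton} once more, $P'$ has $m - 1 \ge 1$ roots in $\Oo = B$.

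For the open-ball case $B = \mm$, the analogous count is not a direct instance of Lemma~\ref{newton}, and this is the main technical step. I would derive it from Lemma~\ref{newton} combined with Hensel's lemma: since $K$ is algebraically closed of equicharacteristic zero, the roots of $P$ in $\Oo$ correspond under reduction to the roots of $\bar{P}$ in $k$, with multiplicities preserved. Consequently, the number of roots of $P$ in $\mm$ (i.e., those reducing to $0$) equals the multiplicity of $0$ as a root of $\bar{P}$, which is the smallest index $m'$ with $\val(a_{m'}) = 0$. The hypothesis that $r_1, r_2 \in \mm$ forces $m' \ge 2$, and the identical valuation computation for $P'$ shows that the smallest index with valuation $0$ in the coefficient sequence of $P'$ is $m' - 1 \ge 1$. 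Hence $P'$ has at least one root in $\mm$. The characteristic zero assumption enters only through the equality $\val((i+1) a_{i+1}) = \val(a_{i+1})$, which is exactly where a positive residue characteristic would obstruct the argument.
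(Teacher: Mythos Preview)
Your proof is correct, and the closed-ball case $B = \Oo$ is identical to the paper's argument. The difference lies in how the open-ball case is handled. You treat $B = \mm$ as a genuine second case and invoke a Hensel-type correspondence to count roots in $\mm$ via the multiplicity of $0$ in $\bar P$; this works and gives the dual Newton-polygon count (smallest index $m'$ with $\val(a_{m'})=0$).

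The paper avoids this second case entirely with a one-line observation: before changing coordinates, shrink $B$ to the \emph{smallest} ball containing the two distinct zeros $r_1, r_2$. This is automatically the \emph{closed} ball of radius $\val(r_1 - r_2)$, so after the affine change one lands directly in the case $B = \Oo$ and Lemma~\ref{newton} applies immediately. Your approach has the minor advantage of actually establishing the open-ball root count (which could be recorded as a companion to Lemma~\ref{newton}), but the paper's shrinking trick is the cleaner route to the lemma as stated. You might also note the degenerate case $B = K$ explicitly, where $P'$ has a root simply because $K$ is algebraically closed and $\deg P \ge 2$.
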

\begin{proof}
  We may assume $P$ is non-zero; otherwise the result it trivial.  Let
  $r_1, r_2$ be two zeros in $B$.  Shrinking $B$ to the smallest ball
  containing $r_1, r_2$, we may assume $B$ is a closed ball.  Shifting
  everything by an affine transformation, we may assume $B = \Oo$.
  Let $P(x) = a_n x^n + \cdots + a_1 x + a_0$.  Multiplying $P$ by a
  constant from $K^\times$, we may assume $\min_{i \le n} \val(a_i) =
  0$.  The polynomial $P(x)$ has at least two roots in $\Oo$, so by
  Lemma~\ref{newton}, there is some $m \ge 2$ such that $\val(a_m) =
  0$.  Note
  \begin{equation*}
    P'(x) = n a_n x^{n-1} + \cdots + 2 a_2 x + a_1.
  \end{equation*}
  Also, $\val(i a_i) = \val(a_i)$ for $i \ge 1$, because of residue
  characteristic 0.  Therefore, $\val(i a_i) \ge 0$, and $\val(m a_m)
  = 0$.  So the coefficient of $x^{m-1}$ has valuation 0 for some $m
  \ge 2$.  By Lemma~\ref{newton}, $P'(x)$ has at least 2 - 1 roots in
  $\Oo$.
\end{proof}

\subsection{Calculations with log derivations}

\begin{lemma}\label{diffs:lem}
  Let $(K,\Oo,\mm)$ be a valued field, let $M$ be an $\Oo$-module, and
  let $\dlog : K^\times \to M$ be a log derivation.  If $x,y \in K$
  satisfy
  \begin{equation}
    \val(x - y) \le \max(\val(x),\val(y)), \label{underkill}
  \end{equation}
  then $x/(x-y), y/(x-y) \in \Oo$, and
  \begin{equation*}
    \dlog(x-y) = \frac{x}{x-y} \cdot \dlog(x) - \frac{y}{x-y} \cdot
    \dlog(y).
  \end{equation*}
\end{lemma}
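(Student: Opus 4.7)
The plan is to first establish the containment statement, and then derive the displayed formula by a direct manipulation that combines the group homomorphism property of $\dlog$ on products with the Leibniz-type axiom on sums.

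First I would argue that $\val(x-y) \le \val(x)$ and $\val(x-y) \le \val(y)$. The ultrametric inequality gives $\val(x-y) \ge \min(\val(x),\val(y))$. If $\val(x) \ne \val(y)$, the inequality is an equality, so $\val(x-y) = \min(\val(x),\val(y))$ is $\le$ both. If $\val(x) = \val(y)$, then $\min = \max$, and combined with the hypothesis $\val(x-y) \le \max(\val(x),\val(y))$ we get $\val(x-y) \le \val(x) = \val(y)$. Either way, $x/(x-y)$ and $y/(x-y)$ lie in $\Oo$. In particular, $x - y \ne 0$, so $\dlog(x-y)$ is defined (and we may assume $x, y \ne 0$ as well, else the stated formula is trivially vacuous or reduces to a tautology).

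Set $z = x - y$. Since $\dlog$ is a group homomorphism on $K^\times$, we have
\begin{align*}
  \dlog(x) &= \dlog(z) + \dlog(x/z), \\
  \dlog(y) &= \dlog(z) + \dlog(y/z).
\end{align*}
Now the key identity: since $x/z - y/z = 1$, i.e.\ $x/z = 1 + y/z$, and both $1$ and $y/z$ lie in $\Oo$, the addition axiom applied to the pair $(1, y/z)$ gives
\[
  (x/z)\,\dlog(x/z) = 1\cdot \dlog(1) + (y/z)\,\dlog(y/z) = (y/z)\,\dlog(y/z),
\]
using that $\dlog(1) = 0$ (a consequence of multiplicativity).

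Finally, I would substitute into the right-hand side of the target equation:
\begin{align*}
  \frac{x}{z}\dlog(x) - \frac{y}{z}\dlog(y)
    &= \frac{x}{z}\bigl(\dlog(z) + \dlog(x/z)\bigr) - \frac{y}{z}\bigl(\dlog(z) + \dlog(y/z)\bigr) \\
    &= \Bigl(\tfrac{x}{z} - \tfrac{y}{z}\Bigr)\dlog(z) + \Bigl(\tfrac{x}{z}\dlog(x/z) - \tfrac{y}{z}\dlog(y/z)\Bigr) \\
    &= 1 \cdot \dlog(z) + 0 = \dlog(x - y),
\end{align*}
where the middle bracket vanishes by the key identity above, and $\tfrac{x}{z} - \tfrac{y}{z} = 1$. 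There is no real obstacle; the only subtle point is verifying the valuation inequality in both the strict and equal cases of $\val(x)$ vs $\val(y)$, so that the addition axiom can legally be invoked at the pair $(1, y/z) \in \Oo \times \Oo$.
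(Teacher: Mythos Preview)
Your proof is correct and takes essentially the same approach as the paper: both arguments exploit the identity $\tfrac{x}{x-y} - \tfrac{y}{x-y} = 1$ together with the defining axioms of a log derivation. The paper phrases it via the associated derivation $\partial$ (computing $0 = \partial(1) = \partial(\tfrac{x}{x-y}) - \partial(\tfrac{y}{x-y})$ and then expanding), while you apply the addition axiom directly to the pair $(1, y/z)$; the underlying computation is the same.
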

\begin{proof}
  First note that
  \begin{equation*}
    \val(x-y) \le \min(\val(x),\val(y)).
  \end{equation*}
  This is automatic if $\val(x) \ne \val(y)$, and equivalent to
  (\ref{underkill}) otherwise.
  
  Recall the derivation $\partial : \Oo \to M$ given by $\partial x =
  x \cdot \dlog x$.  Then
  \begin{align*}
    0 &= \partial(1) = \partial \left(\frac{x-y}{x-y} \right) =
    \partial \left( \frac{x}{x-y} \right) - \partial \left(
    \frac{y}{x-y} \right) \\
    &= \frac{x}{x-y} \dlog \left( \frac{x}{x-y} \right) -
    \frac{y}{x-y} \dlog \left( \frac{y}{x-y} \right) \\
    &= \frac{x}{x-y} \left[ \dlog(x) - \dlog(x-y) \right] -
    \frac{y}{x-y} \left[ \dlog(y) - \dlog(x-y) \right] \\
    &= \left( \frac{x}{x-y} \cdot \dlog(x) - \frac{y}{x-y} \cdot
    \dlog(y) \right) - \left( \frac{x}{x-y} - \frac{y}{x-y} \right) \dlog(x-y) \\
    &= \left( \frac{x}{x-y} \cdot \dlog(x) - \frac{y}{x-y} \cdot
    \dlog(y) \right) - \dlog(x-y). \qedhere
  \end{align*}
\end{proof}

\begin{proposition}\label{alg-case}
  Let $(K,\Oo)$ be an algebraically closed field with a log derivation
  $\dlog : K \to M$, for some $\Oo$-module $M$.  Suppose that $\dlog$
  vanishes on some subfield $F \subseteq K$, and $K$ is algebraic over
  $F$ (so that $K = F^{alg}$).  Then $\dlog$ vanishes on $K$.
\end{proposition}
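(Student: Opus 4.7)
My plan is to extend $\dlog$ to a $K$-linear derivation of $K$, invoke the classical fact that a derivation of a characteristic~$0$ field vanishing on a subfield over which it is algebraic must vanish, and descend the conclusion back to $M$. The main obstacle will be eliminating an $\Oo$-torsion residual in $M$ that survives the localization.

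First I reduce to showing $\dlog|_{\Oo^\times} = 0$. Since the induced derivation $\partial(x) := x\,\dlog x$ kills $\Qq \subseteq \Oo$, the module $M$ is a $\Qq$-vector space and hence $\Zz$-torsion-free. Since $K = F^{\mathrm{alg}}$ is algebraic in characteristic $0$, its value group equals $\Gamma_F \otimes_\Zz \Qq$, so any $\alpha \in K^\times$ admits $n \ge 1$ and $c \in F^\times$ with $\alpha^n/c \in \Oo^\times$; then $n\,\dlog\alpha = \dlog(\alpha^n/c)$, and torsion-freeness of $M$ reduces the problem to $\dlog$ vanishing on $\Oo^\times$.

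Next I would define $M' := M \otimes_\Oo K$ and the canonical map $\iota : M \to M'$, and set $\tilde\partial(x) := x \cdot \iota(\dlog x)$ for $x \in K^\times$ with $\tilde\partial(0) = 0$. Leibniz follows from multiplicativity of $\dlog$; additivity is verified by applying the log-derivation identity to $cx, cy \in \Oo$ (for a suitable $c \in K^\times$ with $cx, cy, c(x+y) \in \Oo$) and then cancelling $c$ in the $K$-module $M'$. So $\tilde\partial : K \to M'$ is a derivation of $K$ vanishing on $F$, and the classical argument (apply $\tilde\partial$ to $P(\alpha) = 0$ for the minimal polynomial $P \in F[X]$ and invert $P'(\alpha) \in K^\times$, using separability in characteristic $0$) forces $\tilde\partial = 0$. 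In particular $\iota(\dlog x) = 0$ for every $x \in K^\times$, i.e.\ $\dlog(x)$ lies in the $\Oo$-torsion submodule $\ker(\iota) \subseteq M$.

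The main obstacle is upgrading this to $\dlog = 0$ in $M$. My plan is to argue modulo $\mm M$: the log-derivation identity forces $\dlog|_{\Oo^\times}$, composed with $M \twoheadrightarrow M/\mm M$, to factor through the residue map $\Oo^\times \twoheadrightarrow k^\times$, producing a log-derivation $\overline{\dlog} : k^\times \to M/\mm M$ on the trivially valued, algebraically closed residue field $k$. Since $K/F$ is algebraic, so is $k/k_F$ (clear denominators in a minimal polynomial over $F$ to bring it into $\Oo_F[X]$ and reduce mod $\mm_F$, noting the reduction is nonzero by choice of scaling), so $\overline{\dlog}$ vanishes on $k_F^\times$, and the classical fact applied to $k/k_F$ forces $\overline{\dlog} = 0$; hence $\dlog(\Oo^\times) \subseteq \mm M$. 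Iterating with $M$ replaced successively by $\mm^n M$ yields $\dlog(\Oo^\times) \subseteq \bigcap_n \mm^n M$. The hardest step is concluding $\dlog = 0$ from this; in the principal case of interest (where $M$ is killed by $\mm$, e.g.\ $M = K/\mm$ coming from a truncated log derivation) a single iteration suffices, while the general case requires a finer analysis combining the information $\dlog(\Oo^\times) \subseteq \ker(\iota) \cap \bigcap_n \mm^n M$ with the structure of annihilator ideals in the valuation ring $\Oo$.
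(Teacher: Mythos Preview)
Your approach has a fatal gap in precisely the case that matters. The intended application (and the paper's only use of this proposition) is the \emph{truncated} log derivation $\dlog : K^\times \to K/\mm$. For $M = K/\mm$, every element is $\Oo$-torsion: given $y + \mm \in K/\mm$, any $a \in \Oo$ with $\val(a) > -\val(y)$ kills it. Hence $M \otimes_\Oo K = 0$, and your localization step is vacuous---the ``classical'' separable argument concludes nothing because $\iota$ is the zero map. Your modular descent fares no better: $K/\mm$ is a \emph{divisible} $\Oo$-module, so $\mm M = M$ and $M/\mm M = 0$; the induced $\overline{\dlog}$ on $k^\times$ is identically zero for trivial reasons. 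Your claim that ``$M = K/\mm$ is killed by $\mm$'' confuses $K/\mm$ with the residue field $\Oo/\mm$; the former is not annihilated by $\mm$ at all. Even granting the iteration, $\bigcap_n \mm^n M = M$ here, and the combination ``$\dlog(\Oo^\times) \subseteq \ker(\iota) \cap \bigcap_n \mm^n M$'' collapses to the tautology $\dlog(\Oo^\times) \subseteq M$.

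The paper's proof is shaped entirely around this obstruction: since one cannot invert elements of $\Oo$ acting on $M$, the argument never leaves the category of $\Oo$-modules. Instead it runs a Zorn/maximality argument on subfields where $\dlog$ vanishes, takes a minimal-degree extension $L/F$, and splits into the three valuation-theoretic cases (value-group extension, residue-field extension, immediate extension). Each case produces a generator $a$ of $L$ with $\dlog a = 0$ by hand---using Lemma~\ref{diffs:lem} to control differences $a - b$ whenever the valuation inequality $\val(a-b) \le \max(\val a, \val b)$ holds, and in the immediate case invoking a non-archimedean Rolle theorem to isolate a single root in a ball. The point is that Lemma~\ref{diffs:lem} only requires the coefficients $x/(x-y)$, $y/(x-y)$ to lie in $\Oo$, so no division in $M$ is ever needed. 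Your reduction to $\Oo^\times$ in Step~1 is correct and useful, but from there you must engage with the valuation structure directly rather than trying to tensor up to $K$.
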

\begin{proof}
  Increasing $F$, we may assume $F$ is maximal among subfields on
  which $\dlog$ vanishes.  Suppose for the sake of contradiction that
  $F \subsetneq K$.  Take minimal $n > 1$ such that $F$ has a finite
  extension of degree $n$.  If $P(x) \in F[x]$ has degree $\le n$,
  then one of the following happens:
  \begin{itemize}
  \item $P(x)$ factors into linear polynomials
  \item $P(x)$ is irreducible of degree $n$.
  \end{itemize}
  \begin{claim} \label{factors}
    If $a \in K$ and $[F(a) : F] = n$, then
    \begin{itemize}
    \item $F(a)^\times$ is generated by $F^\times$ and the elements $a
      - b$ with $b \in F$.
    \item There is $b \in F$ such that $\dlog(a - b) \ne 0$.
    \end{itemize}
  \end{claim}
  \begin{claimproof}
    Every element of $F(a)$ is of the form $P(a)$ for some polynomial
    $P(x) \in F[x]$ of degree less than $n$.  Then $P$ splits into
    linear factors, so
    \begin{equation*}
      P(a) = c(a-b_1)(a-b_2)\cdots(a-b_n)
    \end{equation*}
    for some $c, b_1, b_2, \ldots, b_n \in F$.  This proves the first
    point.  If $\dlog(a - b) = 0$ for all $b \in F$, then $\dlog$ must
    vanish on $F(a)^\times$, contradicting the choice of $F$.
  \end{claimproof}
  Take an arbitrary extension $L/F$ of degree $n$, and break into
  cases:
  \begin{itemize}
  \item If $\val(L)$ is strictly larger than $\val(F)$, take $\gamma
    \in \val(L) \setminus \val(F)$.  The inequality
    \begin{equation*}
      |\val(L)/\val(F)| \le [L : F]
    \end{equation*}
    implies that $m \gamma \in \val(F)$ for some $m \le n$.  Take $c
    \in F$ with $\val(c) = m \gamma$.  The polynomial $x^m - c$ has no
    roots in $F$, so $m = n$ and $x^n - c$ is irreducible.  Take $a
    \in K$ such that $a^n = c$.  Note that
    \begin{equation*}
      \dlog(a) = (1/n)\dlog(c) = 0,
    \end{equation*}
    because $c \in F$ and the residue characteristic is 0.

    By Claim~\ref{factors}, there is $b
    \in F(a)$ such that $\dlog(a - b) \ne 0$.  Now $\val(b) \ne \gamma =
    \val(a)$, by choice of $\gamma$, and so
    \begin{equation*}
      \val(a-b) = \min(\val(a),\val(b)).
    \end{equation*}
    Also $\dlog(a) = \dlog(b) = 0$.  By Lemma~\ref{diffs:lem},
    $\dlog(a-b) = 0$, a contradiction.
  \item If $\res(L)$ is strictly larger than $\res(F)$, take $\alpha
    \in \res(L) \setminus \res(F)$.  The inequality
    \begin{equation*}
      [\res(L) : \res(F)] \le [L : F]
    \end{equation*}
    implies that $[\res(F)(\alpha) : \res(F)] \le n$.  Let
    \begin{equation*}
      x^m + \beta_{m-1} x^{m-1} + \cdots + \beta_1 x + \beta_0
    \end{equation*}
    be the monic irreducible polynomial of $\alpha$ over $\res(F)$.
    Because of residue characteristic 0, this polynomial is separable,
    and so
    \begin{equation}
      m \alpha^{m-1} + (m-1) \beta_{m-1} \alpha^{m-2} + \cdots + 2
      \beta_2 \alpha + \beta_1 \ne 0. \label{simple-1}
    \end{equation}
    Take $b_i \in F$ with $\res b_i = \beta_i$, and let $P(x)$ be the
    polynomial
    \begin{equation*}
      x^m + b_{m-1} x^{m-1} + \cdots + b_1 x + b_0 \in F[x].
    \end{equation*}
    Then $P(x)$ is irreducible, and so $m = [\res(F)(\alpha) :
      \res(F)] = n$.  Let $a \in K$ be the root of $P(x)$ with
    $\res(a) = \alpha$.  Then
    \begin{equation*}
      a^n + b_{n-1} a^{n-1} + \cdots + b_1 a + b_0 = 0.
    \end{equation*}
    Applying the derivation $\partial : \Oo \to M$, which vanishes on
    the $b_i$, we obtain
    \begin{equation*}
      (n a^{n-1} + (n-1) b_{n-1} a^{n-2} + \cdots + 2 b_2 a + b_1 ) \partial a = 0.
    \end{equation*}
    The expression inside the parentheses has nonzero residue, by
    (\ref{simple-1}), and so it is an element of $\Oo^\times$.
    Therefore $\partial a = 0$.  Now $\res(a) = \alpha \notin
    \res(F)$, so $\res(a) \ne 0$ and $a$ is invertible as well.
    Therefore $\dlog a = (\partial a)/a = 0$.

    By Claim~\ref{factors}, there is some $b \in F$ such that $\dlog(a
    - b) \ne 0$.  Then
    \begin{equation*}
      \val(a - b) \le \max(\val(a),\val(b)).
    \end{equation*}
    (Otherwise, $\res(a) = \res(b) \in F$, contradicting the choice of
    $a$ and $\alpha$.)  By Lemma~\ref{diffs:lem}, $\dlog(a - b) = 0$,
    a contradiction.
  \item Lastly, suppose that $L/F$ is an immediate extension.  By
    maximality of $F$, there is $a \in L$ with $\dlog(a) \ne 0$.  Let
    $\mathcal{C}$ be the collection of balls containing $a$, with center and radius from $F$.  Let $I$ be the intersection $\bigcap \mathcal{C}$.  As usual, $I
    \cap F = \emptyset$.  (Suppose $b \in I \cap F$.  Then $rv(a-b) =
    rv(b'-b)$ for some $b' \in F$, because the extension is immediate.
    The ball centered around $b'$ of radius $\val(a-b')$ does not
    contain $b$, contradicting the choice of $b$.)

    Let $P(x)$ be the minimal polynomial of $a$ over $F$.  Then $P(x)$
    has degree $n$.  Let $a_1, \ldots, a_n$ be the roots of $P(x)$,
    with $a_1 = a$.  Note that $P'(x)$ has degree $n - 1$, and
    therefore splits over $F$.  So no root of $P'(x)$ is in $I$.  By
    Rolle's Theorem (Lemma~\ref{rolle}), $a$ is the unique root of
    $P(x)$ in $I$.

    Therefore $I$ has empty intersection with the finite set
    $\{0,a_2,\ldots,a_n\}$.  We can find $b \in F$ such that
    \begin{equation*}
      \val(a - b) > \max(\val(0 - b),\val(a_2 - b),\val(a_3 -
      b),\ldots,\val(a_n - b)).
    \end{equation*}
    Take $c \in F$ with $\val(a - b) = \val(c)$, and let $e_i = (a_i -
    b)/c$.  Then $\val(e_1) = 0$, and $\val(e_i) < 0$ for $i > 1$.
    The $e_i$ are the roots of the irreducible polynomial
    \begin{equation*}
      Q(x) = P(cx + b) = s_n x^n + s_{n-1} x^{n-1} + \cdots + s_1 x +
      s_0 \in F[x].
    \end{equation*}
    By Newton polygons, $\val(s_0) = \val(s_1) < \val(s_i)$ for $i >
    1$.  Then we can apply $\partial$ to the equation
    \begin{equation*}
      (s_n/s_1) e_1^n + \cdots + (s_2/s_1) e_1^2 + e_1 + (s_0/s_1) =
      0,
    \end{equation*}
    and obtain
    \begin{equation*}
      (n (s_n/s_1) e_1^{n-1} + \cdots + 2 (s_2 / s_1) e_1 + 1) \partial e_1 = 0,
    \end{equation*}
    because the coefficients $s_n/s_1$ lie in $F$, where $\partial$ vanishes.  But
    the expression in parentheses has valuation 0, because $e_1 \in
    \Oo$ and $s_i/s_1 \in \mm$ for $2 \le i \le n$.  Therefore
    $\partial e_1 = 0$.  As $\val(e_1) = 0$, we have $e_1 \in
    \Oo^\times$ as well, and then $\dlog e_1 = (\partial e_1)/e_1 =
    0$.  Then $\dlog (a - b) = \dlog e_1 + \dlog c = 0$, as $c \in F$.
    Finally,
    \begin{equation*}
      \val(a - b) > \val(b) = \val(a),
    \end{equation*}
    and so $\dlog(a-b)$ and $\dlog(b)$ determine $\dlog(a)$, by
    Lemma~\ref{diffs:lem}.  Thus $\dlog(a) = 0$, contradicting the
    choice of $a$. \qedhere
  \end{itemize}
\end{proof}
Proposition~\ref{alg-case} is probably a consequence of Lemma~6.5.12
and Claim~6.5.14 in \cite{GabberRamero}, but I am not entirely
certain.

\subsection{Quantifier elimination and completeness}
Let $\mathcal{L}_0$ be the language for $\ACVF_{0,0}$ with two sorts,
$K$ and $K/\mm$, and the following functions and relations:
\begin{itemize}
\item The field operations on $K$, including the constants 0, 1, \emph{and
  division}.
\item The $\Oo$-module structure on $K/\mm$, i.e., the group structure
  (including 0 and negation) and the multiplication map
  \begin{equation*}
    \Oo \times K/\mm \to K/\mm,
  \end{equation*}
  understood as a partial function on $K \times K/\mm$.
\item All $\emptyset$-definable relations on $K$ and $K/\mm$.
\end{itemize}
Then $\ACVF_{0,0}$ has quantifier elimination in $\mathcal{L}_0$,
because we Morleyized.  If $K \models \ACVF_{0,0}$, an
$\mathcal{L}_0$-substructure of $K$ consists of a pair $(F,D)$, where
\begin{itemize}
\item $F$ is a subfield of $K$.
\item $D$ is an $\Oo_F$-submodule of $K/\mm$.
\end{itemize}
Note that $D$ need not contain the image of $F$ under $K
\twoheadrightarrow K/\mm$, as we did not include this map as one of
the functions in the signature.

Let $\mathcal{L}$ be the language for $\LDVF$ obtained by expanding
$\mathcal{L}_0$ with a function symbol for the map $\dlog : K^\times
\to K/\mm$.  If $K$ is a model of $\LDVF$, then an
$\mathcal{L}$-substructure is a pair $(F,D)$, where
\begin{itemize}
\item $F$ is a subfield of $K$
\item $D$ is an $\Oo_F$-submodule of $K/\mm$
\item $D$ contains $\dlog x$ for $x \in F$.
\end{itemize}

\begin{lemma}\label{qe:lem}
  Let $K$ be a model of $\LDVF$.  Let $K'$ be a $|K|^+$-saturated
  model of $\LDVF$.  Let $(F,D)$ be a proper $\mathcal{L}$-substructure of $K$.  Let
  $f : (F,D) \hookrightarrow K'$ be an $\mathcal{L}$-embedding (an isomorphism onto
  a substructure of $K'$).  Then $f$ can be extended to an $\mathcal{L}$-embedding
  $f' : (F',D') \hookrightarrow K'$ for some strictly larger $\mathcal{L}$-substructure $(F',D')$.
\end{lemma}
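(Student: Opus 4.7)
The plan is to proceed by a standard back-and-forth case analysis. Since $(F, D)$ is a proper $\mathcal{L}$-substructure of $K$, either $D \ne K/\mm$ or $F \ne K$, and these two situations will lead to distinct extension moves.

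First I would handle the case $D \ne K/\mm$. I would pick $\alpha \in K/\mm \setminus D$, set $D' = D + \Oo_F \cdot \alpha$ (an $\Oo_F$-submodule of $K/\mm$ still containing $\dlog(F^\times)$), and look for $\alpha' \in K'/\mm'$ realizing the quantifier-free $\mathcal{L}_0$-type of $\alpha$ over $f(F) \cup f(D)$. By quantifier elimination for $\ACVF_{0,0}$, this type is determined by the $\Oo_F$-linear and valuation-theoretic relations between $\alpha$ and $D$, and by $|K|^+$-saturation of $K'$ a realization exists, which provides the needed extension of $f$.

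Otherwise $D = K/\mm$ and $F \ne K$; I would pick any $a \in K \setminus F$ and look for $a' \in K'$ realizing the ACVF-type of $a$ over $f(F)$ together with the single constraint $\dlog(a') = f(\dlog(a))$ (which makes sense because $\dlog(a) \in K/\mm = D$). Once such an $a'$ is found, the extension $F' = F(a)$ with $D' = D$ will automatically be a valid $\mathcal{L}$-substructure, since $D = K/\mm$ already contains $\dlog(F(a)^\times)$. By QE for ACVF and saturation, realizations of the ACVF-type are abundant in $K'$; the task is to show the combined type (ACVF-type plus the $\dlog$-constraint) is consistent as a first-order $\mathcal{L}$-partial-type.

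The hard part will be verifying this consistency. Since $a$ itself realizes the combined type inside $K$ (transferred via $f$), it is a priori consistent, but to apply saturation I must express it as a union of first-order $\mathcal{L}$-formulas over $f(F) \cup f(D)$. In the algebraic subcase, Proposition~\ref{alg-case} will be decisive: any two log derivations on $F(a)^{alg}$ agreeing on $F$ coincide, so fixing $\dlog(a)$ determines $\dlog$ on $F(a)^{alg}$ and pins down the required formulas, once combined with Claim~\ref{factors}-type factorizations. In the transcendental subcase, the density axiom will play the analogous role: for any ACVF-cut data over $F$ and any target value $\beta \in K'/\mm'$, the density axiom produces transcendentals in $K'$ realizing both simultaneously (compare Proposition~\ref{prop:strong-density}, transposed from the lifted-derivation to the log-derivation setting). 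In both subcases, saturation of $K'$ then supplies the required $a'$.
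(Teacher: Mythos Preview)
Your overall case split is right, and the $D \ne K/\mm$ case is fine. The gap is in the transcendental subcase of $D = K/\mm$: you propose to find $a'$ realizing the ACVF-type of $a$ together with the \emph{single} constraint $\dlog(a') = f(\dlog(a))$, and then assert that the resulting $f' : (F(a), K/\mm) \to K'$ is an $\mathcal{L}$-embedding. But preserving $\dlog$ on $F(a)^\times$ requires more than $\dlog(a') = f(\dlog a)$. When $F = F^{alg}$ and $a$ is transcendental, the group $F(a)^\times$ is generated by $F^\times$ and the elements $a - b$ for $b \in F$, so you need
\[
  \dlog(a' - f(b)) = f(\dlog(a - b)) \quad \text{for every } b \in F,
\]
not just for $b = 0$. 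Crucially, $\dlog(a - b)$ is \emph{not} determined by $\dlog(a)$ and $\dlog|_F$ when $\val(a - b) > \val(a)$: Lemma~\ref{diffs:lem} only lets you compute $\dlog(x - y)$ from $\dlog(x)$ and $\dlog(y)$ under the hypothesis $\val(x-y) \le \max(\val(x),\val(y))$, which fails precisely when $b$ is a good $F$-approximation to $a$. So your single constraint leaves infinitely many genuinely independent conditions unverified.

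The paper's proof addresses this by imposing the full family of conditions $\psi_b$ and proving finite satisfiability: given finitely many $b_1,\dots,b_n$, one orders them so that $\val(a - b_1)$ is maximal, uses density to realize $\psi_{b_1}$, and then applies Lemma~\ref{diffs:lem} to show that the remaining $\psi_{b_i}$ follow automatically (since $\val(a - b_i) \le \val(a - b_1)$). This is the substantive step you are missing; your sketch invokes density to hit one target value, but does not explain why all the $\psi_b$ can be satisfied simultaneously.
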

\begin{proof}
  First suppose $D < K/\mm$.  Note that $(F,K/\mm)$ is an
  $\mathcal{L}$-substructure of $K$.  By quantifier elimination of
  $\ACVF_{0,0}$ in the language $\mathcal{L}_0$, we can extend $f$ to
  an $\mathcal{L}_0$-embedding $f' : (F,K/\mm) \hookrightarrow K'$.
  Then $f'$ is already an $\mathcal{L}$-embedding, because
  \begin{equation*}
    f'(\dlog(x)) = f(\dlog(x)) = \dlog(f(x)) = \dlog(f'(x))
  \end{equation*}
  for any $x \in F$.  (The first equation holds because $x \in F
  \implies \dlog(x) \in D$, and $f'$ extends $f$ on $D$.)

  So we may assume $D = K/\mm$, and $F < K$.
  \begin{claim}\label{delta-helper}
    If $F'$ is a subfield of $K$ containing $F$, and $f' : (F',K/\mm)
    \hookrightarrow K'$ is an $\mathcal{L}_0$-embedding extending $f$,
    then
    \begin{itemize}
    \item $f'$ induces a map from the valuation ring of $F'$ to the
      valuation ring of $K'$, and so we can regard $K'/\mm'$ as a
      module over the valuation ring of $F'$.
    \item If $\Delta : F' \to K'/\mm'$ is defined by
      \begin{equation*}
        \Delta(x) = f'(\dlog(x)) - \dlog(f'(x)),
      \end{equation*}
      then $\Delta$ is a log derivation $F' \to K/\mm$.
    \item $\Delta$ vanishes on $F$.
    \item If $\Delta$ vanishes on $F'$, then $f'$ is an $\mathcal{L}$-embedding.
    \end{itemize}
  \end{claim}
  \begin{claimproof}
    The first point is clear, since $f'$ is a partial elementary map
    in the ACVF reduct.  The second point is a direct calculation:
    \begin{align*}
      \Delta(xy) &= f'(\dlog(xy)) - \dlog(f'(xy)) \\ &= f'(\dlog(x) +
      \dlog(y)) - \dlog(f'(x)f'(y)) \\ &= f'(\dlog(x)) + f'(\dlog(y)) -
      \dlog(f'(x)) - \dlog(f'(y)) \\ & = \Delta(x) + \Delta(y) \\ (x+y)
      \Delta(x + y) &= f'(x+y) \Delta(x+y) \\ & = f'(x+y) f'(\dlog(x+y)) -
      f'(x+y) \dlog(f'(x+y)) \\ &= f'((x+y) \dlog(x+y)) - (f'(x) +
      f'(y)) \dlog(f'(x) + f'(y)) \\ &= f'(x \dlog x + y \dlog y) - f'(x)
      \dlog(f'(x)) - f'(y) \dlog(f'(y)) \\ &= f'(x) f'(\dlog x) + f'(y)
      f'(\dlog y) - f'(x) \dlog(f'(x)) - f'(y) \dlog(f'(y)) \\ &= f'(x)
      \Delta(x) + f'(y) \Delta(y).
    \end{align*}
    The third point expresses that $f$ is an $\mathcal{L}$-embedding.
    The fourth point is clear.
  \end{claimproof}

  Next suppose $F \ne F^{alg}$.  By quantifier elimination of
  $\ACVF_{0,0}$, we can extend $f$ to an $\mathcal{L}_0$-embedding $f'
  : (F^{alg},K/\mm) \hookrightarrow K'$.  Let $\Delta : F^{alg} \to
  K'/\mm'$ be as in Claim~\ref{delta-helper}.  Then $\Delta$ vanishes
  on $F$, and therefore on $F^{alg}$, by Proposition~\ref{alg-case}.
  Therefore $f'$ is an $\mathcal{L}$-embedding.

  Finally, suppose that $F = F^{alg}$.  Take a transcendental $a \in K
  \setminus F$.  Let $\vec{s}$ be an infinite tuple enumerating $F$,
  and $\vec{t}$ be an infinite tuple enumerating $K/\mm$.  Let
  $\Sigma(x;\vec{y};\vec{z})$ be the complete $\mathcal{L}_0$-type of
  $(a;\vec{s};\vec{t})$.  Note that $a' \in K'$ satisfies
  $\Sigma(x;f(\vec{s});f(\vec{t}))$, if and only if there is an
  $\mathcal{L}_0$-embedding $f' : (F(a),K/\mm) \hookrightarrow K'$
  extending $f$ and sending $a \mapsto a'$.

  For any $b \in F$, let $\psi_b(x)$ be the
  type in $K'$ asserting that
  \begin{equation*}
    \dlog(x - f(b)) = f(\dlog(a - b)).
  \end{equation*}
  (The right hand side makes sense, because we arranged $D = K/\mm$.)
  \begin{claim}\label{duck1}
    For any $b \in F$, the type $\Sigma(x;f(\vec{s});f(\vec{t})) \cup
    \{\psi_b(x)\}$ is realized in $K'$.
  \end{claim}
  \begin{claimproof}
    By saturation, it suffices to show finite satisfiability.  Suppose
    $\varphi(x;\vec{y};\vec{z})$ is an $\mathcal{L}_0$-formula satisfied
    by $(a;\vec{s};\vec{t})$.  We must find $a' \in K'$ satisfying
    \begin{equation*}
      \varphi(a';f(\vec{s});f(\vec{t})) \wedge \psi_b(a')
    \end{equation*}
    The definable set $\varphi(K;\vec{s};\vec{t})$) has interior, by
    Lemma~\ref{interior:lem} and transcendence of $a$ over $\vec{s}$.
    As $f$ is a partial elementary map in the ACVF reduct, the
    definable set $\varphi(K';f(\vec{s});f(\vec{t}))$ has interior as
    well.  By the density axiom of LDVF, there is $x \in K'$ such
    that
    \begin{align*}
      x + f(b) &\in \varphi(K';f(\vec{s});f(\vec{t})) \\
      \dlog(x) &= f(\dlog(a - b)).
    \end{align*}
    Take $a' = x + f(b)$.
  \end{claimproof}
  \begin{claim}\label{duck2}
    The type $\Sigma(x;f(\vec{s});f(\vec{t})) \cup \{\psi_b(x) : b \in
    F\}$ is realized in $K'$.
  \end{claim}
  \begin{proof}
    By saturation, it suffices to show finite satisfiability.  Let
    $b_1, \ldots, b_n$ be elements of $F$.  We claim that the type
    \begin{equation*}
      \Sigma(x;f(\vec{s});f(\vec{t})) \cup \{\psi_{b_1}(x), \ldots, \psi_{b_n}(x)\}
    \end{equation*}
    is realized in $K'$.  Without loss of generality,
    \begin{equation*}
      \val(a - b_1) \ge \val(a - b_2) \ge \cdots \ge \val(a - b_n).
    \end{equation*}
    By Claim~\ref{duck1} there is $a'$ realizing
    $\Sigma(x;f(\vec{s});f(\vec{t})) \cup \{\psi_{b_1}(x)\}$.  Let $f'
    : (F(a),K/\mm) \to K'$ be the $\mathcal{L}_0$-embedding extending
    $f$ and sending $a$ to $a'$.  Note that
    \begin{equation*}
      \dlog(f'(a - b_1)) = \dlog(f'(a) - f'(b_1)) = \dlog(a' - f(b_1))
      \stackrel{\ast}{=} f(\dlog(a - b_1)) = f'(\dlog(a-b_1)).
    \end{equation*}
    The starred equation holds because of $\psi_{b_1}(a')$.  By
    Claim~\ref{delta-helper}, there is a log derivation $F(a) \to
    K'/\mm'$ given by
    \begin{equation*}
      \Delta(x) := f'(\dlog(x)) - \dlog(f'(x)).
    \end{equation*}
    Then $\Delta(a - b_1) = 0$.  Also, $\Delta(b_i - b_1) = 0$ for any
    $i$, because $b_i - b_1 \in F$.  Moreover,
    \begin{equation*}
      \val((a - b_1) - (b_i - b_1)) = \val(a - b_i) \le \val(a - b_1),
    \end{equation*}
    and so $\Delta(a - b_i) = 0$, by Lemma~\ref{diffs:lem}.  Then for
    each $i$,
    \begin{equation*}
      \dlog(a' - f(b_i)) = \dlog(f'(a) - f'(b_i)) = \dlog(f'(a - b_i))
      = f'(\dlog(a - b_i)) = f(\dlog(a - b_i)).
    \end{equation*}
    Therefore $\psi_{b_i}(a')$ holds.
  \end{proof}
  Using Claim~\ref{duck2}, take $a' \in K'$ realizing
  \begin{equation*}
    \Sigma(x;f(\vec{s});f(\vec{t})) \cup \{\psi_b(x) : b \in F\}.
  \end{equation*}
  Let $f' : (F(a),K/\mm) \hookrightarrow K'$ be the
  $\mathcal{L}_0$-embedding extending $f$ and mapping $a$ to $a'$.
  Let $\Delta(x) = f'(\dlog(x)) - \dlog(f'(x))$ as in
  Claim~\ref{delta-helper}.  The statement $\psi_b(a')$ implies that
  \begin{equation*}
    \dlog(a' - f(b)) = f(\dlog(a - b)).
  \end{equation*}
  Therefore
  \begin{equation*}
    \dlog(f'(a - b)) = \dlog(f'(a) - f'(b)) = \dlog(a' - f(b)) =
    f(\dlog(a - b)) = f'(\dlog(a - b)).
  \end{equation*}
  So $\Delta(a - b) = 0$ for any $b \in F$.  As $F$ is algebraically
  closed and $a$ is transcendental, the multiplicative group
  $F(a)^\times$ is generated by
  \begin{equation*}
    F^\times \cup \{a - b : b \in F\}.
  \end{equation*}
  Then $\Delta$ must vanish on $F(a)$, because it vanishes on the
  generators.  By Claim~\ref{delta-helper}, $f'$ is an
  $\mathcal{L}$-embedding.
\end{proof}
  
\begin{theorem}
  $\LDVF$ has quantifier elimination in the language $\mathcal{L}$.
\end{theorem}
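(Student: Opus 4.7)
The plan is to deduce quantifier elimination from Lemma~\ref{qe:lem} via the standard back-and-forth criterion. Recall the criterion: a first-order theory $T$ in language $\mathcal{L}$ has quantifier elimination if and only if, whenever $K, K' \models T$ with $K'$ being $|K|^+$-saturated, every $\mathcal{L}$-embedding $f : A \hookrightarrow K'$ from an $\mathcal{L}$-substructure $A \subseteq K$ extends to an $\mathcal{L}$-embedding $K \hookrightarrow K'$. My approach will be to verify this criterion and then invoke it.

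Given such $K$, $K'$, $A = (F,D)$, and $f$, I will apply Zorn's lemma to the poset $\mathcal{P}$ of pairs $((F',D'), f')$, where $(F',D')$ is an $\mathcal{L}$-substructure of $K$ containing $(F,D)$ and $f' : (F',D') \hookrightarrow K'$ is an $\mathcal{L}$-embedding extending $f$, ordered by componentwise extension. Chains in $\mathcal{P}$ admit upper bounds: $\mathcal{L}$-substructures of a fixed $\mathcal{L}$-structure are closed under unions of chains (each symbol in $\mathcal{L}$ has fixed arity and sorts, and the partial operations such as division on $K$ and scalar multiplication $\Oo \times K/\mm \to K/\mm$ pass through unions without trouble), and the pointwise union of a chain of $\mathcal{L}$-embeddings into $K'$ is again an $\mathcal{L}$-embedding. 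Hence a maximal element $((F^*, D^*), f^*)$ exists.

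Next I will argue that $(F^*, D^*) = (K, K/\mm)$. Otherwise $(F^*, D^*)$ is a \emph{proper} $\mathcal{L}$-substructure of $K$, and so Lemma~\ref{qe:lem} applies to $f^*$, producing a strictly larger $\mathcal{L}$-substructure $(F^{**},D^{**}) \subseteq (K,K/\mm)$ and an $\mathcal{L}$-embedding $(F^{**},D^{**}) \hookrightarrow K'$ extending $f^*$. This contradicts the maximality of $((F^*,D^*),f^*)$ in $\mathcal{P}$. Therefore $f^*$ is the required extension of $f$ to all of $K$, and quantifier elimination follows from the criterion.

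All substantive content has been done in Lemma~\ref{qe:lem}; passing from that one-step extension lemma to quantifier elimination is entirely routine. The only thing that could be called an obstacle is verifying that $\mathcal{L}$-substructures and $\mathcal{L}$-embeddings are closed under chain unions, but this is immediate from the shape of $\mathcal{L}$ (the only non-$\mathcal{L}_0$ symbol is the total function $\dlog : K^\times \to K/\mm$, which poses no issue at limits).
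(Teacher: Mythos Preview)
Your proposal is correct and takes essentially the same approach as the paper, which simply says the result ``follows from Lemma~\ref{qe:lem}, by well-known model-theoretic techniques.'' You have spelled out exactly those techniques (the standard back-and-forth/Zorn argument reducing QE to the one-step extension lemma), and nothing more is needed.
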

\begin{proof}
  This follows from Lemma~\ref{qe:lem}, by well-known model-theoretic
  techniques.
\end{proof}

\begin{corollary}
  $\LDVF$ is complete.
\end{corollary}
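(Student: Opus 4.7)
The plan is to deduce completeness from quantifier elimination in the standard way, by exhibiting a fixed $\mathcal{L}$-substructure that embeds into every model of $\LDVF$. Since quantifier-free sentences are preserved in both directions under $\mathcal{L}$-embeddings, any quantifier-free sentence has the same truth value in every model, and then quantifier elimination promotes this to arbitrary sentences.

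The candidate common substructure is $(\Qq, \{0\})$. To see that this is well-defined as an $\mathcal{L}$-substructure of an arbitrary $(K,\val,\dlog) \models \LDVF$, I would first check that the valuation is trivial on $\Qq$: since $K$ has characteristic $0$ and residue characteristic $0$, every non-zero integer is a unit in $\Oo$, so $\Qq \subseteq \Oo^\times \cup \{0\}$ and $\Oo_\Qq = \Qq$. Thus $\Qq$ is a subfield of $K$ and $\{0\}$ is trivially an $\Oo_\Qq$-submodule of $K/\mm$.

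Next, I would verify that $\dlog$ vanishes on $\Qq^\times$. The induced map $\partial : \Oo \to K/\mm$, defined by $\partial x = x \cdot \dlog x$, is a derivation, hence kills the prime subfield $\Qq \subseteq \Oo$. Consequently $\dlog(n) = 0$ for every non-zero integer $n$, and multiplicativity of $\dlog$ gives $\dlog(q) = 0$ for all $q \in \Qq^\times$. Therefore the closure condition $\dlog(\Qq^\times) \subseteq \{0\}$ holds, and $(\Qq, \{0\})$ is indeed an $\mathcal{L}$-substructure of every model of $\LDVF$.

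With this in hand, completeness is immediate. Given any two models $M_1, M_2 \models \LDVF$ and any $\mathcal{L}$-sentence $\sigma$, quantifier elimination provides a quantifier-free sentence $\sigma^*$ with $\LDVF \vdash \sigma \leftrightarrow \sigma^*$. Since both inclusions $(\Qq,\{0\}) \hookrightarrow M_i$ are $\mathcal{L}$-embeddings and quantifier-free sentences are preserved under embeddings in both directions, $M_1 \models \sigma^*$ iff $(\Qq,\{0\}) \models \sigma^*$ iff $M_2 \models \sigma^*$. Hence $M_1 \equiv M_2$ and $\LDVF$ is complete. There is essentially no hard step here; the only thing to watch is the vanishing of $\dlog$ on $\Qq^\times$, which is forced by the fact that $\partial$ is a derivation of a $\Qq$-algebra.
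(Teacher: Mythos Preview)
Your approach is essentially the same as the paper's: both exhibit $(\Qq,\{0\})$ as a common $\mathcal{L}$-substructure of every model and then invoke quantifier elimination. There is one small point you gloss over. Recall that $\mathcal{L}_0$ was Morleyized to include \emph{all} $\emptyset$-definable relations of $\ACVF_{0,0}$, so the $\mathcal{L}$-structure induced on $(\Qq,\{0\})$ a priori depends on the ambient model $M_i$: for each such relation symbol, its interpretation on tuples from $\Qq$ (or from $\{0\}$) is read off from $M_i$. To use $(\Qq,\{0\})$ as a single fixed structure bridging $M_1$ and $M_2$, you need these two induced structures to coincide. The paper handles this by invoking completeness of $\ACVF_{0,0}$; since each rational is $\emptyset$-definable, the truth value of any $\emptyset$-definable relation on a tuple from $(\Qq,\{0\})$ is already decided by the theory. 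Your verification that $\dlog$ vanishes on $\Qq^\times$ correctly covers the new symbol, but you should add a line noting that the $\mathcal{L}_0$-part of the induced structure is fixed by completeness of $\ACVF_{0,0}$. With that addition your argument is complete and matches the paper's.
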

\begin{proof}
  LDVF is consistent by Proposition~\ref{consist:prop}.  Take two
  models $M_1$ and $M_2$, viewed as $\mathcal{L}$-structures.  Note
  that $(\Qq,0)$ is an $\mathcal{L}$-substructure of $M_i$, for each
  $i$.  The identity map $(\Qq,0) \to (\Qq,0)$ is an isomorphism of
  $\mathcal{L}$-structures:
  \begin{itemize}
  \item For the $\mathcal{L}_0$-structure, this holds because
    $\ACVF_{0,0}$ is complete.
  \item For the map $\dlog$, this holds because $\dlog$ is trivial on
    both copies of $\Qq$.
  \end{itemize}
  By quantifier elimination, $M_1 \equiv M_2$.
\end{proof}

\begin{corollary}\label{ADVF}
  Let $\ADVF$ be the theory of algebraically closed dense diffeovalued
  fields $(K,\partial,D)$.
  \begin{enumerate}
  \item If $(K, \dlog)$ is a model of $\LDVF$, then
    $(K,\partial,K/\mm)$ is a model of $\ADVF$.
  \item \label{point-2} Up to elementary equivalence, every model of
    $\ADVF$ arises in this way.
  \item $\ADVF$ is complete.
  \end{enumerate}
\end{corollary}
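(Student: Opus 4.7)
The corollary has three parts, tied together by a tight correspondence between LDVF and ADVF.

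For part (1), given $(K, \val, \dlog) \models \LDVF$, one obtains a derivation $\partial : \Oo \to K/\mm$ via $\partial x = x \cdot \dlog x$ (as already noted in the paper). Algebraic closure and residue characteristic zero come from $\ACVF_{0,0}$, and $K/\mm$ is tautologically a mock $K/\mm$, so the only substantive task is verifying density of $\partial$-fibers in $\Oo$. Given $x_0 \in K/\mm$, $a \in \Oo$, and a target valuation $\gamma$, I would split into cases. If $a \in \Oo^\times$, LDVF density furnishes $y$ close to $a$ with $\dlog y = x_0/a$, and $y \dlog y = (y/a) x_0 = x_0$ once $\gamma$ is large enough to put $(y-a)/a$ in the annihilator of $x_0$ in $K/\mm$. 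If $a \in \mm$ and $x_0 = 0$, any $y$ close to $a$ with $\dlog y = 0$ works. For the nontrivial case $a \in \mm$, $x_0 \ne 0$, lift $x_0$ to $X_0 \in K$, apply LDVF density to find $y = a + \varepsilon$ close to $a$ with $\dlog y = X_0/a + \mm$, and verify $y \cdot \dlog y = yX_0/a + \mm = X_0 + \mm = x_0$ by absorbing the error $\varepsilon X_0/a$ into $\mm$ for $\gamma$ sufficiently large.

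For part (2), given $M \models \ADVF$, pass to a sufficiently resplendent elementary extension $M'$ (legitimate because ADVF is first-order). Since $K$ is algebraically closed, its value group is divisible, hence $p$-divisible, so Proposition \ref{prop:lift} supplies both a normalization $D \cong K/\mm$ and a lifting $\delta : K \to K$ of $\partial$. Lemma \ref{convert:lem} then converts $(K, \val, \delta)$ into an LDVF model via $\dlog x = (\delta x)/x + \mm$, and the ADVF-reduct of this LDVF model (constructed as in part (1)) coincides with $M'$ on the nose, since $x \cdot \dlog x = \delta x + \mm = \partial x$ for $x \in \Oo$. Hence $M \equiv M'$ is elementarily equivalent to the ADVF-reduct of a bona fide LDVF model.

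For part (3), the preceding corollary shows LDVF is complete, so any two LDVF models are elementarily equivalent, and their ADVF-reducts constructed uniformly via part (1) are therefore elementarily equivalent as well; combined with part (2), every ADVF model is elementarily equivalent to any such reduct, whence ADVF is complete. The main obstacle is the density verification in part (1) when $a \in \mm$ and $x_0 \ne 0$, because multiplication by $y \in \mm$ on $K/\mm$ has nontrivial kernel: the target class for $\dlog y$ must be chosen carefully by lifting $x_0$ to $K$ and dividing by $a$, with $\gamma$ taken large enough relative to $\val(X_0)$ to kill the resulting error term.
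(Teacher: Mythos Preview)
Your proof of parts (2) and (3) is essentially identical to the paper's: pass to a resplendent extension, invoke Proposition~\ref{prop:lift} (the value group is divisible since $K$ is algebraically closed), apply Lemma~\ref{convert:lem}, and then deduce completeness of $\ADVF$ from completeness of $\LDVF$.

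Where you differ is part (1). The paper does not verify density of $\partial$-fibers directly; it simply writes ``It suffices to prove Point~\ref{point-2}'' and lets part (1) fall out formally: once part (2) is proven, some $\LDVF$-reduct is literally an $\ADVF$ model (namely the resplendent extension $M'$ in the proof), and since $\LDVF$ is complete, every $\LDVF$-reduct is elementarily equivalent to that one, hence also satisfies the first-order axioms of $\ADVF$. Your direct case analysis on $a \in \Oo^\times$ versus $a \in \mm$ is more hands-on and avoids this appeal to completeness, which is a reasonable trade-off: your argument is self-contained, while the paper's is shorter but relies on the heavier quantifier-elimination machinery already in place.

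One small gap: in your case $a \in \mm$, $x_0 \ne 0$, you divide by $a$ to set $\dlog y = X_0/a + \mm$, which fails when $a = 0$. The fix is routine---replace $a = 0$ by any nonzero $a'$ with $\val(a') > \gamma$ and run your argument at $a'$; then $\val(y - 0) \ge \min(\val(y - a'), \val(a')) > \gamma$---but it should be said explicitly.
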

\begin{proof}
  It suffices to prove Point~\ref{point-2}.  Let $(K,\partial,\val)$
  be a model of $\ADVF$.  Passing to a resplendent elementary extension,
  we may assume that a lifting exists, by Proposition~\ref{prop:lift}.
  So we obtain a lifted diffeovalued field $(K,\delta,\val)$.  Define
  \begin{align*}
    \dlog : K^\times &\to K/\mm \\
    \dlog(x) &= \frac{\delta x}{x} + \mm.
  \end{align*}
  Then $(K,\dlog,\val) \models \LDVF$, by Lemma~\ref{convert:lem}.
  The derivation $\partial : \Oo \to K/\mm$ determined by $\dlog$ is
  the original derivation $\partial$.
\end{proof}

\subsection{Upper bound on dp-rank}
Let $T$ be the theory of dense, lifted diffeovalued fields.  By
Proposition~\ref{prop:strong-density}, a model of $T$ is a field $K$
with a non-trivial valuation (of residue characteristic 0) and a
derivation $\delta : K \to K$ such that for any $a, b \in K$ and
$\gamma$ in the value group, there is $x \in K$ such that
\begin{align*}
  \val(x - a) & \ge \gamma \\
  \val(\delta x - b) & \ge \gamma.
\end{align*}
The theory $T$ probably has no nice properties, other than being
consistent.

\begin{remark}\label{expand-to-t}
  Lemma~\ref{convert:lem} says that if $(K,\delta,\val) \models T$, and
  we define
  \begin{equation*}
    \dlog(x) := \frac{\delta x}{x} + \mm,
  \end{equation*}
  then $(K,\dlog,\val) \models \LDVF$.  As LDVF is complete, it
  follows that every sufficiently resplendent model of LDVF can be
  expanded to a model of $T$.
\end{remark}

Fix some one-sorted language for $T$.  Say that $\varphi(\vec{x})$ is a
\emph{ACVF-formula} if $\varphi(\vec{x})$ is defined in the ACVF-reduct,
and similarly for \emph{LDVF-formulas}.  Let $\tp_{\ACVF}(\vec{a}/B)$
be the set of all ACVF-formulas with parameters in $B$, satisfied by
$\vec{a}$.  Define $\tp_{\LDVF}(\vec{a}/B)$ similarly.

If $M \models T$ and $\vec{a}$ is a tuple in $M$, define
$\delta(\vec{a})$ coordinatewise.

\begin{lemma}
  Let $M_1, M_2$ be two models of $T$.  Let $\vec{a}$ be a tuple in
  $M_1$ and $\vec{b}$ be a tuple of the same length in $M_2$.
  Suppose that $\tp_{ACVF}(\vec{a}\delta(\vec{a})/\emptyset) =
  \tp_{ACVF}(\vec{b}\delta(\vec{b})/\emptyset)$.  Then
  $\tp_{LDVF}(\vec{a}/\emptyset) = \tp_{LDVF}(\vec{b}/\emptyset)$.
\end{lemma}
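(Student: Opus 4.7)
The plan is to reduce this to a quantifier-elimination argument for LDVF. By Remark~\ref{expand-to-t}, expand $M_1$ and $M_2$ to LDVF-models $M_1^+, M_2^+$ by defining $\dlog(x) := \delta(x)/x + \mm$; both are models of LDVF by Lemma~\ref{convert:lem}. Every LDVF-formula in the one-sorted $T$-language lifts to an $\mathcal{L}$-formula in the two-sorted LDVF-language, so it suffices to prove that $\vec{a}$ in $M_1^+$ and $\vec{b}$ in $M_2^+$ realise the same $\mathcal{L}$-type over $\emptyset$.

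By the QE theorem for LDVF, the $\mathcal{L}$-type of a $K$-sort tuple is determined by its quantifier-free $\mathcal{L}$-type. I would then observe that a generic atomic $\mathcal{L}$-formula $\varphi(\vec{x})$ with $\vec{x}$ of $K$-sort has one of two shapes: either (i) an ACVF-relation applied to rational functions $P_1(\vec{x}),\ldots,P_n(\vec{x})$ of the tuple, or (ii) an equation of $K/\mm$-sort terms,
\[ \sum_i c_i(\vec{x}) \cdot \dlog(P_i(\vec{x})) = \sum_j d_j(\vec{x}) \cdot \dlog(Q_j(\vec{x})), \]
where the $c_i, d_j$ are rational functions whose values are required to lie in $\Oo$. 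Now $\dlog(P(\vec{a})) = \delta(P(\vec{a}))/P(\vec{a}) + \mm$, and by the chain rule
\[ \delta(P(\vec{a})) = \sum_k (\partial_k P)(\vec{a}) \cdot \delta(a_k), \]
which is a polynomial in the extended tuple $(\vec{a},\delta(\vec{a}))$. Therefore each formula of type (i) or (ii) translates into a Boolean combination of assertions of the form ``a certain rational function of $(\vec{a},\delta(\vec{a}))$ has positive valuation'' (or vanishes), together with side conditions like $P_i(\vec{a}) \ne 0$ and $c_i(\vec{a}) \in \Oo$. All of these are atomic ACVF-formulas on $(\vec{a},\delta(\vec{a}))$.

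Consequently the quantifier-free $\mathcal{L}$-type of $\vec{a}$ over $\emptyset$ in $M_1^+$ is entirely determined by $\tp_{\ACVF}(\vec{a}\delta(\vec{a})/\emptyset)$, and similarly for $\vec{b}$. The hypothesis then gives equality of quantifier-free $\mathcal{L}$-types, and QE for LDVF promotes this to equality of full $\mathcal{L}$-types. Restricting back to the LDVF-reduct of the one-sorted language for $T$ yields $\tp_{\LDVF}(\vec{a}/\emptyset) = \tp_{\LDVF}(\vec{b}/\emptyset)$. The only thing demanding care — and it is essentially bookkeeping rather than a genuine obstacle — is the partial-function nature of $\dlog$ (defined only on $K^\times$) and the side condition that the module coefficients $c_i(\vec{a}),d_j(\vec{a})$ lie in $\Oo$. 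Both of these are themselves decided by the ACVF-type of $\vec{a}$ (and in particular by that of $\vec{a}\delta(\vec{a})$), so the translation of atomic $\mathcal{L}$-formulas into atomic ACVF-formulas about $(\vec{a},\delta(\vec{a}))$ goes through uniformly.
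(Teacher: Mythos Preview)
Your argument is essentially correct and uses the same two ingredients as the paper's proof---quantifier elimination for LDVF and the chain rule for $\delta$---but packages them syntactically rather than semantically. One small lacuna: your case analysis of atomic $\mathcal{L}$-formulas is incomplete. The language $\mathcal{L}_0$ contains \emph{all} $\emptyset$-definable ACVF-relations on $K$ and $K/\mm$, so an atomic formula may be $R(s_1,\ldots,s_m,t_1,\ldots,t_n)$ for an arbitrary such relation $R$, with $s_i$ of $K$-sort and $t_j$ of $K/\mm$-sort; your case (ii) only treats equality. This is harmless, because your computation already shows that each $K/\mm$-term $t_j(\vec{a})$ equals the image in $K/\mm$ of a rational function $Q_j(\vec{a},\delta(\vec{a}))$, and whether $R$ holds of these values is an ACVF-statement about $(\vec{a},\delta(\vec{a}))$. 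So the fix is one sentence.

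The paper's proof takes a cleaner semantic route that avoids term analysis entirely. From the hypothesis it passes to elementary extensions and obtains an isomorphism $f$ of the ACVF-reducts sending $(\vec{a},\delta(\vec{a}))$ to $(\vec{b},\delta(\vec{b}))$; this induces an $\mathcal{L}_0$-isomorphism $f'$ of the two-sorted structures. Restricting the $K$-sort to $\Qq(\vec{a})$, the chain rule gives $f(\delta c)=\delta(f(c))$ for $c\in\Qq[\vec{a}]$, whence $f'(\dlog c)=\dlog(f'(c))$ on all of $\Qq(\vec{a})^\times$; thus the restricted $f'$ is an $\mathcal{L}$-isomorphism of substructures, and QE finishes. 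The advantage of this approach is that one never has to parse atomic $\mathcal{L}$-formulas or worry about the partial-function bookkeeping you flag at the end; the chain-rule computation is done once, for a single generic polynomial. Your syntactic approach has the advantage of being more explicit about exactly how each LDVF-formula reduces to an ACVF-formula on the extended tuple, which is precisely what the next lemma (Lemma~\ref{formula-convert}) extracts anyway.
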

\begin{proof}
  After replacing $M_1$ and $M_2$ with elementary extensions, there is
  an isomorphism of the ACVF-reducts
  \begin{equation*}
    f : (M_1,\val) \to (M_2,\val)
  \end{equation*}
  such that $f(\vec{a}) = \vec{b}$ and $f(\delta(\vec{a})) =
  \delta(\vec{b})$.  With $\mathcal{L}_0$ as in the previous section,
  this induces an isomorphism of $\mathcal{L}_0$-structures
  \begin{equation*}
    f' : (M_1,M_1/\mm^{M_1}) \to (M_2,M_2/\mm^{M_2}).
  \end{equation*}
  Restricting the first sort, we obtain an $\mathcal{L}_0$-isomorphism
  \begin{equation*}
    f'' : (\Qq(\vec{a}),M_1/\mm^{M_1}) \to (\Qq(\vec{b}),M_2/\mm^{M_2}),
  \end{equation*}
  because $f(\vec{a}) = \vec{b}$.  We claim that $f''$ preserves
  $\dlog$.  If $c$ is in $\Qq[\vec{a}]$, then
  \begin{equation*}
    c = P(a_1,\ldots,a_n)
  \end{equation*}
  for some $P(x_1,\ldots,x_n) \in \Qq[x_1,\ldots,x_n]$.  Then
  \begin{equation*}
    \delta(c) = \sum_{i = 1}^n \frac{\partial P}{\partial x_i} (\vec{a})
    \cdot \delta(a_i).
  \end{equation*}
  Since $f$ sends $\delta(a_i)$ to $\delta(b_i)$,
  \begin{equation*}
    f(\delta(c)) = \sum_{i = 1}^n \frac{\partial P}{\partial x_i}
    (\vec{b}) \cdot \delta(b_i) = \delta(P(\vec{b})) = \delta(f(c)).
  \end{equation*}
  Then $f(\delta(c)) = \delta(f(c))$, implying that $f'(\dlog(c)) =
  \dlog(f'(c))$.  More generally, if $c \in \Qq(\vec{a})$, then $c =
  c_1/c_2$ for $c_i \in \Qq[\vec{a}]$, and
  \begin{align*}
    f'(\dlog(c)) &= f'(\dlog(c_1) - \dlog(c_2)) = f'(\dlog(c_1)) -
    f'(\dlog(c_2)) \\ & = \dlog(f'(c_1)) - \dlog(f'(c_2)) =
    \dlog(f'(c_1)/f'(c_2)) \\ & = \dlog(f'(c_1/c_2)) = \dlog(f'(c)).
  \end{align*}
  Thus $f'(\dlog(c)) = \dlog(f'(c))$ for $c \in \Qq(\vec{a})$, and
  $f''$ is an $\mathcal{L}$-isomorphism.  By quantifier elimination of
  LDVF in the language $\mathcal{L}$, it follows that
  $\tp_{\LDVF}(\vec{a}/\emptyset) = \tp_{\LDVF}(\vec{b}/\emptyset)$.
\end{proof}
\begin{lemma}\label{formula-convert}
  For every LDVF-formula $\varphi(\vec{x})$, there is an ACVF-formula
  $\psi(\vec{x};\vec{y})$ such that
  \begin{equation*}
    T \vdash \varphi(\vec{x}) \iff \psi(\vec{x};\delta(\vec{x})).
  \end{equation*}
\end{lemma}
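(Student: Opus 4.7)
The plan is to combine the preceding lemma---which shows that the ACVF-type of $(\vec{a},\delta(\vec{a}))$ determines the LDVF-type of $\vec{a}$---with two applications of compactness. The reduction is essentially a standard ``stable embeddedness'' style argument once that lemma is available, so the bulk of the work has already been done.

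First I would fix $\vec{a}$ in a model of $T$ satisfying $\varphi(\vec{a})$, and set $q(\vec{x},\vec{y}) := \tp_{\ACVF}(\vec{a},\delta(\vec{a})/\emptyset)$. By the preceding lemma, any $\vec{a}'$ realizing $\varphi$-situation with the same ACVF-type on $(\vec{x},\delta(\vec{x}))$ must satisfy $\varphi(\vec{a}')$; equivalently, $T \cup q(\vec{x},\delta(\vec{x})) \cup \{\neg \varphi(\vec{x})\}$ is inconsistent. Applying compactness in the one-sorted language of $T$ yields a single ACVF-formula $\theta_{\vec{a}}(\vec{x},\vec{y}) \in q$ with
\[
T \vdash \theta_{\vec{a}}(\vec{x},\delta(\vec{x})) \to \varphi(\vec{x}),
\]
and of course $\theta_{\vec{a}}(\vec{a},\delta(\vec{a}))$ holds by construction.

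Next I would let $\vec{a}$ range over all realizations of $\varphi$ in all models of $T$. The family $\{\theta_{\vec{a}}(\vec{x},\delta(\vec{x}))\}_{\vec{a}}$ then covers $\varphi$ modulo $T$, i.e.,
\[
T \cup \{\varphi(\vec{x})\} \vdash \bigvee_{\vec{a}} \theta_{\vec{a}}(\vec{x},\delta(\vec{x})).
\]
A second application of compactness reduces this to a finite sub-disjunction $\theta_{\vec{a}_1} \vee \cdots \vee \theta_{\vec{a}_n}$. Taking $\psi(\vec{x},\vec{y}) := \bigvee_{i=1}^n \theta_{\vec{a}_i}(\vec{x},\vec{y})$ gives an ACVF-formula, and the forward implication $T \vdash \theta_{\vec{a}_i}(\vec{x},\delta(\vec{x})) \to \varphi(\vec{x})$ for each $i$ yields the reverse direction $T \vdash \psi(\vec{x},\delta(\vec{x})) \to \varphi(\vec{x})$. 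Combined with the covering statement, this delivers $T \vdash \varphi(\vec{x}) \leftrightarrow \psi(\vec{x},\delta(\vec{x}))$.

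There is no substantive obstacle past the preceding lemma: both steps are pure compactness. The only subtlety to verify is that the formula $\theta_{\vec{a}}(\vec{x},\delta(\vec{x}))$ extracted in the first step is genuinely an ACVF-formula evaluated at the tuple $(\vec{x},\delta(\vec{x}))$ inside models of $T$, which is immediate from the setup. Equivalently, one could phrase the whole argument in terms of the Stone space of complete ACVF-types on $(\vec{x},\vec{y})$: the preceding lemma says that the (closed) loci of those types consistent in $T$ with $\varphi(\vec{x}) \wedge \vec{y} = \delta(\vec{x})$ and with $\neg\varphi(\vec{x}) \wedge \vec{y} = \delta(\vec{x})$ are disjoint closed subsets, hence separated by a clopen set, i.e.\ by an ACVF-formula.
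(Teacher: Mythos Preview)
Your argument is correct and is precisely the standard compactness argument the paper has in mind; the paper's own proof consists of the single sentence ``A standard compactness argument.'' Your two-step compactness reduction from the preceding lemma is exactly what is intended.
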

\begin{proof}
  A standard compactness argument.
\end{proof}

\begin{theorem}
  If $(K,\dlog,\val) \models \LDVF$, then $\dpr(K) \le 2$.
\end{theorem}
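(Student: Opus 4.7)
The strategy is to transfer LDVF ict-patterns on $K$ into ACVF ict-patterns on $K^2$ via the map $x \mapsto (x, \delta x)$, and then invoke dp-minimality of $\ACVF$ with sub-additivity of dp-rank.

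First I would replace $K$ by a sufficiently resplendent elementary LDVF-extension (which does not change dp-rank) and expand by a lifting $\delta$ via Remark~\ref{expand-to-t}, to obtain a model $K^+$ of $T$. By Lemma~\ref{formula-convert}, for every LDVF-formula $\varphi(x;\vec{y})$ there is an ACVF-formula $\psi((x,x');(\vec{y},\vec{y}'))$ such that in $K^+$,
\[
\varphi(x;\vec{a}) \iff \psi((x, \delta x);(\vec{a}, \delta \vec{a}))
\]
for every tuple $\vec{a}$. This is what lets us encode LDVF-definable subsets of $K$ as ACVF-definable subsets of $K \times K$ pulled back along $x \mapsto (x, \delta x)$.

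Suppose, for contradiction, that $\dpr_{\LDVF}(K) \ge 3$. Then there is an LDVF ict-pattern of depth $3$ in $K^+$: LDVF-formulas $\varphi_i(x;\vec{y}_i)$ and arrays $(\vec{a}_{i,j})_{j<\omega}$ for $i \in \{1,2,3\}$, such that for every $\eta\colon \{1,2,3\}\to\omega$ the path type
\[
\{\varphi_i(x;\vec{a}_{i,\eta(i)}) : i \le 3\} \cup \{\neg\varphi_i(x;\vec{a}_{i,j}) : i \le 3,~ j \ne \eta(i)\}
\]
is satisfiable. Let $\psi_i$ correspond to $\varphi_i$ under Lemma~\ref{formula-convert}, and set $\vec{b}_{i,j} = (\vec{a}_{i,j}, \delta\vec{a}_{i,j})$. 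Any LDVF-witness $x^\eta$ to the $\eta$-path type gives the ACVF-witness $(x^\eta, \delta x^\eta)$ to the corresponding ACVF path type assembled from $\psi_i$ and $\vec{b}_{i,j}$. Hence $(\psi_i, (\vec{b}_{i,j}))_{i\le 3}$ is an ACVF ict-pattern of depth $3$ in the two-variable sort $(x,x')$, forcing the $\ACVF$-dp-rank of this sort to be at least $3$.

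But $\ACVF$ is dp-minimal, so sub-additivity of dp-rank gives $\dpr_{\ACVF}((x,x')) \le 2$, a contradiction. Thus $\dpr_{\LDVF}(K) \le 2$. The whole argument rests on the translation step furnished by Lemma~\ref{formula-convert}; the main subtlety is the bookkeeping of path-type consistency, which transfers cleanly because a witness $x^\eta$ to an LDVF path supplies the paired witness $(x^\eta, \delta x^\eta)$ to the corresponding ACVF path (positive part and off-path negations alike). The preliminary resplendent expansion is there solely to ensure that a lifting $\delta$ exists so that the calculus of Lemma~\ref{formula-convert} becomes applicable.
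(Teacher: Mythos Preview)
Your proposal is correct and follows essentially the same approach as the paper: expand to a model of $T$ via Remark~\ref{expand-to-t}, use Lemma~\ref{formula-convert} to translate an LDVF ict-pattern of depth $3$ into an ACVF ict-pattern of depth $3$ on $K^2$, and invoke $\dpr_{\ACVF}(K^2)\le 2$. Your write-up is more explicit about the bookkeeping (the witness map $x\mapsto(x,\delta x)$ and the off-path negations), but the argument is identical in substance.
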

\begin{proof}
  Suppose there is an ict-pattern of depth 3:
  \begin{align*}
    & \varphi_0(x;\vec{b}_{0,0}), \varphi_0(x;\vec{b}_{0,1}), \ldots \\
    & \varphi_1(x;\vec{b}_{1,0}), \varphi_0(x;\vec{b}_{1,1}), \ldots \\
    & \varphi_2(x;\vec{b}_{2,0}), \varphi_0(x;\vec{b}_{2,1}), \ldots
  \end{align*}
  Replacing $K$ with an elementary extension, we may assume that $K$
  can be expanded to a model of $T$, by Remark~\ref{expand-to-t}.
  Applying Lemma~\ref{formula-convert}, we obtain an ict-pattern of
  depth 3,
  \begin{align*}
    & \varphi'_0(x,y;\vec{c}_{0,0}), \varphi'_0(x,y;\vec{c}_{0,1}), \ldots \\
    & \varphi'_1(x,y;\vec{c}_{1,0}), \varphi'_0(x,y;\vec{c}_{1,1}), \ldots \\
    & \varphi'_2(x,y;\vec{c}_{2,0}), \varphi'_0(x,y;\vec{c}_{2,1}), \ldots
  \end{align*}
  made of ACVF-formulas.  But in ACVF, the set $K^2$ has dp-rank less than 3.
\end{proof}

\subsection{Non-valuation type}
\begin{lemma}\label{yatir:lem}
  If $R$ is an interpretable integral domain in some structure, and $K
  = \Frac(R)$, then $\dpr(K) = \dpr(R)$.
\end{lemma}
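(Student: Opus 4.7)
The plan is to establish $\dpr(R) \le \dpr(K)$ and $\dpr(K) \le \dpr(R)$ separately.

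The first inequality is immediate: since $K = \Frac(R)$ is interpretable from $R$ in the ambient structure, the map $r \mapsto r/1$ is a definable injection $R \hookrightarrow K$, and dp-rank is monotone under definable injections.

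For the reverse inequality, I would transfer ict-patterns from $K$ to $R$ by clearing denominators. Suppose $(\psi_i(x; \vec{y}_i))_{i < n}$ and $(\vec{b}_{i,j})_{i < n,\, j < \omega}$ form an ict-pattern in $K$ of depth $n$, witnessed in a sufficiently saturated elementary extension of the ambient structure. For each $\eta \in \omega^n$, pick $a_\eta \in K$ realizing the associated partial type (including the required negations in each row). Find a common $s \in R \setminus \{0\}$ with $r_\eta := s \cdot a_\eta \in R$ for every $\eta$. Setting $\psi'_i(x; \vec{y}_i, z) := \psi_i(x/z; \vec{y}_i)$, the formulas $\psi'_i$ with the parameter array $(\vec{b}_{i,j}, s)$ (the parameter $s$ being shared across all positions) witness an ict-pattern of depth $n$ in $R$: consistency at each $\eta$ is realized by $r_\eta$, and any common $R$-witness to jointly inconsistent $\psi'_i$-formulas would yield a common $K$-witness to the jointly inconsistent $\psi_i$-formulas upon dividing by $s$. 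This gives $\dpr(R) \ge n$, and taking $n = \dpr(K)$ yields the desired bound.

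The only mildly subtle point is the existence of the common denominator $s$, which is routine: any finite collection of elements of $\Frac(R)$ has a common denominator in $R \setminus \{0\}$ (the product of individual denominators), so by saturation (or compactness) the countable collection $\{a_\eta : \eta \in \omega^n\}$ admits such a common denominator as well. Combining both inequalities gives $\dpr(R) = \dpr(K)$.
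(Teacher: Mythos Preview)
Your proof is correct and essentially identical to the paper's: both establish $\dpr(R)\le\dpr(K)$ trivially, then transfer an ict-pattern from $K$ to $R$ by passing to a saturated model, finding a common denominator $s$ for the witnesses via compactness, and replacing each $\varphi_i(x;\vec{y})$ by $\varphi_i(x/z;\vec{y})$ with $s$ plugged into the new variable $z$. The only cosmetic difference is that the paper phrases the argument for an arbitrary cardinal depth $\kappa$ rather than finite $n$, but for the dp-finite context of the paper your version suffices.
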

\begin{proof}
  Work in a monster model.  The inequality $\dpr(R) \le \dpr(K)$ is
  clear.  Conversely, suppose there is an ict-pattern of depth
  $\kappa$ in $K$.  Then there are formulas
  $\{\varphi_\alpha(x;y_\alpha)\}_{\alpha < \kappa}$, coefficients
  $\{b_{\alpha,i}\}_{\alpha < \kappa,~i < \omega}$, and witnesses
  $\{a_\eta\}_{\eta : \kappa \to \omega}$ in $K$, such that
  \begin{equation*}
    \varphi_\alpha(a_\eta,b_{\alpha,i}) \iff \eta(\alpha) = i.
  \end{equation*}
  For any $x_1, \ldots, x_n \in K$, we can find a non-zero common
  denominator $s \in R$ such that
  \begin{equation*}
    \{x_1s, \ldots, x_ns\} \subseteq R.
  \end{equation*}
  By saturation, we can find some non-zero $s \in R$ such that $s
  a_\eta$ lies in $R$ for every $\eta$.  Let $a'_\eta = s a_\eta$, and
  let $\psi_\alpha(x,y,z)$ be the formula
  \begin{equation*}
    \psi_\alpha(x,y,z) \equiv \varphi_\alpha(x/y,z).
  \end{equation*}
  Then there is an ict-pattern of depth $\kappa$ in $R$:
  \begin{equation*}
    \psi_\alpha(a'_\eta,s,b_{\alpha,i}) \iff \eta(\alpha) = i. \qedhere
  \end{equation*}
\end{proof}
\begin{theorem}
  If $(K,\dlog,\val) \models \LDVF$, then $K$ is unstable, not of
  valuation type, and has dp-rank exactly 2.
\end{theorem}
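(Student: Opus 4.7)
The plan is to verify instability, the dp-rank bounds, and non-valuation-type in turn. Instability is immediate: the reduct of $K$ to the valued-field language is a model of $\ACVF_{0,0}$, which is unstable. The upper bound $\dpr(K) \le 2$ is the preceding theorem.

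For $\dpr(K) \ge 2$, I will construct an ict-pattern of depth $2$ using the density axiom. Let $\varphi_1(x;y) := (\val(x - y) > 0)$ and $\varphi_2(x;y) := (\dlog(x) = y + \mm)$, parametrized by the positive integers $b_{1,i} = b_{2,i} = i$ for $i \ge 1$, which, together with their pairwise differences, all have valuation zero. Density of the fibers of $\dlog$ gives, for each $(i,j)$, an element $a_{ij}$ with $\val(a_{ij} - i) > 0$ and $\dlog(a_{ij}) = j + \mm$. The ultrametric inequality then forces $\varphi_1(a_{ij};i')$ to hold only when $i' = i$, and distinctness of integer residues in $K/\mm$ forces $\varphi_2(a_{ij};j')$ only when $j' = j$, yielding the desired pattern.

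The core step is non-valuation-type. Since LDVF is complete, pass to a sufficiently saturated $\Kk^+ \models \LDVF$. Suppose toward contradiction that the canonical topology $\tau_c$ on $\Kk^+$ is a V-topology; by Lemma~7.1 of \cite{prdf2} it is then a \emph{definable} V-topology, so Proposition~3.5 of \cite{hhj-v-top} yields an externally definable valuation ring $\Oo_c$ inducing $\tau_c$. Passing to the Shelah expansion preserves dp-rank (via quantifier elimination in the expansion), so $\Oo_c$ becomes definable in a structure of dp-rank $\le 2$. The ACVF ring $\Oo$ is likewise definable, so Proposition~\ref{prop-compare} forces $\Oo$ and $\Oo_c$ to be comparable. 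Now set $R = \{x \in \Oo : \val(\partial x) \ge 0\}$ with derivation $\partial x := x\,\dlog x$, as in Section~\ref{sec:dtop}. By Lemma~\ref{lem-friends-v2}.\ref{lef3}, $\Frac(R) = \Kk^+$, so Lemma~\ref{yatir:lem} gives $\dpr(R) = 2$; thus $R = R - R$ is a basic $\tau_c$-neighborhood of $0$, whence $a\Oo_c \subseteq R$ for some nonzero $a$. If $\Oo \subseteq \Oo_c$, this yields $a\Oo \subseteq R$, contradicted by density, which supplies $x$ with $\val(x) \ge \val(a)$ and $\val(\partial x) < 0$. If $\Oo_c \subseteq \Oo$, then $a\Oo_c \subseteq R$ with $\Oo_c$ a valuation ring directly contradicts Lemma~\ref{lem:non-multival}.

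The principal obstacle is the last step: obtaining a definable valuation ring $\Oo_c$ via the Shelah expansion and invoking Proposition~\ref{prop-compare} to force comparability with $\Oo$. Once comparability is secured, the contradiction is clean—density rules out $\Oo \subseteq \Oo_c$, and Lemma~\ref{lem:non-multival} rules out $\Oo_c \subseteq \Oo$.
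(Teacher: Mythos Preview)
Your argument is correct, but your route to non-valuation-type is more roundabout than needed, and the detour through Proposition~\ref{prop-compare} is unnecessary even within your own framework. Once you have a nonzero $a$ with $a\Oo_c \subseteq R$ for \emph{some} valuation ring $\Oo_c$, Lemma~\ref{lem:non-multival} already yields the contradiction: it applies to an arbitrary multi-valuation ring $S$, with no comparability hypothesis relating $S$ to $\Oo$. So the Shelah-expansion maneuver, the appeal to Proposition~\ref{prop-compare} (which in turn rests on Theorem~\ref{thm:app}, the main structural result of the paper), and the case split on $\Oo \subseteq \Oo_c$ versus $\Oo_c \subseteq \Oo$ can all be dropped.

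The paper's proof is shorter still and avoids producing any $\Oo_c$. Like you, it observes that $\Frac(R) = K$ gives $\dpr(R) = \dpr(K)$ via Lemma~\ref{yatir:lem}, so $R = R - R$ is a basic canonical neighborhood and the canonical topology refines the diffeovaluation topology. It then cites Theorem~3.2 of \cite{prestel-ziegler} to conclude that any field-topology coarsening of a V-topology is again a V-topology; combined with Proposition~\ref{not-a-v} (the diffeovaluation topology is not a V-topology), this immediately rules out the canonical topology being a V-topology. For the lower bound $\dpr(K) \ge 2$, the paper deduces it \emph{after} establishing non-valuation-type, simply noting that dp-minimal fields are always of valuation type (\cite{myself}, Theorem~9.3.28); your explicit ict-pattern is a valid and self-contained alternative, but the paper's ordering lets the rank lower bound fall out as a one-line corollary.
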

\begin{proof}
  Consider the reduct $(K,\partial,\val)$, a dense diffeovalued field
  by Corollary~\ref{ADVF}.  Let $R$ be the ring
  \begin{equation*}
    R = \{x \in \Oo : \partial x \in \Oo/\mm\},
  \end{equation*}
  as in \S\ref{sec:dtop}.  By Proposition~\ref{prop:dv-top}, the sets
  $\{a R : a \in K^\times\}$ form a neighborhood basis for a
  definable non-trivial Hausdorff topology.  So certainly $K$ is
  unstable.  Also $\Frac(R) = K$, so $\dpr(R) = \dpr(K)$ by
  Lemma~\ref{yatir:lem}.  Therefore $R - R = R$ is a
  basic neighborhood in the canonical topology.  So the canonical
  topology is finer than the diffeovaluation topology.  If the
  canonical topology is a V-topology, so is every coarsening, by
  Theorem 3.2 in \cite{prestel-ziegler}.  But the diffeovaluation
  topology is not a V-topology, by Proposition~\ref{not-a-v}.  So
  $(K,\dlog,\val)$ does not have valuation type.  Then $\dpr(K) \ne
  1$, because dp-minimal fields have valuation type (Theorem~9.3.28 in
  \cite{myself}).  Therefore, $K$ has dp-rank 2.
\end{proof}
\begin{remark}
  The same argument applies to intermediate reducts between the full
  model of $\LDVF$, and the reduct $(K,+,\cdot,R)$.

  The structure $(K,+,\cdot,\Oo,R)$ is similar to Example~7.1 in
  \cite{hhj-v-top}: both are NIP valued fields in which some infinite
  definable set has empty interior.  (In our case, the set is $R$.)
  Unlike \cite{hhj-v-top}, our example is not a \emph{pure} valued
  field, but an expansion.
\end{remark}

\section{Concluding remarks}
The example of \S\ref{sec:grail} derails some promising strategies to
attack the Shelah conjecture and henselianity conjecture.  For
example, it disproves our ``valuation conjecture''
(Conjecture~\ref{con:vc}), which would have implied the Shelah
conjecture (\cite{prdf2}, Theorem~9.9).

Consider the even simpler conjecture:
\begin{conjecture}\label{yeah-right}
  If $(K,+,\cdot,\Oo,\ldots)$ is a dp-finite valued field, and $S
  \subseteq K$ is a definable set of full dp-rank ($\dpr(S) =
  \dpr(K)$), then $S$ has non-empty interior.
\end{conjecture}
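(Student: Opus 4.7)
The plan is not to prove Conjecture~\ref{yeah-right} but to refute it: the construction of \S\ref{sec:grail} is essentially tailored to produce a counterexample. Take any model $(K, \val, \dlog)$ of $\LDVF$, and consider the reduct $(K, +, \cdot, \Oo, R)$, where $\Oo$ is the valuation ring induced by $\val$ and
\[ R = \{ x \in \Oo : \val(\partial x) \ge 0 \} \]
is the fundamental ring of the associated diffeovaluation. Both $\Oo$ and $R$ are definable in the LDVF structure via $\val$ and $\partial$, so this is a legitimate valued field expansion in the sense permitted by the $\ldots$ in Conjecture~\ref{yeah-right}. Since the full LDVF structure has dp-rank $2$ by the results of \S\ref{sec:grail}, the reduct is dp-finite.

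To see that $R$ has full dp-rank in the reduct, note that $R$ is an integral domain with $\Frac(R) = K$ by Lemma~\ref{lem-friends-v2}.\ref{lef3}. Applying Lemma~\ref{yatir:lem} inside the reduct then gives $\dpr(R) = \dpr(K)$, which is exactly what Conjecture~\ref{yeah-right} demands of the set $S = R$.

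The central step is to show that $R$ has empty interior in the valuation topology on $K$. After passing to a sufficiently resplendent elementary extension, Proposition~\ref{prop:lift} produces a lifting $\delta : K \to K$ of $\partial$, and Proposition~\ref{prop:strong-density} supplies the strong density property. Given any $a \in K$ and any $\gamma$ in the value group, pick $b \in K$ with $\val(b) < -\gamma$ and apply strong density to obtain $x$ with $\val(x - a) > \gamma$ and $\val(\delta x - b) > \gamma > \val(b)$. The ultrametric inequality then forces $\val(\delta x) = \val(b) < 0$, so $\val(\partial x) < 0$ and $x \notin R$. Hence every valuation ball around $a$ meets the complement of $R$, so $R$ has empty interior. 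Since ``$R$ has non-empty interior'' is a first-order condition on the reduct, emptiness of the interior descends back to the original model by completeness of $\LDVF$.

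The main obstacle here is not a mathematical one but rather one of interpretation: one must check that the conjecture's signature $(K, +, \cdot, \Oo, \ldots)$ really permits the additional unary predicate $R$, which it does. Restricted to the pure valued field $(K,+,\cdot,\Oo)$ without this extra predicate, the analogous statement may well be true and is closely related to the henselianity conjecture; the force of the refutation lies precisely in observing that the richer structure $(K, +, \cdot, \Oo, R)$ is still a legitimate dp-finite valued field in which a natural, valuation-compatible definable subring fails to be open.
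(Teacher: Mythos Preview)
Your refutation is correct and matches the paper's approach: the paper explicitly identifies $\LDVF$ (via the reduct $(K,+,\cdot,\Oo,R)$) as a counterexample to Conjecture~\ref{yeah-right}, using Lemma~\ref{yatir:lem} for full rank and the density axiom for empty interior.

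One minor simplification: you do not need to pass to a resplendent extension and invoke a lifting $\delta$. The density axiom for diffeovalued fields already says that every fiber of $\partial : \Oo \to D$ is dense in $\Oo$. Since $D$ is a mock $K/\mm$ and the valuation is nontrivial (Remark~\ref{rem:dnon}), $D$ properly contains $k$, so there exists $d \in D$ with $\val(d) < 0$. Then for any $a \in \Oo$ and any $\gamma$, the dense fiber $\partial^{-1}(d)$ meets the ball $\{x : \val(x-a) > \gamma\}$, and any such $x$ has $\val(\partial x) < 0$, hence $x \notin R$. This gives empty interior directly in every model, avoiding the detour through Propositions~\ref{prop:lift} and~\ref{prop:strong-density} and the transfer back down. (Compare the proof of Proposition~\ref{not-a-v}, which uses exactly this kind of argument.)
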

Conjecture~\ref{yeah-right} would imply the Henselianity conjecture
for dp-finite fields, by Theorem~7.5 in \cite{hhj-v-top}.  To the best
of my knowledge, all the known results on the henselianity conjecture
use this strategy.

However, the theory $\LDVF$ of \S\ref{sec:grail} is a counterexample
to Conjecture~\ref{yeah-right}.  Indeed, the definable set $R$ has
full rank, but empty interior with respect to $\Oo$.

Conjecture~\ref{yeah-right} probably holds for \emph{pure} valued
fields $(K,+,\cdot,\Oo)$, but this is probably impossible to prove
without first classifying dp-finite valued fields using some other
strategy.  (The purity assumption is hard to use in proofs.)

It seems we need a new strategy to attack the dp-finite Shelah and
henselianity conjectures.  Perhaps the analysis of
\S\ref{sec:isotyp}--\ref{sec:der} can be extended to higher ranks.

Here is a conjectural sketch.  For any $n \ge 1$, there should be a
class of ``field topologies of type $W_n$,'' cut out by a local
sentence (in the sense of \cite{prestel-ziegler}).  The canonical
topology on an unstable dp-finite field $K$ should be a definable
$W_n$-topology for some $n \le \dpr(K)$.  For $n = 1$, a
$W_1$-topology should be the same thing as a V-topology.  For $n = 2$,
a $W_2$-topology should either be a DV-topology in the sense of
\S\ref{sec:dtop}, or a topology generated by two independent
V-topologies.

For $n = 3$, there should be four types:
\begin{itemize}
\item A topology generated by three independent V-topologies.
\item A topology generated by a V-topology and an independent
  DV-topology.
\item A topology that is like a DV-topology, but with basic opens
  \begin{equation*}
    \{ x \in K : \val(x - a) \ge \gamma,~ \val(\delta_1 x - b) \ge
    \gamma,~ \val(\delta_2 x - c) \ge \gamma\}.
  \end{equation*}
  for \emph{two} derivations $\delta_1, \delta_2 : K \to K$.
\item A topology that is like a DV-topology, but involving second
  derivatives, with basic opens
  \begin{equation*}
    \{ x \in K : \val(x - a) \ge \gamma,~ \val(\delta x - b) \ge
    \gamma,~ \val(\delta^2 x - c) \ge \gamma\}.
  \end{equation*}
\end{itemize}
Now suppose $K$ is a field of dp-rank 3.  Using results from
\cite{prdf2}, it should be possible to prove that the squaring map
$f(x) = x^2$ is an open map from $K^\times$ to $K^\times$.  This
should exclude the first two cases.  In the latter two cases, it
should be possible to show that the $W_3$-topology has a unique
V-topology coarsening.  This would imply that $K$ admits a unique
definable V-topology.  Generalizations of these arguments should work
for $n > 3$.

From this point of view, the rank 2 case is too easy: the rank is so
small that there is no room for two independent topologies, unless
both are V-topologies.

\appendix

\section{Appendix: Resplendent lifting}
In the appendix, we assume that all rings are $\Qq$-algebras, all
fields extend $\Qq$, and all valued fields have residue characteristic
0.
\subsection{Extending derivations}
Say that an ordered abelian group $\Gamma$ is \emph{$\Zz$-less} if it satisfies
the following equivalent conditions:
\begin{itemize}
\item For every $a > 0$ in $\Gamma$, if $\Delta^+$ is the minimal
  convex subgroup containing $a$ and $\Delta^-$ is the maximal convex
  subgroup avoiding $a$, then $\Delta^+/\Delta^- \centernot \cong
  \Zz$.
\item For every $a > 0$ in $\Gamma$, there is $b \in \Gamma$ such that
  \begin{equation*}
    (1/3)a < b < (2/3)a,
  \end{equation*}
  i.e., $a < 3b < 2a$.
\item For every $a > 0$ in $\Gamma$ and every $p < q$ in $\Qq$, there
  is $b \in \Gamma$ such that
  \begin{equation*}
    pa < b < qa.
  \end{equation*}
\end{itemize}
For example, if $\Gamma$ is $p$-divisible for some prime $p$, then
$\Gamma$ is $\Zz$-less.

\begin{remark}
  Let $\Gamma' > \Gamma$ be an extension of ordered abelian groups.
  Suppose $\Gamma'/\Gamma$ is torsion (i.e., $\Gamma' \le \Gamma
  \otimes_\Zz \Qq$).  If $\Gamma$ is $\Zz$-less, then $\Gamma'$ is
  $\Zz$-less.
\end{remark}

\begin{lemma} \label{z-less-calc}
  Let $L/K$ be an algebraic extension of valued fields.  Suppose that
  the value group of $K$ is $\Zz$-less.  Let $a$ be a nonzero element
  of $\Oo_L$ with positive valuation.  Suppose that $a^n \in K$.  Then
  there are $b,c \in \Oo_L$ such that
  \begin{itemize}
  \item $a = bc^n$
  \item $bc^{n-1}$ is in $K$.
  \item $c^n$ is in $K$.
  \end{itemize}
\end{lemma}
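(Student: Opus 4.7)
The plan is to seek $c$ of the form $c = a/\lambda$ for some $\lambda \in K^\times$; such an ansatz automatically makes $c^n = a^n/\lambda^n \in K$ (using the hypothesis $a^n \in K$) and, setting $b := a/c^n = \lambda^n/a^{n-1}$, also gives $bc^{n-1} = a/c = \lambda \in K$ for free. Hence the only remaining conditions to arrange are the integrality statements $b, c \in \Oo_L$.

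Writing $\gamma := \val(a) > 0$, the integrality conditions translate into the inequalities
\[ \tfrac{n-1}{n}\gamma \le \val(\lambda) \le \gamma. \]
The case $a \in K$ (which includes $n = 1$) is handled trivially by $b = a$, $c = 1$, so I may assume $n \ge 2$. Setting $\delta := n\gamma$, which lies in $\val(K)$ by hypothesis, the inequalities above become the demand that $\val(\lambda)$ lie in $\val(K) \cap [(n-1)\delta/n^2,\,\delta/n]$ — an interval cut out by the two rational multipliers $(n-1)/n^2 < 1/n$ applied to the positive element $\delta \in \val(K)$.

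This is exactly the situation for which the $\Zz$-less hypothesis was built: applied to $\delta$ and the rationals $(n-1)/n^2 < 1/n$, it produces some $\beta \in \val(K)$ strictly inside the interval. Choosing $\lambda \in K^\times$ with $\val(\lambda) = \beta$ and defining $c = a/\lambda$, $b = \lambda^n/a^{n-1}$ yields the required factorization (in fact with $\val(b), \val(c) > 0$ rather than merely $\ge 0$). There is no real obstacle once one spots the ansatz $c = a/\lambda$: it makes the two ``lies in $K$'' conditions automatic and reduces the lemma to a pure valuation-group question for which $\Zz$-less is tailor-made.
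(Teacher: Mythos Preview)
Your proposal is correct and essentially identical to the paper's proof: the paper also chooses an element $e \in K$ with $\frac{n-1}{n}\val(a) < \val(e) < \val(a)$ (your $\lambda$) and sets $c = a e^{-1}$, $b = e^n a^{1-n}$, verifying the same four conditions. The only cosmetic difference is that the paper applies $\Zz$-lessness directly to produce $\val(e)$ in the interval $(\tfrac{n-1}{n}\val(a),\,\val(a))$ without first passing to $\delta = n\val(a)$, and does not separate out the case $a \in K$ (which, as you noticed, is anyway subsumed by the general argument).
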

\begin{proof}
  By $\Zz$-lessness, there is $\gamma \in \Gamma_K$ such that
  \begin{equation*}
    \frac{n-1}{n} \val(a) < \gamma < \val(a).
  \end{equation*}
  Let $e \in K$ have $\val(e) = \gamma$.  Let $b = e^na^{1-n}$ and $c
  = ae^{-1}$.  Then
  \begin{align*}
    \val(b) &= n \cdot \gamma - (n-1) \val(a) > 0 \\
    \val(c) &= \val(ae^{-1}) = \val(a) - \gamma > 0 \\
    bc^n &= e^n a^{1-n} a^n e^{-n} = a \\
    bc^{n-1} &= e^na^{1-n}a^{n-1}e^{1-n} = e \in K. \\
    c^n &= a^n e^{-n} \in K. \qedhere
  \end{align*}
\end{proof}

\begin{proposition}\label{alg-case-2}
  Let $(K,\Oo)$ be an algebraically closed field with a derivation
  $\partial : \Oo \to M$ for some $\Oo$-module $M$.  Suppose that
  $\partial$ vanishes on some subfield $F \subseteq K$, and $K$ is
  algebraic over $F$ (so that $K = F^{alg}$).  Suppose $\val(F)$ is
  $\Zz$-less.  Then $\partial$ vanishes on $K$.
\end{proposition}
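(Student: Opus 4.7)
The plan is to mimic the proof of Proposition~\ref{alg-case}, replacing the log-derivation identities there with ordinary-derivation ones. Enlarging $F$, we may assume $F$ is maximal among subfields of $K$ on which $\partial$ vanishes (meaning $\partial|_{\Oo_F} = 0$; the union of an ascending chain of such fields works by Zorn). Assume for contradiction $F \subsetneq K$, and take minimal $n > 1$ such that $F$ admits a degree-$n$ algebraic extension inside $K$. Because the residue characteristic is $0$, the defect of any finite algebraic extension of $F$ is trivial, so immediate algebraic extensions do not occur; minimality of $n$ then forces any degree-$n$ extension $L/F$ to be either totally ramified ($e = n$, $f = 1$) or unramified ($e = 1$, $f = n$), since any intermediate extension of degree strictly between $1$ and $n$ would violate minimality.

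In the unramified case I would mimic the corresponding case of Proposition~\ref{alg-case} directly: take $a \in \Oo_L$ lifting a generator of $\res(L)/\res(F)$, with monic irreducible polynomial over $\Oo_F$ whose reduction is separable. Applying $\partial$ and using $\partial = 0$ on the coefficients, the multiplier of $\partial a$ has nonzero residue and so is a unit in $\Oo$, forcing $\partial a = 0$. Since $\Oo_L = \Oo_F[a]$ for unramified extensions in residue characteristic $0$, the Leibniz rule propagates $\partial|_{\Oo_L} = 0$, contradicting the maximality of $F$.

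In the ramified case, take $\gamma \in \val(L) \setminus \val(F)$, shifted by $\val(F)$ to be positive. As in Proposition~\ref{alg-case}, minimality of $n$ forces $x^n - c_0$ to be irreducible over $F$ for suitable $c_0 \in F$ with $\val c_0 = n\gamma$, and we obtain $a \in L$ with $a^n = c_0$ and $\val a = \gamma > 0$, so $a \in \mm_L$. Here the $\Zz$-less hypothesis enters through Lemma~\ref{z-less-calc}, which writes $a = bc^n$ with $b, c \in \Oo_L$ and $bc^{n-1}, c^n \in F$. Differentiating the identities $\partial(c^n) = 0$ and $\partial(bc^{n-1}) = 0$ and combining them (using that $n$ is a unit in characteristic $0$) yields $c^{n-1}\partial c = 0$ and $c^n\partial b = 0$, whence $\partial a = c^n\partial b + nbc^{n-1}\partial c = 0$. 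Again $\Oo_L = \Oo_F[a]$ in this totally ramified tame setting, so $\partial|_{\Oo_L} = 0$ by Leibniz, contradicting maximality.

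The main obstacle is the ramified case: the naive attempt $\partial(a^n) = na^{n-1}\partial a = 0 \implies \partial a = 0$ fails because $a^{n-1} \in \mm$ is not a unit in $\Oo$, so cancellation is unavailable. Lemma~\ref{z-less-calc} resolves this by replacing the single identity $\partial(a^n) = 0$ with a coordinated pair of identities on auxiliary elements $b, c$; the key cancellation $c^{n-1}\partial c = 0$ then kills $\partial a$ outright without requiring $a^{n-1}$ to be invertible.
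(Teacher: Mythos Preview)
Your sketch has two genuine gaps.

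\textbf{The immediate case cannot be skipped.} The statement ``residue characteristic $0$ implies defectless, so immediate algebraic extensions do not occur'' is only valid when the base is henselian. For a non-henselian $F$, the fundamental equality $[L:F] = \sum_i e_i f_i$ (summed over all extensions of the valuation) still holds in residue characteristic $0$, but nothing prevents the \emph{particular} extension inherited from $K$ from having $e = f = 1$ while $[L:F] > 1$; the missing degree is absorbed by the other extensions. You have not shown that the maximal $F$ is henselian, and proving this is tantamount to handling the immediate case. The paper keeps the third case from Proposition~\ref{alg-case} essentially intact: one uses Rolle's theorem (Lemma~\ref{rolle}) to isolate $a$ as the unique root of its minimal polynomial in a suitable nested intersection, then an affine change of variable and a Newton-polygon calculation force $\partial e_1 = 0$ for a unit $e_1$, and hence $\partial a = 0$.

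\textbf{In the ramified case, $\Oo_L = \Oo_F[a]$ can fail.} This equality is a fact about discrete valuations and does not survive to general $\Zz$-less value groups. For example, take $\val(F) = \Zz[1/2]$ (which is $2$-divisible, hence $\Zz$-less) and $a$ with $a^3 \in F$, $\val(a) = 1/3$; then any $y \in F$ with $\val(y) = -1/4$ gives $ya \in \Oo_L$ with $\val(ya) = 1/12 > 0$, yet $ya \notin \Oo_F[a]$ since its coefficient $y$ in the basis $\{1,a,a^2\}$ lies outside $\Oo_F$. So even though your computation $\partial a = 0$ is correct, Leibniz does not let you conclude $\partial(y_i a^i) = 0$ when $y_i \notin \Oo_F$, because $\partial$ is only defined on $\Oo$. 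The paper's remedy is to apply Lemma~\ref{z-less-calc} not to $a$ but to each term $y_i a^i$ separately (note $(y_i a^i)^n = y_i^n c^i \in F$ and $\val(y_i a^i) > 0$): writing $y_i a^i = ec$ with $e := bc^{n-1} \in \Oo_F$ and $c^n \in \Oo_F$, one gets
\[
  \partial(y_i a^i) = \partial(ec) = e\,\partial c = b c^{n-1} \partial c = \tfrac{b}{n}\,\partial(c^n) = 0
\]
directly, without ever needing $\Oo_L = \Oo_F[a]$.
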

The proof is nearly identical to the proof of
Proposition~\ref{alg-case}.
\begin{proof}
  Increasing $F$, we may assume $F$ is maximal among subfields on
  which $\partial$ vanishes.  Suppose for the sake of contradiction that
  $F \subsetneq K$.  Take minimal $n > 1$ such that $F$ has a finite
  extension of degree $n$.  If $P(x) \in F[x]$ has degree $\le n$,
  then one of the following happens:
  \begin{itemize}
  \item $P(x)$ factors into linear polynomials
  \item $P(x)$ is irreducible of degree $n$.
  \end{itemize}
  Take an arbitrary extension $L/F$ of degree $n$, and break into
  cases:
  \begin{itemize}
  \item If $\val(L)$ is strictly larger than $\val(F)$, take $\gamma
    \in \val(L) \setminus \val(F)$.  The inequality
    \begin{equation*}
      |\val(L)/\val(F)| \le [L : F]
    \end{equation*}
    implies that $m \gamma \in \val(F)$ for some $m \le n$.  Take $c
    \in F$ with $\val(c) = m \gamma$.  The polynomial $x^m - c$ has no
    roots in $F$, so $m = n$ and $x^n - c$ is irreducible.  Take $a
    \in K$ such that $a^n = c$.

    By maximality of $F$, there is some $x \in \Oo_{F(a)}$ such that
    $\partial x \ne 0$.  We can write
    \begin{equation*}
      x = y_0 + y_1 a + \cdots + y_{n-1} a^{n-1}
    \end{equation*}
    for some $y_i \in F$.  Note that $i \gamma \notin \val(F)$ for $i
    < n$, and so the non-zero terms $y_i a^i$ have pairwise distinct
    valuations.  Therefore
    \begin{equation*}
      0 \le \val(x) = \min_i(\val(y_i a^i)).
    \end{equation*}
    So every $y_i a^i$ is in $\Oo$, and
    \begin{equation*}
      \partial x = \sum_{i = 0}^{n-1} \partial(y_i a^i) = \sum_{i =
        1}^{n-1} \partial(y_i a^i).
    \end{equation*}
    For each $i > 0$, we have $\val(y_i a^i) \ne 0$.  Then
    Lemma~\ref{z-less-calc} gives $b, c \in \Oo_{F(a)}$ and $e \in \Oo_F$
    such that
    \begin{align*}
      e &= bc^{n-1} \\
      y_ia^i &= bc^n = ec \\
      c^n &\in F.
    \end{align*}
    Therefore
    \begin{equation*}
      \partial(y_ia^i) = \partial(ec) = e \partial(c) = b c^{n-1}
      \partial(c) = \frac{b}{n} \partial(c^n) = 0,
    \end{equation*}
    as $e, c^n \in \Oo_F$, and $1/n \in \Oo_K$ (by the assumption of
    residue characteristic 0).  So $\partial(x) = 0$, a contradiction.
  \item If $\res(L)$ is strictly larger than $\res(F)$, take $\alpha
    \in \res(L) \setminus \res(F)$.  The inequality
    \begin{equation*}
      [\res(L) : \res(F)] \le [L : F]
    \end{equation*}
    implies that $[\res(F)(\alpha) : \res(F)] \le n$.  Let
    \begin{equation*}
      x^m + \beta_{m-1} x^{m-1} + \cdots + \beta_1 x + \beta_0
    \end{equation*}
    be the monic irreducible polynomial of $\alpha$ over $\res(F)$.
    Because of residue characteristic 0, this polynomial is separable,
    and so
    \begin{equation}
      m \alpha^{m-1} + (m-1) \beta_{m-1} \alpha^{m-2} + \cdots + 2
      \beta_2 \alpha + \beta_1 \ne 0. \label{simple-1-clone}
    \end{equation}
    Take $b_i \in F$ with $\res b_i = \beta_i$, and let $P(x)$ be the
    polynomial
    \begin{equation*}
      x^m + b_{m-1} x^{m-1} + \cdots + b_1 x + b_0 \in F[x].
    \end{equation*}
    Then $P(x)$ is irreducible, and so $m = [\res(F)(\alpha) :
      \res(F)] = n$.  Let $a \in K$ be the root of $P(x)$ with
    $\res(a) = \alpha$.  Then
    \begin{equation*}
      a^n + b_{n-1} a^{n-1} + \cdots + b_1 a + b_0 = 0.
    \end{equation*}
    Applying the derivation $\partial : \Oo \to M$, which vanishes on
    the $b_i$, we obtain
    \begin{equation*}
      (n a^{n-1} + (n-1) b_{n-1} a^{n-2} + \cdots + 2 b_2 a + b_1 ) \partial a = 0.
    \end{equation*}
    The expression inside the parentheses has nonzero residue, by
    (\ref{simple-1-clone}), and so it is an element of $\Oo^\times$.
    Therefore $\partial a = 0$.

    Now if
    \begin{equation*}
      x = y_0 + y_1 a + \cdots + y_{n-1} a^{n-1}
    \end{equation*}
    is any element of $F(a)$, then $\val(x) = \min_i \val(y_i)$.  To
    see this, one reduces to the case where $\min_i(\val(y_i)) = 0$;
    then
    \begin{equation*}
      \res(y_0) + \res(y_1)\alpha + \cdots + \res(y_{n-1})\alpha^{n-1}
      \ne 0,
    \end{equation*}
    by linear independence of $\{1,\alpha,\ldots,\alpha^{n-1}\}$ over
    $\res(K)$.

    Consequently, $\Oo_{F(a)} = \Oo_F[a]$.  As $\partial$
    vanishes on $\Oo_F$ and $a$, it vanishes on $\Oo_{F(a)}$,
    contradicting the maximality of $F$.
  \item Lastly, suppose that $L/F$ is an immediate extension.  By
    maximality of $F$, there is $a \in \Oo_L$ with $\partial(a) \ne 0$.  Let
    $\mathcal{C}$ be the collection of balls containing $a$, with center and radius from $F$.  Let $I$ be the intersection $\bigcap \mathcal{C}$.  As usual, $I
    \cap F = \emptyset$.
    
    Let $P(x)$ be the minimal polynomial of $a$ over $F$.  Then $P(x)$
    has degree $n$.  Let $a_1, \ldots, a_n$ be the roots of $P(x)$,
    with $a_1 = a$.  Note that $P'(x)$ has degree $n - 1$, and
    therefore splits over $F$.  So no root of $P'(x)$ is in $I$.  By
    Rolle's Theorem (Lemma~\ref{rolle}), $a$ is the unique root of
    $P(x)$ in $I$.

    Therefore $I$ has empty intersection with the finite set
    $\{0,a_2,\ldots,a_n\}$.  We can find $b \in F$ such that
    \begin{equation*}
      \val(a - b) > \max(\val(0 - b),\val(a_2 - b),\val(a_3 -
      b),\ldots,\val(a_n - b)).
    \end{equation*}
    Take $c \in F$ with $\val(a - b) = \val(c)$, and let $e_i = (a_i -
    b)/c$.  Then $\val(e_1) = 0$, and $\val(e_i) < 0$ for $i > 1$.
    The $e_i$ are the roots of the irreducible polynomial
    \begin{equation*}
      Q(x) = P(cx + b) = s_n x^n + s_{n-1} x^{n-1} + \cdots + s_1 x +
      s_0 \in F[x].
    \end{equation*}
    By Newton polygons, $\val(s_0) = \val(s_1) < \val(s_i)$ for $i >
    1$.  Then we can apply $\partial$ to the equation
    \begin{equation*}
      (s_n/s_1) e_1^n + \cdots + (s_2/s_1) e_1^2 + e_1 + (s_0/s_1) =
      0,
    \end{equation*}
    and obtain
    \begin{equation*}
      (n (s_n/s_1) e_1^{n-1} + \cdots + 2 (s_2 / s_1) e_1 + 1) \partial e_1 = 0,
    \end{equation*}
    because the coefficients $s_n/s_1$ lie in $F$, where $\partial$ vanishes.  But
    the expression in parentheses has valuation 0, because $e_1 \in
    \Oo$ and $s_i/s_1 \in \mm$ for $2 \le i \le n$.  Therefore
    $\partial e_1 = 0$.

    Meanwhile, $\val(a-b) > \val(b)$ implies that
    \begin{equation*}
      0 \le \val(a) = \min(\val(a-b),\val(b)) = \min(\val(c),\val(b)),
    \end{equation*}
    and so $b, c \in \Oo_F$.  But $a = b + e_1 c$, and so $\partial(a)
    = c \partial e_1 = 0$, contradicting the choice of $a$. \qedhere
  \end{itemize}
\end{proof}

\subsection{Review of K\"ahler differentials}
If $A \to B$ is a morphism of (commutative unital) rings, then
$\Omega_{B/A}$ denotes the module of K\"ahler differentials.  This is
the $B$-module generated by terms $db$ for $b \in B$, subject to the
relations
\begin{align*}
  d(b_1 + b_2) &= db_1 + db_2 \\
  d(b_1b_2) &= b_1db_2 + b_2db_1 \\
  da &= 0 \qquad \textrm{if } a \in A.
\end{align*}
If $M$ is a $B$-module, there is an isomorphism
\begin{equation*}
  \Hom_B(\Omega_{B/A},M) \cong \Der_A(B,M)
\end{equation*}
natural in $M$, where $\Der_A(B,M)$ denotes the set of $A$-linear
derivations $B \to M$.

The following facts about K\"ahler differentials are well-known:
\begin{fact}\label{k-right-exact}
  If $A \to B \to C$ is a morphism of rings, then
  \begin{equation*}
    \Omega_{B/A} \otimes_B C \to \Omega_{C/A} \to \Omega_{C/B} \to 0
  \end{equation*}
  is exact.
\end{fact}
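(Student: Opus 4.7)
The plan is to verify exactness by reduction to a statement about derivations via the universal property of Kähler differentials. Fix an arbitrary $C$-module $N$ and apply the contravariant functor $\Hom_C(-,N)$ to the candidate sequence; the claim becomes that
\begin{equation*}
  0 \to \Hom_C(\Omega_{C/B},N) \to \Hom_C(\Omega_{C/A},N) \to \Hom_C(\Omega_{B/A} \otimes_B C, N)
\end{equation*}
is left-exact for every such $N$. Using the universal property $\Hom_C(\Omega_{C/R}, N) \cong \Der_R(C, N)$ for $R \in \{A, B\}$, together with the tensor-hom adjunction $\Hom_C(\Omega_{B/A} \otimes_B C, N) \cong \Hom_B(\Omega_{B/A}, N) \cong \Der_A(B, N)$, this rewrites as
\begin{equation*}
  0 \to \Der_B(C, N) \to \Der_A(C, N) \to \Der_A(B, N),
\end{equation*}
where the first arrow is the inclusion and the second is restriction along the given map $B \to C$.

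Next I would verify that this sequence of derivation modules is exact by direct inspection. Injectivity of the inclusion is obvious. For exactness at $\Der_A(C, N)$, a derivation $\delta \in \Der_A(C, N)$ restricts to zero on $B$ iff $\delta(b) = 0$ for every $b$ in the image of $B \to C$; combined with the Leibniz rule $\delta(bc) = b\,\delta(c) + c\,\delta(b) = b\,\delta(c)$, this is equivalent to $\delta$ being $B$-linear, i.e., $\delta \in \Der_B(C, N)$. The reverse inclusion is immediate from the definitions. Since the Hom sequence is thus left-exact for every $C$-module $N$, the Yoneda lemma (applied to the cokernel of the map $\Omega_{B/A} \otimes_B C \to \Omega_{C/A}$ quotiented further to $\Omega_{C/B}$) yields right-exactness of the original sequence.

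The argument is purely formal, so there is no real obstacle; the only point demanding care is keeping the module structures straight in the adjunction $\Hom_C(\Omega_{B/A} \otimes_B C, N) \cong \Der_A(B, N)$, where $N$ must be viewed as a $B$-module by restriction of scalars along $B \to C$. Once that bookkeeping is done, every step is a direct unwinding of definitions.
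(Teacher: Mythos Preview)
The paper does not supply a proof of this statement: it is listed among ``well-known'' facts about K\"ahler differentials and left unproved. Your argument is the standard textbook proof (see, e.g., Matsumura or Hartshorne) and is correct; the only cosmetic point is that the parenthetical about Yoneda could be stated more cleanly as the general fact that a sequence $M_1 \to M_2 \to M_3 \to 0$ of $C$-modules is exact if and only if $0 \to \Hom_C(M_3,N) \to \Hom_C(M_2,N) \to \Hom_C(M_1,N)$ is exact for every $C$-module $N$.
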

\begin{fact}\label{k-localization}
  If $A \to B$ is a morphism of rings and $S \subseteq B$ is a
  multiplicative subset, then
  \begin{equation*}
    S^{-1} \Omega_{B/A} \cong \Omega_{S^{-1}B/A}.
  \end{equation*}
\end{fact}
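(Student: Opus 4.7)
The plan is to deduce the isomorphism from the universal properties of the two $S^{-1}B$-modules in question. Recall that $\Omega_{B/A}$ represents the functor $M \mapsto \Der_A(B,M)$ on the category of $B$-modules, and that for any $S^{-1}B$-module $M$, the $B$-module structure on $M$ automatically factors through $S^{-1}B$. Therefore, on the category of $S^{-1}B$-modules, $S^{-1}\Omega_{B/A} = \Omega_{B/A} \otimes_B S^{-1}B$ represents the restriction of $\Der_A(B,-)$, while $\Omega_{S^{-1}B/A}$ represents $\Der_A(S^{-1}B,-)$. The entire statement then reduces to a single claim: for every $S^{-1}B$-module $M$, the restriction map
\begin{equation*}
  \Der_A(S^{-1}B, M) \to \Der_A(B, M)
\end{equation*}
is a bijection. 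Granted this, a Yoneda argument produces a canonical isomorphism $S^{-1}\Omega_{B/A} \cong \Omega_{S^{-1}B/A}$ of $S^{-1}B$-modules, compatible with the universal derivations.

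The first step is to establish uniqueness of the extension. Given $d : B \to M$ and a putative extension $\tilde d : S^{-1}B \to M$, the Leibniz rule applied to $1 = s \cdot s^{-1}$ forces $\tilde d(s^{-1}) = -s^{-2}\,ds$, and then $\tilde d(b/s) = s^{-1}\,db - bs^{-2}\,ds$ for every $b \in B$, $s \in S$. This formula makes sense precisely because $M$ is an $S^{-1}B$-module. The second step is existence: define $\tilde d : S^{-1}B \to M$ by the displayed formula; I would check directly that it is well-defined on equivalence classes $b/s = b'/s'$ (using the $B$-module relation $t(s'b - sb') = 0$ for some $t \in S$ and applying $d$), that it is additive, and that it satisfies the Leibniz rule on products $(b/s)(b'/s') = bb'/ss'$. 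These are routine bookkeeping once one remembers to clear denominators and use the $S^{-1}B$-module structure on $M$.

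The main (though still not serious) obstacle is the well-definedness check in the existence step, because one must genuinely use that $M$ is $S$-torsion free as an $S^{-1}B$-module (equivalently, that $S$ acts invertibly) in order to cancel the auxiliary $t \in S$ that appears when passing between representatives of a fraction. Once this is handled, everything else is formal: the restriction map is manifestly $A$-linear and respects the derivation property, and the inversion formula above provides a two-sided inverse, so it is a bijection and the isomorphism of modules of differentials follows. As a final sanity check, one verifies that under the resulting isomorphism $S^{-1}\Omega_{B/A} \xrightarrow{\sim} \Omega_{S^{-1}B/A}$, the element $(db)/1$ on the left corresponds to $d(b/1)$ on the right, confirming compatibility with the universal derivations.
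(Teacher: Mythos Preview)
Your argument is correct and is the standard proof via the universal property of K\"ahler differentials. Note, however, that the paper does not give a proof of this statement at all: it is listed as a ``Fact'' in the review of K\"ahler differentials, alongside the other well-known properties, and is simply asserted without proof. So there is nothing to compare against; your write-up supplies a complete justification where the paper chose to cite common knowledge.
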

\begin{fact}\label{k-fields}
  If $L/K$ is an extension of (characteristic 0) fields and
  $\{t_i\}_{i \in I}$ is a transcendence basis (possibly infinite),
  then $\{dt_i\}_{i \in I}$ is an $L$-basis of $\Omega_{L/K}$.
\end{fact}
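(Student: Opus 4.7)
The plan is to factor the extension as $K \subseteq F \subseteq L$, where $F = K(\{t_i\}_{i \in I})$ is the purely transcendental subfield generated by the chosen transcendence basis and $L/F$ is algebraic, and then to analyze each piece separately using the K\"ahler differential machinery of Facts~\ref{k-right-exact} and \ref{k-localization}.

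First I would treat the pure-transcendental piece. The polynomial ring $K[\{t_i\}_{i \in I}]$ has the universal property that an $A$-linear derivation out of it is determined by arbitrary choices of values on the $t_i$, so $\Omega_{K[\{t_i\}]/K}$ is the free $K[\{t_i\}]$-module on the symbols $\{dt_i\}$. Applying Fact~\ref{k-localization} with the multiplicative set $S = K[\{t_i\}] \setminus \{0\}$ shows that $\Omega_{F/K}$ is the free $F$-module with basis $\{dt_i\}$.

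Next I would handle the algebraic step $L/F$. Because we are in characteristic 0, every $a \in L$ is separable over $F$: its minimal polynomial $P \in F[x]$ satisfies $P'(a) \ne 0$. Applying $d$ to $P(a) = 0$ inside $\Omega_{L/F}$ gives $P'(a)\, da = 0$, and since $P'(a) \in L^\times$ we conclude $da = 0$. As such elements generate $L$ as an $F$-algebra, $\Omega_{L/F} = 0$. Plugging $K \to F \to L$ into Fact~\ref{k-right-exact} then yields the exact sequence
\begin{equation*}
  \Omega_{F/K} \otimes_F L \longrightarrow \Omega_{L/K} \longrightarrow 0,
\end{equation*}
so $\Omega_{L/K}$ is generated over $L$ by the images of $\{dt_i\}$.

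It remains to establish $L$-linear independence of the $\{dt_i\}$ in $\Omega_{L/K}$, and this will be the main obstacle. My approach is to exhibit, for each $j \in I$, an $L$-linear functional $\Omega_{L/K} \to L$ sending $dt_i$ to $\delta_{ij}$; by the universal property this is the same as producing a $K$-linear derivation $D_j : L \to L$ with $D_j(t_i) = \delta_{ij}$. On $F$ the derivation $D_j = \partial/\partial t_j$ is defined by the freeness established above. To extend from $F$ to $L$, I would use the standard implicit-differentiation argument: given $a \in L$ with minimal polynomial $P \in F[x]$, write $P = \sum c_k x^k$ and define $D_j(a) := -\bigl(\sum (D_j c_k) a^k\bigr)/P'(a)$, which is forced on us by the requirement $D_j(P(a)) = 0$ together with separability. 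A routine check (or appeal to the char.~0 analogue of Proposition~\ref{alg-case-2} applied with $M = L$ and $F$ in place of the algebraically closed field) shows this prescription is consistent across choices of $a$ and yields a derivation on all of $L$. This completes the proof.
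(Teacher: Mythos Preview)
Your argument is correct and follows the standard route. One minor quibble: the parenthetical appeal to Proposition~\ref{alg-case-2} is not quite apt, since that proposition concerns derivations on valuation rings with a $\Zz$-less value-group hypothesis rather than arbitrary field extensions; but you do not need it, as the ``routine check'' you mention (extending $\partial/\partial t_j$ from $F$ to $L$ by implicit differentiation, using separability in characteristic~$0$) is entirely self-contained and is the classical argument.

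As for comparison with the paper: the paper does not prove this statement at all. It is listed among several ``well-known'' facts about K\"ahler differentials (alongside Facts~\ref{k-right-exact} and~\ref{k-localization}) and cited without proof. So there is nothing to compare against; your write-up simply supplies the standard justification that the paper omits.
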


\begin{remark}\label{valuation-flatness}
  If $\Oo$ is a valuation ring and $M$ is an $\Oo$-module, the
  following are equivalent:
  \begin{enumerate}
  \item $M$ is flat.
  \item $M$ is torsionless.
  \item Every finitely-generated submodule of $M$ is free.
  \item $M$ is a direct limit of free modules.
  \item The natural map $M \to M \otimes_{\Oo} K$ is an injection.
  \end{enumerate}
\end{remark}

We will use two flatness results from (\cite{GabberRamero},
Corollary~6.5.21 and Theorem~6.5.15).
\begin{fact}\label{gr-flat}
  If $\Oo$ is a valuation ring with residue characteristic 0, then
  $\Omega_{\Oo/\Qq}$ is flat as an $\Oo$-module.
\end{fact}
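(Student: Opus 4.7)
The plan is to reduce flatness to torsion-freeness using Remark~\ref{valuation-flatness}, and then to establish torsion-freeness via a structure theorem for $\Oo$.

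By Remark~\ref{valuation-flatness}, flatness over the valuation ring $\Oo$ is equivalent to torsion-freeness, which in turn is equivalent to the injectivity of the natural localization map $\Omega_{\Oo/\Qq} \to \Omega_{\Oo/\Qq} \otimes_{\Oo} K$, where $K = \Frac(\Oo)$. By Fact~\ref{k-localization}, the target is isomorphic to $\Omega_{K/\Qq}$. So it suffices to show that the natural map $\Omega_{\Oo/\Qq} \to \Omega_{K/\Qq}$ is injective.

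As a first simplification I would choose a transcendence basis $\{t_j\}_{j \in J}$ of $K/\Qq$ consisting of elements of $\Oo$; this is possible because each $k \in K$ satisfies $k \in \Oo$ or $k^{-1} \in \Oo$, and algebraic independence is preserved under $t \leftrightarrow t^{-1}$. By Fact~\ref{k-fields}, $\{dt_j\}$ is a $K$-basis of $\Omega_{K/\Qq}$. The submodule $N \subseteq \Omega_{\Oo/\Qq}$ generated by $\{dt_j\}$ is then free over $\Oo$: any relation $\sum a_j\, dt_j = 0$ in $\Omega_{\Oo/\Qq}$ descends to a relation in $\Omega_{K/\Qq}$ among the basis elements, forcing all $a_j = 0$. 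This is the ``easy'' part of torsion-freeness; the subtle part is to show that the images of all the remaining $db$ behave well.

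The natural approach is to write $\Oo$ as a filtered colimit of well-chosen $\Qq$-subalgebras $A_\alpha$ and to show that each tensor product $\Omega_{A_\alpha/\Qq} \otimes_{A_\alpha} \Oo$ is flat over $\Oo$; since $\Omega_{-/\Qq}$ commutes with filtered colimits of rings, and filtered colimits of flat modules are flat, this would give the desired result. If each $A_\alpha$ can be taken to be essentially smooth over $\Qq$, then $\Omega_{A_\alpha/\Qq}$ is a projective $A_\alpha$-module, and its base change to $\Oo$ is a projective (hence flat) $\Oo$-module, finishing the argument. The main obstacle---and the substantial technical content that I would be borrowing from Gabber-Ramero---is producing a cofinal family of essentially smooth $\Qq$-subalgebras of $\Oo$. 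This relies on local uniformization in equicharacteristic zero, which is also precisely where the residue characteristic $0$ hypothesis is essential: in positive residue characteristic one encounters wild ramification phenomena, and conclusions of this type can genuinely fail without further hypotheses (such as perfectness of the residue field).
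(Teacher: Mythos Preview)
The paper does not prove this statement: it is quoted as a black-box fact from Gabber--Ramero (Corollary~6.5.21 there), with no argument given. Your sketch is a reasonable outline of one route to the result. The reduction in your first paragraph (flatness $\Leftrightarrow$ torsion-freeness $\Leftrightarrow$ injectivity of $\Omega_{\Oo/\Qq} \to \Omega_{K/\Qq}$) is correct, and your final paragraph identifies the right strategy and the right hard input: writing $\Oo$ as a filtered colimit of essentially smooth $\Qq$-subalgebras, which amounts to local uniformization in equicharacteristic zero (Zariski for function fields, plus a limit argument for general valuation rings). That is indeed where the residue-characteristic-zero hypothesis is used. The transcendence-basis paragraph in the middle is correct as far as it goes but, as you yourself note, does not touch the real difficulty and is not used in the colimit argument; you could drop it without loss.
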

\begin{fact}
  \label{acf-base}
  Let $\Oo'/\Oo$ be an extension of valuation rings.  Suppose
  $\Frac(\Oo) \models \ACF$.  Then $\Omega_{\Oo'/\Oo}$ is flat as an
  $\Oo$-module.
\end{fact}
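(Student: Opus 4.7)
The plan is to show $\Omega_{\Oo'/\Oo}$ is $\Oo$-torsion-free; by Remark \ref{valuation-flatness} this is equivalent to the desired flatness. So suppose $a \omega = 0$ with $0 \ne a \in \Oo$ and $\omega \in \Omega_{\Oo'/\Oo}$; the goal is to force $\omega = 0$. Writing $K = \Frac(\Oo)$ and $K' = \Frac(\Oo')$, I would first localize by Fact \ref{k-localization} and combine with the right-exact sequence of Fact \ref{k-right-exact} for $\Oo \to K \to K'$, noting that $\Omega_{K/\Oo} = 0$ (localize $\Omega_{\Oo/\Oo} = 0$). This identifies $\Omega_{\Oo'/\Oo} \otimes_{\Oo'} K'$ with $\Omega_{K'/K}$, which by Fact \ref{k-fields} is a free $K'$-module on a transcendence basis of $K'/K$, using that $K$ is algebraically closed, so every algebraic subextension of $K'/K$ is separable and kills differentials. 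Multiplication by the unit $a$ is injective on this free module, hence the image of $\omega$ there vanishes, and what remains is to rule out $\Oo'$-torsion in $\Omega_{\Oo'/\Oo}$.

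The next step is to reduce to the finitely generated case: because $\Omega_{-/\Oo}$ commutes with filtered colimits and direct limits of torsion-free modules are torsion-free, it suffices to prove the statement for finitely generated $\Oo$-subalgebras $A = \Oo[b_1,\ldots,b_n] \subseteq \Oo'$ in place of $\Oo'$. After choosing a maximal algebraically independent subset $\{t_1,\ldots,t_d\} \subseteq \{b_1,\ldots,b_n\}$ over $K$, setting $F = K(t_1,\ldots,t_d)$ and $\Oo_F = A \cap F$, I would split the chain $\Oo \subseteq \Oo_F \subseteq A$ into a transcendental and an algebraic part via Fact \ref{k-right-exact}:
\begin{equation*}
  \Omega_{\Oo_F/\Oo} \otimes_{\Oo_F} A \longrightarrow \Omega_{A/\Oo} \longrightarrow \Omega_{A/\Oo_F} \longrightarrow 0.
\end{equation*}
By the $\Tor$ long exact sequence, the middle term is $\Oo$-flat provided the two outer terms are $\Oo$-flat and the leftmost map is injective. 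The leftmost map is injective because every $\Oo$-derivation on $\Oo_F$ extends to an $\Oo$-derivation on $A$ -- the algebraic $b_i$ have unique extensions through their minimal polynomials in characteristic $0$, and the transcendental $t_j$ may be sent anywhere. The $\Oo$-flatness of $\Omega_{\Oo_F/\Oo}$ is the transcendental case, which I would handle by iterating the statement one transcendental at a time and combining with Fact \ref{gr-flat} via the cotangent sequence for $\Qq \to \Oo \to \Oo_F$.

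The main obstacle is the algebraic piece $\Omega_{A/\Oo_F}$. Dually, showing this is $\Oo$-torsion-free means showing that every $\Oo$-linear derivation from $A$ into an $\Oo_F$-module that vanishes on $\Oo_F$ already vanishes on the algebraic generators (after inverting the hypothetical annihilator $a$). This is exactly the situation of Proposition \ref{alg-case-2}, provided $\val(F)$ is $\Zz$-less. To arrange the $\Zz$-less hypothesis, I would perform a preliminary base change before running the argument: enlarge $\Oo$ to a valuation ring whose value group is $p$-divisible for some prime $p$ (e.g.\ by a standard Hahn-series or ultrapower construction, which remains flat over $\Oo$ and so creates no new torsion), forcing $\val(F)$ to be $\Zz$-less. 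Inside this enlarged setting, Proposition \ref{alg-case-2} -- whose proof is the iterated Newton-polygon-and-Rolle calculation of Lemmas \ref{newton} and \ref{rolle}, broken by case into ramification, residue extension, and immediate-extension steps -- eliminates the algebraic contribution and concludes the proof. I expect the primary difficulty is not any individual piece of this outline but rather the bookkeeping to ensure the preliminary $p$-divisibility enlargement, the transcendence decomposition of $A/\Oo_F$, and the algebraic step of Proposition \ref{alg-case-2} all fit into one coherent argument without interfering with one another.
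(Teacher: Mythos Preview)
The paper does not prove this statement: it is quoted as a black box from \cite{GabberRamero}, Theorem~6.5.15, alongside Fact~\ref{gr-flat}. So there is no in-paper proof to compare against; your proposal is an attempt to supply one from the other tools in the appendix.

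The main gap is at the algebraic step. You want to invoke Proposition~\ref{alg-case-2}, which requires the subfield $F$ on which the derivation vanishes to have $\Zz$-less value group. In your setup $F = K(t_1,\ldots,t_d)$ is purely transcendental over $K = \Frac(\Oo)$, and $\val(F)$ can easily fail to be $\Zz$-less: for instance, take $K$ algebraically closed with trivial valuation and $\Oo'$ a DVR over $K$, so that $d=1$, $t_1$ is a uniformizer, and $\val(F) \cong \Zz$. Your proposed fix of enlarging $\Oo$ to have $p$-divisible value group misses the target entirely --- $\val(K)$ is \emph{already} divisible because $K \models \ACF$; the obstruction lives in the transcendental part $F/K$, and no base change on $\Oo$ repairs it. Enlarging $\Oo'$ instead does not obviously help either, since subgroups of $p$-divisible groups need not be $p$-divisible and $\val(F)$ is determined by the specific $t_i$, not by $\val(\Oo')$. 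This is exactly why the paper keeps the $\Zz$-less-base case (Proposition~\ref{flat-extension}, proved via Proposition~\ref{alg-case-2}) and the ACF-base case (this Fact) logically separate, importing the latter from Gabber--Ramero rather than deriving one from the other. Two smaller side issues: Proposition~\ref{alg-case-2} also requires the ambient field to be algebraically closed, which your $\Frac(A)$ is not; and the Fact as printed says ``flat as an $\Oo$-module,'' but its only use (in Proposition~\ref{flat-extension}) is for $\Oo'$-flatness, and your first paragraph drifts between the two readings without settling which one you are proving.
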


\subsection{Flatness and extensions}
\begin{lemma}\label{flat-2-3}
  Let $\Oo$ be a valuation ring and $0 \to A \to B \to C \to 0$ be a
  short exact sequence of $\Oo$-modules.
  \begin{itemize}
  \item If $B$ is flat, then $A$ is flat.
  \item If $A$ and $C$ are flat, then $B$ is flat.
  \end{itemize}
\end{lemma}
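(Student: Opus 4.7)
The plan is to use the characterization in Remark~\ref{valuation-flatness}: over a valuation ring $\Oo$, a module is flat if and only if it is torsionless (equivalently, the natural map $M \to M \otimes_\Oo K$ is injective, where $K = \Frac(\Oo)$). Both implications then reduce to elementary diagram chases, and the hypothesis that $\Oo$ is a valuation ring is only used to invoke this equivalence at the start and end.

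For the first bullet, if $B$ is flat, hence torsionless, then any submodule of $B$ is automatically torsionless: any nonzero $\lambda \in \Oo$ that kills an element of $A \subseteq B$ already kills that element inside $B$, forcing it to be zero. So $A$ is torsionless, hence flat.

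For the second bullet, I would argue directly with torsion: suppose $b \in B$ and $\lambda \in \Oo$ is nonzero with $\lambda b = 0$. Let $\bar b$ denote the image of $b$ in $C$. Then $\lambda \bar b = 0$, and since $C$ is torsionless, $\bar b = 0$, so $b \in A$. But then $\lambda b = 0$ inside the torsionless module $A$, forcing $b = 0$. Hence $B$ is torsionless, and therefore flat. (Alternatively, one could cite the long exact $\Tor$-sequence for $0 \to A \to B \to C \to 0$ and any test module $M$, which gives $\Tor_1^{\Oo}(A,M) \to \Tor_1^{\Oo}(B,M) \to \Tor_1^{\Oo}(C,M)$; if the outer terms vanish for all $M$, then so does the middle.)

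Neither step is a real obstacle, so there is no ``hard part''; the only subtle point is making sure that the torsion characterization of flatness really applies to both $A, B, C$ as written, but that is exactly the content of Remark~\ref{valuation-flatness}.
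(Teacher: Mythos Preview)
Your proof is correct and takes essentially the same approach as the paper: both use Remark~\ref{valuation-flatness} to reduce flatness to torsionlessness, and both handle the first bullet by noting that submodules of torsionless modules are torsionless. For the second bullet, the paper phrases the argument via the snake lemma applied to the diagram of multiplication-by-$r$ maps on the short exact sequence, while you carry out the same diagram chase by hand; these are the same argument in slightly different packaging.
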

\begin{proof}
  By Remark~\ref{valuation-flatness}, an $\Oo$-module is flat if and
  only if it is torsionless.
  
  For the first point: submodules of torsionless modules are
  torsionless.

  For the second point, suppose that $A$ and $C$ are torsionless.  For
  any nonzero $r \in \Oo$, there is a diagram
  \begin{equation*}
    \xymatrix{
      0 \ar[r] & A \ar[r] \ar[d] & B \ar[r] \ar[d] & C \ar[r] \ar[d] & 0 \\
      0 \ar[r] & A \ar[r] & B \ar[r] & C \ar[r] & 0 }
  \end{equation*}
  where the rows are exact and the vertical maps are multiplication by
  $r$.  Because $A$ and $C$ are torsionless, the outer vertical maps
  are injective.  By the snake lemma, the inner vertical map is
  injective.  As $r$ is arbitrary, $C$ is torsionless.
\end{proof}
\begin{lemma}\label{faithful-flat}
  Let $\Oo'/\Oo$ be an extension of valuation rings.  Let $M$ be an
  $\Oo$-module.  Then $M$ is flat (as an $\Oo$-module) if and only if
  $M \otimes_{\Oo} \Oo'$ is flat (as an $\Oo'$-module).
\end{lemma}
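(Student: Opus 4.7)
The plan is to verify both implications separately, with faithful flatness of $\Oo'$ over $\Oo$ as the key input. For the forward implication, I will appeal to preservation of flatness under base change, which is formal: for any $\Oo'$-module $N$, the natural isomorphism $(M \otimes_\Oo \Oo') \otimes_{\Oo'} N \cong M \otimes_\Oo N$ shows that $(M \otimes_\Oo \Oo') \otimes_{\Oo'} -$ is exact whenever $M \otimes_\Oo -$ is exact, so $M$ being $\Oo$-flat forces $M \otimes_\Oo \Oo'$ to be $\Oo'$-flat. This direction uses no property of the extension.

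For the reverse implication, I will first establish that $\Oo'$ is faithfully flat over $\Oo$. Flatness is immediate from Remark~\ref{valuation-flatness}: $\Oo'$ is a domain containing $\Oo$, hence $\Oo$-torsion-free, hence $\Oo$-flat. For faithful flatness one then needs $\mm_\Oo \Oo' \ne \Oo'$. Since ``extension of valuation rings'' should be understood in the standard sense that the valuation on $\Oo'$ restricts to the valuation on $\Oo$, the inclusion $\Oo \hookrightarrow \Oo'$ is a local ring homomorphism, giving $\mm_\Oo \subseteq \mm_{\Oo'}$ and hence $\mm_\Oo \Oo' \subseteq \mm_{\Oo'} \subsetneq \Oo'$.

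The descent is then routine. Given that $M \otimes_\Oo \Oo'$ is $\Oo'$-flat and $N_1 \hookrightarrow N_2$ is an injection of $\Oo$-modules, tensoring with $\Oo'$ over $\Oo$ preserves injectivity (by flatness of $\Oo'$), and tensoring the result with $M \otimes_\Oo \Oo'$ over $\Oo'$ preserves injectivity (by hypothesis). The composite rearranges via associativity to an injection
\[ (M \otimes_\Oo N_1) \otimes_\Oo \Oo' \hookrightarrow (M \otimes_\Oo N_2) \otimes_\Oo \Oo',\]
so by faithful flatness of $\Oo'$ over $\Oo$, the original map $M \otimes_\Oo N_1 \to M \otimes_\Oo N_2$ is already injective, and $M$ is $\Oo$-flat. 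The only real subtlety to watch is the interpretation of ``extension of valuation rings''; without the local-homomorphism condition the reverse implication fails (e.g., taking $\Oo' = \Frac(\Oo)$ makes every $M \otimes_\Oo \Oo'$ flat while $M$ need not be), so the argument genuinely relies on this convention being in force.
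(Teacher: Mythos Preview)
Your proof is correct. Both directions are handled properly, and you correctly identify that the local-homomorphism convention is essential (and is indeed in force in the paper; see the footnote to Lemma~\ref{o-case}).

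The paper takes a slightly different route for each direction. For the forward direction, the paper uses the characterization from Remark~\ref{valuation-flatness} that flat modules over valuation rings are direct limits of free modules, while you use the standard associativity isomorphism; both are fine, yours being more general. For the reverse direction, the paper argues contrapositively and concretely via the torsionless criterion: if $M$ is not flat, there is an embedding $\Oo/I \hookrightarrow M$ with $I$ a nonzero proper ideal; tensoring with the (torsion-free, hence flat) $\Oo$-module $\Oo'$ preserves this injection, and one checks directly that $I\Oo'$ is still nonzero and proper so that $\Oo'/I\Oo'$, hence $M \otimes_\Oo \Oo'$, has torsion. Your argument instead packages the same ingredients into the assertion that $\Oo'$ is faithfully flat over $\Oo$ and then invokes standard descent. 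Your version is more conceptual and portable; the paper's version is more explicit and avoids quoting the faithful-flatness descent lemma, staying within the torsionless language already set up in Remark~\ref{valuation-flatness}.
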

\begin{proof}
  If $M$ is flat, then $M$ is a direct limit of free $\Oo$-modules,
  and so $M \otimes_\Oo \Oo'$ is a direct limit of free
  $\Oo'$-modules.  Conversely, suppose that $M$ is not flat.  Then
  there is an injection
  \begin{equation*}
    \Oo/I \hookrightarrow M
  \end{equation*}
  for some non-zero proper ideal $I$ in $\Oo$.  As $\Oo'$ is
  torsionless over $\Oo$, it is flat as an $\Oo$-module.  Therefore
  the functor $- \otimes_\Oo \Oo'$ is exact, and the map
  \begin{equation*}
    (\Oo/I) \otimes_\Oo \Oo' \hookrightarrow M \otimes_\Oo \Oo'
  \end{equation*}
  is injective.  But $(\Oo/I) \otimes_\Oo \Oo' \cong \Oo'/I\Oo'$.  The
  ideal $I\Oo'$ is non-trivial, because it contains the non-trivial
  elements of $I$.  And $I\Oo'$ is a proper ideal, because it is
  generated by elements of positive valuation.  Therefore $\Oo'/I\Oo'$
  is not torsionless, and neither is the larger module $M \otimes_\Oo
  \Oo'$.
\end{proof}

\begin{lemma}\label{k-case}
  Let $K_1 \subseteq K_2 \subseteq K_3$ be a chain of three fields (of
  characteristic 0).  Then the map
  \begin{equation*}
    \Omega_{K_2/K_1} \otimes_{K_2} K_3 \to \Omega_{K_3/K_1}
  \end{equation*}
  is injective.
\end{lemma}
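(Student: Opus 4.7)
My plan is a direct application of the transcendence-basis description of $\Omega_{L/K}$ for field extensions (Fact~\ref{k-fields}), exploiting the fact that we are in characteristic $0$ so that all algebraic extensions are separable.

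First I would choose a transcendence basis $\{t_i\}_{i\in I}$ of $K_2$ over $K_1$. By Fact~\ref{k-fields}, $\{dt_i\}_{i \in I}$ is a $K_2$-basis of $\Omega_{K_2/K_1}$, and therefore $\{dt_i \otimes 1\}_{i \in I}$ is a $K_3$-basis of $\Omega_{K_2/K_1} \otimes_{K_2} K_3$.

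Next I would extend $\{t_i\}_{i \in I}$ to a transcendence basis $\{t_i\}_{i \in I} \cup \{s_j\}_{j \in J}$ of $K_3$ over $K_1$. This is possible because the $t_i$ are already algebraically independent over $K_1$ as elements of $K_3$, so one can complete them to a maximal algebraically independent set via Zorn's lemma. By Fact~\ref{k-fields} again, the collection $\{dt_i\}_{i \in I} \cup \{ds_j\}_{j \in J}$ is a $K_3$-basis of $\Omega_{K_3/K_1}$.

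Finally, the canonical map
\[
\Omega_{K_2/K_1} \otimes_{K_2} K_3 \to \Omega_{K_3/K_1}
\]
sends $dt_i \otimes 1$ to $dt_i$, so it carries a $K_3$-basis of the source to a $K_3$-linearly independent subset of the target. Hence it is injective. There is no real obstacle here; the only subtlety is that extending a transcendence basis of $K_2/K_1$ to one of $K_3/K_1$ requires the $t_i$ to remain algebraically independent over $K_1$ inside $K_3$, which is immediate since algebraic dependence is an inner property of the subfield $K_1(t_i : i \in I) \subseteq K_2 \subseteq K_3$.
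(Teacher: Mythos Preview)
Your proof is correct and is essentially identical to the paper's own argument: choose a transcendence basis of $K_2/K_1$, extend it to one of $K_3/K_1$, and use Fact~\ref{k-fields} to see that the map sends a $K_3$-basis of the source to part of a $K_3$-basis of the target.
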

\begin{proof}
  Let $B$ be a transcendence basis of $K_2/K_1$, and $B'$ be a
  transcendence basis of $K_3/K_1$ extending $B$.  By
  Fact~\ref{k-fields}, the set $\{dt : t \in B\}$ is a $K_3$-linear
  basis of $\Omega_{K_2/K_1} \otimes_{K_2} K_3$, and the set $\{dt : t
  \in B'\}$ is a $K_3$-linear basis of $\Omega_{K_3/K_1}$.  The map in
  question is induced by the inclusion $B \hookrightarrow B'$, and is
  therefore injective.
\end{proof}
\begin{lemma}\label{o-case-over-q}
  Let $\Oo'/\Oo$ be an extension of valuation rings (with residue
  characteristic 0).  Then the map $\Omega_{\Oo/\Qq} \otimes_{\Oo} \Oo'
  \to \Omega_{\Oo'/\Qq}$ is injective.
\end{lemma}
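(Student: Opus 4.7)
My plan is to reduce to the already-established injectivity at the level of fraction fields (Lemma~\ref{k-case}) by means of a commutative localization square, using flatness of $\Omega_{\Oo/\Qq}$ (Fact~\ref{gr-flat}) to control what happens under passage from $\Oo'$ to $K' := \Frac(\Oo')$.

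Concretely, let $K = \Frac(\Oo)$ and $K' = \Frac(\Oo')$, and consider the commutative square
\begin{equation*}
\xymatrix{
\Omega_{\Oo/\Qq} \otimes_\Oo \Oo' \ar[r]^-{f} \ar[d]^-{\alpha} & \Omega_{\Oo'/\Qq} \ar[d]^-{\beta} \\
\Omega_{K/\Qq} \otimes_K K' \ar[r]^-{g} & \Omega_{K'/\Qq}
}
\end{equation*}
where $\alpha$ and $\beta$ are the natural maps to the $K'$-localization, obtained via the identifications
\[
\bigl(\Omega_{\Oo/\Qq} \otimes_\Oo \Oo'\bigr) \otimes_{\Oo'} K' \;\cong\; \Omega_{\Oo/\Qq} \otimes_\Oo K' \;\cong\; \Omega_{K/\Qq} \otimes_K K'
\]
and $\Omega_{\Oo'/\Qq} \otimes_{\Oo'} K' \cong \Omega_{K'/\Qq}$, both coming from Fact~\ref{k-localization}. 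The bottom map $g$ is injective by Lemma~\ref{k-case} applied to $\Qq \subseteq K \subseteq K'$.

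Next I would show that the left vertical map $\alpha$ is injective. The module $\Omega_{\Oo/\Qq}$ is flat over $\Oo$ by Fact~\ref{gr-flat}, so its base change $\Omega_{\Oo/\Qq} \otimes_\Oo \Oo'$ is flat over $\Oo'$. By Remark~\ref{valuation-flatness}, flatness over the valuation ring $\Oo'$ is equivalent to being torsionless, so this module embeds into its localization $\Omega_{\Oo/\Qq} \otimes_\Oo K'$, and that embedding is precisely $\alpha$. Therefore $g \circ \alpha$ is injective, which forces $f$ to be injective by commutativity $\beta \circ f = g \circ \alpha$. There is no real obstacle here; the entire content of the lemma is packaged into Facts~\ref{k-localization}, \ref{gr-flat} and Lemma~\ref{k-case}, and the only work is assembling them into the diagram above.
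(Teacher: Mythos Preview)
Your proof is correct and takes essentially the same approach as the paper's own proof: both assemble a commutative diagram linking the map in question to the field-level map of Lemma~\ref{k-case}, using Fact~\ref{k-localization} for the identifications and Fact~\ref{gr-flat} (via torsionlessness) to see that $\Omega_{\Oo/\Qq} \otimes_\Oo \Oo'$ injects into its localization. Your square is just the paper's $2\times 4$ diagram with the middle isomorphisms collapsed, and your justification of the injectivity of $\alpha$ via Remark~\ref{valuation-flatness} is a bit more explicit than the paper's.
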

\begin{proof}
  This follows from the commuting diagram
  \begin{equation*}
    \xymatrix{ \Omega_{\Oo/\Qq} \otimes_{\Oo} \Oo' \ar@{^(->}[r]
      \ar[d] & (\Omega_{\Oo/\Qq} \otimes_{\Oo} \Oo') \otimes_{\Oo'} K' \ar[d] \ar@{=}[r] & (\Omega_{\Oo/\Qq} \otimes_{\Oo} K) \otimes_{K} K'
       \ar[d]
      \ar@{=}[r] & \Omega_{K/\Qq} \otimes_{K} K' \ar@{^(->}[d]
      \\ \Omega_{\Oo'/\Qq} \ar@{^(->}[r] & \Omega_{\Oo'/\Qq}
      \otimes_{\Oo'} K' \ar@{=}[r] & \Omega_{\Oo'/\Qq}
      \otimes_{\Oo'} K' \ar@{=}[r] & \Omega_{K'/\Qq} },
  \end{equation*}
  where the left horizontal arrows are injective by flatness
  (Fact~\ref{gr-flat}), the right horizontal arrows are isomorphisms
  by Fact~\ref{k-localization}, and the rightmost vertical map is
  injective by Lemma~\ref{k-case}.
\end{proof}
\begin{lemma}\label{o-case}
  Let $\Oo_1 \subseteq \Oo_2 \subseteq \Oo_2$ be a chain of two
  valuation ring extensions\footnote{Meaning that the inclusions are
    local homomorphisms.}.  Then the map $\Omega_{\Oo_2/\Oo_1} \otimes_{\Oo_2} \Oo_3
  \to \Omega_{\Oo_3/\Oo_1}$ is injective.
\end{lemma}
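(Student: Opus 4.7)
The plan is to reduce to Lemma~\ref{o-case-over-q} via a diagram chase, so no new flatness computation is needed. Consider the commutative diagram
\begin{equation*}
\xymatrix{
\Omega_{\Oo_1/\Qq} \otimes_{\Oo_1} \Oo_3 \ar[r] \ar@{=}[d] & \Omega_{\Oo_2/\Qq} \otimes_{\Oo_2} \Oo_3 \ar[r] \ar[d]^{\alpha} & \Omega_{\Oo_2/\Oo_1} \otimes_{\Oo_2} \Oo_3 \ar[r] \ar[d]^{\beta} & 0 \\
\Omega_{\Oo_1/\Qq} \otimes_{\Oo_1} \Oo_3 \ar[r]^-{\iota} & \Omega_{\Oo_3/\Qq} \ar[r] & \Omega_{\Oo_3/\Oo_1} \ar[r] & 0
}
\end{equation*}
whose bottom row is the right-exact sequence of Fact~\ref{k-right-exact} applied to $\Qq \to \Oo_1 \to \Oo_3$, and whose top row is obtained by applying $- \otimes_{\Oo_2} \Oo_3$ to the right-exact sequence of Fact~\ref{k-right-exact} for $\Qq \to \Oo_1 \to \Oo_2$, using the identification $(\Omega_{\Oo_1/\Qq} \otimes_{\Oo_1} \Oo_2) \otimes_{\Oo_2} \Oo_3 \cong \Omega_{\Oo_1/\Qq} \otimes_{\Oo_1} \Oo_3$. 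Since tensoring preserves right-exactness, the top row is right-exact. The map $\alpha$ is injective by Lemma~\ref{o-case-over-q}, and $\beta$ is precisely the map whose injectivity I want to prove. Commutativity of both squares is naturality of the sequence in Fact~\ref{k-right-exact}.

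Now I would diagram chase. Suppose $x \in \Omega_{\Oo_2/\Oo_1} \otimes_{\Oo_2} \Oo_3$ satisfies $\beta(x) = 0$. By right-exactness of the top row, lift $x$ to some $y \in \Omega_{\Oo_2/\Qq} \otimes_{\Oo_2} \Oo_3$, and set $y' := \alpha(y) \in \Omega_{\Oo_3/\Qq}$. Commutativity of the right square and $\beta(x) = 0$ imply that the image of $y'$ in $\Omega_{\Oo_3/\Oo_1}$ vanishes, so exactness of the bottom row gives some $z \in \Omega_{\Oo_1/\Qq} \otimes_{\Oo_1} \Oo_3$ with $\iota(z) = y'$. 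Let $y'' \in \Omega_{\Oo_2/\Qq} \otimes_{\Oo_2} \Oo_3$ be the image of $z$ along the top-left map; commutativity of the left square yields $\alpha(y'') = \iota(z) = y' = \alpha(y)$, so $y'' = y$ by injectivity of $\alpha$. The top row is a complex (being obtained by tensoring a complex), so $y''$, and hence $y$, maps to $0$ in $\Omega_{\Oo_2/\Oo_1} \otimes_{\Oo_2} \Oo_3$; that is, $x = 0$.

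The only non-trivial ingredient is Lemma~\ref{o-case-over-q}; the rest is a standard four-lemma-style chase. I do not anticipate a serious obstacle: the naturality of the K\"ahler-differential exact sequences is exactly what makes the diagram commute, and once the diagram is in place the chase is forced.
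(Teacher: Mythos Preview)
Your proof is correct and follows essentially the same approach as the paper: set up the same commutative diagram with the two K\"ahler-differential sequences over $\Qq$, and use injectivity of the middle vertical map (Lemma~\ref{o-case-over-q}) to deduce injectivity of $\beta$. The only difference is cosmetic: the paper first shows both rows are short exact (using flatness of $\Oo_3$ over $\Oo_2$ to get left-exactness of the top row) and then invokes the snake lemma, whereas your direct chase needs only right-exactness of the top row and exactness at the middle of the bottom row, so you bypass the flatness step entirely.
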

\begin{proof}
  There is a commutative diagram
  \begin{equation}
    \xymatrix{
      0 \ar[r] & \Omega_{\Oo_1/\Qq} \otimes_{\Oo_1} \Oo_3 \ar@{=}[d]
      \ar[r] & \Omega_{\Oo_2/\Qq} \otimes_{\Oo_2} \Oo_3 \ar[r] \ar@{^(->}[d]
      & \Omega_{\Oo_2/\Oo_1} \otimes_{\Oo_2} \Oo_3 \ar[d] \ar[r] & 0
      \\
      0 \ar[r] & \Omega_{\Oo_1/\Qq} \otimes_{\Oo_1} \Oo_3 \ar[r] & \Omega_{\Oo_3/\Qq} \ar[r]
      & \Omega_{\Oo_3/\Oo_1} \ar[r] & 0
    } \label{confound}
  \end{equation}
  The bottom row is right exact by Fact~\ref{k-right-exact}, and left
  exact by Lemma~\ref{o-case-over-q}.  The same argument shows that
  \begin{equation*}
    0 \to \Omega_{\Oo_1/\Qq} \otimes_{\Oo_1} \Oo_2 \to
    \Omega_{\Oo_2/\Qq} \to \Omega_{\Oo_2/\Oo_1} \to 0
  \end{equation*}
  is an exact sequence.  Applying the exact functor $- \otimes_{\Oo_2}
  \Oo_3$ yields the exactness of the top row of (\ref{confound}).  The
  middle vertical map of (\ref{confound}) is injective by
  Lemma~\ref{o-case-over-q}.  The snake lemma then implies that the
  right vertical map is injective.
\end{proof}
\begin{definition}
  Let $\Oo'/\Oo$ be an extension of valuation rings.  Then $\Oo'/\Oo$
  is \emph{pseudosmooth} if $\Omega_{\Oo'/\Oo}$ is flat (as an
  $\Oo'$-module).
\end{definition}
\begin{proposition}\label{pseudo-2-3}
  Let $\Oo_1 \subseteq \Oo_2 \subseteq \Oo_3$ be a chain of two
  valuation ring extensions.
  \begin{enumerate}
  \item\label{p23a} If $\Oo_2/\Oo_1$ and $\Oo_3/\Oo_2$ are
    pseudosmooth, then $\Oo_3/\Oo_1$ is pseudosmooth.
  \item\label{p23b} If $\Oo_3/\Oo_1$ is pseudosmooth, then
    $\Oo_2/\Oo_1$ is pseudosmooth.
  \end{enumerate}
\end{proposition}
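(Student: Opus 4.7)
The plan is to combine the standard right-exact sequence for relative K\"ahler differentials (Fact~\ref{k-right-exact}) with the injectivity result already proved in Lemma~\ref{o-case} to obtain, for any chain $\Oo_1 \subseteq \Oo_2 \subseteq \Oo_3$, a \emph{short} exact sequence of $\Oo_3$-modules
\begin{equation*}
  0 \to \Omega_{\Oo_2/\Oo_1} \otimes_{\Oo_2} \Oo_3 \to \Omega_{\Oo_3/\Oo_1} \to \Omega_{\Oo_3/\Oo_2} \to 0.
\end{equation*}
Once this sequence is in hand, both parts of the proposition fall out immediately from the 2-out-of-3 behaviour of flatness over valuation rings (Lemma~\ref{flat-2-3}) together with the faithful-flatness transfer across a valuation ring extension (Lemma~\ref{faithful-flat}).

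For part (\ref{p23a}), I would argue as follows. Pseudosmoothness of $\Oo_2/\Oo_1$ says that $\Omega_{\Oo_2/\Oo_1}$ is flat over $\Oo_2$; by Lemma~\ref{faithful-flat} its base change $\Omega_{\Oo_2/\Oo_1} \otimes_{\Oo_2} \Oo_3$ is flat over $\Oo_3$. Pseudosmoothness of $\Oo_3/\Oo_2$ gives flatness of $\Omega_{\Oo_3/\Oo_2}$ over $\Oo_3$. The outer terms of the displayed short exact sequence are therefore flat, so Lemma~\ref{flat-2-3} forces the middle term $\Omega_{\Oo_3/\Oo_1}$ to be flat over $\Oo_3$, i.e.\ $\Oo_3/\Oo_1$ is pseudosmooth.

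For part (\ref{p23b}), I would run the same sequence in the opposite direction. Pseudosmoothness of $\Oo_3/\Oo_1$ gives flatness of the middle term $\Omega_{\Oo_3/\Oo_1}$ over $\Oo_3$. The other half of Lemma~\ref{flat-2-3} (submodules of flat modules over a valuation ring are flat) then shows that the first term $\Omega_{\Oo_2/\Oo_1} \otimes_{\Oo_2} \Oo_3$ is flat over $\Oo_3$. Applying Lemma~\ref{faithful-flat} in the reverse direction descends this flatness back to $\Oo_2$, yielding pseudosmoothness of $\Oo_2/\Oo_1$.

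There is essentially no hard step: all the real work was done in proving Lemma~\ref{o-case} (which upgrades Fact~\ref{k-right-exact} to left exactness in the valuation-ring setting) and in establishing the two-sided flatness transfer of Lemma~\ref{faithful-flat}. The only point that deserves mild care is confirming that the exact sequence above really is a sequence of $\Oo_3$-modules in the right way, so that the 2-out-of-3 principle for flatness applies over the single ring $\Oo_3$; that is automatic from the construction of relative differentials and the compatibility of the connecting maps with scalar extension.
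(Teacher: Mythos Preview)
Your proposal is correct and follows essentially the same approach as the paper: establish the short exact sequence via Fact~\ref{k-right-exact} and Lemma~\ref{o-case}, then invoke Lemma~\ref{faithful-flat} and Lemma~\ref{flat-2-3} for the two-out-of-three conclusions. The paper's write-up is more compressed, stating the three flatness equivalences in a block and appealing to Lemma~\ref{flat-2-3} once, but the content is the same.
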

\begin{proof}
  The sequence
  \begin{equation*}
    0 \to \Omega_{\Oo_2/\Oo_1} \otimes_{\Oo_2} \Oo_3 \to
    \Omega_{\Oo_3/\Oo_1} \to \Omega_{\Oo_3/\Oo_2} \to 0
  \end{equation*}
  is right exact by Fact~\ref{k-right-exact}, and left exact by
  Lemma~\ref{o-case}.  Then
  \begin{align*}
    \Oo_2/\Oo_1 \text{ is pseudosmooth} & \iff \Omega_{\Oo_2/\Oo_1}
    \text{ is flat} \iff \Omega_{\Oo_2/\Oo_1} \otimes_{\Oo_2} \Oo_3
    \text{ is flat} \\
    \Oo_3/\Oo_1 \text{ is pseudosmooth} & \iff \Omega_{\Oo_3/\Oo_1} \text{ is flat} \\
    \Oo_3/\Oo_2 \text{ is pseudosmooth} & \iff \Omega_{\Oo_3/\Oo_2} \text{ is flat,}    
  \end{align*}
  using Lemma~\ref{faithful-flat} in the first line.  The desired
  statements follow from Lemma~\ref{flat-2-3}.
\end{proof}

\begin{proposition}\label{flat-extension}
  Let $\Oo'/\Oo$ be an extension of valued fields of residue
  characteristic 0.  Suppose the value group of $\Oo$ is $\Zz$-less.
  Then $\Omega_{\Oo'/\Oo}$ is flat as an $\Oo'$-module.
\end{proposition}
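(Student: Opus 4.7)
The plan is to reduce to the algebraically closed base case, using the $\Zz$-less hypothesis through Proposition~\ref{alg-case-2}, and then descend via the two-of-three properties of pseudosmoothness established in Proposition~\ref{pseudo-2-3}.

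First I would set up a suitable diamond. Let $K = \Frac(\Oo)$ and $K' = \Frac(\Oo')$. I would choose an algebraically closed field $L$ containing $K'$ and an isomorphic copy of $K^{alg}$, then pick a valuation ring $\Oo''$ on $L$ extending $\Oo'$. Setting $\tilde{\Oo} := \Oo'' \cap K^{alg}$ gives a valuation ring on $K^{alg}$ extending $\Oo$, and we obtain two towers
\[
\Oo \subseteq \tilde{\Oo} \subseteq \Oo'' \qquad \text{and} \qquad \Oo \subseteq \Oo' \subseteq \Oo'',
\]
the first with $\Frac(\tilde{\Oo}) = K^{alg}$ algebraically closed.

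The crucial step is to show $\Omega_{\tilde{\Oo}/\Oo} = 0$. The value group of $\tilde{\Oo}$ sits inside $\val(\Oo) \otimes_{\Zz} \Qq$, which inherits $\Zz$-lessness from $\val(\Oo)$. Applying Proposition~\ref{alg-case-2} with $F = K$ and $\tilde{\Oo}$ playing the role of the valuation ring on the algebraic closure, any derivation $\tilde{\Oo} \to M$ vanishing on $\Oo$ must vanish identically; by the universal property of K\"ahler differentials, this says $\Omega_{\tilde{\Oo}/\Oo} = 0$. Lemma~\ref{o-case} applied to the first tower then yields an exact sequence
\[
0 \to \Omega_{\tilde{\Oo}/\Oo} \otimes_{\tilde{\Oo}} \Oo'' \to \Omega_{\Oo''/\Oo} \to \Omega_{\Oo''/\tilde{\Oo}} \to 0,
\]
and the vanishing gives a canonical $\Oo''$-module isomorphism $\Omega_{\Oo''/\Oo} \cong \Omega_{\Oo''/\tilde{\Oo}}$. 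Fact~\ref{acf-base}, whose hypothesis is met since $\Frac(\tilde{\Oo}) = K^{alg}$, produces the needed flatness, and hence $\Oo''/\Oo$ is pseudosmooth. Finally, Proposition~\ref{pseudo-2-3}.\ref{p23b} applied to the second tower $\Oo \subseteq \Oo' \subseteq \Oo''$ descends pseudosmoothness of $\Oo''/\Oo$ to pseudosmoothness of $\Oo'/\Oo$, which is the claim.

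The main obstacle is establishing $\Omega_{\tilde{\Oo}/\Oo} = 0$. This is precisely what Proposition~\ref{alg-case-2} provides, but that proof itself is the hard work: it must handle three cases (ramified, residual, and immediate algebraic extensions) and the $\Zz$-less hypothesis is used only in the ramified case, through the decomposition in Lemma~\ref{z-less-calc} that lets us write each component $y_i a^i$ of an expansion in $F(a)$ as $e \cdot c$ with $e, c^n \in \Oo_F$, so that $\partial(y_i a^i) = 0$. A secondary technical point is bridging Fact~\ref{acf-base}, as stated in terms of flatness over the smaller ring, with the pseudosmoothness we require; this is the standard flatness transfer for valuation ring extensions (as in Lemma~\ref{faithful-flat}) and should proceed formally.
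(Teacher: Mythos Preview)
Your proposal is correct and follows essentially the same strategy as the paper: reduce to the algebraic closure via Proposition~\ref{alg-case-2}, invoke Fact~\ref{acf-base} over the algebraically closed base, and then use the two-of-three property (Proposition~\ref{pseudo-2-3}) to descend. The paper streamlines your diamond by simply enlarging $\Oo'$ at the outset (via Proposition~\ref{pseudo-2-3}.\ref{p23b}) so that $\Frac(\Oo')$ already contains $K^{alg}$, and then runs a single tower $\Oo \subseteq \Oo'' \subseteq \Oo'$ with $\Oo''$ on $K^{alg}$, applying \ref{p23a} rather than \ref{p23b} at the end; your two-tower diamond achieves the same thing with one extra step.

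Two minor remarks. First, your digression about the value group of $\tilde{\Oo}$ being $\Zz$-less is unnecessary: Proposition~\ref{alg-case-2} only requires $\val(F)$ to be $\Zz$-less, and here $F = K$, so the original hypothesis suffices. Second, you correctly flag the mismatch in Fact~\ref{acf-base} (stated as flat over $\Oo$, used as flat over $\Oo'$), but Lemma~\ref{faithful-flat} is not the right bridge; the paper simply uses the fact directly as pseudosmoothness, so this is a statement-level imprecision in Fact~\ref{acf-base} rather than a gap in either argument.
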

\begin{proof}
  We must show that $\Oo'/\Oo$ is pseudosmooth.  By
  Proposition~\ref{pseudo-2-3}.\ref{p23b}, we may replace $\Oo'$ with
  a larger valued field.  Since valuations can be extended along any
  field extension, we may assume that $\Frac(\Oo')$ contains the
  algebraic closure of $\Frac(\Oo)$.  Let $\Oo''$ be the induced
  valuation ring on $\Frac(\Oo)^{alg}$.  Then $\Oo \subseteq \Oo''
  \subseteq \Oo'$.  Now $\Oo'/\Oo''$ is pseudosmooth by
  Fact~\ref{acf-base}, as $\Frac(\Oo'')$ is algebraically closed.  By
  Proposition~\ref{pseudo-2-3}.\ref{p23a}, it remains to show that
  $\Oo''/\Oo$ is pseudosmooth.  In fact, $\Omega_{\Oo''/\Oo}$ vanishes,
  by Proposition~\ref{alg-case-2}.
\end{proof}

\subsection{Resplendent lifting in the $\Zz$-less case}

\begin{lemma} \label{ortho}
  Let $R$ be a ring, and 
  \begin{equation*}
    \xymatrix{
      A \ar@{_(->}[d]_{f'} \ar[r]^{g'} & M \ar@{->>}[d]^{f} \\ A \oplus B \ar[r]_g & N
    }
  \end{equation*}
  be a diagram of $R$-modules, with $f$ surjective, and $f'$ the inclusion of the first factor.  If $B$ is
  free, then there is a diagonal map $h : A \oplus B \to M$ making the
  diagram commute:
  \begin{equation*}
    \xymatrix{ A \ar@{_(->}[d]_{f'} \ar[r]^{g'} & M \ar@{->>}[d]^{f}
      \\ A \oplus B \ar[ur]_h \ar[r]_g & N.  }
  \end{equation*}
\end{lemma}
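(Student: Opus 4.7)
The plan is to build $h$ piece by piece on the two summands of $A \oplus B$ and then combine. On the first summand, define $h \circ f' := g'$; thanks to commutativity of the original square ($f \circ g' = g \circ f'$), this choice automatically satisfies $f \circ h \circ f' = g \circ f'$, so the right-hand triangle commutes on the image of $f'$. On the second summand, we need an $R$-linear map $h_B : B \to M$ such that $f \circ h_B = g \circ \iota_B$, where $\iota_B : B \hookrightarrow A \oplus B$ is the inclusion of the second factor.

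The construction of $h_B$ is just the projectivity of free modules. Fix an $R$-basis $\{b_i\}_{i \in I}$ of $B$. For each $i$, surjectivity of $f$ lets us pick some $m_i \in M$ with $f(m_i) = g(\iota_B(b_i))$. By the universal property of the free module $B$, there is a unique $R$-linear map $h_B : B \to M$ sending $b_i \mapsto m_i$, and then $f \circ h_B$ and $g \circ \iota_B$ are two $R$-linear maps $B \to N$ that agree on the basis $\{b_i\}$, hence are equal.

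Now assemble $h : A \oplus B \to M$ by $h(a,b) := g'(a) + h_B(b)$. By construction $h \circ f' = g'$, so the upper triangle commutes. For the lower triangle, $f(h(a,b)) = f(g'(a)) + f(h_B(b)) = g(f'(a)) + g(\iota_B(b)) = g(a,b)$, as desired. The main (and only) point is the surjectivity of $f$ combined with the freeness of $B$; there is no genuine obstacle, since the lemma is essentially the standard fact that free modules are projective, phrased as a lifting statement against a split inclusion.
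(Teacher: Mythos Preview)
Your proof is correct and takes essentially the same approach as the paper: define $h(a,b) = g'(a) + h_B(b)$ where $h_B : B \to M$ lifts $g \circ \iota_B$ along $f$ using freeness of $B$, then verify both triangles. The only difference is cosmetic---you spell out the basis-lifting argument for projectivity of $B$, whereas the paper simply invokes it.
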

The proof is well-known but included for completeness.
\begin{proof}
  Consider the morphism $B \to N$ given by $x \mapsto g(0,x)$.
  Because $B$ is free, there is $h_1 : B \to M$ lifting this, so that
  \begin{equation*}
    g(0,x) = f(h_1(x))
  \end{equation*}
  for $x \in B$.  Define $h : A \oplus B \to M$ by the formula $h(x,y)
  = g'(x) + h_1(y)$.  Then
  \begin{align*}
    h(f'(x)) &= h(x,0) = g'(x) \\
    f(h(x,y)) &= f(g'(x)) + f(h_1(y)) = g(f'(x)) + g(0,y) = g(x,0) + g(0,y) = g(x,y). \qedhere
  \end{align*}
\end{proof}

\begin{lemma}\label{important-helper}
  Let $R$ be a valuation ring.  Let
  \begin{equation*}
    \xymatrix{
      A \ar@{_(->}[d]_{f'} \ar[r]^{g'} & M \ar@{->>}[d]^{f} \\ B \ar[r]_g & N
      }
  \end{equation*}
  be a diagram of $R$-modules, with $f' : A \hookrightarrow B$
  injective and $f : M \twoheadrightarrow N$ surjective.  Suppose the
  following hold:
  \begin{itemize}
  \item $\coker(f')$ is flat.
  \item Let $(M,N)$ be the two-sorted structure with the $R$-module
    structure on $M$ and $N$, and the surjection $f : M
    \twoheadrightarrow N$.  Then $(M,N)$ is
    $(|A|+|B|+|R|)^+$-saturated.
  \end{itemize}
  Then there is a morphism $h : B \to M$ making the diagram commute
  \begin{equation*}
    \xymatrix{
      A \ar@{_(->}[d]_{f'} \ar[r]^{g'} & M \ar@{->>}[d]^{f} \\ B \ar[ur]_h \ar[r]_g & N.
      }
  \end{equation*}
\end{lemma}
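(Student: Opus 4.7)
The plan is to combine flatness of the cokernel with Lemma~\ref{ortho} to produce lifts on finitely generated sub-problems, and then glue these local solutions via the saturation hypothesis.

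First I would reduce the lifting problem to finitely generated submodules of $B$. Given a finitely generated submodule $B_0 \subseteq B$, set $A_0 = (f')^{-1}(B_0) \subseteq A$ and $C_0 = B_0/A_0$. Since the natural map $C_0 \hookrightarrow \coker(f')$ identifies $C_0$ with a finitely generated submodule of a flat $R$-module over a valuation ring, Remark~\ref{valuation-flatness} gives that $C_0$ is free. Therefore the short exact sequence
\begin{equation*}
0 \to A_0 \to B_0 \to C_0 \to 0
\end{equation*}
splits, yielding an isomorphism $B_0 \cong A_0 \oplus C_0$ under which $f'|_{A_0}$ is identified with the inclusion of the first factor. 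Applying Lemma~\ref{ortho} with ``$A$''$= A_0$, ``$B$''$= C_0$, and the maps $g'|_{A_0} : A_0 \to M$ and $g|_{B_0} : B_0 \to N$, we obtain a local lift $h_0 : B_0 \to M$ with $h_0 \circ f'|_{A_0} = g'|_{A_0}$ and $f \circ h_0 = g|_{B_0}$.

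Next I would patch these local lifts using saturation. Introduce a variable $x_b$ for each $b \in B$, and consider the partial type $p$ in the sort $M$ whose axioms assert
\begin{align*}
f(x_b) &= g(b) \text{ for each } b \in B, \\
x_{r_1 b_1 + r_2 b_2} &= r_1 x_{b_1} + r_2 x_{b_2} \text{ for each } r_1,r_2 \in R,\ b_1,b_2 \in B, \\
x_{f'(a)} &= g'(a) \text{ for each } a \in A.
\end{align*}
The parameters needed to express $p$ lie in $g(B) \cup g'(A)$, and the total number of formulas is at most $|A| + |B| + |R|$. By the saturation assumption on $(M,N)$, the type $p$ is realized in $M$ provided it is finitely satisfiable, and any realization defines the desired morphism $h : B \to M$.

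Finally I would verify finite satisfiability. A finite subset of $p$ mentions only finitely many elements of $A$ and $B$; let $B_0$ be the finitely generated submodule of $B$ generated by these elements together with the images under $f'$ of the mentioned elements of $A$. Then $A_0 = (f')^{-1}(B_0)$ contains all the referenced elements of $A$, and the local lift $h_0 : B_0 \to M$ produced in the first step provides values for the relevant $x_b$'s satisfying all formulas in the finite fragment.

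The only real content is in step one: it is the interaction between flatness of $\coker(f')$ and the valuation-ring hypothesis (via Remark~\ref{valuation-flatness}) that lets us split $B_0$ and invoke Lemma~\ref{ortho}. The gluing via saturation is then essentially formal, serving only to assemble a coherent global lift out of the compatible family of finitely generated local lifts.
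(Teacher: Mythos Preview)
Your proposal is correct and follows essentially the same approach as the paper: reduce to finitely generated subproblems where the cokernel becomes free (via Remark~\ref{valuation-flatness}), invoke Lemma~\ref{ortho} to get local lifts, and then use saturation on a type in variables $x_b$ to glue. The only cosmetic difference is that the paper indexes the local problems by intermediate modules $A \subseteq C \subseteq B$ with $C/A$ finitely generated, whereas you index by finitely generated $B_0 \subseteq B$ and set $A_0 = (f')^{-1}(B_0)$; these are equivalent organizations of the same argument.
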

\begin{proof}
  Without loss of generality, $f' : A \hookrightarrow B$ is an
  inclusion.  Then $B/A$ is flat.
  
  Let $\vec{x} = \langle x_b \rangle_{b \in B}$ be a tuple of
  variables indexed by $B$.  For every submodule $C \subseteq B$
  containing $A$, let $\Sigma_C(\vec{x})$ be the $\ast$-type in
  $(M,N)$ asserting the following:
  \begin{itemize}
  \item If $b \in C$, then $x_b \in M$ and $f(x_b) = g(b)$.
  \item If $b \in A$, then $x_b = g'(x_b)$.
  \item If $r \in R$ and $b \in C$, then $r x_b = x_{rb}$.
  \item If $b, b' \in C$, then $x_{b+b'} = x_b + x_{b'}$.
  \end{itemize}
  Then $\Sigma_C(\vec{x})$ is realized in $(M,N)$ if and only if there
  is a morphism $h_C : C \to M$ such that the diagram commutes
  \begin{equation}
    \xymatrix{
      A \ar[r]^{g'} \ar[d]_{\subseteq} & M \ar[d]^f \\
      C \ar@{-->}[ur]_{h_C} \ar[r]_{g|_C} & N} \label{c-target}
  \end{equation}
  It suffices to realize $\Sigma_B(\vec{x})$.  This type is a directed
  union of the types
  \begin{equation*}
    \{ \Sigma_C(\vec{x}) : C/A \text{ is finitely generated}\}.
  \end{equation*}
  By saturation, it suffices to realize the types in this family.
  Suppose $C/A$ is finitely generated.  Then $C/A$ injects into $B/A$,
  so $C/A$ is free by Remark~\ref{valuation-flatness}.  The sequence
  \begin{equation*}
    0 \to A \to C \to C/A \to 0
  \end{equation*}
  therefore splits.  By Lemma~\ref{ortho}, there is a dashed arrow
  making (\ref{c-target}) commute.
\end{proof}
Note that $\Zz$-lessness is a conjunction of first-order axioms, so it
is preserved in elementary equivalence of ordered abelian groups.
\begin{theorem}\label{resplendent}
  Let $(K,\Oo)$ be a valued field with residue characteristic 0 and
  $\Zz$-less value group.  Let $f : M \twoheadrightarrow N$ be a
  surjective morphism of $\Oo$-modules.  Let $\partial : \Oo \to N$
  be a derivation.  Consider the three-sorted structure $(\Oo,M,N)$
  with the ring structure on $\Oo$, the module structures on $M, N$,
  the epimorphism $f$, and the derivation $\partial$.
  \begin{itemize}
  \item If the structure $(\Oo,M,N)$ is sufficiently saturated and
    resplendent, then there is a derivation $\delta : \Oo \to M$
    making the diagram commute:
    \begin{equation*}
      \xymatrix{ & M \ar@{->>}[d]^f \\
        \Oo \ar[ur]^{\delta} \ar[r]_{\partial} & N }
    \end{equation*}
  \item In general, such a lifting exists after passing to an elementary
    extension.
  \end{itemize}
\end{theorem}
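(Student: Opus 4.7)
The plan is to reduce the problem to a module-theoretic lifting question using K\"ahler differentials, and then to invoke Lemma~\ref{important-helper}. By the universal property of $\Omega_{\Oo/\Qq}$, the derivation $\partial : \Oo \to N$ factors uniquely as $\partial = g \circ d$, where $d : \Oo \to \Omega_{\Oo/\Qq}$ is the universal derivation and $g : \Omega_{\Oo/\Qq} \to N$ is $\Oo$-linear. Any $\Oo$-linear lift $\tilde g : \Omega_{\Oo/\Qq} \to M$ of $g$ along $f$ produces the desired derivation $\delta := \tilde g \circ d$, since $f \circ \delta = f \circ \tilde g \circ d = g \circ d = \partial$. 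So the question becomes whether $g$ lifts through $f$ in the category of $\Oo$-modules.

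For this lifting, I would apply Lemma~\ref{important-helper} with $R = \Oo$, $A = 0$, $B = \Omega_{\Oo/\Qq}$, and the map $g : B \to N$. The cokernel of $A \hookrightarrow B$ is $\Omega_{\Oo/\Qq}$ itself, which must be flat as an $\Oo$-module; this is supplied by Fact~\ref{gr-flat}, using residue characteristic $0$. The saturation hypothesis of the lemma requires $(M,N)$ to be $(|\Omega_{\Oo/\Qq}| + |\Oo|)^+$-saturated, but since $\Omega_{\Oo/\Qq}$ is generated by the symbols $\{da : a \in \Oo\}$ we have $|\Omega_{\Oo/\Qq}| \le |\Oo|$, so this reduces to $|\Oo|^+$-saturation, which is automatic in the resplendent setting. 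In the general setting, I would first pass to a sufficiently saturated and resplendent elementary extension $(\Oo^*, M^*, N^*, \partial^*)$ of $(\Oo, M, N, \partial)$, apply the resplendent case there to obtain $\delta^* : \Oo^* \to M^*$ lifting $\partial^*$, and interpret the existence of $\delta^*$ as the conclusion of the second bullet point.

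The main technical content of the argument is concentrated in Fact~\ref{gr-flat}: once $\Omega_{\Oo/\Qq}$ is known to be flat, the diagram chase in Lemma~\ref{important-helper} handles the rest via a routine type-realization argument, breaking the global lifting into finitely generated pieces whose cokernels are free over $\Oo$. Notably, the $\Zz$-less hypothesis on the value group does not appear in this part of the argument; its role in the appendix is to secure flatness of the \emph{relative} differentials $\Omega_{\Oo'/\Oo}$ for extensions $\Oo'/\Oo$ (Proposition~\ref{flat-extension}), which is needed in Corollary~\ref{metaval-lifting} to extend a lifting from $\Oo$ to $K$, rather than in the absolute statement of the present theorem.
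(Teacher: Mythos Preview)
Your argument has a genuine cardinality gap in the saturation step. To apply Lemma~\ref{important-helper} with $R = \Oo$ and $B = \Omega_{\Oo/\Qq}$, you need the two-sorted structure $(M,N)$, with the $\Oo$-action encoded as function symbols, to be $|\Oo|^+$-saturated. But $\Oo$ is a sort of the ambient structure $(\Oo,M,N)$: if the whole structure is $\kappa$-saturated for some $\kappa$, then (since $\Oo$ is infinite) $|\Oo| \ge \kappa$, so naming all of $\Oo$ as parameters destroys saturation above $|\Oo|$. No amount of resplendence or saturation of a single model can make $(M,N)$ be $|\Oo|^+$-saturated over $\Oo$ itself. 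Your claim that this is ``automatic in the resplendent setting'' is therefore incorrect.

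This is exactly why the paper builds an elementary chain $(\Oo_i,M_i,N_i)$ with each level saturated over the previous: at stage $i$ one constructs $\delta_i : \Oo_i \to M_{i+1}$, so that $(M_{i+1},N_{i+1})$ is genuinely $|\Oo_i|^+$-saturated over $\Oo_i$. But then one must extend $\delta_i$ from $\Oo_i$ to $\Oo_{i+1}$, and for Lemma~\ref{important-helper} to apply at this step the cokernel $\Omega_{\Oo_{i+1}/\Oo_i}$ must be flat. This is precisely where the $\Zz$-less hypothesis enters, via Proposition~\ref{flat-extension}. Your reading that $\Zz$-lessness is only needed later (in Corollary~\ref{metaval-lifting}, to pass from $\Oo$ to $K$) is mistaken: that passage uses only Fact~\ref{k-localization}. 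Indeed Proposition~\ref{z-probs} exhibits a derivation $\partial : \Oo \to K/\mm$ with non-$\Zz$-less value group that cannot be lifted to $\Oo \to K$ even after passing to an elementary extension, so the hypothesis is genuinely necessary in the present theorem.
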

\begin{proof}
  The two statements are clearly equivalent, by definition of
  resplendence and existence of resplendent elementary extensions.  We
  prove the second statement.  Consider an elementary chain
  \begin{equation*}
    (\Oo,M,N) = (\Oo_0,M_0,N_0) \preceq (\Oo_1,M_1,N_1) \preceq
    (\Oo_2,M_2,N_2) \preceq \cdots
  \end{equation*}
  where each structure is saturated over the previous structure.  Let
  $f_i, \partial_i$ be the structure maps in $(\Oo_i,M_i,N_i)$.  We
  will recursively build a sequence of derivations $\delta_i
  : \Oo_i \to M_{i+1}$ such that
  \begin{align*}
    f_{i+1}(\delta_i(x)) &= \partial_i(x) \\
    \delta_{i+1}(x) &= \delta_i(x)
  \end{align*}
  for $i \ge 0$ and $x \in \Oo_i$.  If this can be done successfully,
  then the union of the $\delta_i$'s is the desired lifting
  of $\partial$ on the structure $\bigcup_i (\Oo_i,M_i,N_i)$, an
  elementary extension of $(\Oo,M,N)$, and we are done.

  At step $i = 0$, we must find an $\Oo_0$-linear map
  $\Omega_{\Oo_0/\Qq} \to M_1$ making the diagram commute
  \begin{equation}
    \xymatrix{ & & M_1 \ar@{-->}[d]^{f_1} \\ \Omega_{\Oo_0/\Qq}
      \ar[urr] \ar[r]_{\partial_0} & N_0 \ar[r]_{\subseteq} & N_1.} \label{lift-1}
  \end{equation}
  At step $i > 1$, we must find an $\Oo_i$-linear map
  $\Omega_{\Oo_i/\Qq} \to M_{i+1}$ making the diagram commute
  \begin{equation}
    \xymatrix{
    (\Oo_i \otimes_{\Oo_{i-1}} \Omega_{\Oo_{i-1}/\Qq}) \ar[d]
    \ar[r]^{\delta_{i-1}} & M_i \ar[r]^{\subseteq} & M_{i+1} \ar[d]^{f_{i+1}}
    \\
    \Omega_{\Oo_i/\Qq} \ar@{-->}[urr] \ar[r]_{\partial_i} & N_i \ar[r]_{\subseteq} & N_{i+1}.} \label{lift-2}
  \end{equation}
  The dashed map exists in both cases by Lemma~\ref{important-helper}.  For (\ref{lift-1}), Fact~\ref{gr-flat} shows that $\Omega_{\Oo_0/\Qq}$ is flat.  For (\ref{lift-2}), the map
  \[ \Oo_i \otimes_{\Oo_{i-1}} \Omega_{\Oo_{i-1}/\Qq} \to \Omega_{\Oo_i/\Qq}\]
  is injective by Lemma~\ref{o-case-over-q}, the cokernel is $\Omega_{\Oo_i/\Oo_{i-1}}$ by Fact~\ref{k-right-exact}, and $\Omega_{\Oo_i/\Oo_{i-1}}$ is flat by Proposition~\ref{flat-extension}.
\end{proof}
\begin{corollary}\label{metaval-lifting}
  Let $(K,\partial)$ be a sufficiently resplendent normalized diffeovalued
  field.  If the value group of $K$ is $\Zz$-less, or $p$-divisible
  for some $p$, then $(K,\partial)$ admits a
  lifting.
\end{corollary}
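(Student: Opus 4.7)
The plan is to reduce Corollary~\ref{metaval-lifting} directly to Theorem~\ref{resplendent}, with only a small additional step to promote a derivation on $\Oo$ to a derivation on $K$.

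First, I would handle the hypothesis: if the value group $\Gamma$ is $p$-divisible for some prime $p$, then $\Gamma$ is $\Zz$-less, since any nonzero quotient $\Delta^+/\Delta^-$ of convex subgroups inherits $p$-divisibility and therefore cannot be $\Zz$. So we may assume throughout that $\Gamma$ is $\Zz$-less. Because $(K,\Oo)$ is a diffeovalued field of equicharacteristic $0$, the residue characteristic is $0$, so the hypotheses of Theorem~\ref{resplendent} apply.

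Next I would apply Theorem~\ref{resplendent} with $M = K$, $N = K/\mm$, and $f : K \twoheadrightarrow K/\mm$ the quotient map, equipped with the given derivation $\partial : \Oo \to K/\mm$. The three-sorted structure $(\Oo, K, K/\mm)$ with the ring/module structure, the quotient map, and $\partial$ is interpretable in $(K,\partial)$, so sufficient resplendence of $(K,\partial)$ guarantees sufficient saturation and resplendence of this structure. Theorem~\ref{resplendent} then produces an $\Oo$-linear derivation $\delta_0 : \Oo \to K$ making the diagram
\begin{equation*}
\xymatrix{ & K \ar@{->>}[d]^{f} \\ \Oo \ar[ur]^{\delta_0} \ar[r]_{\partial} & K/\mm }
\end{equation*}
commute, i.e., $\delta_0(x) + \mm = \partial(x)$ for all $x \in \Oo$.

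Finally, I would extend $\delta_0$ to a derivation $\delta : K \to K$ using the standard quotient rule: since $K = \Frac(\Oo)$, define
\begin{equation*}
\delta(a/b) = \frac{b \cdot \delta_0(a) - a \cdot \delta_0(b)}{b^2}
\end{equation*}
for $a, b \in \Oo$ with $b \ne 0$. One checks that $\delta$ is well-defined, agrees with $\delta_0$ on $\Oo$, and satisfies additivity and the Leibniz rule; this is the routine verification that any derivation from a domain into its field of fractions extends uniquely to the fraction field. The resulting $\delta$ is the required lifting.

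The only genuine content is Theorem~\ref{resplendent}, which was already proved; the extension from $\Oo$ to $K$ is completely standard. I do not anticipate any obstacle beyond verifying that the interpretability step really does yield a sufficiently resplendent/saturated structure $(\Oo, K, K/\mm)$, which is immediate from the resplendence hypothesis on $(K,\partial)$.
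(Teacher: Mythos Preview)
Your proposal is correct and follows essentially the same approach as the paper: reduce $p$-divisibility to $\Zz$-lessness, invoke Theorem~\ref{resplendent} with $M=K$, $N=K/\mm$, $f$ the quotient map to get $\delta_0:\Oo\to K$, and then extend to $K$. The only cosmetic difference is in the final step: the paper phrases the extension via K\"ahler differentials, using $\Omega_{K/\Qq}\cong\Omega_{\Oo/\Qq}\otimes_\Oo K$ (Fact~\ref{k-localization}) to pass from an $\Oo$-linear map $\Omega_{\Oo/\Qq}\to K$ to a $K$-linear map $\Omega_{K/\Qq}\to K$, whereas you write down the quotient rule directly; these are two standard ways of saying the same thing.
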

\begin{proof}
  If the value group is $p$-divisible, then it is $\Zz$-less.  Theorem~\ref{resplendent} allows us to lift the given derivation $\partial : \Oo \to K/\mm$ to a derivation $\delta_0 : \Oo \to K$.  This corresponds to an $\Oo$-linear map $\Omega_{\Oo/\Qq} \to K$, which in turn yields a $K$-linear map
  \[ \Omega_{K/\Qq} \cong \Omega_{\Oo/\Qq} \otimes_{\Oo} K \to K\]
  by Fact~\ref{k-localization}.  Thus $\delta_0 : \Oo \to K$ extends to a derivation $\delta : K \to K$.
\end{proof}

\subsection{An unliftable example}
The assumption that $\Gamma$ is $\Zz$-less is necessary in
Corollary~\ref{metaval-lifting}.
\begin{proposition}\label{z-probs}
  There is a valued field $(K,\Oo,\mm)$ of residue
  characteristic 0, and a derivation $\partial : \Oo \to K/\mm$, such
  that in any elementary extension $(K^*,\Oo^*,\mm^*,\partial) \succeq
  (K,\Oo,\mm,\partial)$, there is no derivation $\delta :
  \Oo^* \to K^*$ making the diagram commute:
  \begin{equation*}
    \xymatrix{ & K^* \ar[d] \\
      \Oo^* \ar[ur]^{\delta} \ar[r]_{\partial} & K^*/\mm^*}
  \end{equation*}
  One can even take $(K,\Oo)$ to be dp-minimal as a pure valued field.
\end{proposition}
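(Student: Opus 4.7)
The plan is to exhibit a Henselian equicharacteristic-$0$ valued field with value group $\Zz$ (which is \emph{not} $\Zz$-less, so Corollary~\ref{metaval-lifting} does not apply), and an explicit derivation $\partial$ whose obstruction to lifting is first-order and therefore persists in every elementary extension. I will take $K = \Cc((t))$ with its $t$-adic valuation, so $\Oo = \Cc[[t]]$, $\mm = t\Oo$, residue field $\Cc$, and value group $\Gamma = \Zz$. By Ax--Kochen--Ershov, together with dp-minimality of $\Cc$ (algebraically closed) and of $\Zz$ (as an ordered abelian group), the pair $(K,\Oo)$ is dp-minimal as a pure valued field, establishing the parenthetical claim.

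The construction of $\partial : \Oo \to K/\mm$ proceeds via the sequence $0 \to \mm \to K \to K/\mm \to 0$ of $\Oo$-modules. Since $\Oo$ is a valuation ring, $K$ is divisible and hence injective, so derivations $\Oo \to K/\mm$ are liftable exactly when their corresponding class in $\Ext^1_\Oo(\Omega_{\Oo/\Qq},\mm)$ vanishes. The failure of $\Zz$-lessness is reflected precisely in the non-flatness of $\Omega_{\Oo/\Qq}$ in this setting, so $\Ext^1$ is nonzero; I pick $\partial$ to realize a nonzero class. Concretely, I will set $\partial(t) = t^{-1} + \mm$ and extend using the freedom afforded by $\Omega_{\Oo/\Qq[t]}$: for a carefully chosen transcendental $s \in \Oo$ (e.g.\ $s = \sum_{n \ge 1} t^{n!}$), the Kähler symbol $ds$ is not forced to equal $s'(t)\,dt$ (since $d$ does not commute with the infinite sum defining $s$), and I assign $\partial(s)$ a value deliberately different from the naive $s'/t + \mm$.

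The unliftability argument has the following shape. Suppose $(K^*,\Oo^*,\mm^*,\partial^*) \succeq (K,\Oo,\mm,\partial)$ admits a derivation $\delta : \Oo^* \to K^*$ lifting $\partial^*$. Elementarity forces $\partial^*(t) = t^{-1} + \mm^*$, so $\delta(t) = t^{-1} + m$ for some $m \in \mm^*$. Applying Leibniz to the first-order identity $(1-t) \cdot (1-t)^{-1} = 1$, together with the expansion $(1-t)^{-2} \equiv 1 + 2t \pmod{t^2\Oo^*}$ (which is a first-order sentence holding in every elementary extension), one computes
\[
\delta\!\left(\tfrac{1}{1-t}\right) = \frac{t^{-1}+m}{(1-t)^2} \equiv t^{-1} + 2 \pmod{\mm^*}.
\]
Combining this with the prescribed value of $\partial^*$ on $s$ and on related elements in the subring generated by $t$ and $s$ yields an equation of the form $c \equiv 0 \pmod{\mm^*}$ for some nonzero integer $c$ (ultimately coming from the ``$+2$'' above), which fails because $\mm^*$ has residue characteristic $0$.

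The main obstacle is making the choice of the transcendental $s$ and the value $\partial(s)$ precise enough that the Leibniz constraints produce a genuine first-order contradiction rather than merely a failure of one candidate lift. This amounts to specifying a concrete cocycle representative of a nonzero class in $\Ext^1_\Oo(\Omega_{\Oo/\Qq},\mm)$ whose non-triviality can be certified by a single polynomial identity among finitely many elements of $\Oo$; the residue characteristic $0$ assumption then guarantees that the ``obstruction constant'' survives reduction modulo $\mm^*$ in every elementary extension.
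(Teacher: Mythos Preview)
Your proposal contains a factual error and a structural gap that together prevent it from working. First, the claim that ``the failure of $\Zz$-lessness is reflected precisely in the non-flatness of $\Omega_{\Oo/\Qq}$'' is false: Fact~\ref{gr-flat} in the paper (from Gabber--Ramero) says $\Omega_{\Oo/\Qq}$ is flat for \emph{every} valuation ring of residue characteristic $0$, with no hypothesis on the value group. The $\Zz$-lessness hypothesis enters only for the \emph{relative} module $\Omega_{\Oo'/\Oo}$ in Proposition~\ref{flat-extension}, which is used to extend a partial lift along an elementary chain --- a quite different role. So your homological heuristic for locating the obstruction is misdirected.

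Second, even granting that some $\partial$ on $\Cc[[t]]$ fails to lift, you have not produced a first-order sentence witnessing this, and your sketch does not suggest one: the computation $\delta((1-t)^{-1}) \equiv t^{-1}+2 \pmod{\mm^*}$ is simply a Leibniz consequence of $\partial(t)=t^{-1}+\mm$ and imposes no constraint beyond that. The paper's construction is genuinely different and uses the extra room in a \emph{rank-two} value group $\Zz+\Zz\omega$: the element $a=t^\omega$ admits factorizations $a=t^n\cdot t^{\omega-n}$ for every $n\in\Zz$, and consistency of a hypothetical lift across all of them forces $\delta(a)$ to lie in $u+t^n\mm_L$ for all $n$, i.e.\ in $u+\pp$, where $u$ was chosen so that no element of $K$ lies in $u+\pp$. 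This is what produces a single first-order sentence $\sigma$ (the $\exists a\,\forall a'\,\exists b,c\,\forall b',c'$ game) that holds in $K$ and is violated by any lift. With value group $\Zz$ there is no element playing the role of $t^\omega$, and nothing in your outline substitutes for it.
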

\begin{proof}
  Let $\Zz + \Zz \omega$ be the free abelian group on two generators
  $1, \omega$, ordered so that $\omega > n \cdot 1$ for all $n \in
  \Zz$.  In other words, $\Zz + \Zz \omega$ is the lexicographic
  product $\Zz \times \Zz$, with generators $\omega := (1,0)$ and $1
  := (0,1)$.

  Let $L$ be the Hahn field $\Qq^{alg}((t^{\Zz + \Zz \omega}))$, and let $K$
  be the relative algebraic closure of $\Qq(t,t^\omega)$ in $L$.  Let
  $\Oo_L, \Oo_K$ denote the valuation rings on $L$ and $K$, and
  $\mm_L, \mm_K$ denote their maximal ideals.  The valued field
  $(K,\Oo_K)$ is henselian with residue characteristic 0,
  algebraically closed residue field, and dp-minimal value group, so
  $(K,\Oo_K)$ is a dp-minimal valued field.

  Let $\val'$ be the coarsening of $\val$ by the convex subgroup $\Zz
  \le \Zz + \Zz \omega$, and let $\pp \lhd \Oo_L$ be the associated
  maximal ideal.  If $\val(x) = i + j\omega$, then $\val'(x) = j$.
  Moreover, for any $x$,
  \begin{equation*}
    x \in \pp \iff \val'(x) > 0 \iff \val(x) > \Zz.
  \end{equation*}
  Note that for $x \in L$,
  \begin{equation}
    \val'(x) \ge 0 \implies x \in \mm_L + \Qq^{alg}[t^{-1}], \label{slice}
  \end{equation}
  because one can split $x = \sum_{i,j} a_{i,j} t^{i+j\omega}$ as
  \begin{equation*}
    x = \sum_{i + j\omega \le 0} a_{i,j} t^{i+j\omega} + \sum_{i + j\omega > 0} a_{i,j} t^{i+j\omega}.
  \end{equation*}
  The assumption $\val'(x) \ge 0$ ensures that the first sum only
  involves $i+j\omega$ with $j = 0$.  The support is well-ordered, so
  the first sum is finite, and belongs to $\Qq^{alg}[t^{-1}]$.  The other
  sum is in $\mm_L$, proving (\ref{slice}).

  Choose
  \begin{equation*}
    u = 1 + a_1t + a_2t^2 + \cdots \in 1 + t\Qq^{alg}[[t]] \subseteq
    \Qq^{alg}((t)) = \Qq^{alg}((t^\Zz)) \subseteq \Qq^{alg}((t^{\Zz + \Zz\omega})) = L
  \end{equation*}
  such that $u \not\equiv v \pmod{\pp}$ for all $v \in K$.  Such a $u$ exists because $1 + t\Qq^{alg}[[t]]$ is
  uncountable, $K$ is countable, and the elements of $\Qq^{alg}((t))$ are
  pairwise distinct modulo $\pp$.  (The valuation $\val'$ restricts to
  the trivial valuation on $\Qq^{alg}((t))$.)

  Consider the derivation
  \begin{align*}
    \partial_0 : L & \to L \\
    \sum_{i,j} a_{i,j}t^{i + j\omega} &\mapsto \sum_{i,j} a_{i,j}jt^{i + (j-1)\omega}.
  \end{align*}
  Note that for $x \in L$,
  \begin{equation}
    x \in \Oo_L \implies \val'(\partial_0 x) \ge 0 \label{lowbound}
  \end{equation}
  Let $\partial_1 : L \to L$ be the derivation $\partial_1 x := u
  \partial_0 x$.  Let $\partial$ be the composition
  \begin{equation*}
    \Oo_K \hookrightarrow L \stackrel{\partial_1}{\to} L \to L/\mm_L.
  \end{equation*}
  We claim that $\partial$ factors through the inclusion $K/\mm_K
  \hookrightarrow L/\mm_L$.  Indeed,
  \begin{equation*}
    x \in \Oo_K \implies x \in \Oo_L \implies \val'(\partial_0 x) \ge
    0 \iff \val'(u \partial_0 x) \ge 0,
  \end{equation*}
  because $\val(u) = 0$.  By (\ref{slice}),
  \begin{align*}
    \val'(u \partial_0 x) \ge 0 &\implies u \partial_0 x \in \mm_L +
    \Qq^{alg}[t^{-1}] \\ & \implies \partial x \in (\Qq^{alg}[t^{-1}]+\mm_L)/\mm_L
    \subseteq (K+\mm_L)/\mm_L \cong K/(K \cap \mm_K) = K/\mm_K.
  \end{align*}
  So $\partial$ is a well-defined derivation from $\Oo_K$ to
  $K/\mm_K$.

  We claim that the following first-order statement $\sigma$ holds in
  the structure $(K,\Oo_K,\partial)$:
  \begin{quote}
    There is an $a \in \Oo_K$ such that for every $a' \in K$, there
    are $b, c \in \Oo_K$ such that $a = bc$ and for every $b', c' \in
    K$, the following identities do \emph{not} all hold:
  \end{quote}
  \begin{align*}
    % a' &\equiv \partial a \pmod{\mm_K} \\
    b' &\equiv \partial b \pmod{\mm_K} \\
    c' &\equiv \partial c \pmod{\mm_K} \\
    a' &= bc' + cb'.
  \end{align*}
  Before proving this, note that this would complete the proof:
  \begin{itemize}
  \item The statement $\sigma$ is first-order, so it remains true in
    any elementary extension of $(K,\Oo,\partial)$.
  \item If the lifting $\delta : \Oo \to K$ exists, the
    statement $\sigma$ is false, because an adversary can choose
    \begin{align*}
      a' &= \delta a \\
      b' &= \delta b \\
      c' &= \delta c.
    \end{align*}
    and the three equations would hold.
  \end{itemize}
  We now prove $\sigma$.  For our opening move, we choose $a =
  t^\omega \in \Oo_K$.  The opponent chooses $a' \in K$.  Note
  \begin{align*}
    \partial_0 a &= \partial_0 t^\omega = 1 \\
    \partial_1 a &= u \partial_0 a = u.
  \end{align*}
  By choice of $u$, we know that $a' - u \notin \pp$, so $\val(a' - u)
  < n$ for some $n \in \Zz$.

  For our next move, we take $b = t^n$ and $c = t^{\omega - n}$.  The
  condition $a = bc$ holds, so we haven't lost the game yet.  The
  opponent chooses $b', c' \in K$.  Suppose that all three identities
  hold:
  \begin{align*}
    % a' &\equiv \partial a \pmod{\mm_K} \\
    b' &\equiv \partial b \pmod{\mm_K} \\
    c' &\equiv \partial c \pmod{\mm_K} \\
    a' &= bc' + cb'.
  \end{align*}
  Then
  \begin{align*}
    % u = \partial_1 a & \equiv a' \pmod{\mm_L} \\
    \partial_1 b & \equiv b' \pmod{\mm_L} \\
    \partial_1 b & \equiv c' \pmod{\mm_L}.
  \end{align*}
  Now $b, c$ are divisible by $t^n$, so
  \begin{align*}
    c \partial_1 b & \equiv cb' \pmod{t^n\mm_L} \\
    b \partial_1 c & \equiv bc' \pmod{t^n\mm_L}
  \end{align*}
  Adding the two equations, and using the identities
  \begin{align*}
    a' &= bc' + cb' \\
    \partial_1(bc) &= b\partial_1c + c \partial_1 b,
  \end{align*}
  we obtain
  \begin{equation*}
    \partial_1(bc) \equiv a' \pmod{t^n\mm_L}
  \end{equation*}
  On the other hand,
  \begin{equation*}
    \partial_1(bc) = \partial_1(a) = u,
  \end{equation*}
  so $u \equiv a' \pmod{t^n\mm_L}$.  Then $\val(u - a') > n$,
  contradicting the choice of $n$.  So it is impossible for all three
  identities to hold, and we have won the game.  This proves the
  sentence $\sigma$ and completes the proof.
\end{proof}

\begin{acknowledgment}
The author would like to thank Meng Chen, Hagen Knaf, and Franz-Viktor
Kuhlmann for some helpful information on K\"ahler differentials.
{\tiny This material is based upon work supported by the National Science
Foundation under Award No. DMS-1803120.  Any opinions, findings, and
conclusions or recommendations expressed in this material are those of
the author and do not necessarily reflect the views of the National
Science Foundation.}
\end{acknowledgment}

\bibliographystyle{plain} \bibliography{mybib}{}

\end{document}